\theoremstyle{plain}
\newtheorem{theorem}{Theorem}[chapter]
\newtheorem{lemma}[theorem]{Lemma}
\newtheorem{fact}[theorem]{Fact}
\newtheorem{corollary}[theorem]{Corollary}
\newtheorem{proposition}[theorem]{Proposition}
\theoremstyle{definition}
\newtheorem{definition}[theorem]{Definition}
\theoremstyle{remark}
\newtheorem{remark}[theorem]{Remark}
\numberwithin{section}{chapter}
\numberwithin{equation}{chapter}
\def\sm{\setminus}
\def\st{such that }
\def\R{\mathbb{R}}
\def\N{\mathbb{N}}
\def\F{\mathcal{F}}
\def\H{\mathcal{H}}
\def\eps{\varepsilon}
\def\phi{\varphi}
\def\le{\leqslant}
\def\ge{\geqslant}
\def\cyl{{\bf C}}
\def\indcyl{{\bf Z}}
\def\T{\mathcal{T}}
\def\ie{{\em i.e.,~}}
\def\M{\mathcal{M}}
\def\P{\mathcal{P}}
\def\Q{\mathcal{Q}}
\def\D{\mathcal{D}}
\def\L{\mathcal L}
\def\level{\mathrm{lev}}
\def\wtm{\widetilde{\M}}
\def\K{\mathcal{K}}
\def\D{{\mathcal D}}
\def\E{{\mathcal E}}
\def\V{{\mathcal V}}
\def\A{{\mathcal A}}
\def\T{{\mathcal T}}
\def\Y{{\mathcal Y}}
\def\dist{\mathrm{dist}}
\def\Orb{\mathrm{Orb}}
\def\X{{\mathcal X}}
\def\FNSD{\F_\mathrm{NSD}}
\def\C{{\mathcal C}}
\def\ul{\underline}
\begin{document}

\title{Free energy and equilibrium states for families of interval maps}

\author{Neil Dobbs}\address{
    School of Mathematics and Statistics,
University College Dublin,
Belfield, Dublin 14,
Ireland.}
\email{\href{neil.dobbs@ucd.ie}{neil.dobbs@ucd.ie}}
\urladdr{\url{http://www.maths.ucd.ie/~ndobbs/}}

\author{Mike Todd}\address{
Mathematical Institute,
University of St Andrews,
North Haugh,
St Andrews,
Fife,
KY16 9SS,
Scotland}
\email{\href{mailto:m.todd@st-andrews.ac.uk}{m.todd@st-andrews.ac.uk}}
\urladdr{\url{http://www.mcs.st-and.ac.uk/~miket/}}

\thanks{ MT was partially supported by FCT grant SFRH/BPD/26521/2006 and NSF grants DMS 0606343 and DMS 0908093. ND was supported by ERC Bridges project, the Academy of Finland CoE in Analysis and Dynamics Research and an IBM Goldstine fellowship.
}

\date{\today}

\begin{abstract} 
    We study continuity, and lack thereof, of thermodynamical properties for one-dimensional dynamical systems. 
    Under quite general hypotheses, 
    the free energy is shown to be almost upper-semicontinuous: some normalised component of a limit measure will have free energy at least that of the limit of the free energies. From this, we deduce results concerning existence and continuity of equilibrium states (including statistical stability). Metric entropy, not semicontinuous as a general multimodal map varies, is shown to be upper semicontinuous under an appropriate hypothesis on critical orbits.
    Equilibrium states vary continuously, under mild hypotheses, as one varies the parameter and the map. We give a general method for constructing induced maps which automatically give strong exponential tail estimates. This also allows us to recover, and further generalise, recent results concerning statistical properties (decay of correlations, etc.). 
    Counterexamples to statistical stability are given which also show sharpness of the main results. 
\end{abstract}

\maketitle

\tableofcontents 

%\printnomenclature

\chapter{Introduction} \label{sec:intro}

\section{General introduction} \label{sec:generalintro}

An ideal gas is a collection of particles interacting solely via random collisions. Keeping track of individual particles is neither important nor feasible. Rather, macroscopic \emph{thermodynamic} quantities (such as pressure, temperature, free energy) describe the system. One can use statistical mechanics to study dependence of equilibrium states and of thermodynamic quantities on parameters. In a deterministic (and for our purposes, discrete-time) dynamical system $f : X \to X$, orbits of nearby points remain close for a certain amount of time. If the system is expanding, the orbits diverge and become decorrelated. If this decorrelation occurs sufficiently quickly, the system exhibits random characteristics and chaotic behaviour. Individual orbits lose importance, and macroscopic, typical, statistical properties are of interest. In the 1970s, Sinai, Ruelle and Bowen \cite{Sin72, Rue76, Bow75}  introduced \emph{thermodynamic formalism} to dynamical systems to remarkable success\footnote{A striking early result of Ruelle concerned Hausdorff dimension of Julia sets of quadratic complex polynomials \cite{Rue82}, a subject with, \emph{a priori}, no connection to physics.}.  

\iffalse
 \begin{equation}
 a=\frac{N}{A}
 \end{equation}%
\nomenclature{$a$}{The number of angels per mess! unit area\nomrefeq}%
 \nomenclature{$N$}{The number of angels per needle point\nomrefpage}%
 \nomenclature{$A$}{The area of the needle point}%
 The equation $\sigma = m a$%
 \nomenclature{$\sigma$}{The total mass of angels per unit area\nomrefeqpage}%
 \nomenclature{$m$}{The mass of one angel}
 follows easily.
 \eqdeclaration{32}
 \fi

    In dynamical systems, questions of existence of equilibrium states and continuous dependence of the equilibrium state on parameters (aka \emph{statistical stability}) 
are of prime importance. Uniformly hyperbolic systems are well-understood: existence holds and one even has smooth dependence of the equilibrium state on parameters \cite{KatKPW, Contreras, Rue}. Non-hyperbolic systems are harder to study. Existence need not hold; even when it does, dependence need not be continuous, as we shall see. 
    This article contributes a new tool to the study of such questions in the context of one-dimensional dynamics. 
We give natural and general proofs of new results concerning statistical stability and linear response (topics studied in 
\cite{Kel82, RycSor, Tsu_cty, Frei, Aru, Bal_pwexp, FreT, Ru_Mis, BalSma09, BalBenSch13, AlvSou13, VarVia10})
or lack thereof, and we generalise recent work concerning existence and uniqueness of equilibrium states \cite{IomTeq, PrzRL}. For families of maps we make important contributions, showing under reasonable hypotheses that metric entropy is upper-semicontinuous and that equilibrium states frequently vary continuously.

An equilibrium state is, roughly speaking, an invariant probability measure maximising the free energy. 
The naive approach to find an equilibrium state---taking a sequence of measures whose free energy converges to the supremum---does not work in general. Failure occurs because the free energy does not depend semicontinuously on the measure. We show that this unsophisticated approach has its merits, thanks to some hidden semicontinuity properties.

Our techniques lead us to propose (and prove) the following principle. For a large class of maps and potentials, \emph{the free energy is almost upper-semicontinuous}. Let us explain briefly. Let $f$ be a piecewise-continuous map of the unit interval $I$ say, let $\mu$ be an invariant probability measure with entropy $h(\mu)$,  and let $\phi: I \to [-\infty,\infty]$ be a potential function.    The free energy $E(\mu)$ of $\mu$ is defined to be $h(\mu) + \int \phi \, d\mu$. 
\footnote{This is the convention in  dynamics literature, see  \cite[Chapter 6]{Kellbook} and \cite{BT08}, while 
physicists would consider $E(\mu)$ to be
\emph{minus} the free energy. With our convention, equilibrium states \emph{maximise} the free energy. }
Let $E_+$ denote the supremum of $E(\mu)$ (or equivalently of $\int \phi \,d\mu$) over  ergodic invariant probability measures supported on repelling periodic orbits.
Take a convergent sequence of ergodic invariant probability measures $\mu_n$ with uniformly positive entropies and with $\liminf_{n \to \infty} E(\mu_n) \geq E_+$. 
Then some component of the limit measure, when normalised, should have free energy at least $\limsup_{n \to \infty} E(\mu_n)$. 

    This powerful principle is somewhat surprising as, if the potential is unbounded above,
    $\mu \mapsto \int \phi\, d\mu$ is often lower-semicontinuous, 
    while (for a fixed map) entropy is upper-semicontinuous. 
    In particular,  in the setting where $f$ is $C^1$ with derivative $Df$ and  $f$ has a critical point, the \emph{geometric potential} 
    $-t\log|Df|$ is unbounded above for $t>0$.
    \emph{A priori} and in fact, the free energy is neither upper- nor lower- semicontinuous, yet we have some sort of hidden upper-semicontinuity. 

    What may happen is the following. Lower-semicontinuity of the Lyapunov exponent occurs when diminishing mass approaches the critical set and disappears in the limit. 
    A typical orbit passing near the critical set leads to a bound period (not quite that of \cite{BenCarl}) where there is no freedom in the subsequent stretch of orbit, until one reaches the large scale again. Such orbital stretches considered as a whole make very little contribution to the free energy, so losing them in the limit and normalising the remainder (which requires positive entropy) does not cause free energy to drop.  
    One may hope that some form of almost upper-semicontinuity will hold in higher dimensions; currently we do not have the techniques at our disposition to show this. 

    Hidden semicontinuity, which extends to families of maps, allows us to give strikingly straightforward proofs of existence  and (almost) continuous dependence of equilibrium states for the geometric potentials $-t \log |Df|$, generalising many previous works. 
    %For example, we have the following, postponing definitions. 
    %\begin{thm} [Statistical stability in the quadratic family]  \label{thm:sseqstate}
          %Suppose that $f_0$ is  non-renormalisable quadratic map, $t_0 \in (t^-,t^+)$ and the pressure $P(f_0, -t_0\log|Df|)$ with respect to the geometric potential $-t_0\log|Df|$ is strictly positive. Let $\F_Q$ denote the quadratic family.  For $(t,f)$ in a neighbourhood of $(t_0,f_0)$ in $\R\times \F_Q$, an equilibrium state $\mu_{t,f}$ (for $f$ and the potential $-t\log|Df|$) exists.  The map $(t,f)\mapsto \mu_{t,f}$ is continuous at $(t_0,f_0)$.
    %\end{thm}
    %This significantly improves upon \cite{FreT}, where stability was shown for $t$ close to $1$ under multiple further hypotheses, including non-uniform expansion, and upon \cite{Raith_stab}, where stability was shown when $t=0$.
   %%
%
    Continuous dependence as one varies the map is new, to the best of our knowledge, even for the quadratic family (Raith \cite{Raith_stab} 
    has shown that the measure of maximal entropy varies continuously). As a bonus, our techniques often give important statistical properties (in particular, the \emph{almost sure invariance principle} and \emph{exponential decay of correlations}) and we derive regularity of pressure functions.

    Some of our main results are unavoidably quite technical, even to state, witness \S\ref{sec:ausce}. Prior to describing them, and at risk of minor repetition, we shall proceed by presenting more readily accessible ones. The first, upper-semicontinuity of metric entropy, follows. Then we detail a couple of results concerning absolutely continuous invariant probabilities in Section~\ref{sec:introacip}.   
    We define equilibrium states in Section~\ref{sec:introeq} which enables us, in Section~\ref{sec:introquad}, to give applications of our results to the quadratic family.  
    Further applications can be found in \cite{DemTod19, DobMih19}.

    \section{Upper-semicontinuity of metric entropy}
    We shall eventually study  free energy, the  sum of metric entropy and the integral of a potential. 
    For sequences of maps, even metric entropy (of invariant measures) is not necessarily semi-continuous \cite{Mis_jump}. 
    This would appear to render hopeless our endeavour to prove almost upper-semicontinuity of the free energy. However, imposing a topological hypothesis allows us to surmount this obstacle and gives us the following result, 
    postponing formal definitions to Section~\ref{sec:introentropy}. %, though for the definition of $\M_f$ see \eqref{eq:meas}.
         \begin{theorem}[Upper-semicontinuity of metric entropy]\label{thm:introusc}
             Let $(f_k)_{k\geq0}$ be a sequence of piecewise monotone $d$-branched maps converging to $f_0$ as $k \to \infty$ with decreasing critical relations. Suppose $(\mu_k)_{k\geq1}$ is a  convergent sequence of ergodic $f_k$-invariant probability measures. Then
             $$
             h\left(\lim_{k\to\infty}\mu_k\right) \geq \limsup_{k\to\infty} h(\mu_k).$$
         \end{theorem}
    Raith's result \cite{Raith_stab} on continuous dependence of the measure of maximal entropy for unimodal maps is  the only one, of which we are aware, which leads in this direction. 
    The hypothesis, \emph{decreasing critical relations},  
    is relatively weak: for the quadratic family for example, only  super-attracting parameters have an extra critical relation in the limit.

    \section{Absolutely continuous invariant probability measures} \label{sec:introacip}
    Absolutely continuous (with respect to Lebesgue measure) invariant probability measures (\emph{acip}s),\index[def]{Acip} when they exist, are important because they describe the typical long-term dynamics of a positive Lebesgue measure set of points. They tend to be equilibrium states for the potential $-\log|Df|$ (the same as the geometric potential above but with $t=1$). 
    It is natural to ask how these measures (and properties) vary as the parameters of a dynamical system change. The following result follows from  a more general version in Section~\ref{sec:ausce}.
    \begin{theorem} \label{thm:epsquad}
        In the quadratic family, given $\eps>0$ and a sequence  $(f_k)_{k\geq1}$  of maps with $f_k \to f$ as $k\to \infty$, if each map $f_k$ has an absolutely continuous invariant probability measure $\mu_k$ with entropy $h(\mu_k) \geq \eps$, then $f$ has an absolutely continuous invariant probability measure $\mu$ and $(\mu_k)_{k\geq1}$ admits a subsequence converging to an $f$-invariant measure $\mu'$ with $\mu$ absolutely continuous with respect to $\mu'$. 
    \end{theorem}
    One could hope that the limit measure itself would necessarily be absolutely continuous (as, indeed, we did). However, in the other direction we have the following. 
        Consider the family of maps 
    $$f_a : [0,1] \to [0,1],$$ 
    with $a > 0$, defined by 
        $$f_a(x) = \left\lbrace \begin{split}
            1 -2x, \quad &  \text{ if } \quad  x\in [0,1/2];\\
    a(x-1/2)(x-1) + 1, \quad & \text{ if } \quad x\in (1/2,1]. \end{split} \right.
            $$
    \begin{theorem}\label{THM:EXO1BIS} 
        There exists a sequence $(f_{a_k})_{k\geq0}$ of such maps having decreasing critical relations with $f_{a_k} \to f_{a_0}$ 
        as $k \to \infty$ and having the following properties. Each $f_{a_k},$ $k\geq 0$, has an acip $\mu_k$ with entropy uniformly bounded away from $0$; the measures $\mu_k$ converge to a strictly convex combination of the acip $\mu_0$ for $f_{a_0}$  and a Dirac mass on a repelling fixed point.
    \end{theorem}
    The family of maps $(f_a)_a$ can be considered a toy model for a first return map to an interval containing the critical point of a unimodal map. 
    Such a first return map will typically have one unimodal branch and infinitely many non-critical branches. 
    Our toy model maps have one expanding branch and one unimodal branch. We expect that in the quadratic family similar discontinuity of acips will hold.

    Worse things happen without the lower bound on the entropy. Motivated by Ruelle's paper \cite{Ru_Mis} showing smoothness of dependence of the acip on parameter for a family of unimodal Misiurewicz maps which \emph{remain in the same topological class}, we consider what happens if ever one leaves the class: Even restricting to non-renormalisable Misiurewicz parameters, the limit measure can be more or less anything, see Theorem~\ref{THM:THUN2}. 

    %We shall discuss the context for these results later. 

    \section{Equilibrium states} \label{sec:introeq}
Given a dynamical system $f:I\to I$ on a topological space $I$, and a \emph{potential} $\phi:I\to [-\infty, \infty]$, both of which preserve the Borel structure, we define the (standard, \emph{variational}) \emph{pressure}\index[def]{Pressure} of $(I,f,\phi)$ to be
$$\mathrm{VP}_f(\phi):=\sup\left\{h(\mu)+\int\phi~d\mu:\mu\in \M \text{ and } \int\phi~d\mu> -\infty\right\},
\index{VP@$\mathrm{VP}_f(\phi)$, variational pressure}
$$
    where $h(\mu)=h_f(\mu)$ denotes the (metric) entropy of $\mu$ and ${\M} ={\M}_f$ \index{measures@${\M}_f$, ergodic $f$-invariant probability measures} denotes the set of ergodic, $f$-invariant, probability measures. 
    Spaces of measures are equipped with the weak$^*$ topology. 

    Henceforth, let $I$ be a bounded, non-degenerate interval.
    For a piecewise-$C^1$ one-dimensional map $f$ and $\mu \in \M_f$, the Lyapunov exponent $\lambda(\mu)$ of $\mu$ is  defined as $\int \log |Df|\, d\mu$. 
    Ergodic measures with negative Lyapunov exponent have zero entropy, are typically finite in number and are supported on attracting periodic orbits (at least in the smooth case, see \cite[Proposition~A.1]{Riv12}). They do not contribute much by way of knowledge. Often they are defined away by, for example, assuming that the system is transitive. We shall rather consider the following pressure function, %\nomenclature{$P_f(\phi)$}{The pressure function \nomrefpage}
    $$
        P(\phi)=P_f(\phi):=\sup_{\mu \in \wtm_f}\left\{h(\mu)+\int\phi~d\mu: \quad \int\phi~d\mu> -\infty  \right\},
        \index{pressure@$P_f(\phi)$}
        $$ 
        where 
\begin{equation}
\wtm = \wtm_f := \left\{\mu \in {\M}_f : \int \log |Df|\, d\mu \geq 0\right\}.\label{eq:meas}
\index{measuresple@$\wtm_f$, $f$-invariant probability measures of non-negative Lyapunov exponent}
\end{equation}
    Results for $\mathrm{VP}_f$ can be deduced from those for $P_f$. 
    If $h(\mu) >0$, then $$\lambda(\mu) \geq h(\mu) >0$$ by Ruelle's Inequality \cite[Theorem~1]{Hof91}. Hence any measure in $\M_f$ with positive entropy is also in $\wtm_f$. 

    As is traditional in dynamical systems \cite{Kellbook}, for an invariant  measure $\mu$, set  %\nomenclature{$ E(f, \mu, \phi)$}{The free energy of $\mu$ \nomrefpage}
    $$ E(\mu) = E(f, \mu, \phi) := h_f(\mu)+\int\phi~d\mu 
    \index{Ef@$ E(f, \mu, \phi)$, free energy}  
    $$ 
    and call it the \emph{free energy} \index[def]{free energy} of $\mu$ (with respect to $(I,f,\phi)$).  Note $E(\mu + \alpha \mu') = E(\mu) + \alpha E(\mu')$ for $\alpha \in \R^+. $\footnote{Note that the definition of metric entropy extends to any finite measure.}
    A periodic point $x$ of period $p$ is called \emph{(hyperbolic) repelling} if $|Df^p(x)|>1$.\index[def]{Repelling (hyperbolic) periodic point}  We define
     \begin{equation*}
        E_+ = E_+(f) = E_+(f,\phi) := \sup \left\{E(\mu) :\begin{split} \mu\in \widetilde\M_f \mbox{ is supported on a} \\ 
         \mbox{ repelling periodic orbit} \end{split} \right\}.
    \index{Eplus@$E_+(f,\phi)$}
    \end{equation*}
    If a measure $\mu\in \wtm$ \emph{maximises} the free energy over measures in $\wtm$, thus satisfying  $h(\mu)+\int\phi~d\mu=P(\phi)$, then we call $\mu$ an \emph{equilibrium state}\index[def]{Equilibrium state} for $(I,f,\phi)$.  
    Note that we require equilibrium states to be ergodic. 
    
    The family of potentials $\{-t\log|Df|:t\in \R\}$ is of especial interest; the integral $\int \log |Df|~d\mu$ is the Lyapunov exponent of the measure.  For example, since an $f$-invariant measure which is absolutely continuous with respect to Lebesgue measure describes the statistical behaviour of a set of points of positive Lebesgue measure, such measures are of great import.  It was shown in \cite{Ledrap} that if $f$ is $C^2$ with non-flat critical points, then  a measure $\mu$ with positive entropy is an acip if and only if it is an equilibrium state for the potential $-\log|Df|$ (the non-flat condition is unnecessary \cite{Dob08}).  For quadratic maps, Lebesgue measure is ergodic and any acip has positive entropy \cite{BloLyu89}. Moreover, equilibrium states of other potentials $-t\log|Df|$ give information on the Lyapunov spectrum, see for example \cite{IomTod10}.

    The function $t \mapsto P(-t \log |Df|)$ is  convex (it can be viewed as a supremum of lines $t \mapsto E(f, \mu, -t\log|Df|)$) and in standard examples is analytic at all but finitely many values of $t$. A point $t_0$ where it is not analytic is called a \emph{phase transition}. If the function is differentiable at $t$ and has an equilibrium state $\mu_t$, then the slope of the pressure function at $t$ is minus the Lyapunov exponent of $\mu_t$. If $h(\mu_t) >0$, the measure $\mu_t$ has maximal dimension (equal to $h(\mu_t)/\lambda(\mu_t)$ by the Dynamic Volume Lemma \cite{HofRai92}) among measures with the same Lyapunov exponent. 

Let us set $$P^0(\phi) = P^0(f,\phi)  := \sup \{E(f, \mu, \phi) : h(\mu) = 0 \text{ and } \mu \in \wtm_f\}.$$\index{Pzero@ $P^0(f,\phi) $} Of course, $P^0(f,\phi) \geq E_+(f,\phi)$, the supremum in $E_+$ being taken over repelling orbits; for reasonable classes of maps one may expect equality. 
Consider 
        $$t^-:= \inf \left\{t : P(-t \log |Df| ) > P^0 (-t \log |Df|) \right\},
        \index{tplus@$t^-$, $t^+$}$$
       
        $$t^+:= \sup \left\{t : P(-t \log |Df| ) > P^0 (-t \log |Df|) \right\}. %\index{tminus@$t^+$}
        $$
        These need not be finite; $t^+$ especially depends strongly (often discontinuously) on the map $f$.   
       % These can often be alternatively written in terms of the quantities
       % 
        As in \cite[Proposition 9.2]{IomTod10}, for example, if there is an acip of positive entropy then $t^+\ge 1$.  On the other hand, even for continuous functions, if there is a wild attractor as in \cite{BruTod15} and \cite[Theorem 10.5]{AviLyu08}, or the map is infinitely renormalisable as in \cite{Dob09}, we can have $t^+\in (0,1)$. There are interesting results on maps for which $t^+ \in (1,\infty)$ by Coronel and Rivera-Letelier \cite{CorRL_low}. If we assume that $f$ has positive topological entropy, then $P(0) > P^0(0)$, so $t^- < 0$ and $t^+ > 0$. For $t \in (t^-, t^+)$, $P(-t\log|Df|) > P^0(-t\log|Df|)$ and any equilibrium measure necessarily has positive entropy. Moreover, whenever $E(f, -t\log|Df|, \mu) > P^0(-t\log|Df|)$ then $\mu$ has positive entropy. One can think of the closure of the zone $P^0 < E < P$ as the region of possible application of our results, see Figure~\ref{fig:pressure}.

        When considering the quadratic family, for example, maps with attracting periodic points are dense (\cite{lyu, GraSw}); without excluding measures with negative Lyapunov exponent from our consideration, $t^+$ would frequently be close to $0$, reducing the space of application of our results. For a single map, one could remove a neighbourhood of an attracting orbit from the domain of definition of the map but, for families, it makes more sense to define away the offending measures.

            \begin{figure}
    \centering
        \def\svgwidth{0.9\columnwidth}
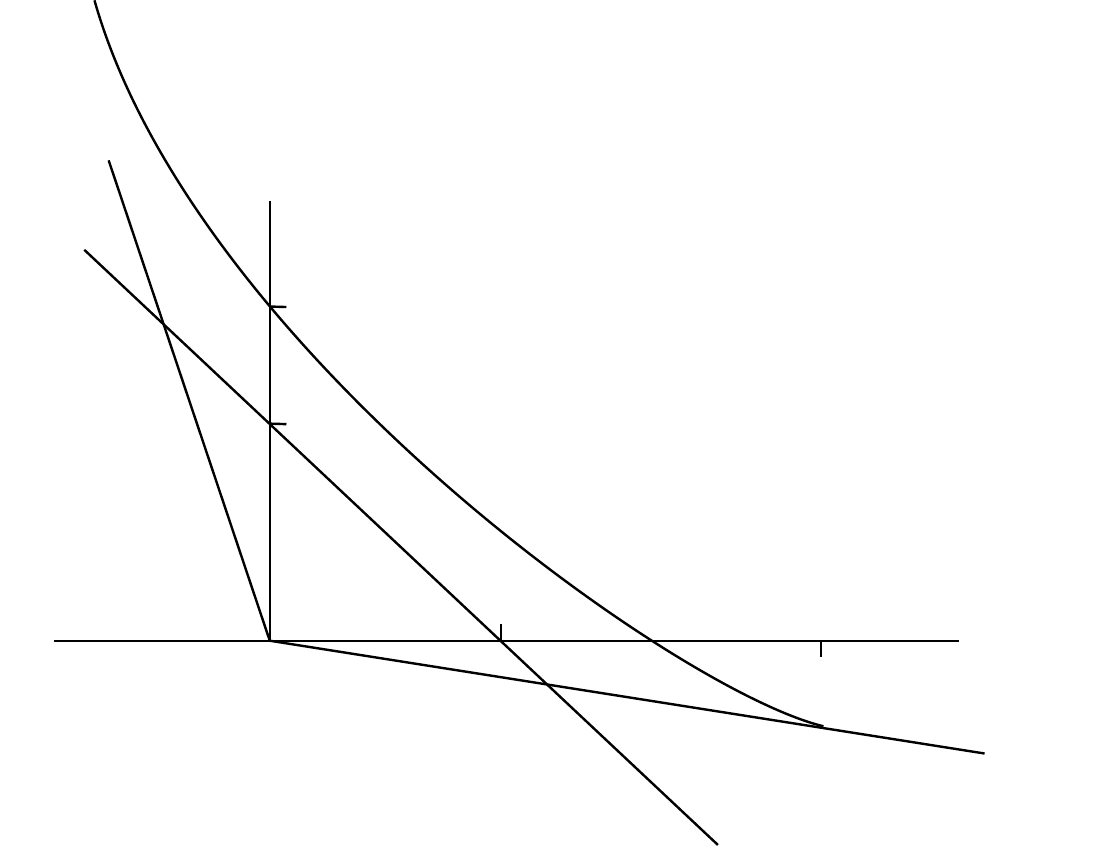
\caption{The zone of almost upper-semicontinuity of the free energy.  For a given $t$, if a sequence of measures $\mu_k$  have $E(f, -t\log|Df|, \mu_k)> P^0(-t\log|Df|)$, our  upper-semicontinuity results apply.}
    \label{fig:pressure}
\end{figure}

    Existence of equilibrium states is non-trivial. For a continuously differentiable interval map with finitely many critical points, for example, the entropy depends upper-semicontinuously on the measure (see for example \cite[Theorem 4.2.4]{Kellbook}), but $\mu \mapsto -\int \log |Df| ~d\mu$ is only lower-semicontinuous (and moreover, as in \cite{BrKel}, there are natural cases where this function is not upper semi-continuous), so if one naively takes a convergent sequence of measures whose free energies converge to the pressure, the limit measure may not maximise the free energy. Limit measures need not  be ergodic.  

\begin{definition} Given a sequence of $f$-invariant probability measures, we call an $f$-invariant measure $\mu$  a \emph{light limit measure} \index[def]{Light limit measure (for a fixed map)} provided some subsequence converges to a measure $\mu'$, $\mu \ll \mu'$ ($\mu$ is absolutely continuous with respect to $\mu$) and $\mu(I) =1$. 
\end{definition}

    In the above, if $\mu'$ were ergodic, then $\mu$ would be $\mu'$; if not, then one could decompose $\mu'$ into two non-trivial $f$-invariant measures; each one, when normalised, would be a light limit measure.  We do not require light limit measures to be ergodic. 

         There is a compatible extension of the definition of light limit measures to families of maps. 
         \begin{definition} \label{def:llm}
             Let $(f_k)_k$ be a convergent sequence of $d$-branched  piecewise-monotone maps (see Definition~\ref{def:dbranch}) with $d$-branched limit $f_0$ and with ergodic $f_k$-invariant probability measures $\mu_k$.
         We say an $f_0$-invariant measure $\mu$ is a \emph{light limit measure}  \index[def]{Light limit measure (for a sequence of maps)} provided some subsequence of the $\mu_k$ converges to a measure $\mu'$, $\mu \ll \mu'$ and $\mu(I) =1$. 
        \end{definition}
        \section{Applications to the quadratic family} \label{sec:introquad}
    The quadratic (or logistic) family \index[def]{Quadratic/logistic family}
    $$
    \F_Q := \{f_a : x \mapsto ax(1-x), \quad a \in [1,4]\}
    \index{FQ@$\F_Q $, the quadratic family}
    $$
    is the primordial test-bed for ideas in non-linear, non-hyperbolic dynamics and is the subject of vast literature, see \cite{MSbook} for example. 
    Phenomena from quadratic dynamics appear in the real world, for example period doubling \cite{Feigenbaum experiments}. Our results apply (and are new) in this context. We present a simplified version of one theorem, with the intention of conveying the concept and motivating the reader to delve further into the article. We first require some definitions.

    Given $\eps >0$ and a potential $\phi$, let us write
    $$
    \M^\eps_f(\phi) := \{ \mu \in \M_f : E(f,\mu,\phi) \geq E_+(f, \phi) \quad \mbox{and} \quad h(\mu)\geq \eps\}.
    \index{measuresgood@$\M^\eps_f(\phi)$, `good' measures}
    $$
    This is the class of measures from which one can hope for some good behaviour. 
    Limits of ergodic measures need not be ergodic. Let us call an $f$-invariant measure $\mu_*$ \emph{hyperbolic}\index[def]{Hyperbolic measure} if $\mu_*$-almost every point $x$ has positive (pointwise) Lyapunov exponent, that is $\lim_{n\to \infty} \frac1n \log |Df^n(x)| >0$. 
    \begin{definition}
        We say the free energy is \emph{almost upper-semicontinuous} \index[def]{Almost upper-semicontinuous} for the quadratic family $\F_Q$ and the potentials $-t\log |Df|$ ($f \in \F_Q$, $t \in \R$)
        if, 
        for every $\eps >0$,  for every sequence $(a_k, t_k, \mu_k)_k$ such that
        \begin{itemize}
            \item
                $(a_k)_k$ converges to $a_0 \in [1,4]$ as $k \to \infty$,
            \item
                 $(t_k)_{k}$ converges to $t_0 \in \R$ as $k \to \infty$
            \item
                and  the measures $(\mu_k)_k$ satisfy 
                $$\mu_k \in \M_{f_{a_k}}^\eps(-t_k \log|Df_{a_k}|),$$ 
        \end{itemize}
        there exists a hyperbolic light limit measure $\mu_*$ with
        $$E(f_{a_0}, \mu_*, -t_0\log |Df_{a_0}|) \geq \limsup_{k \to\infty} E(f_{a_k},\mu_k, -t_k \log |Df_{a_k}|).$$
    \end{definition}

    \begin{theorem}\label{thm:feusc}
        The free energy is almost upper-semicontinuous for the quadratic family and the potentials $-t\log|Df|$ with $t\in \R$.
    \end{theorem}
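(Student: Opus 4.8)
The plan is to reduce Theorem~\ref{thm:feusc} to a general almost upper-semicontinuity statement for one-dimensional families (presumably proved in the body of the paper for $d$-branched piecewise-monotone maps with decreasing critical relations), and then verify that the quadratic family, together with the geometric potentials $-t\log|Df|$, satisfies the hypotheses of that general theorem. So the first step is to set up the correspondence: each $f_{a}\in\F_Q$ is a $2$-branched piecewise-monotone map, and a convergent sequence $a_k\to a_0$ in $[1,4]$ gives a convergent sequence of $2$-branched maps with $2$-branched limit $f_{a_0}$; one must check that the single critical point at $1/2$ is non-flat uniformly and that the only new critical relation that can appear in the limit is the one at super-attracting parameters, so the \emph{decreasing critical relations} hypothesis holds along \emph{every} convergent parameter sequence. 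This is the content of the remark in the introduction that "only super-attracting parameters have an extra critical relation in the limit," and it should follow from continuity of $a\mapsto f_a^n(1/2)$ and the fact that a critical relation that holds for $a_0$ either already holds for all nearby $a_k$ (impossible for an open dense set) or is created in the limit, which forces $f_{a_0}^n(1/2)=1/2$ for some $n$.

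Next I would feed the data $(a_k,t_k,\mu_k)$ with $\mu_k\in\M^\eps_{f_{a_k}}(-t_k\log|Df_{a_k}|)$ into the general theorem. The uniform entropy bound $h(\mu_k)\ge\eps$ gives, via upper-semicontinuity of entropy under decreasing critical relations, that the weak$^*$ limit $\mu'$ of a subsequence of the $\mu_k$ has $h(\mu')\ge\eps>0$; in particular $\mu'$ is not a single repelling periodic measure, so it has a component of positive entropy. The hypothesis $E(f_{a_k},\mu_k,-t_k\log|Df_{a_k}|)\ge E_+$ is precisely the condition "$\liminf_k E(\mu_k)\ge E_+$" from the stated principle (since $E_+$ for the quadratic family varies appropriately; here one should note $E_+(f_{a_0})$ is the relevant bound, using continuity/upper-semicontinuity of $E_+$, or simply carry $E_+$ through as in the hypothesis). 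The general almost-upper-semicontinuity theorem then produces a light limit measure $\mu_*$ with $\mu_*\ll\mu'$, $\mu_*(I)=1$, and
$$E(f_{a_0},\mu_*,-t_0\log|Df_{a_0}|)\ \ge\ \limsup_k E(f_{a_k},\mu_k,-t_k\log|Df_{a_k}|).$$
It remains to argue $\mu_*$ is hyperbolic: since $E(f_{a_0},\mu_*,-t_0\log|Df_{a_0}|)=h(\mu_*)-t_0\lambda(\mu_*)$ is bounded below by $\limsup_k E(\mu_k)\ge E_+\ge P^0$, and any measure realising free energy above $P^0$ has positive entropy, $\mu_*$ decomposes into ergodic components each with free energy $>P^0$, hence each with positive Lyapunov exponent (by the discussion of $t^-<t^+$ in the introduction, measures with $E>P^0$ have positive entropy, and for quadratic maps positive entropy forces positive exponent since there is no indifferent or attracting behaviour compatible with positive entropy). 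Thus $\mu_*$ is hyperbolic.

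The main obstacle I expect is \emph{not} the reduction itself but verifying the \emph{decreasing critical relations} hypothesis uniformly along arbitrary parameter sequences, and in particular handling the super-attracting parameters $a_0$ where an extra critical relation \emph{does} appear in the limit. At such $a_0$ one must confirm that the hypothesis of the general theorem is still met — i.e.\ that the relation appearing is "decreasing" in the sense required (the limit map has \emph{more} identifications, not fewer), so that entropy remains upper-semicontinuous — and that the conclusion (existence of a hyperbolic light limit measure) is unaffected. A secondary technical point is matching the normalisations: the potentials $-t_k\log|Df_{a_k}|$ are unbounded above near $1/2$ and depend on $k$ through both $t_k$ and $a_k$, so one needs the general theorem to be stated for \emph{sequences} of potentials converging suitably (uniformly on compact subsets away from the critical point, with controlled behaviour near it), which the $d$-branched framework of the paper is designed to provide; checking that $-t_k\log|Df_{a_k}|\to -t_0\log|Df_{a_0}|$ in the relevant sense is routine given $t_k\to t_0$ and $C^1$ convergence $f_{a_k}\to f_{a_0}$ away from the critical point and uniform non-flatness at it.
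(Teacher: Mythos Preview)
Your overall strategy---reduce to the general almost upper-semicontinuity theorem (Theorem~\ref{thm:SSSS}) and verify its hypotheses for the quadratic family---is exactly what the paper does. The paper states the deduction in one line: Theorem~\ref{thm:SSSS} together with Remark~\ref{rem:Crum} give Corollary~\ref{cor:ssssquad}, which is equivalent to Theorem~\ref{thm:feusc}. Hyperbolicity of $\mu_*$ is already part of the conclusion of Theorem~\ref{thm:SSSS}, so your separate argument for it is unnecessary.

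There is, however, a genuine gap in how you handle the critical-relations hypothesis, and your diagnosis of the super-attracting case is inverted. The definition of \emph{decreasing critical relations} requires that every critical relation of the \emph{limit} map $f_0$ already be present in the approximating maps $f_k$. At a super-attracting parameter $a_0$, the limit map acquires a critical relation (the periodic critical orbit hits $\E(f_0)$) that generic nearby $f_{a_k}$ do \emph{not} have; this \emph{violates} the hypothesis rather than satisfying it. Your sentence ``the limit map has more identifications, not fewer'' describes precisely the bad direction.

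The paper's fix (Remark~\ref{rem:Crum}) is not to argue that the hypothesis holds as stated, but to modify the family so that the offending relation disappears. First, extend the domain of each $f_a$ from $[0,1]$ to $[-\eps,1+\eps]$; the boundary points are then mapped outside the domain and contribute no critical relations. Second, when $a_0$ is super-attracting, excise a small neighbourhood of $1/2$ lying inside the basin of attraction: this removes the periodic critical relation at $a_0$ without affecting any positive-entropy measure (since such measures give no mass to the basin). Letting the excised neighbourhood shrink appropriately as the parameter moves away from $a_0$, one stays within a $2$-branched piecewise-monotone family in $\FNSD$ with decreasing critical relations, and Theorem~\ref{thm:SSSS} applies directly. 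This modification is the missing idea in your proposal.
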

    The proof of existence of equilibrium states $\mu_t$, $t \in (t^-,t^+)$ for the quadratic family (and more general multimodal maps) has a long history, with partial results by Hofbauer, Bruin and Keller, Bruin and Todd, Pesin and Senti \cite{HofIntrinsic, Hof81, BrKel, BT08, BTeqnat, PeSe} before Iommi and Todd \cite{IomTeq} finally proved existence for all $t$ in this range. Recently Przytycki and Rivera-Letelier \cite{PrzRL} gave another proof and also showed analyticity of the pressure function and statistical properties, with all these results holding for fixed maps. 
    Another proof follows from our work: one can always take a sequence of measures with free energies converging to the pressure; 
    sometimes (in particular, if $t \in (t^-,t^+)$) all measures with large free energy have uniformly positive entropy; 
    from Theorem~\ref{thm:feusc}, existence falls out naturally. 
    \begin{corollary} \label{cor:eqstate1}
For $f \in \F_Q$ and $t \in (-\infty, t^+)$, there exists an equilibrium state $\mu_t$ for the potential $-t\log|Df|$. 
                %For $t \in (t^-, t^+)$, there exists such $\mu_t$ with positive entropy. 
            \end{corollary}
    If statistical properties of a dynamical system are described by invariant measures, it is natural to ask how these measures (and properties) vary as the parameters of a dynamical system change. 
            
    If, for example, a quadratic map has a \emph{hyperbolic attracting} periodic orbit (\ie with negative Lyapunov exponent), then all nearby maps do too, and the periodic orbit moves smoothly with the parameter. The equidistributions on the periodic attractors are ergodic invariant probability measures which describe the typical long-term dynamics of almost every point, since the basins of attraction have full measure \cite{MSbook}. As these measures vary continuously, we say that statistical stability holds at (\emph{hyperbolic}) parameters with hyperbolic attracting\index[def]{attracting (hyperbolic) periodic points} periodic orbits. 

            At non-hyperbolic parameters, the dynamics is more interesting. 
    Absolutely continuous invariant probability measures, when they exist, describe the typical long-term dynamics of a positive-measure set of points. 
    The question then arises as to the  dependence of acips
    as a function of parameter.
    There is a natural measure on $\F_Q$ corresponding to Lebesgue measure on parameter space. 
    Let $\F$ be a subset of $\F_Q$ and suppose that for all $f \in \F$, $f$ has an acip (acips are always unique for maps in the quadratic family). We will say that acips are \emph{statistically stable} \index[def]{Statistically stable} in $\F$ if the acip depends continuously on the parameter. 

    Rychlik and Sorets \cite{RycSor} proved existence of such a set $\F$ having positive measure, where continuous dependence holds at Misiurewicz maps, a non-trivial but zero-measure subset of maps.  For a positive measure set $\F \subset \F_Q$, Tsujii used Benedicks-Carleson techniques to show statistical stability of acips in $\F$ \cite{Tsu_cty}. This was extended by Freitas and Todd to the more general case where $\F$ is any set of non-renormalisable parameters satisfying a uniform summability condition for the derivatives along the critical orbit \cite{FreT}. Recently, Baladi \emph{et al} showed H\"older continuity in Tsujii's setting \cite{BalBenSch13}. On the other hand, Tsujii and Thunberg \cite{Tsu_cty, Thun} showed how renormalisation can lead to instability; we shall present further obstructions to statistical stability, but first we shall give a positive result. 

   \begin{definition}[Statistical quasistability]
        For each $\eps >0$, denote by $\F^\eps_Q$ the subfamily of maps $f$ in $\F_Q$ for which $f$ has an acip with entropy  $\geq \eps$.  
        We shall say acips are \emph{statistically quasistable} \index[def]{Statistically quasistable}  in the quadratic family $\F_Q$ if, for each $\eps >0$,  for any $f_0 \in \F^\eps_Q$ and sequence $(f_k)_{k \geq 1}$ of maps in $\F^\eps_Q$ converging to $f_0$, some light limit measure of the corresponding sequence of acips is the acip for $f_0$. 
    \end{definition}
    Recalling that acips in the quadratic family are equilibrium states for the potential $-\log|Df|$, Theorem~\ref{thm:feusc} implies the following result, equivalent to Theorem~\ref{thm:epsquad}. Note that there is no assumption on the behaviour of the (derivatives along the) critical orbit. 
    \begin{corollary} \label{cor:acip stab}
        Acips are statistically quasistable in the quadratic family $\F_Q$.
    \end{corollary}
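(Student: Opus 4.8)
The plan is to apply Theorem~\ref{thm:feusc} to the constant sequence $t_k\equiv 1$, using the fact (recalled above) that the acips in the quadratic family are exactly the equilibrium states of positive entropy for the potential $-\log|Df|$. Fix $\eps>0$, a map $f_{a_0}\in\F^\eps_Q$, and a sequence $(f_{a_k})_k$ in $\F^\eps_Q$ with $a_k\to a_0$; let $\mu_k$ be the (unique, ergodic) acip of $f_{a_k}$. By \cite{BloLyu89} together with Rokhlin's formula (equivalently, Ledrappier's theorem \cite{Ledrap}), $\lambda(\mu_k)=h(\mu_k)>0$, so $\mu_k\in\M_{f_{a_k}}$ and
\[
E(f_{a_k},\mu_k,-\log|Df_{a_k}|)=h(\mu_k)-\lambda(\mu_k)=0.
\]
Any measure supported on a repelling periodic orbit has zero entropy and strictly positive Lyapunov exponent, hence $E_+(f_{a_k},-\log|Df_{a_k}|)\le 0$. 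Since moreover $h(\mu_k)\ge\eps$ by definition of $\F^\eps_Q$, we conclude $\mu_k\in\M^\eps_{f_{a_k}}(-\log|Df_{a_k}|)$, so the hypotheses of Theorem~\ref{thm:feusc} hold.

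Theorem~\ref{thm:feusc} then yields a hyperbolic light limit measure $\mu_*$ of the sequence $(\mu_k)_k$, relative to the limit map $f_{a_0}$, with
\[
E(f_{a_0},\mu_*,-\log|Df_{a_0}|)\ge\limsup_k E(f_{a_k},\mu_k,-\log|Df_{a_k}|)=0.
\]
It remains to identify $\mu_*$ with the acip $\mu_0$ of $f_{a_0}$. I would use the ergodic decomposition $\mu_*=\int\nu\,d\tau(\nu)$. Since $\mu_*$ is hyperbolic, the pointwise Lyapunov exponent is positive $\mu_*$-almost everywhere, so $\lambda(\nu)\in(0,\infty)$ for $\tau$-almost every $\nu$. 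By Ledrappier's inequality \cite{Ledrap} (or Ruelle's inequality), $h(\nu)\le\lambda(\nu)$ for $\tau$-a.e.\ $\nu$; integrating and using the previous display gives
\[
0\le E(f_{a_0},\mu_*,-\log|Df_{a_0}|)=h(\mu_*)-\lambda(\mu_*)\le 0,
\]
so $\int\bigl(\lambda(\nu)-h(\nu)\bigr)\,d\tau(\nu)=0$ with non-negative integrand, whence $h(\nu)=\lambda(\nu)>0$ for $\tau$-a.e.\ $\nu$.

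By Ledrappier's characterisation of acips as the positive-entropy equilibrium states of $-\log|Df|$ \cite{Ledrap} (see also \cite{Dob08}), $\tau$-a.e.\ ergodic component $\nu$ is an acip for $f_{a_0}$, and by uniqueness of acips in the quadratic family $\nu=\mu_0$ for $\tau$-a.e.\ $\nu$; therefore $\mu_*=\mu_0$. Since $\mu_*$ is, by construction, a light limit measure of the sequence of acips $(\mu_k)_k$, this is exactly the assertion of statistical quasistability. I expect no real obstacle beyond invoking Theorem~\ref{thm:feusc}: the only manual verifications are the two elementary comparisons $E(f_{a_k},\mu_k,-\log|Df_{a_k}|)=0\ge E_+$ and the ergodic-decomposition argument converting $h(\mu_*)=\lambda(\mu_*)$ into ``$\mu_*$ is the acip $\mu_0$''.
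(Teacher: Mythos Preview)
Your proof is correct and follows exactly the approach the paper indicates (the paper merely writes ``Recalling that acips in the quadratic family are equilibrium states for the potential $-\log|Df|$, Theorem~\ref{thm:feusc} implies the following result'' and leaves the details implicit). You have correctly supplied those details: verifying $\mu_k\in\M^\eps_{f_{a_k}}(-\log|Df_{a_k}|)$ via Pesin's formula $h(\mu_k)=\lambda(\mu_k)$ and $E_+\le 0$, and then using Ruelle's inequality on the ergodic decomposition of the hyperbolic light limit measure $\mu_*$ to force $h(\nu)=\lambda(\nu)>0$ for almost every component, whence $\mu_*=\mu_0$ by Ledrappier's characterisation and uniqueness of the acip.
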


There was hope in the community that statistical stability (and more) might hold for non-renormalisable Collet-Eckmann parameters (those for which the derivative grows exponentially along the critical orbit), see \cite[Conjecture B]{Bal_pwexp}. However, even for the better-behaved Misiurewicz parameters, one has instability [even worse, the limit measure can be almost anything at all, see Theorem~\ref{THM:THUN2}:
    \begin{theorem} \label{thm:MisUnstable}
		Statistical stability does not hold for acips for the subfamily of non-renormalisable Misiurewicz quadratic maps. 
	\end{theorem}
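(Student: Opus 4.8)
The plan is to construct, for a fixed non-renormalisable Misiurewicz parameter $a_0 \in [1,4]$, a sequence of non-renormalisable Misiurewicz parameters $a_k \to a_0$ for which the sequence of acips $\mu_{a_k}$ does \emph{not} converge to $\mu_{a_0}$. By Corollary~\ref{cor:acip stab}, some light limit measure of the $\mu_{a_k}$ is $\mu_{a_0}$; the point is that the full weak$^*$ limit will be a nontrivial convex combination of $\mu_{a_0}$ and another invariant measure (a periodic measure), so statistical stability fails. The key mechanism, foreshadowed in the introduction, is that a definite fraction of the mass of $\mu_{a_k}$ escapes to a repelling periodic orbit in the limit — this is consistent with almost upper-semicontinuity precisely because the escaping orbital stretches (bound periods near the critical point) carry negligible free energy.

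First I would recall the structure of Misiurewicz maps: the critical point $c$ is non-recurrent, its forward orbit lands (after finitely many steps) in a hyperbolic repelling set, and the acip $\mu_{a}$ has a density that is well-controlled and bounded below on a neighbourhood of the critical value; crucially, Misiurewicz maps are uniformly hyperbolic away from $c$, with the acip's Lyapunov exponent and entropy bounded away from $0$ (and in fact $h(\mu_a) \ge \eps_0 > 0$ uniformly along a sequence with $a_k \to a_0$, by upper-semicontinuity of entropy combined with a uniform lower bound — this puts us in $\F_Q^{\eps_0}$). Second, I would exploit the rich bifurcation structure near $a_0$: since $a_0$ is non-renormalisable, arbitrarily close to it one finds parameters $a$ at which the critical orbit, before settling into the Misiurewicz regime, shadows a chosen repelling periodic orbit $\gamma$ of $f_{a_0}$ for a long (but finite) time — a "Misiurewicz-with-a-long-prelude" parameter, still non-renormalisable and still Misiurewicz. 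As the prelude length $N_k \to \infty$ (and $a_k \to a_0$), the acip $\mu_{a_k}$ is forced, by its absolute continuity and the fact that the critical value is deep inside the dynamically relevant region, to spend a proportion of time bounded away from $0$ near $\gamma$; equivalently $\mu_{a_k} \rightharpoonup \theta\,\mu_{a_0} + (1-\theta)\,\mu_\gamma$ with $\theta \in (0,1)$, where $\mu_\gamma$ is the equidistribution on $\gamma$.

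The main obstacle is making the escape quantitative and irreversible in the limit: one must show not merely that such prelude-parameters exist and accumulate at $a_0$, but that along them the acip genuinely sheds a \emph{definite}, non-vanishing fraction of its mass onto $\mu_\gamma$ (rather than that fraction tending to $0$ as $a_k \to a_0$). I would control this via a transfer-operator / invariant-density estimate: the density of $\mu_{a_k}$ relative to the transfer operator's fixed point admits an explicit expansion as a sum over inverse branches of $f_{a_k}^n$, and the long prelude forces a fixed $\gamma$-concentration into the leading terms because mass entering a neighbourhood of $c$ is channelled along the prelude for time $\sim N_k$ before redistributing. Since $1/N_k \cdot N_k$ is bounded below, the time-average near $\gamma$ stays bounded away from $0$. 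The remaining steps — checking that the prelude construction preserves the Misiurewicz and non-renormalisability conditions (standard transversality and kneading-sequence arguments, cf.\ \cite{MSbook}), and that the limit measure is as claimed — are routine. One then concludes that $\mu_{a_k} \not\rightharpoonup \mu_{a_0}$, so statistical stability fails for acips on the non-renormalisable Misiurewicz subfamily, proving Theorem~\ref{thm:MisUnstable}; the stronger Theorem~\ref{thm:thun2} is obtained by choosing, instead of a single periodic orbit $\gamma$, preludes that shadow arbitrary prescribed invariant measures.
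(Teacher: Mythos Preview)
Your proposed mechanism cannot work as stated, and the obstruction is structural. You fix a non-renormalisable Misiurewicz map $f_{a_0}$, choose a hyperbolic repelling periodic orbit $\gamma$ of $f_{a_0}$, and seek nearby Misiurewicz parameters $a_k$ whose critical orbits shadow $\gamma$ for $N_k \to \infty$ steps; you then claim the acips $\mu_{a_k}$ converge to a nontrivial mixture $\theta\mu_{a_0} + (1-\theta)\mu_\gamma$ while the entropies $h(\mu_{a_k})$ stay uniformly positive. But this runs straight into Theorem~\ref{thm:eqstatstab}. Since $f_{a_0}$ is Misiurewicz, $\lambda_{\min}(f_{a_0}) > 0$, hence $t^+(f_{a_0}) > 1$ and the pressure is analytic (in particular differentiable) at $t_0 = 1$ by Theorem~\ref{thm:anal}. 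If along your sequence one also has $\lambda_{\min}(f_{a_k}) \ge \eps > 0$ uniformly --- and nothing in your construction introduces a nearly-parabolic orbit --- then the uniform pressure-gap hypothesis of Theorem~\ref{thm:eqstatstab} holds at $t_0=1$, and that theorem forces any limit of $(\mu_{a_k})$ to be the unique equilibrium state $\mu_{a_0}$: statistical \emph{stability}, the opposite of what you want. So any instability construction for acips must drive $\lambda_{\min}(f_{a_k}) \to 0$, i.e.\ pass near parabolic parameters; your proposal neither does this nor acknowledges the need to. The heuristic ``since $1/N_k \cdot N_k$ is bounded below'' is not an argument, and the appeal to ``upper-semicontinuity of entropy combined with a uniform lower bound'' for a uniform entropy floor is circular.

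The paper's route exploits nearly-parabolic parameters directly. Theorem~\ref{thm:MisUnstable} is deduced immediately from Theorem~\ref{thm:thun2}, proved in \S\ref{sec:low ent}: given $f_{a_0}$ and a periodic point $p$, one first locates (via tent-map combinatorics) a nearby parameter with a genuine \emph{parabolic} periodic orbit $\alpha$ whose orbit lies mostly near $\Orb(p)$; one then takes non-renormalisable Misiurewicz parameters $a_k$ approaching it, whose critical orbits follow the slightly-repelling continuation of $\alpha$ for many iterates before escaping to a pre-periodic point. Near-parabolicity makes the escape polynomially slow, so the trapped mass \emph{dominates}: one obtains $\mu_{a_k} \to$ equidistribution on $\Orb(\alpha)$ (a pure periodic measure, not a mixture), and consequently $h(\mu_{a_k}) \to 0$ --- the opposite of your uniform-entropy claim. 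The positive-entropy mixture scenario you sketch is exactly the content of Theorem~\ref{thm:exo1}, which the paper establishes only for a piecewise toy model and explicitly leaves open for the quadratic family.
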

        It follows from the work of Tsujii \cite{Tsu_PosLyapExp, Tsu_cty}, though justification is beyond the scope of this paragraph, that near (almost) every non-renormalisable Misiurewicz parameter there is a positive measure set of non-renormalisable Collet-Eckmann parameters whose acips are very close to that of the Misiurewicz parameter. Compare the discussion in \S\ref{sec:failure}.
        From this and Theorem~\ref{thm:MisUnstable},  one can deduce the following. 
        \begin{corollary} \label{cor:thun3}
		Statistical stability does not hold for acips for any (relatively) full-measure subset of non-renormalisable Collet-Eckmann parameters in the quadratic family. 
	\end{corollary}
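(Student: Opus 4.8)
The plan is to argue by contradiction. Suppose $\mathcal{G}$ is a subset of the non-renormalisable Collet--Eckmann parameters, of full relative Lebesgue measure, on which $a\mapsto \mu_a$ is continuous; here, for $a$ in that locus, $\mu_a$ denotes the (unique) acip of $f_a$, and we fix a metric $d$ inducing the weak$^*$ topology on probability measures on $I$. The idea is to use Theorem~\ref{thm:MisUnstable} (with its proof and the freedom behind Theorem~\ref{thm:thun2}) as a source of instability, and the quoted consequence of Tsujii's work to transport that instability from Misiurewicz parameters into the Collet--Eckmann locus, where the full-measure hypothesis forces $\mathcal{G}$ to encounter it.

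First I would select, from (the proof of) Theorem~\ref{thm:MisUnstable}, a non-renormalisable Misiurewicz parameter $c_0$ and non-renormalisable Misiurewicz parameters $c_j\to c_0$ with $\mu_{c_j}\to\nu$ for some $\nu\neq\mu_{c_0}$ (pass to a subsequence to force the limit away from $\mu_{c_0}$), arranging, using the freedom in the construction, that $c_0$ and every $c_j$ lie in the (full-measure) set of non-renormalisable Misiurewicz parameters near which the Tsujii approximation is valid. For each $j$, that approximation supplies a positive-measure set $A_j$ (resp.\ $B_j$) of non-renormalisable Collet--Eckmann parameters within $1/j$ of $c_j$ (resp.\ of $c_0$) whose acips lie within $1/j$ of $\mu_{c_j}$ (resp.\ of $\mu_{c_0}$). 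Since $\mathcal{G}$ has full relative measure, $A_j\cap\mathcal{G}$ and $B_j\cap\mathcal{G}$ have positive measure, so we may pick $a_j\in A_j\cap\mathcal{G}$ and $b_j\in B_j\cap\mathcal{G}$. Then $a_j\to c_0$ and $b_j\to c_0$, with $\mu_{a_j}\to\nu$, $\mu_{b_j}\to\mu_{c_0}$, and $\nu\neq\mu_{c_0}$. Hence the interleaved sequence $a_1,b_1,a_2,b_2,\dots$ lies in $\mathcal{G}$, converges to $c_0\in\overline{\mathcal{G}}$, but its acips do not converge; in particular $a\mapsto\mu_a$ has no continuous extension to $\overline{\mathcal{G}}$, and if it happens that $c_0\in\mathcal{G}$ this already contradicts continuity on $\mathcal{G}$.

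The remaining, and \emph{principal}, difficulty is that in general $c_0\notin\mathcal{G}$, so one must place the discontinuity at a point of $\mathcal{G}$ itself. The plan is to promote the previous construction to a positive-measure statement, using that the Misiurewicz instability of Theorem~\ref{thm:MisUnstable} is not isolated, but recurs \emph{at every scale} about a.e.\ parameter of the Collet--Eckmann locus lying near a non-renormalisable Misiurewicz parameter --- a strengthening I expect to extract from the combinatorial mechanism behind Theorems~\ref{thm:MisUnstable} and~\ref{thm:thun2}. Granting this, one shows that the set $D$ of non-renormalisable Collet--Eckmann parameters $a$ that are limits of non-renormalisable Collet--Eckmann parameters $a_k$, the $a_k$ taken from Tsujii approximation sets, with $d(\mu_{a_k},\mu_a)\geq \tfrac{1}{2}d(\nu,\mu_{c_0})$ for all $k$, has positive Lebesgue measure. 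Then $D\cap\mathcal{G}\neq\emptyset$; fixing $a_0\in D\cap\mathcal{G}$ and replacing each witnessing $a_k$ by a parameter of the same (positive-measure) Tsujii set lying in $\mathcal{G}$, one obtains a sequence in $\mathcal{G}$ converging to $a_0$ whose acips stay a fixed distance from $\mu_{a_0}$, contradicting continuity of $a\mapsto\mu_a$ on $\mathcal{G}$ at $a_0$ and proving the corollary. I expect the scale-and-location robustness of the instability needed to make $D$ of positive measure --- together with checking that the Tsujii sets entering the witnessing sequences genuinely have positive measure, so that they meet the full-measure set $\mathcal{G}$ --- to be the crux of the argument.
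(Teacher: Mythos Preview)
Your overall strategy --- transfer the Misiurewicz instability into the Collet--Eckmann locus via Tsujii's approximation, then use the full-measure hypothesis to land in $\mathcal{G}$ --- is exactly the paper's mechanism. The gap is in your choice of base point. You take $c_0$ to be a Misiurewicz parameter furnished by Theorem~\ref{thm:MisUnstable}, and then spend your third paragraph wrestling with the genuine problem that $c_0$ need not lie in $\mathcal{G}$; your proposed fix (a positive-measure set $D$ of ``bad'' CE parameters, built from some recurrence of the instability at every scale) is left as a hoped-for strengthening rather than a proof.

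The difficulty dissolves once you notice that Theorem~\ref{thm:thun2}, which underlies Theorem~\ref{thm:MisUnstable}, requires only that the base map $f_0$ be non-renormalisable with all periodic points repelling and with topological entropy not locally constant. Any non-renormalisable Collet--Eckmann quadratic map satisfies this (CE forbids non-repelling periodic points; non-renormalisability forces entropy to vary). So pick \emph{any} $a_0\in\mathcal{G}$ and apply Theorem~\ref{thm:thun2} directly at $a_0$: you obtain non-renormalisable Misiurewicz parameters $c_j\to a_0$ with $\mu_{c_j}$ converging to (say) a periodic-orbit equidistribution, hence bounded away from the acip $\mu_{a_0}$. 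Now run your own second paragraph: arrange each $c_j$ to lie in the Tsujii ``almost every'' set, pick $a_j\in\mathcal{G}$ from the positive-measure Tsujii set near $c_j$, and conclude $a_j\to a_0\in\mathcal{G}$ with $\mu_{a_j}\not\to\mu_{a_0}$. This is the paper's intended deduction; your third paragraph becomes unnecessary, and in fact your set $D$ is all of the non-renormalisable CE locus by this argument.
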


        Note that statistical quasistability is weaker than statistical stability, in that only some component of the (not necessarily ergodic) limit measure need be the acip of the limit map. 
        While the examples of Theorem~\ref{THM:THUN2} have entropy decreasing to zero, uniform positive entropy does not imply stability, as we show in Theorem~\ref{THM:EXO1BIS}. One cannot expect better than quasistability --  for actual statistical stability one needs stronger hypotheses. Consider a sequence of (maps and) acips with uniform positive entropy. Theorem~\ref{thm:feusc} says that some light limit measure will be an acip. However, another component of the limit measure may be an atom at a repelling fixed point for the limit map, for example.

        \section[Piecewise-monotone families and metric entropy]{Piecewise-monotone families and upper-semicontinuity of metric entropy} \label{sec:introentropy}
    A quadratic map is a continuous $2$-branched piecewise-monotone map. We shall study a much broader class of maps, admitting discontinuities. 
    In Sections~\ref{sec:ausce}--\ref{sec:npesq} we present results concerning almost-semicontinuity and statistical stability  for general piecewise-monotone families. There are also results pertaining to the thermodynamic formalism for an individual map.

    \begin{definition} \label{def:dbranch}
    For $d \geq 2$, a map $f$ defined on a pairwise-disjoint collection $I_1, I_2, \ldots, I_d$ of non-degenerate subintervals of a bounded interval $I$ is called a \emph{($d$-branched) piecewise-monotone map} \index[def]{dbranchedmap@$d$-branched piecewise-monotone map} if $f$ maps each $I_j$ into $I$ and, on each $I_j$, $f$ is continuous and strictly monotone. 
    \end{definition}
    We shall assume throughout the article, without loss of generality, that $d \geq 2$. Otherwise the entropy of the map, and thus of any invariant measure, would be zero. 
    
    In the presence of more than one such map, we write $I_j(f)$ for $I_j$. A branch $I_j$ of $f$ is called \emph{full} if it is mapped by $f$ onto $I$.
        We do not specify here whether the subintervals $I_j$ are open or closed. Since the domain of $f$ need not be the whole interval $I$, iterates for some points may not be defined. However, for any invariant probability measure, almost every point has all iterates defined. Moreover, almost every point is recurrent. 
        
\begin{definition} 
We call $\F$ a \emph{$d$-branched piecewise-monotone family} \index[def]{dbranchedfam@$d$-branched piecewise-monotone family} on $I$ if each $f \in \F$ is a $d$-branched piecewise-monotone map.

        $\F$ is called a \emph{piecewise-monotone family} if it is a $d$-branched piecewise-monotone family for some $d \geq 2$. 
        \label{def:PMF}
    \end{definition}

    \begin{definition} \label{def:pmfconv}
Let $\F$ be a $d$-branched piecewise-monotone family.  
We say a sequence of $f_k \in \F$ \textit{converges to $f$ in $\F$} \index[def]{Convergence in a $d$-branched piecewise-monotone family} as $k \to \infty$ if $f \in \F$ and there is a sequence of homeomorphisms $h_k: I \to I$ such that, for $1 \leq j \leq d$, 
        \begin{itemize}
            \item $h_k(I_j(f_k)) = I_j(f)$, 
            \item $h_k\circ f_k \circ h_k^{-1}$  converges in $C^0$ to $f$ (on the domain of definition of $f$) as $k\to \infty$,
            \item $h_k$ converges in $C^0$ to the identity on $I$ as $k\to \infty$.
        \end{itemize}
    \end{definition}
    This definition is equivalent to uniform convergence on compact subsets of 
    $$ \bigcup_{j=1}^d \mathring{I}_j(f),$$
    where $\mathring{I}_j(f)$ denotes the interior of $I_j(f)$, provided the $f_k$ and $f$ are uniformly Lipschitz.  
            For $f$ as above, set $$
            \E(f) := \bigcup_{j=1}^d  \partial I_j.$$
        
            The Hofbauer extension, a Markovian extension of an original system, was introduced in  \cite{Hpwise}. This powerful idea lies behind a host of results in one-dimensional dynamics. The structure of the Hofbauer extension depends on the post-critical orbits. 
    In recent work on statistical stability \cite{FreT} where Hofbauer extensions were used, the limit maps studied were assumed not to have critical orbits which intersect or self-intersect. That unsatisfactory assumption excludes post-critically finite maps and many more; it was necessary to obtain convergence of Hofbauer extensions. We introduce the following definition to deal with this issue, eventually embedding Hofbauer extensions in some ambient space and examining convergence properties there. 
    \begin{definition} \label{def:deccr}
        Let $f$  be a piecewise-monotone map and let $c \in \E(f)$. 
        Let $\sigma \in \{-1, +1\}$. Suppose there is some minimal $n \geq 1$ for which 
        $$
        \lim_{\varepsilon \to 0^+} f^n(c+ \sigma \varepsilon) = c' \in \E(f).
        $$
        We call this a \emph{critical relation}\index[def]{Critical relation} of order $n$. 
        This is a purely topological condition, defined by $(c, c', \sigma, n)$.  We say that a sequence $(f_k)$ such that $f_k\to f_0$ as $k\to\infty$ has \emph{decreasing critical relations} if to each critical relation $(c_0, c_0', \sigma, n)$ of $f_0$ there is a corresponding  critical relation $(c_k, c_k', \sigma, n)$ of $f_k$, where $h_k^{-1}(c_0) = c_k$ and $h_k^{-1}(c_0') = c_k'$.  
    \end{definition}

	Of course, a sequence will have decreasing critical relations if all maps from the sequence  share the same critical relations.  
        For multimodal maps, there may be many critical relations. This is not true for unimodal maps. 
        \begin{remark}[Critical relations for unimodal maps] \label{rem:Crum}
            Consider a 
        quadratic  map $f_a : x \mapsto ax(1-x)$. It maps the interval $[0,1]$ into itself for $a \in (3,4]$, the parameters of interest. Extend the domain of definition to $[-\eps,1+\eps]$ for some $\eps>0$. Then the boundary points are mapped outside the domain so they contribute no critical relations. The orbit of the critical point $\frac12$ never leaves $[0,1]$, so it can only cause a critical relation if it is periodic and (thus) super-attracting.   
        These critical relations are negligible: 
    If one is only interested in positive-entropy measures,  no mass lies in the basin of attraction. Remove a small neighbourhood of $\frac12$ contained in the basin of attraction and the critical relation disappears. If the neighbourhood shrinks to nothing quickly, as one changes the parameter, the conditions for being a piecewise-monotone family remain verified. 
    We only used the $C^1$ nature of quadratic maps, so the same holds for general $C^1$ unimodal maps. 
\end{remark}

As discussed in~\S\ref{sec:Kel}, some assumption regarding critical relations is necessary in our work. In particular, 
there are examples showing metric entropy is not necessarily upper-semicontinuous as one varies both map and measure. On the other hand, Raith has shown that for unimodal maps $f$, under a condition on the orbits of points in $\E(f)$ which implies decreasing critical relations, the measure of maximal entropy varies continuously \cite{Raith_stab} (hence topological entropy is upper semi-continuous). This admits a strong generalisation, Theorem~\ref{thm:introusc} on upper semicontinuity of metric entropy, proven in Chapter~\ref{sec:llm}.
         %\begin{theorem}[Upper-semicontinuity of entropy]\label{thm:usc}
             %Let $(f_k)_{k\geq0}$ be a sequence of $d$-branched maps converging to $f_0$ as $k \to \infty$ with decreasing critical relations. Suppose $(\mu_k)_{k\geq1}$ is a  convergent sequence of measures $\mu_k \in \M_{f_k}$. Then
             %$$
             %h\left(\lim_{k\to\infty}\mu_k\right) \geq \limsup_{k\to\infty} h(\mu_k).$$
         %\end{theorem}

         \section{Almost upper-semicontinuity of free energy}
         \label{sec:ausce}
         Theorem~\ref{thm:SSSS}, below, is the principal technical result of the article; its proof occupies~Chapters~\ref{sec:cylchap}-\ref{sec:SSSS} and part of Chapter~\ref{chap:Kat}. Its statement requires the introduction of regularity of piecewise-monotone families.

        \begin{definition}
We that say a piecewise-monotone map $f$ has \emph{non-positive Schwarzian derivative} if, 
on the interior of each  $I_j$, $f$ is a $C^2$ diffeomorphism and $1/\sqrt{|Df|}$ is convex. If the convexity is strict, the map has \emph{negative Schwarzian derivative.} \index[def]{Schwarzian derivative}
        \end{definition}
        Non-positive \cite[p.24]{MisIHES1981} or negative Schwarzian derivative \cite[{\S}IV.1]{MSbook} is a standard property which gives good distortion control for iterates via the Koebe Lemma (see Lemma~\ref{lem:koebe}). It is not especially restrictive (\cite{Koz00}). 
Maps from the quadratic family $f_a : x \mapsto ax(1-x)$ with $a \in (0,4]$ are ($2$-branched) piecewise-monotone maps with negative Schwarzian derivative.  
One can readily check that maps with non-positive Schwarzian derivative have bounded derivative. 
The decreasing critical relations hypothesis allows one to obtain uniform Koebe space for some natural induced maps (see Lemma~\ref{lem:kappaPk}).  

        \begin{definition}
            Let $\F_\mathrm{NSD}$ \index{FNSD@$\F_\mathrm{NSD}$, piecewise monotone families with non-positive Schwarzian derivative} denote the class of piecewise-monotone families $\F$ such that for each $f \in \F \in \F_\mathrm{NSD}$, 
$f$ has non-positive Schwarzian derivative and $\sup_{f \in \F} \sup|Df| < +\infty$. 
        \end{definition}
        
By Definition~\ref{def:PMF},  each element of $\F_\mathrm{NSD}$ is a $d$-branched family for some $d$.  In our applications of this notion, we will often take some sequence $(f_k)_k\in \FNSD$.

\begin{theorem}[Free energy is almost upper-semicontinuous] \label{thm:SSSS}
    Let 
 $(f_k)_{k\ge 0} \in \FNSD$ be a sequence converging to $f_0$ as $k \to \infty$ with decreasing critical relations. 
 Suppose  that 
\begin{enumerate}[label=({\alph*}), itemsep=0.0mm, topsep=0.0mm, leftmargin=7mm]
\item $t_k \to t_0$ as $k \to \infty$; 
\item $(\mu_k)_{k\geq1}$ is a convergent sequence of measures $\mu_k\in \wtm_{f_k}$;
\item $E(f_k, \mu_k, -t_k \log |Df_k|)$ converges to a limit $E_L$ as ${k \to \infty}$;
\item $E_L \geq \liminf_{k \to \infty} E_+(f_k, -t_k\log|Df_k|) $; 
\item $\limsup_{k\to\infty} h(\mu_k) >0$.
\end{enumerate}
Then, writing $E(\mu_*)$ for $E(f_0,\mu_*, -t_0 \log |Df_0|)$, 
some light limit measure $\mu_*$ is hyperbolic and satisfies one of the following statements:
    \begin{itemize}
     \item
         $E(\mu_*) > E_L$;\\ 
        \item 
            $\mu_* = \lim_{k\to \infty} \mu_{k}$ and,
            for some strictly increasing subsequence $(k_n)_n$, \\
            $\int \log |Df_0| ~d\mu_* = \lim_{n \to \infty} \int \log |Df_{k_n}| ~d\mu_{k_n}$, \\ $h(\mu_*) \geq \limsup_{n\to\infty} h(\mu_{k_n}) $ and  $E(\mu_*) \geq E_L$; \\  
     \item
         $E(\mu_*) \geq E_L$ and $h(\mu_*) > \limsup_{k\to\infty} h(\mu_k)$.
    \end{itemize}
If $E_L= P(-t_0 \log |Df_0|)$,  one of the last two alternatives holds and some light limit measure is a positive entropy  equilibrium state for this potential. 
    \end{theorem}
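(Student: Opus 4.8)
\emph{Proof strategy.} The plan is to transport everything to the Hofbauer (Markov) extensions and to show that whatever mass escapes to spatial infinity in the limit contributes almost nothing to the free energy---at most $\beta E_+$, where $\beta$ is the escaping mass---so that renormalising what survives cannot lower the free energy. First I would make routine reductions: passing to a subsequence (chosen to realise $\liminf_k E_+(f_k,-t_k\log|Df_k|)$), assume $\mu_k\to\mu'$ weak$^*$, $h(\mu_k)\to h_L>0$, $\lambda(\mu_k):=\int\log|Df_k|\,d\mu_k\to\lambda_L\ge0$ and $E_+(f_k,-t_k\log|Df_k|)\to E^+_\infty\le E_L$ (the limits exist and are finite since $h(\mu_k)\le\htop(f_k)$ and $0\le\lambda(\mu_k)\le\log\sup_k\sup|Df_k|<\infty$ by (b) and the definition of $\FNSD$), so $E_L=h_L-t_0\lambda_L$. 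If $t_0\le0$ the potential $-t_0\log|Df_0|$ is upper-semicontinuous with the favourable sign, so with Theorem~\ref{thm:usc} the free energy is upper-semicontinuous along the sequence; decomposing $\mu'$ and keeping a hyperbolic ergodic component of free energy $\ge E_L$ gives the first alternative. So assume henceforth $t_0>0$, hence $t_k>0$ for $k$ large.

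Since $\liminf_k h(\mu_k)>0$, for $k$ large $\mu_k$ lifts to an ergodic $\hat f_k$-invariant probability $\hat\mu_k$ on the Hofbauer extension $\hat X_k$ of $f_k$, with $h_{\hat f_k}(\hat\mu_k)=h(\mu_k)$ and $\pi_{k*}\hat\mu_k=\mu_k$ for the projection $\pi_k\colon\hat X_k\to I$. Using decreasing critical relations I would realise all the $\hat X_k$ inside one ambient space with $\hat X_k\to\hat X_0$, take a weak$^*$ limit $\hat\mu$ of the $\hat\mu_k$ in the one-point compactification, and set $\beta:=1-\hat\mu(\hat X_0)\in[0,1]$ and $\mu^-:=\pi_{0*}\hat\mu$, an $f_0$-invariant measure with $\mu^-(I)=1-\beta$. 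A point at large level in $\hat X_k$ lies in a cylinder shrinking, as the level grows, to a point of $\overline{\orb(\E(f_k))}$, so the mass lost at infinity reappears downstairs on $\overline{\orb(\E(f_0))}$; combined with $\pi_{k*}\hat\mu_k=\mu_k\to\mu'$ this forces $\mu'=\mu^-+\rho$ for an $f_0$-invariant $\rho$ of mass $\beta$ carried by $\overline{\orb(\E(f_0))}$, whence $\mu^-\ll\mu'$. By Lemma~\ref{lem:kappaPk} the induced first-return map of $\hat f_0$ to a fixed level has uniform Koebe space, and from this together with the structure of $\hat X_0$ one gets that $\mu^-$ is hyperbolic, so renormalised it lies in $\M_{f_0}$.

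The crux---and where I expect the real work to lie---is the estimate
\[
E(f_0,\mu^-,-t_0\log|Df_0|)\ \ge\ E_L-\beta E^+_\infty\ \ge\ (1-\beta)E_L,\qquad h(\mu^-)=h_{\hat f_0}(\hat\mu)\ge h_L .
\]
One decomposes, at each finite $k$, the orbit of a $\mu_k$-typical point into large-scale stretches and ``bound-period'' stretches---between a deep return near $\E(f_k)$ and the next return to a fixed scale---the latter being exactly what sits at high level in $\hat X_k$. A bound-period stretch carries no entropy (its symbolic itinerary is slaved to the orbit of $\E(f_k)$), and the Koebe Lemma~\ref{lem:koebe} with the uniform Koebe space of Lemma~\ref{lem:kappaPk} should bound its derivative sum $\sum\log|Df_k|$ from below by the sum along any repelling cycle, up to an additive constant independent of the depth; hence the total free-energy contribution of these stretches to $E(f_k,\mu_k,-t_k\log|Df_k|)$ is at most $\beta_k E_+(f_k,-t_k\log|Df_k|)+o(1)$, where $\beta_k\to\beta$ records the escaping mass of $\hat\mu_k$. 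Letting $k\to\infty$ and using that the escaped mass carries no entropy (so $h_{\hat f_0}(\hat\mu)\ge\limsup_k h(\mu_k)=h_L$) gives the displayed inequalities; the second uses $E^+_\infty\le E_L$. The hard point is to make the bound-period derivative estimate precise---that a long bound period is, on average, at least as expanding as the tamest repelling cycle.

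Granting the estimate, we conclude. If $\beta=1$ it forces $\mu^-\equiv0$ while $h(\mu^-)\ge h_L>0$, absurd; so $\beta<1$ and $\mu_*:=\mu^-/(1-\beta)$ is a hyperbolic light limit measure with $E(\mu_*)\ge E_L$ and $h(\mu_*)\ge h_L/(1-\beta)\ge\liminf_k h(\mu_k)$. If $\beta>0$ this entropy inequality is strict, so the third alternative holds. If $\beta=0$ then $\mu_*=\mu'=\lim_k\mu_k$ and no mass met $\E(f_0)$, so $\int\log|Df_0|\,d\mu'=\lim_k\int\log|Df_k|\,d\mu_k$; with $h(\mu')\ge\liminf_k h(\mu_k)$ (Theorem~\ref{thm:usc}) and $E(\mu')\ge E_L$ the second alternative holds. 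Finally, if $E_L=P(-t_0\log|Df_0|)$, then $\mu_*\in\M_{f_0}$ gives $E(\mu_*)\le P(-t_0\log|Df_0|)=E_L$, which excludes the first alternative, so $E(\mu_*)=P(-t_0\log|Df_0|)$ and $h(\mu_*)>0$; by affinity of $E$ almost every ergodic component $\mu_\omega$ of $\mu_*$ satisfies $E(\mu_\omega)=P(-t_0\log|Df_0|)$, and any such component with $h(\mu_\omega)>0$ (these have positive measure since $h(\mu_*)>0$) is a positive-entropy equilibrium state absolutely continuous with respect to $\mu'$, i.e.\ a light limit measure.
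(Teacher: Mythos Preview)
Your outline has the right shape---lift to the Hofbauer extension, track the mass that escapes to high level, and argue that this escaped mass contributes at most $\beta E_+$ to the free energy so that renormalising what remains cannot hurt. But the crux estimate, which you yourself flag, is where the proposal breaks down. The justification ``the Koebe Lemma \ldots\ should bound its derivative sum $\sum\log|Df_k|$ from below by the sum along any repelling cycle'' is not what Koebe gives you: Koebe controls distortion ratios on extendible branches, not absolute lower bounds on Birkhoff sums along bound periods. There is no direct comparison between a bound-period derivative sum and the exponent of the tamest repelling cycle.

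The paper does not prove the estimate $E(\mu^-)\ge E_L-\beta E_+^\infty$ directly. Instead it argues by contradiction, and the mechanism is quite different from yours. Working with level-$R$ first-return maps $(\hat X(R),\hat F,\tau)$ rather than the whole tower, the paper introduces a family of induced limit measures $\nu_0^R$ and the \emph{drop in inducing time} $T^R_\Delta=\lim_k\int\tau_k\,d\nu_k^R-\int\tau_0\,d\nu_0^R$. Two ingredients replace your heuristic. First, a counting estimate (Lemma~\ref{lem:Rtower}) bounds the number of long-return branches by $\exp(n\eps(R))$ with $\eps(R)\to0$; via Lemma~\ref{lem:hnusemi} this gives $h^R_\Delta\le 5\eps(R)T^R_\Delta$, which is the precise content of ``bound periods carry no entropy'' and is not free. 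Second, and this is the point you are missing, a \emph{shadowing construction} (\S\ref{sec:rarereturns}, Lemma~\ref{lem:shadow}) shows that the rare-return part of a typical orbit can be shadowed by a genuine periodic orbit whose Lyapunov exponent is $\lambda_\Delta/T_\Delta+o(1)$. So if the spread $\mu_0^R$ had free energy uniformly below $E_L$ by some $\gamma>0$, Lemma~\ref{lem:contra} produces, for large $k$, an $f_k$-periodic orbit with free energy at least $E_L+\eta\gamma/3$, violating hypothesis~(d). This is how $E_+$ actually enters---not as an \emph{a priori} bound on the escaped contribution, but through a periodic orbit manufactured from the escaped part.

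Two further technical points you have underestimated. The claim ``realise all the $\hat X_k$ inside one ambient space with $\hat X_k\to\hat X_0$'' is false in general: Hofbauer extensions need not converge (a post-critically finite limit map has a finite tower while nearby maps may not). The paper works around this by embedding only the bounded part $\hat I^R$ into a fixed space $I_*=I\times\bigcup_{j\le R}\Sigma^j_d$ (\S\ref{sec:embed}) and passing to subsequences so that the combinatorics $\Omega_k^N$ stabilise for each $N$; the limit induced map need not coincide with the level-$R$ map of $f_0$. And your single escape parameter $\beta$ is too coarse: the paper's casework is on $T^\infty_\Delta:=\liminf_R T^R_\Delta$, and the $\beta=0$ conclusion ``no mass met $\E(f_0)$, so Lyapunov exponents converge'' is not automatic---it is Lemma~\ref{lem:lim0}, which uses $\lambda^R_\Delta\le T^R_\Delta\log C$ and needs the whole $R$-indexed machinery. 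Finally, the first alternative in the theorem is not vacuous in the paper's argument: it arises in Lemma~\ref{lem:gammaE} when decomposing $\mu_0^R$ for varying $R$, and your framework, which produces a single $\mu_*$ with $E(\mu_*)\ge E_L$, would have to prove a strictly stronger statement than the paper does.
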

    Consequent results, detailed below and proven (where not immediate corollaries) in~\S\ref{sec:ancillary}, follow with little extra work.

    Theorem~\ref{thm:SSSS} and 
    Remark~\ref{rem:Crum} 
    imply the following corollary, equivalent to 
    Theorem~\ref{thm:feusc}. 
  
    \begin{corollary} \label{cor:ssssquad}
        The statement of Theorem~\ref{thm:SSSS} holds for the quadratic family $\F_Q$, without the decreasing critical relations hypothesis. 
    \end{corollary}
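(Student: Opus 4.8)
The plan is to deduce the corollary from Theorem~\ref{thm:SSSS} by excising a small neighbourhood of the critical point $1/2$, as outlined in Remark~\ref{rem:Crum}. Suppose we are given $(a_k,t_k,\mu_k)_{k\ge0}$ satisfying the analogues of hypotheses (a)--(e) for $\F_Q$. If $f_{a_0}$ has no critical relation then $(f_{a_k})$ has (vacuously) decreasing critical relations and Theorem~\ref{thm:SSSS} applies directly, so I may assume $f_{a_0}$ does have a critical relation; by Remark~\ref{rem:Crum} this forces $1/2$ to be periodic of some period $p\ge1$ and super-attracting. I would then fix $\delta>0$ so small that $U:=(1/2-\delta,1/2+\delta)$ lies in the component of the immediate basin containing $1/2$, that $f_{a_0}^p(\overline U)\subset U$ (possible since $Df_{a_0}^p(1/2)=0$), that $2\delta<4^{-p}$, and that $f_{a_0}^j(1/2)$ stays at a definite distance from $\overline U$ for $0<j<p$. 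Put $\tilde f_{a_k}:= f_{a_k}|_{I\setminus U}$; discarding finitely many terms we may assume $f_{a_k}^p(\overline U)\subset U$ for all $k$, which is harmless since the conclusions of Theorem~\ref{thm:SSSS} concern only the tail of the sequence.

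The first thing to verify is that $\{\tilde f_{a_k}:k\ge0\}$ is a $2$-branched family in $\FNSD$ with $\tilde f_{a_k}\to\tilde f_{a_0}$ in the sense of Definition~\ref{def:pmfconv}, and with \emph{no} critical relations at all. Negative Schwarzian derivative and the uniform bound on $|Df|$ pass to restrictions; convergence is the identity-homeomorphism case of Definition~\ref{def:pmfconv}, since all $\tilde f_{a_k}$ share the same branches. The absence of critical relations for $\tilde f_{a_0}$ is the key structural point: the orbit under $\tilde f_{a_0}$ of either endpoint of $U$ leaves the domain after exactly $p$ steps, landing strictly inside $U$, and it does not meet $\E(\tilde f_{a_0})$ beforehand by the choice of $\delta$; the other two branch endpoints are mapped outside $I$, as observed in Remark~\ref{rem:Crum}. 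So the sequence $(\tilde f_{a_k})$ has decreasing critical relations vacuously.

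Next I would check that $(\tilde f_{a_k},t_k,\mu_k)$ satisfies hypotheses (a)--(e) of Theorem~\ref{thm:SSSS}. The crux is that $\mu_k(U)=0$ for all $k$. On the one hand $\mu_k\in\M_{f_{a_k}}$ gives $\int\log|Df_{a_k}|\,d\mu_k\ge0$; since $\log|Df_{a_k}|\le\log 4+\log(2\delta)$ on $U$ and $\log|Df_{a_k}|\le\log 4$ everywhere, this yields $0\le\log 4+\mu_k(U)\log(2\delta)$, whence $\mu_k(U)<1/p$. On the other hand, if $\mu_k(U)>0$, then since $f_{a_k}^p(\overline U)\subset U$ the forward orbit of $\mu_k$-almost every point eventually enters $U$ and thereafter lies in $U$ at every $p$-th iterate, so $\mu_k(U)\ge 1/p$ by Birkhoff's ergodic theorem --- a contradiction. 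Hence $\mu_k(U)=0$, so by $f_{a_k}$-invariance $\mu_k$ gives no mass to $\bigcup_{j\ge0}f_{a_k}^{-j}(U)$ and is therefore $\tilde f_{a_k}$-invariant and ergodic, with unchanged entropy and $\int\log|Df|\,d\mu_k$; in particular the free energies agree, so (a), (c), (e) and the convergence part of (b) transfer immediately. The same computation applied to $f_{a_0}$ shows $\M_{\tilde f_{a_0}}=\M_{f_{a_0}}$ with identical free energies, hence $P(-t_0\log|D\tilde f_{a_0}|)=P(-t_0\log|Df_{a_0}|)$; and since every repelling periodic orbit of $\tilde f_{a_k}$ is one of $f_{a_k}$ with the same multiplier, $E_+(\tilde f_{a_k},\cdot)\le E_+(f_{a_k},\cdot)$, so (d) is preserved.

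Applying Theorem~\ref{thm:SSSS} to $(\tilde f_{a_k},t_k,\mu_k)$ then produces a hyperbolic light limit measure $\mu_*$ for $(\tilde f_{a_k})$ satisfying one of the three stated alternatives for $\tilde f_{a_0}$. Being $\tilde f_{a_0}$-invariant, $\mu_*$ gives no mass to $U$, and none to the (non-periodic) endpoints of $U$, hence $\mu_*$ is $f_{a_0}$-invariant with the same entropy, Lyapunov exponent and free energy, and it remains hyperbolic for $f_{a_0}$; it is also a light limit measure of $(\mu_k)$ relative to $(f_{a_k})$, because $\mu_k$ agrees $\mu_k$-almost everywhere with the $f_{a_k}$-dynamics and $\lim\mu_k$ is $f_{a_0}$-invariant. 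Translating the three alternatives, and the final sentence (using the pressure identity just established), back through this identification yields precisely the statement of Theorem~\ref{thm:SSSS} for $\F_Q$. The hard part is the identity $\mu_k(U)=0$: one cannot in general avoid the support of $\mu_k$ with a neighbourhood of $1/2$, so one is forced to exploit the Lyapunov constraint $\int\log|Df_{a_k}|\,d\mu_k\ge0$ together with the trapping $f_{a_k}^p(\overline U)\subset U$, and it is exactly this that dictates choosing $U$ small relative to $p$ (``shrinking to nothing quickly'' in the language of Remark~\ref{rem:Crum}). Everything else is bookkeeping.
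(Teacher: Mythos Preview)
Your approach is exactly the paper's: invoke Remark~\ref{rem:Crum} to remove a small neighbourhood of $1/2$ when the limit parameter is super-attracting, then apply Theorem~\ref{thm:SSSS}. The paper gives essentially no detail beyond that sentence, so your write-up is a welcome expansion; the argument that $\mu_k(U)=0$ via the Lyapunov constraint together with the trapping $f_{a_k}^p(\overline U)\subset U$ is correct and nicely done.

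There is one inaccuracy. You assert that ``the other two branch endpoints are mapped outside $I$, as observed in Remark~\ref{rem:Crum}'', and conclude that $\tilde f_{a_0}$ has \emph{no} critical relations. On the natural domain $I=[0,1]$ this is false: $f_a(0)=f_a(1)=0\in\E(\tilde f_a)$, so the critical relations $(0,0,+1,1)$ and $(1,0,-1,1)$ persist. Remark~\ref{rem:Crum} avoids this by first \emph{extending} the domain to $[-\eps,1+\eps]$, so that the new outer endpoints are thrown outside; you invoke the remark but never carry out the extension. The fix is painless: either (i) perform the extension explicitly before excising $U$, or (ii) keep $I=[0,1]$ and observe instead that these two boundary critical relations are present with identical data for \emph{every} $\tilde f_{a_k}$, so the decreasing-critical-relations hypothesis is still satisfied (Definition~\ref{def:deccr} only demands that each critical relation of the limit map have a counterpart along the sequence). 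With either correction the rest of your argument goes through unchanged.
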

    We shall later treat analyticity of the pressure function and statistical properties, but for now let us just consider existence of equilibrium states. Existence was shown in \cite[Theorem~A]{IomTeq} and again in \cite{PrzRL}. Compared with those settings, we allow discontinuities, parabolic points and holes (\cite{PrzRL} allows holes), while both \cite{IomTeq,PrzRL} impose a 
    \emph{non-flatness} condition on critical points (see also~\S\ref{sec:nsdweak}).  
    The following corollary  implies Corollary~\ref{cor:eqstate1}.

    \begin{corollary} \label{cor:eqstate2}
    Given a $d$-branched map $f$ with non-positive Schwarzian derivative,  for $t \in (-\infty, t^+)$, there exists an equilibrium state $\mu_t$ for the potential $-t\log|Df|$. 
                For $t \in (t^-, t^+)$, there exists such $\mu_t$ with positive entropy. 
            \end{corollary}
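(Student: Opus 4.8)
The plan is to deduce existence directly from Theorem~\ref{thm:SSSS} applied to the constant sequence $f_k = f$, $t_k = t$. First observe that for a $d$-branched map $f$ with non-positive Schwarzian derivative, $\{f\} \in \FNSD$ (one checks $\sup|Df| < \infty$, as noted after the definition of negative Schwarzian derivative), and the constant sequence trivially has decreasing critical relations, so the hypotheses on the family and on convergence are automatic. It remains to produce a sequence of measures $\mu_k = \mu_n \in \M_f$ witnessing hypotheses (a)--(e). Fix $t < t^+$ and write $\phi = -t\log|Df|$. By definition of $t^+$ (and since $f$ has positive topological entropy, so $P(0) > P^0(0)$ forces $t^- < 0 < t^+$), for $t \in (t^-, t^+)$ we have $P(\phi) > P^0(\phi) \geq E_+(\phi)$; for $t \le t^-$ one argues as below but with a sequence realising $P(\phi)$ which, after discarding zero- or small-entropy parts, still does the job — the cleanest route is to first treat $t \in (t^-,t^+)$ and then handle $t \le t^-$ separately, as there $P(\phi) = P^0(\phi)$ may hold and equilibrium states of positive entropy need not exist (only the first clause of the Corollary is claimed there).

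For $t \in (t^-, t^+)$: choose ergodic measures $\mu_n \in \M_f$ with $E(f,\mu_n,\phi) \to P(\phi)$. Since $P(\phi) > P^0(\phi)$, and $\int \phi\, d\mu_n$ is bounded above (as $\sup|Df| < \infty$ gives $\phi$ bounded below when $t \ge 0$, and bounded above when $t \le 0$; in either case large free energy forces control — more precisely, $E(f,\mu_n,\phi) > P^0(\phi)$ eventually forces $h(\mu_n)$ bounded away from $0$, because a measure of entropy close to $0$ has free energy close to $\sup\{\int\phi\,d\nu : h(\nu)=0\} = P^0(\phi)$; one makes this quantitative using upper-semicontinuity of entropy for the fixed map $f$). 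Hence, passing to a subsequence, $\liminf h(\mu_n) > 0$, giving (e); by weak$^*$ compactness, passing to a further subsequence, $\mu_n \to \mu'$, giving (b); passing again, $E(f,\mu_n,\phi) \to E_L := P(\phi)$, giving (c); and $E_L = P(\phi) > E_+(\phi) = E_+(f_k, -t_k\log|Df_k|)$ gives (d). Now Theorem~\ref{thm:SSSS}, in its final sentence (the case $E_L = P(-t_0\log|Df_0|)$), yields a light limit measure that is a positive-entropy equilibrium state for $\phi$. Decomposing it into ergodic components if necessary, at least one ergodic component has free energy $\ge P(\phi)$, hence equals $P(\phi)$, and has positive entropy (being a component of a positive-entropy equilibrium state whose other components cannot exceed $P(\phi)$); this is the required $\mu_t$, proving both assertions simultaneously for $t \in (t^-,t^+)$.

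For $t \in (-\infty, t^-]$: here $P(\phi) = P^0(\phi)$ is possible, so we cannot expect positive entropy, and only existence is claimed. Take $\mu_n \in \M_f$ with $E(f,\mu_n,\phi)\to P(\phi)$. If $\liminf h(\mu_n) > 0$ along a subsequence, the previous argument applies verbatim and even gives a positive-entropy equilibrium state. Otherwise $h(\mu_n) \to 0$ along every subsequence, and then $P(\phi) = P^0(\phi) = E_+(\phi)$; in this regime one shows directly that the supremum $E_+(\phi)$ over repelling periodic orbits is attained, or is approximated by a weak$^*$-convergent sequence of periodic measures whose limit is an equilibrium state — invoking non-positive Schwarzian derivative to control distortion (Lemma~\ref{lem:koebe}) and the structure of the Hofbauer extension to extract a convergent subsequence of periodic orbit measures realising $E_+$. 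The main obstacle is precisely this degenerate case $t \le t^-$ with vanishing entropy: Theorem~\ref{thm:SSSS} has no content when hypothesis (e) fails, so one needs a separate, more hands-on compactness argument at the level of periodic orbits; everything else is a routine translation of Theorem~\ref{thm:SSSS} into existence language.
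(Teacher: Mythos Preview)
Your strategy for $t \in (t^-, t^+)$ coincides with the paper's: take a maximising sequence $\mu_n \in \M_f$ with $E(\mu_n) \to P(\phi)$ and feed it to Theorem~\ref{thm:SSSS} with the constant family $f_k \equiv f$. The genuine gap is in verifying hypothesis~(e). You argue that if $h(\mu_n)$ were not bounded away from zero then $E(\mu_n)$ would be close to $P^0(\phi)$, invoking upper-semicontinuity of entropy. But this does not follow: what you actually need is the inequality $\int \phi\, d\mu \le P^0(\phi)$ for \emph{every} $\mu \in \M_f$ (not only zero-entropy ones), which for $t>0$ amounts to $\lambda(\mu) \ge \inf\{\lambda(\nu) : h(\nu)=0,\ \nu \in \M_f\}$. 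A priori a positive-entropy measure could have smaller Lyapunov exponent than every zero-entropy measure; neither the lower bound on $\phi$ you mention (valid for $t\ge 0$) nor upper-semicontinuity of entropy rules this out, since $\phi = -t\log|Df|$ is unbounded above for $t>0$ and $\int\phi\,d\mu_n$ need not converge under weak$^*$ convergence. The paper closes this gap with Katok theory (Lemma~\ref{lem:Katok1}): every positive-entropy measure is approximated by equidistributions on periodic orbits with nearby Lyapunov exponent, so $-t\lambda(\mu) \le P^0(-t\log|Df|)$ for all $\mu \in \M_f$, and then immediately
\[
h(\mu_n) \;=\; E(\mu_n) - \bigl(-t\lambda(\mu_n)\bigr) \;\ge\; E(\mu_n) - P^0(\phi) \;\longrightarrow\; P(\phi) - P^0(\phi) \;>\; 0.
\]
(For $t \le 0$ your boundedness remark does suffice, since then $\phi$ is bounded above; the issue is specifically the range $0 < t < t^+$.)

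For $t \le t^-$ the paper's own proof is in fact silent --- it treats only $(t^-, t^+)$ --- so you are attempting more than the paper does. Your sketch is honest about being incomplete, and completing it does require an argument outside the scope of Theorem~\ref{thm:SSSS}, since hypothesis~(e) cannot be arranged when $P(\phi) = P^0(\phi)$.
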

            From the pressure function, one can read off certain properties. 
                For example, the following result generalises \cite[Proposition~1.1]{IomTeq}. 
                \begin{proposition} \label{prop:IomTeq}
        The map $t \mapsto P(-t \log |Df|)$ is differentiable at $t^+$ if and only if there is no equilibrium state with positive entropy for the potential $-t^+ \log |Df|$. The same holds for $t^-$ in place of $t^+$.
    \end{proposition}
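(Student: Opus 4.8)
The plan is to exploit convexity of the pressure function together with the explicit affine shape of $P^0(-\,\cdot\,\log|Df|)$ and a single application of Theorem~\ref{thm:SSSS}. Write $P(t):=P(-t\log|Df|)$ and $P^0(t):=P^0(-t\log|Df|)$; both are finite (entropies are at most $\htop(f)$, and $|Df|$ is bounded since $f$ has non-positive Schwarzian derivative, so Lyapunov exponents of measures in $\M$ lie in a fixed bounded interval) and convex, hence continuous on $\R$. I treat $t^+$; the case of $t^-$ is symmetric and I indicate the changes at the end. One may assume $t^+<\infty$, otherwise there is nothing to prove, and then (as $f$ has positive topological entropy) $0<t^+<\infty$ and $\{\mu\in\M:h(\mu)=0\}$ is nonempty. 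Put $\lambda_0:=\inf\{\lambda(\mu):\mu\in\M,\ h(\mu)=0\}\in[0,\infty)$. Since $E(f,\mu,-t\log|Df|)=-t\lambda(\mu)$ whenever $h(\mu)=0$, one gets $P^0(t)=-t\lambda_0$ for every $t\ge0$, so $P^0$ is affine with slope $-\lambda_0$ on $[0,\infty)$, in particular near $t^+$. The first step is to pin down the right derivative: by definition of $t^+$ and continuity, $P=P^0$ on $[t^+,\infty)$ and $P(t^+)=-t^+\lambda_0$, whence $P'_+(t^+)=-\lambda_0$, while convexity forces $P'_-(t^+)\le-\lambda_0$. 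Therefore $P$ is non-differentiable at $t^+$ exactly when $P'_-(t^+)<-\lambda_0$, and the proposition reduces to the equivalence: \emph{the potential $-t^+\log|Df|$ admits a positive-entropy equilibrium state if and only if $P'_-(t^+)<P'_+(t^+)$}.

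For the forward implication of that equivalence, suppose $\mu$ is an equilibrium state with $h(\mu)>0$. The affine function $s\mapsto h(\mu)-s\lambda(\mu)$ lies below $P$ (by definition of $P$) and agrees with $P$ at $t^+$, hence is a supporting line there, so $-\lambda(\mu)\in[P'_-(t^+),P'_+(t^+)]$; comparing $h(\mu)-t^+\lambda(\mu)=P(t^+)=-t^+\lambda_0$ gives $h(\mu)=t^+(\lambda(\mu)-\lambda_0)$, whence $\lambda(\mu)>\lambda_0$ (as $h(\mu)>0$, $t^+>0$), and so $P'_-(t^+)\le-\lambda(\mu)<-\lambda_0=P'_+(t^+)$. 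For the reverse implication --- the substantial one --- assume $P'_-(t^+)<P'_+(t^+)$. I would pick $t_k\uparrow t^+$ with $t_k\in(t^-,t^+)$; by Corollary~\ref{cor:eqstate2} each $-t_k\log|Df|$ has a positive-entropy equilibrium state $\mu_k$, for which $E(f,\mu_k,-t_k\log|Df|)=P(t_k)$ and (as above) $-\lambda(\mu_k)\in[P'_-(t_k),P'_+(t_k)]$. These slope-intervals stay bounded as $t_k\uparrow t^+$, so after passing to a subsequence we may assume $\mu_k$ converges weak$^*$ and $\lambda(\mu_k)\to\lambda_*$; since $-\lambda(\mu_k)\le P'_+(t_k)$ and $P'_+(t_k)\to P'_-(t^+)$, we obtain $\lambda_*\ge-P'_-(t^+)>\lambda_0$, and hence $h(\mu_k)=P(t_k)+t_k\lambda(\mu_k)\to -t^+\lambda_0+t^+\lambda_*=t^+(\lambda_*-\lambda_0)>0$, i.e.\ $\liminf_k h(\mu_k)>0$. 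The remaining hypotheses of Theorem~\ref{thm:SSSS}, applied with the constant family $f_k\equiv f$ (which lies in $\FNSD$ and trivially has decreasing critical relations), then hold at once: $t_k\to t^+$; $E(f,\mu_k,-t_k\log|Df|)=P(t_k)\to P(t^+)=:E_L$; and $E_L\ge\liminf_k E_+(f,-t_k\log|Df|)$ because $E_+(f,-s\log|Df|)\le P(-s\log|Df|)$ for every $s$. Since $E_L=P(-t^+\log|Df|)$, the final assertion of Theorem~\ref{thm:SSSS} furnishes a positive-entropy equilibrium state for $-t^+\log|Df|$, as required. For $t^-$ one runs the same argument using instead that $P^0(t)=-t\lambda_1$ is affine with slope $-\lambda_1$ on $(-\infty,0]\ni t^-$, where $\lambda_1:=\sup\{\lambda(\mu):\mu\in\M,\ h(\mu)=0\}<\infty$ (finite by boundedness of $|Df|$), so that $P'_-(t^-)=-\lambda_1\le P'_+(t^-)$, and one approximates $t^-$ from the right.

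I expect the crux to be verifying hypothesis (e) of Theorem~\ref{thm:SSSS} in the reverse implication --- promoting the purely analytic gap $P'_-(t^+)<P'_+(t^+)$ to a uniform positive lower bound for the entropies $h(\mu_k)$. This is exactly where the gap is needed: it forces the limiting Lyapunov exponent $\lambda_*$ to exceed $\lambda_0$ strictly, so that $h(\mu_k)\to t^+(\lambda_*-\lambda_0)>0$. If $P$ were differentiable at $t^+$ this mechanism fails --- the $\mu_k$ may shed all their entropy in the limit, Theorem~\ref{thm:SSSS} no longer applies, and there need be no positive-entropy equilibrium state at $t^+$, exactly as the forward implication predicts.
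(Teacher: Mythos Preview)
Your proof is correct and follows essentially the same route as the paper's: both directions rest on convexity of $t\mapsto P(-t\log|Df|)$ and the affine form of $P^0$ on $[0,\infty)$, with the forward implication coming from two distinct supporting lines at $t^+$ and the reverse from applying Theorem~\ref{thm:SSSS} to equilibrium states $\mu_k$ at $t_k\uparrow t^+$, the slope gap $P'_-(t^+)<P'_+(t^+)$ being exactly what forces $\liminf_k h(\mu_k)>0$. Your write-up is more explicit than the paper's (which compresses the geometry into a sentence), and you sidestep the paper's appeal to Lemma~\ref{lem:Katok1} by working directly with $\lambda_0=\inf\{\lambda(\mu):h(\mu)=0\}$ rather than with $\underline\lambda=\inf_{\mu\in\M}\lambda(\mu)$; the paper needs Katok theory to identify these, whereas your formulation only needs the definition of $P^0$.
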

 
    Concerning statistical stability of equilibrium states, we obtain strong, new results. 

    \begin{theorem}[Smooth pressure and uniform pressure gap imply statistical stability] \label{thm:eqstatstab}
 Let 
 $(f_k)_k \in \FNSD$ be a sequence converging to $f_0$ as $k \to \infty$ with decreasing critical relations. 
 Take $\eps >0$ and assume that
    \begin{enumerate}[label=({\alph*}), itemsep=0.0mm, topsep=0.0mm, leftmargin=7mm]
        \item $t_k \to t_0$ as $k \to \infty$;
        \item for each $k \geq 0$, $P(f_k, -t_0\log|Df_k|) \geq P^0(f_k, -t_0\log|Df_k|)+\eps$.
        \end{enumerate}
        Then for each $k\geq0$, there is an  equilibrium state $\mu_k$ for $f_k$ and the potential $-t_k\log|Df_k|$. 
        If $t \mapsto P(f_0, -t\log|Df_0|)$ is differentiable at $t_0$, then any limit measure of the sequence $(\mu_k)_k$ is a convex combination of equilibrium measures for $f_0$ with the potential $-t_0\log|Df_0|$.
    \end{theorem}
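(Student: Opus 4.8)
The plan is to feed the equilibrium states into Theorem~\ref{thm:SSSS} and then use differentiability of the pressure to collapse its three-way conclusion to the statement we want.

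\emph{Existence and set-up.} Since $\sup_{f\in\F}\sup|Df|<\infty$, for each $k$ the functions $t\mapsto P(f_k,-t\log|Df_k|)$ and $t\mapsto P^0(f_k,-t\log|Df_k|)$ are Lipschitz with a constant not depending on $k$ (each is a supremum of affine functions whose slopes $-\lambda(\nu)$ lie in a fixed bounded interval). Hence, for all large $k$, hypothesis (b) at $t_0$ together with $t_k\to t_0$ forces $P(f_k,-t_k\log|Df_k|)>P^0(f_k,-t_k\log|Df_k|)$, i.e.\ $t_k<t^+(f_k)$, and Corollary~\ref{cor:eqstate2} yields a positive-entropy equilibrium state $\mu_k$ (and likewise for $k=0$); discarding finitely many indices, which affects neither this claim nor ``any limit measure'', we assume this for every $k$. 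Now fix an arbitrary weak$^*$ limit point $\mu_\infty$ of $(\mu_k)$, pass to a subsequence (not relabelled) on which $\mu_k\to\mu_\infty$, and pass further so that $h(\mu_k)$, $\lambda(\mu_k)$ and $P(f_k,-t_k\log|Df_k|)=E(f_k,\mu_k,-t_k\log|Df_k|)$ converge, say to $H$, $\Lambda_0$, $E_L$, with $E_L=H-t_0\Lambda_0$.

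\emph{Applying Theorem~\ref{thm:SSSS}.} Hypotheses (a)--(c) are immediate, and (d) holds because $E(f_k,\mu_k,-t_k\log|Df_k|)=P(f_k,\cdot)\ge P^0(f_k,\cdot)\ge E_+(f_k,\cdot)$. The real content is (e), $\liminf_k h(\mu_k)>0$, which is where the \emph{uniform} gap is used: transferring the gap to $t_k$ as above, and using that an equilibrium state in the regime $t<t^+$ has Lyapunov exponent at least $\inf\{\lambda(\nu):\nu\in\M,\ h(\nu)=0\}$ (its free-energy line supports the pressure function, which lies strictly above the $P^0$-line there), one obtains $h(\mu_k)\ge P(f_k,-t_k\log|Df_k|)-P^0(f_k,-t_k\log|Df_k|)-o(1)\ge\eps-o(1)$. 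Theorem~\ref{thm:SSSS} then yields a hyperbolic light limit measure $\mu_*$, so $\mu_*\ll\mu_\infty$ and $\mu_*(I)=1$, satisfying one of its three alternatives; writing $E(\mu_*)=E(f_0,\mu_*,-t_0\log|Df_0|)$, in each case $E(\mu_*)\ge E_L$.

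\emph{Pinning down $E_L$.} As $\mu_*$ is hyperbolic, almost every ergodic component lies in $\M_{f_0}$, so $E(\mu_*)\le P(f_0,-t_0\log|Df_0|)=:P_0$, whence $E_L\le P_0$. For the reverse, $E_L\ge P_0$, one uses lower semicontinuity of the pressure along the family (approximate a positive-entropy equilibrium state of $f_0$ by $f_k$-invariant measures of nearly the same free energy); thus $E_L=P_0$. Combining this with the upper bound $\limsup_k P(f_k,-t\log|Df_k|)\le P(f_0,-t\log|Df_0|)=:Q(t)$ for $t$ near $t_0$ (apply Theorem~\ref{thm:SSSS} to near-maximisers of $E(f_k,\cdot,-t\log|Df_k|)$ at each fixed such $t$), and with convexity, one gets $P(f_k,-\,\cdot\,\log|Df_k|)\to Q$ uniformly near $t_0$. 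In particular the first alternative of Theorem~\ref{thm:SSSS}, $E(\mu_*)>E_L=P_0$, cannot occur, so one of the last two does.

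\emph{Differentiability eliminates the third alternative.} Since these convex pressure functions converge uniformly to $Q$, which is differentiable at $t_0$, and $t_k\to t_0$, the one-sided derivatives converge, so $\lambda(\mu_k)\to -Q'(t_0)=:\Lambda_0$ and hence $\liminf_k h(\mu_k)=\lim_k\big(P(f_k,-t_k\log|Df_k|)+t_k\lambda(\mu_k)\big)=P_0+t_0\Lambda_0$. By the same supporting-line argument, every ergodic equilibrium state $\sigma$ for $(f_0,-t_0\log|Df_0|)$ has $\lambda(\sigma)=\Lambda_0$, and by hypothesis (b) it has positive entropy (a zero-entropy one would satisfy $-t_0\lambda(\sigma)=E(f_0,\sigma,\cdot)\le P^0(f_0,-t_0\log|Df_0|)<P_0$). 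Were the third alternative to hold, then $E(\mu_*)=E_L=P_0$, so $\mu_*$ would be a convex combination of ergodic equilibrium states, each of Lyapunov exponent $\Lambda_0$, giving $\int\log|Df_0|\,d\mu_*=\Lambda_0$ and $h(\mu_*)=P_0+t_0\Lambda_0=\liminf_k h(\mu_k)$ --- contradicting $h(\mu_*)>\liminf_k h(\mu_k)$. Hence the second alternative holds: $\mu_*=\mu_\infty$ with $E(\mu_\infty)=P_0$, so $\mu_\infty$ is a convex combination of ergodic equilibrium states for $f_0$ and $-t_0\log|Df_0|$; as $\mu_\infty$ was arbitrary, this proves the theorem.

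All the analytic weight sits in Theorem~\ref{thm:SSSS}. The delicate points here are extracting the uniform bound $\liminf_k h(\mu_k)>0$ from the pressure gap and, the real obstacle, excluding the alternative in which the extracted light limit measure has \emph{strictly} larger entropy --- since that would leave room for $\mu_\infty$ to carry an extra, non-equilibrium, component. It is differentiability of the pressure, forcing a common Lyapunov exponent $-Q'(t_0)$ on all positive-entropy equilibrium states at $t_0$, that makes the entropy accounting tight enough to rule this out.
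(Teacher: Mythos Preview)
Your argument is correct and follows essentially the same route as the paper: the paper packages the two main steps as Lemma~\ref{lem:prescon} (convergence of pressures on a neighbourhood of $t_0$) and Lemma~\ref{lem:SSSScon} (differentiability rules out the third alternative of Theorem~\ref{thm:SSSS}), and then simply invokes them. Your write-up reproves both inline.

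Two remarks on points you leave implicit. First, the ``lower semicontinuity of pressure'' step, $\liminf_k P(f_k,-t\log|Df_k|)\ge P(f_0,-t\log|Df_0|)$, is not free: it is exactly Corollary~\ref{cor:Katok2}, which rests on the Katok approximation Lemma~\ref{lem:Katok1} (build a hyperbolic horseshoe for $f_0$ carrying nearly the full free energy, then transfer it to nearby $f_k$). You should cite this rather than describe it informally. Second, your way of eliminating the third alternative differs slightly from the paper's. The paper argues geometrically: the line through $(0,\liminf_k h(\mu_k))$ and $(t_0,Q(t_0))$ supports $Q$ (coming from the $\mu_k$ via Katok approximation), and so does the line through $(0,h(\mu_*))$ and $(t_0,Q(t_0))$ (coming from $\mu_*$, again via Corollary~\ref{cor:Katok2}); two distinct supporting lines at $t_0$ contradict differentiability. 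You instead use the standard fact that subgradients of convex functions converge to the derivative of a differentiable limit, deduce $\lambda(\mu_k)\to -Q'(t_0)$, and do the entropy bookkeeping directly. The two arguments are equivalent; yours avoids a second appeal to Katok theory at the cost of quoting a convex-analysis fact.
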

    Jumping ahead a little, for non-renormalisable quadratic maps, $f$ is transitive on $J(f)$ so positive entropy equilibrium states are unique, see Theorem~\ref{thm:eqstates}. Moreover the pressure is analytic on $(t^-,t^+)$, see Theorem~\ref{thm:anal}. 
    In the quadratic family $\F_Q$, we immediately obtain:
    \begin{corollary} [Statistical stability in the quadratic family] Suppose that $f_0 \in \F_Q$ is  non-renormalisable, $t_0 \in (t^-,t^+)$ and $P(f_0, -t_0\log|Df|) >0$. For $(t,f)$ in a neighbourhood of $(t_0,f_0)$ in $\R\times \F_Q$, an equilibrium state $\mu_{t,f}$ (for $f$ and the potential $-t\log|Df|$) exists.  The map $(t,f)\mapsto \mu_{t,f}$ is continuous at $(t_0,f_0)$.
    \end{corollary}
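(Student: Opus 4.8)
The plan is to deduce the statement from Theorem~\ref{thm:eqstatstab}, using Theorem~\ref{thm:anal} to supply the differentiability hypothesis and Theorem~\ref{thm:eqstates} to pin down the limiting measure. First I would fix a small $\delta>0$ and work inside the $2$-branched family $\{f_a:|a-a_0|<\delta\}$, domains extended slightly as in Remark~\ref{rem:Crum}; this family belongs to $\FNSD$ (quadratics have negative Schwarzian derivative and $\sup_a\sup|Df_a|<\infty$) and convergence $f_{a_k}\to f_{a_0}$ in it is simply $a_k\to a_0$, witnessed by $h_k\equiv\mathrm{id}$. Since $t_0\in(t^-,t^+)$ forces $P(f_0,-t_0\log|Df_0|)>P^0(f_0,-t_0\log|Df_0|)$, every equilibrium state of $f_0$ has positive entropy; in particular $\htop(f_0)>0$, so $\tfrac12$ is not periodic for $f_0$, $f_0$ has no critical relations at all, and every sequence $f_{a_k}\to f_{a_0}$ trivially has decreasing critical relations. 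Thus Theorem~\ref{thm:eqstatstab} will be applicable once we supply its pressure-gap hypothesis.

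The crux is to promote the gap at $(t_0,f_0)$ to a \emph{uniform} one: I claim there are $\delta,\eps>0$ such that
$$ P(f_a,-t\log|Df_a|)\ \geq\ P^0(f_a,-t\log|Df_a|)+\eps \qquad\text{whenever } |t-t_0|<\delta \text{ and } |a-a_0|<\delta. $$
The key is that $P^0$ is easy to control from above in the quadratic family: every $\mu\in\M_{f_a}$ is carried by $[0,1]$, where $0\leq\lambda(\mu)\leq\log a$, and the Dirac mass at the repelling fixed point $0$ realises $\lambda=\log a$ with zero entropy, so $P^0(f_a,-t\log|Df_a|)\leq\max\{0,-t\log a\}$. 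This last quantity is continuous in $(t,a)$ and its value at $(t_0,a_0)$ is $\max\{0,P^0(f_0,-t_0\log|Df_0|)\}$, which is strictly below $P(f_0,-t_0\log|Df_0|)$: for $t_0<0$ it equals $P^0(f_0,-t_0\log|Df_0|)$ and we use $t_0\in(t^-,t^+)$, while for $t_0\geq0$ it equals $0$ and we use the hypothesis $P(f_0,-t_0\log|Df_0|)>0$ (this is exactly where that hypothesis is needed: for $t\geq0$ one only has $P^0\leq0$, but $P^0$ can creep up towards $0$ as $f$ is perturbed near saddle-node parameters accumulating on $a_0$). Combined with lower-semicontinuity of $(t,f)\mapsto P(f,-t\log|Df|)$ at $(t_0,f_0)$ — valid because $P>P^0$ there, so $P(f_0,-t_0\log|Df_0|)$ is approximated by the free energy of a measure on a horseshoe, and horseshoes, being hyperbolic, persist with nearby entropy and nearby integral of $\log|Df|$ under the $C^1$-small perturbation $f_a\to f_{a_0}$ — this yields the uniform gap.

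With the uniform gap in hand the rest is assembly. Applying Theorem~\ref{thm:eqstatstab} to the constant sequence $(t,f_a)$ (its hypothesis (b) being precisely our gap), or equivalently Corollary~\ref{cor:eqstate2}, produces, for each $(t,f_a)$ in the neighbourhood, an equilibrium state $\mu_{t,f_a}$, necessarily of positive entropy since $P>P^0$ there; this is the existence assertion. For continuity at $(t_0,f_0)$, take any $(t_k,f_k)\to(t_0,f_0)$ and any choice of equilibrium states $\mu_k:=\mu_{t_k,f_k}$. Hypothesis (a) of Theorem~\ref{thm:eqstatstab} is immediate, hypothesis (b) at the fixed parameter $t_0$ holds for all large $k$ by the uniform gap, and $t\mapsto P(f_0,-t\log|Df_0|)$ is differentiable at $t_0$ because it is analytic on $(t^-,t^+)\ni t_0$ (Theorem~\ref{thm:anal}). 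Hence every weak$^*$ limit of $(\mu_k)_k$ is a convex combination of equilibrium measures of $(f_0,-t_0\log|Df_0|)$. But $f_0$ is non-renormalisable, hence transitive on $J(f_0)$, so by Theorem~\ref{thm:eqstates} its positive-entropy equilibrium state is unique; as every equilibrium state of $f_0$ has positive entropy, $f_0$ has exactly one equilibrium measure $\mu_{t_0,f_0}$, and every limit of $(\mu_k)$ equals it. Weak$^*$ compactness of the invariant probability measures then gives $\mu_k\to\mu_{t_0,f_0}$; as the sequence and the choices were arbitrary, $(t,f)\mapsto\mu_{t,f}$ is continuous at $(t_0,f_0)$.

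The main obstacle is exactly the uniform pressure gap: one cannot simply invoke semicontinuity of the endpoints $t^\pm(f)$ in $f$, since new almost-parabolic periodic orbits appearing at saddle-node parameters accumulating on $a_0$ may make the interval $(t^-(f),t^+(f))$ contract abruptly and even expel $t_0$. What rescues the argument is that in the quadratic family $P^0$ admits the explicit, perturbation-stable upper bound $\max\{0,-t\log a\}$, so that the three stated hypotheses on $(t_0,f_0)$ alone force a gap that survives the limit; the remaining steps are routine applications of the quoted results.
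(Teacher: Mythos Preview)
Your proof is correct and takes the same route the paper indicates: apply Theorem~\ref{thm:eqstatstab}, with differentiability supplied by Theorem~\ref{thm:anal} and uniqueness by Theorem~\ref{thm:eqstates}. The paper states the corollary without proof, so your explicit verification of the uniform pressure gap --- via the bound $P^0(f_a,-t\log|Df_a|)\leq\max\{0,-t\log a\}$ combined with lower-semicontinuity of $P$ from Corollary~\ref{cor:Katok2} --- fills in the one non-trivial detail, and your explanation of why the hypothesis $P>0$ (rather than merely $P>P^0$) is needed is exactly right.

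One small correction: the implication ``$\htop(f_0)>0$, so $\tfrac12$ is not periodic for $f_0$'' is false as stated --- a super-attracting cycle of period $\geq 2$ can coexist with positive topological entropy (e.g., at the centre of the period-$3$ window). The correct argument uses both hypotheses: non-renormalisability rules out periods $p\geq 2$ (a super-attracting $p$-cycle yields a restrictive interval), while $\htop(f_0)>0$ rules out $p=1$ (since $a_0=2$ gives $\htop=0$). You already have non-renormalisability available; it simply needs to be invoked at this step as well. Alternatively, you may bypass the issue entirely by citing Corollary~\ref{cor:ssssquad}, which drops the decreasing-critical-relations hypothesis for the quadratic family via the removal-of-basin trick in Remark~\ref{rem:Crum}.
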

    %\ND{This is not quite the result I thought it was. We have continuity at non-renormalisable parameters, but we should probably have stated something that holds more generally... like continuity whenever one does not have interacting strata...  Maybe we should write a separate note about that... Note also, this would not be true without restricting to non-attracting measures.}
    This significantly improves upon \cite{FreT}, where stability was shown for $t$ close to $1$ under multiple further hypotheses, including non-uniform expansion, and upon \cite{Raith_stab}, where stability was shown when $t=0$. The result would be false if we did not exclude measures supported on periodic attractors from being equilibrium states.
    
    \section{$J(f)$ for a piecewise-monotone map $f$}
    \label{sec:EandJ}
     
        \begin{definition}
            Given a piecewise-monotone map $f : \cup_{j=1}^d I_j \to I$, the set of \emph{critical points} is denoted 
            $$
            \E(f) := \bigcup_j \partial I_j, \index{Ecrit@$\E(f)$ Set of critical points} \index[def]{Critical points}
            $$
            while the set of \emph{critical values} is denoted
            $$
            \V(f) := \bigcup_j \partial f(I_j).  \index{Vcrit@$\V(f)$ Set of critical values}  \index[def]{Critical values}
            $$
        \end{definition}
        
        \begin{definition}
            Given a piecewise-monotone map $f$, we denote by $J(f)$ \index{Julia@$J(f)$, analogue of Julia set} the set of points $x$ such that
        \begin{itemize}
            \item
                $x$ is recurrent, so $x \in \overline{\{f^n(x)\}_{n\geq 1}}$;
            \item
                $x$ is accumulated on both sides by points from $\cup_{n\geq 0} f^{-n}(\E(f))$, that is, 
                for all $\varepsilon >0$, there exist $l, r \in \cup_{n\geq 0} f^{-n}(\E(f))$ with $x -\varepsilon < l < x < r< x+\varepsilon$; 
            \item
                $f^n(x) \notin \E(f)$ and $f^{-n}(f^m(x)) \cap \V(f) = \emptyset$ for each $n,m  \geq 0$; 
            \item
                $x$ is accumulated on both sides by points with the above three properties.
            \end{itemize}
        \end{definition}
        It is easy to check that $J(f)$ is forward-invariant, $f(J(f)) \subset J(f)$. For any ergodic, invariant, positive-entropy measure, the set $J(f)$ has full measure -- we shall justify this statement in Lemma~\ref{lem:SMB}.  On the other hand, some zero-entropy measures may give no mass to $J(f)$. 
        This definition of $J(f)$ is related to, but more restrictive than, the definition of Julia set used in \cite{Riv12}, 
        the main differences being recurrence and that we require our accumulation from both sides. Of course, $J(f)$ is not a closed set.  

\begin{remark}
    Quadratic maps $f_a : x \mapsto ax(1-x)$ with $a \in (3,4)$ are not transitive on the unit interval $[0,1]$. However, for certain values of $a$, in particular when $f_a$ is non-renormalisable, they may be transitive on the interval $[f_a^2(1/2), f_a(1/2)]$. In such cases, $\overline{J(f_a)}$ would coincide with this forward-invariant \emph{dynamical core} $[f_a^2(1/2), f_a(1/2)]$ and $f_a$ would be transitive on $J(f_a)$. The only ergodic invariant probability measure outside the core would be the Dirac mass at $0$. 
    If one extends the domain to $[-\eps, 1+\eps]$, exclusion of isolated recurrent points means $0 \notin J(f_a)$.
\end{remark}

\section[Equilibrium states, pressure and correlations]{Uniqueness of equilibrium states, analyticity of pressure, decay of correlations}

One could ask how many equilibrium states there are for a given potential, compare \cite{Pin11} (see \cite{Hof81} for the case of constant potentials). Depending on the potential and the map, the answer may be infinitely many (see the final paragraph of~\cite{Dob09}). Restricting to those with uniformly positive entropy, we shall provide a finite bound on the number; this bound only depends on the number of branches and the entropy. With a transitivity assumption (on the chaotic part of phase space), there is only one equilibrium state. 
The bound allows us to pass from the final statement of Proposition~\ref{prop:SSSS}, where one finds a convex combination of equilibrium states, to Theorem~\ref{thm:SSSS}, where one finds a light limit measure which is an equilibrium state.  
\begin{theorem}[Bounds on the number of equilibrium states] \label{thm:eqstates}
    Given $d \geq 2$ and  $\varepsilon>0$, there is a number $N \in \N$ such that the following holds. If $f$ is a $d$-branched piecewise-monotone map 
     with non-positive Schwarzian derivative and $t \in \R$, there are at most $N$ equilibrium states with entropy greater than $\varepsilon$ for the potential $-t \log |Df|$. 

     If $f$ is transitive on $J(f)$, then there is at most one equilibrium state with positive entropy for the potential $-t \log |Df|$. 
\end{theorem}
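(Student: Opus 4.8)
The plan is to bound the number of positive-entropy equilibrium states by transferring the problem to an induced (first-return-type) system where standard thermodynamic uniqueness arguments apply, and then using the combinatorics of $d$-branched maps to control how many ``independent'' such systems can coexist. First I would fix $\varepsilon>0$ and $t\in\R$, and consider any equilibrium state $\mu$ with $h(\mu)>\varepsilon$. By Lemma~\ref{lem:SMB}, $\mu$ gives full mass to $J(f)$, and $\mu$ is hyperbolic (positive Lyapunov exponent), since $E(\mu)=P(-t\log|Df|)\ge\ldots$ forces $\lambda(\mu)>0$ in the relevant range, or one argues directly that a positive-entropy measure for a map with non-positive Schwarzian derivative has $\lambda(\mu)>0$ (Ruelle's inequality plus the Volume Lemma / absence of zero-exponent positive-entropy measures). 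The key structural input is the induced map: using the non-positive Schwarzian derivative and the decreasing critical relations / Koebe machinery referenced in the excerpt (Lemma~\ref{lem:koebe}, Lemma~\ref{lem:kappaPk}), one builds, around each ``nice'' reference scale, a first-return or induced Markov map $F$ with uniformly bounded distortion and full branches, to which $\mu$ lifts as an $F$-invariant measure $\hat\mu$ with finite return time in $L^1$ (this is where positive entropy $\ge\varepsilon$ is used: it bounds the average return time, cf.\ Abramov).

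Next I would show that on each such induced system, the positive-entropy equilibrium state is unique. This is the classical part: an induced full-branched map with bounded distortion behaves like a (countable) full shift, and for the induced potential the equilibrium state is unique by a Ruelle–Perron–Frobenius / Gurevich-pressure argument, or by a convexity/ergodicity argument (two distinct ergodic equilibrium states would be mutually singular, but both would have to be the unique RPF measure). Pulling back, any two positive-entropy equilibrium states $\mu_1,\mu_2$ for $f$ that lift to the *same* inducing scheme must coincide. So the count $N$ is at most the number of ``essentially distinct'' inducing schemes, equivalently the number of distinct ergodic components one can support on $J(f)$ that are mutually ``disjoint'' at the chosen scale. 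Here the $d$-branched structure enters: the number of disjoint forward-invariant ``transitive pieces'' carrying a measure of entropy $>\varepsilon$ is bounded in terms of $d$ and $\varepsilon$ only — roughly, each such piece must contain a cylinder of bounded depth (depth controlled by $\varepsilon$ via entropy $\le\log$(number of cylinders)), and there are only finitely many such cylinders, $\le d^{n(\varepsilon)}$. This yields $N=N(d,\varepsilon)$ independent of $f$ and $t$.

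For the transitivity statement: if $f$ is transitive on $J(f)$, then there is a single transitive piece, so all positive-entropy equilibrium states lift to a common inducing scheme, and uniqueness on that scheme gives uniqueness downstairs. (One must check the inducing scheme can be chosen around a point of $J(f)$ and that transitivity on $J(f)$ makes every positive-entropy measure's support dense enough to use the same scheme — this is where transitivity is genuinely used, to rule out the measures living on ``different returns''.)

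The main obstacle I anticipate is making the uniform-in-$f$ claim precise: one needs the inducing scheme, its distortion bounds, and the entropy-to-return-time comparison to be uniform over all $d$-branched maps with non-positive Schwarzian derivative, with constants depending only on $d$ and $\varepsilon$. The non-positive Schwarzian derivative gives Koebe distortion control with *universal* constants (no dependence on $f$ beyond the number of branches), which is exactly why the bound can be made uniform; but assembling this — in particular choosing the reference scale and showing the return-time integrability constant depends only on $(d,\varepsilon)$ — is the delicate step, and is presumably where the bulk of the section's work (the machinery of \S\S\ref{sec:cyl}--\ref{sec:SSSS}) is reused. The rest (uniqueness on a full-branched induced system, and the counting of pieces) is standard once that uniformity is in hand.
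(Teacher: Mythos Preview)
Your overall strategy matches the paper's: lift each high-entropy equilibrium state to an induced Markov system, invoke uniqueness of the equilibrium state on that system via countable-shift thermodynamic formalism, and bound the number of such systems combinatorially in terms of $d$ and $\varepsilon$. The paper implements this via the Hofbauer extension: one fixes $R$ with $2\eps(R)<\varepsilon$, and the relevant induced systems are the \emph{primitive components} of the level-$R$ induced map $(\hat X(R),\hat F,\tau)$ (Definition~\ref{def:primi}). Corollary~\ref{cor:Rtowerprim} shows every measure with entropy $\ge 2\eps(R)$ lifts to exactly one primitive component; Lemma~\ref{lem:eqlift} gives uniqueness of the equilibrium state on each primitive component; and Remark~\ref{rem:prim} bounds the number of primitive components by $(2dR)^2d^R$, which depends only on $d$ and $R=R(\varepsilon)$. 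Transitivity on $J(f)$ forces a single primitive component (Lemma~\ref{lem:primi3}).

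The one genuine gap in your sketch is the counting step. You propose to bound the number of equilibrium states by the number of ``disjoint forward-invariant transitive pieces'' in the base, arguing that each must contain a cylinder of bounded depth. But distinct ergodic equilibrium states, while mutually singular, need not have disjoint supports---indeed their supports may coincide (think of distinct Bernoulli measures on a full shift). So there is no bound on the number of equilibrium states coming from cylinders in $I$ alone. The paper's resolution is that the separation happens \emph{upstairs}: the lift $\hat\mu$ of an ergodic measure, by ergodicity, gives mass to exactly one primitive component of $\hat X(R)$, and it is these primitive components (not base-level pieces) that are finite in number with a bound depending only on $(d,R)$. Your reference-scale/nice-interval language could in principle be made to work, but you would need to replace ``transitive pieces of $J(f)$'' by the analogue of primitive components for your inducing scheme and prove the analogue of Corollary~\ref{cor:Rtowerprim}---which is exactly the content of \S\S\ref{sec:combi}--\ref{sec:MM}.
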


Understanding the pressure function $t\mapsto P_f(t\phi)$ 
    and its smoothness properties can give information on the statistical properties (large deviations and multifractal spectra, for example) of the system $(I,f)$. 
 We note that if there exists  an equilibrium state $\mu_{s}$  for $s\phi$, and $t\mapsto P(t\phi)$ is differentiable at $t =s$, then
\begin{equation}
\left.\frac{\partial P(t\phi)}{\partial t}\right|_{t=s} = \int\phi~d\mu_{s}.
\label{eq:pres der}
\end{equation}

The proofs of the following two results depend on  thermodynamic formalism for shifts on an infinite alphabet. They correspond to \cite[Theorem~A]{PrzRL}. Again recall, we do not exclude parabolic points and do not assume non-flatness of critical points. 
            \begin{theorem}\label{thm:anal}
        Let $f$ be a piecewise-monotone map with non-positive Schwarzian derivative.  
        If $f$ is transitive on $J(f)$, the pressure function $t \mapsto P(-t \log |Df|)$ is real-analytic on the interval $(t^-, t^+)$. 
    \end{theorem}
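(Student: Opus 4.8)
The plan is to pass to an inducing scheme---a first-return or first-hyperbolic-time map to a suitable interval in $J(f)$---which realizes the dynamics as a full-branched (countable-alphabet) Markov shift, and then invoke the real-analyticity machinery for the pressure of such shifts (Sarig/Mauldin--Urbański style thermodynamic formalism on countable alphabets). Concretely: fix $t_0 \in (t^-,t^+)$. By Theorem~\ref{thm:SSSS} (equivalently the existence statement in Corollary~\ref{cor:eqstate2}) there is a positive-entropy equilibrium state $\mu_{t_0}$ for $-t_0\log|Df|$, and by the transitivity hypothesis and Theorem~\ref{thm:eqstates} it is the unique such measure. Because $t_0 \in (t^-,t^+)$ we have $P(-t_0\log|Df|) > P^0(-t_0\log|Df|)$, i.e.\ there is a genuine \emph{pressure gap} between the positive-entropy part and the zero-entropy part of the system; this gap is exactly what is needed to guarantee that the inducing scheme one builds is \emph{contracting} in the sense required by infinite-alphabet thermodynamics (the induced pressure of the tail is strictly negative).

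The key steps, in order, are: (i) Using the Koebe/distortion control coming from non-positive Schwarzian derivative (Lemma~\ref{lem:koebe}) and the structure of $J(f)$, construct an induced Markov map $F = f^{\tau}$ on a finite union of intervals (or a single interval) inside $J(f)$ with countably many full branches, together with the induced potential $\Phi_t := -t\sum_{j=0}^{\tau-1}\log|Df|\circ f^j$. Transitivity of $f$ on $J(f)$ is used to make the induced system topologically mixing and to ensure $\mu_{t_0}$ lifts to the induced system. (ii) Verify the hypotheses of the abstract analyticity theorem: the induced potential is locally Hölder (via bounded distortion), the pressure gap $P > P^0$ translates into summability of $\sum_n \exp(\sup \Phi_{t} \text{ over level-}n \text{ cylinders})$ being finite for $t$ in a neighbourhood of $t_0$ and into the Gurevich pressure of the induced system being $0$ (the normalization $P_{\mathrm{ind}}(\Phi_t - P(-t\log|Df|)\tau) = 0$), and positivity of entropy of $\mu_{t_0}$ guarantees the return time $\tau$ is $\mu_{t_0}$-integrable so that the variational/Abramov relation $P(-t\log|Df|) = \{p : P_{\mathrm{ind}}(\Phi_t - p\tau)=0\}$ holds in a neighbourhood. (iii) Apply analyticity of the Gurevich pressure as a function of the potential (this is where the real-analytic implicit function theorem on the appropriate Banach space of Hölder potentials, or the analytic perturbation theory of the Ruelle transfer operator with a spectral gap, enters) to conclude that $t \mapsto P_{\mathrm{ind}}(\Phi_t - p\tau)$ is jointly real-analytic in $(t,p)$ near $(t_0, P(-t_0\log|Df|))$; since $\partial_p P_{\mathrm{ind}} = -\int \tau\, d(\text{equilibrium state}) < 0$ is nonzero, the implicit function theorem gives that the solution $p = P(-t\log|Df|)$ is real-analytic in $t$ near $t_0$. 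Since $t_0 \in (t^-,t^+)$ was arbitrary, analyticity holds on the whole open interval.

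The main obstacle I anticipate is step (i) together with the bookkeeping in step (ii): building an inducing scheme that simultaneously (a) has good distortion (hence a genuine spectral gap for the transfer operator), (b) captures \emph{all} positive-entropy measures with free energy above $P^0$---so that the induced pressure really does compute $P(-t\log|Df|)$ and not just some lower bound---and (c) depends on the map/parameter only through data controlled by the hypotheses. The pressure gap $t_0\in(t^-,t^+)$ is the crucial input that makes this possible: it forces the ``bad'' (near-critical, non-recurrent, zero-entropy) part of the dynamics to contribute exponentially little to the induced sums, which is precisely the hypothesis $\sum_n \exp(\sup_{[\text{cyl}_n]}\Phi_t) < \infty$ needed for the Mauldin--Urbański/Sarig formalism; one must check this summability is \emph{uniform} for $t$ in a complex neighbourhood of $t_0$ in order to get analyticity rather than mere continuity. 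The rest---Hölder regularity of the induced potential, the Abramov formula relating induced and original pressure, and the implicit function theorem argument---is then essentially standard given the earlier results of the paper (Theorems~\ref{thm:eqstates} and~\ref{thm:SSSS}) and the classical infinite-alphabet thermodynamic formalism.
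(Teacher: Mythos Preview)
Your proposal is correct and follows essentially the same route as the paper: pass to a full-branched induced Markov map (the paper obtains this in Theorem~\ref{thm:fullbranch} by further inducing the level-$R$ primitive induced map), verify summability of the induced potential in a complex neighbourhood of $(t_0,p(t_0))$ via exponential tails, apply the Mauldin--Urba\'nski analyticity machinery (Proposition~\ref{prop:anal}), and conclude by the implicit function theorem using $\partial_v P_{\sigma_0}(\Psi_{u,v}) = -\int\tau\,d\nu < 0$. The one point worth sharpening is your mechanism for summability: the paper does not derive it directly from the inequality $P>P^0$, but rather combines the combinatorial counting bound $\#\{i:\tau_i=n\}\le e^{n\eps(R)}$ (Lemma~\ref{lem:inducingcyl}) with the derivative estimate $-t\log|D\hat F_\Y|-\tau p(t)\le -2\theta\tau+C$ (Lemma~\ref{lem:nsdbounds}), the latter being where the pressure gap $p(t)-\max\{-t\lambda_{\min},-t\lambda_{\max}\}>2\theta$ enters.
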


Suppose that $(I, f, \mu)$ is an ergodic dynamical system.
     Following \cite[Theorem~1.2]{MelNic05}, we say that  $\phi : I \to \R$ satisfies the Almost Sure Invariance Principle (\emph{ASIP}) \index[def]{ASIP} if there exists $\gamma >0$, a sequence of random variables $\{S_N\}_N$ and a Brownian motion $W$ with variance $\sigma^2 \geq 0$ such that
    $$
    \left\{ \sum_{j=0}^{N-1} \phi \circ f^j\right\}_N = \{S_N\}_N \quad \mbox{in distribution} 
    $$
    and, almost everywhere,
    $$ 
    S_N = W(N) + O(N^{\frac12 -\gamma}) \quad \mbox{as } N \to \infty.
    $$
    
    For a pair of function spaces $\C_1, \C_2$,  we say that we have \emph{decay of correlations}, \index[def]{Decay of correlations} against $(\C_1, \C_2)$ observables, if there exists a function $\rho:\N\to [0, \infty)$ such that $\rho(n)\to 0$ as $n\to \infty$ such that for each pair $\phi\in \C_1$, $\psi\in \C_2$ there exists a constant $C_{\phi, \psi}>0$ such that for any $n\in \N$,
    $$\left|\int (\phi\circ f^n)\psi~d\mu-\int\phi~d\mu\int\psi~d\mu\right|\le C_{\phi, \psi}\rho(n).$$
    If $\rho(n)=O(e^{-\alpha n})$ for some $\alpha>0$, we say that we have exponential decay of correlations.

    For $\beta>0$, let $\H_\beta$ denote the set of $\beta$-H\"older continuous observables
     $\phi : I \to \R$.  

    \begin{theorem}[Decay of correlations and the ASIP]        \label{thm:ASIP DCor}
        Let $\beta >0$ and let $f$ be a piecewise-monotone map with non-positive Schwarzian derivative.  
        Assume $f$ is  topologically mixing on $J(f)$. For each $t \in (t^-, t^+)$, the (unique) equilibrium state for the potential $-t \log |Df|$ has exponential decay of correlations against $(L^\infty, \H_\beta)$ observables. The ASIP holds for zero-mean observables in $\H_\beta$. 
    \end{theorem}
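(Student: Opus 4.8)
The plan is to reduce the statement to a uniformly hyperbolic induced system with exponentially small return-time tails, and then quote standard consequences. Since $t\in(t^-,t^+)$ we have $P(-t\log|Df|)>P^0(-t\log|Df|)\ge E_+$, so by Theorem~\ref{thm:eqstates} there is a unique positive-entropy equilibrium state $\mu_t$ for $-t\log|Df|$, and by Lemma~\ref{lem:SMB} it gives full mass to $J(f)$. First I would lift $(f,\mu_t)$ to the Hofbauer extension (equivalently, code it by a countable Markov shift $\Sigma$), on which $\mu_t$ becomes the Gibbs state $\hat\mu_t$ for the lifted potential and the RPF machinery of the thermodynamic formalism for shifts on an infinite alphabet applies; topological mixing of $f$ on $J(f)$ passes to aperiodicity of the relevant irreducible component of $\Sigma$ (after, if necessary, passing to a power and restricting to that component).

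Second, I would construct an inducing scheme: choose a cylinder (or finite union of base states) $Y$ in $\Sigma$ on which the local expansion of $f$ is uniformly large, and let $F=f^\tau:Y\to Y$ be the first-return map. Using the non-positive Schwarzian derivative together with the uniform Koebe space for the natural induced maps (Lemma~\ref{lem:kappaPk}, via Lemma~\ref{lem:koebe}), $F$ is a full-branched uniformly expanding Markov map with uniformly bounded distortion, i.e.\ a Gibbs--Markov map, whose branches have diameters decaying exponentially in $\tau$. The crucial estimate is then $\hat\mu_t(\tau>n)\le C\theta^n$ for some $\theta\in(0,1)$: the set of points avoiding $Y$ for $n$ consecutive iterates supports only measures of free energy at most $P^0(-t\log|Df|)<P(-t\log|Df|)$ (the slow, near-critical and parabolic behaviour all live in the complement of $Y$ and carry at most the "zero-entropy" pressure $P^0$), so the $\mu_t$-measure of this set decays like $\exp(-(P-P^0)n+o(n))$; equivalently, in the language of countable Markov shifts, the induced system is positive recurrent with spectral radius of the relevant transfer operator strictly below $1$.

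Third, with a Gibbs--Markov induced map with exponential return-time tails in hand, the conclusions are standard. A Hölder observable $\phi\in\H_\beta$ pulls back to a function on the tower over $(Y,F,\tau)$ which is dynamically Hölder, precisely because the $F$-cylinders shrink exponentially; the transfer operator of the tower then has a spectral gap on the associated Banach space, and summing out the (exponential) tail of $\tau$ yields exponential decay of correlations against $(L^\infty,\H_\beta)$ observables for $(f,\mu_t)$. Finally, exponential decay of correlations together with the tower structure places us in the setting of \cite[Theorem~1.2]{MelNic05}, giving the ASIP for zero-mean $\phi\in\H_\beta$, with variance $\sigma^2=0$ exactly when $\phi$ is an $L^2(\mu_t)$-coboundary.

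The main obstacle is the exponential tail bound $\hat\mu_t(\tau>n)\le C\theta^n$ in this generality, where discontinuities, parabolic points and flat critical points are all permitted. Parabolic points are the delicate case, since measures concentrated near them have only polynomially small return times; the point is that the pressure gap $P-P^0>0$ forces $\mu_t$ to be exponentially far from the parabolic and critical sets, and converting this soft fact into the quantitative exponential estimate --- either through the variational characterisation of the pressure of the survivor set that never returns to $Y$, or through the discriminant/positive-recurrence criterion on $\Sigma$ --- is the technical heart of the argument. A secondary point needing care is arranging the coding to be topologically mixing and checking that Hölder-on-$I$ observables are regular enough on the tower, both of which rest on the bounded distortion coming from the non-positive Schwarzian hypothesis.
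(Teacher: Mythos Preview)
Your overall architecture is correct and matches the paper's: build a full-branched uniformly expanding induced Markov map with exponential return-time tails for the equilibrium measure, then invoke Young's tower theorem for exponential decay of correlations and Melbourne--Nicol for the ASIP. The paper packages the first step as Theorem~\ref{thm:fullbranch} and then checks the Melbourne--Nicol hypotheses exactly as you outline.

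The gap is in your justification of the exponential tail bound. Your claim that ``the set of points avoiding $Y$ for $n$ consecutive iterates supports only measures of free energy at most $P^0(-t\log|Df|)$'' is false as stated: the complement of a single cylinder in the Hofbauer extension can carry positive-entropy invariant measures (any hyperbolic horseshoe disjoint from $\pi(Y)$ will do), so ``avoiding $Y$'' is not the same as ``near-critical or parabolic''. A survivor-set-pressure argument at the level of the induced map $\hat F_\Y$ would at best give exponential decay in the number of $\hat F_\Y$-iterates, not in the actual inducing time $\tau$; converting one to the other is exactly where the hard work lies.

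The paper's mechanism is different and more concrete. The crucial ingredient is the purely combinatorial Lemma~\ref{lem:Rtower} (and its consequence Lemma~\ref{lem:inducingcyl}): the number of branches of the level-$R$ induced map with $\tau=n$ is at most $\exp(n\eps(R))$, with $\eps(R)\to 0$. Separately, Lemma~\ref{lem:nsdbounds} gives $-t\log|D\hat F_\Y|-p(t)\tau\le -2\theta\tau+C$, where $2\theta=p(t)-\max\{-t\lambda_{\min},-t\lambda_{\max}\}>0$ is the gap relative to $E_+$ (not $P-P^0$). The Gibbs property then bounds the measure of each branch by $K\exp(-2\theta\tau_i+C)$, and multiplying the branch count by the per-branch bound yields $\hat\nu(\tau^{-1}(n))=O(e^{-n\theta})$ once $R$ is chosen with $\eps(R)<\theta$. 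A second induction to a single $K$-cylinder then gives the full-branched map with exponential $\tau_0$-tails. Your proposal does not supply a substitute for the branch-counting step, and without it the tail estimate does not follow.
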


    \section{Necessity of positive entropy for statistical quasistability}
\label{sec:npesq}
    A $C^2$ map  $f  \in \F \in \F_\mathrm{NSD}$  is called \emph{Collet-Eckmann} if for each critical value $v\in \V$, all forward iterates are defined and $\liminf_{n \to \infty} \frac{1}{n} \log |Df^n(v)| >0$.  A $C^2$ map  $f \in \F\in \F_\mathrm{NSD}$  is called \emph{Misiurewicz}\index[def]{Misiurewicz maps} if all periodic points are hyperbolic repelling and if,  for all critical points $c$ (in the standard sense that $Df(c) =0$), the orbit of $c$ avoids a neighbourhood of the critical set.  
By Ma\~n\'e's theorem (see for example \cite[Theorem III.5.1]{MSbook}) or \cite{MisIHES1981},  any Misiurewicz map is also Collet-Eckmann. A $C^2$ Collet-Eckmann  map $f \in \F \in \F_\mathrm{NSD}$ with non-flat critical points has an acip \cite{ColEck83}.

 Sometimes, given a piecewise-monotone map of an interval, a subinterval will be forward-invariant under some iterate of the map.
One refers to this property as \emph{renormalisability}, see \cite[\S II.5a]{MSbook} for definitions and details. 
 One can study the dynamics  restricted to such forward-invariant subintervals independently from  the dynamics of points that never enter these subintervals. 
Repeated renormalisability entails a stratification of the phase space. As a potential varies, equilibrium states may jump from one level to another in a non-smooth fashion, as  considered in \cite{Dob09}. 
 This does not happen for non-renormalisable maps under reasonable transitivity assumptions. 
 
 Our next theorem goes in the other direction to Theorem~\ref{thm:epsquad} (equivalently, Corollary~\ref{cor:acip stab}): even if we restrict to the well-behaved class of non-renormalisable  Misiurewicz maps, we do not have statistical quasistability of acips without (for example) the uniform entropy condition, once we leave the topological class. Contrast with~\cite{Ru_Mis}, which gives linear response when one remains within the class and suggested it may hold if one remains tangential to the class. 
  
  \begin{theorem} [Varying entropy implies instability]\label{THM:THUN2}
      Let $\F\in \F_\mathrm{NSD}$ denote the class of $C^2$ unimodal maps $f$  with non-positive Schwarzian derivative, having $f(\partial I) \subset \partial I$ and $|Df_{|\partial I}| > 1$, endowed with the $C^0$ topology. 
      Consider a continuous one-parameter family of maps $(f_t)_{t \in [0, \theta)} \subset \F$, for some $\theta >0$. Suppose that $f_0$ is non-renormalisable and has all periodic points repelling, and that the topological entropy of $f_t$ is not locally constant at $t=0$. Let $C >0$ and suppose that for all $t \in [0, \theta)$, $\int_I \log |Df_t(x)| dx > -C$. 
    Let $p$ be a non-boundary periodic point of $f_0$.

    Then there is a sequence $t_k \to 0$ for which each $f_{t_k}$ is a non-renormalisable Misiurewicz map with acip $\mu_k$  and for which the measures $\mu_k$ converge to the equidistribution on the orbit of $p$. Moreover, $h(\mu_k) \to 0$. 
    \end{theorem}
    Since any $f_0$-invariant probability measure (not supported on $\partial I$) can be approximated by equidistributions on periodic orbits (noting $f_0$ is Misiurewicz, this is easy to show), there are $t_k$ such that the $\mu_k$ converge to whatever $f_0$-invariant probability measure we please.  That the entropy tends to zero follows from Theorem~\ref{thm:SSSS}. 

As a remark, the condition on the integral of $\log |Df_t(x)|$ just says that the critical points are uniformly not too flat. It is clearly satisfied by non-trivial quadratic maps of the interval. See \cite{BM:flat, Dob08} for results concerning existence of acips under this condition and non-existence when it fails. 
Theorem~\ref{THM:THUN2} immediately implies Theorem~\ref{thm:MisUnstable}.
    
Equilibrium states do not vary continuously (in general), even if their entropy is bounded away from zero, see Theorem~\ref{THM:EXO1BIS}.

%\section{Positive entropy does not imply statistical stability}
%\label{sec:pedniss}

%In this section we state that equilibrium states do not vary continuously (in general), even if their entropy is bounded away from zero. Theorem~\ref{THM:EXO1BIS} implies the following. 

    %\begin{theorem}\label{THM:EXO1} 
        %There exists a sequence $(f_k)_{k\geq0} \in \FNSD$ having decreasing critical relations with $f_k \to f_0$ as $k \to \infty$ with the following properties. Each $f_k,$ $k \geq 0$ has an acip $\mu_k$ with entropy uniformly bounded away from $0$; the measures $\mu_k$ converge to a strictly convex combination of the acip $\mu_0$ for $f_0$  and a Dirac mass on a repelling fixed point.
    %\end{theorem}

\section{Background}
For many classes of uniformly hyperbolic dynamical systems, statistical stability is well-established (in this article we will think of a dynamical system as the dynamics $f:I \to I$, plus some associated measure $\mu$ on $I$).  In particular, stability holds for systems where the dynamics $f:I \to I$ has an underlying (finite) Markov structure and the measures $\mu_f$ are equilibrium states for H\"older potentials $\phi_f:I \to \R$.
Indeed, for many such systems, it is known that the measures do not
merely vary continuously with the map, but actually vary
differentiably. 
For example, if $f_t:M\to M$
are $C^3$ Axiom A diffeomorphisms of a manifold $M$, each with a unique
\emph{physical} measure $\mu_t$, and the family $t \mapsto f_t$ is $C^3$,
then the map $t \mapsto \int\psi~d\mu_t$ is differentiable at $t=0$
for any real-analytic observable $\psi:M\to \R$, see \cite{KatKPW, Contreras, Rue}.  This theory has been further developed, placing it inside a general theory of `linear response' for dynamical systems, see  example \cite{Rulin} and references therein, as well as comments on the work of Baladi and Smania below.  

\subsection{Physical measures} 
An invariant measure is said to be \emph{physical}\index[def]{Physical measure} if there is a positive (full-dimensional) Lebesgue measure set of points $x \in M$ for which the average of Dirac measures along the orbit of $x$ converges to the invariant measure.  Any ergodic acip, if it exists, is a physical measure.  Alternatively if there is a periodic attractor, i.e. a point $x$ with $f^p(x)=x$ and $|Df^p(x)|<1$, then  the equidistribution on the orbit of $x$ is a physical measure. 
For smooth unimodal maps in general, physical measures can be very strange.  For example, a quadratic map with a physical measure which is the Dirac mass on a repelling fixed point was constructed in \cite{BrKel}, developing a construction in \cite{Joh87}.  
Perhaps even more surprisingly,  topologically transitive, unimodal maps of high critical order can have their physical measure supported on an absorbing Cantor set \cite{BKNS}.   

\subsection{Acips for smooth interval maps}
For example, let $\F_Q$ be the class of quadratic interval maps $f_a : x \mapsto ax(1-x)$, for $0<a \leq 4$, and consider the potential $\phi(x):=-\log|Df|$ for $f\in \F_Q$. By \cite{BloLyu89}, Lebesgue measure is ergodic. Any acip is a physical measure. Moreover it is an equilibrium measure with respect to the potential $\phi$, this following from \cite{Ledrap} and \cite{Prz93}. 
 On the other hand, if the map is hyperbolic, so it has a (hyperbolic) attracting periodic orbit, then the physical measure is the equidistribution on the attracting orbit.  
Hyperbolic parameters form an open, dense subset of $(0,4]$ (\cite{lyu, GraSw}), yet 
the set of parameters for which $f_a$ has an acip, by \cite{Jak, BenCarl}, has positive Lebesgue measure. There are further parameters for which acips do not exist, see for example \cite{HofKel90, AlkBruJak08}. 
This gives an indication of the complexity of the possible behaviour within the quadratic family. 

There are positive results regarding statistical stability in $\F_\mathrm{NSD}$. In \cite{Tsu_cty, Frei, FreT} it was shown that for large  families $\F\in \F_\mathrm{NSD}$ of smooth maps with acips, the acips depend continuously on the map.  In the first two of these papers, the maps had to have some exponential growth along the critical orbit, while in the final one, subexponential growth was sufficient.  However in all cases it was essential that the growth constants which defined $\F$ had to be uniform in the family.  For the purposes of the current article, we should point out that these uniform constants lead to uniform tail estimates (which correspond in our setting to $T^R_\Delta=0$, see \S\ref{sec:SSSS}, and a uniform lower bound of the entropy of the acips for all maps in $\F$).

\subsection{Prior results for fixed maps}
For a positive measure set of quadratic parameters near $a=4$, Pesin and Senti showed existence of equilibrium states and analyticity of the pressure for $t$ in a neighbourhood of $[0,1]$ and the potential $-t\log|Df|$ \cite{PeSe}. In \cite{BTeqnat, IomTeq}, these results were extended to eventually give existence of equilibrium states for $t \in (t^-, t^+)$ and differentiability of the pressure function for transitive (multimodal) maps. Recently, in \cite{PrzRL}, existence of equilibrium states and analyticity of the pressure were proven by Przytycki and Rivera-Letelier. Ideas were further developed in \cite{GelfPrzRams}. The paper \cite{PrzRL} transferred some results of \cite{PrzRLSm, PrzRLRational} for rational maps to the interval setting. In the same paper, strong results concerning alternative definitions of pressure and concerning non-uniform hyperbolicity (Topological Collet-Eckmann condition, etc.) were proven. They also show statistical properties such as decay of correlations and the Central Limit Theorem. Compared with that paper, ours has a very different (and, we feel, more canonical) method of constructing induced maps. We focus more on convergence properties of measures and thermodynamic quantities, while they concentrate on the pressure. Our results on almost upper-semicontinuity of the free energy are surprising. We have slightly weaker (see~\S\ref{sec:nsdweak}) hypotheses too, not requiring critical points to be non-flat, and allowing discontinuities and indifferent points. Of course, our article is geared towards proving results about families of maps; that we obtain both new and (slight generalisations of) recent results for fixed maps is a bonus. 

\subsection{Non-positive Schwarzian derivative and bounded distortion}\label{sec:nsdweak}
For the proof of Theorem~\ref{thm:SSSS}, we need distortion bounds and lower bounds on the derivative, see Lemma~\ref{lem:distn}. Uniformity of the bounds for the sequence of maps is important and follows from non-positive Schwarzian derivative and the uniform extensibility (\emph{aka} Koebe space) for the induced maps. If we assume the conclusions of Lemma~\ref{lem:distn} hold for our sequences, we can prove Theorem~\ref{thm:SSSS}. For further results, we just need some iterate of our induced maps to be expanding---the uniformity of Lemma~\ref{lem:expandingscheme} is unnecessary ($N,K$ may depend on $f$) for our purposes. Expansion follows from uniform bounded distortion for iterates of the  induced maps. For fixed maps, therefore, we could just assume some distortion bounds as done in 
 \cite[Definition~1.10]{PrzRL}; for sequences of maps we would need to assume uniformity in these bounds. For simplicity we assume non-positive Schwarzian derivative. We choose non-positive rather than negative because 
if one considers an infinitely-renormalisable quadratic map, 
the limit maps in renormalisation theory \cite[Theorem~1]{Sullivan} automatically have non-positive Schwarzian derivative, and this will be useful in applications of this work, see \cite{DobMih19}. 

\subsection{Return maps in the Hofbauer extension}
    First return maps in the Hofbauer extension \cite{Hpwise} were introduced in \cite{Bru95} and used more recently to good effect in \cite{BTeqnat, IomTeq}, for example. However, those works just look at the first return to a single interval, or `extensible column' in the extension, which engenders full-branched induced Markov maps. The tail estimates for such maps, however, are quite weak and led to difficulties in applications. 

    An important technical advance in this work is to take return maps to a possibly large, but finite, union of intervals in the extension. Combinatorial estimates then give strong control on the tails of the corresponding induced maps, see Lemma~\ref{lem:inducingcyl}. 

    For convergent sequences of maps, Hofbauer extensions need not converge. This problem was defined away in \cite{FreT} by excluding some possible limit maps, including any map with a preperiodic critical point. We overcome this difficulty by embedding our return maps in an ambient space, where a subsequence of the return maps converges. This limit return map will be an induced map for the limit base map, but may not correspond to a return map in the limit map's Hofbauer extension. 

\subsection{Upper-semicontinuity of entropy}
In piecewise-monotone families, topological entropy is lower-semicontinuous (note the number of branches is fixed). 
Misiurewicz has produced examples which show one cannot do better than  lower-semicontinuous \cite{Mis_jump}, see also \cite[Section 4.5]{AlsedaLlibreMis}. Measures of maximal entropy exist and have the same entropy as the map \cite{HofIntrinsic, Hof81}. Therefore metric entropy is not upper-semicontinuous in general (free energy is neither, then). In \cite{Misiurewicz and shlyachkov}, conditions are given for topological entropy to be continuous at a map (in terms of periodic orbits containing singularities). They consider all convergent sequences; we limit ourselves to those with decreasing critical relations but obtain more limit points, so there is some but not complete overlap. We prove upper semi-continuity of the metric entropy. This additionally implies that topological entropy is continuous at the limit map (for that sequence of maps).

\subsection{Keller's example} \label{sec:Kel}
In
\cite{Kel82}, Keller considers a sequence of continuous expanding, piecewise linear maps of the interval with four branches in the form of a {\tt W}. The two external branches can be assumed to be full. 
The internal branches meet at a tip just above the diagonal, and have slope approximately $2-\eps$. The fixed point near the tip and its symmetric preimage define a restrictive interval on which the map is a tent map with slope $2-\eps$. The acip for the map is supported on the restrictive interval. It has entropy equal to log of the slope of the internal branches, thus bounded away from zero. In the limit (where $\eps\to 0$), the tip meets the diagonal and the restrictive interval vanishes. The limit of the acips is an atom on the tip; meanwhile the limit map has its acip. 
This shows lack of quasistability. It can occur because the limit map has an extra critical relation: the turning point at the tip becomes a fixed point. This example has been further developed in \cite{Concordia, EslMis12}, where all maps are transitive; the limit map still has an extra critical relation. 
On the other hand, the examples of Theorems~\ref{THM:THUN2},~\ref{THM:EXO1BIS}  have decreasing critical relations. 

\subsection{Failure of statistical stability for acips for smooth interval maps} \label{sec:failure} 
Tsujii showed that, in the quadratic family, the Chebyshev parameter $4$ is accumulated by a positive measure set of once-renormalisable Collet-Eckmann parameters whose acips converge, as the parameters converge to $4$, to the Dirac mass on $0$ \cite[Remark~1.2]{Tsu_cty}. 
    These ideas were further developed by Thunberg \cite{Thun}.
    In particular, statistical stability does 
    does not hold for any full measure set of Collet-Eckmann parameters in the quadratic family. This lack of stability was due to renormalisation. 

In Theorem~\ref{THM:THUN2}, we show that statistical stability of acips does not hold even if one restricts to the very well-behaved, non-renormalisable Misiurewicz parameters (or in Corollary~\ref{cor:thun3}, if one restricts to full-measure sets of non-renormalisable Collet-Eckmann parameters). 
    At the boundary of Tsujii's and Thunberg's  renormalisation intervals are parabolic maps. Developing ideas of Homburg and Young \cite{HomYoun}, we find non-renormalisable and almost-parabolic Misiurewicz maps whose measures approximate the equidistribution along the parabolic orbit. 
    Showing convergence of measures is somewhat technical, requiring us to keep track of constants in the proof of existence of the acips, unlike say in the related estimates of Benedicks and Misiurewicz in \cite{BM:flat}. The entropy of the acips along our sequences of maps tends to zero. The proof of the theorem occupies \S\ref{sec:low ent}.
    In Theorem~\ref{THM:EXO1BIS}, we show that a lower bound on entropy does not guarantee statistical stability.

\subsection{Stronger versions of statistical stability}
In many works, see for example \cite{RycSor, Alv04, AlvVia_sta, AlvSou13} the convergence in $L^1(m)$ of the densities $\frac{d\mu_f}{dm}$ of acips $\mu_f$ is called \emph{strong statistical stability}.  The notion of statistical stability of measures we use here is sometimes referred to as weak statistical stability.  One can also ask for even stronger results.
Baladi, in \cite[\S1]{Bal_pwexp}, see also \cite[\S3.2]{Balopen}, asks about the smoothness of measure-parameter dependence for Collet-Eckmann unimodal maps. Our Theorem \ref{THM:THUN2} implies that some sort of uniformity condition on the Collet-Eckmann constants is required for continuity (let alone smoothness). 
Baladi and Smania, in \cite{BalSma08},  showed that the physical measure $\mu_0$ (an acip) for a tent map $f_0$  actually depends differentiably if the family of tent maps $(f_t)_{t\in (-\eps, \eps)}$, with corresponding physical measures $(\mu_t)_{t\in (-\eps, \eps)}$, is chosen to be tangent to the topological class of $f_0$.  On the other hand, for unimodal maps with critical points, the corresponding result in \cite{BalSma09} (see also \cite{Ru_Mis}) requires that the maps stay in a fixed topological class.
That requirement implied (see Appendix A of \cite{BalSma09}) uniformity of the Collet-Eckmann constants, as well as constant topological entropy.  
While this manuscript was being prepared, a paper of Baladi, Benedicks and Schnellmann appeared proving sharp results on statistical stability for a subset of the Collet-Eckmann parameters \cite{BalBenSch13}.  These are stronger conclusions, but for a much less general class of maps than those considered here.

\subsection{Higher dimensions}
For results on statistical stability for non-uniformly hyperbolic maps in higher dimensions, see \cite{Vasq} and \cite{ACF_hen} where partially hyperbolic diffeomorphisms and  H\'enon maps are considered.   See also \cite{Alv04, Aru}.  The results in the current article are restricted to the one-dimensional case, but we expect
that some analogue of our results carries over to the higher-dimensional case, probably requiring \emph{sufficiently large} entropy as in \cite{Buz_ent_exp} --  the main difficulty is then to build families of Markov extensions with the right properties for this setting.

    \section{Method and structure}
    First we give an overview of how almost-semicontinuity of the free energy may hold. The structure of the  article is then presented. 

    \subsection{Method}
    Due to the presence of critical points, it is convenient to use the Hofbauer extension $(\hat I, \hat f)$ of the original system $(I, f)$. 
    In Definition~\ref{def:canonR} we introduce the important \emph{(canonical) level-$R$ induced map} $(\hat X(R), \hat F, \tau)$, where $\hat X(R) \subset \hat I$ is a certain finite union of $R$-cylinders in the Hofbauer extension, $\hat F$ is the (Markovian) first return map to $\hat X(R)$ with return time function $\tau$. Note that $\hat X(R)$ is a finite union of intervals and that branches of the first return map will not be surjective. 

    Positive-entropy measures in $\M_{f}$ \emph{lift} to measures on the extension $(\hat I, \hat f)$. Counting arguments on the extension imply that if $R \geq 8d$ and $d \geq 2$, 
    \begin{equation} \label{eq:epsdef}
        \eps(R) = \eps(R,d) := 8 \frac{ \log R }{R}, \index{epsRd@$\eps(R,d) = 8 \frac{ \log R }{R}$}
    \end{equation}
    and 
    \begin{equation} \label{eq:etadef}
        \eta(R, d)) :=  \frac{ \eps(R)^2 }{2R(\log d)^2}, \index{eztaRD@$\eta(R, d)) =  \frac{ \eps(R)^2 }{2R(\log d)^2}$}
    \end{equation}
    then any measure $\mu \in \M_f$ with entropy at least $2\eps(R)$ must lift to a measure $\hat \mu$ with $$\hat \mu(\hat X(R)) \geq \eta(R,d).$$ 
    The normalised restriction $\hat \nu$ of the measure $\hat \mu$ to $\hat X(R)$ is $\hat F$-invariant. Kac' Lemma implies that  $$\int \tau \,d\hat \nu \leq \frac1{\eta(R,d)}$$
    (see \cite[\S1.5]{Aar97}, for example, for a proof of both of these facts).
    The counting arguments of Lemma~\ref{lem:Rtower} are an important refinement of the ideas in \cite[Theorem~9]{Hpwise}, and better implemented than in \cite[Lemma~4]{BTeqnat}. 

    For convergent sequences of $d$-branched maps, level-$R$ induced maps need not converge to a level-$R$ induced map for the limit system. 
    Embedding the induced maps in some ambient space, some subsequence of the induced maps does converge to a well-defined limit map, with controlled properties. 
    
    The estimates hitherto depend only on $R$ and $d$, so they hold for sequences of maps. 
    We study convergence of induced measures $\hat \nu$ embedded into the ambient space. The limit induced measure will \emph{spread} (Definition~\ref{dfn:spread}) to a light limit measure for the limit map (which may or may not be the one desired). 

    Tail estimates on the first return times then come into play. Our counting arguments imply that the number of branches with return time equal to $n$ grows at most at a small exponential rate, bounded by $\exp(n \eps(R))$.  This implies in some vague sense that the branches with high return time make little contribution to the entropy. 

    Drops in the Lyapunov exponent in the limit occur when diminishing mass for the induced measures has non-diminishing integral of $\log |DF|$, where $F$ represents the induced maps. This bit of integral gets lost in the limit. This only happens if there is also a drop $T_\Delta^R$ in the integral of the return time.  
    
    If $T_\Delta^R =0$ for some $R$, then the Lyapunov exponent is continuous (for the original sequence of measures). This continuity  remains true, merely assuming $\lim_{R \to \infty} T_\Delta^R = 0$. As entropy is upper-semicontinuous (Theorem~\ref{thm:introusc}), for these cases free energy will also be upper-semicontinuous. 
    Otherwise, if the limit is positive, the spread of the limit induced measure will have entropy strictly greater than the limit of the entropies. Either this measure will have free energy greater than or equal to the limit or, to balance out, some bit of free energy gets lost in the limit. In this latter case, some other similarly-constructed measure must have free energy strictly greater than the limit free energy, or one can approximate the lost free energy (a quantity greater than $E_+$) by a periodic orbit, giving a contradiction. Almost upper-semicontinuity of the free energy is obtained. 

    We induce $F$ again to obtain a full-branched Markov map with (exponential) good tail estimates.  This useful result is stated as Theorem~\ref{thm:fullbranch}. Application of thermodynamic formalism and ergodic theory for countable state Markov shifts gives analyticity of the pressure and uniqueness of equilibrium states, under transitivity assumptions. The almost sure invariance principle and decay of correlations follows, since our full-branched induced map can be viewed as a Young tower with exponential tails.

    \subsection{Structure}

In  Section~\ref{sec:cyl}, we detail some properties of $J(f)$. 
We introduce, in Section~\ref{sec:combi}, notions concerning the Hofbauer extension and its topological structure; important counting arguments are carried out in Lemma~\ref{lem:Rtower}. 
Canonical level-$R$ induced maps are introduced in Definition~\ref{def:canonR}.  In the subsequent section, we examine convergence of cylinder sets in Hofbauer extensions. 
    In Chapter~\ref{sec:embed}, we embed the Hofbauer extensions (and thus $\hat X(R)$) into an ambient space, obtaining convergence of the induced maps there to some limit map. 
    Continuing on the topological side, we then show in Chapter~\ref{sec:rarereturns} that given some orbit, there is another orbit which mimics the part of the orbit corresponding to large return times. The error term is uniform for a given sequence of maps.  

    Up until this point, everything was topological. 
    Next, in Chapter~\ref{sec:MM}, we describe the correspondence between measures on the interval and measures for induced maps.   The lift of a positive entropy measure to the Hofbauer extension  cannot have much mass near the boundary of $\hat X(R)$, we show. From this tightness-type result, we obtain limit induced measures in Chapter~\ref{sec:llm}.   Any drop in (metric) entropy is small compared to the  (possible) drop $T_\Delta^R$ in the integral of the inducing time. Upper-semicontinuity of metric entropy is proven. 

We present some standard distortion and expansion estimates for maps with non-positive Schwarzian derivative in Chapter~\ref{sec:NSD}.
All except the final statement of 
Theorem~\ref{thm:SSSS} is proven in Chapter~\ref{sec:SSSS}, with a casewise analysis depending on the behaviour of $T_\Delta^R$. For the final statement, to pass from a convex combination of equilibrium states to some light limit measure actually being an equilibrium state, we need a bound on the number of equilibrium states. 

In Chapter~\ref{chap:Kat}, we present some Katok theory (with self-contained, simple proofs) and, in Section~\ref{sec:ancillary}, several results depending on Theorem~\ref{thm:SSSS} and Katok theory.  
In Section~\ref{sec:tdf2}, facts concerning ergodic theory and thermodynamic formalism for countable state Markov shifts are reviewed. We obtain a full-branched induced map with exponential tails (with respect to the lift of the equilibrium measure). Thanks to the estimates for our first return maps defined earlier,  thermodynamic formalism almost automatically gives analyticity of the pressure and uniqueness of equilibrium states, at least under a transitivity assumption. Without the transitivity assumption, we obtain a bound on the number of equilibrium states with a given entropy and complete the proof of Theorem~\ref{thm:SSSS}. We obtain the almost sure invariance principle and decay of correlations. 

    In Chapter~\ref{sec:low ent} we prove Theorem~\ref{THM:THUN2} which, we recall, shows that for any natural family of unimodal maps and corresponding measures,  uniform positive entropy is necessary for quasistability.  

    Finally, in Chapter~\ref{sec:secexo1}, we show, for a toy model, that uniform positive entropy \emph{does not imply} stability of acips.

    \iffalse
\subsection{Running assumption}
 Given any $d$-branched piecewise-monotone map $f : \cup_{j=1}^d I_j \to I$, 
 we know $J(f)$ is forward-invariant and $J(f) \cap \E(f) = \emptyset$.  
 In Lemma~\ref{lem:SMB}
    we show that any positive-entropy measure $\mu \in \M_f$ must have $\mu(J(f)) = 1$. 
 Therefore to prove the  results stated in this introduction, we can and do, without loss of generality,  make the following assumption.
 
 \textbf{In the definition of piecewise-monotone maps, each $I_j$ is open.}

 \fi

% Without loss of generality, \textbf{we assume} $|I| =1$. 

    %%%%%%%%%%%%%%%%%%%%%%%%%%

    \section{Acknowledgements}  Some of this work came out of discussions between MT and J.M.\ Freitas and M.\ Holland at the University of Porto.  He would like to thank them for their comments and encouragement.  He would also like to thank M.\ Sambarino for his question about convergence of pressure.  We thank V.\ Baladi, F.\ Przytycki and J.\ Rivera-Letelier for several insightful conversations. The referee provided numerous helpful comments and questions.

\chapter{Topological structures}\label{sec:cylchap}

 \section{Piecewise preliminaries} \label{sec:cyl}

 %We denote by $\mathring{A}$ the interior of a set $A$. 
\begin{lemma}
 \label{lem:SMB}
 Given a piecewise-monotone map $f$, each  positive-entropy measure $\mu \in \M_f$ must have $\mu(J(f)) = 1$. 
 \end{lemma}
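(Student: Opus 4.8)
The plan is to show that a positive-entropy ergodic measure $\mu\in\M_f$ assigns full measure to each of the four defining conditions of $J(f)$ separately. Throughout, the key structural input is that $\mu$ has entropy $h(\mu)>0$ and Lyapunov exponent $\lambda(\mu)\ge 0$; combined with the Ruelle inequality (and the finiteness of the branch partition), positivity of entropy forces $\lambda(\mu)>0$, i.e.\ $\mu$ is hyperbolic in the sense defined in the introduction. Note also that since there are $d$ branches and $\mu$ is ergodic with $h(\mu)>0$, the generating partition $\{I_1,\dots,I_d\}$ is non-trivial and $\mu$ cannot be supported on a single periodic orbit (which would have zero entropy).

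First I would handle the condition $f^n(x)\notin\E(f)$ for all $n\ge 0$. The set $\E(f)=\bigcup_j\partial I_j$ is finite; if $\mu$ gave positive mass to $\bigcup_{n\ge0}f^{-n}(\E(f))$, then by ergodicity the orbit of $\mu$-a.e.\ point would hit $\E(f)$, and since the forward orbit of a point of $\E(f)$ either is eventually undefined or is a finite-to-one image, one checks that $\mu$ would be carried on a finite union of (pre-)periodic orbits, hence have zero entropy — a contradiction. The condition $f^{-n}(f^m(x))\cap\V(f)=\emptyset$ is handled similarly: $\V(f)$ is finite, and its grand orbit, if charged by $\mu$, again forces $\mu$ onto a countable forward-invariant set, contradicting $h(\mu)>0$ by a Shannon–McMillan–Breiman / entropy-of-countable-support argument. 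Next, recurrence: by the Poincaré recurrence theorem $\mu$-a.e.\ point is recurrent, so $x\in\overline{\{f^n(x)\}_{n\ge1}}$ for $\mu$-a.e.\ $x$; this is immediate and requires only $f$-invariance.

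The remaining, and I expect the main, obstacle is the accumulation condition: $\mu$-a.e.\ $x$ must be accumulated on \emph{both sides} by points of $\bigcup_{n\ge0}f^{-n}(\E(f))$ (and then, recursively, by points already known to satisfy the first three properties). The idea is that the backward grand orbit $G:=\bigcup_{n\ge0}f^{-n}(\E(f))$ partitions $I$ into the $n$-cylinders of the map; a point $x$ \emph{fails} to be accumulated on, say, the left by $G$ exactly when $x$ lies at the left end of an interval that is a cylinder of every level — i.e.\ $x$ is the endpoint of a "maximal" one-sided interval of injectivity for all iterates. The collection of such bad points is countable (each is determined by a one-sided itinerary that is eventually "trapped"), and more to the point, the set of $x$ whose forward orbit eventually lands in such a trapped interval is a forward-invariant set on which $f$ is, in the relevant direction, eventually a homeomorphism onto a fixed subinterval; pushing forward, the entropy contribution of this piece is zero, so $\mu$ cannot charge it. Once both-sided accumulation by $G$ is established for a full-measure set $X_0$, one intersects $X_0$ with the full-measure sets coming from the first three bullets to get a full-measure set $X_1$ of points satisfying properties one through three; then a.e.\ point of $X_1$ is accumulated on both sides by $X_1$ (since $X_1$ differs from $X_0$ by a null set, and $X_0$-points are accumulated both sides by $G\supset$ relevant points — one has to upgrade "accumulated by $G$" to "accumulated by points of $X_1$", which follows because $\mu$-a.e.\ nearby preimage point is itself generic, by Fubini/ergodicity applied to the natural extension). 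Intersecting all four full-measure sets yields $\mu(J(f))=1$.

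The delicate point in the last paragraph is the passage from "accumulated by the grand orbit of $\E(f)$" to "accumulated by points of $J(f)$ itself": one must produce, near a.e.\ $x$ and on both sides, points that are simultaneously recurrent, avoid $\E(f)$ forever, have grand orbit disjoint from $\V(f)$, and are themselves two-sided accumulation points of such. I would do this by working in the natural extension $(\hat X,\hat F,\hat\mu)$, where $\mu$-typical points have a full-measure set of "good" backward itineraries; a Fubini argument over the backward coordinate shows that $\mu$-a.e.\ $x$ has, arbitrarily close on each side, preimages under suitable branches of iterates of $\mu$-generic (hence good) points. This is routine in spirit but needs care to make sure the approximating points inherit all four properties, and that is where essentially all the work of the lemma lies.
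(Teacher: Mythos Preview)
Your overall decomposition---handle each defining property of $J(f)$ separately---is exactly what the paper does, and your treatment of recurrence and of the third bullet (countable sets have measure zero for a non-atomic ergodic measure) matches the paper's. But you have loaded the argument with machinery that is not needed, and in doing so you have misjudged where the work actually lies.

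First, the appeal to the Ruelle inequality and $\lambda(\mu)>0$ plays no role anywhere in the proof; the paper uses only that $\mu$ is ergodic, invariant, non-atomic (from $h(\mu)>0$), and that a countable set has measure zero. Drop all the hyperbolicity talk.

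Second, for the two-sided accumulation by $G=\bigcup_{n\ge0}f^{-n}(\E)$, the paper's argument is cleaner than your sketch and avoids the slip where you call the bad set ``countable'' before correcting yourself. One simply sets $E:=I\setminus\overline{G}$, an open forward-invariant set. Each connected component $E_j$ satisfies either $f^n(E_j)\cap E_j=\emptyset$ for all $n$, or $f^n(E_j)\subset E_j$ for some $n$; in either case all iterates of $f$ are homeomorphisms on $E_j$ (or eventually undefined), so by ergodicity and positive entropy $\mu(E_j)=0$. The boundary points of the $E_j$ form a countable set. Hence $I\setminus\overline{E}$, which is exactly the set of points accumulated on both sides by $G$, has full measure.

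Third, and most importantly, you have the difficulty backwards: the fourth bullet is \emph{not} where the work lies, and no natural extension or Fubini argument is needed. Once you have a full-measure set $X_1$ satisfying the first three bullets, the set of points not accumulated from (say) the left by $X_1$ is contained in $\bigcup_{q\in\mathbb{Q}}\{x:\mu((q,x))=0\}$, a countable union of $\mu$-null intervals, hence $\mu$-null. So $\mu$-a.e.\ point of $X_1$ is two-sidedly accumulated by $X_1$, and you are done. The paper does not even spell this out, treating it as implicit in the non-atomicity observation.
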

 \begin{proof}      To show this, note first that $\mu$ is ergodic with positive entropy. Hence it must be non-atomic, so any countable set  has measure zero. Moreover, $\mu$ is invariant so almost every point is recurrent. It suffices, therefore, to show that almost every point is accumulated on both sides by points from $\cup_{n\geq 0} f^{-n}(\E)$. %We can, moreover, assume $I$ is open. 
     Recall that $J(f)$ and $\E$ were defined in \S\ref{sec:EandJ}.

 Consider the set $$E = I \setminus \overline{\bigcup_{n\geq 0} f^{-n}(\E \cup \partial I)}.$$ It is an open set, so its connected components consist of a countable number of intervals. Note that $E$ is forward-invariant, $f(E) \subset E$. Fix some connected component $E_j$ of $E$.  
 If $f^n(E_j) \cap E_j \ne \emptyset$ for some $n >0$, then $f^n(E_j) \subset E_j$. 
 Either some iterate of $f$ is not defined anywhere on $E_j$ or all iterates are defined and homeomorphic on $E_j$. 
 Either way, it follows from ergodicity, invariance and positive entropy that $\mu(E_j) = 0$.   
 The boundary points of the components $E_j$ form a countable set, therefore with measure zero.  Let $E_*$ be the union of the closures of the connected components of $E$. Thus $I\setminus (E_* \cup \partial I) $ has full measure. But this is exactly the set of points accumulated on both sides by points from $\cup_{n\geq 0} f^{-n}(\E)$. 
 We conclude that almost every point is in $J(f)$. 
 \end{proof}

 Uniqueness of equilibrium measures will rely on the following rather general lemma. Its proof is very similar to that of Lemma~5.2 of \cite{Dob08}. Here we assume transitivity; in the setting of \cite{Dob08}, ergodicity was assumed, giving transitivity on a set of full measure. 

    \begin{lemma} \label{lem:transitivity}
        Let $f$ be a piecewise-monotone map, transitive on $J(f)$. Suppose there is an open interval $W$ containing a point from $J(f)$ and $l \geq 1$ such that $W \subset f^l(W)$. Then there exists $N \geq 1$ such that $$J(f) \subset \bigcup_{j=1}^N f^j(W).$$ 
    \end{lemma}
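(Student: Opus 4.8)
The plan is to use transitivity on $J(f)$ together with the hypothesis $W \subset f^l(W)$ to show that the forward orbit of $W$ under iterates that are multiples of $l$ already covers $J(f)$, and then a compactness/covering argument finishes the job. First I would observe that $W \subset f^l(W) \subset f^{2l}(W) \subset \cdots$ gives an increasing nested sequence of open sets; set $U := \bigcup_{m \geq 0} f^{ml}(W)$, which is open and satisfies $f^l(U) = U \setminus (\text{possibly some boundary effects})$, more precisely $U \subset f^l(U)$ and $U$ is forward-invariant under $f^l$. Since $W$ contains a point $x_0 \in J(f)$ and $J(f)$ is forward-invariant (stated in the excerpt: $f(J(f)) \subset J(f)$), each $f^{ml}(W)$ contains $f^{ml}(x_0) \in J(f)$, so $U$ meets $J(f)$.

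Next I would exploit transitivity on $J(f)$: there is a point $z \in J(f)$ whose forward orbit is dense in $J(f)$ (transitivity for these interval maps should give a dense orbit; alternatively one argues directly that every point of $J(f)$ is approximated by the forward orbit of the point $x_0 \in W \cap J(f)$, using that $J(f)$ is contained in the closure of $\bigcup_n f^{-n}(\E(f))$ and hence has no isolated points). The key step is: for \emph{every} $y \in J(f)$, some forward iterate $f^j(x_0)$ lands in $W$. Indeed, $y$ is accumulated on both sides by preimages of critical points, and transitivity lets us pull $W$ back near $y$; more concretely, density of the $J(f)$-orbit of $x_0$ means $f^{j}(x_0)$ visits the open set $W$ for infinitely many $j$. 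Combined with $W \subset f^l(W)$, once $f^j(x_0) \in W$ we get $f^j(x_0) \in f^{ml}(W)$ for all $m$ with $ml \geq 0$, and chasing the iterate forward, a suitable $f^{j'}(W)$ with $j' \geq j$ contains $y$ in its interior. This shows $J(f) \subset \bigcup_{j \geq 0} f^j(W)$, and since each $f^j(W)$ is open (images of open intervals under the continuous monotone branches, using the running assumption that the $I_j$ are open) while $J(f)$ is... not compact. So here I would instead pass to $\overline{J(f)}$, or better, note that the conclusion only needs finitely many $j$: because $W \subset f^l(W)$, the sets $f^{ml}(W)$ are nested increasing, so $\bigcup_{j=0}^{\infty} f^j(W) = \bigcup_{j=0}^{l-1} \bigcup_{m=0}^{\infty} f^{ml+j}(W) = \bigcup_{j=0}^{l-1} f^j\!\left(\bigcup_{m\geq 0} f^{ml}(W)\right) = \bigcup_{j=0}^{l-1} f^j(U)$, and $U = \bigcup_m f^{ml}(W)$ is an increasing union of open intervals (at least if $W$ is connected and the $f^{ml}(W)$ share the point $x_0$), hence $U$ is itself a single open interval. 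One then checks $J(f) \subset \bigcup_{j=0}^{l-1} f^j(U)$; truncating $U$ at a large but finite stage $m = M$, the relatively compact piece $\bigcup_{j=0}^{l-1}\bigcup_{m=0}^{M} f^{ml+j}(W)$ covers any prescribed compact subset of $J(f)$, and a Baire/covering argument (or the fact that the complement $J(f) \setminus \bigcup_{j=1}^N f^j(W)$ is both forward-invariant-ish and must shrink to nothing) upgrades this to a genuine finite cover with the stated $N$.

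The main obstacle I anticipate is the non-compactness of $J(f)$ — the excerpt explicitly warns that $J(f)$ is not closed — so one cannot naively extract a finite subcover from an open cover of $J(f)$. The resolution is the nesting $f^{ml}(W) \subset f^{(m+1)l}(W)$, which converts the a priori infinite union over $m$ into a finite union over residues $j \in \{0,1,\dots,l-1\}$ of the single open set $U$; the remaining work is to verify that $U$ (together with its first $l-1$ iterates) really does swallow all of $J(f)$, which is where transitivity on $J(f)$ and the density of the orbit of the point $x_0 \in W \cap J(f)$ are used. Since the statement says "Its proof is very similar to that of Lemma~5.2 of \cite{Dob08}," I expect this nesting-plus-transitivity scheme to be exactly the intended route, with the finite $N$ emerging from the residue decomposition rather than from compactness.
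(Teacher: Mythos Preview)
Your residue decomposition does not deliver a finite $N$: rewriting $\bigcup_{j\ge 0} f^j(W)$ as $\bigcup_{r=0}^{l-1} f^r(U)$ with $U=\bigcup_{m\ge 0} f^{ml}(W)$ is just a regrouping of the same infinite union, and truncating $U$ at stage $M$ still leaves you needing to prove that $\bigcup_{j=0}^{(M+1)l-1} f^j(W)$ already contains $J(f)$. The ``Baire/covering argument'' you allude to does not exist here, precisely because $J(f)$ is not compact (as you note yourself). There is also a prior gap: transitivity on $J(f)$ only gives $J(f)\subset \overline{\bigcup_{j\ge 0} f^j(W)}$, not containment in the open union --- a dense orbit enters $W$ and thereafter stays in $U$, so its closure contains $J(f)$, but that says nothing about points of $J(f)$ lying on $\partial U$. (A side issue: $f^{ml}(W)$ is in general a finite union of open intervals, not a single interval, so $U$ need not be connected.)

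The paper's proof is devoted almost entirely to handling exactly these boundary points. Writing $W_j=f^j(W)$ (nested, after reducing to $l=1$) and $W_\infty=\bigcup_j W_j$, one first shows that only finitely many connected components of $W_j$ can meet $J(f)$, by tracking how components of $W_j\setminus \E$ return to $W_N$ for a suitable $N$. One then isolates the obstruction: a point $x$ with the \emph{one-sided property}, meaning $W_\infty$ contains arbitrarily small one-sided neighbourhoods of $x$ meeting $J(f)$, none contained in any finite $W_j$. Such points lie in the finite set $\bigcup_k \partial\bigcup_{j\ge M} V_k^j$ of endpoints of the persistent components; pulling back shows they are (combinatorially) periodic, and then a one-sided repelling/attracting dichotomy at the periodic point produces a contradiction with the one-sided property. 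This finite-set-plus-periodicity mechanism is the missing idea in your proposal; nothing in the nesting-plus-transitivity scheme substitutes for it.
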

    \begin{proof}
        We can assume, without loss of generality, that $l = 1$, since $f^l$ would also be a piecewise-monotone map. 
Let $$W_j = \bigcup_{i=0}^j f^i(W) = f^j(W).$$ Since $\E = \E(f)$ is finite, there is an $N$ such that for all $j > N$, $W_j \setminus W_N$ does not form a one-sided neighbourhood of any point of $\E$. 

Suppose $j \geq N$ and $V$ is a connected component of $W_j \setminus \E$. Suppose there is some  $k \geq 1$  such that $f^k(V) \cap W_N \ne \emptyset$ and let $l$ be the least such $k$. 
Then $f^l|_{V}$ is continuous. 

For each connected component $V$ of $W_N \setminus \E$, let $n_V$ be the minimal $k\geq 1$ such that $f^k(V) \cap W_N \ne \emptyset$ if such a $k$ exists; otherwise set $n_V = 0$. 
Let $M$ be the maximum of the $n_V$, noting that $W_N$ has a finite number of connected components. 
Let $X$ be a connected component of $W_j$ for some $j\geq M$. Suppose $X$ does not contain a connected component of $W_M$. Let $y \in X$ and let $l \geq 1$ be minimal such that $y \in f^l(V)$, where $V$ is some component of $W_N\setminus \E$. Then $l>M$. In particular, $n_V = 0$, so $f^i(y) \notin W_N$ for all $i\geq 0$. This holds for each $y \in X$. Therefore $f^i(X) \cap W_N = \emptyset$ for all $i \geq 0$. By transitivity, $X \cap J(f) = \emptyset$. 

Let $V_1, \ldots, V_r$ denote the connected components of $W_M$.
For $j\geq M$ and $1 \leq k \leq r$, let $V^j_k$ denote the connected component of $W_j$ containing $V_k$. These are the only connected components of $W_j$ which may contain points from $J(f)$.

Let $W_\infty = \bigcup_{j\geq 0}  W_j$. 
We shall say that a point $x$ \emph{has the one-sided property} if 
 $W_\infty$ contains nested,  one-sided neighbourhoods $Z_k$ of the $x$ such that: each $Z_k$ contains points from $J(f)$,  each $Z_k \not\subset W_j$ for any $j < \infty$, and  $|Z_k| \to 0$ as $k \to \infty$. 
 
 Note that if the conclusion of the lemma does not hold, there exists a point $x$ which has the one-sided property. Suppose this is so. We must arrive at a contradiction. 

 Points with the one-sided property belong to the finite set $$\bigcup_{k=1}^r \partial \bigcup_{j\geq M} V^j_k.$$ 
If $y$ is in the interior of $W_\infty$, then $f^k(y)$ does not have the one-sided property for any $k\geq 0$. 
Suppose $x$ has the one-sided property and let $Z_x$ be a corresponding one-sided neighbourhood of $x$. Then 
there is a (we can assume strictly monotone) sequence $(y_n)_n \subset J(f)$  for which $f(y_n) \in Z_x$ and $f^n(y) \to x$ as $n \to \infty$. Let $y$ be its limit. 
Then $f( (y_n, y) )$ contains $(f(y_n), x) \subset Z_x$. Since $x$ has the one-sided property, $(y_n,y) \not\subset W_j$ for any $j>0$, so $y$ has the one-sided property. Moreover, if $y_n \in U_i$ for all large $n$, then the continuous extension of $f$ to $\overline{U_i}$ maps $y$ to $x$.

It follows that (the finite collection of) points having the one-sided property are periodic, in the sense that if $x,Z_x$ are as before and $Z_x$ is sufficiently small then some iterate $f^k(Z_x)$ is a small one-sided neighbourhood of $x$. 
The point $x$ is a fixed point of the continuous extension $g$ of $f^k_{|Z_x}$ to the closure $\overline{Z_x}$.
If $ g(Z_x) \subset Z_x$ for arbitrarily small  $Z_x$ then $J(f)$ must just be the orbit of $x$, by transitivity, and thus be contained in $f^k(W)$. 
Otherwise,  $x$ is a (one-sided, topologically-) repelling fixed point for $g$.
For points of $J(f)$  to accumulate near $x$ in $Z_x$, they have to come from somewhere: there must be a point $y \ne x$ and arbitrarily small,  one-sided neighbourhoods $Z_y$ of $y$ mapped by $f^k$ onto one-sided neighbourhoods of $x$ in $Z_x$, with $Z_y$ containing points from $J(f)$. 
But $y, Z_y$ are not periodic in the above sense, so $y$ cannot have the one-sided property. In particular, some sufficiently small $J_y$ is contained in some $W_j$. Then $W_{j+k} \supset f^k(J_y)$ contains some $J_x$, contradicting the one-sided property. 
    \end{proof}

    \section{Combinatorics in the Hofbauer extension} \label{sec:combi}
    Let $f : \cup_{j=1}^d I_j \to I$ be a $d$-branched piecewise monotone map. 
    Let us describe the dynamically defined cylinders.  
    Recall that we denote by $\mathring{A}$ the interior of a set $A$\index{Aint@$\mathring{A}$ the interior of $A$}. Let $f_\E$ denote the restriction of $f$ to $\cup_{j=1}^d \mathring{I}_j$. 
    Let $\P_0=\P_0^f:=\mathring{I}$ and $\P_1=\P_1^f:=\{\mathring{I}_1, \ldots, \mathring{I}_d\}$.  For $n \geq 2$, let  
    $$\P_n=\P_n^f := \bigvee_{i=0}^{n-1} f_\E^{-i}(\P_1).
    \index{Pnf@$\P_n^f$, $n$-cylinders} \index[def]{Cylinders} 
    $$
    Each  $\cyl \in \P_n$ is \emph{an $n$-cylinder}: 
    %a maximal open interval on which 
    $f^n:\cyl\to f^n(\cyl)$ is a well-defined homeomorphism, $f^j(\cyl) \cap \E = \emptyset$ for $j = 0,\ldots, n-1$ and $\cyl$ is a maximal interval with these properties. Cylinder sets are open. If $n \geq 1$, the boundary points of $\cyl$ are in the set $\cup_{j=0}^{n-1} f^{-j}(\E)$.
We let $\cyl_n[x]$ \index{cyln@ $\cyl_n[x]$, the element of $\P_n^f$ containing $x$} denote the member of $\P_n$ containing $x$.  

    \begin{lemma}
        \label{lem:cylshrinks}
        For $x \in J(f)$, $\cyl_n[x]$ is defined for all $n \geq 0$ and contains $x$ in its interior and $\cyl_n[x]$ shrinks to the point $x$ as $n \to \infty$.  
    \end{lemma}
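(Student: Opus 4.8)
The plan is to prove the two assertions together: that $\cyl_n[x]$ is defined (and contains $x$ in its interior) for every $n$, and that $|\cyl_n[x]| \to 0$. The first assertion is essentially bookkeeping using the definition of $J(f)$; the second is the real content and will rest on the absence of wandering intervals for piecewise-monotone maps together with the structure of $J(f)$.

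First I would establish that $\cyl_n[x]$ is well-defined and has $x$ in its interior. By definition of $J(f)$, all forward iterates $f^n(x)$ are defined, none lie in $\E(f)$, and $x$ is accumulated on both sides by points of $\cup_{m\ge 0} f^{-m}(\E)$. Since $f^n(x) \notin \E$ for all $n$, for each $i < n$ the point $f^i(x)$ lies strictly inside some branch $I_{j_i}$, so there is a genuine open interval around $x$ on which $f^n$ is defined and monotone on each step; this interval is $\cyl_n[x]$, and $x$ is interior to it because the cylinder endpoints lie in $\cup_{j=0}^{n} f^{-j}(\E)$, a set which (by the accumulation property, using that $f^n(x)\notin\E$) does not contain $x$. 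The intervals $\cyl_n[x]$ are nested and decreasing in $n$, so $\cyl_n[x] \to L$ for some interval (possibly degenerate) $L \ni x$.

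The main step is to show $L = \{x\}$. Suppose not, so $L$ is a non-degenerate interval containing $x$ in its closure, and $f^n$ is defined and monotone on the interior of $L$ for every $n$ (since the interior of $L$ is contained in every $\cyl_n[x]$). Thus the interior of $L$ is a \emph{wandering interval}: its forward images $f^n(\mathrm{int}\,L)$ are pairwise disjoint or eventually periodic. Here I would invoke the standing hypothesis of non-positive Schwarzian derivative (and bounded derivative) on each branch, which is exactly the setting in which wandering intervals are excluded --- this is the classical no-wandering-intervals theorem for $C^2$ maps with non-flat/Schwarzian conditions (see \cite[\S IV.1]{MSbook}), adapted to the piecewise-monotone case. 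Concretely: the forward orbit of $\mathrm{int}\,L$ cannot be infinitely many disjoint intervals (infinite total length in a bounded interval), nor can it be eventually periodic with $f^k$ mapping an interval into itself, because then $x$ (or a point of $J(f)$ near $x$) would be trapped in a restrictive interval, contradicting the defining accumulation-by-preimages-of-$\E$ property of $J(f)$ --- a restrictive interval contains no points of $\cup_m f^{-m}(\E)$ in its interior beyond its own iterates. Either way we reach a contradiction, so $L$ is degenerate and $\cyl_n[x]$ shrinks to $x$.

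The hard part will be making the "restrictive interval" alternative airtight without quoting a heavy black box: one must show that if some $f^k$ maps an interval $V \subset L$ into itself, then $V$ cannot meet $J(f)$ and cannot be accumulated from inside by preimages of $\E$, which contradicts $x \in \overline{V}$ having the accumulation property in the definition of $J(f)$. I would handle this by noting $\mathrm{int}\,V$ would be disjoint from $\cup_{m\ge 0} f^{-m}(\E)$ (otherwise some cylinder $\cyl_n[x]$ would already be cut inside $V$, contradicting $V \subset \bigcap_n \cyl_n[x]$), and then the points of $J(f)$ accumulating at $x$ from the side contained in $V$ would have to be recurrent points whose orbits never hit $\E$ yet stay in a region disjoint from all preimages of $\E$ --- which is incompatible with them being accumulated on both sides by $\cup_m f^{-m}(\E)$. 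This is the delicate bookkeeping; the rest is the standard no-wandering-intervals input.
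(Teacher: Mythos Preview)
You have overlooked the key feature of the definition of $J(f)$ that makes this lemma immediate. The paper's proof is literally one sentence: ``This follows immediately from the definition of $J(f)$.''

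The point you missed is that the second bullet in the definition of $J(f)$ --- that $x$ is accumulated \emph{on both sides} by points of $\bigcup_{m\ge 0} f^{-m}(\E)$ --- is exactly what forces the cylinders to shrink. The boundary points of any $n$-cylinder lie in $\bigcup_{j=0}^{n-1} f^{-j}(\E)$, and conversely any point $l \in f^{-m}(\E)$ cannot belong to the interior of $\cyl_{m+1}[x]$ (since $f^{m+1}$ fails to be a homeomorphism across $l$). So if $l<x<r$ with $l \in f^{-m}(\E)$ and $r \in f^{-m'}(\E)$, then for $n > \max(m,m')$ the cylinder $\cyl_n[x]$ is contained in $[l,r]$. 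Since such $l,r$ exist arbitrarily close to $x$, the cylinders shrink to $\{x\}$. No smoothness, no Schwarzian, no wandering-interval theorem is needed; the definition of $J(f)$ was crafted precisely so that this argument works.

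Your proposed route has real problems beyond being unnecessarily heavy. First, the lemma is stated for arbitrary piecewise-monotone maps, well before the non-positive Schwarzian hypothesis is introduced (that enters only in \S\ref{sec:NSD}); you are not entitled to invoke it here. Second, your claim that a wandering interval is impossible because of ``infinite total length in a bounded interval'' is simply false: infinitely many disjoint subintervals of $I$ can have finite total length, and indeed wandering intervals \emph{do} exist for general continuous (non-smooth) interval maps. The correct obstruction in the disjoint-images case is recurrence of $x$, not length. Third, your restrictive-interval alternative, while salvageable in principle, is doing by hand exactly what the accumulation clause in the definition already hands you for free. You even quoted that clause in your first paragraph --- but then used it only for the interior property and forgot it when proving shrinking, which is where it actually does the work.
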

    \begin{proof} This follows immediately from the definition of $J(f)$.
    \end{proof}

    One can code the cylinder sets as follows. For $n \geq 1$, let 
    $$
    \Sigma^n_d := \{1, 2, \ldots, d\}^n.
    \index{Sigman@$ \Sigma^n_d$, coding space for $n$-cylinders}$$
    Any $\omega \in \Sigma^n_d$ is of the form $\omega = (\omega_1 \omega_2 \ldots \omega_n)$. 
    Given such an $\omega$, there exists at most one $n$-cylinder, denoted $\cyl^\omega$, such that $f^{l-1}(\cyl^\omega) \subset I_{\omega_l}$ for $l = 1, \ldots n$. Of course, each cylinder is of this form for some $\omega$.

    Set 
    $$
    \D := \{f^k(\cyl) : k\geq 0, \cyl \in \P_{k}\}.
    \index{Domains@$\D$, domains of the Hofbauer extension}$$
    As $\D$ is a set, each element $D \in \D$ appears once. 
    As in \cite{Hpwise}, the \emph{Hofbauer extension}\footnote{This is the Hofbauer extension of $f$ restricted to $\cup_{j=1}^d \mathring{I_j}$. Restricting to open cylinder sets, as we do here, has advantages when considering convergence for families of maps, at the cost of a countable set of points.} is defined as 
$$\hat I:= \bigsqcup_{D\in \D} D.\index[def]{Hofbauer extension} \index{I@$\hat I$, the Hofbauer extension} $$ 
with the disjoint union topology on $\hat I$. Note that each $D \in \D$ is an open subinterval of $\R$. 
We call each $D$ a \emph{domain} of $\hat I$. 
%The topology on each domain $D\in \D$ is that induced from the usual topology on $\R$, and the topology on $\hat I$ is then the disjoint union topology.
There is a natural projection map 
$\pi:\hat
I \to I$.\index{pi@$\pi$ the projection map from $\hat I$ to $I$}  A point $\hat x\in \hat I$\index{xhat@$\hat x=(x, D)$, a point in the Hofbauer extension} is in some $D \in \D$;  $\hat x$ can
be represented by $(x,D)$ where 
$x=\pi(\hat x)$.  Given $\hat x\in \hat I$, we can denote the domain $D\in \D$ it belongs to by $D_{\hat x}$.

The map 
$$\hat f: \pi^{-1}\left( \bigcup_{j=1}^d I_j \right ) \to \hat I$$\index{fhat@$\hat f$, the dynamics on $\hat I$}
is defined by
$$\hat f(\hat x) = \hat f(x,D) = (f(x), D')$$
if there are cylinder sets $\cyl' \subset \cyl$, with $\cyl'\in \P_{k+1}$ and $\cyl \in \P_{k}$, \st $$x \in
f^k(\cyl') \subset f^k(\cyl) = D$$ and $D' = f^{k+1}
(\cyl')$.
Equivalently, there are $k, y$ such that $f^k(y) = x$,  $f^k(\cyl_k[y]) = D$ and $f^{k+1}(\cyl_{k+1}[y]) = D'$. 
In this case, we write $D \to D'$, giving $(\D, \to)$ the
structure of a directed graph. A path from $D_1$ to $D_2$ in $(\D,\to)$ is a sequence of arrows in the graph leading from $D_1$ to $D_2$.   The map $\pi$
acts as a semiconjugacy between $\hat f$ and $f$: $$\pi\circ \hat
f=f\circ \pi.$$  
The Hofbauer extension has the useful property of being Markovian: 
\begin{lemma} \label{lem:MarkovH}
    Let $\hat x \in D \in \D$, let $n \geq 1$ and suppose $D' \in \D$ contains $\hat f^n(\hat x)$. Then some neighbourhood $W \subset D$ of $\hat x$ is mapped homeomorphically by $\hat f^n$ onto $D'$. 
\end{lemma}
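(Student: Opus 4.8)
The plan is to establish the Markov property of the Hofbauer extension by unwinding the definition of $\hat f$ one step at a time and tracking how cylinder sets propagate. First I would reduce to the case $n=1$: if the claim holds for single iterates of $\hat f$, then given $\hat x \in D$ with $\hat f^n(\hat x) \in D'$, one applies the $n=1$ statement successively along the orbit $\hat x, \hat f(\hat x), \ldots, \hat f^n(\hat x)$, obtaining at each stage a neighbourhood mapped homeomorphically onto the next domain, and intersects the pullbacks. The only subtlety in this reduction is that the intermediate neighbourhoods must be nested correctly, i.e.\ that pulling back the neighbourhood of $\hat f(\hat x)$ in $D''$ under the homeomorphism $\hat f : W_0 \to D''$ (where $W_0$ is the $n=1$ neighbourhood at $\hat x$) yields an open set still contained in $W_0 \subset D$; this is automatic since $\hat f : W_0 \to D''$ is a homeomorphism.

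For the base case $n=1$, recall from the definition that if $\hat x = (x,D) \in D$, then there are $k$ and $y$ with $f^k(y)=x$, $f^k(\cyl_k[y]) = D$, and if $\hat f(\hat x) = (f(x), D')$ then $D' = f^{k+1}(\cyl_{k+1}[y])$. Here $\cyl_{k+1}[y] \subset \cyl_k[y]$ is the $(k+1)$-cylinder, so $f^{k+1}: \cyl_{k+1}[y] \to D'$ is a homeomorphism by the very definition of cylinders (Lemma statement: $f^{k+1}$ restricted to a $(k+1)$-cylinder is a homeomorphism onto its image). Now set $W := f^k(\cyl_{k+1}[y])$. This is an open subinterval of $f^k(\cyl_k[y]) = D$ containing $x$, hence $W$ lifts to an open neighbourhood of $\hat x$ inside $D$. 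Since $f^k : \cyl_{k+1}[y] \to W$ is a homeomorphism and $f^{k+1} = f \circ f^k : \cyl_{k+1}[y] \to D'$ is a homeomorphism, we get that $f : W \to D'$ is a homeomorphism; by the definition of $\hat f$ on $\pi^{-1}(\cup_j I_j)$ and since $\pi$ is a semiconjugacy, $\hat f$ acts on the lift of $W$ exactly as $f$ acts on $W$, so $\hat f$ maps this neighbourhood homeomorphically onto $D'$.

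One point requiring a little care: the representation $(k,y)$ of $\hat x$ and the value of $D'$ must be consistent with the specific arrow $D \to D'$ realised by $\hat f(\hat x)$, rather than some other domain over $f(x)$. Different choices of $(k,y)$ representing the same $\hat x \in D$ give the same image domain $D'$ under $\hat f$, because $\hat f$ is well-defined; so we may fix any such representative. I expect the main (though still modest) obstacle to be purely bookkeeping: making sure that ``neighbourhood in $D$'' is interpreted correctly in the disjoint-union topology of $\hat I$ (each domain $D$ is an interval and carries its interval topology, with $\hat I$ their disjoint union), and that the cylinder $\cyl_{k+1}[y]$ is genuinely a subset of $\cyl_k[y]$ with $x$ in the interior of its image — both of which follow from Lemma~\ref{lem:cylshrinks} and the definitions in this section. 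No hard analysis is involved; the content is entirely combinatorial-topological and follows the scheme of \cite{Hpwise}.
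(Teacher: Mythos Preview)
Your argument is correct and is precisely the unpacking of ``immediate from the definition of $\hat f$'' that the paper leaves implicit; the paper's own proof is a one-liner to that effect, so your approach coincides with it, only with the bookkeeping written out. One small remark: you need not invoke Lemma~\ref{lem:cylshrinks} (which assumes $x\in J(f)$) to get that $x$ lies in the interior of $W=f^k(\cyl_{k+1}[y])$; this already follows from the running assumption that the $I_j$ are open, so cylinders are open intervals and $f^k$ restricted to $\cyl_{k+1}[y]$ is a homeomorphism onto an open interval containing $x=f^k(y)$.
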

    \begin{proof} This follows by induction from the definition of $\hat f$. 
    \end{proof}

\begin{lemma} \label{lem:meetup}
    %Let $f$ be a piecewise-monotone map. 
 If $\hat x_1, \hat x_2\in \hat I$ have $\pi(\hat x_1)=\pi(\hat x_2)\in J(f)$ then there exists $n\in \N$ such that $\hat f^n(x_1)=\hat f^n(x_2)$.
\end{lemma}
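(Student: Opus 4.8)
The plan is to follow the $\hat f$-orbit of a point $\hat x=(x,D)$ over $x$ by means of cylinders of the base map, the point being that once the cylinder currently being shadowed is smaller than $D$, the domain coordinate no longer remembers $D$. For $i=1,2$ write $\hat x_i=(x,D_i)$ and fix $k_i\ge 0$ and $y_i$ with $f^{k_i}(y_i)=x$ and $D_i=f^{k_i}(\cyl_{k_i}[y_i])$ (these exist by the definition of $\D$). Note each domain in $\D$, being the homeomorphic image of an open cylinder under a power of $f$, is an open interval, so $x$ lies in the interior of $D_i$. Since $x\in J(f)$ the whole forward orbit of $x$ is defined and avoids $\E(f)$, so $\hat f^n(\hat x_i)$ is defined for every $n$, and a routine induction on $n$ straight from the definition of $\hat f$ gives
$$
\hat f^n(\hat x_i)=\bigl(f^n(x),\ f^{k_i+n}(\cyl_{k_i+n}[y_i])\bigr),\qquad n\ge 0 .
$$
At the inductive step one uses only that the pair $(k_i+n,\cyl_{k_i+n}[y_i])$ legitimately realises the current domain and that the $(k_i+n+1)$-subcylinder of $\cyl_{k_i+n}[y_i]$ whose $f^{k_i+n}$-image contains $f^n(x)$ is $\cyl_{k_i+n+1}[y_i]$; this rests on the Markov property, Lemma~\ref{lem:MarkovH}.

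Next I would strip the dependence on the history of $D_i$ out of the right-hand side. Because $f^{k_i}$ maps $\cyl_{k_i}[y_i]$ homeomorphically onto $D_i$ and carries $y_i$ to $x$, and because the itinerary of $\cyl_{k_i+n}[y_i]$ is that of $\cyl_{k_i}[y_i]$ followed by the first $n$ symbols of the itinerary of $x$, one gets $f^{k_i}\bigl(\cyl_{k_i+n}[y_i]\bigr)=D_i\cap\cyl_n[x]$, hence, applying the homeomorphism $f^n|_{\cyl_n[x]}$,
$$
f^{k_i+n}(\cyl_{k_i+n}[y_i])=f^n\bigl(D_i\cap\cyl_n[x]\bigr).
$$
By Lemma~\ref{lem:cylshrinks}, $\cyl_n[x]$ shrinks to $\{x\}$, so there is $n$ (depending on $x,D_1,D_2$) with $\cyl_n[x]\subset D_1\cap D_2$; for such $n$ the displayed domain equals $f^n(\cyl_n[x])$ for both $i=1$ and $i=2$. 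Therefore $\hat f^n(\hat x_1)=\bigl(f^n(x),f^n(\cyl_n[x])\bigr)=\hat f^n(\hat x_2)$, which is exactly the assertion.

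The only genuine work is the two bookkeeping identities — the inductive formula for $\hat f^n(\hat x_i)$ and the equality $f^{k_i}(\cyl_{k_i+n}[y_i])=D_i\cap\cyl_n[x]$ — and I do not anticipate a real obstacle there, since both are immediate from the cylinder/Markov structure once one matches itineraries. The one point demanding care is ensuring that every iterate in sight (of $f$ and of $\hat f$) is defined; this is precisely what $x\in J(f)$ buys us, via the facts that $f^n(x)\notin\E(f)$ for all $n$ and that $x$, being recurrent, has a well-defined forward orbit.
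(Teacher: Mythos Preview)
Your proof is correct and follows essentially the same approach as the paper's: both arguments use Lemma~\ref{lem:cylshrinks} to find $n$ with $\cyl_n[x]\subset D_1\cap D_2$ (as intervals in $I$) and then observe that the domain coordinate of $\hat f^n(\hat x_i)$ is $f^n(\cyl_n[x])$ regardless of $i$. The paper compresses your inductive formula and the identity $f^{k_i}(\cyl_{k_i+n}[y_i])=D_i\cap\cyl_n[x]$ into the single phrase ``the construction of the extension implies\ldots''; you have simply unpacked this bookkeeping explicitly.
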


\begin{proof}
    Suppose that $x=\pi(\hat x_1)=\pi(\hat x_2)$ and let $D_1$ and $D_2$ denote the domains in the Hofbauer extension containing $\hat x_1$ and $\hat x_2$ respectively. By Lemma~\ref{lem:cylshrinks}, there exists an $n \geq 1$ such that   $\cyl_n[x] \subset \pi (D_1 \cap D_2)$. 
The construction of the extension implies that $\hat f^n(\hat x_1),\hat f^n(\hat x_2)\in f^n(\cyl_n[x])\in \D$ and hence that $\hat f^n(\hat x_1)=\hat f^n(\hat x_2)$.  
\end{proof}

        \begin{lemma} \label{lem:path12}
            Suppose $f$ is transitive on $J(f)$. 
            Let $D_1, D_2 \in \D$ be domains for which the first return maps  each have at least two branches. Then there is a path from $D_1$ to $D_2$. 
        \end{lemma}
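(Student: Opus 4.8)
The plan is to exploit transitivity on $J(f)$ together with the Markov structure of the Hofbauer extension recorded in Lemmas~\ref{lem:MarkovH} and~\ref{lem:meetup}. First I would observe that since the first return map to $D_1$ has at least two branches, $D_1$ contains two disjoint subcylinders $C', C''$ each returning to $D_1$; in particular $D_1$ contains a point $\hat x$ whose projection $x = \pi(\hat x)$ lies in $J(f)$ (indeed, the set of points in $D_1$ with all iterates defined and projecting into $J(f)$ is nonempty, being of full measure for any lifted positive-entropy measure, or alternatively constructible directly from the branching). Symmetrically, $D_2$ contains a point $\hat y$ with $\pi(\hat y) = y \in J(f)$, and the branching of the return map to $D_2$ will be used at the end of the argument.

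Next I would use transitivity of $f$ on $J(f)$: since $x, y \in J(f)$, there is some $m \geq 0$ with $f^m$ carrying a small neighbourhood of $x$ in $J(f)$ to a set meeting a neighbourhood of $y$; more precisely, pick a point $z \in J(f)$ in a tiny cylinder neighbourhood of $x$ in $D_1$ with $f^m(z)$ in a tiny cylinder neighbourhood of $y$ in $D_2$, using that preimages of $\E(f)$ (hence cylinder endpoints) are dense near points of $J(f)$ and that the forward orbit of any point of $J(f)$ is dense in $J(f)$. Lift $z$ to the point $\hat z \in D_1$ with $D_{\hat z} = D_1$. Then $\hat f^m(\hat z)$ lies in some domain $D_3 \in \D$, and by Lemma~\ref{lem:MarkovH} there is a path $D_1 \to \cdots \to D_3$ in $(\D, \to)$. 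It remains to connect $D_3$ to $D_2$. Since $\pi(\hat f^m(\hat z)) = f^m(z)$ lies in the cylinder neighbourhood of $y$ inside $D_2$, and $f^m(z) \in J(f)$, Lemma~\ref{lem:meetup} (applied to $\hat f^m(\hat z) \in D_3$ and the lift of $f^m(z)$ sitting in $D_2$, which have the same projection in $J(f)$) yields an $n$ with $\hat f^n$ identifying them; hence $\hat f^n$ maps a neighbourhood in $D_3$ onto some domain $D_4$, and likewise iterating from $D_2$ reaches $D_4$, giving paths $D_3 \to \cdots \to D_4$ and $D_2 \to \cdots \to D_4$. This is not yet what we want — we need to arrive \emph{at} $D_2$, not pass through it.

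To fix the direction, I would instead argue as follows: having reached (via the path from $D_1$) some domain $D_3$ whose projection contains the point $f^m(z) \in J(f)$ lying in $D_2$, I use that $f^m(z)$ has dense forward orbit in $J(f)$ and that $D_2$ has a return branch, so there is a further time $\ell$ with $f^{m+\ell}(z)$ landing back in a subcylinder of $D_2$ that is the image of a cylinder realising a return branch; tracking this through the extension, $\hat f^{m+\ell}(\hat z)$ lands in a domain $D_5$ with $D_5 = D_2$ (the return-branch structure of $D_2$ forces the itinerary to re-enter $D_2$ as a genuine domain, not a sub-copy). This gives a path $D_1 \to \cdots \to D_2$ as required. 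The main obstacle — and the step needing care — is precisely this last point: ensuring the itinerary genuinely terminates at the domain $D_2$ itself rather than at some other domain projecting over the same interval, which is where the hypothesis that the first return map to $D_2$ has at least two branches is essential (it guarantees $D_2$ is "recurrent" in $(\D, \to)$, so that nearby orbits in $J(f)$ do re-enter $D_2$ as a domain). Everything else is routine bookkeeping with cylinders, Lemma~\ref{lem:cylshrinks}, and the Markov property.
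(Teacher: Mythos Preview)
Your proposal correctly identifies the architecture of the argument and, to your credit, also isolates the genuine obstacle: Lemma~\ref{lem:meetup} only gives a common \emph{future} domain $D_4$ for the two lifts, not a path terminating at $D_2$. However, your proposed fix does not close this gap. The assertion that $\hat f^{m+\ell}(\hat z)\in D_2$ because its projection $f^{m+\ell}(z)$ lands in the projection of a return branch of $D_2$ conflates the base dynamics with the extension. The domain containing $\hat f^{m+\ell}(\hat z)$ is determined by the full cylinder history of $\hat z$, and distinct domains of $\hat I$ may project onto overlapping intervals; the return branches of $D_2$ are subsets of $D_2$, and there is no reason the $\hat f$-orbit of $\hat z$, which is currently passing through $D_3$, ever enters them. (A smaller slip: it is not true that every point of $J(f)$ has dense forward orbit---transitivity gives the existence of one such point, nothing more.)

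The paper supplies the missing idea: choose the target point in $D_2$ to be \emph{periodic for $\hat f$}. Because the first return map to $D_2$ is full-branched with at least two branches (Lemma~\ref{lem:inducingcyl}), $D_2$ contains infinitely many repelling $\hat f$-periodic points, and all but finitely many of them avoid the post-critical set and hence project into $J(f)$; pick such a $\hat p_2$, and similarly a periodic $\hat p_1\in D_1$ with $p_1=\pi(\hat p_1)\in J(f)$. Now take a small interval $W_1\ni p_1$ with $W_1\subset f^l(W_1)$ and invoke Lemma~\ref{lem:transitivity} (not bare transitivity) to obtain $N$ with $p_2\in\bigcup_{j\le N}f^j(W_1)$; this produces $\hat y\in D_1$ with $\pi(\hat f^j(\hat y))=p_2$ \emph{exactly}. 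Applying Lemma~\ref{lem:meetup} to $\hat f^j(\hat y)$ and $\hat p_2$ then merges the orbit of $\hat y$ with the finite $\hat f$-periodic cycle of $\hat p_2$, which returns to $D_2$ after boundedly many further steps. Periodicity of the target is precisely what turns ``common future domain'' into ``path ending at $D_2$'', and it is the step your argument lacks.
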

        \begin{proof}
            The first return map to any domain in $\D$ is full-branched, by Lemma~\ref{lem:MarkovH}, so $D_1$ and $D_2$ each contain infinitely many repelling periodic points. Meanwhile, there are only finitely many periodic points in the post-critical orbits, that is, contained in the set
            $\cup_{n\geq 0} f^n(\V)$. Therefore there are infinitely many repelling periodic points in each of $D_1$ and $D_2$ which project down to points in $J(f)$. 
            Let $\hat p_1 \in D_1$ and $\hat p_2 \in D_2$ be such points, with projections $p_1$ and $p_2$ respectively. 

            Let $W_1 \subset \pi D_1$ be an open interval containing $p_1$, small enough that $W_1 \subset f^l(W_1)$. By Lemma~\ref{lem:transitivity}, there is some $N$ for which $\cup_{j=0}^n f^j(W_1)$ contains $p_2$. Thus some $\hat y \in D_1$ is mapped by some $\hat f^j$ to $\pi^{-1} (p_2)$.    By Lemma~\ref{lem:meetup}, some further iterate of $\hat y$ by $\hat f^j$ is actually equal to $\hat p_2$.  
        \end{proof}

        We denote the \emph{base} of $\hat I$, the copy of $\mathring{I}$ in $\hat I$, by  $D_0$.\index{D0@$D_0$, the base of $\hat I$} The natural inclusion map sending $\mathring{I}$ to $D_0$ we denote by $\iota : \mathring{I} \hookrightarrow \hat I$.\index{izota@$\iota$ the inclusion map of $\mathring{I}$ into $D_0$}  For $D\in \D$, we define $\level(D)$ \index{level@$\level(D)$} to be the length of the shortest path $D_0 \to \dots \to D$ starting at the base $D_0$.  
        \begin{definition}          For each $R \in \N$, let $\hat I^R$ be the bounded
part of the Hofbauer extension defined by
$$
\hat I^R := \{ (x,D) \in \hat I : \level(D) \le R \}.\index{IR@$\hat I^R$, bounded part of $\hat I$}
$$
\end{definition}
Abusing notation somewhat, we can write $\hat I^R = \sqcup \{ D \in \D : \level(D) \le R \}.$

    We define $\hat \P_n$ \index{Pn@$\hat \P_n$, induced partition on $\hat I$} to be the set of  intervals $\{\pi|_D^{-1}\cyl:D\in \D, \cyl\in \P_n\}$. 
    For a domain $D\in \D$, let $D_\ell^R$ be the left-most element of $\hat \P_R$ in $D$ and let $D_r^R$ be the rightmost. We refer to these elements \emph{boundary components}. 
    Set $$\hat I_-^R:= \hat I^R \cap \left(\sqcup_{D\in \D}D\sm (D_\ell^R\cup D_r^R)\right).
    \index{Iminus@$\hat I_-^R$, trimmed version of $\hat I^R$} 
    $$
    As we will see later, excluding the boundary components will give uniform distortion bounds (thanks to the Koebe principle) for certain induced maps. Boundary components project to subsets of $R$-cylinders.  
   
    For the purposes of the following lemma, let us set $\P_n^0 := \iota^{-1} \P_n$, the same collection of $n$-cylinders except living on the base of the extension, and for a word $\omega$, we set $\cyl_0^\omega := \iota^{-1}(\cyl^\omega)$.

    Recall from~\eqref{eq:epsdef} that $\eps(R) = 8\log R / R.$
\begin{lemma} \label{lem:Rtower} 
    Let $R \geq 8d$.
    Let $$\delta := \frac{\eps(R) }{ 4 \log d} = \frac{2\log R}{R \log d}.$$  
    There exists $n_0 = n_0(R)$  such that for any piecewise-monotone $d$-branched map $f$ and any $n \geq n_0$, the following holds. 
    Let $\Q$\index{Q@$\Q$, elements of $\P_{nR}^0$ with rare visits to $\hat I^R_-$ } denote the collection of elements of $\P_{nR}^0$ which visit $\hat I^R_-$ at most $\delta n$ times in the first $nR$ iterates.
    Then
    $$
    \#\Q \leq R^{7n} < \exp(nR \eps(R)).
    $$
    Moreover, for $n \geq n_0$,\index{n0@$n_0=n_0(R)$, used to estimate $\#\Q$} the number of elements of $\hat P_{R+n}$ contained in $\hat I^R_-$ with first return time to $\hat I^R_-$ equal to $n$ is bounded by $\exp(n \eps(R))$.
\end{lemma}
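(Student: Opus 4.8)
The plan is to realise each $W\in\P_{nR}^0$ by the path its canonical lift $\hat W:=\iota(W)\subset D_0$ traces in the Hofbauer graph. Write $D^{(i)}\in\D$ for the domain with $\hat f^i(\hat W)\subseteq D^{(i)}$, and for $i\le(n-1)R$ write $\hat Z^{(i)}\in\hat\P_R$ for the $R$-cylinder of $D^{(i)}$ containing $\hat f^i(\hat W)$; this is well defined since $\hat f^i(\hat W)$ is an $(nR-i)$-cylinder and $nR-i\ge R$. Distinct elements of $\P_{nR}^0$ yield distinct sequences $(\hat Z^{(i)})_i$, and $\hat f^i(\hat W)$ meets $\hat I_-^R$ exactly when $\level(D^{(i)})\le R$ and $\hat Z^{(i)}\notin\{D_\ell^R,D_r^R\}$. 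Since no $R$-cylinder contains a point of $\E$ in its interior, $\hat f(\hat Z^{(i)})$ is a single $(R-1)$-cylinder lying in one domain $D^{(i+1)}$, so $D^{(i+1)}$ is determined by $\hat Z^{(i)}$, while $\hat Z^{(i+1)}$ is one of the at most $d$ $R$-cylinders of $D^{(i+1)}$ contained in $\hat f(\hat Z^{(i)})$. Thus each step offers at most $d$ choices for $\hat Z^{(i+1)}$ given $\hat Z^{(i)}$; the final $R$ steps I treat crudely, at the cost of a factor $d^{R}$.

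The technical core is a quantitative form of Hofbauer's theorem that the non-returning part of $\hat I$ has zero entropy: one shows that, up to a factor polynomial in $R$, the number of admissible sequences $(\hat Z^{(j)})_{j=0}^{a}$ of length $a$ starting from a fixed $\hat Z^{(0)}$ and never meeting $\hat I_-^R$ is at most $R^{Ca/R}$ for some absolute constant $C$ --- heuristically because outside $\hat I_-^R$ an orbit is confined near the critical orbits and so has little symbolic freedom. Granting this, split the orbit of $W\in\Q$ into its at most $\delta n$ visits to $\hat I_-^R$ and the intervening excursions, of lengths $e_1,e_2,\dots$ with $\sum_j e_j\ge nR-\delta n$ and at most $\delta n+1$ excursions in all. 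The excursions contribute at most $\prod_j R^{Ce_j/R}$ times a sub-exponential factor, hence at most $R^{Cn+o(n)}$; each of the at most $\delta n$ visit steps contributes a factor at most $d$; and the number of ways to schedule the visits among the $nR$ steps is at most $(\delta n+1)\binom{nR}{\delta n}$. Altogether,
\[
\#\Q\ \le\ (\delta n+1)\binom{nR}{\delta n}\; d^{\,\delta n}\; R^{Cn+o(n)}\; d^{R}.
\]
Using $\delta=\eps(R)/4\log d$ and $\eps(R)=8\log R/R$, one has $d^{\delta n}=R^{2n/R}$ and $\binom{nR}{\delta n}\le(eR/\delta)^{\delta n}=R^{O(n\log R/(R\log d))}$, and $d^{R}\le R^{R}$ is negligible once $n\ge n_0(R)$; collecting terms, the product is at most $R^{7n}$, which is $<\exp(nR\,\eps(R))=R^{8n}$ as required.

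For the last statement, an element $Z\in\hat\P_{R+n}$ contained in $\hat I_-^R$ with first return time $n$ to $\hat I_-^R$ lies in some domain $D$ with $\level(D)\le R$. There are at most $1+2dR\le R^{2}$ such domains --- the domains of positive level split into at most $2d$ chains (one per critical point and side), each carrying at most one domain per level --- and if $D=f^{k}(\cyl)$ then $D$ contains at most $d^{R+n}$ elements of $\hat\P_{R+n}$, each being $f^{k}$ of a $(k+R+n)$-cylinder refining $\cyl$ and hence specified by $R+n$ extra symbols. Thus the number of such $Z$ is at most $R^{2}\,d^{R+n}\le R^{4n}\le R^{7n}$ for $n\ge n_0(R)$. (The return-time hypothesis in fact forces a far smaller count, but this crude bound suffices.)

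The step I expect to be the main obstacle is the quantitative zero-entropy estimate used in the second paragraph: that, for the level-$R$ truncation, the escaping part of the Hofbauer extension has entropy at most $C\log R/R$. Concretely one must show that ``jumping off'' a chain without re-entering $\hat I_-^R$ is rare: when a high-level domain is cut by $\hat f$, most of the resulting pieces map back to low level rather than remaining outside $\hat I_-^R$, so the density of genuinely free symbolic choices along an excursion is $O(1/R)$. This is precisely the place where \cite[Theorem~9]{Hpwise} has to be sharpened; granting it, the counting above is routine, the explicit forms of $\eps(R)$ and $\delta$ having been chosen so that the final arithmetic closes comfortably at the exponent $7$.
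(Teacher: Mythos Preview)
Your architecture is right and you have correctly located the crux: the bound $R^{Ca/R}$ on excursions of length $a$ that stay outside $\hat I_-^R$. But you do not prove this; you explicitly grant it. The paper's proof is essentially a proof of exactly this estimate, via a device you do not use.

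Rather than tracking variable-length excursions, the paper partitions the length-$nR$ word into $n$ \emph{fixed} blocks of length $R$. A block is \emph{external} if all $R$ iterates in it lie outside $\hat I_-^R$. Two concrete observations then make each external block cheap. First, if $\hat f^l(\hat W)$ lies in a domain of level $>R$, then over the next $k\le R$ steps at most two refinements of the current cylinder can remain at level $>R$: only those sharing a boundary point with the current cylinder can fail to drop, since otherwise the image is itself a $k$-cylinder and hence lands at level $\le k\le R$. This is your ``jumping off is rare'' heuristic made exact: the high-level portion of an external block costs a factor of $2$, not $d^{\text{length}}$. Second, once the orbit is at level $\le R$ but in a boundary $R$-cylinder $D_\ell^R$ or $D_r^R$, its location is determined by which boundary component it occupies, and there are at most $(2dR)^2$ domains of level $\le R$ (each determined by its two endpoints, which lie in $\bigcup_{j<R}f^j(\V)$), hence at most $8d^2R^2$ such components. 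With $R$ choices for where the high-level stretch ends inside the block, each external block has at most $16d^2R^3$ continuations. An element of $\Q$ has at most $\lfloor\delta n\rfloor$ non-external blocks, each with the trivial bound $d^R$, and $\binom{n}{\lfloor\delta n\rfloor}\le 2^n$ configurations; this gives $\#\Q\le 2^n(16d^2R^3)^n d^{R\delta n}$, and $R\ge 8d$ closes the arithmetic at $R^{7n}$. The fixed block length is what makes the first observation usable: it caps each high-level run at $R$ steps.

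Two remarks on the rest. Your domain count $1+2dR$ via ``$2d$ chains'' is the unimodal picture and is not obviously correct for general $d$-branched maps with discontinuities; the paper's $(2dR)^2$ is the safe bound. And for the second statement, your crude $R^2d^{R+n}$ does give $R^{7n}$ since $d<R$, but note that downstream (Lemma~\ref{lem:inducingcyl} and Lemma~\ref{lem:tails}) one actually needs a bound of the form $e^{n\eps(R)}$ with $\eps(R)\to 0$, which your crude bound does not provide; the paper re-uses the external-block count to get $(2dR)^2(16d^2R^3)^{\lfloor n/R\rfloor}d^{1+2R}$, which does.
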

\begin{proof}  
    From the definition of $\eps(R)$, $R^{7n} < \exp(nR \eps(R))$.

    By construction of the extension, if $\omega$ has length $n$,  $\hat f^j$ is homeomorphic on $\cyl_0^\omega$ for each $j \leq n$ (in particular, for each $j$, $\hat f^j(X)$ is contained in a single domain of the extension). 

 We shall count elements of $\P_{nR}^0$  for large $n$. 
 The following estimates are needed.
 \begin{enumerate}[label=(\roman*)]
        \item \label{en:high}
            if $X \in \P^0_l$ for some $l \geq 1$ then, for $1 \leq k \leq R$, there are at most two elements $X'$ of $\P^0_{l+k}$ for which both $X' \subset X$ and $\hat f^{l+k}(X') \subset \hat I \setminus \hat I^R$; 
        \item \label{en:external}
            there are at most $(2dR)^2$ domains $D$ with $\level(D) \leq R$; in particular, there are at most $8d^2R^2$ connected components of $\hat I^R \setminus \hat I^R_-$;
        \item \label{en:combi}
            $\binom{n}{\lfloor \delta n \rfloor} \leq 2^n.$
          
    \end{enumerate}
    Estimate \ref{en:high} is so because if $X'$ does not have a common boundary point with $X$, then $\pi \hat f^l(X') \in \P_k$, so $\hat f^{l+k}(X') \subset \hat I^k \subset \hat I^R$. 
    For estimate \ref{en:external},
    the projection of each boundary point of a domain $D$ of level at most $R$ is contained in the set $\cup_{0 \leq j \leq R-1} f^j(\V)$ of cardinality bounded by $2dR$, since the set $\V$ of critical values has cardinality at most $2d$.  A domain has two boundary points, giving $(2dR)^2$ possibilities. 
    The trivial estimate \ref{en:combi} on the binomial coefficient could be improved (for example using Stirling's formula), but is sufficient for our needs. 

 To each element $\cyl = \cyl^\omega_0$ corresponds a word $\omega$ of length $nR$. 
 For $j \geq 0$,
 denote by $\cyl^j$ the image $\hat f^j(\cyl)$, so $\pi \cyl^j$ is contained in the $1$-cylinder of $f$ corresponding to the symbol $\omega_{j+1}$.  
    We can divide the word $\omega$ into $n$ blocks of length $R$. 
    If $\cyl^{kR +j} \subset \hat I \setminus \hat I^R_-$ for $j = 0, \ldots, R-1$ then (counting from zero) we say the $k^\mathrm{th}$ block is \emph{external}.
    If it is external, there is a maximal $l \leq R$ for which $\cyl^{kR +j} \not \subset \hat I^R$ for $j = 0, \ldots, l -1$ (so if $\cyl^{kR} \not\subset \hat I^R$ then $l = 0$, and if $l < R$ then $\cyl^{kR + l} \subset \hat I^R \setminus \hat I^R_-$). 
    Given $\omega_1 \ldots \omega_{kR }$, by \ref{en:high} there are at most two possibilities for the sequence $\omega_{kR+1} \ldots \omega_{kR + l}$. 
    Given $\omega_1 \ldots \omega_{kR+l}$, there are by \ref{en:external} at most $8d^2R^2$ possibilities for $\omega_{kR+l +1}\ldots \omega_{(k+1)R}$ (one possibility for each boundary component). There are $R$ possible values for $l$. 

    Multiplying these estimates, given $\omega_1 \ldots \omega_{kR}$, there are at most $16 d^2 R^3$ possible words $\omega_{kR+1} \ldots \omega_{(k+1)R}$ for which the $k^{\mathrm{th}}$ block is external. For general blocks of length $R$ 
    we use a crude estimate:
    there are at most $d^R$ possible words. There are fewer than $2^n$ possible configurations of $\lfloor \delta n \rfloor$ general and $n - \lfloor \delta n \rfloor$ external blocks,  by \ref{en:combi}.

    Elements of $\Q$ visit $\hat I^R_-$ at most $\delta n$ times in the first $nR$ iterates.
    A word corresponding to an element of $\Q$ has at least $n - \lfloor \delta n \rfloor$ external blocks of length $R$. 
    Combining the estimates from the preceding paragraph, we get an upper bound of 
    \begin{equation} \label{eq:ent1}
     \# \Q \leq 2^n (16 d^2R^3)^n d^{R \delta n}.
 \end{equation}
 Replacing $\delta$ by $\frac{2\log R}{R\log d}$, we obtain
 $$
 \# \Q \leq (32 d^2R^3)^n R^{2n}.
     $$
     Since $R \geq 8d$,
 $$
 \# \Q \leq  R^{7n}.
     $$
     The first statement is now proven. 

       To finish, we claim that the number of elements of $\hat \P_{R+n}$ contained in $\hat I^R_-$ with first return time to $\hat I^R_-$ equal to $n$ is bounded by 
    $$
     (2dR)^2(16 d^2 R^3)^{\lfloor n/R \rfloor} d^{1+2R}.$$
    We assembled this bound from the following components. An element of $\hat \P_{R+n}$ is defined by a word of length $n+R$ plus the starting domain.
    There are, as before, at most $(2dR)^2$ domains of level at most $R$. 
    There are $d$ possible choices for the first symbol of the word. Then there follows 
    $\lfloor n/R \rfloor$ full  external blocks, each having  $16d^2R^3$ possibilities,   followed by a final $\leq 2R$ free symbols with at most $d^{2R}$ possibilities. 
    
    Taking $n$ large, we can ignore the terms not being raised to the $n$, and recalling $R \geq 8d$ we obtain a bound of
    $$ (R^5)^{n/R} = \exp( 5 n(\log R)/R) < \exp(n\eps(R)),
    $$
    completing the proof. 
\end{proof}

\begin{lemma} \label{lem:YY}
    If $\hat Y, \hat Y' \in \hat \P_n$ and $\hat f(\hat Y) \cap \hat Y' \ne \emptyset$, then $\hat f^{-1}(\hat Y') \cap \hat Y$ is mapped homeomorphically by $\hat f$ onto $\hat Y'$. 
    \end{lemma}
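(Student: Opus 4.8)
The plan is to establish the stronger structural fact that, for $n\ge1$, $\hat f$ maps each member of $\hat\P_n$ homeomorphically onto a member of the coarser partition $\hat\P_{n-1}$, and then to combine this with the observation that $\hat\P_n$ refines $\hat\P_{n-1}$. What makes this work is that, since $\P_n$ refines $\P_1$, every member of $\hat\P_n$ projects into a single $1$-cylinder. Accordingly, write $\hat Y=\pi|_D^{-1}(\cyl)$ with $D\in\D$ and $\cyl=\cyl^{(\omega_1\cdots\omega_n)}\in\P_n$, so that $\pi(\hat Y)\subset\cyl\subset I_{\omega_1}$.

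First I would check that $\hat f(\hat Y)$ lies in a single domain and that $\hat f|_{\hat Y}$ is a homeomorphism onto its image. Fixing a representation $D=f^k(\cyl^\tau)$ with $\cyl^\tau\in\P_k$, for $\hat x=(x,D)\in\hat Y$ let $y\in\cyl^\tau$ be the unique point with $f^k(y)=x$ (unique since $f^k$ is a homeomorphism on $\cyl^\tau$); because $f^k(y)=x\in I_{\omega_1}$, we have $\cyl_{k+1}[y]=\cyl^{(\tau\omega_1)}$, so the definition of $\hat f$ gives $\hat f(\hat x)=(f(x),D')$ with $D'=f^{k+1}(\cyl^{(\tau\omega_1)})=f(D\cap I_{\omega_1})$, independent of $\hat x$. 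Since $\pi|_D$ and $\pi|_{D'}$ are homeomorphisms onto the intervals $D$ and $D'$, since $\pi\circ\hat f=f\circ\pi$, and since $f$ is continuous and strictly monotone, hence injective, on $I_{\omega_1}\supset\pi(\hat Y)$, the map $\hat f|_{\hat Y}$ is a composition of homeomorphisms, hence a homeomorphism onto $\hat f(\hat Y)=\pi|_{D'}^{-1}(f(\pi(\hat Y)))$.

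The heart of the matter is to identify $\hat f(\hat Y)$ as a member of $\hat\P_{n-1}$. Using that $f$ is injective on $I_{\omega_1}$, that $f(I_{\omega_1}\cap f^{-1}(A))=f(I_{\omega_1})\cap A$, and that $\cyl^{(\omega_1\cdots\omega_n)}=I_{\omega_1}\cap f^{-1}(\cyl^{(\omega_2\cdots\omega_n)})$, one computes $f(\pi(\hat Y))=f(I_{\omega_1})\cap\cyl^{(\omega_2\cdots\omega_n)}\cap D'$; and because $D'=f(D\cap I_{\omega_1})\subset f(I_{\omega_1})$, the factor $f(I_{\omega_1})$ is redundant, so $\hat f(\hat Y)=\pi|_{D'}^{-1}(\cyl^{(\omega_2\cdots\omega_n)}\cap D')$, precisely the member of $\hat\P_{n-1}$ lying over $D'$ associated with the $(n-1)$-cylinder $\cyl^{(\omega_2\cdots\omega_n)}$. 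This absorption of the $f(I_{\omega_1})$ constraint, relying on $D'\subset f(I_{\omega_1})$, is the only slightly delicate step; the rest is routine unwinding of the definition of $\hat f$.

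To finish: since $\P_n$ refines $\P_{n-1}$, the partition $\hat\P_n$ refines $\hat\P_{n-1}$ inside each domain. As $\hat f(\hat Y)\cap\hat Y'\ne\emptyset$ and distinct domains of $\hat I$ are disjoint, $\hat Y'$ sits over the same domain $D'$ as $\hat f(\hat Y)$; being a member of $\hat\P_n$ meeting the member $\hat f(\hat Y)$ of the coarser partition $\hat\P_{n-1}$, it satisfies $\hat Y'\subset\hat f(\hat Y)$. Since $\hat f|_{\hat Y}$ is a homeomorphism onto $\hat f(\hat Y)$, the set $\hat f^{-1}(\hat Y')\cap\hat Y=(\hat f|_{\hat Y})^{-1}(\hat Y')$ is mapped homeomorphically by $\hat f$ onto $\hat Y'$, as claimed.
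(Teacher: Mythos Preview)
Your proof is correct and is essentially a detailed expansion of the paper's one-line argument ``This follows from the cylinder structure and Lemma~\ref{lem:MarkovH}.'' You compute explicitly that $\hat f$ maps each member of $\hat\P_n$ homeomorphically onto a member of $\hat\P_{n-1}$ (this is the Markov property, the $n=1$ case of Lemma~\ref{lem:MarkovH}, unpacked by hand), and then use that $\hat\P_n$ refines $\hat\P_{n-1}$ (the ``cylinder structure'') to conclude $\hat Y'\subset\hat f(\hat Y)$; the paper intends exactly this.
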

    \begin{proof} This follows from the cylinder structure and Lemma~\ref{lem:MarkovH}. \end{proof}

        \begin{definition}\label{def:canonR}
            The \emph{(canonical) level-$R$ induced map} is the system $(\hat X(R), \hat F, \tau)$ \index[def]{Induced map} and consists of:
            $\hat X = \hat X(R)$, the union of elements of $\hat \P_R$ contained in $\hat I^R_-$; the first return map $\hat F$ to $\hat X$; the first return time function $\tau$ on $\hat X(R)$. 
        \end{definition}

        The large range of $\hat F$, with multiple connected components,  allows one to obtain powerful counting  estimates. 
\begin{lemma} \label{lem:inducingcyl}
    The  level-$R$ induced map $(\hat X(R), \hat F, \tau)$  has the following properties. 
    \begin{enumerate}[label=(\roman*)]
        \item \label{enum:c1}
                $\hat F$ is defined on a countable union of pairwise-disjoint open subintervals $\hat X_i$ of $\hat X$;
            \item
                $\hat F$ maps each $\hat X_i$ homeomorphically onto a connected component of $\hat X$;
                \item
                    the return time $\tau$ is constant on each $\hat X_i$, thus we can denote it $\tau_i$;
                \item \label{enum:c5}
                    for each $i$,  there is some $\hat X_i' \supset \hat X_i$ mapped homeomorphically by $\hat f^{\tau_i}$  onto the domain of $\hat I$ containing $\hat F( \hat X_i) $. 
            \end{enumerate}
            If  $n_0=n_0(R)$ is given by Lemma~\ref{lem:Rtower}, then $$\# \{i : \tau_i =n \} \leq \exp({n\eps(R)}) $$ for all $n \geq n_0$. 
        \end{lemma}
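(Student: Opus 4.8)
The plan is to obtain items (i)--(iv) from the Markov property of the Hofbauer extension (Lemma~\ref{lem:MarkovH}, together with the cylinder bookkeeping of Lemma~\ref{lem:YY}), and to obtain the tail estimate directly from the ``moreover'' assertion of Lemma~\ref{lem:Rtower}; no fresh counting is needed. The one preliminary remark is that, under the running assumption, cylinders are open intervals, so the connected components of $\hat X = \hat X(R)$ are precisely the individual elements of $\hat\P_R$ lying in $\hat I^R_-$ (two such $R$-cylinders inside one domain are separated by their shared boundary point, which is not in $\hat X$).

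For the structural part, I would fix $\hat x \in \hat X$ that returns to $\hat X$, put $n := \tau(\hat x)$, and let $W$ be the component of $\hat X$ containing $\hat f^n(\hat x)$, sitting in a domain $D' \in \D$. By Lemma~\ref{lem:MarkovH} there is a neighbourhood $\hat X_i' \subset D_{\hat x}$ of $\hat x$ mapped homeomorphically by $\hat f^n$ onto all of $D'$; set $\hat X_i := (\hat f^n|_{\hat X_i'})^{-1}(W)$, so that $\hat X_i$ is an open interval and $\hat f^n = \hat F$ carries it homeomorphically onto the full component $W$. Since the itinerary along $\hat X_i$ is prescribed for $n+R$ consecutive steps (the first $n$ by the Markov branch $\hat X_i'$, the last $R$ by $W$), it follows that $\hat X_i$ is an element of $\hat\P_{R+n}$, that it lies in the same $R$-cylinder as $\hat x$ and hence in $\hat X$, and that for $0 < j < n$ the set $\hat f^j(\hat X_i)$ lies in a single $R$-cylinder; that $R$-cylinder contains $\hat f^j(\hat x) \notin \hat X$ in its interior, hence is disjoint from $\hat X$, so the first return time is constantly $n$ on $\hat X_i$. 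Ranging $\hat x$ over $\hat X$, one gets a countable family of pairwise disjoint open intervals on which $\hat F$ acts as the first return map, which gives (i), (ii) and (iii), together with (iv) for the above $\hat X_i'$ (``the domain of $\hat I$ containing $\hat X$'' meaning $D'$, the domain containing the relevant component $W$ of $\hat X$). Lemma~\ref{lem:YY} is the convenient packaging of the homeomorphism claims above.

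For the tail estimate, note that the construction above shows that the $\hat X_i$ with $\tau_i = n$ are precisely the elements of $\hat\P_{R+n}$ contained in $\hat I^R_-$ whose first return time to $\hat I^R_-$ equals $n$, distinct branches corresponding to distinct cylinders. Hence $\#\{i : \tau_i = n\}$ is exactly the quantity estimated in the ``moreover'' clause of Lemma~\ref{lem:Rtower}, and, recalling $\eps(R) = 8\log R / R$, that estimate is $\le \exp(n\eps(R))$ for all $n \ge n_0(R)$.

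I expect the main obstacle to be, again, purely the careful bookkeeping in the middle paragraph: one must check that $\hat F$ maps each $\hat X_i$ \emph{onto} a full component of $\hat X$ --- which is why $\hat X_i$ is defined as a preimage of $W$ under a Markov branch --- and that the intermediate images $\hat f^j(\hat X_i)$, $0<j<n$, fall into single $R$-cylinders that miss $\hat X$; here one uses that in the extension the image of an $(R+n)$-cylinder under $\hat f^j$ is a full cylinder \emph{relative to its target domain}, a domain that may project to a proper subinterval of $I$. All of this is forced by Lemma~\ref{lem:MarkovH} and the persistence of cylinders under the relevant iterates, but it must be set down with a little care.
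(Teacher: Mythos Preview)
Your proof is correct and follows exactly the paper's approach: the paper's own proof simply says that (i)--(iv) follow from Lemmas~\ref{lem:YY} and~\ref{lem:MarkovH}, and that the counting estimate is immediate from Lemma~\ref{lem:Rtower} since each $\hat X_i$ is an $(R+\tau_i)$-cylinder. You have merely unpacked this in full detail, including the correct reading of (iv) and the verification that intermediate images lie in single $R$-cylinders disjoint from $\hat X$.
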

        \begin{proof}
            Properties \ref{enum:c1}-\ref{enum:c5} follow from Lemma~\ref{lem:YY} and Lemma~\ref{lem:MarkovH}. 
            The counting statement follows immediately from Lemma~\ref{lem:Rtower}, since each $\hat X_i$ is an $(R+\tau_i)$-cylinder. 
        \end{proof}
        Let $(\hat X, \hat F, \tau)$ be a level-$R$ induced map.\index{XFT@$(\hat X, \hat F, \tau)$, level-$R$ induced map}
        \begin{lemma} \label{lem:tails}
            Let $N\geq 1$ and let $\hat Y_j$,\index{Yj@$\hat Y_j$, $j\geq 1$, domains of $\hat F^N$} $j\geq 1$, be the connected components of the domain of $\hat F^N$. 
            Let $n_0= n_0(R)$  be given by Lemma~\ref{lem:inducingcyl}. 
            Let $\rho_j$ \index{rho@$\rho_j$ an $N^\mathrm{th}$ return time} denote the value on $\hat Y_j$ of 
            the $N^\mathrm{th}$ return time to $\hat X$ for $\hat f$, so $\hat f^{\rho_j} = \hat F^N$ on $\hat Y_j$. 
            Then there is an $n_1$ (dependent on $N, d,  R, \eps(R), n_0$ but not on $f$) for which, for all $n \geq n_1$, 
            $$\# \{j : \rho_j =n \} \leq e^{3n\eps(R)}.$$
        \end{lemma}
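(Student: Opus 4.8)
The plan is to bound the number of components of $\mathrm{dom}(\hat F^N)$ by the number of branch itineraries of $\hat F$, and then to convert the single-step tail bound of Lemma~\ref{lem:inducingcyl} into an $N$-step bound by multiplying out. First I would note that each connected component $\hat Y_j$ of $\mathrm{dom}(\hat F^N)$ carries a well-defined \emph{itinerary} $(i_1,\dots,i_N)$, recording which branch $\hat X_{i_\ell}$ of $\hat F$ the orbit lies in at step $\ell$; this is constant on $\hat Y_j$, and since the $\hat X_i$ are pairwise disjoint and each branch of $\hat F$ maps homeomorphically onto a connected component of $\hat X$, every admissible itinerary gives at most one interval, so distinct $\hat Y_j$ have distinct itineraries. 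As $\hat f^{\rho_j}=\hat F^N$ on $\hat Y_j$, the return time is additive along the itinerary: $\rho_j=\tau_{i_1}+\dots+\tau_{i_N}$. Writing $a_m:=\#\{i:\tau_i=m\}$, this gives
$$\#\{j:\rho_j=n\}\ \le\ \sum_{\substack{m_1+\dots+m_N=n\\ m_\ell\ge1}}\ \prod_{\ell=1}^N a_{m_\ell}.$$

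Next I would establish, with a constant $C_0=C_0(d,R,n_0)$ independent of $f$, the uniform bound $a_m\le C_0\,e^{m\eps(R)}$ for every $m\ge1$. For $m\ge n_0$ this is exactly the counting statement of Lemma~\ref{lem:inducingcyl}. For $m<n_0$ one uses a crude estimate: a branch with $\tau_i=m$ is an element of $\hat\P_{R+m}$ contained in one of the at most $(2dR)^2$ domains of level $\le R$ (as in the proof of Lemma~\ref{lem:Rtower}), and inside one domain the elements of $\hat\P_{R+m}$ correspond to members of $\P_{R+m}$, of which there are at most $d^{R+m}$; hence $a_m\le (2dR)^2 d^{R+n_0}=:C_0$, and since $e^{m\eps(R)}\ge1$ the inequality holds here too. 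Multiplying along a composition then gives $\prod_{\ell=1}^N a_{m_\ell}\le C_0^N e^{\eps(R)(m_1+\dots+m_N)}=C_0^N e^{n\eps(R)}$, uniformly in the composition.

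Finally I would absorb the combinatorial and constant overhead into the gap between $e^{n\eps(R)}$ and $e^{3n\eps(R)}$. The number of compositions of $n$ into $N$ positive parts is $\binom{n-1}{N-1}\le n^N$, so $\#\{j:\rho_j=n\}\le n^N C_0^N e^{n\eps(R)}$. Since $\eps(R)>0$ is fixed, $n^N C_0^N\le e^{2n\eps(R)}$ once $n\ge n_1$ for a suitable $n_1$ depending only on $N,\eps(R),C_0$, hence on $N,d,R,\eps(R),n_0$ and not on $f$; this yields $\#\{j:\rho_j=n\}\le e^{3n\eps(R)}$ for $n\ge n_1$.

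The argument involves no serious difficulty; the only point requiring care is uniformity in $f$, which is why the finitely many small return times $m<n_0$ are handled by the separate crude cylinder count, and why the statement asks for $e^{3n\eps(R)}$ rather than $e^{n\eps(R)}$: the extra $e^{2n\eps(R)}$ of slack is exactly what is needed to swallow the polynomial factor $n^N$ and the constant $C_0^N$ once $n$ is taken large (this is the origin of $n_1$).
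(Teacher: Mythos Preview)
Your proof is correct and follows essentially the same approach as the paper: bound the number of branches with $\rho_j=n$ by a sum over compositions $m_1+\cdots+m_N=n$ of products $\prod a_{m_\ell}$, handle small $m_\ell$ by a crude uniform constant and large $m_\ell$ by the tail bound from Lemma~\ref{lem:inducingcyl}, then absorb the $n^N$ composition count and the constant $C_0^N$ into the slack $e^{2n\eps(R)}$. Your write-up is in fact slightly more explicit than the paper's about the crude bound for $m<n_0$ and the uniformity in $f$.
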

        \begin{proof}
            Let us write $\eps$ for $\eps(R)$. 
            With the notation of Lemma~\ref{lem:inducingcyl}, set $K_k := \# \{i : \tau_i =k \}$ 
            for each $k \geq n_0$, 
            so $K_k \leq e^{k\eps}$. 
            There is a uniform bound $K_*$ (depending just on $d, R, n_0$) on $K_k$ for $k \leq n_0$. 
            Then $$\# \{j : \rho_j =n \} \leq \sum \prod_{i=1}^N K_{k_i}$$
            where the sum is taken over all $N$-tuples $k_1, \ldots, k_N$ which sum to $n$. The product is then bounded by $\exp(2n\eps)$, for $n\geq \log K_* /\eps$.
            There are fewer than $n^N$ summands, so if $n$ is big enough that $n\eps \geq N\log n$, then 
            $$\# \{j : \rho_j =n \} \leq n^N \exp(2n\eps) \leq \exp(3n\eps).$$
        \end{proof}

        \begin{lemma} \label{lem:boundarytime}
            Let $\rho$ be the inducing time (the $R^\mathrm{th}$ return time) corresponding to $\hat F^R$. Let $K \geq 1$ and suppose $\rho(\hat x) \leq K$ for some $\hat x \in \hat X$. Let $\hat y$ be in the boundary of the connected component of $\hat X$ containing $\hat x$. Then $(\hat x, \hat y)$ contains a $(K+R)$-cylinder for $\hat f$. 
        \end{lemma}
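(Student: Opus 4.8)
The plan is to argue by contradiction, reducing everything to the behaviour of the forward orbit of $\hat y$ in the Hofbauer extension. Suppose $(\hat x,\hat y)$ contains no element of $\hat\P_{K+R}$. Since $\hat X(R)$ is a union of open $R$-cylinders contained in $\hat I^R_-$, the point $\hat x$ lies in the interior of an $R$-cylinder $C_\sharp\in\hat\P_R$ with $C_\sharp\subseteq\hat I^R_-$, and the connected component of $\hat X$ containing $\hat x$ is built from non-boundary $R$-cylinders of a single domain $D$. If $\hat x$ does not lie in the $R$-cylinder of that component adjacent to $\hat y$, then $(\hat x,\hat y)$ already contains a whole $R$-cylinder, hence (refining, since $\hat\P_{K+R}$ refines $\hat\P_R$) a $(K+R)$-cylinder, and we are done; so assume $\hat y\in\partial C_\sharp$. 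Because $\hat\P_{K+R}$ refines $\hat\P_R$, there is then a $(K+R)$-cylinder $\hat C$ with $\hat x$ in its interior, $\hat C\subseteq C_\sharp$, $(\hat x,\hat y)\subseteq\hat C$ and $\hat y\in\partial\hat C$ (the remaining degenerate case, in which $\hat x$ itself is a boundary point of a $(K+R)$-cylinder, is immediate or is handled exactly as below).

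Next I would extract the combinatorial consequence that $\pi\hat f^{i_0}(\hat y)\in\E$ for some $i_0\le R-1$. Indeed $\hat y$ is a boundary point of the $R$-cylinder $C_\sharp$ (equivalently, of the adjacent boundary $R$-cylinder $D_\ell^R(D)$ or $D_r^R(D)$), which is an interior cut of the domain $D$ and is not an endpoint of $D$; unwinding $\hat\P_R=\bigvee_{i=0}^{R-1}\hat f^{-i}(\hat\P_1)$ together with the fact that a domain boundary propagates forward under $\hat f$ (so a domain-boundary cut at some step descends from an $\E$-cut at an earlier step) yields such an $i_0$. Since $\rho:=\rho(\hat x)\le K$ is an $R$th $\hat f$-return time we have $\rho\ge R$, so $i_0<\rho$. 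As $\hat C\in\hat\P_{K+R}$ and $\rho\le K+R$, the iterates $\hat f^0,\dots,\hat f^\rho$ extend continuously along $\hat C$ to $\hat y$; because $\hat f^{i_0}(\hat y)$ is a lift of a critical point and $i_0<\rho$, propagating the one-sided limit past that critical encounter shows that $\hat f^\rho(\hat y):=\lim_{\hat z\to\hat y,\ \hat z\in\hat C}\hat f^\rho(\hat z)$ lies on the boundary of some domain of $\hat I$ (it is a post-critical point).

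The opposing estimate is then run as follows. Since $\hat C\in\hat\P_{K+R}$ and $\rho\le K+R$, the map $\hat f^\rho$ is a homeomorphism of $\hat C$ onto an interval on which $\hat f^{K+R-\rho}$, hence (as $K+R-\rho\ge R$) $\hat f^{R}$, is a homeomorphism, and this image contains $\hat f^\rho(\hat x)=\hat F^R(\hat x)\in\hat X(R)$. As $\hat F^R(\hat x)$ lies in a non-boundary $R$-cylinder $C^\ast$ of its domain $D''$, maximality of $C^\ast$ gives $\hat f^\rho(\hat C)\subseteq C^\ast$, so $\hat f^\rho(\hat y)\in\partial\hat f^\rho(\hat C)\subseteq\overline{C^\ast}$. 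But $\overline{C^\ast}$ contains no domain-boundary point of $\hat I$: being neither the leftmost nor the rightmost $R$-cylinder of $D''$, its two endpoints are cuts interior to $D''$. This contradicts the previous paragraph, and therefore $(\hat x,\hat y)$ must contain a $(K+R)$-cylinder.

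The main obstacle is keeping the Hofbauer-extension bookkeeping honest in the middle two steps, i.e.\ the clean dichotomy that a near-boundary $R$-cylinder cut such as $\hat y$ has its orbit meet $\E$ within the first $R-1$ iterates (so its image under $\hat f^\rho$ sits on a domain boundary), whereas a point of $\hat X(R)$ has its $\hat f^{R}$-image safely interior to its domain — and checking that the inequality $\rho\ge R$ is exactly what forces these two facts to collide. The initial reduction to the case $\hat x\in C_\sharp$ with $\hat y\in\partial C_\sharp$ is routine once one observes that $\hat x$ lies in the interior of its $R$-cylinder.
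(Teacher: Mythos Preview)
Your proof is correct but takes a longer, contrapositive route than the paper's three-line argument. The paper proceeds constructively: pick the least return time $k \geq R$ (so $R \leq k \leq \rho(\hat x) \leq K$), use the Markov property (Lemma~\ref{lem:MarkovH}) to obtain the $k$-cylinder $W \ni \hat x$ mapped by $\hat f^k$ onto the full domain $D \ni \hat f^k(\hat x)$, and pull back the boundary $R$-cylinders $D^R_l, D^R_r$ of $D$; these pullbacks are $(k+R)$-cylinders in $W$, and since $k \geq R$ forces $W \subseteq C_\sharp$, the one on the $\hat y$-side lies in $(\hat x, \hat y)$ and contains a $(K+R)$-cylinder. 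Your argument reads the same structural fact from $\hat y$'s side: you push $\hat y$ forward, note that after the critical encounter at time $i_0 < R \leq \rho$ the one-sided limit tracks along domain boundaries, and obtain a clash with $\hat f^\rho(\hat C) \subseteq C^\ast$ sitting in the interior of its domain. Both approaches hinge on returns to $\hat X$ landing in non-boundary $R$-cylinders; the paper's direct pullback simply avoids the domain-boundary-propagation bookkeeping that you correctly identify as the delicate step in your version (that step does go through, since by construction of the Hofbauer extension $\hat f$ carries domain boundaries to domain boundaries under one-sided limits).
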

        \begin{proof}
            Let $k \geq R$ be minimal such that $\hat f^k(\hat x) \in \hat X$. Then $\rho(\hat x) \geq k$. If $\hat f^k \in D \in \D$, we can pull back the boundary cylinders $D^R_l, D^R_r$ to obtain $(k+R)$-cylinders separating $x$ from $\partial \hat X$.  
        \end{proof}
        
        The map $\hat F$ is typically not onto. Since $\hat X$ has only a finite number of connected components and $\hat F$ is Markovian, we can find subsets with good mixing properties. Let us say that connected components $\hat Y, \hat Y'$ are \emph{linked} if some iterate of $\hat F$ maps  $\hat Y$ onto $\hat Y'$ and some iterate also maps $\hat Y'$ onto $\hat Y$. Being linked is clearly a symmetric and transitive relation. 

        We denote by $\hat X_\T$ the collection of all connected components $\hat Y$ of $\hat X$ for which the first return map from $\hat Y$ to $\hat Y$ has at least two branches. Given $\hat Y \in \hat X_\T$, we can write $\Y_{\hat Y}$ for the union of all components linked to $\hat Y$.  
        \begin{definition}\label{def:primi}
        We call $\Y_{\hat Y}$ the \emph{primitive component} \index[def]{Primitive component} of $\hat X$ containing $\hat Y$. \index{YY@ $\Y_{\hat Y}$ the primitive component of $\hat X$ containing $\hat Y$}
    \end{definition}
        
    \begin{remark}\label{rem:prim}
        The number of connected components of $\hat X$ is bounded by $(2dR)^2d^R$ and this is trivially an upper bound on the number of primitive components of $\hat X$.  
    \end{remark}
        \begin{lemma} \label{lem:primi3}
            If $f$ is transitive on $J(f)$ then $\hat X$ has only one primitive component. 
        \end{lemma}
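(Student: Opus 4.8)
The plan is to show that any two connected components $\hat Y, \hat Y' \in \hat X_\T$ are linked; since the primitive components of $\hat X$ are exactly the sets $\Y_{\hat Y}$ with $\hat Y \in \hat X_\T$, and "linked'' is an equivalence relation on these, this will give the lemma. The first ingredient I would record, from Lemma~\ref{lem:inducingcyl}, is that every branch of $\hat F$ maps its domain homeomorphically onto a \emph{full} connected component of $\hat X$; hence composing branches along a path in the component graph of $\hat F$ yields branches of $\hat F^N$ that are again full onto components. In particular, for $\hat Y \in \hat X_\T$ the first return map of $\hat f$ to $\hat Y$ is full-branched with at least two branches, so (each branch of each of its iterates has a fixed point) it has infinitely many periodic points. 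Arguing exactly as in the proof of Lemma~\ref{lem:path12} --- only finitely many periodic points of $\hat f$ project into the post-critical orbit $\bigcup_n f^n(\V)$ and thus outside $J(f)$ --- I would select a periodic point $\hat p \in \hat Y$, of some period $l$, with $p := \pi(\hat p) \in J(f)$, and likewise $\hat p' \in \hat Y'$ with $p' := \pi(\hat p') \in J(f)$.

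Next I would bring in transitivity. By Lemma~\ref{lem:MarkovH}, a neighbourhood $\hat W$ of $\hat p$ inside its domain $D$ is mapped homeomorphically by $\hat f^l$ onto $D$, hence onto a superset of $\hat W$; so $W := \pi(\hat W)$ is an open interval containing $p \in J(f)$ with $W \subseteq f^l(W)$. Lemma~\ref{lem:transitivity} then provides $N'$ with $J(f) \subseteq \bigcup_{j=1}^{N'} f^j(W)$, so there are $w \in W$ and $j \ge 1$ with $f^j(w) = p'$. Lifting $w$ to $\hat w \in \hat W \subseteq D$ (using injectivity of $\pi|_D$), the points $\hat f^j(\hat w)$ and $\hat p'$ project to the same point $p' \in J(f)$, so Lemma~\ref{lem:meetup} supplies $n$ with $\hat f^{j+n}(\hat w) = \hat f^n(\hat p')$; following the periodic orbit of $\hat p'$ a little further gives $M \ge j$ with $\hat f^M(\hat w) = \hat p' \in \hat Y'$.

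Finally I would convert this orbit segment into a path in the component graph. The orbit $\hat w, \hat f(\hat w), \ldots, \hat f^M(\hat w) = \hat p'$ begins in $\hat Y \subseteq \hat X$ and ends in $\hat Y' \subseteq \hat X$; let $0 = t_0 < t_1 < \cdots < t_s = M$ be the times it lies in $\hat X$ and let $\hat Y^{(i)}$ be the component containing $\hat f^{t_i}(\hat w)$, so $\hat Y^{(0)} = \hat Y$ and $\hat Y^{(s)} = \hat Y'$. Each step $\hat f^{t_{i-1}}(\hat w) \mapsto \hat f^{t_i}(\hat w)$ is a first return to $\hat X$, hence lies on an $\hat F$-branch in $\hat Y^{(i-1)}$ mapping onto the full component $\hat Y^{(i)}$; composing these $s$ branches (fullness again) produces a branch of $\hat F^s$ carrying a subinterval of $\hat Y$ homeomorphically onto $\hat Y'$. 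Interchanging the roles of $\hat Y$ and $\hat Y'$ yields an iterate of $\hat F$ mapping $\hat Y'$ onto $\hat Y$, so $\hat Y$ and $\hat Y'$ are linked; therefore $\Y_{\hat Y} = \Y_{\hat Y'}$, and $\hat X$ has only one primitive component.

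The main obstacle is the first step: extracting cleanly from Lemma~\ref{lem:inducingcyl} that the first return map to a component $\hat Y \in \hat X_\T$ is genuinely full-branched onto $\hat Y$, so that its periodic points are abundant and can be chosen to project into $J(f)$ --- and checking that the "finitely many post-critical periodic orbits'' argument of Lemma~\ref{lem:path12} transfers verbatim. Once that is in place, the remainder is a routine chase through the Markov structure (Lemmas~\ref{lem:MarkovH} and~\ref{lem:meetup}) combined with Lemma~\ref{lem:transitivity}.
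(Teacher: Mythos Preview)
Your proposal is correct and follows essentially the same route as the paper's proof: pick periodic points $\hat p\in\hat Y$, $\hat p'\in\hat Y'$ projecting into $J(f)$ (using full-branchedness of the return map and the finiteness of post-critical periodic points), apply Lemma~\ref{lem:transitivity} to a small repelling neighbourhood of $p$, lift and invoke Lemma~\ref{lem:meetup} to land on $\hat p'$, then swap roles. Your write-up is in fact slightly more explicit than the paper's in one place: you spell out how the $\hat f$-orbit from $\hat Y$ to $\hat Y'$ decomposes into successive first returns to $\hat X$, each full onto its target component, so that their composition gives a branch of some $\hat F^s$ mapping onto $\hat Y'$; the paper leaves this last step implicit in the phrase ``It follows that $\hat Y_1$ and $\hat Y_2$ are linked''.
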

        \begin{proof}
            Let $\hat Y_1, \hat Y_2 \in \hat X_\T$, so their first return maps each have at least two branches. The  return maps are full-branched, by Lemma~\ref{lem:inducingcyl}, so $\hat Y_1$ and $\hat Y_2$ each contain infinitely many repelling periodic points. Meanwhile, there are only finitely many periodic points in the post-critical orbits, that is, contained in the set
            $\cup_{n\geq 0} f^n(\V)$. Therefore there are infinitely many repelling periodic points in each of $\hat Y_1$ and $\hat Y_2$ which project down to points in $J(f)$. 
            Let $\hat p_1 \in \hat Y_1$ and $\hat p_2 \in \hat Y_2$ be such points, with projections $p_1$ and $p_2$ respectively. 

            Let $W_1 \subset \pi \hat Y_1$ be an open interval containing $p_1$, small enough that $W_1 \subset f^l(W_1)$. By Lemma~\ref{lem:transitivity}, there is some $N$ for which $\cup_{j=0}^N f^j(W_1)$ contains $p_2$. Thus some $\hat y \in \hat Y_1$ is mapped by some $\hat f^j$ to $\pi^{-1} (p_2)$.    By Lemma~\ref{lem:meetup}, some further iterate of $\hat y$ by $\hat f^j$ is actually equal to $\hat p_2$. Repeating the argument switching twos and ones, we find a point in $\hat Y_2$ mapped to $\hat Y_1$. It follows that $\hat Y_1$ and $\hat Y_2$ are linked, so they are subsets of the same primitive component, as required.   
        \end{proof}

        \begin{lemma} \label{lem:primi4}
            Let $\Y$ be a primitive component of $\hat X$. Then the first return (under $\hat f$) map 
            $\hat F_\Y$ to $\Y$
            has the following properties:
            \begin{enumerate}[label=(\roman*)]
        \item \label{enum:cc1}
                $\hat F_\Y$ is defined on a countable union of pairwise-disjoint open subintervals $\hat X_i$ of $\Y$;
            \item \label{enum:cc2}
                $\hat F_\Y$ maps each $\hat X_i$ homeomorphically onto a connected component of $\Y$;
            \item \label{enum:cc3}
                    there exists $N_\Y$ such that, for each $\hat X_i$,
                    $$\bigcup_{j=1}^{N_\Y} \hat F_\Y^j(\hat X_i) \supset \Y;$$
                \item \label{enum:cc4}
                    the return time is constant on each $\hat X_i$ and coincides with the return time under $\hat F$ to $\hat X$: thus we can denote it $\tau_i$;
                \item \label{enum:cc5}
                    for each $i$,  there is some $\hat X_i' \supset \hat X_i$ mapped homeomorphically by $\hat f^{\tau_i}$  onto the domain of $\hat I$ containing $\hat F_\Y(X_i)$. 
            \end{enumerate}
            If $n_0=n_0(R)$ is given by Lemma~\ref{lem:Rtower}, then $$\# \{i : \tau_i =n \} \leq \exp({n\eps}) $$
            for all $n \geq n_0$. 
        \end{lemma}
        \begin{proof}
            We must prove~\ref{enum:cc3} and~\ref{enum:cc4}. The other three properties follow from Lemma~\ref{lem:inducingcyl}. 
           
            To show~\ref{enum:cc3}, note first that $\hat F_\Y(\hat X_i)$ is a connected component of $\Y$.  Each pair  $\hat Y_1, \hat Y_2 \subset \Y$ of connected components of $\Y$ is linked, so for some minimal $n = n(\hat Y_1, \hat Y_2)$, 
            $$\hat F_\Y^n(\hat Y_1) \supset \hat Y_2.$$ 
            There are only a finite number of such ordered pairs, so 
            $$N_\Y := 1 + \max_{\hat Y_1, \hat Y_2 \in \Y} n(\hat Y_1, \hat Y_2)$$ 
            is finite and has the desired property.   

            Now we show~\ref{enum:cc4}. Suppose that $\hat x$ is in the domain of $\hat F_\Y$, so
            $\hat x \in \hat Y_1$ and $\hat F_\Y(\hat x) \in \hat Y_2$ for some connected components $\hat Y_1, \hat Y_2$ of $\Y$. 
            Let $\hat Y'$ denote the connected component of $\hat X$ containing $\hat F(\hat x)$. In particular, $\hat F(\hat Y_1) \supset \hat Y'$. Some $\hat F^j$, $j\geq0$, maps $\hat Y'$ onto $\hat Y_2$.
            Since $\hat Y_2, \hat Y_1$ are linked, some iterate of $\hat F$ maps $\hat Y_2$ onto $\hat Y_1$. 
            Consequently $\hat Y_1$ and $\hat Y'$ are linked. Hence  $\hat Y' = \hat Y_2 \in \Y$ and 
            $$\hat F(\hat x) = \hat F_\Y(\hat x) = \hat f^{\tau_i}(\hat x),$$ if $\hat F = \hat f^{\tau_i}$ on the interval $\hat X_i \ni \hat x$. 
        \end{proof}
        \begin{definition} \label{def:primcomp}
        A \emph{level-$R$ primitive induced map} is such a triplet
        $(\Y, \hat F_\Y, \tau)$, \index{YFT@ $(\Y, \hat F_\Y, \tau)$, a level-$R$ primitive induced map}  where $\Y$ is a primitive component of a level-$R$ induced map, $\hat F_\Y$ is the first return map to $\Y$ and $\tau$ is the return time (w.r.t.\ $\hat f$). 
        \end{definition}

                \section{Coding convergence}
               Recall that the coding of cylinder sets $\cyl^\omega$, for $\omega \in \Sigma^n_d$, was defined near the start of~\S\ref{sec:combi}.
    Introducing $f$-dependence, we can write $\cyl^\omega(f)$ for $\cyl^\omega$. Let $\Sigma^n(f)$\index{Sigman@ $\Sigma^n(f)$, the set of codes for $\cyl^\omega(f)$} denote the set of $\omega \in \Sigma^n_d$ for which $\cyl^\omega(f)$ exists.  
    Sequences of maps $(f_k)_k$ in what follows will be sequences of $d$-branched piecewise-monotone maps. 
    \begin{lemma} \label{lem:cylconv}
        Let $(f_k)_k$ be a sequence converging to $f_0$ as $k \to \infty$ and having decreasing critical relations. Given $n \geq 1$, for all sufficiently large $k$, $\Sigma^n(f_0) = \Sigma^n(f_k)$. For $\omega \in \Sigma^n(f_0)$, 
    $$
        \cyl^\omega(f_k) \to \cyl^\omega(f_0)
    $$
        in the Hausdorff metric as $k \to \infty$. 
    \end{lemma}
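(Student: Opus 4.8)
The plan is to induct on $n$, tracking both the combinatorial set $\Sigma^n(f_k)$ of admissible words of length $n$ and the geometric location of the cylinders $\cyl^\omega(f_k)$. The base case $n=1$ is essentially the definition of convergence in $\F$ (Definition~\ref{def:pmfconv}): after the homeomorphic change of coordinates $h_k$, the branch domains $I_j(f_k)$ are carried exactly onto $I_j(f_0)$ and $h_k \to \mathrm{id}$ in $C^0$, so $\Sigma^1(f_k)= \{1,\dots,d\} = \Sigma^1(f_0)$ for all $k$, and $\cyl^\omega(f_k) = I_{\omega_1}(f_k) \to I_{\omega_1}(f_0)$ in the Hausdorff metric. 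For the inductive step, recall that $\cyl^\omega(f)$ for $\omega = (\omega_1\ldots\omega_n)$ is the maximal interval on which $f^{j-1}$ maps into $I_{\omega_j}$ for $j \le n$; equivalently it is the connected component of $f^{-1}\big(\cyl^{\sigma\omega}(f)\big)$ inside $I_{\omega_1}(f)$, where $\sigma\omega = (\omega_2\ldots\omega_n)$. So the whole statement reduces to showing: if $J_k \to J_0$ in the Hausdorff metric with $J_0$ a (closed) subinterval compactly contained in the relevant range, then the component of $f_k^{-1}(J_k)$ lying in a fixed branch converges to the corresponding component of $f_0^{-1}(J_0)$.

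The key step, then, is a preimage-stability lemma for $C^0$-convergent monotone branches. Working in $h_k$-coordinates, each branch $f_k|_{I_{\omega_1}}$ is a strictly monotone homeomorphism onto its image, converging uniformly to $f_0|_{I_{\omega_1}}$. The preimage of an interval $[a,b]$ under a monotone homeomorphism is the interval $[\,g^{-1}(a), g^{-1}(b)\,]$ (with appropriate orientation), and uniform convergence of $g_k \to g_0$ together with strict monotonicity of $g_0$ forces $g_k^{-1} \to g_0^{-1}$ uniformly on compact subsets of the interior of the image. Feeding in $J_k = \cyl^{\sigma\omega}(f_k) \to J_0 = \cyl^{\sigma\omega}(f_0)$ from the inductive hypothesis (which, crucially, tells us the endpoints of $J_k$ converge to those of $J_0$) yields Hausdorff convergence $\cyl^\omega(f_k) \to \cyl^\omega(f_0)$, provided $\cyl^\omega(f_0)$ is non-degenerate and its closure avoids the branch endpoints of $f_0$. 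Here is where the hypotheses do real work: a branch endpoint $c \in \E(f_0)$ lies in $\overline{\cyl^\omega(f_0)}$ exactly when the orbit of $c$ realises a critical relation of order $< n$ with the coding prefix of $\omega$; the decreasing critical relations assumption guarantees the very same relation holds for each $f_k$, so the $f_k$-cylinder degenerates or ceases to exist in exactly the same pattern as for $f_0$, keeping $\Sigma^n(f_k) = \Sigma^n(f_0)$ for large $k$ and keeping the non-degenerate cylinders uniformly bounded away from the critical set.

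I expect the main obstacle to be precisely this bookkeeping at the boundary: a word $\omega \in \Sigma^n(f_0)$ with $\cyl^\omega(f_0)$ sharing an endpoint with a branch domain, or abutting the image of a critical relation, could \emph{a priori} have $\cyl^\omega(f_k)$ either empty (if $f_k$ ``misses'' on one side) or abnormally large (if a nearby cylinder's endpoint drifts). The decreasing critical relations hypothesis is exactly what rules this out --- it says the topological/combinatorial structure near the critical orbits is at least as rich for $f_k$ as for $f_0$ --- but translating it into the statement ``for large $k$, $\Sigma^n(f_k)$ contains no spurious extra words and loses no words'' requires care. One handles it by noting there are only finitely many critical relations of order $\le n$ to track, and for each, the corresponding relation $(c_k, c_k', \sigma, n')$ of $f_k$ pins down the offending cylinder endpoints; combined with the uniform $C^0$ convergence of $h_k\circ f_k\circ h_k^{-1}$ and $h_k \to \mathrm{id}$, one gets that all endpoints of all $n$-cylinders converge, which simultaneously gives $\Sigma^n(f_k)=\Sigma^n(f_0)$ eventually and the claimed Hausdorff convergence. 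The remaining details --- orientation bookkeeping for decreasing branches, and the routine $\eps$-$\delta$ argument for $g_k^{-1} \to g_0^{-1}$ --- are standard and I would relegate them to a line or two.
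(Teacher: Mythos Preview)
Your approach is correct and essentially the same as the paper's: both induct on $n$, both observe that the inclusion $\Sigma^n(f_0) \subset \Sigma^n(f_k)$ and the Hausdorff convergence of the corresponding cylinders follow straightforwardly from $C^0$ convergence, and both identify the reverse inclusion $\Sigma^n(f_k) \subset \Sigma^n(f_0)$ as the place where decreasing critical relations is needed. The paper packages that step as a short contradiction: assume $\omega \in \Sigma^n(f_k)\setminus \Sigma^n(f_0)$ with $n$ minimal, so $\omega' := \sigma\omega \in \Sigma^{n-1}(f_0)$; then $f_0(I_{\omega_1}(f_0))$ and $\cyl^{\omega'}(f_0)$ are disjoint but (by convergence) must share a boundary point $x$, and the minimal $p$ with $f_0^p(x)\in\E(f_0)$ yields a critical relation of order $p+1$ for $f_0$, which the hypothesis transfers to $f_k$, forcing $f_k(I_{\omega_1}(f_k)) \cap \cyl^{\omega'}(f_k)=\emptyset$ as well --- a contradiction. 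Your direct endpoint-tracking formulation is the same mechanism unwound.

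One small correction: your sentence ``a branch endpoint $c\in\E(f_0)$ lies in $\overline{\cyl^\omega(f_0)}$ exactly when the orbit of $c$ realises a critical relation of order $<n$'' is not quite right as stated --- the cylinder can reach the edge of $I_{\omega_1}$ without any critical relation (e.g.\ if $f_0(c^+)$ lands strictly inside $I_{\omega_2}$). But in that harmless case convergence is immediate, and the critical relation only enters in the genuinely delicate case where $f_0(c^+)$ lands \emph{on} a boundary point of the next cylinder; this is exactly the boundary-touching scenario the paper isolates. Once you rephrase that line, your proof goes through.
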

    \begin{proof}
        Since the $f_k$ converge to $f_0$ and cylinder sets are open intervals, $\Sigma^n(f_0) \subset \Sigma^n(f_k)$ for large $k$ and the corresponding cylinder sets converge. To complete the proof, we prove by contradiction that $\Sigma^n(f_k) \subset \Sigma^n(f_0)$ for all large $k$. 
        
        The case $n=1$ is trivial, by definition of convergence of the $f_k$. 
        So suppose $n \geq 2$ is minimal such that for some subsequence $(k_i)_i$, tending to infinity with $i$,  there is some $\omega$ with 
    $$
        \omega \in \Sigma^n(f_{k_i}) \setminus \Sigma^n(f_{0})
    $$
    for all $i$. To simplify notation, we can suppose that this subsequence is the original sequence $(f_k)_k$.      
    Denote by $\omega'$ the word $\omega$ with its first symbol removed. Then $\omega'$ is a word of length $(n-1)$ and  $f_k(\cyl^\omega(f_k)) \subset \cyl^{\omega'}(f_k)$. Since $n$ is minimal, $\omega' \in \Sigma^{n-1}(f_0)$. 
    It follows that $f_0(I_{\omega_1}(f_0))$ shares a common boundary point, $x$ say, with $\cyl^{\omega'}(f_0)$, and that 
    $$f_0(I_{\omega_1}(f_0)) \cap \cyl^{\omega'}(f_0) = \emptyset.$$ 
    Let $c_0$ be the boundary point of $I_{\omega_1}(f_0)$ whose small one-sided neighbourhoods in $I_{\omega_1}(f_0)$ get mapped to a one-sided neighbourhood of $x$.  
    Let $p \geq 0$ be minimal such that $f_0^p(x) \in \E(f_0)$ and note that $f^p_0$ is a homeomorphism on some neighbourhood of $x$. Therefore
    on any small neighbourhood $W$ of $x$, 
    $f^p_k$ is a homeomorphism 
    for all $k$ large enough; $f^j_k(W) \cap \E(f_k) = \emptyset$ for $j=0,\ldots,p-1$; the corresponding boundary points $x_k \in \partial \cyl^{\omega'}$ to $x$ lie in $W$.  
    There is a critical relation of order $(p+1)$ between $c_0$ and $f_0^p(x)$ for $f_0$. By the decreasing critical relations hypothesis, this  critical relation also exists for $f_k$. In particular, $\{x_k\} = W \cap \partial f_k(I_{\omega_1}(f_k))$, for $W$ small and $k$ sufficiently large. But this implies that 
    $$f_k(I_{\omega_1}(f_k)) \cap \cyl^{\omega'}(f_k) = \emptyset $$ so $\cyl^\omega(f_k) = \emptyset$, a contradiction.
 \end{proof}

 \begin{lemma} \label{lem:bigdomains}
     Let $(f_k)_k$ be a sequence converging to $f_0$ as $k \to \infty$ and having decreasing critical relations. For each $j$ and $n$ with $0\leq j \leq n$, for each $\omega \in \Sigma^n(f_0)$ and for all $k$ large enough, the sets
     $$
     f_k^j(\cyl^\omega(f_k))
     $$
     exist and they converge in the Hausdorff metric to (the non-trivial set) $f_0^j(\cyl^\omega(f_0))$ as $k \to \infty$. 
 \end{lemma}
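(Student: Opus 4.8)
The plan is to bootstrap Lemma~\ref{lem:cylconv} by induction on $j$, using the fact that the forward images $f_k^j(\cyl^\omega(f_k))$ are themselves (images of) cylinder sets whose combinatorics stabilise. For the base case $j=0$, the statement is precisely Lemma~\ref{lem:cylconv}: for $k$ large, $\cyl^\omega(f_k)$ exists (since $\omega \in \Sigma^n(f_0) = \Sigma^n(f_k)$ for large $k$) and converges in the Hausdorff metric to $\cyl^\omega(f_0)$, which is a non-degenerate interval. Note that non-triviality of $f_0^j(\cyl^\omega(f_0))$ is automatic: $f_0^j$ is a homeomorphism on the cylinder $\cyl^\omega(f_0)$ (for $j \le n$), so the image is a non-degenerate interval.

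For the inductive step, suppose the sets $f_k^{j}(\cyl^\omega(f_k))$ exist for all large $k$ and converge in the Hausdorff metric to $f_0^{j}(\cyl^\omega(f_0))$, with $j < n$. Write $\omega = (\omega_1 \cdots \omega_n)$ and let $\omega^{(j)}$ denote the shifted word $(\omega_{j+1} \cdots \omega_n)$, which lies in $\Sigma^{n-j}(f_0)$. By construction of the cylinders, $f_k^{j}(\cyl^\omega(f_k)) = \cyl^{\omega^{(j)}}(f_k)$ whenever the latter exists, and in particular $f_k^{j}(\cyl^\omega(f_k)) \subset I_{\omega_{j+1}}(f_k)$. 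Now $f_k \to f_0$ uniformly on compact subsets of the interiors of the branch domains (this is the content of Definition~\ref{def:pmfconv}, using that the family is uniformly Lipschitz, or directly the $C^0$-conjugacy formulation). Since $f_0^{j+1}(\cyl^\omega(f_0)) = f_0(\cyl^{\omega^{(j)}}(f_0))$ is a non-degenerate interval compactly contained (after taking closures appropriately) in the range, and since the closed intervals $\overline{f_k^{j}(\cyl^\omega(f_k))}$ converge in the Hausdorff metric to $\overline{f_0^{j}(\cyl^\omega(f_0))}$ while $f_0$ is a homeomorphism on a neighbourhood of this limiting interior, continuity gives that $f_k(\overline{f_k^{j}(\cyl^\omega(f_k))})$ converges in the Hausdorff metric to $f_0(\overline{f_0^{j}(\cyl^\omega(f_0))})$. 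Taking interiors, $f_k^{j+1}(\cyl^\omega(f_k)) \to f_0^{j+1}(\cyl^\omega(f_0))$ as required, and this limit is again non-degenerate since $f_0^{j+1}$ is a homeomorphism on $\cyl^\omega(f_0)$.

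The one subtlety to handle carefully is that $f_0^{j}(\cyl^\omega(f_0))$ could a priori abut a critical point $c \in \E(f_0)$, which would mean $f_0$ is only one-sidedly continuous there and we must check $f_k$ does not "leak" across the branch partition. But this is exactly the situation governed by the decreasing critical relations hypothesis, already exploited in the proof of Lemma~\ref{lem:cylconv}: if $f_0^{j}(\cyl^\omega(f_0))$ has an endpoint on $\partial I_{\omega_{j+1}}(f_0)$, the corresponding endpoint of $f_k^{j}(\cyl^\omega(f_k))$ sits on the corresponding branch boundary of $f_k$ (via the homeomorphisms $h_k$), so the monotone branch $f_k|_{I_{\omega_{j+1}}(f_k)}$ still maps the (closed) interval into $I$ and the images converge. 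Concretely, one can invoke Lemma~\ref{lem:cylconv} applied to the word $\omega^{(j)}$ of length $n-j$ to get $\cyl^{\omega^{(j)}}(f_k) \to \cyl^{\omega^{(j)}}(f_0)$, and then Lemma~\ref{lem:cylconv} once more (or the same one-step argument on $f_k$ restricted to a single branch, where it is genuinely continuous up to the boundary) to pass to $f_k(\cyl^{\omega^{(j)}}(f_k))$.

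The main obstacle is precisely this boundary issue — ensuring Hausdorff convergence of the forward images is not spoiled when a cylinder's image touches a turning point or discontinuity of the limit map — and the decreasing critical relations hypothesis is the tool that resolves it, exactly as in Lemma~\ref{lem:cylconv}. Everything else is a routine induction combining uniform convergence of $f_k$ on compacta with continuity of $f_0$. One should also remark that since $0 \le j \le n$ and $f_k^{j}$ restricted to $\cyl^\omega(f_k)$ is a homeomorphism onto its image, the sets in question are genuine intervals at every stage, so "Hausdorff convergence" of these intervals is equivalent to convergence of their two endpoints, which is what the argument actually delivers.
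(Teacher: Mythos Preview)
Your inductive approach is sound and matches the paper's one-line proof (``convergence of the maps and Lemma~\ref{lem:cylconv}''), but there is one genuine error you should fix: the claimed equality $f_k^{j}(\cyl^\omega(f_k)) = \cyl^{\omega^{(j)}}(f_k)$ is false in general. The forward image is
\[
f_k^{j}(\cyl^\omega(f_k)) \;=\; f_k^{j}\bigl(\cyl^{\omega_1\cdots\omega_j}(f_k)\bigr)\,\cap\,\cyl^{\omega^{(j)}}(f_k),
\]
which is typically a proper subinterval of $\cyl^{\omega^{(j)}}(f_k)$ whenever the branch $I_{\omega_j}$ is not full. These intervals are exactly the domains of the Hofbauer extension, and the whole point of the Hofbauer construction is that they are \emph{not} cylinders. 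Consequently, your ``concrete'' shortcut at the end of the third paragraph --- reducing to Lemma~\ref{lem:cylconv} applied to $\omega^{(j)}$ --- does not prove the lemma; it only gives convergence of the larger interval $\cyl^{\omega^{(j)}}(f_k)$.

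Fortunately your main inductive argument in the second paragraph does not actually use the false equality, only the correct inclusion $f_k^{j}(\cyl^\omega(f_k)) \subset I_{\omega_{j+1}}(f_k)$. With that inclusion, the step is exactly as you describe: the endpoints of $f_k^{j}(\cyl^\omega(f_k))$ converge by the inductive hypothesis, each lies in $\overline{I_{\omega_{j+1}}(f_k)}$, and the continuous monotone extension of $f_k$ to $\overline{I_{\omega_{j+1}}(f_k)}$ converges (via the conjugating homeomorphisms $h_k$ of Definition~\ref{def:pmfconv}) to that of $f_0$, so the image endpoints converge. Delete the false equality and the misleading ``concrete'' alternative, keep the induction, and the proof is complete.
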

 \begin{proof}
     This follows from convergence of the maps and Lemma~\ref{lem:cylconv}.
 \end{proof}
	\begin{lemma}\label{lem:overlaps}
     Let $(f_k)_k$ be a sequence converging to $f_0$ as $k \to \infty$ and having decreasing critical relations. For each $j$ and $n$ with $1\leq j, n$, there is a $\kappa > 0$ such that the following holds. For each $\omega \in \Sigma^j(f_0)$ and $\theta \in \Sigma^n(f_0)$, for all $k$ large enough, either $\cyl^\omega(f_k) \cap f_k^n(\cyl^\theta(f_k)) = \emptyset$ or 
	$$\left|\cyl^\omega(f_k) \cap f_k^n(\cyl^\theta(f_k))\right| \geq \kappa.$$
	\end{lemma}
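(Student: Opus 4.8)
### Proof proposal for Lemma~\ref{lem:overlaps}

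The plan is to reduce the statement to a finite collection of Hausdorff-convergent families of intervals, and then exploit the fact that the relevant limit sets are either disjoint or overlap in a set of fixed positive length. First I would fix $j, n$ and note that the number of pairs $(\omega, \theta)$ with $\omega \in \Sigma^j(f_0)$ and $\theta \in \Sigma^n(f_0)$ is finite (bounded by $d^{j+n}$), so it suffices to produce, for each such pair, a constant $\kappa_{\omega,\theta} > 0$ which works for all large $k$, and then take $\kappa$ to be the minimum of the finitely many $\kappa_{\omega,\theta}$. By Lemma~\ref{lem:cylconv}, for all large $k$ we have $\Sigma^j(f_k) = \Sigma^j(f_0)$ and $\Sigma^n(f_k) = \Sigma^n(f_0)$, and $\cyl^\omega(f_k) \to \cyl^\omega(f_0)$ in the Hausdorff metric; by Lemma~\ref{lem:bigdomains}, the images $f_k^n(\cyl^\theta(f_k))$ exist for large $k$ and converge in the Hausdorff metric to the non-trivial interval $f_0^n(\cyl^\theta(f_0))$.

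Next I would analyse the limit intersection $A := \cyl^\omega(f_0) \cap f_0^n(\cyl^\theta(f_0))$. Both $\cyl^\omega(f_0)$ and $f_0^n(\cyl^\theta(f_0))$ are non-degenerate open intervals (the latter is non-trivial by Lemma~\ref{lem:bigdomains}), so $A$ is an open interval, possibly empty. Here one must distinguish two cases. If $\overline{A}$ has non-empty interior, set $\kappa_{\omega,\theta} := \frac{1}{2}|A| > 0$; since both families of intervals converge in the Hausdorff metric to intervals whose intersection has length $|A|$, for $k$ large enough the intersection $\cyl^\omega(f_k) \cap f_k^n(\cyl^\theta(f_k))$ contains an interval of length at least $\frac{1}{2}|A| = \kappa_{\omega,\theta}$ — this is just the elementary fact that Hausdorff-close intervals have Lebesgue-close intersections, once the limit intersection is non-degenerate. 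If instead $A = \emptyset$, i.e. the two limit intervals are disjoint or share only a common endpoint, I would argue that for all large $k$ the intersection $\cyl^\omega(f_k) \cap f_k^n(\cyl^\theta(f_k))$ is actually empty, so the "either" alternative of the lemma holds vacuously. The subtle point is the case where $\overline{\cyl^\omega(f_0)}$ and $\overline{f_0^n(\cyl^\theta(f_0))}$ share exactly one boundary point $x$: here one cannot conclude emptiness from Hausdorff convergence alone, since the perturbed intervals might overlap slightly.

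The main obstacle, therefore, is precisely this boundary-contact case, and I expect it to be handled exactly as in the proof of Lemma~\ref{lem:cylconv}: the common endpoint $x$ of $\overline{\cyl^\omega(f_0)}$ and the closure of the image interval $f_0^n(\cyl^\theta(f_0))$ is the projection of a point in the Hofbauer extension lying on the boundary of a cylinder, and it arises from a critical relation of $f_0$. By the decreasing critical relations hypothesis, the corresponding critical relation persists for $f_k$, so the endpoint of $\cyl^\omega(f_k)$ that converges to $x$ coincides, for all large $k$, with the endpoint of $f_k^n(\cyl^\theta(f_k))$ that converges to $x$ (they are the ``same'' point in the combinatorial sense dictated by the critical relation). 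Hence the two open intervals $\cyl^\omega(f_k)$ and $f_k^n(\cyl^\theta(f_k))$ remain on opposite sides of that common endpoint and do not overlap. More precisely: let $p \geq 0$ be minimal with $f_0^p(x) \in \E(f_0)$; then $f_0^p$ is a homeomorphism near $x$, hence so is $f_k^p$ for large $k$ on a fixed neighbourhood of $x$, and tracking the (finitely many) critical relations that determine the endpoints of $\cyl^\omega$ and of the monotone branch $f^n|_{\cyl^\theta}$ shows the endpoints agree. The remaining subcase — the limit intervals are disjoint with positive gap — is immediate from Hausdorff convergence. Combining the three cases over the finitely many pairs $(\omega,\theta)$ and taking the minimum of the resulting positive constants yields the desired uniform $\kappa$.
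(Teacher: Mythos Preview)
Your overall strategy is sound and would work, but the paper takes a quite different and considerably shorter route. Rather than a case analysis on the shape of the limiting intersection, the paper argues directly by contradiction: if the statement fails, then along a subsequence the nonempty intersection $\cyl^\omega(f_k)\cap f_k^n(\cyl^\theta(f_k))$ shrinks to a point $x$. Since neither interval degenerates (Lemma~\ref{lem:bigdomains}), the endpoint $x_k\in\partial\cyl^\omega(f_k)$ lying in the closure of the intersection is in the interior of $f_k^n(\cyl^\theta(f_k))$, so it has a preimage $y_k\in\cyl^\theta(f_k)$. Taking $z_k$ to be the nearest boundary point of $\cyl^\theta(f_k)$, the interval $(y_k,z_k)$ is an $(n+j)$-cylinder whose $f_k^n$-image is exactly the shrinking intersection. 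This contradicts Lemmas~\ref{lem:cylconv} and~\ref{lem:bigdomains}, which force $(n+j)$-cylinders (and their images) to converge to non-degenerate intervals.

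The comparison: your boundary-contact case is precisely where the work lies, and you propose to handle it by tracking the critical relation linking the two coincident endpoints and invoking the decreasing-critical-relations hypothesis directly. This is correct in spirit, but note that the common endpoint $x$ may arise from a \emph{chain} of critical relations (the orbit of the critical value that gives $\partial f_0^n(\cyl^\theta)$ may pass through several critical points before reaching the one defining $\partial\cyl^\omega$), so the ``tracking'' you allude to needs an induction or at least more detail than you give. The paper's trick avoids this bookkeeping entirely: it packages the whole critical-relation analysis inside the already-proven Lemma~\ref{lem:cylconv}, applied one level higher (to $(n+j)$-cylinders rather than to $j$- and $n$-cylinders separately). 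Your approach makes the mechanism more visible; the paper's buys brevity.
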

 \begin{proof}
     For the purposes of the proof, we may assume that $\cyl^\omega(f_k) \cap f^n_k(\cyl^\theta(f_k)$ is non-empty. Then the set $$W_k = \cyl^\theta(f_k) \cap f^{-n}_k(\cyl^\omega(f_k))$$ 
	is non-empty and is an $(n+j)$-cylinder. 
        By Lemma~\ref{lem:cylconv}, $W_k$ converges to an $(n+j)$ cylinder $W_0$ for $f_0$. From Lemma~\ref{lem:bigdomains}, $f^n_k(W_k)$ converges to $f_0^n(W_0)$. Taking $\kappa = | f_0^n(W_0)| /2$ completes the proof. 
     %For the statement of the lemma not to hold, the above intersection must tend (in the Hausdorff metric) to a point $x$ for some sequence of $f_k$ and some $\omega \in \Sigma^j(f_0)$ and $\theta \in \Sigma^n(f_0)$. 
     %Let $x_k$ denote the corresponding boundary point of $\cyl^\omega(f_k)$, so $\lim_{k\to \infty} x_k = x$. 
     %From Lemma~\ref{lem:bigdomains}, neither $\cyl^\omega(f_k)$ nor $f_k^n(\cyl^\theta(f_k))$ tend to a point. 
     %Therefore there is a point $y_k \in \cyl^\theta(f_k)$  mapped by $f^n_k$ onto $x_k$. Let $z_k$ denote a boundary point of $\cyl^\theta(f_k)$ nearest $y_k$.  Then $(y_k, z_k)$ is an $(n+j)$-cylinder set which clearly shrinks to a point, contradicting Lemma~\ref{lem:cylconv}.
 \end{proof}

    \textbf{Notation.} 
       Recall that for a map $f$, $\hat X = \hat X(R)$ is the union of elements of $\hat P_R$ contained in $\hat I^R_-$. The first return map to $\hat X$ we denote $\hat F$; its return time we denote $\tau$.
    Given maps $f_k$ and a number $R\geq 1$, we introduce dependence on $k$ to the notation $\P$, $\hat P_n$, $\hat I^R$, $\hat X(R)$, $\hat X$, $\hat F$, $\tau$, giving the notation 
    $\P^k$, $\hat P^k_n$, $\hat I^R(k)$, $\hat X^k(R)$, $\hat X^k$, $\hat F_k$, $\tau_k$. 
    \begin{lemma} \label{lem:kappaPk}
     Let $(f_k)_k$ be a sequence converging to $f_0$ as $k \to \infty$ and having decreasing critical relations. For each $R, K\geq 1$, there is a $\kappa > 0$ such that the following holds. 
     $$
     \inf_k \{|V| : V \in \hat \P_K^k, V \subset \hat I^R(k) \} \geq \kappa >0.$$
	\end{lemma}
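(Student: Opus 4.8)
The plan is to reduce the claim, via the coding-convergence results of this section, to Lemma~\ref{lem:overlaps}.

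First I would dispose of a single map: for fixed $R,K\ge1$ and fixed $k$, the set $\hat I^R(k)$ consists of finitely many domains (the domains of level $\le R$, of which there are at most $(2dR)^2$ by the proof of Lemma~\ref{lem:Rtower}), and each of these meets only finitely many elements of $\hat\P_K^k$, every one of which is a non-degenerate open interval; hence $\kappa_k:=\inf\{|V|:V\in\hat\P_K^k,\ V\subset\hat I^R(k)\}>0$. It therefore suffices to produce $\kappa'>0$ and $k_0$ with $|V|\ge\kappa'$ for all $V\in\hat\P_K^k$ with $V\subset\hat I^R(k)$ and all $k\ge k_0$, since then $\inf_k\kappa_k\ge\min(\kappa',\kappa_1,\dots,\kappa_{k_0-1})>0$.

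Next I would identify such a $V$. It lies in a unique domain $D$ with $\level(D)\le R$, and $V=\pi|_D^{-1}(\cyl\cap\pi(D))$ for some $K$-cylinder $\cyl\in\P_K^k$; as $\pi$ maps $D$ bijectively onto $\pi(D)$ preserving lengths, $|V|=|\cyl\cap\pi(D)|$. By construction of the Hofbauer extension (see \cite{Hpwise}), a domain of level $m'\le R$ is of the form $f_k^{m'}(\cyl^\omega(f_k))$ with $\omega\in\Sigma^{m'}(f_k)$; writing $m=\level(D)\le R$ and $\cyl=\cyl^\theta(f_k)$ with $\theta\in\Sigma^K(f_k)$, we get
$$ |V|=\bigl|\cyl^\theta(f_k)\cap f_k^m\bigl(\cyl^\omega(f_k)\bigr)\bigr|. $$
By Lemma~\ref{lem:cylconv} there is $k_0$ such that for $k\ge k_0$ one has $\Sigma^{m}(f_k)=\Sigma^{m}(f_0)$ for every $0\le m\le R$ and $\Sigma^K(f_k)=\Sigma^K(f_0)$; so the data $(m,\omega,\theta)$ range over a finite set independent of $k$. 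If $m=0$ then $\pi(D)=I$ and $|V|=|\cyl^\theta(f_k)|$, which by the Hausdorff convergence $\cyl^\theta(f_k)\to\cyl^\theta(f_0)$ of Lemma~\ref{lem:cylconv} (and non-degeneracy of $\cyl^\theta(f_0)$) exceeds $\tfrac12|\cyl^\theta(f_0)|$ for $k$ large. If $m\ge1$, I would apply Lemma~\ref{lem:overlaps} with its ``$j$'' equal to $K$, its ``$n$'' equal to $m$, its ``$\omega$'' equal to our $\theta$ and its ``$\theta$'' equal to our $\omega$: it gives, for each $1\le m\le R$, a constant $\kappa_m>0$ (independent of the words) such that for all large $k$ the set $\cyl^\theta(f_k)\cap f_k^m(\cyl^\omega(f_k))$ is empty or has length $\ge\kappa_m$; since $V\ne\emptyset$ this intersection is non-empty, so $|V|\ge\kappa_m$. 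Taking $\kappa'$ to be the minimum of the finitely many constants $\tfrac12|\cyl^\theta(f_0)|$ ($\theta\in\Sigma^K(f_0)$) and $\kappa_m$ ($1\le m\le R$), and enlarging $k_0$ so that the finitely many ``for all large $k$'' thresholds are met, finishes the proof.

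I do not expect a serious obstacle, since Lemma~\ref{lem:overlaps} already encapsulates exactly the scale estimate needed: a $K$-cylinder cannot overlap an $m$-fold image of a cylinder (equivalently, a domain of level $m\le R$) in an arbitrarily small interval. The only delicate points are the bookkeeping --- checking that for $k\ge k_0$ the relevant combinatorial data $(m,\omega,\theta)$ stabilise, so that the single constant provided by Lemma~\ref{lem:overlaps} for given lengths applies uniformly, together with the standard fact that domains of level $\le R$ are images $f_k^m(\cyl^\omega(f_k))$ with $m\le R$. The decreasing-critical-relations hypothesis is used only through Lemmas~\ref{lem:cylconv} and~\ref{lem:overlaps}.
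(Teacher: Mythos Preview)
Your proposal is correct and follows the same route as the paper, which simply says the result ``follows straightforwardly from Lemma~\ref{lem:overlaps} with $j=K$ and $n=R$.'' You have unpacked this one-liner carefully, in particular handling each level $0\le m\le R$ separately (the paper's ``$n=R$'' should really be read as ``$n\le R$''), and correctly noting the bookkeeping required to get a uniform $k_0$.
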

 \begin{proof}
     This follows straightforwardly from 
            Lemma~\ref{lem:overlaps} with $j = K$ and $n = R$.
    \end{proof}

    \section{Extension embedding}\label{sec:embed}
    
    Given $d, R$, we can order the set $\cup_{j\leq R} \Sigma^j_d$. Then for a $d$-branched map $f$, we can embed $\hat I^R$ naturally into the compact space
    $$I_* := \overline{I} \times \bigcup_{j\leq R} \Sigma^j_d$$
    as follows. 
    To each $\hat x \in \hat I^R$, there is a minimal word $$\theta = \theta(\hat x) \in \bigcup_{j\leq R} \Sigma^j_d$$ for which projection of the domain of $\hat I^R$ containing $\hat x$ is 
    $f^{L(\theta)}(\cyl^\theta(f))$. Embed $\hat x$ as $(x, \theta(\hat x))$, and let us call the embedding map $\iota_R$. Recall we denote by $(\hat X(R), \hat F, \tau)$ the level-$R$ induced map. 
    \begin{definition}
        We call $(X(R), F, \tau)$ the  \emph{embedded level-$R$ induced map}, \index{XFT@ $(X(R), F, \tau)$, the embedded level-$R$ induced map } \index[def]{Induced map (embedded)} where
     $X=X(R)$ is the embedding of $\hat X = \hat X(R)$, and $F$ is the corresponding map, so 
    $$F = \iota_R \circ \hat F \circ \iota_R^{-1}.$$ 
    The domain of definition of $\tau$ is extended by  $\tau_{|X} := \tau \circ \iota_R^{-1}$. 
    \end{definition}
    We can write $\pi_1$ for projection onto the first coordinate. Then $\pi_1 \circ F = f^\tau \circ \pi_1$ on the domain of definition of $F$. 
    Of course, $F$ and $\tau$ are only defined on a subset of $I_*$. 
    One should think of $F$ as being Markovian, as it inherits the properties of $\hat F$ given in Lemma~\ref{lem:inducingcyl}. 

    To a branch $(Z, \theta)$ of $F$, there is a word $\omega$ corresponding to $Z$, such that $Z = \cyl^\omega(f)$, whose length $L(\omega)$ equals $\tau +R$ on $(Z, \theta)$. 
    There is another unique word $\theta_*$ corresponding to the domain containing $F((Z, \theta))$. The triplet $(\omega, \theta, \theta_*)$ entirely specifies the branch $(Z,\theta)$, in the sense that it tells us the interval $Z$ and the starting and image domains in $I_*$. 

    Let $\Omega$ denote the set of such triplets corresponding to branches of $F$. 
    Let $\Omega^N$ denote the set of $(\omega, \theta, \theta_*) \in \Omega$ for which $L(\omega) \leq N$. 
    Let $\Omega'$ be the (finite) set of pairs $(\omega', \theta)$, with $L(\omega')=R$, for which $(\cyl^{\omega'}, \theta)$ is a connected component of $X$. 
As such, $$X := \bigcup_{\omega' \in \Omega'} (\cyl^{\omega'}(f), \theta). $$ 
    For $k \geq 1$ and $f_k$ (in place of $f$), we introduce the notation $F_k, \tau_k, X^k, \Omega_k, \Omega^N_k$ and $\Omega'_k$. 

    For each $N\in \N$, we shall say that the sequences $(\Omega_k^N)_k$ and $(\Omega_k')_k$ are \emph{eventually constant} if, 
    for all $k,k'$ large enough, $\Omega_k^N = \Omega_{k'}^N$ and $\Omega_k'=\Omega_{k'}'$.  
    Suppose that for all $N \geq 1$, the sequences $(\Omega_k^N)_k$ and $(\Omega_k')_k$  are eventually constant.  We denote by $\Omega^N_0$ and $\Omega_0'$ their respective limits. Set $\Omega_0 := \cup_N \Omega^N_0$. 
We define 
$$Y^0 := \bigcup_{\omega' \in \Omega'_0} (\cyl^{\omega'}(f_0), \theta).  
$$
For  $(\omega, \theta, \theta^*) \in \Omega_0$ and  $(x, \theta) \in (\cyl^\omega(f_0),\theta)$, we define $F_0$ of $(x,\theta)$ by
    $$F_0(x, \theta) := (f_0^{L(\omega) - R}(x), \theta_*)$$
    and $\tau_0$  by  
    $$\tau_0 (x, \theta) =  L(\omega) -R.$$ 
    The domain and range of $F_0$ are subsets of $Y^0$. 
    Convergence of $f_k$ to $f_0$ gives  $F_0(x, \theta) = \lim_{k \to \infty} F_k(x, \theta)$ (noting that the branches are open sets). 
    It makes sense to say that the sequence $((X^k, F_k, \tau_k))_k$ is convergent and we can write (still omitting the $R$-dependence)
    \begin{equation} \label{eq:indconv}
        (X^k, F_k, \tau_k) \to (Y^0, F_0, \tau_0).
    \end{equation}
    We call $(Y^0, F_0, \tau_0)$ a (embedded) \emph{level-$R$ limit induced map}. \index{Y0@$(Y^0, F_0, \tau_0)$, (embedded) level-$R$ limit induced map}

In general, for each $N$ (and $d,R$), there are only a finite number of possibilities for $\Omega^N$.
Thus there is always a (strictly increasing) sequence $(n_k)_k$ for which for each $N \geq 1$, the subsequence $(\Omega_{n_k}^N)_{n_k}$ is eventually constant. Similarly, there is a subsequence for which $(\Omega'_{n_k})_k$ is eventually constant.

    \begin{remark} If the map $f_0$ has finite post-critical set $\cup_{n\geq 0} f^n(\V)$, then the corresponding Hofbauer extension has only a finite number of domains. This need not be true for maps $f_k$ converging to $f_0$, so there is no particular reason
        the level-$R$ limit induced map should coincide with the level-$R$ induced map for $f_0$. 
        This is why we need to consider convergence of the embeddings. 
    \end{remark}

    \section{Dynamics of rare returns} \label{sec:rarereturns}
    In this chapter we construct orbits with large inducing time, which corresponds to only returning infrequently to the bounded part $\hat X$ in the Hofbauer extension. 

    Suppose we have a convergent sequence of embedded level-$R$ induced maps 
    $$(X^k, F_k,  \tau_k) \to   (Y^0, F_0, \tau_0)$$  as per~\eqref{eq:indconv}.
     For $U,V$ connected components of $X^k$ (or of $Y^0$ if $k=0$), we can define $\tau_k(U,V)$ as follows. Let  $A$ be a subinterval of $U$ for which there is an $n_A$ with $F_k^{n_A }(A)=V$. Let $p_A = \tau + \tau\circ F_k + \cdots + \tau \circ F_k^{(n_A -1) }$. Define $\tau_k(U,V)$ as the infimum over all such $A$ of $p_A$. 
    If $\tau_k(U,V) < +\infty$, we can choose an $A$ which minimises $p_A$. Then we can define a \emph{quick word}\index[def]{Quick words}
    $$b^k(UV) = Z_0Z_1\ldots Z_{n_A - 1},$$
    where $Z_j$ is the connected component of $X^k$ containing $F_k^{j}(A)$. 

    Since $Y^0$ has a finite number of connected components, 
    $$M := \max\{\tau_0(U,V) : \tau_0(U,V) < +\infty\}$$
    is finite. 
    \begin{lemma} \label{lem:UVW}
	For each $K$, for all large $k$ the following holds. If $j\geq 1$, $U, V$ are connected components of $X^k$, 
        if $\tau_k\left(F_k^{i}(A)\right) \leq K$
	for each $i = 0,1, \ldots, j-1$ 
        and if $F_k^j$ maps $A \subset U$  homeomorphically onto $V$,
        then $\tau_k(U,V) \leq M$. 
	\end{lemma}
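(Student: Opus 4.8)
The plan is to use that the only branches of $F_k$ relevant to the hypothesis are those with return time at most $K$, i.e.\ those whose defining word $\omega$ has length $L(\omega)\le K+R$. Such a branch corresponds to a triplet in $\Omega_k^{K+R}$, and this sequence is eventually constant, equal to $\Omega_0^{K+R}$. Since $M$ is a fixed quantity depending only on $R$ and the limit induced map, I would first choose $k$ large enough that $\Omega_k^N=\Omega_0^N$ for $N:=\max(K,M)+R$. By Lemma~\ref{lem:cylconv} and this eventual constancy, for such $k$ the connected components of $X^k$, together with the branches of $F_k$ of return time at most $\max(K,M)$ and their return times $\tau=L(\omega)-R$, are in canonical bijection with the corresponding objects attached to $(Y^0,F_0,\tau_0)$; write $U_0,V_0$ for the components of $Y^0$ matching $U,V$.

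The first step is to push the given orbit down to the limit map. For $0\le i<j$ the set $F_k^i(A)$ lies inside a single branch of $F_k$; by hypothesis its return time is at most $K$, so its triplet $t_{i+1}=(\omega^{(i)},\theta^{(i)},\theta^{(i+1)})$ lies in $\Omega_k^N=\Omega_0^N$. These triplets chain up (the image domain of $t_{i+1}$ equals the source domain of $t_{i+2}$), the source of $t_1$ encoding $U$ and the image of $t_j$ encoding $V=F_k^j(A)$. Reading the same chain $t_1,\dots,t_j$ inside $F_0$ and successively pulling back through the (unique) $F_0$-branches carrying these triplets --- possible because each $F_0$-branch maps homeomorphically onto a whole connected component of $Y^0$ (inheriting this from the $F_k$ via Lemma~\ref{lem:inducingcyl}), which therefore meets the domain of the next branch --- yields a subinterval $A_0\subset U_0$ with $F_0^j$ mapping $A_0$ homeomorphically onto $V_0$ and with accumulated return time $p_{A_0}=p_A<+\infty$. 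Hence $\tau_0(U_0,V_0)<+\infty$, so by the definition of $M$ we get $\tau_0(U_0,V_0)\le M$.

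The second step is to transfer an optimal limit path back. By definition of $\tau_0(U_0,V_0)$ there is a chain of $F_0$-branches from $U_0$ to $V_0$ whose accumulated return time is at most $M$; every branch on it then has return time at most $M$, hence word-length at most $M+R\le N$, hence triplet in $\Omega_0^N=\Omega_k^N$. Reading this same chain inside $F_k$ and pulling back successively, starting from $U$ (which matches $U_0$), produces a subinterval of $U$ mapped by an iterate of $F_k$ homeomorphically onto $V$ with accumulated return time at most $M$. This witnesses $\tau_k(U,V)\le M$, completing the argument.

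I expect the main obstacle to be the bookkeeping underlying the canonical bijection claimed in the first paragraph: identifying precisely which connected components of $X^k$ and which $F_k$-branches (of bounded word-length) are matched with their counterparts in $(Y^0,F_0,\tau_0)$, checking that accumulated return times agree under this matching, and verifying that one may indeed pull back successively through a prescribed chain of $F_0$-branches. These are all direct consequences of the construction of the embedded level-$R$ induced map in \S\ref{sec:embed} and of the eventual constancy of $(\Omega_k^N)_k$, but they must be handled with care precisely because the limit induced map need not arise as a return map in any Hofbauer extension.
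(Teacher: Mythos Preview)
Your argument is correct and rests on the same underlying mechanism as the paper's: the eventual constancy of $\Omega_k^N$ lets one transfer chains of branches with bounded return time between $F_k$ and $F_0$, preserving the accumulated return time. The one point needing care --- that each $F_0$-branch surjects onto the full target component so that successive pullbacks compose --- follows from Lemmas~\ref{lem:cylconv} and~\ref{lem:bigdomains}, exactly as you indicate. (Note that the paper's phrase ``connected components of $Y^0$'' should be read as the embedded $R$-cylinder components, finitely many in number, not as the individual branches; your identification via $\theta$ and the $R$-word is the intended one.)

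The paper's proof is organised differently. Rather than transferring the whole length-$j$ orbit at once, it first extracts two consequences of the same transfer principle: a \emph{gap} statement~\eqref{eq:rhok}, asserting that for large $k$ either $\tau_k(U,V)\le M$ or $\tau_k(U,V)$ is arbitrarily large, and a \emph{transitivity} statement~\eqref{eq:rhok2}, that $\tau_k(U,V),\tau_k(V,W)\le M$ forces $\tau_k(U,W)\le M$. It then inducts along the components $V_0=U,V_1,\dots,V_j=V$ visited by the orbit, using transitivity to push the bound $\tau_k(U,V_i)\le M$ forward and the gap to derive a contradiction if it ever fails. Your route bypasses the induction entirely by pushing the full chain to $F_0$ in one go and pulling a short witness back; this is cleaner for the lemma at hand. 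The paper's packaging has the mild advantage that~\eqref{eq:rhok} and~\eqref{eq:rhok2} are reusable statements, and the induction makes explicit that the conclusion depends on $K$ only through the choice of ``large $k$'', not on $j$.
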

\begin{proof}
    By convergence, for each pair of connected components $U,V$,
    \begin{equation}\label{eq:rhok}
        \lim_{k\to \infty} \inf\{\tau_k(U,V) : \tau_k(U,V) > M\} = +\infty.
    \end{equation}
    Consequently, for all $k$ large enough and all connected components $U,V,W$ of $X^k$, provided $\tau_k(U,V), \tau_k(V,W) \leq M$, one has $\tau_k(U,W) \leq 2M$ and so
    \begin{equation}\label{eq:rhok2}
    \tau_k(U,W) \leq M. 
    \end{equation}

    Now let $U,V,A$ be as per the hypotheses. 
	For $i = 0,1,\ldots, j$, let $V_i$ be the connected component of $X^k$ containing $F_k^{i}(A)$. Let $l \leq j$ be  the maximal integer for which for each $i =0, \ldots, l$, $\tau_k(U,V_i) \leq M$. 
	If $l < j$ then $\tau_k(V_l, V_{l+1}) > M$ by (\ref{eq:rhok2}) and so 
	$\tau_k(V_l, V_{l+1}) > K$ by
	(\ref{eq:rhok}). 
        This contradicts the assumption $\tau_k\left(F_k^{l}(A)\right) \leq K$, completing the proof of the lemma.
	\end{proof}

Let $\xi_k$ denote the set of connected components of the domain of $F_k$; let $\xi_k(K)$ denote those components on which $\tau_k \geq K$. Let 
    $$ V^k_K := \bigcup_{Z \in \xi_k(K)}Z \subset X^k.$$ 
    Let (without indicating the dependence on $k$) $e_n$ be the $n^\mathrm{th}$ return time (with respect to $F_k$) to $V^k_K$ and set
$$g(y) := \sum_{i=0}^{e_1(y) -1} \tau_k(F_k^i(y)).$$

\begin{lemma} \label{lem:shadow}
        Suppose that $x \in V^k_K$ is recurrent. 

        For arbitrarily large $n$, there is a periodic point $w \in V^k_K$ of period $e_n(w)$ such that $F_k^{e_j(w)}(w)$ and $F_k^{e_j(x)}(x)$ are in the same element of $\xi_k(K)$ for $j = 0,\ldots, n$ 
        and such that if $\tau_k(F_k^{e_j(w)+1}(w)) <K$ then   $g(F_k^{e_j(w)+1}(w))  \leq M$.

        There is $y \in V^k_K$ such that $F_k^{e_n(y)}(y),$ and $F_k^{e_n(x)}(x)$ are in the same element of $\xi_k(K)$ for each $n \geq 0$ and such that if $\tau_k(F_k^{e_n(y) +1}(y)) <K$ then   $g(F_k^{e_n(y)+1}(y))  \leq M$. 
    \end{lemma}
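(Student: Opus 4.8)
The plan is to realise prescribed itineraries for the Markov map $F_k$. By Lemma~\ref{lem:inducingcyl}, every branch of $F_k$ is mapped homeomorphically onto a connected component of $X^k$, so any admissible sequence of branches (consecutive branches being compatible, i.e.\ the image of one containing the next) is the itinerary of some point, realised as a nested intersection of intervals; if such a sequence is finite and closes up onto its first branch, monotonicity of $F_k$ on the resulting cylinder produces a periodic point. We shall shadow the \emph{coarse itinerary} of $x$ --- the sequence of branches of $\xi_k(K)$ that $x$ visits at its successive returns to $V^k_K$ --- while replacing each excursion in between by a \emph{shortcut} supplied by Lemma~\ref{lem:UVW}.

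First I would write $x_j := F_k^{e_j(x)}(x)$ for the $j$-th return of $x$ to $V^k_K$ and let $Z_j \in \xi_k(K)$ be the branch containing it; recurrence of $x$ guarantees that all $e_j(x)$ are defined and that $Z_n = Z_0$ for arbitrarily large $n$ (these $n$ will serve in the first assertion). Fix $k$ large enough for Lemma~\ref{lem:UVW}. When the excursion from $x_j$ to $x_{j+1}$ is non-trivial, i.e.\ $\tau_k(F_k^{e_j(x)+1}(x)) < K$, the intermediate points $F_k(x_j), \dots, F_k^{m_j-1}(x_j)$, where $m_j := e_{j+1}(x)-e_j(x)$, all lie in branches carrying $\tau_k < K$; applying Lemma~\ref{lem:UVW} with $U = C_j := F_k(Z_j)$ and $V = V_{j+1}$ the component containing $x_{j+1}$ gives $\tau_k(C_j, V_{j+1}) \le M$, and I would take the corresponding quick word as the shortcut. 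The point is that, $K$ being large ($K > M$) while every branch of $\xi_k(K)$ carries $\tau_k \ge K$, any path meeting $V^k_K$ costs $\hat f$-time at least $K > M$; hence a quick path of $\hat f$-time $\le M$ meets $V^k_K$ only at its endpoints.

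For the infinite orbit $y$ I would prescribe the itinerary got by concatenating $Z_0$, the quick word from $C_0$ to $V_1$, then $Z_1$ (a branch of $V_1$), the quick word from $C_1$ to $V_2$, $Z_2$, and so on --- a trivial excursion of $x$ contributing just the single admissible step $Z_j \to Z_{j+1}$ --- and realise it as a nested intersection of intervals. Since each inserted quick path avoids $V^k_K$ in its interior, the $j$-th return of $y$ lands precisely in $Z_j$, the same element of $\xi_k(K)$ as $x_j$; and one $F_k$-step past the $j$-th return, if $\tau_k(F_k^{e_j(y)+1}(y)) < K$ then the ensuing excursion is exactly that quick word, so $g(F_k^{e_j(y)+1}(y)) = \tau_k(C_j, V_{j+1}) \le M$. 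For the first (periodic) assertion I would instead choose $n$ with $Z_n = Z_0$ and concatenate the finite block $Z_0$, quick word $C_0 \to V_1$, $Z_1$, \dots, $Z_{n-1}$, quick word $C_{n-1}\to V_0$, which closes up on $Z_0$; the associated cylinder is carried by the appropriate iterate of $F_k$ onto a component containing it, hence contains a periodic point $w$, which has the stated properties for the same reasons.

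The principal obstacle is the bookkeeping of returns: one must be sure that a shortcut inserted in place of an excursion is not itself registered as a return to $V^k_K$, since otherwise the returns of the new orbit would no longer be indexed in step with those of $x$ and both the $\xi_k(K)$-matching and the value of $g$ would be spoiled. This is precisely what the largeness of $K$ secures, through the $\hat f$-time estimate of the second paragraph. The remaining points --- that the spliced itineraries are genuinely admissible for $F_k$, and that the closed-up finite itinerary really does carry a periodic point by monotonicity --- are routine from Lemma~\ref{lem:inducingcyl}.
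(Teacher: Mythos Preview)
Your proposal is correct and follows the same strategy as the paper: record the $\xi_k$-itinerary of $x$, replace each low excursion by the quick word furnished via Lemma~\ref{lem:UVW}, and realise the resulting admissible word using the Markov property of $F_k$. The only cosmetic differences are that the paper obtains $y$ as a limit of the periodic points $w_n$ rather than as a direct nested intersection, and is terser about the bookkeeping (``the estimates for $g$ hold by construction''), whereas you make explicit the reason the inserted quick words do not themselves register as returns to $V^k_K$ (namely $K>M$, so any branch in $\xi_k(K)$ already carries $\hat f$-time exceeding the total $\hat f$-time of a quick word).
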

    \begin{proof}
 If $F_k^{j}(x) \in Z_j \in \xi_k$, we obtain a sequence $\underline{x} = Z_0Z_1Z_2\ldots$. Write $V_j$ for the connected component of $X^k$ containing $Z_j$. If $n,n+1, \ldots, n+l$ satisfy 
        $$Z_j \in \xi_k \setminus \xi_k(K)$$ 
        for $j = n, \ldots, n+l$, and $Z_{n-1}, Z_{n+l+1} \notin \xi_k \setminus \xi_k(K)$,  then we call $Z_nZ_{n+1}\ldots Z_{n+l}$ a \emph{low string}. Replacing each low string by the corresponding quick word $b^k(V_nV_{n+l})$, we obtain a new sequence $\underline{w} = W_0W_1W_2\ldots$. 
        By construction, $F_k(W_j) \supset W_{j+1}$, while  $W_0 \in \xi_k(K)$. Since $x$ is recurrent, $W_0$ and $W_n$ lie in the same connected component of $X^k$ for infinitely many $n \to \infty$.
        Therefore, for a sequence of arbitrarily large $n$, there exists an $n$-periodic point $w = w_n$ such that $F_k^{j}(w) \in W_j$ for $j=0,\ldots, n$. 

        Taking a convergent sequence of $w_n$ gives a limit point $y$ which shadows $x$ forever. 

        The estimates for $g$ hold by construction. 
    \end{proof}

\chapter{Measures and entropy} \label{sec:MM}

\section{Lifts of measures}
    
Let $f : \cup_{j=1}^d I_j \to I$ be a $d$-branched piecewise monotone map. 
Given $\mu\in \M_f$, we say that $\mu$ \emph{lifts to $\hat I$} \index[def]{Lift of a measure} if there exists an ergodic $\hat f$-invariant probability measure $\hat\mu$ on $\hat I$ such that $\hat\mu\circ\pi^{-1}=\mu$.    The process, given in \cite{Kellift}, of constructing the lifted measure is as follows.  
First set  $\hat\mu_0:=\mu\circ \iota^{-1}$. Now define $\hat \mu_n$ on $\hat I$ by  
    \begin{equation}\label{eqn:Hoflift}
        \hat\mu_n:=\frac1n\sum_{k=0}^{n-1} \hat\mu_0\circ\hat f^{-k}.  
    \end{equation}
Any vague limit of $(\hat\mu_n)_n$ is an $\hat f$-invariant measure.  However, this limit might be the null measure.  Keller showed that positive entropy is enough to guarantee existence of the lifted measure:

\begin{lemma}[{\cite[Theorem~3]{Kellift}, \cite[Theorem~1]{HofIntrinsic}}] \label{lem:entropybranches}
    Every positive entropy measure $\mu \in \M_f$ lifts to a measure $\hat \mu \in \M_{\hat f}$ with $\hat \mu(\hat I) = 1$ and the same entropy, $h(\hat \mu) = h(\mu)$. The sequence of measures $\hat \mu_n, n \geq 1$, defined in~\eqref{eqn:Hoflift} converges weakly to $\hat \mu$. 
        \end{lemma}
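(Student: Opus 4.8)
The plan is to recover Keller's argument \cite{Kellift} (which refines Hofbauer \cite{Hpwise}). The lifted measure $\hat\mu$ will be obtained as a vague limit of a subsequence of $(\hat\mu_n)_n$, and the work is to show (i) this limit loses no mass, so is a probability measure; (ii) it satisfies $\hat\mu\circ\pi^{-1}=\mu$ and is ergodic; (iii) $h(\hat\mu)=h(\mu)$; (iv) the whole sequence converges. A preliminary observation: since $h(\mu)>0$, $\mu$ is non-atomic, so by Lemma~\ref{lem:SMB} we may assume $\mu(J(f))=1$, whence $\cyl_n[x]$ shrinks to $x$ for $\mu$-a.e.\ $x$ (Lemma~\ref{lem:cylshrinks}); this is what makes the Hofbauer coordinate of a lift essentially well defined.

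\emph{Non-escape of mass.} Any vague limit $\hat\mu$ of $(\hat\mu_n)$ is $\hat f$-invariant and satisfies $\hat\mu\circ\pi^{-1}\le\mu$ (test against $\phi\circ\pi$, $\phi\in C_c(I)$, $\phi\ge0$, and note $\hat\mu_n\circ\pi^{-1}=\mu$ for all $n$); since $\mu$ is ergodic this forces $\hat\mu\circ\pi^{-1}=c\mu$ for some $c\in[0,1]$, and the point is that $c=1$. First one shows that $\mu$-a.e.\ orbit meets the bounded part $\hat X(R)$ (equivalently $\hat I^R_-$) of the extension infinitely often, for $R$ large: by Lemma~\ref{lem:Rtower} the number of $(nR)$-cylinders whose orbit visits $\hat I^R_-$ at most $\delta n$ times is at most $e^{nR\eps(R)}$ with $\eps(R)\to0$, while $h(\mu)=\lim_m\frac1m H_\mu(\P_m)>0$ prevents $\mu$ from concentrating, even in $\limsup$, on such a sparse family of cylinders (estimate $H_\mu(\P_{nR})$ by conditioning on whether the cylinder lies in this family), and ergodicity upgrades ``positive frequency infinitely often'' to ``a.e.''. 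Consequently, after normalising, $(\hat\mu/c)$-a.e.\ orbit meets $\hat X(R)$, so $(\hat\mu/c)(\hat X(R))>0$, the canonical level-$R$ induced map $(\hat X(R),\hat F,\tau)$ is defined $\hat\mu$-a.e., and Kac's formula gives $\int\tau\,d\hat\nu<\infty$ for the normalised restriction $\hat\nu$ of $\hat\mu/c$. Now a point has Hofbauer level $>R'$ at time $k$ only if $k$ lies more than $R'-R$ steps into an excursion away from $\hat X(R)$; since the excursion length is $\hat\nu$-integrable, the fraction of time spent that deep into excursions tends to $0$ as $R'\to\infty$, uniformly along the Ces\`aro averages $\hat\mu_n$. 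Hence $\sup_n\hat\mu_n(\hat I\setminus\hat I^{R'})\to0$, no mass escapes, and $c=1$.

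\emph{Entropy, ergodicity, convergence.} Because $\pi$ is a factor map, $h(\hat\mu)\ge h(\mu)$. For the reverse, Lemma~\ref{lem:meetup} shows any two lifts of a point of $J(f)$ agree after finitely many applications of $\hat f$; combined with the fact that $\pi$ is injective on each domain of $\hat I$, this means the $\hat f$-orbit is $\hat\mu$-a.e.\ determined by the underlying $f$-orbit, so the natural extensions of $(\hat I,\hat f,\hat\mu)$ and $(I,f,\mu)$ are isomorphic and $h(\hat\mu)=h(\mu)$. For ergodicity: each ergodic component of $\hat\mu$ projects (a pushforward of an ergodic measure is ergodic) to an ergodic $f$-invariant probability $\le\mu$, hence equal to $\mu$; and two ergodic lifts of $\mu$ must coincide --- pick points generic for the two lifts lying over a common $\mu$-generic point (possible, since both projections have full $\mu$-measure) and apply Lemma~\ref{lem:meetup}, getting a single point generic for both --- so $\hat\mu$ is ergodic. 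Finally, the ergodic lift being unique and no subsequence losing mass, every vague subsequential limit of $(\hat\mu_n)$ equals $\hat\mu$, so $\hat\mu_n\to\hat\mu$ weakly.

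\textbf{Main obstacle.} The only genuinely delicate step is the non-escape of mass: turning the slogan ``positive entropy forbids the orbit from dwelling at high Hofbauer levels'' into the uniform tail bound $\sup_n\hat\mu_n(\hat I\setminus\hat I^{R'})\to0$. This needs both the combinatorial counting of Lemma~\ref{lem:Rtower} and the return-time integrability via Kac's formula; the remaining steps are soft.
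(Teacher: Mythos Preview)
The paper does not prove this lemma at all --- it is quoted from Keller \cite{Kellift} with no argument supplied --- so there is nothing in the paper to compare your reconstruction against. I will therefore just assess your sketch on its own terms.

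Most of the pieces are correct, and in fact the entropy equality, the uniqueness of the ergodic lift via Lemma~\ref{lem:meetup}, and the deduction of full-sequence convergence from uniqueness are fine. The gap is in the non-escape step. You obtain $\int\tau\,d\hat\nu<\infty$ from Kac's lemma applied to the \emph{limit} measure $\hat\mu/c$, and then assert that this controls the Ces\`aro averages $\hat\mu_n$ ``uniformly''. But $\hat\nu$-integrability of $\tau$ is a statement about the limit; it says nothing \emph{a priori} about the excursion statistics along orbits of $\iota(x)$, which is what governs $\hat\mu_n$. Positive lower density of visits to $\hat X(R)$ (which you do establish) does not by itself bound the fraction of time spent deep in long excursions --- one can have density bounded below while arbitrarily long gaps carry a fixed fraction of the time. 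So as written the step from ``$\tau\in L^1(\hat\nu)$'' to ``$\sup_n\hat\mu_n(\hat I\setminus\hat I^{R'})\to0$'' is unjustified.

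The fix is already present in your own argument, just out of order. Once you know $c>0$ (which the counting argument of Lemma~\ref{lem:Rtower} plus the entropy bound gives, exactly as in the proof of Lemma~\ref{lem:Rtower2}), take any ergodic component $\hat\mu_e$ of $\hat\mu/c$; it projects to $\mu$. Your uniqueness argument via Lemma~\ref{lem:meetup} then shows $\hat\mu_e$ is the \emph{only} ergodic lift. Now use Lemma~\ref{lem:meetup} again: for $\mu$-a.e.\ $x$ there is a $\hat\mu_e$-generic point $\hat y$ over $x$, and the orbits of $\iota(x)$ and $\hat y$ eventually coalesce, so the orbit of $\iota(x)$ is itself eventually generic for $\hat\mu_e$. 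Bounded convergence then gives $\int\phi\,d\hat\mu_n\to\int\phi\,d\hat\mu_e$ for every bounded continuous $\phi$, i.e.\ $\hat\mu_n\to\hat\mu_e$ weakly. Since $\hat\mu_e$ is a probability measure, no mass is lost and $c=1$. Note that with this ordering the Kac/excursion-length detour is unnecessary.
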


    Recall that $\hat X = \hat X(R)$ denotes the union of elements of $\hat \P_R$ contained in $\hat I^R_-$. Note that $\hat I^R_- \setminus \hat X$ is a finite set which therefore has zero mass for any positive-entropy ergodic invariant probability measure.
    From~\eqref{eq:epsdef},~\eqref{eq:etadef}, $\eps(R) = 8 \log R/R$, while 
    $$
    \eta(R,d) = \frac{\eps(R)^2}{2R (\log d)^2}.$$
\begin{lemma} \label{lem:Rtower2} 
    For any piecewise-monotone $d$-branched map $f$ and any  $\mu \in \M_f$ with entropy $h(\mu) \geq 2\varepsilon(R)$, the lift $\hat\mu$ of $\mu$ satisfies
    $$
    \hat\mu (\hat X) \geq\eta(R,d).
    $$
\end{lemma}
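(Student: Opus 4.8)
The plan is to argue by contradiction, using the principle that the entropy of $\mu$ is carried by orbits that return frequently to the bounded part $\hat I^R_-$ of the extension: if $\hat\mu(\hat X)$ were smaller than $\eta(R,d)$, a $\mu$-typical point would visit $\hat I^R_-$ so rarely that its itinerary would be confined to one of the exponentially few cylinders counted in Lemma~\ref{lem:Rtower}, contradicting $h(\mu)\ge 2\eps(R)$.

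For the setup, note first that since $\mu$ has positive entropy, $\mu(J(f))=1$ by Lemma~\ref{lem:SMB}, so by Lemma~\ref{lem:cylshrinks} the branch partition $\P_1$ is generating mod $\mu$ and $h(\mu)=h(\mu,\P_1)=\lim_{n\to\infty}\tfrac1{nR}H(\mu,\P_{nR})$. By Lemma~\ref{lem:entropybranches}, $\mu$ lifts to $\hat\mu$ with $h(\hat\mu)=h(\mu)$, obtained as the weak limit of the averages $\hat\mu_n$ of~\eqref{eqn:Hoflift} issued from $\hat\mu_0=\mu\circ\iota^{-1}$. Put $\beta:=\hat\mu(\hat X)=\hat\mu(\hat I^R_-)$ — the two agree since their difference is finite — and suppose, for contradiction, $\beta<\eta(R,d)$. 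For $x\in I$ with all iterates defined, set $N_{nR}(x):=\#\{0\le j<nR:\hat f^j(\iota(x))\in\hat I^R_-\}$; this depends only on the cylinder $\cyl_{nR}[x]$, and induces a finite partition of $\P_{nR}$ in which we let $\Q_v$ be the collection of $nR$-cylinders on which $N_{nR}=v$. Integrating and invoking~\eqref{eqn:Hoflift}, $\int N_{nR}\,d\mu=\sum_{j<nR}(\hat\mu_0\circ\hat f^{-j})(\hat I^R_-)=nR\,\hat\mu_{nR}(\hat I^R_-)$; since $\partial\hat I^R_-$ is finite and $\hat\mu$ is non-atomic, $\hat I^R_-$ is a $\hat\mu$-continuity set, so $\beta_n:=\tfrac1{nR}\int N_{nR}\,d\mu\to\beta$ as $n\to\infty$.

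The heart of the argument is a refinement of Lemma~\ref{lem:Rtower}: one bounds, for \emph{every} $v$, the number of $nR$-cylinders making exactly $v$ visits to $\hat I^R_-$. Selecting the $v$ visit times ($\binom{nR}{v}$ ways) and a branch immediately after each ($d^v$ ways), the intervening excursion steps are constrained by estimates~\ref{en:high}--\ref{en:external} in the proof of Lemma~\ref{lem:Rtower} together with $R\ge 8d$, and a routine extension of the block-counting there gives $\#\Q_v\le\binom{nR}{v}\,d^v\,R^{5n}$, up to a factor polynomial in $R,d$. Decomposing the partition entropy over the value of $N_{nR}$, $H(\mu,\P_{nR})\le\log(nR+1)+\sum_v\mu\big(\bigcup\Q_v\big)\log\#\Q_v$; inserting the bound, using $\log\binom{nR}{v}\le nR\,h_2(v/nR)$ with $h_2(p):=-p\log p-(1-p)\log(1-p)$ and Jensen's inequality for the concave function $h_2$ (with mean $\beta_n$), one obtains after dividing by $nR$ and letting $n\to\infty$
$$h(\mu)\ \le\ h_2(\beta)+\beta\log d+\tfrac58\eps(R)\ \le\ \beta\log(ed/\beta)+\tfrac58\eps(R),$$
the last step from $h_2(\beta)\le\beta(1-\log\beta)$. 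With $h(\mu)\ge 2\eps(R)$ this forces $\beta\log(ed/\beta)\ge\tfrac{11}{8}\eps(R)$. But $\beta\mapsto\beta\log(ed/\beta)$ is increasing on $(0,d)\supset(0,\eta(R,d)]$, and a direct computation from~\eqref{eq:epsdef} and~\eqref{eq:etadef}, using $R\ge 8d$, shows $\eta(R,d)\log\big(ed/\eta(R,d)\big)<\tfrac{11}{8}\eps(R)$; since $\beta<\eta(R,d)$ this is a contradiction, so $\hat\mu(\hat X)\ge\eta(R,d)$. (The case $\beta=0$ needs no separate treatment: the same chain gives $h(\mu)\le\tfrac58\eps(R)$ at once.)

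The genuine difficulty is the combinatorial refinement of Lemma~\ref{lem:Rtower} — counting $nR$-cylinders with a \emph{prescribed} number of visits to $\hat I^R_-$, not merely those with few visits — sharply enough that the frequent-visit cylinders contribute only the $\beta\log(ed/\beta)$ term, which is $O(\beta\log(1/\beta))$ and hence negligible as $\beta\to 0$, rather than the useless $\beta\log d^{nR}$ coming from the trivial bound $\#\Q_v\le d^{nR}$. Any bound with per-visit freedom $d^{O(v)}$ and per-excursion-step freedom polynomial in $R$ suffices for the final step, since $\eta(R,d)\log(1/\eta(R,d))$ is of strictly smaller order than $\eps(R)$ by~\eqref{eq:epsdef}--\eqref{eq:etadef}; nailing the explicit constant in $\eta(R,d)$ is what forces one to be careful in the counting. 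The remaining ingredients — that $\P_1$ generates, the identity $\int N_{nR}\,d\mu=nR\,\hat\mu_{nR}(\hat I^R_-)$ with $\hat\mu_{nR}\to\hat\mu$, and the elementary convexity estimates — are routine.
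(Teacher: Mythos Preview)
Your strategy departs from the paper's, and the step you flag as the ``genuine difficulty'' is indeed a gap: the bound $\#\Q_v\le\binom{nR}{v}\,d^v\,R^{5n}$ does not follow by a routine extension of the block-counting in Lemma~\ref{lem:Rtower}. That argument partitions the $nR$ steps into $n$ blocks of length $R$ and assigns each block either $16d^2R^3$ choices (external) or the trivial $d^R$ (otherwise). A block containing even a single visit is not external, so the block method charges it $d^R$, not $d^{(\text{number of visits in the block})}$. If the $v$ visits occupy $m$ blocks, one gets at best $\binom{n}{m}\,d^{Rm}\,(16d^2R^3)^{n-m}$, and the factor $d^{Rm}$ (potentially $d^{Rv}$) is far larger than your $d^v$; after dividing by $nR$ it contributes $\beta R\log d$ rather than $\beta\log d$ to the entropy inequality, which swamps the $\eps(R)$ you need. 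An excursion-based count using the second half of Lemma~\ref{lem:Rtower} fares no better: the per-excursion overhead there carries a factor $d^{1+2R}$, again exponential in $R$, so raising it to the $v$th power does not give the $O(1)^v$ your argument requires. Obtaining a per-visit cost bounded independently of $R$ would need a genuinely new combinatorial argument.

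The paper sidesteps this entirely with a single-threshold argument. It uses only the crude dichotomy already in Lemma~\ref{lem:Rtower}: let $\Q$ be the $nR$-cylinders visiting $\hat I^R_-$ at most $\delta n$ times, with $\delta=\eps(R)/(4\log d)$; then $\#\Q\le\exp(nR\,\eps(R))$, while $\#(\P_{nR}\setminus\Q)\le d^{nR}$ trivially. The two-set entropy inequality gives $2\eps(R)\le h(\mu)\le \mu(\P_{nR}\setminus\iota\Q)\log d+\eps(R)+o(1)$, hence $\mu(\P_{nR}\setminus\iota\Q)\gtrsim \eps(R)/\log d$. Each point in this set makes at least $\delta n$ visits in $nR$ steps, so averaging immediately yields $\hat\mu(\hat I^R_-)\gtrsim (\eps(R)/\log d)\cdot(\delta/R)$, which is $\eta(R,d)$ up to the constant. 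No $v$-by-$v$ refinement is needed, and the argument is direct rather than by contradiction.
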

\begin{proof} 
The  following is a more detailed proof of the somewhat sketchy \cite[Lemma~4]{BTeqnat} which, moreover, yields more information: in particular uniform constants for families of maps together with a tail estimate. 
  
 As in Lemma~\ref{lem:Rtower}, let $\Q$ denote the collection of elements of $\P_{nR}^0$ which visit $\hat I^R_-$ at most $\delta n$ times in the first $nR$ iterates, where $\delta = \eps(R)/4\log d$.
 By Lemma~\ref{lem:Rtower}, for all large $n$,  $\# \Q \leq \exp(nR\eps(R)).$ Of course, there is a trivial bound $\# \P_{nR} \leq d^{nR}$. 

    Now we can use these two estimates on the numbers of $nR$-cylinders to estimate entropy. 
    For any finite sets of non-negative numbers $\{a_k\}_k, \{b_k\}_k$ satisfying the relations $\sum_k a_k=a$, $\sum_k b_k = b$ and $a+b =1$, Jensen's inequality implies that 
    $$-\sum_ka_k\log a_k -\sum_k b_k \log b_k \le a\log\#\{a_k\}_k + b\log \#\{b_k\}_k + \log 2,$$  which we apply in the following entropy estimate.  
    Since the partition $\P_1$ is generating for any ergodic positive entropy measure $\mu$ and since $\iota \P^0_{nR} = \P_{nR}$, we deduce the bound
    \begin{align*}
        2\eps(R) \leq h(\mu)&\leq -\frac1{nR} \sum_{\cyl\in \P_{nR}}\mu(\cyl)\log\mu(\cyl)\\
              &\leq -\frac1{nR} \left( \sum_{\cyl\in \P_{nR} \setminus \iota \Q}\mu(\cyl)\log\mu(\cyl) 
    +\sum_{\cyl\in \iota \Q}\mu(\cyl)\log\mu(\cyl)\right)\\
    &\leq \mu(\cup_{\cyl \in \P_{nR} \setminus \iota \Q} \cyl) \log d + \eps(R) + \frac{\log 2}{nR},
    \end{align*}
    for all large $n$. 
    Therefore
    $$ \hat\mu^0(\cup_{\cyl \in \P^0_{nR}\setminus \Q} \cyl) = \mu(\cup_{\cyl \in \P_{nR} \setminus \iota \Q} \cyl)\log d \geq \varepsilon(R)  - (\log2) / nR.$$ 
    In particular,  for each large $n$, there is a set of $\hat \mu_0$-measure  at least $\varepsilon(R)/\log d  - 1/nR = 8\delta - 1/nR$ which visits 
    $\hat I^R_-$ 
    at least $\delta n$ times in the first $nR$ iterates. From the definition of $\hat \mu$, we get 
    $\hat \mu(\hat I^R_-) = \hat \mu(\hat X) \geq 8\delta^2/R$. Substituting the definition of $\delta$ gives the required result.
\end{proof}

    Recall primitive components of $\hat X$ were introduced in Definition~\ref{def:primi}.
    \begin{corollary}\label{cor:Rtowerprim}
        For any  $\mu \in \M_f$ with entropy $h(\mu) \geq 2\varepsilon(R)$, there is a primitive component $\Y$ with
    $$
    \hat\mu (\Y) \geq\eta(R,d).
    $$
\end{corollary}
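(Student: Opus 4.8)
The plan is to deduce the statement from Lemma~\ref{lem:Rtower2} together with the finiteness of the number of primitive components (Remark~\ref{rem:prim}) and a pigeonhole argument, but with a slight care about constants. By Lemma~\ref{lem:Rtower2}, the lift $\hat\mu$ satisfies $\hat\mu(\hat X) \geq \eta(R,d)$. Since $\hat I^R_- \setminus \hat X$ is a finite set, it has zero mass for $\hat\mu$ (which is non-atomic, being the lift of a positive-entropy ergodic measure), so in fact $\hat\mu(\hat X) \geq \eta(R,d)$ with $\hat X$ decomposed into its connected components, each of which projects into a single $R$-cylinder. Now $\hat X$ is a disjoint union of its primitive components $\Y_1, \ldots, \Y_m$ together with the connected components not lying in any primitive component (those whose first return map has only one branch). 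By Remark~\ref{rem:prim}, $m \leq (2dR)^2 d^R$.

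First I would observe that the connected components $\hat Y$ of $\hat X$ \emph{not} belonging to any primitive component --- i.e. those for which the first return map to $\hat Y$ has at most one branch --- carry no mass under $\hat\mu$. Indeed, if $\hat F$ restricted to such a $\hat Y$ has a single branch, then by Lemma~\ref{lem:inducingcyl}\ref{enum:c5} this branch extends to a homeomorphism onto a domain of $\hat I$, and the normalised restriction $\hat\nu$ of $\hat\mu$ to $\hat Y$ (which is $\hat F_{\hat Y}$-invariant, where $\hat F_{\hat Y}$ is the first return to $\hat Y$) would be supported on a set on which the return map is essentially injective; such a measure has zero entropy for $\hat F_{\hat Y}$, hence by Abramov's formula $\hat\mu$ restricted to the orbit of $\hat Y$ would have zero entropy, contradicting $h(\hat\mu) = h(\mu) > 0$. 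More carefully: positive-entropy ergodic measures cannot be supported on the forward orbit of a set on which every first-return branch is single, since the induced system would be (measurably) a single-branch, hence zero-entropy, system. So $\hat\mu$ gives full mass to $\cup_{i=1}^m \Y_i$.

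Then I would apply the pigeonhole principle: since $\hat\mu(\cup_{i=1}^m \Y_i) \geq \eta(R,d)$ and $m \leq (2dR)^2 d^R$, there is some $i$ with $\hat\mu(\Y_i) \geq \eta(R,d)/m \geq \eta(R,d)/((2dR)^2 d^R)$. This gives a primitive component of positive mass, but a priori only mass $\eta(R,d)/m$, not $\eta(R,d)$ itself --- so the corollary as literally stated seems to claim more. The resolution (and the point I expect to be the crux) is ergodicity: $\hat\mu$ is ergodic for $\hat f$, and the $\hat f$-orbit of $\hat X$ meets each primitive component, but more to the point the \emph{first-return dynamics} to $\hat X$ is such that the normalised restriction $\hat\nu = \hat\mu|_{\hat X}/\hat\mu(\hat X)$ is $\hat F$-invariant and \emph{ergodic} for $\hat F$ (ergodicity passes to first-return maps). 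An ergodic $\hat F$-invariant measure cannot split its mass across two primitive components, because there is no $\hat F$-orbit connecting distinct primitive components (by definition of ``linked'', $\hat F$ maps each primitive component into itself up to the single-branch components, which have measure zero). Hence $\hat\nu$ is supported on a single primitive component $\Y$, giving $\hat\mu(\Y) = \hat\mu(\hat X) \geq \eta(R,d)$, as required.

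The main obstacle is this last ergodicity step: one must check that the normalised restriction $\hat\nu$ of $\hat\mu$ to $\hat X$ is genuinely ergodic for the first return map $\hat F$ (standard, since $\hat\mu$ is ergodic for $\hat f$ and first-return maps of ergodic systems are ergodic --- see \cite[\S1.5]{Aar97}), and then that $\hat F$-ergodicity forces support in one primitive component. The latter follows because, modulo the zero-measure set of single-branch components, $\hat F$ preserves the partition of $\hat X$ into primitive components (each primitive component is forward-invariant under $\hat F$ once we know no mass escapes to single-branch components and no $\hat F$-orbit links two primitive components --- the latter being exactly the definition of ``linked'' in Definition~\ref{def:primi}). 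An ergodic measure assigning positive mass to a forward-invariant set of positive complement must give that set full measure, so $\hat\nu(\Y) = 1$ for a single $\Y$, and un-normalising gives the claim.
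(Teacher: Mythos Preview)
Your proof is correct and lands on the same key idea as the paper: ergodicity forces $\hat\mu$ to concentrate on a single primitive component, whence $\hat\mu(\Y) = \hat\mu(\hat X) \geq \eta(R,d)$. The paper's proof is two sentences; your pigeonhole detour is unnecessary (as you yourself recognise), and the treatment of single-branch components, while correct, is more than is strictly needed once ergodicity is in hand.

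One point to tighten: your claim that ``each primitive component is forward-invariant under $\hat F$'' because ``no $\hat F$-orbit links two primitive components --- the latter being exactly the definition of `linked'\,'' is not quite accurate as stated. The definition of linked requires a \emph{two-way} connection; a one-way $\hat F$-orbit from $\Y_1$ into $\Y_2$ is not excluded by the definition alone. The correct justification (which you have the ingredients for) is: by ergodicity of $\hat\nu$, if $\hat\nu(\Y_1), \hat\nu(\Y_2) > 0$ then a.e.\ orbit visits both infinitely often; by the Markov property (Lemma~\ref{lem:inducingcyl}), this gives iterates of $\hat F$ mapping a component of $\Y_1$ onto one of $\Y_2$ and vice versa, so the components are linked and $\Y_1 = \Y_2$. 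Equivalently, Poincar\'e recurrence plus the argument of Lemma~\ref{lem:primi4}\ref{enum:cc4} shows that $\Y$ is $\hat\nu$-a.e.\ forward-invariant. Either route closes the gap.
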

\begin{proof} From ergodicity and Definition~\ref{def:primi}, $\hat \mu$ can only give mass to one primitive component. By the lemma, this component has mass at least $\eta(R,d)$. 
\end{proof}

    Given $R$ and a measure $\mu \in \M_f$ with $h(\mu) \geq 2\eps(R)$, then $\hat \mu(\hat X) \geq \eta(R,d)$ as in Lemma~\ref{lem:Rtower2}. We obtain a measure $\hat \nu =\hat  \nu^R \in \M_{\hat F}$ defined by 
    \begin{equation}\label{eq:nudef}
        \hat \nu := \frac1{\hat \mu(\hat X)} \hat \mu_{|\hat X},
    \end{equation}
        since $\hat F$ is a first return map. 
        Recall the embedding $\iota_R$ from~\S\ref{sec:embed}, giving $(X, F, \tau)$, the embedded level-$R$ induced map. 
        The measure $\hat \nu \in \M_{\hat F}$ gives an \emph{embedded induced measure} \index[def]{Induced measure (embedded)} $\nu \in \M_F$ via $\nu = \hat \nu \circ \iota_R^{-1}$. 
        In this setting, we have the following. 
        \begin{lemma} \label{lem:toight}
            Given $\gamma >0$, there exists $K \geq 1$ ($K$ depending on $d, R, \gamma$) such that, if   
            $\kappa := \min\{|V| : V \in \hat \P_K, V \subset \hat I^R \}$ then 
        $$ \nu (B(\partial X, \kappa)) < \gamma.$$ 
    \end{lemma}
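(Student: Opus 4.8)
The plan is to move to the Hofbauer side via the isometric embedding $\iota_R$ of \S\ref{sec:embed} and combine the pullback-of-boundary-cylinders estimate of Lemma~\ref{lem:boundarytime} with a Kac/Markov bound on the $R^{\mathrm{th}}$ return time of $\hat F$. Since each connected component of $\hat X$ sits inside a single domain $D\in\D$, on which $\iota_R$ acts (in its first coordinate) as an isometry, we have $\nu\bigl(B(\partial X,\kappa)\bigr)=\hat\nu\bigl(B(\partial\hat X,\kappa)\bigr)$ for every $\kappa>0$, so it is enough to bound the latter, where $\hat\nu$ is as in~\eqref{eq:nudef}. Let $\rho:=\sum_{j=0}^{R-1}\tau\circ\hat F^{j}$ be the inducing time of $\hat F^{R}$ appearing in Lemma~\ref{lem:boundarytime}. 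Invariance of $\hat\nu$ under $\hat F$ gives $\int\rho\,d\hat\nu=R\int\tau\,d\hat\nu$, and, since we are in the standing situation $h(\mu)\ge 2\eps(R)$ (so $\hat\mu$ is a probability measure on $\hat I$ by Lemma~\ref{lem:entropybranches} and $\hat\mu(\hat X)\ge\eta(R,d)$ by Lemma~\ref{lem:Rtower2}), Kac' lemma yields $\int\tau\,d\hat\nu\le 1/\eta(R,d)$. Hence, by Markov's inequality, $\hat\nu(\{\rho>K_0\})\le R/\bigl(K_0\,\eta(R,d)\bigr)$ for every $K_0\ge 1$.

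Given $\gamma>0$, I would fix $K_0$ large enough, depending only on $d,R,\gamma$ (e.g.\ $K_0=\lceil R/(\gamma\,\eta(R,d))\rceil+1$), that $\hat\nu(\{\rho>K_0\})<\gamma$, and then set $K:=K_0+R$, with $\kappa=\min\{|V|:V\in\hat\P_K,\ V\subset\hat I^R\}$. The geometric core is the inclusion $B(\partial\hat X,\kappa)\subseteq\{\rho>K_0\}$ modulo a $\hat\nu$-null set: if $\hat x\in\hat X$ has $\rho(\hat x)\le K_0$ and $\hat y$ is an endpoint of the connected component of $\hat X$ containing $\hat x$, then by Lemma~\ref{lem:boundarytime} the interval $(\hat x,\hat y)$ contains a $(K_0+R)$-cylinder for $\hat f$; that cylinder lies in $\hat I^R$, so its length is at least $\kappa$, and therefore $\dist(\hat x,\partial\hat X)\ge\kappa$ (the nearest point of $\partial\hat X$ to $\hat x$ being one of the two endpoints of its component). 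Thus no $\hat x$ with $\rho(\hat x)\le K_0$ lies in $B(\partial\hat X,\kappa)$; as $\rho$ is defined $\hat\nu$-a.e., this gives $\nu(B(\partial X,\kappa))=\hat\nu(B(\partial\hat X,\kappa))\le\hat\nu(\{\rho>K_0\})<\gamma$. One also notes $\kappa>0$, since $\hat\P_K$ has only finitely many (non-degenerate) members inside the finite union of domains $\hat I^R$, and that $K$ depends only on $d,R,\gamma$, as required.

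The delicate point is purely the bookkeeping of cylinder levels: Lemma~\ref{lem:boundarytime} converts ``$\rho\le K_0$'' into the presence of a cylinder of level $K_0+R$ (a distance $\le K_0$ between $\hat x$ and the domain boundary costs at most $K_0$ extra refinements of the level-$R$ boundary cylinders), which is exactly why the constant produced in the statement must be $K=K_0+R$ rather than $K_0$, so that the $\kappa$ in the conclusion is the minimal length of a $(K_0+R)$-cylinder. The only other thing to verify with care is that the measures in play are genuine probability measures, so that Kac' lemma applies — this is where the running hypothesis $h(\mu)\ge 2\eps(R)$, via Lemmas~\ref{lem:entropybranches} and~\ref{lem:Rtower2}, is needed. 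I do not expect any other serious difficulty.
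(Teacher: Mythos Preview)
Your proof is correct and follows essentially the same route as the paper's: both pass to $\hat\nu$ via the embedding, bound $\int\rho\,d\hat\nu\le R/\eta(R,d)$ by Kac, apply Markov's inequality to control $\hat\nu(\{\rho>K_0\})$, and invoke Lemma~\ref{lem:boundarytime} to show $B(\partial\hat X,\kappa)\subset\{\rho>K_0\}$ for $K=K_0+R$. Your write-up is in fact more explicit than the paper's about the level bookkeeping and the inclusion step.
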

    \begin{proof} 
        It suffices to show that $$ \hat \nu (B(\partial \hat X, \kappa)) < \gamma.$$ 
            By Kac' Lemma, if $\rho$ is the inducing time corresponding to $\hat F^R$, then    
            \begin{equation} \label{eq:itb}
                \int_{ B(\partial \hat X,\kappa) } \rho ~d\hat \nu \leq 
                \int_{\hat X} \rho ~d\hat \nu = 
                \frac{R}{\hat \mu(\hat X)} \leq \frac{R}{\eta(R,d)}.
        \end{equation} 
        Let $K =  R + \lceil R/(\eta(R,d) \gamma) \rceil$, and let $\kappa$ be given by the statement. 
        By Lemma~\ref{lem:boundarytime}, 
        \[\rho \geq \frac{R}{\eta(R,d)\gamma} \]
        on $B(\partial \hat X, \kappa)$. 
        Thus by (\ref{eq:itb}), 
        $
            \hat \nu (B(\partial \hat X, \kappa))  <\gamma.
            $
    \end{proof}

    \section{Spreading measures and Abramov's formula} 
    Suppose we have some induced map $(X,F, \tau)$, where $(X,F,\tau)$ is a level-$R$ induced map, an embedded level-$R$ induced map or a level-$R$ limit induced map for an initial piecewise-monotone $d$-branched map $f$. Denote by $\pi$ the projection from $X$ to $I$. 
    Let $\nu$ be an $F$-invariant probability measure, possibly non-ergodic. The pushforward $\pi_* \nu$ is a measure on $I$. Write $\nu^n$ for the restriction of $\nu$ to the set $\tau^{-1}(n)$ and put
    $$T = T_\nu := \int \tau\, d\nu.\index{Tnu@ $T_\nu = \int \tau\, d\nu.$}
    $$
    \begin{definition} \label{dfn:spread}
        The \emph{spread} of $\nu$ is the measure \index[def]{Spread of a measure}
        $$
        \frac{1}{T} \sum_n \sum_{j = 0}^{n-1} f^j_*(\pi_*\nu^n),$$
        which is well-defined if $T < \infty.$ 
    \end{definition}
    Writing $\mu$ for the spread of $\nu$, Abramov's formula (\cite{Abramov} or \cite{Zwe05}) states that
    \begin{equation} \label{eq:Abe}
        h(\mu) = \frac{1}{T} h(\nu).
    \end{equation}
    %Moreover, by \cite[Theorem 2.3]{PeSe},
     %\begin{equation} \label{eq:AbeDf}
        %\int \log |Df|\, d\mu  = \frac{1}{T} \int \log |DF|\, d\nu.
    %%\end{equation}
    %\iffalse
    If $\nu$ is ergodic and $f$ is piecewise differentiable, then straightforward application of Birkhoff's ergodic theorem (or \cite[Theorem~2.3]{PeSe}) gives
    \begin{equation} \label{eq:AbeDf}
        \int \log |Df|\, d\mu  = \frac{1}{T} \int \log |DF|\, d\nu.
    \end{equation}
    If $\nu$ is not ergodic, use ergodic decomposition: one can write $\mu = \int_{\M_f} \mu' \,  dm_1(\mu')$ and 
    $$\nu = \int_{\M_f} \frac{T}{T_{\nu'}} \nu' \, dm_1(\mu')$$ for some probability measure $m_1$ and with $\mu'$ the spread of $\nu'$. Then 
    \begin{eqnarray*}
        \int \log |Df| \, d\mu &=& \int_{\M_f} \int \log|Df| \, d\mu' \, dm_1(\mu') \\
                               &=& \int_{\M_f} \frac1{T_{\nu'}} \int \log|DF| \, d\nu' \, dm_1(\mu') \\
                               &=& \frac1T\int \log |DF| d\nu,
    \end{eqnarray*}
    so~\eqref{eq:AbeDf} still holds. 
    %\fi

    Naturally, given $(\hat X(R), F, \tau)$, if $\mu \in \M_f$ has entropy at least $2\eps(R)$ with lift $\hat \mu \in \M_{\hat f}$, and $\hat \nu \in \M_{\hat F}$ is the normalised restriction to $\hat X(R)$ (which by Lemma~\ref{lem:Rtower2} is well-defined), then $\hat \nu$ spreads to $\mu$. 

    \chapter{Light limit measures and upper-semicontinuity of metric entropy} \label{sec:llm}

     Let $(f_k)_k$ be a sequence  of $d$-branched piecewise-monotone maps converging to $f_0$ as $k \to \infty$ and having decreasing critical relations. 
     Let $R \geq 8d$. 
     For each $k \geq 1$, let $\mu_k \in \M_{f_k}$ have $h(\mu_k) \geq 2\eps(R)$ and suppose the sequence $(\mu_k)_k$ converges to some limit measure. 
     Let 
     $$((X^k, F_k, \tau_k))_k \to (Y^0, F_0, \tau_0)$$ 
     be a convergent sequence of level-$R$ induced maps, as in~\S\ref{sec:embed}.
     In particular,  the sequences $(\Omega_k^N)_k$ are eventually constant for each $N$. 
     We obtain the measures $\hat \nu_k \in \M_{\hat F_k}$ from~\eqref{eq:nudef}, together with their embeddings $\nu_k \in \M_{F_k}$. 
     Kac' Lemma says that 
     $$
     \int_{\hat X^k} \tau_k ~d\hat \mu_k = 1$$
     so,
     writing $\eta = \eta(R,d)$, we have the following bound on the inducing time
     \begin{equation}\label{eq:Kac1}
         \int \tau_k ~d\nu_k  = \frac{1}{\hat \mu_k(\hat X^k)} \leq \frac1\eta.
     \end{equation}
     Since $Y_0$ is a subset of the compact space $I_*$, there is a convergent subsequence of the measures $\nu_k$ whose limit is a probability measure supported on $\overline{Y^0}$. Let us suppose the sequence itself actually converges; the limit measure we shall denote $\nu_0 = \nu_0^R$. Its spread will be a light limit measure, as shown later in the chapter.
     \begin{lemma} \label{lem:nu0inv}
        The measure $\nu_0$ is $F_0$-invariant. 
    \end{lemma}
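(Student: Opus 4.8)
The plan is to show that for every continuous test function $\varphi$ on $\overline{Y^0}$ and every branch-structure, the push-forward identity $\int \varphi\,d(\nu_0\circ F_0^{-1}) = \int\varphi\,d\nu_0$ holds, by transferring the corresponding identity for $F_k$ and $\nu_k$ to the limit. The subtlety is that $F_k$ and $F_0$ are maps on genuinely different domains (different cylinder sets, living in the fixed ambient space $I_*$), so I cannot simply say ``$F_k\to F_0$ uniformly and $\nu_k\to\nu_0$ weakly, hence done''. Instead I would exploit the eventually-constant structure of the branch data $(\Omega_k^N)_k$.

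\textbf{Step 1 (reduce to finitely many branches).} Fix $\gamma>0$. By~\eqref{eq:Kac1}, $\int\tau_k\,d\nu_k\le 1/\eta$ uniformly in $k$, so by Markov's inequality $\nu_k(\tau_k > N) \le 1/(\eta N)$ for all $k$; choose $N$ so large that this is $<\gamma$. The same bound passes to the limit, giving $\nu_0(\tau_0>N)\le 1/(\eta N)<\gamma$ (using that $\{\tau_0\le N\}$ is a finite union of the open branch-sets indexed by $\Omega_0^N$, so $\{\tau_0>N\}$ is, up to a $\nu_0$-null boundary, closed, and weak convergence gives the inequality in the right direction). Thus both $F_k$ (for $k$ large) and $F_0$ are, off a set of measure $<\gamma$, given by the \emph{same finite} list of branches $\Omega_0^N = \Omega_k^N$, each branch being $(x,\theta)\mapsto (f_k^{L(\omega)-R}(x),\theta_*)$ on $(\cyl^\omega(f_k),\theta)$.

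\textbf{Step 2 (pass to the limit on each branch).} For a fixed triple $(\omega,\theta,\theta_*)\in\Omega_0^N$, Lemma~\ref{lem:bigdomains} gives $\cyl^\omega(f_k)\to\cyl^\omega(f_0)$ in the Hausdorff metric and, together with $f_k\to f_0$ in $C^0$, the branch maps $F_k|_{(\cyl^\omega(f_k),\theta)}$ converge uniformly to $F_0|_{(\cyl^\omega(f_0),\theta)}$ on compact subsets of the interior. Since the branches are open and pairwise disjoint, for a test function $\varphi\in C(\overline{Y^0})$ one gets $\int \varphi\circ F_k\,\mathbf{1}_{(\cyl^\omega(f_k),\theta)}\,d\nu_k \to \int \varphi\circ F_0\,\mathbf{1}_{(\cyl^\omega(f_0),\theta)}\,d\nu_0$, provided $\nu_0$ charges no boundary point of a branch --- which holds because these boundary points form a countable set and $\nu_0$, being a weak limit of the $\nu_k$ which are (normalised restrictions of) positive-entropy measures, is non-atomic on $J(f_0)$; more directly, one may perturb $N$ and $\gamma$ to avoid the (countably many) bad values. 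Summing over the finitely many branches in $\Omega_0^N$ and using $F_k$-invariance of $\nu_k$,
\begin{equation*}
    \Big|\int\varphi\,d\nu_0 - \int\varphi\circ F_0\,d\nu_0\Big| \le 2\gamma\,\|\varphi\|_\infty + o(1) \quad (k\to\infty).
\end{equation*}
Letting $\gamma\to0$ gives $\int\varphi\,d\nu_0=\int\varphi\circ F_0\,d\nu_0$ for all $\varphi\in C(\overline{Y^0})$, i.e.\ $F_0$-invariance.

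\textbf{The main obstacle} I expect is the bookkeeping at the boundaries: ensuring that no mass of $\nu_0$ escapes to $\partial X^k$ in the limit, and that $\nu_0$ does not sit on the (countably many) boundary points where the limiting branch structure is ambiguous. The first point is exactly what Lemma~\ref{lem:toight} is for --- it gives, uniformly in $k$, that $\nu_k(B(\partial X^k,\kappa))$ is small, so no mass concentrates on the branch boundaries in the limit; combined with Lemma~\ref{lem:kappaPk} (uniform lower bound on cylinder lengths inside $\hat I^R$), the branches of $F_0$ have definite size and $\nu_0$ is supported on their (disjoint) union up to a null set. The second point is handled by non-atomicity together with the freedom to wiggle the cut-off $N$. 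Once these are in place, the invariance identity is a routine limiting argument branch-by-branch.
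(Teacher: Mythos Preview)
Your approach is essentially the paper's: truncate to finitely many branches via the Kac bound~\eqref{eq:Kac1}, use eventual constancy of $\Omega_k^N$ together with convergence of cylinders to pass to the limit branch-by-branch, and then invoke Lemma~\ref{lem:toight} (with Lemma~\ref{lem:kappaPk}) to show $\nu_0$ puts no mass on branch boundaries. The paper phrases the argument in terms of sets $U$ with massless boundary rather than continuous test functions, but this is cosmetic.

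One correction: in Step~2 your first justification that $\nu_0$ charges no branch-boundary point --- ``$\nu_0$, being a weak limit of the $\nu_k$ which are (normalised restrictions of) positive-entropy measures, is non-atomic'' --- is not a valid inference (uniform measures on shrinking intervals converge weakly to a Dirac mass). Your own obstacle paragraph already supplies the right argument, and it is exactly what the paper does: for each $\gamma>0$ Lemma~\ref{lem:toight} gives $\kappa>0$ with $\nu_k(B(\partial X^k,\kappa))<\gamma$ uniformly in $k$; since the boundary points $p_k'$ converge, this yields $\nu_0(B(p_0',\kappa/2))\le\gamma$, and letting $\gamma\to0$ shows $\nu_0$ gives zero mass to the (finite) set of $R$-cylinder endpoints. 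Replace the non-atomicity sentence by this and your proof is complete.
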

    \begin{proof}
        Let $V$ be a connected component of $Y^0 \subset I_*$, so $\pi_1 V$ is an $R$-cylinder.
        Let $U$ be an interval compactly contained in $V$ such that 
        $$\nu_0(\partial U) = \nu_0(\partial(F_0^{-1} (U)))= 0,$$ that is, the boundaries of $U$ and $F_0^{-1}(U)$ have zero measure. Since $\nu_0(\partial U) = 0$, 
        \begin{equation}\label{eq:nuconv}
        \lim_{k \to \infty} \nu_k(U) = \nu_0(U).
    \end{equation}
        Now each connected component of $F_0^{-1}(U)$ is compactly contained in a branch of $F_0$ and so $\partial(F_0^{-1}(U) \cap \{\tau_0 \leq K\}) \subset \partial(F_0^{-1}(U))$ has zero measure for each $K$. 
        For any $K$, since $\Omega^K_k = \Omega^K_0$ for all large $k$, the branches with inducing time bounded by $K$ converge. Therefore, 
        $$\lim_{k \to \infty} \nu_k \left(F_k^{-1}(U) \cap \{\tau_k \leq K \} \right) = \nu_0\left(F_0^{-1}(U) \cap \{\tau_0 \leq K \} \right). $$
        Meanwhile,~\eqref{eq:Kac1} implies  $\nu_k(\{\tau_k > K\}) < 1/K\eta$. Consequently (and again using that the boundary has zero measure with respect to $\nu_0$),
        \begin{equation}\label{eq:nuconv2}
        \lim_{k \to \infty} \nu_k \left(F_k^{-1}(U)  \right) = \nu_0\left(F_0^{-1}(U)  \right). 
    \end{equation}
    Since $\nu_k(U) = \nu_k(F^{-1}(U))$,~\eqref{eq:nuconv}-\eqref{eq:nuconv2} imply that $\nu_0(U) = \nu_0( F^{-1}(U))$. 
        Approximating general sets compactly contained in $V$ by open covers with massless boundary, one can then show that $\nu_0 = \nu_0 \circ F_0^{-1}$ on compact subsets of $V$ and thus on $V$. It remains to treat boundaries of sets like $V$, that is, of $R$-cylinders. 

        So let $Z^0$ be an $R$-cylinder for $f_0$ and let $(Z^0, \theta) \subset I_*$. Let $Z^k$ denote the corresponding cylinder for $f_k$, with a boundary point $p_k$ such that $p_k$ converges to a boundary point $p_0$ of $Z^0$. 
        Set $p_k' := (p_k, \theta)$. 
        Let $\gamma >0$ and let $K> R$ be given by Lemma~\ref{lem:toight}. Let $\kappa>0$ be given by Lemma~\ref{lem:kappaPk}. [Note that if $p_k' \notin \partial \iota_R \hat X_k$ then $\nu_k(B(p_k', \kappa)) = 0$.]
        Then Lemma~\ref{lem:toight} says that $$\hat \nu_k(B(\partial \hat X^k, \kappa)) < \gamma$$ for all large $k$. 
        In particular, $\nu_k (B(p_k', \kappa)) < \gamma$. Thus $\nu_0(B(p_0', \kappa/2)) \leq \gamma$. Letting $\gamma \to 0$, we conclude that the points projecting to boundaries of cylinders have zero $\nu_0$-measure, so $\nu_0 = \nu_0 \circ F^{-1}$ everywhere. 
    \end{proof}

    Let $T_k := \int \tau_k d \nu_k$;\index{TkR@ $T_k^R = \int \tau_k d \nu_k$} we will write $T_k^R$ when we need to indicate dependence on $R$. Suppose that $T_k$ converges to a finite quantity $T_L = T^R_L$,\index{TLR@ $T_L^R = \lim_{k\to \infty}T_k^R$} as $k \to \infty$. 
   Let $T_0  = T^R_0:= \int \tau_0 d \nu_0$ and set $T^R_\Delta := T^R_L - T^R_0$.\index{TRD@$T^R_\Delta := T^R_L - T^R_0$} Then
   \begin{equation}\label{eq:t00}
       T_0 = \lim_{K\to \infty} \lim_{k \to \infty} \int_{\tau_k \leq K} \tau_k ~d\nu_k \leq T_L \leq \frac1\eta.
   \end{equation}
   In particular, $T_\Delta^R \geq 0$. Since $\hat \mu_k(\hat X^k(R))$ is increasing with $R$,   \eqref{eq:Kac1} implies that $T_k^R$ is decreasing as $R$ increases and
   \begin{equation}\label{eq:tRR}
       0 \leq T_\Delta^{R'} \leq T_L^{R'} \leq T_L^R \quad \text{ for all } R' \geq R.
   \end{equation}

   \begin{lemma} \label{lem:spreadabscns}
       For some measure $\mu' \in \M_{f_0}$,
           $$
           \lim_{k\to\infty} \mu_k = \frac{T^R_0}{T^R_L} \mu_0^R + \left(1- \frac{T^R_0}{T^R_L}\right) \mu',$$
           where $\mu_0^R$ denotes the spread of $\nu_0$ ($\nu_0 = \nu_0^R$).
       \end{lemma}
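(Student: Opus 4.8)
The plan is to decompose the measures $\mu_k$ according to whether the base orbit is "inside" the induced tower or "in transit" through a long excursion, and to track where mass goes in the limit. Recall that $\mu_k$ is the spread of $\nu_k$, so by Definition~\ref{dfn:spread},
$$\mu_k = \frac{1}{T_k} \sum_n \sum_{j=0}^{n-1} (f_k^j)_*\bigl((\pi_1)_* \nu_k^n\bigr),$$
where $\nu_k^n$ is the restriction of $\nu_k$ to $\{\tau_k = n\}$. Fix a threshold $K$. I would split the sum into the part with $n \le K$ and the part with $n > K$. For the low part, since the branches of $F_k$ with inducing time bounded by $K$ converge to those of $F_0$ (because $\Omega_k^K = \Omega_0^K$ for large $k$, from~\S\ref{sec:embed}) and $\nu_k \to \nu_0$ weakly with no mass on the relevant boundaries (as established in the proof of Lemma~\ref{lem:nu0inv}), the low part of the spread of $\nu_k$ converges to the low part of the spread of $\nu_0$. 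Precisely,
$$\lim_{k\to\infty} \frac{1}{T_k}\sum_{n\le K}\sum_{j=0}^{n-1}(f_k^j)_*\bigl((\pi_1)_*\nu_k^n\bigr) = \frac{1}{T_L}\sum_{n\le K}\sum_{j=0}^{n-1}(f_0^j)_*\bigl((\pi_1)_*\nu_0^n\bigr),$$
as measures (not necessarily probability measures) on $I$.

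Next I would let $K \to \infty$. On the right-hand side, the partial sums increase to $\frac{T_0}{T_L}\mu_0^R$ by definition of the spread of $\nu_0$ (which has total mass $T_0/T_L$ after dividing by $T_L$ rather than $T_0$); the tail mass is controlled because $\int \tau_0\, d\nu_0 = T_0 < \infty$ by~\eqref{eq:t00}. On the left-hand side, the contribution of the high part $\{n > K\}$ has total mass $\frac{1}{T_k}\int_{\tau_k > K}\tau_k\, d\nu_k$, which by~\eqref{eq:Kac1} and a uniform-integrability-type argument is small uniformly in $k$ once $K$ is large — but this requires care, since \emph{a priori} only $\int \tau_k\, d\nu_k \le 1/\eta$ is known, not uniform integrability. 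Here I would use the tail estimate from Lemma~\ref{lem:inducingcyl}: the number of branches with return time $n$ is at most $e^{n\eps(R)}$, and each such branch has $\nu_k$-measure bounded via the distortion/Koebe estimates (or more simply, one extracts from the hypothesis $T_k \to T_L$ that $\lim_{K\to\infty}\limsup_k \int_{\tau_k > K}\tau_k\, d\nu_k = 0$, which is exactly the statement $T_0 = T_L - \lim_K\limsup_k(\text{tail})$ rearranged; since we are \emph{assuming} $T_k \to T_L$, this follows from~\eqref{eq:t00} together with the definition of $T_L$).

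Putting these together: $\mu_k = A_k^K + B_k^K$ where $A_k^K$ (the low part) converges to a measure $\to \frac{T_0}{T_L}\mu_0^R$ as $K\to\infty$, and $B_k^K$ (the high part) is a nonnegative measure of total mass $\beta_k^K$ with $\lim_{K\to\infty}\limsup_k \bigl|\beta_k^K - (1 - \tfrac{T_0}{T_L})\bigr| = 0$. Since $\lim_k \mu_k$ exists (it is a probability measure, by hypothesis on $(\mu_k)_k$), a diagonal argument extracts a limit: writing $\mu_k \to \mu_\infty$, we get $\mu_\infty - \frac{T_0}{T_L}\mu_0^R$ is a nonnegative measure of total mass $1 - \frac{T_0}{T_L}$, hence equals $(1-\frac{T_0}{T_L})\mu'$ for a probability measure $\mu'$. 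That $\mu'$ is $f_0$-invariant follows because $\mu_\infty$ is $f_0$-invariant (weak-$*$ limit of $f_k$-invariant measures with $f_k \to f_0$) and $\mu_0^R$ is $f_0$-invariant (it is a spread of the $F_0$-invariant measure $\nu_0$, using Lemma~\ref{lem:nu0inv}). The main obstacle is the uniform control of the high-return-time tail $B_k^K$: one must ensure that mass escaping to long excursions in the tower for $\mu_k$ does not behave pathologically in the limit, and this is precisely where the hypothesis $T_k \to T_L$ (equivalently, convergence of inducing-time integrals) is essential rather than merely the Kac bound~\eqref{eq:Kac1}.
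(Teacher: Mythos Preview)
Your approach is essentially the paper's: truncate the spread at inducing time $K$, pass to the limit $k\to\infty$ on the truncated piece using convergence of branches and of $\nu_k$ (with $\nu_0$ giving no mass to cylinder boundaries, from Lemma~\ref{lem:nu0inv}), then let $K\to\infty$.

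There is one slip to flag. In your middle paragraph you assert that
\[
\lim_{K\to\infty}\limsup_{k}\ \frac{1}{T_k}\int_{\{\tau_k>K\}}\tau_k\,d\nu_k = 0,
\]
but this is false in general: by your own rearrangement $T_0 = T_L - \lim_K\lim_k(\text{tail})$, the double limit of the tail integral equals $T_L - T_0 = T^R_\Delta$, so the normalised tail mass tends to $T^R_\Delta/T_L = 1 - T_0/T_L$, not zero. The whole point of the lemma is that this residual mass may be strictly positive and becomes the $\mu'$ component. The counting bound from Lemma~\ref{lem:inducingcyl} does not help here either, since you have no lower bound on the branch measures.

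Your final paragraph, however, does not actually use the vanishing claim and is correct as written: from $\mu_{k,K}\le \mu_k$ and the convergence of each side you get $\frac{T_0}{T_L}\mu_0^R \le \mu_\infty$ as measures (weak-$*$ limits preserve the order), and the difference is a nonnegative $f_0$-invariant measure of total mass $1 - T_0/T_L$. That is the proof; simply delete the erroneous ``tail $\to 0$'' discussion.
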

       \begin{proof}
           Since the limit measure $\nu_0$ is $F_0$-invariant, it gives no mass to boundaries of cylinder sets. Denote by $\nu_k^n$ the restriction of $\nu_k$ to $\tau^{-1}(n)$. 
           It follows that
           $$
           \mu_{k,N} := \frac{1}{T_k} \sum_{n=1}^N \sum_{j = 0}^{n-1} (f^j_k)_*(\pi_*\nu_k^n)
           \to 
           \frac{1}{T_L} \sum_{n=1}^N \sum_{j = 0}^{n-1} (f^j_0)_*(\pi_*\nu_0^n)
           $$
           as $k \to \infty$. The difference between $\mu_{k,N}$ and $\mu_k$ tends to zero uniformly as $N\to \infty$.  The right-hand side converges to  $\frac{T^R_0}{T^R_L} \mu_0^R$ as $N\to \infty$, and the result follows.
       \end{proof}

     Let us suppose in addition that $h(\nu^R_k)$ converges to a limit $h_L = h^R_L$ as $k \to \infty$.\index{hnu@$h(\nu^R_L)= \lim_{k\to\infty}h(\nu^R_k)$} 
     Then set 
     \begin{equation} \label{eq:hdeltadef}
         h^R_\Delta : = h_L^R - h(\nu^R_0).
     \index{hrd@$ h^R_\Delta  = h_L^R - h(\nu^R_0)$}
     \end{equation}

    \begin{lemma}\label{lem:hnusemi}
        For all  $R$ with $\eps(R) \leq \inf_k h(\mu_k)$,  
        $$h^R_\Delta \leq 5\eps(R)T^R_\Delta.$$ 
        In particular,
        $\lim_{R\to\infty} (h^R_L - h(\nu^R_0)  ) \leq 0$. 
    \end{lemma}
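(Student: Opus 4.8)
The plan is to argue entirely at the level of the convergent sequence of embedded induced maps $(X^k,F_k,\tau_k)\to(Y^0,F_0,\tau_0)$ and the associated measures $\nu_k=\nu_k^R\to\nu_0=\nu_0^R$ set up just before the lemma. The hypothesis $\eps(R)\le\inf_kh(\mu_k)$ forces each $\mu_k$ to have positive entropy, hence to lift (Lemma~\ref{lem:entropybranches}) with the normalised restrictions $\hat\nu_k$, and thus $\nu_k$, well defined (Lemma~\ref{lem:Rtower2}). Recall $h(\nu_k)=h(\nu_k,\xi_k)$, where $\xi_k$ is the countable partition of $X^k$ into branches of $F_k$: it is a generator because the underlying cylinders shrink to points (Lemma~\ref{lem:cylshrinks}) on a set of full $\nu_k$‑measure. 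We want to bound $h^R_\Delta=h^R_L-h(\nu^R_0)=\bigl(\limsup_kh(\nu_k)\bigr)-h(\nu_0)$ from above, and the only combinatorial input is the branch count $\#\{i:\tau_i=n\}\le e^{n\eps(R)}$ for $n\ge n_0(R)$ from Lemma~\ref{lem:inducingcyl}. (Going instead through Abramov's formula $h(\nu_k)=T_k\,h(\mu_k)$, affinity of entropy and Lemma~\ref{lem:spreadabscns} reduces the claim to ``the escaping component $\mu'$ of $\lim_k\mu_k$ has entropy $\le5\eps(R)$'', but extracting that cleanly seems to need upper‑semicontinuity of entropy, which is precisely what this lemma will be used to prove; so we take the direct route.)

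Fix an integer threshold $K\ge n_0(R)$, to be sent to infinity last. Let $\xi_k^{(K)}$ be the coarsening of $\xi_k$ that keeps each branch with return time $\le K$ as its own atom and groups the remaining branches into the atoms $\{\tau_k=n\}$, $n>K$. This is a countable partition of finite entropy, uniformly in $k$: there are finitely many short branches, and the tail of the return‑time law has entropy controlled by its uniformly bounded mean $\int\tau_k\,d\nu_k\le1/\eta$ from \eqref{eq:Kac1} (Gibbs comparison with $q_n\propto n^{-2}$). By the Rokhlin--Abramov inequality $h(\nu_k)=h(\nu_k,\xi_k)\le h(\nu_k,\xi_k^{(K)})+H_{\nu_k}(\xi_k\mid\xi_k^{(K)})$. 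On each atom $\{\tau_k=n\}$, $n>K$, recovering the branch costs at most $\log\#\{i:\tau_i=n\}\le n\eps(R)$, so $H_{\nu_k}(\xi_k\mid\xi_k^{(K)})\le\eps(R)\int_{\{\tau_k>K\}}\tau_k\,d\nu_k$. For the first term, since each branch with $\tau\le K$ is eventually combinatorially fixed (the $\Omega^N_k$ are eventually constant) and converges, and $\nu_0$ charges no cylinder boundary (Lemma~\ref{lem:nu0inv}), we get $\tfrac1nH_{\nu_k}\bigl((\xi_k^{(K)})^{\vee n}\bigr)\to\tfrac1nH_{\nu_0}\bigl((\xi_0^{(K)})^{\vee n}\bigr)$ for each $n$, whence $\limsup_kh(\nu_k,\xi_k^{(K)})\le h(\nu_0,\xi_0^{(K)})\le h(\nu_0)$. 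Taking $\limsup_k$ and then $K\to\infty$, and using $\int_{\{\tau_k>K\}}\tau_k\,d\nu_k=T_k-\int_{\{\tau_k\le K\}}\tau_k\,d\nu_k\to T^R_L-\int_{\{\tau_0\le K\}}\tau_0\,d\nu_0\to T^R_L-T^R_0=T^R_\Delta$ (the last limit by \eqref{eq:t00}), we obtain $h^R_\Delta\le\eps(R)\,T^R_\Delta$, in particular $h^R_\Delta\le5\eps(R)\,T^R_\Delta$; we make no attempt to optimise the constant, the slack comfortably absorbing the finitely many return times below $n_0(R)$ and any cruder bookkeeping of the tail‑entropy term.

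For the final assertion we let $R\to\infty$. Here $\eps(R)\to0$, and $0\le T^R_\Delta=T^R_L-T^R_0\le T^R_L-1$ with $T^R_L=\lim_k 1/\hat\mu_k(\hat I^R_-(k))\to1$, since the sets $\hat I^R_-$ increasingly exhaust $\hat I$ and the lifts $\hat\mu_k$ are uniformly tight over high levels (a consequence of the uniform counting bound of Lemma~\ref{lem:Rtower} together with the uniform lower bound on $h(\mu_k)$; compare also Lemma~\ref{lem:toight}). Hence $T^R_\Delta\to0$, so $\limsup_{R\to\infty}\bigl(h^R_L-h(\nu^R_0)\bigr)\le\limsup_{R\to\infty}5\eps(R)\,T^R_\Delta=0$, as required.

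The genuinely delicate points are the two uniform‑in‑$k$ passages to the limit. First, for the countable partitions $\xi_k^{(K)}$ one must justify $H_{\nu_k}\bigl((\xi_k^{(K)})^{\vee n}\bigr)\to H_{\nu_0}\bigl((\xi_0^{(K)})^{\vee n}\bigr)$: this needs a uniform‑integrability argument exploiting the uniform bound on $H_{\nu_k}(\xi_k^{(K)})$ (hence on the return‑time tail) so that the mass and entropy carried by atoms with large return time is uniformly negligible, while the finitely many ``short'' atoms converge in the Hausdorff metric with $\nu_0$‑null boundaries. Second, the tightness ``$\inf_k\hat\mu_k(\hat I^R_-(k))\to1$'' as $R\to\infty$ must be uniform in $k$; this is where the fact that the counting estimates in \S\ref{sec:combi} depend only on $R$ and $d$ (not on the individual map) is essential. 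Everything else — the Rokhlin--Abramov inequality, the exponential branch count, and Lemma~\ref{lem:nu0inv} — is already at our disposal.
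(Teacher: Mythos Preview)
Your argument for the main inequality is a genuine alternative to the paper's. The paper works with the $N$-fold join $\xi_k^N$, picking $N$ so that $\tfrac1N H(\nu_0,\xi_0^N)$ is within $\gamma$ of $h(\nu_0)$, then compares $\nu_k$ and $\nu_0$ only on the \emph{finite} collection $\xi_k^N\setminus\xi_k^N(K)$ of cylinders with $N$th return time $\rho<K$, and estimates the remaining tail entropy via Jensen and the branch count of Lemma~\ref{lem:tails} (the factor $e^{3n\eps}$ there, together with an extra $-\beta_{n,k}\log\beta_{n,k}$ term, is what produces the constant~$5$). Your coarsening-plus-Rokhlin--Abramov route is more direct and uses only the sharper count of Lemma~\ref{lem:inducingcyl}, giving the better constant~$1$. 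The price is the convergence $H_{\nu_k}\bigl((\xi_k^{(K)})^{\vee n}\bigr)\to H_{\nu_0}\bigl((\xi_0^{(K)})^{\vee n}\bigr)$ for a \emph{countable} partition: weak$^*$ convergence only gives lower-semicontinuity of $H$ automatically, and it is the upper bound you need. Your uniform-integrability remark is the right idea, but it is exactly what the paper's approach avoids by restricting to finitely many atoms before passing to the limit in $k$.

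Your argument for the final assertion has a real gap. You assert $T^R_L=\lim_k1/\hat\mu_k(\hat I^R_-(k))\to1$, equivalently $\inf_k\hat\mu_k(\hat I^R_-(k))\to1$ uniformly in $k$, but neither Lemma~\ref{lem:Rtower} nor Lemma~\ref{lem:toight} gives this: the former only yields the lower bound $\eta(R,d)$, which tends to $0$ as $R\to\infty$, and the latter controls mass near $\partial\hat X$, not mass above level $R$. What you actually need is far less: that $T^R_\Delta$ stays \emph{bounded} as $R\to\infty$. This follows from monotonicity of the inducing domains: for $R\ge R_0$, the boundary $R$-cylinders of each domain shrink and $\hat I^R$ grows, so $\hat X^k(R)\supset\hat X^k(R_0)$ up to a null set, whence $\hat\mu_k(\hat X^k(R))\ge\hat\mu_k(\hat X^k(R_0))\ge\eta(R_0,d)$ by Lemma~\ref{lem:Rtower2}. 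Thus $T^R_\Delta\le T^R_L\le1/\eta(R_0,d)$ uniformly in $R\ge R_0$, and since $\eps(R)\to0$ the product $5\eps(R)T^R_\Delta\to0$ as required.
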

    \begin{proof} 
        We shall fix $R$ and drop dependence on $R$ from the notation.
        Let $\xi_k$  denote the collection of branches of $F_k$.  It is a generating partition for $\nu_k$ with entropy $h(\nu_k) < \infty$. 
   Let $$\xi_k^N : = \bigvee_{j=0}^{N-1} F_k^{-j} \xi_k.$$ 
   Suppose $h_\Delta >0$, for otherwise there is nothing to prove, since $T_\Delta \geq 0$. 
   Let $\gamma := h_\Delta/15$. Then there is an $N$ for which  
   $$
   h(\nu_0) + \gamma > 
   \frac1N H(\nu_0, \xi^N_0) 
   := -\frac1N \sum_{Z \in \xi^N_0} \nu_0(Z) \log \nu_0(Z). $$
   Let $\rho = \rho_k := \tau + \tau \circ F_k + \cdots + \tau \circ F_k^{N-1}$, the $N^\mathrm{th}$ return time for $F_k$. 
   Let $\xi^N_k(K)$ denote the elements of $\xi_k^N$ on which $\rho \geq K$. 
   Since $H(\nu_0, \xi^N_0)$ is finite, for each  $K$ large enough  $$-\frac 1N \sum_{Z \in \xi^N_0(K)} \nu_0(Z) \log \nu_0(Z) < \gamma.$$ 
   Then for large $k$, 
   \begin{equation}
       \begin{split}
           h(\nu_0) + \gamma 
           & > -\frac{1}{N} \sum_{Z \in \xi^N_0\setminus \xi^N_0(K)} \nu_0(Z) \log \nu_0(Z) \\
           & > -\frac{1}{N} \sum_{Z \in \xi^N_k\setminus \xi^N_k(K)} \nu_k(Z) \log \nu_k(Z) - \gamma \\
           & \geq -\frac{1}{N}  H(\nu_k, \xi^N_k)  
           - \frac{-1}N \sum_{Z \in  \xi^N_k(K)} \nu_k(Z) \log \nu_k(Z)   - \gamma \\
       & \geq h(\nu_k) 
       - \frac{-1}N \sum_{Z \in  \xi^N_k(K)} \nu_k(Z) \log \nu_k(Z) - \gamma.  \\
   \end{split}
   \end{equation}
   Passing to the second line required convergence of the 
    measures on $\{\rho \leq K\}$.
   Rearranging and replacing $h(\nu_k)$ by (a lower bound) $h_L - \gamma$ produces
   \begin{equation} \label{eq:3g}
        \frac{-1}N \sum_{Z \in  \xi^N_k(K)} \nu_k(Z) \log \nu_k(Z) 
           > h_\Delta - 3\gamma 
\end{equation}
for all sufficiently large $k$. 

Thus we have a first  upper bound for $h_\Delta$. The exponential tails estimate of Lemma~\ref{lem:tails} will permit us to rewrite the upper bound in terms of $\eps$ and $T_\Delta$. 

    Set $$ \beta_{n,k} := \nu_k(\{\rho=n\}).$$ 
    The $F_k$-invariance of $\nu_k$ implies $\int\rho_k~d\nu_k=NT_k$, so 
    $$\lim_{k \to \infty} \sum_{n\geq 1} n\beta_{n,k} = \lim_{k\to \infty} NT_k =  N T_L. $$
    Hence 
    $0 \leq \sum_{n\geq K} \beta_{n,k} \leq NT_k/K \leq (NT_L + 1)/K$ (for large $k$) which goes to $0$ as $K \to \infty$. 
    As a consequence,
    \begin{equation}\label{eq:NTD}
        \lim_{K \to \infty} \lim_{k \to \infty} \sum_{n\geq K} n\beta_{n,k} = N T_\Delta. 
\end{equation}
For $\eps = \eps(R)$, Lemma~\ref{lem:tails} says that for some $n_1$ independent of $k$, for all $n \geq n_1$, 
    $$\#\{Z \in \xi^N_k : \rho(Z) = n\} \leq \exp(3n \eps).$$
    Thus Jensen's inequality gives 
    \begin{equation}\label{eq:Jensen1}
        -\frac1N \sum_{Z \in \xi^N_k : \rho(Z) = n} \nu_k(Z) \log \nu_k(Z)  \leq \frac{\beta_{n,k} 3n \eps}{N} -\frac1N \beta_{n,k}\log \beta_{n,k}.
\end{equation}
Splitting the following sum over $n \geq K \geq n_1$ in two, depending on whether $\beta_{n,k}$ is greater or smaller than $e^{-n\eps}$, one deduces that 
    \begin{equation}\label{eq:NTD2}
    \begin{split}
        \sum_{n\geq K} -\frac1N \beta_{n,k} \log \beta_{n,k} & \leq \frac1N \sum_{n \geq K} n\eps \beta_{n,k} + \frac1N \sum_{n\geq K} e^{-n\eps}n\eps \\
        & 
        \leq \frac1N \sum_{n \geq K} n\eps \beta_{n,k} + \frac1K,
    \end{split}
\end{equation}
    say, for all large $K$.

    From~\eqref{eq:3g},~\eqref{eq:Jensen1} and~\eqref{eq:NTD2}, for all $K$ large enough, 
    \begin{align*}
        h_\Delta - 3\gamma & < \sum_{n\geq K} \frac{\beta_{n,k} 3n \eps}{N} -\frac1N \beta_{n,k}\log \beta_{n,k} \\
                            & \leq \frac1{K}  + \frac{4\eps}{N} \sum_{n \geq K}n\beta_{n,k}.
    \end{align*}
    Applying~\eqref{eq:NTD} and letting $K \to \infty$, $h_\Delta - 3\gamma \leq 4\eps T_\Delta$. Since $\gamma = \eps h_\Delta/15$,  
    we obtain 
    \begin{equation}\label{eq:hT1}
        h_\Delta \leq 5 \eps T_\Delta,
    \end{equation}
    as required.
\end{proof}
   
\begin{proof}[Proof of Theorem~\ref{thm:introusc}]
    Now let us prove upper semi-continuity of entropy for convergent sequences of measures $\mu_k \in \M_{f_k}$ with limit $\mu_\infty$, where $(f_k)$ is a convergent sequence of $d$-branched maps with decreasing critical relations. By passing to a subsequence, we may assume that $h(\mu_k)$ converges to a limit, and moreover that this limit is strictly positive, for otherwise the result is trivial. Take $R_0$ large enough that $\eps(R_0)$ is less than this limit. 
    Passing to a subsequence if necessary, construct a convergent sequence of embedded level-$R$ induced maps for each $R \geq R_0$.  
    Obtain $\nu^R_k, T^R_k$ as before. Assume without loss of generality that $\nu^R_k$ converges to $\nu^R_0$, $h(\nu^R_k)$ to $h^R_L$ and $T^R_k$ to $T^R_L$, for all $R\geq R_0$. Define $h^R_\Delta, T^R_\Delta$ as before.  Let $\mu_0^R$ denote the spread of $\nu_0$. 
    By Lemma~\ref{lem:spreadabscns},~\eqref{eq:Abe} and Lemma~\ref{lem:hnusemi} consecutively,
    \begin{eqnarray*}
        h(\mu_\infty) &\geq& \frac{T^R_0}{T_L^R} h(\mu^R_0) \\
                      & =& \frac{1}{T_L^R} h(\nu^R_0)\\
        &\geq& 
        \frac{1}{T_L^R} (h^R_L -5\eps(R) T^R_\Delta)\\
        & =& \lim_{k\to \infty} h(\mu_k)   -5\eps(R) \frac{T^R_\Delta}{T_L^R}.
\end{eqnarray*}
This holds for all large $R$. As  $R\to \infty$, $\eps(R)\to0$ while, by definition,   ${T_\Delta^R} \leq {T_L^R}$, so 
    $$
    h(\mu_\infty) \geq \lim_{k\to \infty} h(\mu_k)   
    $$
    as required. 
\end{proof}
    
        \chapter{Non-positive Schwarzian derivative} \label{sec:NSD}

In this section we derive and use the distortion results required for our main theorems.

\begin{definition}
    If $g:Z\to g(Z)$ is a diffeomorphism on an interval $Z$, we say that the \emph{Minimum Principle} \index[def]{Minimum Principle} holds  if, for any $a,b \in Z$ and $x \in (a,b)$,  $$|Dg(x)| \geq \min\{|Dg(a)|, |Dg(b)|\}.$$ 
\end{definition} 
For diffeomorphisms $g$ with non-positive Schwarzian derivative, $|Dg|^{-1/2}$ is convex, so the Minimum Principle holds. 
                   To see a nice treatment of the following in our $C^2$ setting, see \cite[Chapter~3]{Ced06}. 
        
\begin{lemma}[{\cite[p18, p24]{MisIHES1981}}] \label{lem:koebe}
            Let $Z \subset Y$ be intervals and let $g$ be a $C^2$ diffeomorphism from $Y$ to $g(Y)$ with non-positive Schwarzian derivative. Extensibility implies \emph{bounded distortion}: \index[def]{Bounded distortion} if $\kappa>0$ and $\dist(g(Z), \partial g(Y)) \geq \kappa |g(Z)|$, then
            for all $x, x' \in Z$,
                   $$ 
                   \frac{|Dg(x)|}{|Dg(x')|} \leq  
                \left(\frac{1+\kappa}\kappa\right)^2.
                   $$ 
               \end{lemma}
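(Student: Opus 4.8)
The plan is to deduce the distortion bound from an elementary property of nonnegative concave functions, after transporting the Schwarzian hypothesis to the inverse map $g^{-1}$. Note first that pre- and post-composing $g$ with affine maps changes neither the hypothesis (which, by the paper's definition, reads ``$|Dg|^{-1/2}$ is convex on $Y$''), nor the ratio $|Dg(x)|/|Dg(x')|$, nor the quantity $\dist(g(Z),\partial g(Y))/|g(Z)|$. So I may assume $g$ is increasing and $g(Z)=[0,1]$; the hypothesis then forces $g(Y)\supseteq(-\kappa,1+\kappa)$ (endpoints included if $g(Y)$ is closed; a limiting argument handles the open case, since $t\mapsto t/(1+t)$ is increasing).

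Next I would pass to $h:=g^{-1}\colon g(Y)\to Y$ and set $\phi:=|Dg|^{-1/2}$ on $Y$ and $\psi:=|Dh|^{-1/2}$ on $g(Y)$, both positive and $C^1$ since $g\in C^2$. A short computation using $Dg=\phi^{-2}$ gives $\psi=1/(\phi\circ h)$ and hence $\psi'(g(x))=-\phi'(x)$ (the sign is $+$ if $g$ is decreasing, but the conclusion is the same). Therefore ``$\phi'$ non-decreasing on $Y$'' — exactly the convexity hypothesis — is equivalent to ``$\psi'$ non-increasing on $g(Y)$'', i.e.\ to $\psi$ being concave on $g(Y)$. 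For $x_1,x_2\in Z$, writing $y_i=g(x_i)\in[0,1]$, this yields
$$\frac{|Dg(x_1)|}{|Dg(x_2)|}=\left(\frac{\psi(y_1)}{\psi(y_2)}\right)^{2}.$$

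It then remains to prove the elementary fact: if $\psi\ge 0$ is concave on an interval $J$ with $\dist([0,1],\partial J)\ge\kappa$, then $\max_{[0,1]}\psi\le\frac{1+\kappa}{\kappa}\min_{[0,1]}\psi$. A concave function attains its minimum over $[0,1]$ at an endpoint, which after reflecting I take to be $0$; let $M=\psi(p)$ with $p\in[0,1]$ be its maximum. Applying concavity at the point $0$, which lies between $-\kappa$ and $p$, and discarding the nonnegative term $\psi(-\kappa)$ gives $\psi(0)\ge\frac{\kappa}{p+\kappa}\,M\ge\frac{\kappa}{1+\kappa}\,M$. Combined with the displayed identity this produces $|Dg(x_1)|/|Dg(x_2)|\le\left(\frac{1+\kappa}{\kappa}\right)^{2}$, as claimed.

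The only genuinely delicate step is the transfer above — that convexity of $|Dg|^{-1/2}$ implies concavity of $|Dg^{-1}|^{-1/2}$. At the $C^3$ level this is immediate from the cocycle identity $S(g^{-1})\circ g=-Sg/(Dg)^2$ for the Schwarzian derivative, but under the paper's standing $C^2$ assumption one must argue via the $C^1$ computation indicated above (or by approximation with $C^3$ maps), as is done carefully in \cite{MisIHES1981, Ced06}. The rest of the argument is routine.
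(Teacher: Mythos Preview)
The paper does not prove this lemma; it is stated with a citation to \cite{MisIHES1981} (and \cite{Ced06} is referenced nearby for a $C^2$ treatment), so there is no in-paper argument to compare against. Your proof is correct: the transfer step --- that convexity of $|Dg|^{-1/2}$ forces concavity of $|Dg^{-1}|^{-1/2}$, verified via the $C^1$ identity $\psi'(g(x))=-\phi'(x)$ --- is exactly what reduces the Koebe bound to the elementary concave-function inequality you prove, and it correctly avoids any appeal to the $C^3$ Schwarzian cocycle.
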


    \begin{lemma} \label{lem:distn}
        Let $(f_k)_k$ be a sequence converging to $f_0$ as $k \to \infty$, having decreasing critical relations and non-positive Schwarzian derivative. Let $(X^k,\hat F_k,\tau_k)$ be the corresponding level-$R$ induced maps. For each $R$, there is a $C > 1$ such that 
     $$
     \left|\frac{D\hat F^n_k(\hat x)} {D\hat F^n_k(\hat x')}\right| \leq C
     $$
     for each $n,k$ and each $\hat x, \hat x'$ in the same branch of $\hat F_k^n$. In particular, there is a uniform lower bound on $|D\hat F_k|$ independent of $k$. 
 \end{lemma}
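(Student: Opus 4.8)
The plan is to reduce the whole statement to a single application of the Koebe bounded-distortion estimate (Lemma~\ref{lem:koebe}) by producing, for each branch of $\hat F_k^n$, a Koebe neighbourhood in the image whose relative size is bounded below uniformly in $n$ and $k$. The uniformity across the sequence will come entirely from Lemma~\ref{lem:kappaPk}. So, fixing $R$, I would first apply Lemma~\ref{lem:kappaPk} with $K=R$ to obtain $\kappa_0 = \kappa_0(R) >0$ with $|V|\ge \kappa_0$ for every $V \in \hat\P_R^k$ with $V \subset \hat I^R(k)$, uniformly in $k$. Each connected component $W$ of $\hat X^k$ is an $R$-cylinder lying in a domain $D\in\D$ of level $\le R$, and by the definition of $\hat I^R_-$ it is distinct from the two extreme $R$-cylinders $D_\ell^R, D_r^R$ of $D$; hence $D_\ell^R$ and $D_r^R$ lie between $W$ and $\partial D$, so $\dist(W,\partial D) \ge \kappa_0$, and since $|W|\le |I| = 1$ this gives $\dist(W,\partial D)\ge \kappa_0|W|$ --- a uniform Koebe space around every component of $\hat X^k$ inside its domain.

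Next I would fix a branch $\hat Y$ of $\hat F_k^n$ and write $\hat F_k^n = \hat f^\rho$ on $\hat Y$, where $\rho = \tau_{i_0} + \dots + \tau_{i_{n-1}}$ and $\hat X_{i_0},\dots,\hat X_{i_{n-1}}$ are the successive first-return branches visited. By Lemma~\ref{lem:inducingcyl}(iv) each $\hat X_{i_j}$ extends to $\hat X_{i_j}'$ on which $\hat f^{\tau_{i_j}}$ is a homeomorphism onto the domain $D^{(j)}\in\D$ containing $\hat F_k(\hat X_{i_j})$, and by Lemma~\ref{lem:MarkovH} the set $\hat X_{i_j}'$ lies in the single domain of $\hat I$ containing $\hat X_{i_j}$. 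The crucial point --- the step I expect to be the main work --- is that these extensions compose: since $\hat X_{i_{j+1}}$ is a first-return branch meeting $\hat F_k(\hat X_{i_j})$, it lies in that whole connected component of $\hat X^k$ (components being disjoint) and hence in $D^{(j)}$; therefore $\hat X_{i_{j+1}}'$, being contained in the domain of $\hat I$ containing $\hat X_{i_{j+1}}$, lies in $D^{(j)} = \hat f^{\tau_{i_j}}(\hat X_{i_j}')$, so $\bigl(\hat f^{\tau_{i_j}}|_{\hat X_{i_j}'}\bigr)^{-1}(\hat X_{i_{j+1}}')$ is well defined. Iterating, $\hat f^\rho$ extends to a homeomorphism of some $\hat Z \supset \hat Y$ onto $D^{(n-1)}$. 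On a cylinder, $\hat f^\rho$ agrees, via the projection $\pi$ (which is the identity on each domain), with an iterate of $f_k$ restricted to a subinterval of $I$, hence has non-positive Schwarzian derivative, being a composition of maps with that property, and the Minimum Principle and Lemma~\ref{lem:koebe} apply to it.

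Then I would apply Lemma~\ref{lem:koebe} with $g = \hat f^\rho$, $Y = \hat Z$, $Z = \hat Y$: here $g(Y) = D^{(n-1)}$ and $g(Z) = \hat F_k^n(\hat Y)$ is a connected component of $\hat X^k$, so by the first two paragraphs $\dist(g(Z),\partial g(Y)) \ge \kappa_0 \ge \kappa_0 |g(Z)|$, which yields $|D\hat F_k^n(\hat x)|/|D\hat F_k^n(\hat x')| \le \bigl((1+\kappa_0)/\kappa_0\bigr)^2 =: C$ for all $\hat x,\hat x'$ in the branch $\hat Y$, with $C = C(R)$ independent of $n$ and $k$. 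For the lower bound on $|D\hat F_k|$: on a branch $\hat X_i$ the mean value theorem supplies a point with derivative $|\hat F_k(\hat X_i)|/|\hat X_i| \ge \kappa_0/1$ (using $|\hat F_k(\hat X_i)| \ge \kappa_0$ from Lemma~\ref{lem:kappaPk} and $|\hat X_i|\le 1$), and the $n=1$ case of the distortion bound then forces $|D\hat F_k| \ge \kappa_0/C$ everywhere, a bound independent of $k$.

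The genuine obstacle is the composability just described: one must check that the extension of each first-return branch stays inside the domain of $\hat I$ reached at the previous step, which is exactly what lets the full $\rho$-step composition extend to a map with a fixed Koebe space, so that a single application of Koebe suffices. This is essential because $\hat F_k$ need not be uniformly expanding (for instance in the presence of indifferent periodic points), so a naive telescoping that sums per-step distortions would not produce a bound uniform in $n$; the domain structure of the Hofbauer extension, rather than hyperbolicity, is what makes the argument work.
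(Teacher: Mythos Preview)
Your proof is correct and follows essentially the same approach as the paper's: obtain uniform Koebe space $\kappa$ from Lemma~\ref{lem:kappaPk}, extend each branch of $\hat F_k^n$ to a homeomorphism onto the containing domain, and apply Lemma~\ref{lem:koebe}. The only difference is cosmetic: where you carefully compose the per-step extensions from Lemma~\ref{lem:inducingcyl}\ref{enum:c5} and verify they nest, the paper simply invokes Lemma~\ref{lem:MarkovH} once for the total iterate $\hat f^\rho$ to produce the extension in one stroke --- your composability check is exactly what that single invocation encapsulates.
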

 \begin{proof}
     Between any point of $\hat X^k(R)$ and the boundary of the element of $\D$ containing the point, there lies an element of $\hat P^k_R$ of length at least $\kappa >0$, by Lemma~\ref{lem:kappaPk}. 
      Lemma~\ref{lem:MarkovH} and  the standard Koebe distortion bound Lemma~\ref{lem:koebe} will then imply the first statement. The second follows from the first since the components of $\hat X^k$ have length at least $\kappa$.
    \end{proof}
    To state the next lemma, we define
    $$\lambda_{\min}:= \max\left(0, \inf\left\{\frac1n\log|Df^n(x)|: f^n(x)=x, \ n\in \N\right\}\right),$$%\index{lambdamin@$\lambda_{\min}$}
      and $$\lambda_{\max}:= \sup\left\{\frac1n\log|Df^n(x)|: f^n(x)=x, \ n\in \N\right\}.$$%\index{lambdamax@$\lambda_{\max}$}
    
    \begin{lemma}\label{lem:nsdbounds}
        Let $(\Y,\hat F_\Y,  \tau)$ be a primitive level-$R$ induced map. There is a $C > 0$ such that 
        $$
        \lambda_{\min} \tau - C \leq \log |D\hat F_\Y| \leq \lambda_{\max} \tau +C.$$
    \end{lemma}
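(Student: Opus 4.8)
The plan is to unwind $\hat F_\Y$ into a product of derivatives of $f$ along an orbit segment and then control the two extreme terms; the upper bound is purely formal, while the lower bound needs the Koebe machinery of~\S\ref{sec:NSD}.

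First I would record the chain rule. By part~\ref{enum:cc4} of Lemma~\ref{lem:primi4}, on each branch $\hat X_i$ of $\hat F_\Y$ one has $\hat F_\Y = \hat f^{\tau_i}$ with $\tau_i = \tau|_{\hat X_i}$ constant, and since $\pi\circ\hat f = f\circ\pi$ with $\pi$ a local homeomorphism, $|D\hat f(\hat y)| = |Df(\pi\hat y)|$ for all $\hat y$. Hence for $\hat x\in\hat X_i$,
$$\log|D\hat F_\Y(\hat x)| \;=\; \sum_{j=0}^{\tau(\hat x)-1}\log\bigl|Df\bigl(\pi\hat f^{\,j}(\hat x)\bigr)\bigr|.$$
As non-positive Schwarzian derivative forces $|Df|$ to be bounded, $\lambda_{\max}=\sup\log|Df|<\infty$, and each of the $\tau(\hat x)$ terms is at most $\lambda_{\max}$; this already gives $\log|D\hat F_\Y|\le\lambda_{\max}\tau$, with no additive constant needed for the upper bound.

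The only point requiring work is the lower bound, because $\log|Df|$ may tend to $-\infty$ near $\E(f)$ (so $\lambda_{\min}$ can be very negative, or even $-\infty$) and a termwise estimate is then useless. Instead I would use bounded distortion. By part~\ref{enum:cc5} of Lemma~\ref{lem:primi4}, $\hat X_i$ sits inside an interval $\hat X_i'$ on which $\hat f^{\tau_i}$ is a diffeomorphism onto the domain $D\in\D$ containing $\hat F_\Y(\hat X_i)$; since $\hat X\subset\hat I^R_-$ avoids the two boundary components $D_\ell^R,D_r^R$ of each domain and those components (being elements of $\hat\P_R$) have length $\ge\kappa>0$ by Lemma~\ref{lem:kappaPk} with $K=R$, one gets $\dist(\hat F_\Y(\hat X_i),\partial D)\ge\kappa\ge\kappa\,|\hat F_\Y(\hat X_i)|$ because $|\hat F_\Y(\hat X_i)|\le|D|\le|I|=1$. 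The Koebe estimate (Lemma~\ref{lem:koebe}), which applies thanks to non-positive Schwarzian derivative, then bounds the distortion of $\hat f^{\tau_i}$ on $\hat X_i$ by $C_{\mathrm{dist}}:=\bigl((1+\kappa)/\kappa\bigr)^2$ (equivalently, this is Lemma~\ref{lem:distn} with $n=1$, $\hat F_\Y$ being a restriction of $\hat F$ by part~\ref{enum:cc4} of Lemma~\ref{lem:primi4}). Comparing the derivative to the mean slope, $|D\hat F_\Y|\ge |\hat F_\Y(\hat X_i)|/(C_{\mathrm{dist}}|\hat X_i|)\ge\kappa/C_{\mathrm{dist}}$ pointwise, since a component of $\Y$ has length $\ge\kappa$ and $|\hat X_i|\le|\Y|\le1$. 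Put $C_0:=\log(C_{\mathrm{dist}}/\kappa)>0$, so $\log|D\hat F_\Y|\ge-C_0$ everywhere.

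Finally I would assemble the lower bound with $C:=\max\{0,C_0\}$ (which also serves for the upper bound above). If $\lambda_{\min}\le0$, including $\lambda_{\min}=-\infty$, then $\lambda_{\min}\tau\le\lambda_{\min}\le0$ because $\tau\ge1$, so $\lambda_{\min}\tau-C\le-C_0\le\log|D\hat F_\Y|$ by the previous step; if instead $\lambda_{\min}>0$, every term in the chain-rule sum is $\ge\lambda_{\min}$, so $\log|D\hat F_\Y|\ge\lambda_{\min}\tau\ge\lambda_{\min}\tau-C$. Either way the claimed double inequality holds, and since $\kappa$ and $C_{\mathrm{dist}}$ (hence $C$) can be taken uniform over a convergent sequence of maps in $\FNSD$ with decreasing critical relations, the same $C$ works along such a sequence, as is needed in~\S\ref{sec:SSSS}. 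The one genuine obstacle is passing from "the orbit may hug the critical set" to a uniform lower bound on the composed derivative; this is exactly where non-positive Schwarzian derivative (Koebe) together with the uniform Koebe space $\kappa$ supplied by Lemma~\ref{lem:kappaPk} enter, everything else being the chain rule.
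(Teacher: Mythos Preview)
Your proof rests on a misreading of $\lambda_{\min}$ and $\lambda_{\max}$. In this paper these are \emph{not} the pointwise infimum and supremum of $\log|Df|$; they denote the extremal Lyapunov exponents of measures in $\M_f$ (equivalently, of repelling periodic orbits). This is how they are used in Theorem~\ref{thm:fullbranch} and its proof, where $\max\{-t\lambda_{\min},-t\lambda_{\max}\}$ plays the role of $P^0(-t\log|Df|)$, and it is explicit in the paper's own proof of this lemma (``a repelling periodic point \ldots\ with exponent in $[\lambda_{\min},\lambda_{\max}]$''). With the intended meaning, your upper bound ``each term $\le\lambda_{\max}$'' is false: $\sup\log|Df|$ can exceed $\lambda_{\max}$. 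More importantly, your lower bound in the case $\lambda_{\min}>0$ claims ``every term in the chain-rule sum is $\ge\lambda_{\min}$'', which is simply untrue when $f$ has critical points --- individual $\log|Df(f^j(\pi\hat x))|$ can be arbitrarily negative even though every invariant measure has nonnegative exponent. Since $\lambda_{\min}\ge0$ always holds for measures in $\M_f$, this is precisely the case that matters, and your argument gives nothing there; the uniform bound $\log|D\hat F_\Y|\ge -C_0$ you derive is far weaker than $\lambda_{\min}\tau-C$ when $\lambda_{\min}>0$ and $\tau$ is large.

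The paper's argument is genuinely different and uses the \emph{primitive} structure of $\Y$, which your proof never invokes. For each branch $\hat X_i$, one concatenates with a short connecting word (of bounded length and bounded derivative, because all components of $\Y$ are linked) to obtain a subinterval $W\subset\hat X_i$ mapped onto $\hat X_i$ itself by some $\hat f^{n+\tau_i}$ with $n$ uniformly bounded. The fixed point of this map is a repelling periodic orbit whose exponent lies in $[\lambda_{\min},\lambda_{\max}]$ by definition; bounded distortion then transfers this to $\log|D\hat F_\Y|$ on $\hat X_i$, up to an additive constant absorbing the short connecting piece. The primitivity is essential: without it one could not close up an arbitrary branch into a periodic orbit of comparable length.
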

    \begin{proof}
        Let $Y_1, Y_2$ be  connected components of $\Y$. Since $\Y$ is primitive, $Y_1,Y_2$ are linked. There is a minimal $C_{1,2}\geq1$ for which there exists a subinterval $V$ of $Y_1$ mapped by some iterate $\hat f^n_\Y$ onto $Y_2$ with derivative bounded inside $[C_{1,2}^{-1}, C_{1,2}]$ and with $n\leq C_{1,2}$.  
        Let $C'$ be the maximum of $C_{1,2}$ over all pairs $Y_1,Y_2$ of connected components of $\Y$. 

        Now let $\hat X_i$ be a branch of $\hat F_\Y$, with $\tau = \tau_i$ on $\hat X_i$. For some $n$ with $0 \leq n \leq C'$,  $\hat f^{n+\tau_i}$ maps a subinterval $W\subset \hat X_i$ onto $\hat X_i$ with 
        $$
        \log|D \hat f^{n+\tau_i}|_{|W} - \log C' -C'' \leq \log |D\hat F_\Y|_{|\hat X_i} \leq \log|D \hat f^{n+\tau_i}|_{|W} +\log C' +C'', 
        $$
        where $C''$ comes from bounded distortion, and $\tau \leq n \leq C +\tau$. There is a repelling periodic point in $W$, necessarily with exponent in $[\lambda_{\min}, \lambda_{\max}]$. Taking $C = C'\lambda_{\max} + \log C' + C''$, the  result follows. 
    \end{proof}

        \begin{lemma} \label{lem:MPexp}
            Let $\kappa >0$. 
            Let $Z \subset Y \subset g(Z)$ be intervals, where $g$ is a $C^2$ diffeomorphism from $Y$ to $g(Y)$ with non-positive Schwarzian derivative. Suppose $\dist(g(Z), \partial g(Y)) \geq \kappa |g(Z)|$. 
            Then at the fixed point $p$, $|Dg(p)| \geq \sqrt{1+\kappa}$. 
        \end{lemma}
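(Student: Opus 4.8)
The plan is to exploit the self-referential nestings $Z \subset Y \subset g(Z)$ together with the Koebe-type distortion bound of Lemma~\ref{lem:koebe}. Since $Z \subset Y \subset g(Z)$ and $p$ is a fixed point, $p$ lies in $Z$ and hence in $g(Z)$. First I would note that, because $\dist(g(Z),\partial g(Y)) \ge \kappa |g(Z)|$, Lemma~\ref{lem:koebe} applies with this $\kappa$, giving that the distortion of $g$ on $Z$ is at most $\left(\frac{1+\kappa}{\kappa}\right)^2$; but in fact I want a sharper, Minimum-Principle-flavoured estimate rather than this crude bound, so the cleaner route is to work directly with the convexity of $|Dg|^{-1/2}$.

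The key computation: let $\phi := |Dg|^{-1/2}$, which is convex on $Y$ since $g$ has non-positive Schwarzian derivative. Write $Z = (a,b)$ and $g(Z) = (g(a),g(b))$ (up to orientation). The hypothesis $\dist(g(Z),\partial g(Y)) \ge \kappa|g(Z)|$ means that $g(Y)$ contains a $\kappa|g(Z)|$-scaled collar on each side of $g(Z)$; equivalently $|g(Y)| \ge (1+2\kappa)|g(Z)|$, and $Y$ contains points $y_-, y_+$ with $g(y_\pm)$ at distance exactly $\kappa|g(Z)|$ beyond the endpoints of $g(Z)$. Now I would use the standard consequence of convexity of $\phi$: for the point $p\in Z$, comparing the graph of $\phi$ on the interval $[y_-,y_+]$ (or using the three-point convexity inequality on a suitable triple of points whose $g$-images are evenly spread), one gets a lower bound on $|Dg(p)| = \phi(p)^{-2}$ in terms of $\kappa$. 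Concretely, the ratio $|g(Z)|/|Z|$ is an average of $|Dg|$ over $Z$, and the extendibility forces this average (hence, via the Minimum Principle, the value $|Dg(p)|$ at the interior fixed point, up to the interplay with $|Z|=|g(Z)|$ being impossible unless $|Dg|\ge 1$) to be bounded below; tracking the collar of relative size $\kappa$ upgrades the trivial bound $|Dg(p)|\ge 1$ to $|Dg(p)| \ge \sqrt{1+\kappa}$. The cleanest phrasing: apply the cross-ratio / Koebe estimate to the fixed point configuration $p \in Z \subset Y$, using that $g(p) = p$ and $g(Z)\supset Y \supset Z$, so that $g$ maps the pair $(Z, Y)$ to $(g(Z), g(Y))$ with $g(Z) \supset Y$; the expansion of the cross-ratio of $(Z,Y)$ under $g$, which is $\ge 1$ for maps with non-positive Schwarzian by the Koebe principle, combined with $g(Z)\supset Y$ and the $\kappa$-collar, yields $|Dg(p)|^2 \ge 1+\kappa$ after a short computation with the explicit cross-ratio formula.

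I expect the main obstacle to be organising the cross-ratio / convexity bookkeeping so that the collar of relative size $\kappa$ around $g(Z)$ inside $g(Y)$ translates precisely into the factor $\sqrt{1+\kappa}$ rather than some other monotone function of $\kappa$; in particular one must be careful that $p$ is an \emph{interior} fixed point so that $|Dg(p)|\ge 1$ to begin with (otherwise $g$ could not satisfy $g(Z)\supset Z$ with $p$ fixed unless $|Dg|$ were somewhere $>1$, and the Minimum Principle then propagates this). Once the configuration is set up, the estimate is a one-line application of the convexity of $|Dg|^{-1/2}$ evaluated at $p$ against its values at two points of $Y$ whose images sit at the outer edges of the $\kappa$-collar. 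I would present it as: by the Minimum Principle applied on $Y$, $|Dg(p)| \ge \min\{|Dg(y_-)|,|Dg(y_+)|\}$, and then a direct Koebe estimate bounds each of $|Dg(y_\pm)|$ from below by $\sqrt{1+\kappa}$ using the space $g$ has to expand, completing the proof.
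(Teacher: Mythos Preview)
Your proposal does not arrive at a proof, and it misses the two simple ideas that make the paper's argument work. First, the square root in $\sqrt{1+\kappa}$ is not produced by any Koebe or cross-ratio computation; it comes from an orientation dichotomy. When $g$ is orientation-preserving one actually gets the stronger bound $|Dg(p)|\ge 1+\kappa$, and the square root only enters in the orientation-reversing case by passing to $g^2$ (with $Z'=g^{-1}(Z)$, $Y'=g^{-1}(Y)$, which satisfy the same hypotheses with the same $\kappa$), giving $|Dg^2(p)|\ge 1+\kappa$ and hence $|Dg(p)|\ge\sqrt{1+\kappa}$. You never distinguish these cases, so your argument cannot account for the precise exponent.

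Second, your final step is backwards. You write: apply the Minimum Principle to get $|Dg(p)|\ge\min\{|Dg(y_-)|,|Dg(y_+)|\}$ and then bound $|Dg(y_\pm)|$ below via Koebe. But Koebe controls \emph{distortion}, not the derivative at a chosen point, and there is no reason the derivative at your particular points $y_\pm$ (preimages of the collar edges) is large. The paper instead uses the Mean Value Theorem: for $g$ orientation-preserving and, say, the right component $(p,y_+)$ of $Y\setminus\{p\}$, one has $y_+-p\le|g(Z)|$ while $g(y_+)-p\ge (y_+-p)+\kappa|g(Z)|$ (using $Y\subset g(Z)$ and the $\kappa$-collar), so $\tfrac{g(y_+)-p}{y_+-p}\ge 1+\kappa$; MVT then produces a point in $(p,y_+)$ with derivative $\ge 1+\kappa$, and similarly on the left. \emph{Now} the Minimum Principle applies, with $p$ sandwiched between two points of large derivative. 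Your cross-ratio and convexity manoeuvres are unnecessary detours, and as written they do not close the argument.
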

        \begin{proof} Suppose first that $g$ is orientation-preserving. There is, by the Mean Value Theorem, a point in each component of $Y \setminus \{p\}$ with derivative $\geq 1 + \kappa$. By the Minimum Principle, the derivative at $p$ is $\geq 1+\kappa$. If $g$ is orientation-reversing, apply this argument to $g^2$. 
        \end{proof}
        \begin{lemma} \label{lem:lbd}
            Given $\kappa >0$, there are $\gamma, \alpha > 0$ such that the following holds. 
            Let $Z \subset Y \subset g(Z)$ be intervals, where $g$ is a $C^2$ diffeomorphism from $Y$ to $g(Y)$ with non-positive Schwarzian derivative. Suppose $$\dist(g(Z), \partial g(Y)) \geq \kappa |g(Z)|.$$ 
	    If $|Z| \leq \gamma |g(Z)|$ then $|Dg_{|Z}| > 2$.
            If $|Z| \geq \gamma|g(Z)|$ then $g^{-2}(Z)$ and the components of $Z\setminus g^{-2}(Z)$ all have length bounded by $|Z|(1-\alpha)$. 
        \end{lemma}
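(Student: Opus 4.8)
The plan is to treat the two alternatives of the conclusion separately. The crucial observation for the second case ($|Z|\ge\gamma|g(Z)|$) is that this hypothesis, combined with the Koebe distortion bound, forces $|Dg|$ to be \emph{bounded above} on $Z$, not merely bounded below.

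Set $D_\kappa:=\bigl(\tfrac{1+\kappa}{\kappa}\bigr)^2$. Since $\dist(g(Z),\partial g(Y))\ge\kappa|g(Z)|$, Lemma~\ref{lem:koebe} gives that $g$ has distortion at most $D_\kappa$ on $Z$. As $Z\subset g(Z)$, the inverse branch $g^{-1}$ maps $g(Z)$ into $Z$, so $g$ has a fixed point $p$ in $Z$. I would fix $\gamma:=1/(3D_\kappa)$ (any smaller value works equally well) and set $M:=D_\kappa/\gamma=3D_\kappa^2$. For the first alternative, suppose $|Z|\le\gamma|g(Z)|$; the Mean Value Theorem yields $\xi\in Z$ with $|Dg(\xi)|=|g(Z)|/|Z|\ge1/\gamma$, and bounded distortion then gives $|Dg|\ge1/(\gamma D_\kappa)=3>2$ on all of $Z$, as required.

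So assume $|Z|\ge\gamma|g(Z)|$, i.e.\ $|g(Z)|\le|Z|/\gamma$. Since $g(Z)\supset Z$, the average of $|Dg|$ over $Z$ lies in $[1,1/\gamma]$, so by bounded distortion $|Dg|\le M$ everywhere on $Z$. The three sets to control --- $g^{-2}(Z)$ and the two components of $Z\setminus g^{-2}(Z)$ --- partition $Z$, and I would bound them as follows. \textbf{(a)} First, $|g(Z)|\ge(1+\kappa/M)|Z|$: since $g$ has non-positive Schwarzian, $|Dg|^{-1/2}$ is convex on $Y$, so $|Dg|$ is unimodal on $Y$ with a maximum at some $q\in\overline Y$; consequently, on at least one of the two components $C$ of $Y\setminus Z$ the function $|Dg|$ is monotone with its maximum at the endpoint that $C$ shares with $\overline Z$, so $|Dg|\le M$ on $C$. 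Since $g$ maps $C$ onto a component of $g(Y)\setminus g(Z)$, which has length $\ge\kappa|g(Z)|\ge\kappa|Z|$, we get $|C|\ge\kappa|Z|/M$; the corresponding component of $g(Z)\setminus Z$ contains $C$, whence $|g(Z)|-|Z|\ge\kappa|Z|/M$. \textbf{(b)} Writing $Z_1:=g^{-1}(Z)\supset g^{-2}(Z)$, the set $Z\setminus Z_1$ is mapped by $g$ onto $g(Z)\setminus Z$, which has length $\ge(\kappa/M)|Z|$ by (a); as $|Dg|\le M$ on $Z$, this forces $|Z\setminus Z_1|\ge(\kappa/M^2)|Z|$, hence $|g^{-2}(Z)|\le|Z_1|\le(1-\kappa/M^2)|Z|$. \textbf{(c)} For the components, write $Z=(z_1,z_2)$, let $p$ be the fixed point, and (taking $g$ orientation-preserving) let $a:=g^{-2}(z_1)$, so the left component of $Z\setminus g^{-2}(Z)$ is $(z_1,a)$; if $a=z_1$ it is empty, so assume $z_1<a<p$. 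Then $g^2$ is defined on $(a,p)\subset g^{-1}(Y)$ and maps it homeomorphically onto the strictly larger interval $(z_1,p)$; moreover $(a,p)\subset Z$ and $g\bigl((a,p)\bigr)=(g^{-1}(z_1),p)\subset Z$, so $|Dg^2|\le M^2$ on $(a,p)$. Hence $p-a\ge(p-z_1)/M^2$, i.e.\ $a-z_1\le(1-1/M^2)(p-z_1)<(1-1/M^2)|Z|$, and the right component is bounded the same way. Taking $\alpha:=\min(1,\kappa)/M^2$ then finishes the proof.

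All of the above is written for orientation-preserving $g$; the orientation-reversing case is the same after relabelling, since the fixed point still exists, $|Dg|^{-1/2}$ is still convex, $g^2$ is still increasing, and $g^{-2}(Z)$ still has the form $\bigl(g^{-2}(z_1),g^{-2}(z_2)\bigr)$ with its endpoints pulled strictly towards $p$. The step needing the most care is \textbf{(a)}: the maximum of $|Dg|$ on $Y$ may lie outside $Z$, where we have no upper bound on $|Dg|$, so one must check that \emph{whichever} component of $Y\setminus Z$ the monotonicity favours still produces a definite overhang --- splitting on the position of $q$ relative to $z_1$ and $z_2$ settles this. Step \textbf{(c)} is precisely why $g^{-2}$ (rather than $g^{-1}$) appears in the statement: it lets us compare $(a,p)$ with the strictly larger $(z_1,p)$ using only the crude bound $|Dg^2|\le M^2$, with no need for distortion control of $g^2$, which the Koebe estimate does not cheaply provide.
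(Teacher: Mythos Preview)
Your proof is correct and takes a genuinely different route from the paper's.

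The paper argues the two parts of the second alternative in the opposite order and with different tools. For the components of $Z\setminus g^{-2}(Z)$, the paper simply observes that bounded distortion together with $|Z|\ge\gamma|g(Z)|$ gives a uniform lower bound $|g^{-2}(Z)|\ge c|Z|$ (pull back twice using $|Dg|\le D_\kappa|g(Z)|/|Z|\le D_\kappa/\gamma$ on $Z$), so each component has length at most $(1-c)|Z|$; this is shorter than your step~(c). For the upper bound on $|g^{-2}(Z)|$, the paper invokes Lemma~\ref{lem:MPexp} to get $|Dg(p)|\ge\sqrt{1+\kappa}$ at the fixed point, then appeals to the Minimum Principle to produce a uniform neighbourhood of $p$ on which $|Dg|$ exceeds $1$; the details of ``uniform'' are left implicit. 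Your route here is quite different and fully explicit: you never use the fixed-point derivative bound, instead exploiting unimodality of $|Dg|$ on $Y$ to push the upper bound $|Dg|\le M$ from $Z$ onto one component of $Y\setminus Z$, which forces a definite overhang $|g(Z)\setminus Z|\ge(\kappa/M)|Z|$ and hence $|g^{-1}(Z)|\le(1-\kappa/M^2)|Z|$. So the paper's argument is terser for the components but sketchier for $|g^{-2}(Z)|$; yours is more self-contained and makes the role of $g^{-2}$ (rather than $g^{-1}$) in bounding the components transparent, at the cost of the extra step~(a).
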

        \begin{proof} 
	The first statement is obvious, by bounded distortion.
		For the second statement, since $Z \geq \gamma |g(Z)|$, by bounded distortion there is a uniform lower bound on $|g^{-2}(Z)|/ |Z|$, giving the upper bound on the components of $Z \setminus g^{-2}(Z)$. By Lemma~\ref{lem:MPexp}, at the fixed point $p$ of $g$, $Dg^2 \geq 1+\kappa$, so there is a uniform neighbourhood of $p$ with $Dg > \sqrt{1+\kappa}$. Using the Minimum Principle, a uniform upper bound on $|g^{-2}(Z)|$ follows easily. 
        \end{proof}

        \begin{lemma} \label{lem:expandingscheme}
            Given $\kappa >0$ and integers $d, R \geq 1$, there is a $K\geq 1$ such that the following holds. Let $(\hat{X}, \hat F, \tau)$ be a level-$R$ induced map for a $d$-branched piecewise-monotone map $f$ with non-positive Schwarzian derivative. 
            Suppose that $\kappa \leq \min \{|Y|/|I| : Y \in \hat \P_R, \quad Y \subset \hat I^R\}$. 
            Then $|D\hat F^{K}|>2$.
        \end{lemma}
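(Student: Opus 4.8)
The plan is to show that some fixed power of $\hat F$ contracts every branch enough that bounded distortion and the definite size of the images force derivative $>2$, the uniformity coming from the Koebe estimates of~\S\ref{sec:NSD} rather than from the distortion constant. First I would fix the combinatorial data: let $K$ be the number of connected components of $\hat X$, so $K\le (2dR)^2d^R$ by Remark~\ref{rem:prim}, depending only on $d$ and $R$. The hypothesis says every $R$-cylinder inside $\hat I^R$ has length $\ge\kappa$; hence every connected component $\tilde Y$ of $\hat X$ has $|\tilde Y|\ge\kappa$, and the component lying in a domain $D$ is separated from $\partial D$ by the boundary cylinders $D_\ell^R, D_r^R$, each of length $\ge\kappa$, so $\dist(\tilde Y,\partial D)\ge\kappa\ge\kappa|\tilde Y|$ (using $|I|=1$). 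By Lemma~\ref{lem:MarkovH} any branch $V$ of a first-return map to a component $\tilde Y$ (first return under $\hat f$, equivalently under $\hat F$) extends to a homeomorphism $g=\hat f^{\rho}\colon V'\to D$ onto the ambient domain with $V\subset V'$ and $\hat f^{\rho}(V)=\tilde Y$, so there is $\kappa$-Koebe space around $\tilde Y$ inside $g(V')=D$; this places us in the hypotheses of Lemmas~\ref{lem:koebe},~\ref{lem:MPexp} and~\ref{lem:lbd}.

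The core of the argument is a dichotomy for such ``return loops''. Let $\gamma,\alpha>0$ be the constants of Lemma~\ref{lem:lbd} attached to this $\kappa$. If $V\subset\tilde Y$ is a branch of $\hat F^{m}$ with $\hat F^{m}(V)=\tilde Y$, then, taking $Z=V$, $Y=V'$, $g=\hat f^{\rho}$, Lemma~\ref{lem:lbd} gives: either $|V|\le\gamma|\tilde Y|$ and then $|D\hat F^{m}|_{V}|>2$ outright; or $|V|>\gamma|\tilde Y|$, in which case $g^{-2}(V)\subset V$ and each component of $V\setminus g^{-2}(V)$ has length at most $(1-\alpha)|V|$. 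In the second case each of these three pieces of $V$ is the pullback under $g$ (resp.\ $g^{2}$) of a branch of $\hat F$ inside $\tilde Y$ (resp.\ of $V$ itself), hence is again contained in a return loop at $\tilde Y$, of bounded length. Iterating the second alternative decreases the length of the active loop branch by a factor $1-\alpha$; since the branch always sits inside a component, of length $\le 1$, after at most $N_{0}:=\lceil\log\gamma/\log(1-\alpha)\rceil$ steps one is forced into the first alternative. The number $N_{0}$ depends only on $\kappa$, and once a loop branch is $\gamma$-small relative to its component, bounded distortion at the loop scale (Lemma~\ref{lem:koebe}, constant depending only on $\kappa$) shows every subsequent return loop is again $\gamma$-small, hence expanding by $>2$.

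It remains to organise a general orbit. Given $\hat x$ in the domain of $\hat F^{NK}$, every window of $K$ consecutive $\hat F$-iterates meets some component twice (pigeonhole over the $K$ components), producing a return loop; moreover $\hat F^{m}$ maps the loop branch onto the whole component, so after a loop the orbit lies again in that component and the loops chain together. Feeding this chain into the dichotomy, each round of at most a bounded multiple of $K$ iterates either terminates with a derivative factor exceeding $2$ (after which all later loops expand too), or shrinks the active loop branch by $1-\alpha$; after $N_{0}$ rounds the first alternative occurs. Choosing $N$ a suitable multiple of $N_{0}$, depending only on $\kappa$, guarantees that these rounds, together with one further expanding loop, are completed within $NK$ iterates, and composing the factors yields $|D\hat F^{NK}|>2$; any residual comparison at the end is absorbed by Lemma~\ref{lem:distn} and $|\tilde Y|\ge\kappa$.

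The main obstacle is the geometric bookkeeping rather than any single inequality. One must check that the Koebe extension $V'$ of a (composed) return loop actually lies inside the component $\tilde Y$ so that Lemmas~\ref{lem:MPexp} and~\ref{lem:lbd} apply verbatim, dealing with the exceptional possibility that $V'$ protrudes into a boundary cylinder $D_\ell^R$ or $D_r^R$ — there one either uses the $\ge\kappa$ Koebe gap on the opposite side or passes to a higher $\hat F$-iterate whose branch partition resolves the relevant endpoints — and one must track how branch lengths at the various, spread-out return times compare, so that the per-round factor $1-\alpha$ and the terminal factor $2$ genuinely compose into expansion over exactly $NK$ steps with no uncontrolled leftover iterates. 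These points are routine but delicate; crucially, none of the constants introduced forces $N$ to depend on anything beyond $\kappa$.
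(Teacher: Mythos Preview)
Your proof uses the right ingredients---the Koebe space from the boundary $R$-cylinders and the dichotomy of Lemma~\ref{lem:lbd}---and your first two paragraphs set up the single-loop shrinking mechanism correctly. The gap is in the third paragraph's organisation of a general orbit. Pigeonholing in each window of $K$ iterates produces return loops to \emph{different} components, and the claim that ``once a loop branch is $\gamma$-small relative to its component \ldots\ every subsequent return loop is again $\gamma$-small'' is not justified: if a loop to $\tilde Y_1$ has a small branch (hence derivative $>2$), the very next loop may be to some $\tilde Y_2\ne\tilde Y_1$ whose branch is large relative to $\tilde Y_2$. Nor does the second alternative of Lemma~\ref{lem:lbd} chain across components---it concerns iterates of the \emph{same} map $g$, so ``shrinks the active loop branch by $1-\alpha$'' only makes sense if you keep returning to the same component. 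Finally, the assertion that each round costs ``at most a bounded multiple of $K$ iterates'' cannot hold if a round means two further returns to a fixed component, since those return times are unbounded.

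The paper avoids all of this by fixing a \emph{single} component $\hat X_G$ and working entirely with the first return map $\hat G$ to it. The inductive argument with Lemma~\ref{lem:lbd} then runs cleanly (a branch of $\hat G^{n}$ is either $\gamma$-small in $\hat X_G$, giving $|D\hat G^n|>2$, or the branches of $\hat G^{3n}$ inside it are shorter by a factor $1-\alpha$), because every comparison is to the one fixed interval $\hat X_G$. A high iterate of $\hat F$ is then written as a bookended string of $\hat G$-iterates: the iterates before the first and after the last visit to $\hat X_G$ are handled by the uniform lower bound on $|D\hat F^n|$ (your own first paragraph, via Lemma~\ref{lem:koebe}), and the middle block supplies the expansion. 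To repair your argument, replace the window-by-window pigeonhole by the observation that among the $NK+1$ points $\hat x,\hat F(\hat x),\ldots,\hat F^{NK}(\hat x)$ distributed over $K$ components, some single component is hit at least $N+1$ times; then your dichotomy argument goes through verbatim for the return map to that component.
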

        \begin{proof} 
        For each domain $D$ of $\hat I^R$ containing a point of $\hat X$, $\dist(\hat X, \partial D) > \kappa |I|$, while $|D|\leq  |I|$.
            Let $n \geq 1$ and let $Z \subset \hat X$ be a branch of $\hat F^n$, so $\hat F^n$ maps $Z$ homeomorphically onto a connected component of $\hat X$ contained in a domain $D$ of $\hat I^R$, with $\hat F^n|_{Z} = \hat f^{j}$ say. 
	There is, by Lemma~\ref{lem:MarkovH}, an interval $Y \supset Z$ mapped homeomorphically by $\hat f^{j}$ onto $D$.  
            Since $j \geq n$, $Y \subset \hat X$. 
            Lemma~\ref{lem:koebe} then gives a positive lower bound, depending only on $\kappa$, for $|D\hat F^n|$. 

	Let $\hat G$ denote the first return map to a connected component $\hat X_G$ of $\hat X$. Since there are at most a finite number (depending on $d,R$) of connected components, it suffices to show that there is an $N$ for which $|D\hat G^N| >2$. 
	The general result follows because high iterates can be decomposed into a sequence of repeated first return maps bookended by two maps with derivative bounded below.

	So let $Z$ be a branch of $\hat G^n$, for some $n \geq 1$, with $\hat G^n|_{Z} = \hat f^{j}$ say.  As before, there is an interval $Y \supset Z$ mapped homeomorphically by $\hat f^j$ onto $D$ the domain of $\hat I^R$, now containing $Z$. 
            
	Let $\gamma, \alpha >0$ be given by Lemma~\ref{lem:lbd}. If $|Z| \leq \gamma |\hat X_G|$ then $|D \hat G^n| >2$. Otherwise, by the second part of Lemma~\ref{lem:lbd}, the branches of $\hat G^{3n}$ contained in $Z$ have length bounded by $|Z|(1-\alpha)$. An inductive argument gives some $N$ for which all branches of $\hat G^N$ have length at most $\gamma |\hat X_G|$, and thus $|D \hat G^N| >2$, the required derivative estimate.  
        \end{proof}
        From the uniformity of the preceding lemma ensues:
        \begin{corollary} \label{cor:expsch}
            Let $(Y^0, F_0, \tau_0)$ be a level-$R$ limit induced map. For some $K\geq 1$, $|DF_0^K| >2$. 
        \end{corollary}
        Denote by $\hat \K$ the set of periodic points $\hat p \in \hat I$ with multiplier $\pm 1$, that is, for which, for some $n\geq1$,  $\hat f^n(\hat p) = \hat p$ and $D\hat f^n(\hat p) = 1$. It follows from  the Minimum Principle that if $\hat p \in D \in \D$, then on at least one component of $D\setminus \{\hat p\}$, $\hat f^n$ is the identity and the component lies in $\hat \K$. It similarly follows from the Minimum Principle that no periodic point in $\hat I$ can be attracting. 

        \begin{lemma} \label{lem:recplle}
            If   $\hat x = (x, D_{\hat x}) \in \hat I$ is non-periodic and recurrent, then all bar at most two points of 
            $\{\hat f^n(\hat x): n \geq 0\} \cap D_{\hat x}$ 
            lie in the domain of a level-$R$ induced map for all sufficiently large $R$. 
    \end{lemma}
    \begin{proof}
    %Let $D$ be the domain of $\hat I$ containing $\hat x$. 
        Consider $q > j \geq 0$ such that 
         $\hat y = \hat f^j(\hat x) \in D_{\hat x}$ and 
         such that 
         $\hat z = f^q(\hat x) \in D_{\hat x}$. 
    It suffices to show that $\hat y$ and $\hat z$ are in different elements of $\hat \P_R$ for all large $R$, 
    for then any point in the set
            $\{\hat f^n(x): n \geq 0\} \cap D_{\hat x}$. 
         which is neither its infimum nor its supremum will be in a non-boundary $R$-cylinder for large $R$. 
         We prove by contradiction, assuming that, for each $R$, $\hat y$ and $\hat z$ are in the same element of $\hat \P_R$. Let $k = q-j$, so $\hat z = \hat f^k(\hat y)$. 
         Then, for each $R$, all points $\hat f^{sk}(\hat y)$ with $s\geq0$ lie in the same element of $\hat \P_R$. Hence the sequence $(\hat f^{2sk}(\hat y))_{s\geq 0}$ is strictly monotone. From this, one deduces that, for each large $S$ and for $n \geq S$, 
         $$\hat f^n(\hat y) \in \bigcup_{i=0}^{2k-1} \hat f^i \left([\hat f^{2Sk}(\hat y), \lim_{s\to \infty} \hat f^{2sk}(\hat y)]\right),$$
         an obstacle to recurrence. 
        %Points in the boundary of cylinder sets are not recurrent. Therefore $\hat x$ lies in a non-boundary $R$-cylinder for large $R$.  
    \end{proof}
    We remark that we do not have exponential growth rates (necessarily), but at least $\limsup_{n\to\infty}|D\hat f^n(\hat x)| = \infty$ for recurrent points $\hat x \notin \hat \K$. 
    By Lemmas~\ref{lem:expandingscheme}-\ref{lem:recplle} and Birkhoff's theorem, one obtains:
        \begin{corollary} \label{cor:ple}
            Every measure $\hat \mu \in \M_{\hat f}$, not supported on $\hat \K$, has  positive Lyapunov exponent. 
        \end{corollary}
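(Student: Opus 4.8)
The plan is to reduce the assertion to a single level-$R$ induced map, invoke the genuine expansion supplied by Lemma~\ref{lem:expandingscheme}, and transport positivity back to $\hat f$ via Kac's formula (equivalently, via the Birkhoff theorem for the return map).

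First I would dispose of $\hat\K$. The set $\hat\K$ is Borel and forward-invariant under $\hat f$ (the multiplier is constant along a periodic orbit), so $\bigcup_{n\ge0}\hat f^{-n}(\hat\K)$ is $\hat f^{-1}$-invariant and, by invariance of $\hat\mu$, has the same $\hat\mu$-measure as $\hat\K$; ergodicity of $\hat\mu$ then forces $\hat\mu(\hat\K)\in\{0,1\}$, and since $\hat\mu$ is not supported on $\hat\K$ we conclude $\hat\mu(\hat\K)=0$. By Poincar\'e recurrence, $\hat\mu$-almost every point is recurrent; hence $\hat\mu$-almost every $\hat x$ is a recurrent point of $\hat I\setminus\hat\K$, and by Lemma~\ref{lem:recplle} such $\hat x$ lies in $\hat X(R)$ for every sufficiently large $R$. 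Thus the full-measure set $\bigcup_{R_0}\bigcap_{R\ge R_0}\hat X(R)$ is an increasing union in $R_0$, so some $\hat X(R)$ has positive measure. Fix such an $R$ and write $\hat X=\hat X(R)$, with first return map $\hat F$ and return time $\tau$.

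Next I would record the expansion and the integrability needed. Since $f$ has non-positive Schwarzian derivative and $\{Y\in\hat\P_R:Y\subset\hat I^R\}$ is a finite family of intervals of positive length, $\kappa:=\min\{|Y|:Y\in\hat\P_R,\ Y\subset\hat I^R\}>0$, so Lemma~\ref{lem:expandingscheme} (applied to the constant sequence $f$) yields $N$ and $K\ge1$ with $|D\hat F^{NK}|>2$ on the full-measure domain of $\hat F^{NK}$. Set $\hat\nu:=\hat\mu(\hat X)^{-1}\hat\mu_{|\hat X}$; it is $\hat F$-invariant, and it is ergodic because $\hat\mu$ is ergodic with $\hat\mu(\hat X)>0$. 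Kac's lemma gives $\int\tau\,d\hat\nu=\hat\mu(\hat X)^{-1}<\infty$, so $\log|D\hat F|=\sum_{j=0}^{\tau-1}\log|D\hat f|\circ\hat f^{\,j}$ is bounded above (as $\hat f$ has bounded derivative) and bounded below by $\log c>-\infty$, where $c>0$ is the uniform lower bound on $|D\hat F|$ from Lemma~\ref{lem:distn}; hence $\log|D\hat F|$ is $\hat\nu$-integrable and, by Kac's identity, $\log|D\hat f|$ is $\hat\mu$-integrable.

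Finally I would combine these. By $\hat F$-invariance of $\hat\nu$,
$$\int\log|D\hat F|\,d\hat\nu=\frac1{NK}\int\log|D\hat F^{NK}|\,d\hat\nu\ \ge\ \frac{\log2}{NK}>0,$$
while Kac's formula for the induced cocycle (see \cite[\S1.5]{Aar97}), together with $\log|D\hat F|=\sum_{j=0}^{\tau-1}\log|D\hat f|\circ\hat f^{\,j}$, gives
$$\int_{\hat I}\log|D\hat f|\,d\hat\mu=\hat\mu(\hat X)\int\log|D\hat F|\,d\hat\nu\ \ge\ \hat\mu(\hat X)\,\frac{\log2}{NK}>0.$$
Since the Lyapunov exponent of $\hat\mu$ equals $\int\log|D\hat f|\,d\hat\mu$ (which coincides with that of $\pi_*\hat\mu$, as $\pi$ is a local homeomorphism with $|D\hat f|=|Df|\circ\pi$), this is precisely the conclusion. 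One may equivalently argue pointwise: for $\hat\nu$-a.e.\ $\hat x$ the Birkhoff theorem for $(\hat F,\hat\nu)$ gives $\rho_n(\hat x)/n\to\int\tau\,d\hat\nu$ and $\frac1n\log|D\hat F^n(\hat x)|\to\int\log|D\hat F|\,d\hat\nu$ for the $n$-th return time $\rho_n$, so the pointwise exponent along $(\rho_n)$ is $\int\log|D\hat F|\,d\hat\nu\big/\int\tau\,d\hat\nu>0$, and the bounded derivative of $\hat f$ together with the lower bound on $|D\hat F|$ makes the excursions between consecutive returns negligible (their length is $o(\rho_n)$). The only point requiring care — and it is mild, consistent with the corollary being essentially immediate — is justifying that the Kac identity may be applied to the potentially unbounded-below function $\log|D\hat f|$, which is exactly why I check its $\hat\mu$-integrability first.
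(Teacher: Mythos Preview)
Your proof is correct and follows the same route the paper indicates: combine Lemma~\ref{lem:recplle} (to find some $\hat X(R)$ with positive $\hat\mu$-measure), Lemma~\ref{lem:expandingscheme} (to get genuine expansion for an iterate of $\hat F$), and then Birkhoff/Kac to transport the positive exponent from $\hat F$ back to $\hat f$. Your version is simply a careful expansion of the one-line citation in the paper, including the integrability check needed to justify the Kac identity for $\log|D\hat f|$.
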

    Given $(\hat X, \hat F, \tau)$ or its embedding $(X,F,\tau)$, we say that $\log |D \hat F|$ (or equivalently $\log|DF|$) is \emph{locally H\"older} if there are $C_0, \alpha>0$ such that, for each $n$ and on every branch $Z$ of $F^n$, 
    $$
    \left| \log|DF(x)| - \log |DF(y)| \right| \leq C_0 \exp(-\alpha n).
    $$

    \begin{lemma}
        Let $f\in \F \in \F_M$ and let $(\hat X, \hat F, \tau)$ be the level-$R$ induced map.
        Then $\log|DF|$ is locally H\"older.
        \label{lem:locHol}
    \end{lemma}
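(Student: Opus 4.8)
The plan is a routine bounded-distortion argument: branches of $\hat F^n$ shrink geometrically in $n$, so the image of such a branch under a \emph{single} application of $\hat F$ sits inside its surrounding domain with an enormous amount of Koebe space, and a direct application of the Koebe Lemma then produces the exponential modulus of continuity.

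First I would record the geometric contraction. By Lemma~\ref{lem:expandingscheme} we have $|D\hat F^{NK}|>2$ for suitable $N,K$, and by Lemma~\ref{lem:distn} there is a uniform lower bound $\delta_0>0$ on $|D\hat F|$; the chain rule then produces constants $C_1\in(0,1]$ and $\lambda>1$ with $|D\hat F^m(\hat x)|\geq C_1\lambda^m$ whenever $m\geq 1$ and $\hat x$ lies in a branch $Z$ of $\hat F^m$. Such a $Z$ is mapped homeomorphically by $\hat F^m$ onto a connected component of $\hat X$, which has length at most $|I|=1$; writing $|Z|=\int_{\hat F^m(Z)}|D(\hat F^m)^{-1}|\leq 1/\min_Z|D\hat F^m|$ one deduces $|Z|\leq C_1^{-1}\lambda^{-m}$.

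Next, fix $n\geq 1$, a branch $Z$ of $\hat F^n$, and $\hat x,\hat y\in Z$. Since $\hat F^n=\hat F^{n-1}\circ\hat F$, the interval $Z$ lies in a single branch $\hat X_{i_0}$ of $\hat F$, on which $\hat F=\hat f^{\tau_{i_0}}$, and $\hat F(Z)$ is contained in a branch of $\hat F^{n-1}$, whence $|\hat F(Z)|\leq C_1^{-1}\lambda^{-(n-1)}$ by the previous step. By Lemma~\ref{lem:inducingcyl}\ref{enum:c5} there is an interval $\hat X_{i_0}'\supseteq\hat X_{i_0}$ mapped by $\hat f^{\tau_{i_0}}$ homeomorphically onto the domain $D\in\D$ containing the component $\hat F(\hat X_{i_0})$ of $\hat X$; as compositions of maps with non-positive Schwarzian derivative again have non-positive Schwarzian derivative, the Koebe Lemma~\ref{lem:koebe} applies to $\hat f^{\tau_{i_0}}\colon\hat X_{i_0}'\to D$. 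The set $\hat F(Z)$ lies inside $\hat F(\hat X_{i_0})$, which is not a boundary component $D_\ell^R$ or $D_r^R$ of $D$, so $\dist(\hat F(Z),\partial D)\geq\kappa$, where $\kappa>0$ bounds from below the lengths of the (finitely many) elements of $\hat\P_R$ contained in $\hat I^R$ (cf.\ Lemma~\ref{lem:kappaPk}). Combining,
\[
\dist(\hat F(Z),\partial D)\;\geq\;\kappa\;\geq\;\kappa\,C_1\lambda^{\,n-1}\,\bigl|\hat F(Z)\bigr|,
\]
so, with $Z\subseteq\hat X_{i_0}\subseteq\hat X_{i_0}'$, the Koebe Lemma gives
\[
\bigl|\log|D\hat F(\hat x)|-\log|D\hat F(\hat y)|\bigr|
=\left|\log\frac{|D\hat f^{\tau_{i_0}}(\hat x)|}{|D\hat f^{\tau_{i_0}}(\hat y)|}\right|
\leq 2\log\!\left(1+\frac{1}{\kappa C_1\lambda^{\,n-1}}\right)
\leq \frac{2\lambda}{\kappa C_1}\,\lambda^{-n}.
\]
Taking $C_0:=2\lambda/(\kappa C_1)$ and $\alpha:=\log\lambda>0$ yields the claim for $\log|D\hat F|$; the statement for $\log|DF|$ is identical, since $|DF|=|D\hat f^{\tau}|\circ\pi$ under the embedding $\iota_R$.

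The only mildly delicate points are the bookkeeping for the geometric shrinking of branches of $\hat F^n$ (which merely combines the uniform expansion of Lemma~\ref{lem:expandingscheme} with the bounded distortion of Lemma~\ref{lem:distn}) and the verification that $\hat F(Z)$ is trapped inside a non-boundary $R$-cylinder so that the uniform Koebe space $\kappa$ is genuinely available; neither is a real obstacle, and the remainder is a one-line invocation of Lemma~\ref{lem:koebe}.
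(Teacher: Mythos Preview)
Your proof is correct and follows essentially the same route as the paper's: both arguments use the uniform expansion from Lemma~\ref{lem:expandingscheme} to get geometric shrinking of $\hat F(Z)$ for branches $Z$ of $\hat F^n$, observe that $\hat F(Z)$ sits inside its domain with Koebe space $\kappa$ (the length of a boundary $R$-cylinder), and then apply Lemma~\ref{lem:koebe} with the resulting exponentially large extendibility ratio. Your version is slightly more explicit in deriving $|D\hat F^m|\geq C_1\lambda^m$ for all $m$ (via the lower derivative bound of Lemma~\ref{lem:distn}) and in citing Lemma~\ref{lem:inducingcyl}\ref{enum:c5} for the extension, but the substance is identical.
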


\begin{proof}
    For ease of notation, we drop the hats and work with the embedded induced map $F$. 
    We have 
    $$\kappa := \min\{|Y| : Y \in \hat P_R, \quad Y \subset \hat I^R\} >0.$$ 
     By Lemma~\ref{lem:expandingscheme}, there is some $N_0$ for which $|DF^{N_0}| >2$.  
    If $Z$ is a branch of  $F^n$ with $n \geq kN_0+1$ say, then $|F(Z)| < |I| \exp(-2k)$, since $2 < |DF^{N_0}|$.
    The map $F$ on $Z$ extends  with non-positive Schwarzian derivative to map a larger interval homeomorphically onto an interval $X'$ with 
    $$\dist(F(Z), \partial X') \geq \kappa \geq \frac{\kappa}{|I|}   |F(Z)| \exp ( 2k). $$
    Applying the Koebe Lemma~\ref{lem:koebe} gives $\big|\log|DF(x)| -\log|DF(y)|\big| \leq C_0 \exp(-2k)$ for all $x,y \in Z$, for some constant $C_0$ depending just on $\kappa$ and $|I|$.  
    Now replacing $k$ by $(n-1)/N_0$, we obtain
    $$
    \left| \log|DF(x)| - \log |DF(y)| \right| \leq C_1 \exp(- n/N_0),
    $$
    for some constant $C_1$, as required. 
\end{proof}

%\chapter{When thermodynamic quantities jump up}
 
    \chapter{Almost upper-semicontinuity of the free energy} 
    \label{sec:SSSS}
    The proof of the following proposition occupies this chapter. It implies most of
    Theorem~\ref{thm:SSSS}, the remaining ingredient being Theorem~\ref{thm:eqstates}. 
   
\begin{proposition} \label{prop:SSSS}
    Let 
 $(f_k)_{k\ge 0} \in \FNSD$ be a sequence converging to $f_0$ as $k \to \infty$ with decreasing critical relations. 
 Suppose  that, as $k \to \infty$,
\begin{enumerate}[label=({\alph*}), itemsep=0.0mm, topsep=0.0mm, leftmargin=7mm]
\item $t_k \to t_0$;
\item $(\mu_k)_{k\geq1}$ is a convergent sequence of measures $\mu_k\in \M_{f_k}$;
\item $E(f_k, \mu_k, -t_k \log |Df_k|)$ converges to a limit $E_L$ as ${k \to \infty}$;
\item $E_L \geq \liminf_{k \to \infty}E_+(f_k, -t_k\log|Df_k|) $; 
\item $h(\mu_k)$ converges to a strictly positive limit.
\end{enumerate}
Then, writing $E(\mu_*)$ for $E(f_0,\mu_*, -t_0 \log |Df_0|)$, 
some light limit measure $\mu_*$ is hyperbolic and satisfies one of the following statements:
    \begin{itemize}
     \item
         $E(\mu_*) > E_L$;\\ 
        \item 
            $\mu_* = \lim_{k\to \infty} \mu_{k}$ and,
            for some strictly increasing subsequence $(k_n)_n$, \\
            $\int \log |Df_0| ~d\mu_* = \lim_{n \to \infty} \int \log |Df_{k_n}| ~d\mu_{k_n}$, \\ $h(\mu_*) \geq \lim_{n\to\infty} h(\mu_{k_n}) $ and  $E(\mu_*) \geq E_L$; \\  
            %$\mu_* = \lim_{k\to \infty} \mu_k$, $\quad \int \log |Df_0| ~d\mu_* = \lim_{k \to \infty} \int \log |Df_k| ~d\mu_k$,\\ $h(\mu_*) \geq \lim_{k\to\infty} h(\mu_k) $ and  $E(\mu_*) \geq E_L$; \\  
     \item
         $E(\mu_*) \geq E_L$ and $h(\mu_*) > \lim_{k\to\infty} h(\mu_k)$.
    \end{itemize}
If $E_L= P(-t_0 \log |Df_0|)$,  one of the last two alternatives holds and some light limit measure is a convex combination of positive entropy  equilibrium states for this potential. 
    \end{proposition}
    
    \begin{proof}
    Let $\eps' = \lim_{k\to \infty}h(\mu_k)/2$. Without loss of generality, we assume that $h(\mu_k) \geq \eps'$ for all $k\geq 1$. 
    We note that, by Ruelle's Inequality, each $\mu_k \in \wtm_{f_k}.$
    Let $R_0 \geq 8d$ be big enough that $2\eps(R_0) < \eps'$,  where $\eps(R_0)$ is defined in~\eqref{eq:epsdef}. For each $R \geq R_0$, we could construct in~\S\ref{sec:embed} sets $\Omega_k, \Omega^N_k, \Omega_k'$. Let $\mu_\infty = \lim_{k\to \infty} \mu_k$. Taking subsequences, we can assume that, as $k \to \infty$,
           for each $R\geq R_0$ and $N\geq 1$, the sequences $(\Omega_k^N)_k$ and $(\Omega_k')_k$ are eventually constant.
   This allows us, for each $R$,  to obtain convergent sequences of embedded level-$R$ induced maps
   $$
   (X^k(R) = X^k, F_k, \tau_k) \to (Y^0, F_0, \tau_0).$$ 
   For $k \geq 1$, we obtain the embedded induced measures $\nu^R_k \in \M_{F_k}$ via~\eqref{eq:nudef}. Note we do not indicate through notation the dependence of $F_k$ on $R$, but this should be understood. 
    Again taking subsequences, we assume that for each $R$, as $k \to \infty$,
   \begin{itemize}
       \item
            $\nu_k^R$ converges to a limit measure $\nu_0^R$;
       \item
           $T_k^R$, the integral of the inducing time, converges to some $T_L^R$;
       \item
           $h(\nu^R_k)$ converges to a limit $h_L^R$;
       \item
           $\lambda(\nu^R_k) := \int \log|DF_k| ~d\nu^R_k$ converges to some limit $\lambda_L^R$. 
   \end{itemize}
   By Lemma~\ref{lem:nu0inv}, $\nu_0^R \in \M_{F_0}$. 
   Set $T^R_\Delta:=T^R_L-T^R_0 \geq 0$ (by~\eqref{eq:t00}).   Note that $1 \leq T^R_k \leq 1/\eta(R_0,d)$ for each $k\geq0, R \geq R_0$, where $\eta$ is given by Lemma~\ref{lem:Rtower2}. As $T^R_0 \geq 1$, 
   \begin{equation} \label{eq:deltabound}
       T_\Delta^R \leq 1/\eta(R_0,d).
   \end{equation}

We can spread the measure $\nu_0 = \nu_0^R$ to get an  $f_0$-invariant probability measure $\mu_0^R$, defined by
   $$
   \mu_0^R = \frac{1}{T_0}\sum_n \sum_{j=0}^{n-1} f_{0*}^j (\pi_{1 *}\nu_{0|\{\tau_0 =n\}}).
   $$
   By Corollary~\ref{cor:expsch}, $\nu_0$ is hyperbolic. Hence so is $\mu_0^R$. 
   By Lemma~\ref{lem:spreadabscns},
   %$$\nu^R_0 = \lim_{k \to \infty} \nu^R_k \ll \lim_{k \to \infty} \mu_k,$$
   $\mu_0^R$ is a light limit measure (see Definition~\ref{def:llm}) for the sequence $(\mu_k)_k$. 
   Upon taking further subsequences, we similarly obtain measures $\mu_0^{R'}$ for each $R' > R$, and we can assume 
    furthermore that $E(\mu_k)$ converges to a limit $E_L$.

   Set $\lambda_\Delta^R := \lambda^R_L - \lambda_0^R$. 
   Due to non-positive Schwarzian derivative, Lemma~\ref{lem:distn} gives us a uniform lower bound on $|DF_k|$. There is a uniform upper bound on $|Df_k|$, so $\log |DF_k| \leq \tau_k C$ for some uniform constant $C >1$ (independent of $R$). 
       Hence
   \begin{equation} \label{eq:lambdabound}
      0 \leq \lambda^R_\Delta = \lim_{K\to \infty} \lim_{k\to \infty} \int_{\{\tau_k \geq K\}} \log |DF_k| ~d\nu^R_k \leq T^R_\Delta \log C.
   \end{equation}
    We shall use \eqref{eq:Abe} and~\eqref{eq:AbeDf} repeatedly in the following lemmas.

    \begin{lemma}\label{lem:lim0}
   If $\lim_{R\to \infty}T^R_\Delta = 0$, then $\mu_0^R$ converges strongly 
   to $\mu_\infty$ as $R \to \infty$. 
   Moreover,
        $\lambda(\mu_\infty) = \lim_{k \to \infty} \lambda(\mu_k)$ and
       $$
       E(\mu_\infty)  \geq E_L.$$
   \end{lemma}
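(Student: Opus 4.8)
The plan is to exploit the three convergence results already assembled in $\S\ref{sec:MM}$-$\S\ref{sec:llm}$, specialised to the regime $\lim_{R\to\infty} T^R_\Delta = 0$. Recall that by Lemma~\ref{lem:spreadabscns},
$$
\mu_\infty = \frac{T^R_0}{T^R_L}\,\mu_0^R + \left(1 - \frac{T^R_0}{T^R_L}\right)\mu'
$$
for some $\mu' \in \M_{f_0}$. Since $1 \leq T^R_0 \leq T^R_L$ and $T^R_\Delta = T^R_L - T^R_0 \to 0$ as $R \to \infty$, the weight $T^R_0/T^R_L \to 1$, so the ``error'' component $\mu'$ has weight tending to $0$. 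Hence $\mu_0^R \to \mu_\infty$, and the convergence is strong in the sense used for light limit measures (the spread is an honest probability measure for each $R$ by~\eqref{eq:t00}, and it converges in total variation along this scheme because the discarded part has vanishing mass). This gives the first assertion.

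For the Lyapunov exponent, first I would note that $\lambda(\mu_k) = \int \log|Df_k|\,d\mu_k$ and, by Abramov/Birkhoff~\eqref{eq:AbeDf} applied to the induced map $(X^k, F_k, \tau_k)$ and the measure $\nu^R_k$ spreading to $\mu_k$, we have $\lambda(\mu_k) = \lambda(\nu^R_k)/T^R_k$. Passing to the limit in $k$ gives $\lim_k \lambda(\mu_k) = \lambda^R_L / T^R_L$. On the other hand, $\lambda(\mu_0^R) = \lambda^R_0/T^R_0$ by~\eqref{eq:AbeDf} applied to the (possibly non-ergodic) limit induced measure $\nu_0^R$, using the ergodic-decomposition version worked out after Definition~\ref{dfn:spread}. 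Now~\eqref{eq:lambdabound} says $0 \leq \lambda^R_\Delta = \lambda^R_L - \lambda^R_0 \leq T^R_\Delta \log C$, and $T^R_\Delta \to 0$, so $\lambda^R_L - \lambda^R_0 \to 0$; combined with $T^R_0/T^R_L \to 1$ and the boundedness $1 \leq T^R_k \leq 1/\eta(R_0,d)$, one gets $\lambda(\mu_0^R) \to \lim_k \lambda(\mu_k)$. Since also $\mu_0^R \to \mu_\infty$ and $\log|Df_0|$ is bounded (non-positive Schwarzian derivative forces bounded derivative), $\lambda(\mu_0^R) \to \lambda(\mu_\infty)$ — here one must be slightly careful, as $\log|Df_0|$ is only bounded above, not below near critical points, but the lower bound is not an issue since $\log|Df_0| \leq \log C$ and the measures $\mu_0^R$ are spreads of induced measures with uniformly controlled integrals; more simply, $\mu \mapsto \int \log|Df_0|\,d\mu$ is upper-semicontinuous, and the matching lower bound comes from the fact that the drop in $\lambda$ is exactly controlled by $T^R_\Delta$, which vanishes. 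Chaining the equalities yields $\lambda(\mu_\infty) = \lim_k \lambda(\mu_k)$.

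For the free energy inequality, combine the entropy and Lyapunov statements. By Theorem~\ref{thm:usc} (upper-semicontinuity of entropy), $h(\mu_\infty) \geq \liminf_k h(\mu_k)$. Writing $\phi_k = -t_k\log|Df_k|$ and $\phi_0 = -t_0\log|Df_0|$, we have $\int \phi_k\,d\mu_k = -t_k\lambda(\mu_k)$, so $\int \phi_k\,d\mu_k \to -t_0 \lim_k\lambda(\mu_k) = -t_0\lambda(\mu_\infty) = \int\phi_0\,d\mu_\infty$ using $t_k \to t_0$ and the just-proven continuity of the Lyapunov exponent. Therefore
$$
E(\mu_\infty) = h(\mu_\infty) + \int \phi_0\,d\mu_\infty \geq \liminf_{k\to\infty} h(\mu_k) + \lim_{k\to\infty}\int\phi_k\,d\mu_k = \liminf_{k\to\infty} E(f_k,\mu_k,\phi_k) = E_L,
$$
the last equality because $E(f_k,\mu_k,\phi_k)$ was assumed to converge to $E_L$. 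The main obstacle I anticipate is the interchange of the double limit: showing that $\lambda(\mu_0^R) \to \lambda(\mu_\infty)$ as $R \to \infty$ genuinely requires that the ``lost'' mass and the ``lost'' integral both vanish simultaneously, which is precisely the content of~\eqref{eq:lambdabound} together with $T^R_\Delta \to 0$; one should also double-check that strong convergence $\mu_0^R \to \mu_\infty$ (rather than mere weak$^*$) is what lets us pass $\int\log|Df_0|$ through the limit despite $\log|Df_0|$ being unbounded below, or alternatively argue via the uniform Abramov bounds without ever integrating the unbounded potential against $\mu_\infty$ directly.
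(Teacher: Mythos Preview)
Your proof is correct and follows the same approach as the paper: Lemma~\ref{lem:spreadabscns} gives $\mu_0^R \to \mu_\infty$, the chain of equalities through Abramov's formula and~\eqref{eq:lambdabound} gives convergence of the Lyapunov exponent, and Theorem~\ref{thm:usc} handles the entropy part to conclude $E(\mu_\infty) \geq E_L$.

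On the concern you flag: the cleanest resolution is the convex decomposition itself rather than any semicontinuity argument. From $\mu_\infty = c_R\,\mu_0^R + (1-c_R)\,\mu'$ with $\mu' \in \M_{f_0}$ (so $0 \leq \lambda(\mu') \leq \log C$), linearity gives $\lambda(\mu_\infty) = c_R\,\lambda(\mu_0^R) + (1-c_R)\,\lambda(\mu')$; since $(1-c_R) \to 0$ and $\lambda(\mu')$ is bounded, the remainder vanishes and $\lambda(\mu_\infty) = \lim_R \lambda(\mu_0^R)$ follows directly, sidestepping the unboundedness of $\log|Df_0|$ entirely.
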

   \begin{proof}
       The first statement follows immediately from Lemma~\ref{lem:spreadabscns}.
       From~\eqref{eq:lambdabound}, if $\lim_{R\to\infty}T_\Delta^R = 0$ then $\lim_{R \to \infty} \lambda^R_\Delta = 0$ and  
     \begin{align*} 
         \int \log|Df_0| ~d\mu_\infty  & = \lim_{R\to\infty} \int \log |Df_0| ~d\mu_0^R \\
                                       & = \lim_{R\to\infty} \frac{\lambda^R_0}{T_0^R} \\
         & = \lim_{R\to \infty}\lim_{k\to \infty} \frac{\lambda_k^R}{T^R_L} \\
         & = \lim_{k\to\infty} \int \log|Df_k| ~d\mu_k.
 \end{align*}
 Thus 
 $E(\mu_\infty) - E_L = h(\mu_\infty) - \lim_{k \to \infty} h(\mu_k)$. 
 By 
       Theorem~\ref{thm:introusc}, 
 entropy is upper-semicontinuous, we deduce that $E(\mu_\infty) \geq E_L$.
    \end{proof}
    
    Recall from~\eqref{eq:hdeltadef} that $h_{\Delta}^R = h^R_L - h(\nu_0^R).$ 
   If $T_\Delta = T_\Delta^R \ne 0$ then any jump in free energy can be written in a suggestive form. We suppress $R$ from notation, where appropriate. 
   \begin{lemma} \label{lem:TDne0}
       Suppose $T_\Delta \ne 0$. 
       Then
       $$
       \frac{T_0}{T_\Delta} \left(E_L - E(\mu_0)\right) = \frac{h_\Delta - t\lambda_\Delta}{T_\Delta} - E_L.$$
       \end{lemma}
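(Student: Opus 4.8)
The plan is to rewrite both $E_L$ and $E(\mu_0)$ as ratios of induced quantities via Abramov's formula, and then verify the stated identity by elementary algebra. I would fix $R$ throughout and suppress it from the notation, writing $\nu_0,T_0,T_L,T_\Delta,h_\Delta,\lambda_\Delta$, etc.

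First I would record the two expressions. Since $h(\mu_k)\geq 2\eps(R)$, the measure $\mu_k$ is the spread of the ergodic induced measure $\nu_k$ (as noted at the end of~\S\ref{sec:MM}), so~\eqref{eq:Abe} and~\eqref{eq:AbeDf} give $h(\mu_k)=h(\nu_k)/T_k$ and $\int\log|Df_k|\,d\mu_k=\lambda(\nu_k)/T_k$, whence
$$
E(f_k,\mu_k,-t_k\log|Df_k|)=\frac{h(\nu_k)-t_k\lambda(\nu_k)}{T_k}.
$$
Passing to the limit $k\to\infty$ — legitimate because the subsequence extraction at the start of~\S\ref{sec:SSSS} makes $h(\nu_k),\lambda(\nu_k),T_k$ converge, $t_k\to t_0$, and $1\le T_k\le 1/\eta(R_0,d)$ keeps the quotient under control — yields $E_L=(h_L-t_0\lambda_L)/T_L$. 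For $\mu_0$, which is by construction the spread of the (possibly non-ergodic) measure $\nu_0$, the non-ergodic forms of Abramov's identities discussed in~\S\ref{sec:MM} give $h(\mu_0)=h(\nu_0)/T_0$ and $\int\log|Df_0|\,d\mu_0=\lambda_0/T_0$, so $E(\mu_0)=(h(\nu_0)-t_0\lambda_0)/T_0$.

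Then I would substitute $T_L=T_0+T_\Delta$, $h_L=h(\nu_0)+h_\Delta$, $\lambda_L=\lambda_0+\lambda_\Delta$. Setting $A:=h(\nu_0)-t_0\lambda_0$ and $B:=h_\Delta-t_0\lambda_\Delta$, so that $E(\mu_0)=A/T_0$ and $E_L=(A+B)/T_L$, the claimed identity reduces to
$$
\frac{T_0}{T_\Delta}\!\left(\frac{A+B}{T_L}-\frac{A}{T_0}\right)=\frac{B}{T_\Delta}-\frac{A+B}{T_L},
$$
and a short computation shows both sides equal $(T_0B-T_\Delta A)/(T_\Delta T_L)$ (on the left using $T_0(A+B)-T_LA=T_0B-T_\Delta A$, on the right using $BT_L-T_\Delta(A+B)=BT_0-T_\Delta A$). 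Here $T_\Delta\neq 0$ by hypothesis and $T_L\geq 1$, so the divisions are valid.

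I do not expect a genuine obstacle: the whole content is the additivity of free energy under inducing via Abramov's formula, and the only step needing a word of justification is the interchange of limit and quotient in the formula for $E_L$, which is covered by the convergences already arranged in~\S\ref{sec:SSSS} together with the uniform bound $1\le T_k^R\le 1/\eta(R_0,d)$.
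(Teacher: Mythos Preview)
Your proof is correct and takes essentially the same approach as the paper: both derive $E_L=(h_L-t\lambda_L)/T_L$ and $E(\mu_0)=(h(\nu_0)-t\lambda_0)/T_0$ via Abramov's formula and then verify the identity by elementary algebra. The paper organises the algebra slightly differently---rewriting $E_L=(h_L-t\lambda_L-T_\Delta E_L)/T_0$ first and then subtracting $E(\mu_0)$---but the content is identical.
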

       \begin{proof}
     From
     $$E_L = \lim_{k\to \infty} \frac{h(\nu_k) - t\lambda(\nu_k) }{T_k} = \frac{h_L - t \lambda_L }{T_L},$$
     one deduces (to verify, substitute $T_L-T_0$ for $T_\Delta$) that 
     $$E_L = \frac{h_L - t\lambda_L - T_\Delta E_L}{T_0}.$$ 
     Consequently, knowing $E(\mu_0) = \frac{h(\nu_0) - t\lambda(\nu_0)}{T_0}$,
    \begin{equation}\label{eq:limits23}
    E_L - E(\mu_0) 
    = \frac {1}{T_0}\left( (h_\Delta -t \lambda_\Delta) - T_\Delta E_L \right).
    \end{equation}
    Multiply both sides by $T_0/T_\Delta$ to conclude. 
    \end{proof}

       Let $T^\infty_\Delta := \liminf_{R\to\infty} T^R_\Delta \geq0$.\index{Tinfinity@ $T^\infty_\Delta = \liminf_{R\to\infty} T^R_\Delta$}  Note this is finite by \eqref{eq:deltabound}.
    \begin{lemma}  \label{lem:limplus}
       If $T^\infty_\Delta > 0$ then  there exists $R_0, \theta >0$ such that for all $R \geq R_0$, 
       the light limit measure $\mu_0^R$ has  entropy  $h(\mu_0^{R})  \geq \lim_{k\to \infty} h(\mu_k) + \theta$.
   \end{lemma}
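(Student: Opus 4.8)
The plan is to combine Abramov's formula~\eqref{eq:Abe} with the quantitative entropy-defect bound of Lemma~\ref{lem:hnusemi}. Let $R_0 \geq 8d$ be the level fixed at the start of~\S\ref{sec:SSSS}, so that $\eta_0 := \eta(R_0,d) > 0$ is a fixed constant; by the bound recorded just before~\eqref{eq:deltabound} one has $1 \leq T_0^R \leq T_L^R \leq 1/\eta_0$ for all $R \geq R_0$. By Abramov, $h(\mu_0^R) = h(\nu_0^R)/T_0^R$, and, applied to each $\nu_k^R$ (whose spread is $\mu_k$), $h(\mu_k) = h(\nu_k^R)/T_k^R$; since $h(\nu_k^R) \to h_L^R$ and $T_k^R \to T_L^R \geq 1$, the limit $h_* := \lim_{k\to\infty} h(\mu_k)$ exists, satisfies $h_* \geq \eps' > 0$, and equals $h_L^R/T_L^R$ for every $R \geq R_0$. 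Hence the assertion to be proved is $h(\nu_0^R)/T_0^R \geq h_* + \theta$ for all $R$ beyond some threshold.

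The engine is the inequality $h(\nu_0^R) = h_L^R - h_\Delta^R \geq h_L^R - 5\eps(R)\,T_\Delta^R$, which is immediate from Lemma~\ref{lem:hnusemi} and holds irrespective of the sign of $h_\Delta^R$. Substituting $h_L^R = h_* T_L^R$ and $T_0^R = T_L^R - T_\Delta^R$ and carrying out the (routine, two-term) algebra gives
$$
\frac{h(\nu_0^R)}{T_0^R} - h_* \;\geq\; \frac{h_*\,T_L^R - 5\eps(R)\,T_\Delta^R}{T_L^R - T_\Delta^R} - h_* \;=\; \frac{T_\Delta^R\,\bigl(h_* - 5\eps(R)\bigr)}{T_0^R}.
$$
Conceptually: the entropy lost in the limit is of order $\eps(R)\,T_\Delta^R$, whereas re-normalising through the strictly smaller inducing time $T_0^R < T_L^R$ produces a gain $h_*\,T_\Delta^R/T_0^R$, and the gain wins once $\eps(R)$ is small relative to $h_*$.

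To conclude, pick $R_1 \geq R_0$ so large that $5\eps(R) \leq h_*/2$ for all $R \geq R_1$ (possible as $\eps(R) \to 0$), and --- invoking the hypothesis $T^\infty_\Delta = \liminf_{R\to\infty} T_\Delta^R > 0$ --- also so large that $T_\Delta^R \geq T^\infty_\Delta/2$ for all $R \geq R_1$. Then for every $R \geq R_1$, using $T_0^R \leq 1/\eta_0$,
$$
h(\mu_0^R) - h_* \;\geq\; \frac{(T^\infty_\Delta/2)\,(h_*/2)}{1/\eta_0} \;=\; \frac{T^\infty_\Delta\, h_*\, \eta_0}{4} \;=:\; \theta \;>\; 0 .
$$
Since $\mu_0^R$ has already been identified as a light limit measure for $(\mu_k)_k$, taking $R_0 := R_1$ and $\theta$ as above yields the statement. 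I do not expect any genuine obstacle here; the only point requiring care is exactly the one flagged after the first display --- the defect $h_\Delta^R$ does not tend to zero (it is comparable to $\eps(R)\,T_\Delta^R$, and $T_\Delta^R$ is bounded away from $0$ by hypothesis), so one must quantify it against the re-normalisation gain, which is precisely why the estimate is asserted only for $R$ past a threshold depending on $h_*$ and why the positivity of $T^\infty_\Delta$ is essential.
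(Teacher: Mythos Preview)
Your proof is correct and follows essentially the same route as the paper: combine Abramov's formula $h(\mu_0^R)=h(\nu_0^R)/T_0^R$ with the defect bound $h_\Delta^R \le 5\eps(R)T_\Delta^R$ from Lemma~\ref{lem:hnusemi}, then exploit $T_\Delta^R \ge T^\infty_\Delta/2$ and the uniform bound $T_0^R \le 1/\eta(R_0,d)$ to extract a positive $\theta$ independent of $R$. Your algebraic rearrangement yielding $h(\mu_0^R)-h_* \ge T_\Delta^R(h_*-5\eps(R))/T_0^R$ is in fact a bit cleaner than the paper's chain of inequalities, but the ingredients and logic are identical.
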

   \begin{proof}
       For all  $R$ large enough,  $T^R_\Delta >  T^\infty_\Delta/2$ and 
       $$
       h(\mu_k) \frac{T_\Delta^\infty}{2T^R_0} - 6\eps(R) \frac{T_k^R}{T^R_0} > \theta
       $$ 
       for all large $k$ and some $\theta >0$. 

       Then for all large $k$, 
       \begin{align*}
           h(\mu_0^R) & = \frac{h(\nu_0^R)}{T^R_0} \\
                      & \geq \frac{h(\nu_k) - 5\eps(R) T_\Delta^R}{T_0^R} \\
                    & 
               = h(\mu_k) \frac{T_k^R}{T_0^R} - 5\eps(R) \frac{T^R_\Delta}{T_0^R} \\
               & \geq
               h(\mu_k) + h(\mu_k)\frac{T_k^R-T^R_0}{T^R_0} -5\eps(R) \frac{T^R_\Delta}{T_0^R} \\ 
               & > h(\mu_k) + h(\mu_k)\frac{T^\infty_\Delta}{2T^R_0}  -6\eps(R) \frac{T^\infty_\Delta}{T^R_0} \\
               & \geq h(\mu_k) + \theta,
           \end{align*}
           completing the proof.
       \end{proof}

    \begin{lemma} \label{lem:gammaE}
        Suppose $T_\Delta^\infty >0$ and
        let $R_0$ be given by Lemma~\ref{lem:limplus}. 
        If $E(\mu^{R_0}_0) < E_L$, either there is a hyperbolic light limit measure with free energy strictly greater than $E_L$ or there is a $\gamma >0$ such that $E(\mu_0^R) \leq E_L - \gamma$ for all $R \geq R_0$. 
   \end{lemma}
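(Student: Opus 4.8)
The plan is to argue by contradiction: suppose that no light limit measure has free energy strictly greater than $E_L$, and that there is no uniform gap $\gamma$, i.e.\ $\limsup_{R\to\infty} E(\mu_0^R) = E_L$ (recalling from Lemma~\ref{lem:TDne0} and the surrounding discussion that $E(\mu_0^R) \leq E_L$ need not hold automatically, but here we are in the case where it is assumed for $R_0$ and we want to propagate it with a gap). I would first pass to a subsequence of $R$'s along which $E(\mu_0^R) \to E_L$ and along which $T^R_\Delta \to T^\infty_\Delta > 0$ (using the bound \eqref{eq:deltabound} so everything stays in a compact range), and along which $\lambda_\Delta^R$, $h_\Delta^R$, $T^R_0$, $T^R_L$ all converge. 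The key identity is Lemma~\ref{lem:TDne0}:
\begin{equation*}
    \frac{T_0^R}{T_\Delta^R}\left(E_L - E(\mu_0^R)\right) = \frac{h_\Delta^R - t\lambda_\Delta^R}{T_\Delta^R} - E_L.
\end{equation*}
As $R \to \infty$ along the subsequence, the left-hand side tends to $0$ (since $E_L - E(\mu_0^R) \to 0$, $T_\Delta^R$ is bounded below by $T^\infty_\Delta/2$, and $T_0^R$ is bounded above by $1/\eta$). Hence $(h_\Delta^R - t\lambda_\Delta^R)/T_\Delta^R \to E_L$. But by Lemma~\ref{lem:hnusemi}, $h_\Delta^R \leq 5\eps(R) T_\Delta^R$, so $h_\Delta^R/T_\Delta^R \to 0$; and by \eqref{eq:lambdabound}, $0 \leq \lambda_\Delta^R \leq T_\Delta^R \log C$, so $\lambda_\Delta^R/T_\Delta^R$ is bounded. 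Therefore, passing to a further subsequence, $\lambda_\Delta^R/T_\Delta^R \to \ell$ for some $\ell \in [0, \log C]$, and we conclude $-t_0\ell = E_L$, i.e.\ $E_L = -t_0 \ell$.

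Next I would extract from the excess entropy produced by Lemma~\ref{lem:limplus} a genuine light limit measure that beats $E_L$, contradicting our assumption. Indeed, Lemma~\ref{lem:limplus} gives $h(\mu_0^R) \geq \lim_k h(\mu_k) + \theta$ for all large $R$, with $\theta$ uniform. Meanwhile $\int \log|Df_0|\, d\mu_0^R = \lambda_0^R/T_0^R$, and one checks $\lambda_0^R/T_0^R = \lim_k \lambda(\mu_k) - \lambda_\Delta^R/T_0^R \cdot (\text{correction})$; more precisely, since $\lambda(\mu_k) = \lambda_k^R/T_k^R$ and $\lambda_0^R = \lambda_L^R - \lambda_\Delta^R$, $T_0^R = T_L^R - T_\Delta^R$, one gets
\begin{equation*}
    \int \log|Df_0|\, d\mu_0^R = \frac{\lambda_L^R - \lambda_\Delta^R}{T_L^R - T_\Delta^R},
\end{equation*}
which as $R \to \infty$ (along the subsequence) tends to $\frac{\lambda_L^\infty - T^\infty_\Delta \ell}{T_L^\infty - T^\infty_\Delta}$ where $\lambda_L^\infty = \lim_k \lambda(\mu_k) \cdot T_L^\infty$. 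A routine manipulation then shows $E(\mu_0^R)$ can be written as a convex-type combination of $E_L$ and the "lost piece" $(h_\Delta^R - t\lambda_\Delta^R)/T_\Delta^R$; since the latter tends to $E_L$ and $h(\mu_0^R)$ carries the extra $\theta$ of entropy, a compactness argument (weak$^*$ limit of the $\mu_0^R$ along a subsequence, using Theorem~\ref{thm:usc} for upper-semicontinuity of entropy of the spread and the fact that $\log|Df_0|$ is bounded so the Lyapunov integral passes to the limit modulo the controlled loss) produces a light limit measure $\mu_*$ with $E(\mu_*) \geq E_L + \theta' > E_L$ for some $\theta' > 0$. This is the desired contradiction, so either such a beating measure exists after all, or the uniform gap $\gamma$ must hold.

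The main obstacle I anticipate is the last step: taking a weak$^*$ limit of the family $(\mu_0^R)_R$ and controlling simultaneously (i) the entropy — which only drops in the limit, so the extra $\theta$ from Lemma~\ref{lem:limplus} is genuinely helpful here, not an enemy — and (ii) the Lyapunov exponent, which may *also* drop in the weak$^*$ limit because $\log|Df_0|$ is only bounded above in absolute value but mass can escape near the critical set. The resolution should mirror the philosophy already used for the original sequence $(\mu_k)_k$: re-run the induced-map / spreading machinery one more level, so that any further drop in $\int \log|Df_0|$ along the $R$-limit is again matched by a drop in some inducing time and hence bounded by a multiple of $\eps(R)$, which is negligible. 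Alternatively, and perhaps more cleanly, one avoids taking the $R$-limit of measures altogether: one shows directly that for $R$ large the measure $\mu_0^R$ *itself* already satisfies $E(\mu_0^R) > E_L$ unless the gap holds, by feeding the identity of Lemma~\ref{lem:TDne0} the bounds $h_\Delta^R \leq 5\eps(R)T_\Delta^R$ and then noting that the assumption $E(\mu_0^{R}) < E_L$ forces $(h_\Delta^R - t\lambda_\Delta^R)/T_\Delta^R < E_L$, which combined with the extra entropy $\theta$ gives a contradiction once $\eps(R)$ is small enough — unless $E(\mu_0^R)$ stays a definite amount $\gamma$ below $E_L$. I would try this second route first, as it keeps everything at finite $R$ and only uses lemmas already proven.
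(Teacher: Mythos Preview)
Your approach is quite different from the paper's and contains a genuine gap. The paper's proof is short and structural: it exploits the fact that for $R \geq R_0$ the induced measures $\hat\nu_k^{R_0}$ and $\hat\nu_k^{R}$ are both normalised restrictions of the \emph{same} lifted measure $\hat\mu_k$, so on subsets of $X(R_0)$ one has $\nu_k^R = (T_k^R/T_k^{R_0})\nu_k^{R_0}$, and hence in the limit $\nu_0^R$ restricted to $Y^0(R_0)$ is a rescaling of $\nu_0^{R_0}$. Spreading, this yields a convex decomposition $\mu_0^R = \alpha\,\mu_0^{R_0} + (1-\alpha)\mu'$ with $\alpha = 1/T_L^{R_0}$ and $\mu'$ itself a light limit measure. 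Then either $E(\mu') > E_L$ (first alternative), or $E(\mu') \leq E_L$, and linearity of $E$ immediately gives $E(\mu_0^R) \leq \alpha(E_L - \gamma') + (1-\alpha)E_L = E_L - \gamma'/T_L^{R_0}$, uniformly in $R$. That is the whole argument; no asymptotics in $R$ are needed.

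Your route, by contrast, tries to pass to the limit $R\to\infty$ and extract a light limit measure from a weak$^*$ accumulation point of the $\mu_0^R$. There are two problems. First, you have a sign error: from the identity of Lemma~\ref{lem:TDne0}, the assumption $E(\mu_0^R) < E_L$ forces $(h_\Delta^R - t\lambda_\Delta^R)/T_\Delta^R > E_L$, not $< E_L$ as you write. Second, and more seriously, the step ``combined with the extra entropy $\theta$ gives a contradiction'' is not justified. The surplus entropy $h(\mu_0^R) \geq \lim_k h(\mu_k) + \theta$ from Lemma~\ref{lem:limplus} says nothing on its own about $E(\mu_0^R)$, since it can be (and is) exactly offset by a shift in the Lyapunov exponent; indeed $E(\mu_0^R)\to E_L$ is perfectly consistent with a persistent entropy surplus. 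Your first route then asks for a weak$^*$ limit $\mu_*$ of the $\mu_0^R$ with $E(\mu_*) > E_L$, but for $t_0>0$ the free energy $h - t_0\lambda$ is a sum of an upper-semicontinuous term and a lower-semicontinuous term, so you cannot conclude $E(\mu_*) \geq \limsup E(\mu_0^R)$, let alone strict inequality. You correctly flag this as the main obstacle, but the proposed resolution (re-running the induced-map machinery on the sequence $(\mu_0^R)_R$) would require redoing essentially the entire Section~\ref{sec:SSSS} argument for a new sequence, and it is not clear the hypotheses (uniform positive entropy, decreasing critical relations, convergence of induced maps) would be available. The paper sidesteps all of this with the nesting decomposition, which you should use instead.
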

    \begin{proof}
        Suppose $E(\mu^{R_0}_0) = E_L - \gamma'$ for some $\gamma' >0$. 
        Let $R \geq R_0$ and consider the measures $\nu_k^{R}$. 
        Note that $\hat \nu_k^{R_0}, \hat \nu_k^{R}$ are constructed in the Hofbauer extension as normalised restrictions of $\hat \mu_k$. Thus for subsets of $X(R_0)$, 
        $$\nu_k^{R} = \frac{T_k^{R}}{T_k^{R_0}} \nu_k^{R_0},$$ so on subsets of $Y_0$, 
   $$
   \nu_0^{R} = \frac{T_L^{R}}{T_L^{R_0}} \nu_0^{R_0}.$$ 
   Thus we can write $\mu_0^{R} = (1/T_L^{R_0})\mu_0^{R_0} + (1-1/T_L^{R_0})\mu'$, say, where $\mu'$ is a hyperbolic (positive)
   $f_0$-invariant probability measure (not necessarily different from $\mu_0^{R_0}$). In particular, $\mu^{R}$ and $\mu'$ are light limit measures. Either   
    $E(\mu') > E_L$ or, since $E$ is linear, 
    \begin{align*}
        E(\mu^{R}_0) & \leq \frac{E_L - \gamma'}{T_L^{R_0}} + \left(1-\frac{1}{T_L^{R_0}}\right) E_L\\
                     & \leq E_L -  \gamma'/T_L^{R_0}.
    \end{align*}
    Setting $\gamma:= \gamma'/T_L^{R_0}$ completes the proof.
   \end{proof}

   \begin{lemma} \label{lem:contra}
       Suppose $E(\mu^R_0) \leq E_L - \gamma$ for all $R\geq R_0$, for some $R_0, \gamma >0$. Then for sufficiently large $R$ and $k$, there exists   $\mu \in \M_{f_k}$ supported on a periodic orbit, with $E(\mu) > 
       E_L + \eta(R_0,d) \gamma/3$.
   \end{lemma}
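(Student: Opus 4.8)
The plan is to feed the hypothesis into the identity of Lemma~\ref{lem:TDne0} and then realise the resulting excess free energy by a periodic orbit living essentially on the high-return branches, via the shadowing of~\S\ref{sec:rarereturns}. First note the hypothesis forces $T_\Delta^R>0$ for every $R\ge R_0$: were $T_\Delta^R=0$, then $\lambda_\Delta^R=0$ by~\eqref{eq:lambdabound} and $h_\Delta^R\le 5\eps(R)T_\Delta^R=0$ by Lemma~\ref{lem:hnusemi}, giving $E_L-E(\mu_0^R)=h_\Delta^R/T_L^R\le0$, contrary to $E(\mu_0^R)\le E_L-\gamma$. So Lemma~\ref{lem:TDne0} applies; using $T_0^R\ge1$, $T_\Delta^R\le 1/\eta(R_0,d)$ (by~\eqref{eq:deltabound}) and $E_L-E(\mu_0^R)\ge\gamma$ it yields $\tfrac{h_\Delta^R-t_0\lambda_\Delta^R}{T_\Delta^R}\ge E_L+\eta(R_0,d)\gamma$, whence, since $h_\Delta^R/T_\Delta^R\le 5\eps(R)$,
\[
-\,t_0\,\frac{\lambda_\Delta^R}{T_\Delta^R}\ \ge\ E_L+\eta(R_0,d)\gamma-5\eps(R).
\]
I would then fix $R$ so large that $5\eps(R)<\tfrac13\eta(R_0,d)\gamma$; the remaining slack is there to absorb the approximation errors below.

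Next I would realise $-t_0\lambda_\Delta^R/T_\Delta^R$ dynamically. Fix this $R$, a large cutoff $K$, and a large $k$. Since $\mu_k$ is ergodic, so is its Hofbauer lift, hence $\nu_k^R$, hence the first-return measure $\sigma_k$ of $F_k$ to $V_K^k=\{\tau_k\ge K\}$; let $G_k$ be that first-return map and choose $x\in V_K^k$ recurrent and $\sigma_k$-generic. By Lemma~\ref{lem:shadow} there are arbitrarily large $n$ and an $n$-periodic point $w=w_n$ of $F_k$ whose $n$ high returns land in the same elements of $\xi_k(K)$ as those of $x$, with each interspersed low string replaced by a quick word of total $f_k$-length at most $M$ (the uniform constant of~\S\ref{sec:rarereturns}). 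The equidistribution $\mu_{w_n}$ is ergodic with zero entropy, so $E(f_k,\mu_{w_n},-t_k\log|Df_k|)=-t_k\lambda(\mu_{w_n})=-t_k\,P_n^{-1}\log|Df_k^{P_n}(\pi w_n)|$, with $P_n$ the $f_k$-period. Writing $S_n:=\sum_{j<n}\log|DF_k|(G_k^jx)$ and $R_n:=\sum_{j<n}\tau_k(G_k^jx)\ge nK$, branch-matching plus bounded distortion (Lemma~\ref{lem:distn}) give $\log|Df_k^{P_n}(\pi w_n)|=S_n+O(n)$, and the quick words give $P_n=R_n+O(nM)$, all implied constants uniform (Lemma~\ref{lem:distn}, $\sup|Df_k|<\infty$). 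Hence $\lambda(\mu_{w_n})$ differs by $O(1/K)$ from $(S_n/n)\big/(R_n/n)$, which by Birkhoff tends as $n\to\infty$ to $\big(\int_{\{\tau_k\ge K\}}\log|DF_k|\,d\nu_k^R\big)\big/\big(\int_{\{\tau_k\ge K\}}\tau_k\,d\nu_k^R\big)$; letting $k\to\infty$ and then $K\to\infty$ this ratio tends to $\lambda_\Delta^R/T_\Delta^R$, by the definitions of $\lambda_\Delta^R,T_\Delta^R$ together with~\eqref{eq:t00},~\eqref{eq:lambdabound} and low-branch convergence. So, choosing $K$ then $k$ then $n$ large enough (and using $t_k\to t_0$), $E(f_k,\mu_{w_n},-t_k\log|Df_k|)=-t_k\lambda(\mu_{w_n})$ lies within $\tfrac13\eta(R_0,d)\gamma$ of $-t_0\lambda_\Delta^R/T_\Delta^R\ge E_L+\eta(R_0,d)\gamma-5\eps(R)>E_L+\tfrac23\eta(R_0,d)\gamma$, hence exceeds $E_L+\tfrac13\eta(R_0,d)\gamma$.

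It remains to check $\mu_{w_n}\in\M_{f_k}$ is supported on a \emph{repelling} periodic orbit. Since $w_n$ has large $\hat F_k$-period $n$, Lemma~\ref{lem:expandingscheme} (its $N$ uniform because the Koebe space $\kappa$ is, by Lemma~\ref{lem:kappaPk}) and the uniform lower bound on $|D\hat F_k|$ from Lemma~\ref{lem:distn} give $|Df_k^{P_n}(\pi w_n)|=|D\hat F_k^n(w_n)|>1$ for $n$ large; as the $f_k$-period of $\pi(w_n)$ divides $P_n$, the orbit is repelling and $\lambda(\mu_{w_n})>0$. Taking $\mu:=\mu_{w_n}$ finishes the proof. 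I expect the main obstacle to be the bookkeeping in the second paragraph: organising the limits ($R$, then $K$, then $k$, with $n$ internal) so that $\lambda_\Delta^R,T_\Delta^R$ emerge cleanly from the Birkhoff averages along the shadowed orbit, and so that the distortion and quick-word corrections, which are only $O(n)$ in absolute size, are genuinely dominated by $P_n\ge nK$ — this is exactly where the uniformity of $M$, of $\kappa$, and of the distortion bound $C$ (from~\S\S\ref{sec:rarereturns}--\ref{sec:NSD}) is indispensable.
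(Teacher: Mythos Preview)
Your proposal is correct and follows essentially the same route as the paper: feed the gap $E_L-E(\mu_0^R)\ge\gamma$ into Lemma~\ref{lem:TDne0} (together with $T_0\ge1$, $T_\Delta\le1/\eta(R_0,d)$, and $h_\Delta\le5\eps(R)T_\Delta$) to obtain $-t\lambda_\Delta/T_\Delta\ge E_L+\eta(R_0,d)\gamma-5\eps(R)$, choose $R$ to absorb the $5\eps(R)$ term, and then realise $\lambda_\Delta/T_\Delta$ as the Lyapunov exponent of a periodic orbit via the shadowing of Lemma~\ref{lem:shadow} and Birkhoff averages, with the $O(n)$ distortion and $O(nM)$ quick-word corrections swallowed by $P_n\ge nK$. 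The paper's proof is organised identically (with slightly different constants and the error packaged as a single $\kappa(K,k,n)$), and your added verification that the periodic orbit is repelling, via Lemma~\ref{lem:expandingscheme}, makes explicit a point the paper leaves implicit.
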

   \begin{proof}
       From Lemma~\ref{lem:lim0} it follows that $T_\Delta^\infty >0$. 
   Let us fix $R \geq R_0$ large enough that $\eps :=\eps(R) < \gamma \eta(R_0, d)/10$.  
   For this $R$, we have $X^k, Y^0, F_k, \tau_k, \nu_k, T_k$ for each $k$. We write $T_L, T_\Delta, h_L, \lambda_L$ for $T_L^R, T_\Delta^R, h_L^R, \lambda_L^R$. 
   Set $h_\Delta := h_L - h(\nu_0)$ and 
     $\lambda_\Delta := \lambda_L - \lambda(\nu_0)$. Let $\mu_0$ denote the projection of $\nu_0$.

     By Lemma~\ref{lem:TDne0}, 
    $$ E_L - E(\mu_0) 
    = \frac {1}{T_0}\left( (h_\Delta -t \lambda_\Delta) - T_\Delta E_L \right) \geq \gamma.
    $$
    Noting $T_0 \geq 1$,
    $$
        -t\lambda_\Delta /T_\Delta \geq E_L + \gamma/T_\Delta - h_\Delta/T_\Delta.
        $$
    By~\eqref{eq:Kac1}, $1/T_\Delta \geq \eta(R_0,d)$. 
    By Lemma~\ref{lem:hnusemi}, $h_\Delta \leq 5\eps T_\Delta$. These estimates, combined with  choice of $R$, imply
    \begin{equation}\label{eq:limits}
        -t\lambda_\Delta /T_\Delta \geq E_L + \eta(R_0,d) \gamma/2.
    \end{equation}

   As previously, let $\xi_k$ denote the collection of branches of $F_k$.  
   Let $$\xi_k^N : = \bigvee_{j=0}^{N-1} F_k^{-j} \xi_k.$$ 
   Let $\xi_k(K)$ denote the elements of $\xi_k$ on which $\tau_k \geq K$ 
        and set $$V^k_K := \bigcup_{Z \in \xi_k(K)} Z.$$
        Let $x \in V^k_K$ be a typical point for the measure $\nu_k$ (we know $T_\Delta^R$ is positive, so $V^k_K$ has positive measure), so $x$ is recurrent. 
        Now we can speed up the map $F_k$ a little, and define $G$, the first return map (for $F_k$) to $V_K^k$. 
        Let $n, w$ be given by Lemma~\ref{lem:shadow}, so for some arbitrarily large $n$, $w$ is an $n$-periodic point (for $G$) and $G^j(w), G^j(x)$ lie in the same element of $\xi_k(K)$ for $j=0, \ldots, n$. 
        %The points $w$ will still be periodic for $G$. 
        We wish to estimate the Lyapunov exponent of $w$. 
        Let $e(n)$ denote the $n^\mathrm{th}$ return time of $x$ to $V^k_K$ under $F$, so $F^{e(j)}(x)$ and $G^j(w)$ are in the same element of $\xi_k(K)$, so $\tau_k(G^j(w)) = \tau_k(F^{e(j)}(x))$. 
        %By Birkhoff's theorem for $\eps>0$ for all large $n$, $$\frac{n}{e(n)} \leq  \nu_k(V^k_K)+\eps \leq (T_L + 1)/K+\eps,$$ which tends to $0$ as $K \to \infty$. 
        By Birkhoff's theorem, for large $n$, $$\frac{n}{e(n)} \leq  2\nu_k(V^k_K) \leq 2(T_L + 1)/K$$ 
        and the last term tends to $0$ as $K \to \infty$. 
        Let  $$g(y) := \sum_{i=0}^{e_1(y) -1} \tau_k(F_k^i(y)).$$
        If $\tau_k(F_k(G^j(w))) \geq K$, then $\tau_k$ and $g$ coincide at $G^j(w)$. 
        If $\tau_k(F_k(G^j(w))) < K$, then $g(G^j(w)) = \tau_k(G^j(w)) + g(F_k(G^j(w)))$. Consequently, by Lemma~\ref{lem:shadow}, 
        $$  
        K \leq 
        \tau_k(G^j(w)) \leq 
        g(G^j(w)) \leq 
        \tau_k(G^j(w)) + M.$$ 

        We then obtain the following estimates, using $\chi$ to denote the indicator function of a set. Recall that $\lim_{K\to \infty}\lim_{k\to\infty} \int_{V^k_K} \tau_k ~d\nu_k =  T_\Delta.$ 
        In what follows, $\kappa(K,k,n)$ may vary from line to line, but it will satisfy $$\lim_{K\to \infty} \lim_{k \to \infty} \lim_{n\to \infty} \kappa(K,k,n) = 0.$$  
        \begin{equation*}
            \begin{split}
            \sum_{j=0}^{n-1} g(G^j(w))
            & = (1 + \kappa(K,k,n)) \sum_{j=0}^{n-1} \tau_k(G^j(w)) \\
            & =  (1 + \kappa(K,k,n)) \sum_{j=0}^{e(n)-1} \chi_{V^k_K}(F_k^{j}(x)) \tau_k(F_k^j(x))\\
            & = e(n) \left( \kappa(K,k,n) + \int_{V^k_K} \tau_k ~d\nu_k \right) \\
            & = e(n)\left( \kappa(K,k,n) + T_\Delta \right).
        \end{split}
    \end{equation*}
                
    To estimate the derivative, note that distortion is uniformly bounded and that the derivative  corresponding to quick words is uniformly bounded below away from zero (by Lemma~\ref{lem:distn}), while it is bounded above by $\sup_k\sup|Df_k|^M$. Thus
        \begin{equation*}
            \begin{split}
            \log |DG^n(w)|  
            & = (1 + \kappa(K,k,n)) \sum_{j=0}^{e(n)-1} \chi_{V^k_K}(F_k^j(x)) \log |DF_k(F_k^{j}(x))| \\
            & =  e(n) \left( \kappa(K,k,n) + \int_{V^k_K} \log |DF_k| ~d\nu_k \right) \\
            & =  e(n)\left( \kappa(K,k,n) + \lambda_\Delta \right).
        \end{split}
    \end{equation*}
        
    Dividing, the Lyapunov exponent for the $f_k$-invariant equidistribution, denoted $\mu_w$ say, along the periodic orbit of $\pi_1 w \in I$ is $\lambda_\Delta /T_\Delta + \kappa(K,k,n)$. 
               The free energy of the equidistribution satisfies,
               using \eqref{eq:limits} and~\eqref{eq:hT1},
               \begin{align*}
                   E(\mu_w) & = -t\lambda_\Delta /T_\Delta + \kappa(K,k,n) \\
                            & \geq E_L + \eta(R_0,d) \gamma/2 + \kappa(K,k,n) \\
                            & > E_L + \eta(R_0,d) \gamma/3.
               \end{align*}
               Thus for each $k$ large enough, there is a $f_k$-invariant measure $\mu^k$ ($=\mu_w$, for some $K,k,n$), supported on a periodic orbit, with $E(f_k, \mu^k) > E_L + \eta(R_0,d) \gamma/3$. 
               This completes the proof of Lemma~\ref{lem:contra}.
           \end{proof}
          Let us now complete the proof of Proposition~\ref{prop:SSSS}.
               In the case $T^\infty_\Delta = 0$, just apply Lemma~\ref{lem:lim0}: the second alternative of Theorem~\ref{thm:SSSS} holds. 
               If $T^\infty_\Delta > 0$, by Lemma~\ref{lem:limplus}, the light limit measure $\mu^{R_0}_0$ has entropy strictly greater than $\lim_{k\to \infty} h(\mu_k)$. If this measure has free energy at least $E_L$, the third alternative of the proposition holds, or if some other hyperbolic light limit measure has free energy $> E_L$ the first alternative of the proposition holds. 
               If these alternatives do not hold, Lemma~\ref{lem:gammaE} gives a $\gamma > 0$ for which $E(\mu^R_0) \leq E_L - \gamma$ for all $R \geq R_0$. We then derive a contradiction via 
               Lemma~\ref{lem:contra} and 
               the hypothesis  $\liminf_{k\to \infty} E_+(f_k) \leq E_L$.  
               \end{proof}

           \chapter{Katok theory, pressure and exponential tails} \label{chap:Kat}
           \section{Katok theory} \label{sec:katok}
               An idea due, we believe, to Katok is that one can approximate measures with non-zero Lyapunov exponents by measures on hyperbolic sets with similar entropy and exponents, see~\cite[\S11.6]{PrzUrb10} for this in the complex dynamics setting.
               
               \begin{lemma}\label{lem:Katok1}
                   Given $f\in\F\in\F_\mathrm{NSD}$, suppose $\mu \in \M_f$ has positive entropy and (therefore) positive Lyapunov exponent. Then there exists a sequence $(\Lambda_n)_{n\geq1}$ with the following properties. 
                   Each $\Lambda_n$ is a forward-invariant, compact, hyperbolic subset of $I$ containing periodic points. 
                   If $(\mu_n)_{n\geq1}$ is any sequence of measures $\mu_n \in \M_f$ with support $\Lambda_n$, then 
                       as $n \to \infty$, 
                   %\begin{align*}
                       \[
                           \mu_n  \to \mu \quad \text{ and } 
                       \lambda(\mu_n)  \to \lambda(\mu). 
                       \]
                   %\end{align*}
                   Moreover, if one takes as $\mu_n$ the measure of maximal entropy for $f_{|\Lambda_n}$, then $h(\mu_n) \to h(\mu)$. 
               \end{lemma}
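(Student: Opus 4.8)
The idea is to carry out the classical Katok horseshoe construction, but inside the Hofbauer extension, where the level-$R$ induced maps supply the uniform expansion and distortion control (Lemmas~\ref{lem:distn} and~\ref{lem:expandingscheme}) that is unavailable on the interval itself; one then spreads the resulting uniformly hyperbolic Cantor set back down to $I$.

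First I would pass to the induced picture. Since $\mu$ is ergodic with positive entropy, Corollary~\ref{cor:ple} gives $\lambda(\mu)>0$, and Lemma~\ref{lem:entropybranches} lifts $\mu$ to an ergodic $\hat f$-invariant $\hat\mu$ on $\hat I$ with $h(\hat\mu)=h(\mu)$. Fix $R\geq 8d$ (large, eventually sent to infinity); by Lemma~\ref{lem:Rtower2} and Corollary~\ref{cor:Rtowerprim} the measure $\hat\mu$ assigns positive mass to a single primitive component $\Y$ of $\hat X(R)$, so $\hat\nu:=\hat\mu|_\Y/\hat\mu(\Y)$ is an ergodic $\hat F_\Y$-invariant probability measure which spreads (Definition~\ref{dfn:spread}) back to $\mu$. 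By Kac's lemma $T:=\int\tau\,d\hat\nu=1/\hat\mu(\Y)<\infty$, and Abramov's formula~\eqref{eq:Abe}--\eqref{eq:AbeDf} gives $h(\hat\nu)=T\,h(\mu)$ and $\lambda_\Y:=\int\log|D\hat F_\Y|\,d\hat\nu=T\,\lambda(\mu)$. Recall that $\hat F_\Y$ has finitely many connected components with the mixing property of Lemma~\ref{lem:primi4}\ref{enum:cc3}, uniformly bounded distortion (Lemma~\ref{lem:distn}) and some uniformly expanding iterate (Lemma~\ref{lem:expandingscheme}).

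Next I would extract a large family of $\mu$-typical cylinders and glue them into a full shift. Let $\xi$ be the (generating, finite-entropy) partition of $\Y$ into branches of $\hat F_\Y$ and $\xi^m:=\bigvee_{i=0}^{m-1}\hat F_\Y^{-i}\xi$. Fix $\delta>0$. By Shannon--McMillan--Breiman, Birkhoff's theorem and Egorov's theorem applied to $(\Y,\hat F_\Y,\hat\nu)$, for all large $m$ there is $A_m\subset\xi^m$ with $\hat\nu(\bigcup A_m)\geq 1-\delta$ such that every $Z\in A_m$ has $\hat\nu(Z)=e^{-m(h(\hat\nu)+O(\delta))}$, $m$-th return time $\rho_m|_Z=m(T+O(\delta))$, $|D\hat F_\Y^m|_Z=e^{m\lambda_\Y+O(m\delta)}$ with distortion $\leq C$ (this last statement valid on \emph{all} of $Z$, by Lemma~\ref{lem:distn}), and orbit-segment empirical measure within $\delta$ of $\hat\mu$; hence $(1-\delta)e^{m(h(\hat\nu)-\delta)}\le\#A_m\le e^{m(h(\hat\nu)+\delta)}$. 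Grouping by the finitely many components of $\Y$ and using the mixing constant $N_\Y$ of Lemma~\ref{lem:primi4}, I replace each $Z$ in the largest group $\mathcal C_m$ by a sub-branch $Z^*\subset Z$ of combinatorial length $\le m+N_\Y$, contained in a fixed component $Y^*$ and mapped homeomorphically by an iterate of $\hat F_\Y$ onto $Y^*$, with all estimates intact up to bounded errors. The set $\hat\Lambda_m$ of points of $Y^*$ whose composed-return orbit stays in $\bigcup_{Z\in\mathcal C_m}Z^*$ forever is then a compact $\hat F_\Y$-invariant Cantor set on which the composed first-return map is topologically conjugate to the full shift on $\#\mathcal C_m$ symbols; it is uniformly expanding and contains periodic points. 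Setting $\Lambda_n:=\pi\bigl(\bigcup_{i=0}^{\tau^{\max}_m-1}\hat f^i(\hat\Lambda_m)\bigr)$, a finite union of compact sets, yields a compact, forward-$f$-invariant set containing periodic points, hyperbolic since on it every orbit has Lyapunov exponent $\ge(m\lambda_\Y+O(m\delta+1))/(mT+O(m\delta+1))\approx\lambda(\mu)>0$ uniformly; one then lets $n\mapsto(m_n,\delta_n)\to(\infty,0)$.

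Finally I would verify the convergence statements. Any $f|_{\Lambda_n}$-invariant $\mu_n$ lifts through $\pi$ (as $f|_{\Lambda_n}$ is a topological factor of $\hat f$ on $\bigcup_i\hat f^i(\hat\Lambda_m)$; cf.\ Lemma~\ref{lem:meetup}) to an $\hat f$-invariant measure whose normalised restriction $\hat\sigma$ to the domain of the composed return map is invariant for it and spreads to $\mu_n$. Then $\lambda(\mu_n)=\int\log|D\hat F^{\mathrm{comp}}|\,d\hat\sigma\big/\int\rho^{\mathrm{comp}}\,d\hat\sigma$, and the distortion-controlled per-branch estimates put the numerator in $[m\lambda_\Y-O(m\delta+1),\,m\lambda_\Y+O(m\delta+1)]$ and the denominator in $[mT-O(m\delta+1),\,mT+O(m\delta+1)]$, so $\lambda(\mu_n)\to\lambda_\Y/T=\lambda(\mu)$; likewise $h(\mu_n)=h(\hat\sigma)/\int\rho^{\mathrm{comp}}\,d\hat\sigma$ lies between $(m(h(\hat\nu)-\delta)+O(1))/(mT+O(m\delta+1))$, nearly attained by the uniform Bernoulli measure, and $(m(h(\hat\nu)+\delta))/(mT-O(m\delta+1))$, whence the measure of maximal entropy satisfies $h(\mu_n)\to h(\hat\nu)/T=h(\mu)$. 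For weak-$*$ convergence $\mu_n\to\mu$ one writes, for $g\in C(I)$, $\int g\,d\mu_n=\int\bigl(\sum_{j<\rho^{\mathrm{comp}}}g\circ\pi\circ\hat f^j\bigr)d\hat\sigma\big/\int\rho^{\mathrm{comp}}\,d\hat\sigma$ and must show each per-branch Birkhoff sum equals $\rho^{\mathrm{comp}}\int g\,d\mu+o(m)$ \emph{uniformly over the branch}, not merely at the reference point that generated it. I expect this to be the main obstacle: it is the standard difficulty that, within a deep cylinder, orbits shadow the reference orbit only up to a scale that degrades near the end of the block. The remedy I would follow is to impose on $A_m$ the additional requirement that few of the $m$ intermediate return times $\tau_i$ exceed a slowly growing threshold (the $\hat\nu$-typical points with this property still have measure $\to1$), thereby reducing to observables locally constant on cylinders of fixed depth, and then to combine uniform shrinking of cylinders (Lemma~\ref{lem:expandingscheme}) with bounded distortion to bound the contribution of the bounded-depth ``tail'' of each block, the exceptional pieces carrying total weight $o(m)$. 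Sending $\delta_n\to0$ and $m_n\to\infty$ then gives $\mu_n\to\mu$ for every sequence $\mu_n$ supported on $\Lambda_n$, completing the proof.
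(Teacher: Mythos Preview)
Your approach is essentially the same as the paper's---Katok horseshoe construction carried out for an induced map in the Hofbauer extension, using Shannon--McMillan--Breiman and Birkhoff to select good cylinders, then spreading the resulting Cantor set down to $I$---and it is correct.

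Two points of comparison are worth noting. First, the paper induces to a \emph{single} non-boundary $R$-cylinder $\hat Z$ with $\hat\mu(\hat Z)>0$ rather than to the whole primitive component $\Y$. The first return map $\hat G$ to $\hat Z$ is then automatically full-branched onto $\hat Z$, so no gluing via the mixing constant $N_\Y$ is needed and the horseshoe is obtained directly; this is a worthwhile simplification over your route. Second, you correctly flag the uniformity of the Birkhoff sums over each selected cylinder as the main obstacle. The paper dispatches this in a single remark (``to obtain uniformity, \eqref{eqn:Qn3} relies on some iterate of $G$ being expanding and on Lipschitz continuity of the functions $\psi_i$''), the point being that for $x,y$ in an $n$-cylinder one controls $\sum_{m<n}|\Psi_i(\hat G^m x)-\Psi_i(\hat G^m y)|$ via the exponential contraction $|\hat G^m x-\hat G^m y|\lesssim\lambda^{-(n-m)}$ together with the local Lipschitz bound $|\Psi_i(z)-\Psi_i(z')|\lesssim\tau(z)|z-z'|$ coming from bounded distortion. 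Your more explicit remedy---additionally requiring via Birkhoff that $\sum_{m:\tau_m>K}\tau_m<\delta n$ on the selected cylinders---makes this step watertight and is exactly what is needed to turn the paper's one-line justification into a complete estimate; it yields a per-block error $O_K(1)+O(\delta n)=o(e_n)$, which suffices.
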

               
                   \begin{proof}
               To show convergence of measures, let $\psi_1,\psi_2, \ldots$ be a sequence of Lipschitz continuous real-valued functions on $I$. It suffices to show that, given $N\geq 1$ and $\alpha>0$, Birkhoff averages of $\psi_i$ along orbits in $\Lambda_n$ approximate $\int \psi_i \,d\mu$ to within $\alpha$ for all $i \leq N$ and all large $n$. 

                       For some $R\geq 1$, there exists, in the Hofbauer extension, a non-boundary element  $\hat Z$ of $\hat\P_R$  with $\theta := \hat \mu(\hat Z) > 0$. 

                       Let $\hat G$ denote the first return map to $\hat Z$ with $n^\mathrm{th}$ return time $e_n$. By Lemma~\ref{lem:expandingscheme}, $|D\hat G^n| > 2$ for all large $n$. The map $\hat G$ has inducing time $\tau$ with $\int_{\hat Z} \tau ~d\hat \mu = 1$,  
               while the probability measure $\hat \nu := \theta^{-1}\hat \mu  $ 
               is $\hat G$-invariant with entropy $h(\hat \nu) = h(\mu)/\theta$ (recall~\eqref{eq:AbeDf}). 

               Let $\Q = \Q_1$ denote the branch partition of $\hat G$, that is, the collection of connected components of the domain of $\hat G$.  It is generating and has entropy equal to $h(\hat \nu)$. Denote by $\Q_n$ the join $\bigvee_{j=1}^n \hat G^{-j+1} \Q_1$ and by $\Q_n[\hat x]$ the element of $\Q_n$ containing $\hat x$. 

               For almost every $\hat x$ in $\hat Z$, as $n \to \infty$, by Shannon-MacMillan-Breiman and Birkhoff's theorems, for $1 \leq i \leq N$,
               \begin{align*}
                   \frac{-1}{n} \log \hat \nu(\Q_n[\hat x]) & \to h(\hat \nu),\\
                   e_n(\hat x)/n & \to \theta^{-1},\\
               \frac{1}{n} \log |D\hat G^n(\hat x)| & \to \lambda(\mu)/\theta,\\
               \lim_{k\to\infty} \frac1k \sum_{j=0}^{k-1} \psi_i \circ \pi \circ \hat f^j(\hat x) &= \int \psi_i \, d\mu.
               \end{align*}
           Since a set of measure $\alpha$, say, cannot be covered by fewer than  $\alpha\eps^{-1}$ sets of measure $\leq \eps$, we obtain the following, using little-$o$ notation. 

           There is a finite collection $\Q_n'$ of $\exp(n h(\hat \nu)(1 + o(1)))$ elements of $\Q_n$ on which
               \begin{eqnarray}
                   \label{eqn:Qn1}
                   e_n &  =& \frac n\theta (1+o(1)),\\ 
                   \label{eqn:Qn2}
                   |D\hat G^n| & =& \exp\left(\frac{n}{\theta} \lambda(\mu)(1+o(1)) \right),\\ 
                  \frac1{e_n} \sum_{j=0}^{e_n-1} \psi_i \circ \pi \circ \hat f^j &=& (1+o(1))\int \psi_i \, d\mu. \label{eqn:Qn3}
               \end{eqnarray}
               Note that to obtain uniformity,~\eqref{eqn:Qn3} relies on some iterate of $G$ being expanding and on Lipschitz continuity of the functions $\psi_i$. 

               Let $\hat Y_n := \cup_{\hat V \in \Q_n'} \hat V$.
               Set 
               $$\hat \Lambda'_n := \bigcap_{k\geq0} \overline{\hat G^{-kn}(\hat Y_n)} = \bigcap_{k\geq 0} \hat G^{-kn}(\hat Y_n) ,$$
               the latter equality holding because $\overline{\hat Y_n} \subset \hat Z$ and $\hat G^n$ maps components of $\hat Y_n$ onto $\hat Z$. 
               Then all iterates of $\hat G^n$ are defined on $\hat \Lambda'_n$ and $\hat \Lambda'_n$ is a forward-invariant set under $\hat G_n$, while $e_n$ is bounded. Since we only selected a finite number of branches, $\hat \Lambda'_n$ is compact. Thus
               $$\Lambda_n := \bigcup_{k\geq 0} f^k(\pi (\Lambda'_n))$$
               is a forward-invariant, compact hyperbolic  subset of $I$ 
               (also known as a  \emph{conformal expanding repeller}).
               %\index[def]{Repeller (expanding conformal)} 
               The fixed points of $\hat G^n$ in $\hat \Lambda_n'$ correspond to periodic points for $f$ in $\Lambda_n$. 

               Every point in $\Lambda_n$ enters $\pi (\Lambda'_n)$ infinitely often. Thus for all $x \in \Lambda_n$, by~\eqref{eqn:Qn2}-\eqref{eqn:Qn3}, for $k \geq n^2$, say,
               \begin{eqnarray*}
                    \frac1k \log |Df^k(x)| &= (1+o(1)) \lambda(\mu), \\
                  \frac1k \sum_{j=0}^{k-1} \psi_i \circ f^j(x) &= (1+o(1))\int \psi_i \, d\mu. 
               \end{eqnarray*}
               In particular, any measure 
                $\mu_n$  supported on $\Lambda_n$  has Lyapunov exponent $$\lambda(\mu)(1+o(1)),$$ and for any sequence  $(\Lambda_n, \mu_n)_n$, $\mu_n$ converges to $\mu$. 

               The topological entropy of  $\hat G^n : \hat Y_n \to \hat Z$ is 
               $$\log \# Q_n' = n h(\hat \nu)(1 + o(1)).$$
               The average inducing time for any measure $\nu_n$ on $\hat \Lambda'_n$ is 
               $$T_n := \int_{\hat Y_n} e_n ~d\nu_n = \frac{n}{\theta}(1+o(1)). $$
               Consequently (use \eqref{eq:Abe} in both directions),  the measure of maximal entropy for $f$ restricted to $\Lambda_n$ has entropy
               $$n h(\hat \nu)(1+o(1)) / T_n = \theta h(\hat \nu)(1+o(1))= h(\mu)(1+o(1))$$
               as required. 
           \end{proof}

               \begin{corollary}\label{cor:Katok2}
                   Suppose $f\in\F\in\F_\mathrm{NSD}$,  $\eps >0$, and $\mu \in \M_f$ has positive entropy. For each $g$ sufficiently close to $f$ in $\F$, there is a measure $\mu_g \in \M_g$ with 
                   $$|h(\mu_g) - h(\mu)| < \eps$$ 
                   and 
                   $$|\lambda(\mu_g) - \lambda(\mu)| < \eps.$$ 
                   In particular, $\liminf_{g\to f} P(g, -t\log|Dg|) \geq E(f,\mu, -t\log|Df|)$. 
               \end{corollary}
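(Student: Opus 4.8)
The plan is to deduce the corollary from Lemma~\ref{lem:Katok1} by a finite-approximation argument, exploiting that the hyperbolic sets $\Lambda_n$ produced there are \emph{robust} under perturbation. First I would fix $\mu \in \M_f$ of positive entropy. By Corollary~\ref{cor:ple} (or directly, since positive-entropy measures in $\M_f$ are not supported on $\hat\K$), $\mu$ has positive Lyapunov exponent, so Lemma~\ref{lem:Katok1} applies and yields a sequence $(\Lambda_n)_n$ of forward-invariant, compact, hyperbolic subsets of $I$ containing periodic points, with the property that the measure of maximal entropy $\mu_n$ on $f|_{\Lambda_n}$ satisfies $h(\mu_n)\to h(\mu)$ and $\lambda(\mu_n)\to \lambda(\mu)$. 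Fix $n$ large enough that $|h(\mu_n)-h(\mu)|<\eps/2$ and $|\lambda(\mu_n)-\lambda(\mu)|<\eps/2$.

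Next I would use structural stability of the conformal expanding repeller $\Lambda_n$. Since $\Lambda_n$ is built (in the proof of Lemma~\ref{lem:Katok1}) from a \emph{finite} collection $\Q_n'$ of branches of an induced map $\hat G^n$ with $|D\hat G^n|>2$, the relevant dynamics is encoded by finitely many cylinders in finitely many domains of the Hofbauer extension, all of which persist under small perturbations of $f$ in $\F$: by Lemma~\ref{lem:cylconv} and Lemma~\ref{lem:bigdomains} the corresponding cylinders and domains for $g$ near $f$ exist, converge in the Hausdorff metric, and (by Lemma~\ref{lem:kappaPk}) stay uniformly away from the boundary, so the distortion bounds of Lemma~\ref{lem:koebe} persist uniformly. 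Hence $g|_{\Lambda_n(g)}$ is topologically conjugate, via a conjugacy $C^0$-close to the identity, to $f|_{\Lambda_n}$, and the induced expanding map for $g$ is $C^0$-close to that for $f$ with comparable derivatives. Transferring the measure of maximal entropy $\mu_n$ across this conjugacy gives $\mu_g:=\mu_{n}(g)\in\M_g$ with $h(\mu_g)=h(\mu_n)$ (entropy is a conjugacy invariant) and, since $\log|Dg|$ is close to $\log|Df|$ along the (Hausdorff-close) orbits in $\Lambda_n(g)$, $|\lambda(\mu_g)-\lambda(\mu_n)|<\eps/2$ for $g$ sufficiently close to $f$. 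Combining with the previous paragraph gives $|h(\mu_g)-h(\mu)|<\eps$ and $|\lambda(\mu_g)-\lambda(\mu)|<\eps$.

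For the final inequality, observe that for each such $g$,
$$
P(g,-t\log|Dg|)\ \ge\ E(g,\mu_g,-t\log|Dg|)\ =\ h(\mu_g)-t\,\lambda(\mu_g),
$$
since $\mu_g\in\M_g$ has positive entropy (hence $\int\log|Dg|\,d\mu_g>-\infty$) and is thus admissible in the supremum defining $P$. As $g\to f$ we may let $\eps\to0$, so $h(\mu_g)\to h(\mu)$ and $\lambda(\mu_g)\to\lambda(\mu)$, whence
$$
\liminf_{g\to f} P(g,-t\log|Dg|)\ \ge\ h(\mu)-t\,\lambda(\mu)\ =\ E(f,\mu,-t\log|Df|),
$$
as claimed.

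The main obstacle I anticipate is making precise the persistence of $\Lambda_n$ under perturbation and the attendant control of the Lyapunov exponent: one must check that the finitely many induced branches used to define $\Lambda_n$ survive for nearby $g$ with uniform Koebe space and bounded distortion, and that the Birkhoff sums of $\log|Dg|$ over orbits in $\Lambda_n(g)$ stay within $\eps/2$ of those for $f$. This is routine given Lemmas~\ref{lem:cylconv}--\ref{lem:kappaPk} and the distortion estimates of \S\ref{sec:NSD}, but it is the step where the topological convergence machinery developed earlier actually gets used, and care is needed because $g$ need not share the critical relations of $f$ (only the relevant \emph{finite} combinatorics of the chosen branches must persist, which it does since those branches are compactly contained in the interiors of cylinders of $f$).
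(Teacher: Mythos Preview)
Your proposal is correct and follows essentially the same approach as the paper: invoke Lemma~\ref{lem:Katok1} to approximate $\mu$ by a measure on a compact hyperbolic set, then use structural stability of that set to transfer the measure to nearby $g$. The paper's proof is terser, simply citing the existence of a smooth conjugacy $h_g$ near the identity on a neighbourhood of $\Lambda$ and pushing the measure forward; you instead argue persistence explicitly via the paper's combinatorial lemmas (\ref{lem:cylconv}--\ref{lem:kappaPk}) applied to the finitely many induced branches defining $\Lambda_n$. Your observation that decreasing critical relations are not needed here---because only the persistence direction $\Sigma^n(f_0)\subset\Sigma^n(f_k)$ is required, and the chosen branches lie compactly inside cylinders---is exactly the right point, and in fact makes your argument slightly more self-contained than the paper's one-line appeal to classical structural stability.
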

               \begin{proof}
 By Lemma~\ref{lem:Katok1}, there is a measure $\nu \in \M_{f}$  supported on a compact hyperbolic set $\Lambda$ with 
                   $|h(\nu) - h(\mu)| < \eps/2$ 
                   and 
                   $|\lambda(\nu) - \lambda(\mu)| < \eps/2.$ 
                   For $g$ sufficiently $C^1$-close to $f_0$ (on a neighbourhood of $\Lambda$),  there is a continuous conjugacy $h_g$ (close to the identity) on a neighbourhood of $\Lambda$ with $g = h_g \circ f \circ h_g^{-1}$, so $h_{g*}\nu \in \M_g$, and the entropy and Lyapunov exponent of $h_{g*}\nu$ approximate those of $\nu$ to within $\eps/2$.  To give more details for the Lyapunov exponent, 
                    \begin{align*}
\lambda(h_{g*}\nu) &= \int \log |Dg| d(h_{g*}\nu) \sim \int\log|Dg\circ h| d(h_{g*}\nu)\\
&= \int\log |Dg|d\nu \sim\int \log |Df|d\nu = \lambda(\nu).
\end{align*}
where the first $\sim$ is because $h_g$ can be taken to be arbitrary $C^0$ close
to identity by taking $g$ $C^1$ close to $f$.  A similar argument follows for entropy so the result follows. 
    \end{proof}

           \section{Ancillary proofs} \label{sec:ancillary}
                      \begin{proof}[Proof of Corollary~\ref{cor:eqstate2}]
                                For $t \in (t^-, t^+)$, 
  consider a sequence of measures $\mu_k \in \wtm_f$ with $E(\mu_k) \to P(-t\log|Df|)$. By hypothesis, for large $k$, 
  %$$ h(\mu_k) - t\int \log |Df| ~d\mu_k \geq \frac{P(-t \log |Df|)- P^0(-t \log |Df|)}2 
  $$ h(\mu_k) - t\lambda(\mu_k) \geq \frac{P(-t \log |Df|)- P^0(-t \log |Df|)}2 
  + P^0(-t\log|Df|).$$
  In Lemma~\ref{lem:Katok1}, is shown that positive entropy measures can be approximated by equidistributions on periodic orbits with similar Lyapunov exponents. 
  Hence 
  $$
  P^0(-t\log |Df|) - 
  \left(-t \lambda(\mu) \right) \geq 0
  $$ 
  for every  $\mu \in \wtm_f$ (not just those with zero entropy, as in the definition). 
  Therefore,
   $$h(\mu_k)  \geq \frac{P(-t \log |Df|)- P^0(-t \log |Df|)}2>0,$$ 
   so the measures have uniformly positive entropy. Applying Proposition~\ref{prop:SSSS},  we obtain existence of  an equilibrium measure with positive entropy. 
\end{proof}

           \begin{proof}[Proof of Proposition~\ref{prop:IomTeq}]
    Let 
    $$\underline{\lambda} := \inf_{\mu \in \wtm_f} \lambda(\mu) = \inf_{\mu \in \wtm_f, h(\mu)=0} \lambda(\mu),$$
    the equality holding by  Lemma~\ref{lem:Katok1}. For $t \geq 0$, $P^0(-t\log|Df|) = -t\underline{\lambda}$. 
    Suppose $\mu$ is an equilibrium state for $-t^+ \log |Df|$ with $h(\mu) > 0$. Then the lines $t \mapsto -t \underline{\lambda}$ and $t \mapsto h(\mu) -t \lambda(\mu)$ intersect at $(t^+, P(-t^+ \log |Df|))$ and subtend the graph of $t \mapsto P(-t \log |Df|)$, showing lack of differentiability. In the other direction, take a strictly increasing sequence of $t_k$ converging to $t^+$ and let $\mu_k$  be the corresponding equilibrium states. Non-differentiability together with convexity implies that for some $\delta >0$, $\lambda(\mu_k) \geq \underline\lambda + \delta$ for all $k$. Simple geometry then implies that $\inf_k h(\mu_k) >0$ and that $E(f, \mu_k, -t^+ \log |Df|) \to P(-t^+ \log |DF|)$. Applying Proposition~\ref{prop:SSSS} gives existence of the equilibrium state with positive entropy. The proof for $t^-$ is the same (with $t_k \searrow t^-$ and $\overline{\lambda}$, appropriately defined, in place of $\underline{\lambda}$). 
    \end{proof}

\begin{lemma} \label{lem:prescon}
    Let 
 $\varepsilon >0$ and $t \in \R$. Let 
 $(f_k)_k \in \FNSD$ be a sequence converging to $f_0$ as $k \to \infty$ with decreasing critical relations. 
 Suppose that 
    \begin{enumerate}[label=({\alph*}), itemsep=0.0mm, topsep=0.0mm, leftmargin=7mm]
        \item for each $k \geq 0$ there is an equilibrium state $\mu_k$ for the map $f_k$ and the potential $-t \log |Df_k|$;
        \item each $\mu_k$ has entropy at least $\varepsilon$.
        \end{enumerate}
        Then 
        \begin{equation} \label{eqn:pcon2}
            \lim_{k \to \infty} P(f_k, -t \log|Df_k|)  = P(f_0, -t \log |Df_0|).
        \end{equation}
    \end{lemma}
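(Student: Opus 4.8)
The plan is to prove the two inequalities $\liminf_{k\to\infty} P(f_k,-t\log|Df_k|) \ge P(f_0,-t\log|Df_0|)$ and $\limsup_{k\to\infty} P(f_k,-t\log|Df_k|) \le P(f_0,-t\log|Df_0|)$ separately; since the $\liminf$ never exceeds the $\limsup$, together these force the limit to exist and to equal $P(f_0,-t\log|Df_0|)$, which is~\eqref{eqn:pcon2}.

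For the lower bound I would simply exploit the persistence of hyperbolic sets. By hypothesis~(a) (taken at $k=0$) together with~(b), the limit map $f_0$ has an equilibrium state $\mu_0$ for the potential $-t\log|Df_0|$ with $h(\mu_0)\ge\varepsilon>0$, so $E(f_0,\mu_0,-t\log|Df_0|)=P(f_0,-t\log|Df_0|)$. Since $\mu_0$ has positive entropy, Corollary~\ref{cor:Katok2} applies and gives $\liminf_{g\to f_0} P(g,-t\log|Dg|) \ge E(f_0,\mu_0,-t\log|Df_0|)$; as $f_k\to f_0$ in $\F$, this yields $\liminf_{k\to\infty} P(f_k,-t\log|Df_k|)\ge P(f_0,-t\log|Df_0|)$. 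Note this direction uses neither the decreasing critical relations hypothesis nor Theorem~\ref{thm:SSSS}: only the existence of a positive-entropy equilibrium state for the limit map.

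For the upper bound I would pass to a subsequence along which $P(f_k,-t\log|Df_k|)$ converges to $E_L:=\limsup_{k\to\infty}P(f_k,-t\log|Df_k|)$ (this is finite, being uniformly bounded by $\log d + |t|\sup_{f\in\F}\sup|Df|$), and then, by weak$^*$-compactness, to a further subsequence along which the $\mu_k$ converge. Since each $\mu_k$ is an equilibrium state with $h(\mu_k)\ge\varepsilon$, we have $E(f_k,\mu_k,-t\log|Df_k|)=P(f_k,-t\log|Df_k|)\to E_L$. I then check that $(f_k,t,\mu_k)$ meets the hypotheses of Theorem~\ref{thm:SSSS}: $t_k:=t$ is constant, $(\mu_k)_k$ converges, $E(f_k,\mu_k,-t\log|Df_k|)$ converges to $E_L$, and $\liminf_k h(\mu_k)\ge\varepsilon>0$; finally $E_+(f_k,-t\log|Df_k|)\le P(f_k,-t\log|Df_k|)=E(f_k,\mu_k,-t\log|Df_k|)$, so $\liminf_k E_+(f_k,-t\log|Df_k|)\le E_L$, which is hypothesis~(d). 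Theorem~\ref{thm:SSSS} then produces a hyperbolic light limit measure $\mu_*$ with $E(f_0,\mu_*,-t\log|Df_0|)\ge E_L$ (each of the three alternatives in its conclusion entails this inequality).

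It remains to dominate $E(f_0,\mu_*,-t\log|Df_0|)$ by $P(f_0,-t\log|Df_0|)$. The measure $\mu_*$ need not be ergodic, but since it is hyperbolic its ergodic components $\mu'$ all satisfy $\int\log|Df_0|\,d\mu'>0$, hence lie in $\M_{f_0}$, and $\int -t\log|Df_0|\,d\mu'>-\infty$ because $\log|Df_0|$ is bounded (non-positive Schwarzian derivative gives bounded derivative) and hyperbolicity controls the negative part of $\log|Df_0|$. As $\mu\mapsto E(f_0,\mu,-t\log|Df_0|)$ is affine, some ergodic component $\mu'$ has $E(f_0,\mu',-t\log|Df_0|)\ge E(f_0,\mu_*,-t\log|Df_0|)\ge E_L$, so $P(f_0,-t\log|Df_0|)\ge E_L$. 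Combining with the lower bound finishes the proof. The only place where genuine dynamics enters is the invocation of Theorem~\ref{thm:SSSS} for the upper bound; I expect that and the small reduction from the possibly non-ergodic light limit measure to an honest element of $\M_{f_0}$ (handled, as indicated, by hyperbolicity together with the derivative bounds) to be the points requiring the most care, the remainder being soft.
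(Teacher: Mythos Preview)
Your proof is correct and follows essentially the same approach as the paper: Corollary~\ref{cor:Katok2} for the lower bound and Theorem~\ref{thm:SSSS} for the upper bound. You are in fact slightly more careful than the paper in reducing the possibly non-ergodic light limit measure $\mu_*$ to an ergodic component in $\M_{f_0}$ via hyperbolicity and ergodic decomposition; the paper's proof simply writes $\mu_0'\in\M_{f_0}$ and invokes the definition of pressure.
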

    
           \begin{proof}
               From Corollary~\ref{cor:Katok2}, 
            $$\liminf_{k \to \infty} P(f_k, -t \log|Df_k|)  \geq P(f_0, -t \log |Df_0|).$$
        
        By  Proposition~\ref{prop:SSSS}, there is a hyperbolic positive entropy light limit measure $\mu_0' \in \M_{f_0}$ with 
        $$E(\mu_0') \geq \limsup_{k \to \infty} P_{f_k}(-t \log |Df_k|).$$ By definition of pressure, 
        $$ P(f_0, -t\log|Df_0|) \geq E(\mu_0'), $$ 
        so the limit of $P(f_k, -t\log|Df_k|)$ exists and coincides with $P(f_0, -t\log|Df_0|)$. 
    \end{proof}

\begin{lemma} \label{lem:SSSScon}
    Let 
    $(f_k)_{k\ge 0} \in \FNSD$ be a sequence converging to $f_0$ as $k \to \infty$ with decreasing critical relations. Let $t_0 \in \R$. 
 Suppose  that 
\begin{enumerate}[label=({\alph*}), itemsep=0.0mm, topsep=0.0mm, leftmargin=7mm]
\item $\mu_k \to \mu_\infty$ as $k \to \infty$, where $\mu_k \in \wtm_{f_k}$ for $k \geq 1$;
\item $E_L := \lim_{k\to \infty} E(f_k, \mu_k, -t_0 \log |Df_k|) = P(-t_0 \log |Df_0|)$;
\item $E_L \geq \liminf_{k \to \infty} E_+(f_k, -t_0\log|Df_k|) $; 
\item $\limsup_{k\to\infty} h(\mu_k) >0$;
\item  the pressure functions $t \mapsto P(f_k, -t \log|Df_k|)$ converge on a neighbourhood of $t_0$ to a limit function $t \mapsto p(t)$. 
\end{enumerate}
If $\mu_\infty$ is not a convex combination of equilibrium states for the potential $-t_0 \log|Df_0|$, then some light limit measure $\mu_*$ is an equilibrium state with $$h(\mu_*) >\limsup_{k\to\infty} h(\mu_k),$$ and $p$ is not differentiable at $t_0$. 
    \end{lemma}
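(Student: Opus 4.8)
The plan is to deduce Lemma~\ref{lem:SSSScon} from Theorem~\ref{thm:SSSS} together with Lemma~\ref{lem:prescon}-style pressure convergence and a simple geometric argument about the convex pressure function. First I would apply Theorem~\ref{thm:SSSS} to the sequence $(f_k, t_0, \mu_k)_k$; hypotheses (a)--(d) of the lemma give exactly hypotheses (a)--(e) of the theorem (with $t_k \equiv t_0$), and since $E_L = P(-t_0\log|Df_0|)$ we are in the case where one of the last two alternatives holds and \emph{some} light limit measure $\mu_*$ is a positive-entropy equilibrium state for $-t_0\log|Df_0|$. If the first of these two alternatives held, namely $\mu_* = \mu_\infty = \lim_k \mu_k$ with $\int\log|Df_0|\,d\mu_* = \lim_k \int \log|Df_k|\,d\mu_k$ and $h(\mu_*)\ge \liminf_k h(\mu_k)$, then $\mu_\infty$ itself would be an equilibrium state (in particular a trivial convex combination of equilibrium states), contradicting our standing assumption. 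Hence we must be in the third alternative: there is a light limit measure $\mu_*$, an equilibrium state, with $h(\mu_*) > \liminf_k h(\mu_k)$. That establishes the first assertion.

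Next I would prove non-differentiability of $p$ at $t_0$. The idea is that if $p$ were differentiable at $t_0$, then (being a limit of the convex functions $t\mapsto P(f_k,-t\log|Df_k|)$, hence itself convex) it would have a well-defined slope $-s$ there, and one expects this slope to be $-\lambda(\mu_k)$-like along the sequence, forcing the Lyapunov exponents (and then, via $E_L = h - t_0\lambda$, the entropies) of the $\mu_k$ to converge to those of $\mu_*$. More precisely: each $\mu_k$ has free energy converging to $P(-t_0\log|Df_0|) = p(t_0)$, so for the potential $-t\log|Df_k|$ the line $t \mapsto h(\mu_k) - t\lambda(\mu_k)$ lies below $P(f_k,-t\log|Df_k|)$ and touches (asymptotically) at $t_0$; passing to the limit, the line $t\mapsto \liminf_k h(\mu_k) - t\,\ell$, where $\ell$ is a limit of $\lambda(\mu_k)$ along a subsequence, is a supporting line of $p$ at $t_0$ with value $E_L$. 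Similarly $\mu_*$ being an equilibrium state gives that $t\mapsto h(\mu_*) - t\lambda(\mu_*)$ is a supporting line of $t\mapsto P(f_0,-t\log|Df_0|)$, and by Lemma~\ref{lem:prescon}-type reasoning this function equals $p$ near $t_0$, so this too is a supporting line of $p$ at $t_0$ with value $E_L = p(t_0)$. If $p$ were differentiable at $t_0$ these two supporting lines would coincide, giving $h(\mu_*) = \liminf_k h(\mu_k)$ and $\lambda(\mu_*) = \ell$, directly contradicting $h(\mu_*) > \liminf_k h(\mu_k)$. Hence $p$ is not differentiable at $t_0$.

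I would organise the write-up so that the Lyapunov-exponent bookkeeping is handled cleanly: extract a subsequence along which $\lambda(\mu_k)$ converges to some $\ell \in [\lambda_{\min},\lambda_{\max}]$ (derivatives are uniformly bounded for $\FNSD$), note $h(\mu_k) = E(f_k,\mu_k,-t_0\log|Df_k|) + t_0\lambda(\mu_k) \to E_L + t_0\ell$, so in fact $\liminf_k h(\mu_k) = E_L + t_0\ell$ along this subsequence; combined with $h(\mu_*) - t_0\lambda(\mu_*) = E_L$ and $h(\mu_*) > \liminf_k h(\mu_k)$ this forces $\lambda(\mu_*) > \ell$, i.e.\ the two candidate supporting lines of $p$ at $t_0$ have different slopes but the same value $E_L = p(t_0)$ at $t_0$, and a convex function with two distinct supporting lines at an interior point is not differentiable there. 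For the statement that the relevant lines are genuinely supporting lines of $p$ I would use convexity of each $P(f_k,-t\log|Df_k|)$ in $t$ together with $p(t) = \lim_k P(f_k,-t\log|Df_k|)$ near $t_0$ and $p(t_0) = E_L$, plus the elementary inequality $P(f_k,-t\log|Df_k|) \ge h(\mu_k) - t\lambda(\mu_k)$ for all $t$ (respectively $P(f_0,-t\log|Df_0|) \ge h(\mu_*) - t\lambda(\mu_*)$).

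The main obstacle I anticipate is the passage to the limit in the supporting-line argument: one must be careful that $p$ really does coincide with $\lim_k P(f_k,-t\log|Df_k|)$ on a \emph{full} neighbourhood of $t_0$ (this is hypothesis (e), so it is given, but one should check the convex functions' limit is convex and the supporting lines pass to the limit correctly), and that the equilibrium state $\mu_*$ for $f_0$ at $t_0$ genuinely yields a supporting line of $p$ and not merely of $P(f_0,\cdot)$ — for this one needs $p = P(f_0,\cdot)$ near $t_0$, which follows from Corollary~\ref{cor:Katok2} (lower semicontinuity of pressure) in one direction and from applying Theorem~\ref{thm:SSSS} at nearby values of $t$ in the other, exactly as in the proof of Lemma~\ref{lem:prescon}. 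Once those identifications are in place the contradiction is immediate from $h(\mu_*) > \liminf_k h(\mu_k)$.
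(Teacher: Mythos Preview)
Your proposal is correct and follows essentially the same approach as the paper: apply Theorem~\ref{thm:SSSS}, rule out its second alternative (since it would force $\mu_\infty$ to be a convex combination of equilibrium states), land in the third alternative to obtain $\mu_*$ with $h(\mu_*)>\liminf_k h(\mu_k)$, and then exhibit two distinct supporting lines of the convex function $p$ at $t_0$ (one coming from the $\mu_k$, one from $\mu_*$ via Corollary~\ref{cor:Katok2}) to conclude non-differentiability. Your write-up is more explicit about the subsequence bookkeeping for $\lambda(\mu_k)$ than the paper's terse version, and your worry about needing $p=P(f_0,\cdot)$ near $t_0$ is an over-complication: you only need that the line $t\mapsto h(\mu_*)-t\lambda(\mu_*)$ subtends $p$, which is exactly what Corollary~\ref{cor:Katok2} gives directly (without passing through Lemma~\ref{lem:prescon}).
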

    \begin{proof}
        Considering the free energies of $\mu_k$, the graph of $p$ is subtended by the line passing through the points $(0,\liminf_{k\to\infty}h(\mu_k)), (t_0, p(t_0))$.

        One of the last two alternatives of Proposition~\ref{prop:SSSS} hold. If $\mu_\infty$ is not a convex combination of equilibrium states, the second alternative does not hold and the third (final) alternative must hold. Thus there is a light limit measure $\mu_*$ with $E(\mu_*) = P(f, -t_0,\log|Df_0|)$ and with entropy $h(\mu_*) >\limsup_{k\to\infty} h(\mu_k)$. 
        Via Corollary~\ref{cor:Katok2}, considering free energies of the given measures for maps close to $f_0$, one obtains that 
         the graph of $p$ is subtended by the line passing through the points $(0,h(\mu_*)), (t_0, p(t_0))$. 
        
         As a result, $p$ is not differentiable at $t_0$. 
    \end{proof}

           \begin{proof}[Proof of Theorem~\ref{thm:eqstatstab}]
               By hypothesis, 
               $$P(f_k, -t_0\log|Df_k|) \geq P^0(f_k, -t_0\log|Df_k|) + \eps$$ 
               for each $k$. Since the pressure functions are uniformly Lipschitz, we immediately deduce the existence of a neighbourhood $U$ of $t_0$ for which, for all large $k$, 
               \begin{equation}\label{eqn:peps2}
                   P(f_k, -t\log|Df_k|) \geq P^0(f_k, -t\log|Df_k|) + \eps/2
           \end{equation}
           for all $t \in U$. By Corollary~\ref{cor:eqstate2}, there is an equilibrium state $\mu_{k,t}$ for $f_k$ and the potential $-t\log|Df_k|$ (for large $k$ and for $k=0$). By~\eqref{eqn:peps2}, $h(\mu_{k,t}) \geq \eps/2$. 
           Applying Lemma~\ref{lem:prescon}, the pressures converge for $t \in U$: 
           $$\lim_{k\to\infty} P(f_k, -t\log|Df_k|) = P(f_0, -t\log|Df_0|).$$
           Since $t \mapsto P(f_0, -t\log|Df_0|)$ is assumed differentiable at $t_0$, Lemma~\ref{lem:SSSScon} implies that any limit measure of the sequence $(\mu_k)_k$ is a convex combination of equilibrium measures. 
    \end{proof}

\section{Thermodynamics for induced maps}
\label{sec:tdf2}

A strategy used to study interval maps which lack Markov structure and expansiveness is to consider induced maps, a generalisation of the first return map.  
    We shall use level-$R$ primitive induced maps $(\Y, \hat F_\Y, \tau)$ as introduced in Definition~\ref{def:primcomp}. These maps will be expanding and  Markovian, although over a countable alphabet and not full-branched.  The idea is to study (thermodynamic) properties of the induced maps and then to translate the results into the original system.  
We shall make use of thermodynamic formalism for topological Markov shifts (in Sarig's terminology) as developed by Sarig and by Mauldin and Urba\'nski.  
We then prove Theorems~\ref{thm:eqstates}--\ref{thm:ASIP DCor}.
    
\subsection{Primitive induced maps and topological Markov shifts}
    Let $(\Y, \hat F_\Y, \tau)$ be a level-$R$ primitive induced map of a  piecewise-monotone map $f$ with non-positive Schwarzian derivative. 
    The connected components of the domain of $\hat F_\Y$ are countable and can be listed $\{\hat X_i\}_{i \in \A}$, for some countable alphabet $\A$. This gives a \emph{natural coding} $H$ \index{H@$H$, the coding map on $\Lambda$} defined on the invariant set
    $$ \Lambda := \bigcap_{n \geq 0} \hat F_\Y^{-n}(\Y)$$
    by 
    $H(\hat x) = i_0i_1i_2\ldots$, where $\hat F^k_\Y(\hat x) \in \hat X_{i_k}$ for all $k \geq 0$. 
    From Lemma~\ref{lem:expandingscheme} (and Lemma~\ref{lem:primi4}\ref{enum:cc5}), it follows that $\Lambda$ is totally disconnected and the map $H$ is injective. 
   %Recall that $\Y$ is primitive and is a finite collection of intervals of $\hat I$ with Markov properties inherited from the full system $(\hat I, \hat f)$.  So in particular there is a uniformly bounded number of iterates of $\hat f$ required to get from one element to another.  
   %Moreover, primitivity 
    Then Lemma~\ref{lem:primi4}\ref{enum:cc3} 
   implies that $\hat F_\Y$ is transitive on $\Lambda$. 
    Set $\X := H(\Lambda)$ and denote by $\sigma$ the left-shift on $\X$. 
    On $\X$, 
    \begin{equation}\label{eq:Hconj}
     H \circ \hat F_\Y \circ H^{-1} = \sigma. 
    \end{equation}
    There is an \emph{incidence matrix} associated to the shift; in Mauldin and Urba\'nski's terminology, this matrix is irreducible, because ($\hat F_\Y$ and therefore) $\sigma$ is transitive, while verification that the matrix is \emph{finitely irreducible}
    is straightforward thanks to 
    Lemma~\ref{lem:primi4}\ref{enum:cc1}-\ref{enum:cc2}, 
    noting that $\Y$ comprises of finitely many intervals and each pair of these intervals is linked. 
    Summarising: 
    \begin{lemma}
        On the invariant set $\Lambda$, the coding map conjugates $\hat F_\Y$  to a Markov shift over a countable alphabet with a finitely irreducible incidence matrix (a \emph{finitely irreducible Markov shift}).
   \end{lemma}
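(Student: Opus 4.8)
The plan is to verify each clause of the statement in turn, drawing on the structural results already established for primitive level-$R$ induced maps in Lemma~\ref{lem:primi4} and the expansion result Lemma~\ref{lem:expandingscheme}. The three things to check are: (i) that $H$ is a well-defined injective coding conjugating $\hat F_\Y$ on $\Lambda$ to the shift $\sigma$ on a subshift $\X$ of the countable full shift; (ii) that the associated incidence matrix is irreducible, i.e.\ that $\sigma$ (equivalently $\hat F_\Y$) is transitive on $\Lambda$; and (iii) that the matrix is \emph{finitely irreducible} in the sense of Mauldin--Urba\'nski, meaning there is a finite set $\mathcal{F}$ of admissible words such that any two symbols $a,b \in \A$ can be joined by some word from $\mathcal{F}$ (i.e.\ $aw b$ is admissible for some $w \in \mathcal{F}$).

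First I would treat the coding. By Lemma~\ref{lem:primi4}\ref{enum:cc1}-\ref{enum:cc2}, the domain of $\hat F_\Y$ is a countable pairwise-disjoint union $\{\hat X_i\}_{i\in\A}$ of open intervals, each mapped homeomorphically onto a connected component of $\Y$; hence $\hat F_\Y$ is Markov over $\A$ and the itinerary map $H(\hat x) = i_0 i_1 i_2 \ldots$ is well-defined on $\Lambda = \bigcap_{n\ge0}\hat F_\Y^{-n}(\Y)$ and satisfies~\eqref{eq:Hconj} by construction. For injectivity, Lemma~\ref{lem:expandingscheme} (together with the uniform Koebe space from Lemma~\ref{lem:primi4}\ref{enum:cc5} feeding into the distortion bounds) gives an iterate $\hat F_\Y^{NK}$ with derivative exceeding $2$, so cylinders $\bigcap_{j=0}^{n-1}\hat F_\Y^{-j}(\hat X_{i_j})$ shrink to points as $n\to\infty$; thus distinct itineraries correspond to distinct points and $\Lambda$ is totally disconnected. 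This identifies $\X := H(\Lambda)$ as a topological Markov shift with incidence matrix $A_{ij} = 1$ iff $\hat F_\Y(\hat X_i) \supset \hat X_j$.

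Next, transitivity. Because $\Y$ is a \emph{primitive} component, by Definition~\ref{def:primi} all its connected components are linked, and Lemma~\ref{lem:primi4}\ref{enum:cc3} provides $N_\Y$ such that $\bigcup_{j=1}^{N_\Y}\hat F_\Y^j(\hat X_i) \supset \Y$ for every $i$. So given any $i, i' \in \A$, the component containing $\hat X_{i'}$ lies in some image $\hat F_\Y^m(\hat X_i)$ with $1\le m\le N_\Y$, and since $\hat F_\Y^m$ maps $\hat X_i$ over a whole component containing $\hat X_{i'}$, there is an admissible word of length $m+1$ from $i$ to $i'$; this proves irreducibility of the incidence matrix, hence transitivity of $\sigma$ on $\X$ and of $\hat F_\Y$ on $\Lambda$. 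For the stronger \emph{finitely irreducible} property, observe that the connecting words just produced all have length at most $N_\Y + 1$, and the intermediate symbols in such a word range over the symbols of $\Y$ that appear --- but in fact it suffices to note that there are only finitely many words of length $\le N_\Y + 1$ over the (finitely many) symbols that can occur as intermediate links, more carefully: the first-return structure to a fixed component gives finitely many ``bridge'' words realizing all needed connections. This is where I expect the main (though modest) obstacle to lie: pinning down precisely which finite collection $\mathcal{F}$ of admissible words witnesses finite irreducibility when the alphabet $\A$ is infinite. The resolution is that, because $\Y$ has only finitely many connected components and by Lemma~\ref{lem:primi4}\ref{enum:cc3} each is reachable from each other within $N_\Y$ steps, one may fix one branch per (ordered pair of) component and let $\mathcal{F}$ be the finite set of itineraries of these fixed connecting branches; every pair $a,b\in\A$ is then joined by $a\,w\,b$ for the $w\in\mathcal{F}$ connecting the component of $\hat F_\Y(\hat X_a)$ to the component of $\hat X_b$. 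This establishes all claims, completing the proof.
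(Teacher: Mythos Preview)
Your proposal is correct and follows essentially the same approach as the paper. The paper's argument is in fact the paragraph preceding the lemma (the lemma is introduced as ``Summarising:''), where injectivity of $H$ is drawn from Lemma~\ref{lem:expandingscheme} and Lemma~\ref{lem:primi4}\ref{enum:cc5}, transitivity from primitivity of $\Y$, and finite irreducibility from Lemma~\ref{lem:primi4}\ref{enum:cc1}--\ref{enum:cc3}; your write-up expands each of these points with the right details, and your construction of the finite witnessing set $\mathcal{F}$ (one fixed connecting word per ordered pair of the finitely many components of $\Y$) is exactly what the paper leaves implicit.
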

  
   \subsection{Induced Markov maps} \label{sec:imm}
   The preceding construction can be made more general. Since we will need to deal with a second induced map, it is worthwhile to do so.  Let $(I,f)$ be some dynamical system defined in some metric space $I$ equipped with a Borel $\sigma$-algebra, where $f$ is a map with domain and range contained in $I$.
   We will call $(Y,F,\tau, \pi)$ an \emph{expanding induced Markov map} \index[def]{Induced Markov map} for $(I,f)$ if
   \begin{itemize}
       \item
           $Y$ is a  metric space with a finite number of connected components $Y_1, \ldots, Y_p$;
       \item
           on each connected component of $Y$, $\pi$ is a metric isomorphism onto a subset of $I$;
       \item
           the domain of definition of $F$ can be written as $\bigcup_{i\in \A} X_i$ for some  countable alphabet $\A$, where $X_i$ are pairwise-disjoint subsets of $Y$, each compactly contained in $Y$ and mapped homeomorphically by $F$ onto some $Y_j$;
       \item
           $\tau$ is constant on each $X_i$ and $F(x) = f^{\tau(x)}(\pi(x))$;
       \item
           the maximal diameter of a connected component of $F^{-n}(Y)$ tends to $0$ as $n\to \infty$.
   \end{itemize}
   We say $(Y, F, \tau, \pi)$ is \emph{full-branched} \index[def]{Full-branched} if $Y$ only has one connected component; in this case one can identify $Y$ with $\pi(Y)$. 
           Set $\Lambda(F) :=\bigcup_{n\geq0} F^{-n}(Y)$; this set is compact and totally disconnected.
   We say that $(Y, F, \tau)$ is \emph{irreducible} \index[def]{Irreducible} if $F$ is transitive on $\Lambda(F)$. 
   If $(Y,F,\tau)$ is irreducible, then the natural coding map $H$  is a conjugacy between $F$ on $\Lambda(F)$ and a finitely irreducible Markov shift $(\X,\sigma)$ over the alphabet $\A$.

   \subsection{Finitely irreducible Markov shifts}
   Let $(\X, \sigma)$ be a finitely irreducible Markov shift over a countable alphabet $\A$.
   Denote by $\P^\sigma$ the set of $1$-cylinders (a $1$-cylinder is a set of points of $\X$ with the same first symbol) and by $\P_n^\sigma$ the set of $n$-cylinders. Equip $\X$ with the standard distance $d$, so the diameter of an  $n$-cylinder is $e^{-n}$. Given $(\X,\sigma)$   and potential $\Phi:\X\to \R$, we define the  \emph{$n$-th variation} by 
\begin{equation}\label{eq:var}
V_n(\Phi)=V_n^\sigma(\Phi) := \sup_{\indcyl \in \P^\sigma_n} \sup_{x,y \in \indcyl} |\Phi(x) -
\Phi(y)|,
\index{VnP@$V_n(\Phi)$, the $n$-th variation of $\Phi$}
\end{equation}
We say that $\Phi$ is $(C,\alpha)$-\emph{locally H\"older continuous}\index[def]{Locally H\"older continuous}  (or just \emph{H\"older continuous}) of order $\alpha$ if $V_n(\Phi) \leq Ce^{-\alpha n}$ for some $C,\alpha>0$, for all $n \geq 1$. `Locally' indicates that one only compares values of the potential within $1$-cylinders, which allows for unbounded potentials. We assume throughout this section that $\Phi$ is locally H\"older continuous. 

For $A \subset \A$, let $$\X_A := \{x \in \X : x_k \in A \mbox{ for all } k \geq 0\},$$ 
which is just the subset of $\X$ consisting of symbolic sequences all of whose symbols belong to $A$. 
The notion of pressure has been extended to Markov shifts over countable alphabets.  In the current setting of finitely irreducible Markov shifts with a locally H\"older continuous potential, 
\begin{equation} \label{eq:PressPhi} P_\sigma(\Phi) = \sup \left\{h(\mu) + \int \Phi \, d\mu\right\}.
\end{equation}
Here the supremum can be taken over all measures $\mu \in \M_\sigma$ or, alternatively, over finite alphabets $A$ and  measures $\mu \in \M_\sigma$ supported on $\X_A$, see~\cite[Theorem~2.1.5]{MauUrb03} and the top of page~10 in the same (recall $\M_\sigma$ denotes the collection of ergodic $\sigma$-invariant probability measures). 
An equilibrium measure is an ergodic $\sigma$-invariant measure $\nu$ with $\int \Phi \, d\nu > -\infty$ for which $P_\sigma(\Phi) = h(\nu) + \int \Phi \, d\nu.$ 

If $$\sum_{\indcyl \in \P^\sigma} \sup_{x \in \indcyl} \exp(\Phi(x)) < \infty,$$
then we say that $\Phi$ is \emph{summable}.\index[def]{Summable (for a potential)} Note that a summable potential is necessarily bounded above (and unbounded below, if the alphabet is infinite).\footnote{In~\cite[Chapter~2]{MauUrb03} the recurring assumption that the potential $\Phi$ (there denoted $f$) is bounded ought to read that $\exp(\Phi)$ is bounded. }  

\begin{remark}\label{rem:summ}
    By~\cite[Proposition~2.1.9]{MauUrb03}, the potential is summable if and only if the pressure is finite (summability is the same as $Z_1 < \infty$, by definition). 
\end{remark}

The Birkhoff sum of $\Phi$ is 
$$
S^\sigma_n\Phi(x) := \sum_{j=0}^{n-1} \Phi \circ \sigma^j(x).$$

A measure  $\nu$ is said to be a \emph{Gibbs measure} \index[def]{Gibbs measure} for $\Phi$ %(with \emph{Gibbs constant} $p$)  
if there exist
$K<\infty$ and an associated constant $p\in \R$ \st for all $\indcyl\in \P^F_n$, 
$$\frac1{K} \le \frac{\nu(\indcyl)}{e^{S^\sigma_n\Phi(x)-np}}\le K$$
for any $x\in \indcyl$.  Given an expanding induced Markov map $F$ and the natural coding map, if the push-forward of an $F$-invariant measure $\hat \nu$ under the coding map is a Gibbs measure, we also call $\hat \nu$ a Gibbs measure. 

\begin{lemma} \label{lem:GibbsUniq}
    If $\Phi$ is locally H\"older continuous and is summable, then there exists a unique $\sigma$-invariant Gibbs measure $\nu_\Phi$ for $\Phi$. It has constant $p = P_\sigma(\Phi) := P(\sigma, \Phi)$ equal to the pressure. If $\int \Phi\, d\nu_\phi > -\infty$ then $\nu_\Phi$ is the unique equilibrium state for the potential $\Phi$. 
\end{lemma}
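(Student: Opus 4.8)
The plan is to deduce everything from the Ruelle--Perron--Frobenius theory for countable Markov shifts developed in \cite[Chapter~2]{MauUrb03}. By Remark~\ref{rem:summ}, summability of $\Phi$ is equivalent to $P_\sigma(\Phi)<\infty$; together with local H\"older continuity of $\Phi$ and finite irreducibility of the incidence matrix of $(\X,\sigma)$, this puts us exactly in the standing setting of that theory. First I would invoke the existence of a Borel probability eigenmeasure $m_\Phi$ for the dual $\mathcal{L}_\Phi^*$ of the transfer operator $\mathcal{L}_\Phi g(x)=\sum_{\sigma y=x} e^{\Phi(y)}g(y)$, with eigenvalue $e^{P_\sigma(\Phi)}$, together with a continuous eigenfunction $\psi_\Phi$ of $\mathcal{L}_\Phi$ for the same eigenvalue which is bounded away from $0$ and $\infty$ (finite irreducibility is what yields the two-sided bound on $\psi_\Phi$). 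Normalising $\psi_\Phi\, m_\Phi$ gives a $\sigma$-invariant measure $\nu_\Phi$; evaluating the eigenmeasure relation on $n$-cylinders and using the local H\"older bound on the Birkhoff sums $S^\sigma_n\Phi$ yields the Gibbs inequality for $\nu_\Phi$ with constant $p=P_\sigma(\Phi)$.

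Uniqueness of the Gibbs measure is then a routine cylinder computation: if $\nu,\nu'$ are $\sigma$-invariant Gibbs measures for $\Phi$ with constants $p,p'$, the defining inequalities on $n$-cylinders force $\nu(\indcyl)/\nu'(\indcyl)$ to lie between $K^{-2}e^{n(p'-p)}$ and $K^{2} e^{n(p'-p)}$ for every $\indcyl \in \P^\sigma_n$; $\sigma$-invariance of both measures forces $p=p'$, whereupon $\nu$ and $\nu'$ are mutually absolutely continuous with Radon--Nikodym derivative bounded above and below. Since a Gibbs measure on a finitely irreducible shift is ergodic, that derivative is constant, so $\nu=\nu'$. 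The same estimate, combined with the characterisation \eqref{eq:PressPhi} of $P_\sigma(\Phi)$, identifies the common Gibbs constant as $P_\sigma(\Phi)$.

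It remains to treat the equilibrium-state statement. Assuming $\int\Phi\,d\nu_\Phi>-\infty$, the Gibbs property gives, along $\nu_\Phi$-typical points $x$, the asymptotics $-\tfrac1n\log\nu_\Phi(\indcyl)= -\tfrac1n S^\sigma_n\Phi(x)+p+o(1)$ where $\indcyl$ is the $n$-cylinder containing $x$, so Shannon--McMillan--Breiman together with Birkhoff's theorem yields $h(\nu_\Phi)+\int\Phi\,d\nu_\Phi=P_\sigma(\Phi)$; thus $\nu_\Phi$ is an equilibrium state. The one genuinely non-elementary point---and the main obstacle---is uniqueness of equilibrium states among measures $\mu$ with $\int\Phi\,d\mu>-\infty$: this is not a cylinder manipulation but requires the finer positive-recurrence theory, whereby any such equilibrium state is forced to be absolutely continuous with respect to $m_\Phi$, hence is a Gibbs measure and equals $\nu_\Phi$. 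I would simply quote the corresponding result of \cite[Chapter~2]{MauUrb03}, since reproducing it would take us well outside the scope of this section.
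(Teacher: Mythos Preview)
Your proposal is correct and takes essentially the same approach as the paper: both defer to the Ruelle--Perron--Frobenius theory in \cite[Chapter~2]{MauUrb03}. The paper's proof is a bare citation of \cite[Corollary~2.7.5, Proposition~2.2.2, Theorem~2.2.9]{MauUrb03}, while you have sketched out the underlying mechanism (eigenmeasure, eigenfunction, cylinder estimates, SMB) before quoting the same source for the hard uniqueness step; the content is the same.
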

 \begin{proof} The first statements follow from {\cite[Corollary~2.7.5, Proposition 2.2.2]{MauUrb03}}, while for the final one we invoke {\cite[Theorem~2.2.9]{MauUrb03}}. 
 \end{proof}

 Denote by $\K_\beta$ the set of functions $\Psi : \X \to \mathbb{C}$ which are locally H\"older continuous  (defined in the same way as for real-valued functions) of order $\beta >0$. Denote by $\K_\beta^s$ the set of functions in $\K_\beta$ whose real parts are summable, and by $L(\K_\beta)$ the space of bounded continuous operators on $\K_\beta$. 

    We define the transfer operator for  $\Phi$ 
 as
\[
(\L_{\Phi}g)(x) := \sum_{\sigma(y) = x} e^{\Phi(y)} g(y),
\]
for $g$  in $\K_\beta$, so $\L_\Phi \in L(\K_\beta)$. 
\index{Lp@$\L_{\Phi}$, transfer operator}

\begin{lemma} 
    Let $U$ be an open ball in $\mathbb{C}^2$ and let $\Psi_j \in \K_\beta$, $j = 0,1,2$. Suppose that for each $(u,v)\in U$, the potential $\Phi_{u,v} := \Psi_0 + u \Psi_1 +v\Psi_2 \in \K_\beta^s$, that is, each potential is summable. 
    Then $(u,v) \mapsto \L_{\Phi_{u,v}}$ is analytic.
\end{lemma}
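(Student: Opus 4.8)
The plan is to prove that on a small polydisc around each point $(u_0,v_0)\in U$ the $L(\K_\beta)$-valued function $(u,v)\mapsto\L_{\Phi_{u,v}}$ coincides with a convergent power series; as $(u_0,v_0)$ is arbitrary, analyticity on $U$ follows. Writing $\Phi_{u,v}=\Phi_{u_0,v_0}+(u-u_0)\Psi_1+(v-v_0)\Psi_2$ and formally expanding $e^{(u-u_0)\Psi_1+(v-v_0)\Psi_2}$ inside the definition of the transfer operator, the natural candidate is
\begin{equation*}
\L_{\Phi_{u,v}}=\sum_{m,n\ge0}\frac{(u-u_0)^m(v-v_0)^n}{m!\,n!}\,T_{m,n},\qquad T_{m,n}g:=\L_{\Phi_{u_0,v_0}}\!\bigl(\Psi_1^{\,m}\Psi_2^{\,n}\,g\bigr).
\end{equation*}
The proof then splits into three tasks: (i) each $T_{m,n}$ is a bounded operator on $\K_\beta$; (ii) $\|T_{m,n}\|_{L(\K_\beta)}\le M\,m!\,n!\,(C/\delta)^{m+n}$ for a fixed $\delta>0$, so that the series has positive radius of convergence; and (iii) the sum of the series really is $\L_{\Phi_{u,v}}$.

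The engine for (i)--(ii) is the elementary inequality $|t|^{k}\le\frac{k!}{\delta^{k}}\bigl(e^{\delta t}+e^{-\delta t}\bigr)$, valid for all real $t$ (immediate from $e^{\delta|t|}\ge(\delta|t|)^{k}/k!$), applied to the real and imaginary parts of $\Psi_1,\Psi_2$ and combined with $|w|^k\le2^{k-1}(|\mathrm{Re}\,w|^k+|\mathrm{Im}\,w|^k)$. First I would fix $\delta>0$ so small that the closed polydisc of radius $\delta$ about $(u_0,v_0)$ lies in $U$; then each perturbed potential $\Phi_{u_0+a,\,v_0+b}$ with $a,b\in\{\pm\delta,\pm i\delta\}$ is locally H\"older and summable, hence defines a bounded operator on $\K_\beta$ by the property of the transfer operator recorded just before the statement. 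The inequality above then dominates $e^{\Phi_{u_0,v_0}}\Psi_1^{\,m}\Psi_2^{\,n}$, cylinder by cylinder, by $m!\,n!\,\delta^{-m-n}$ times a finite combination of the weights $e^{\Phi_{u_0+a,\,v_0+b}}$; this controls both the supremum-norm part of $\|T_{m,n}g\|_{\K_\beta}$ and the associated summability sums $\sum_i\sup_{[i]}\bigl|e^{\Phi_{u_0,v_0}}\Psi_1^{\,m}\Psi_2^{\,n}\bigr|$ taken over $1$-cylinders $[i]$. For the local-H\"older seminorm of $T_{m,n}g$ I would run the standard preimage-pairing estimate on $n$-cylinders, using local H\"older continuity $V_k(\Psi_j),V_k(\Phi_{u_0,v_0})\le Ce^{-\beta k}$ and the product rule to bound oscillations of $e^{\Phi_{u_0,v_0}}\Psi_1^{\,m}\Psi_2^{\,n}$ on a cylinder by its supremum there times $e^{-\beta k}$ and a factor polynomial in $m,n$, and then re-applying the same domination. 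Absorbing polynomial factors into the geometric term gives the bound in (ii), so the candidate series converges absolutely in $L(\K_\beta)$ once $|u-u_0|,|v-v_0|<\delta/C$.

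For (iii), evaluating a partial sum at a point $x$ and applying it to $g\in\K_\beta$, the domination just obtained justifies interchanging the sum over $(m,n)$ with the sum over $\{y:\sigma y=x\}$; carrying out the inner double sum gives $\sum_{\sigma y=x}e^{\Phi_{u_0,v_0}(y)}g(y)\,e^{(u-u_0)\Psi_1(y)+(v-v_0)\Psi_2(y)}=\L_{\Phi_{u,v}}g(x)$, as required. I expect the main obstacle to be step (ii), and within it the local-H\"older seminorm estimate: the supremum/summability half is clean once one has $|t|^{k}\le k!\,\delta^{-k}(e^{\delta t}+e^{-\delta t})$, but controlling the variation of $e^{\Phi_{u_0,v_0}}\Psi_1^{\,m}\Psi_2^{\,n}g$ on cylinders with only factorial growth in $m$ and $n$ requires the usual, slightly delicate cylinder-by-cylinder bookkeeping — in particular distinguishing cylinders on which $|\Psi_j|$ is large from those on which it is small, so as not to divide by $\inf_{[i]}|\Psi_j|$.
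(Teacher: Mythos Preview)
Your approach is correct in outline and would work, but it takes a genuinely different route from the paper. The paper's proof is two lines: it invokes \cite[Corollary~2.6.10]{MauUrb03} to obtain \emph{separate} analyticity of $(u,v)\mapsto\L_{\Phi_{u,v}}$ in each complex variable, and then applies a vector-valued Hartogs theorem \cite[3.3.4]{ColombeauDifferentialCalc} to upgrade separate to joint analyticity. No estimates are carried out.

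What you do instead is a direct two-variable power series expansion, dominating the coefficient operators $T_{m,n}$ via the inequality $|t|^k\le k!\,\delta^{-k}(e^{\delta t}+e^{-\delta t})$ and the summability of finitely many perturbed potentials $\Phi_{u_0+a,v_0+b}$ with $a,b\in\{\pm\delta,\pm i\delta\}$ lying inside $U$. This is essentially the mechanism behind the one-variable result in \cite{MauUrb03}, reworked to handle both variables simultaneously. The trade-offs: the paper's route is vastly shorter and avoids repeating estimates already in the literature, at the price of importing Hartogs' theorem for Banach-space-valued maps; your route is self-contained and makes the analytic mechanism explicit, but requires the cylinder-by-cylinder bookkeeping you correctly identify as the delicate step (controlling the H\"older seminorm of $T_{m,n}g$ with only factorial growth in $m,n$). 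Both are valid; the paper simply chose to cite rather than compute.
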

\begin{proof}
    From~\cite[Corollary~2.6.10]{MauUrb03}, $(u,v) \mapsto \L_{\Phi_{u,v}}$ is separately analytic. Then a vector-valued version of Hartogs' theorem \cite[3.3.4]{ColombeauDifferentialCalc} implies analyticity on $U$. 
\end{proof}
The following proposition is a two-variable version of \cite[Theorem~2.6.12]{MauUrb03}, whose proof we follow. 
\begin{proposition} \label{prop:anal}
    Let $U$ be an open ball in $\mathbb{C}^2$ and let $\Psi_j \in \K_\beta$, $j = 0,1,2$. Suppose that for each $(u,v)\in U$, the potential $\Psi_{u,v} := \Psi_0 + u \Psi_1 +v\Psi_2 \in \K_\beta^s$. 
    If $A$ is (in the terminology of \cite{MauUrb03}) finitely primitive (in particular, if $\sigma$ is a full shift, or finitely irreducible and topologically mixing), then 
    $$(u,v) \mapsto P_\sigma(\Psi_{u,v})$$
    is analytic. 
\end{proposition}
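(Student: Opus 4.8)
The plan is to adapt the single-variable argument of \cite[Theorem~2.6.12]{MauUrb03} to two complex variables by invoking the separate-analyticity result (the transfer-operator lemma just stated) together with Hartogs' theorem. First I would recall the analytic perturbation picture: for a fixed real $(u_0,v_0)\in U\cap\R^2$ with $P_\sigma(\Psi_{u_0,v_0})<\infty$, the operator $\L_{\Psi_{u_0,v_0}}$ acting on $\K_\beta$ has a simple, isolated, dominant eigenvalue $\lambda_0 = e^{P_\sigma(\Psi_{u_0,v_0})}$ with strictly positive eigenfunction and a spectral gap; this is exactly what finite primitivity buys us via the Ionescu-Tulcea--Marinescu / Ruelle--Perron--Frobenius machinery in \cite[\S2.4--2.6]{MauUrb03}. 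By analytic perturbation theory for linear operators (Kato), since $(u,v)\mapsto \L_{\Psi_{u,v}}$ is analytic on $U$ (by the preceding lemma) and $\lambda_0$ is a simple isolated eigenvalue, the eigenvalue $\lambda(u,v)$ and the corresponding spectral projection depend analytically on $(u,v)$ on a neighbourhood of $(u_0,v_0)$ in $\C^2$.

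Next I would identify $\lambda(u,v)$ with $\exp(P_\sigma(\Psi_{u,v}))$ for real parameters: on $U\cap\R^2$, wherever the potential is summable, $P_\sigma(\Psi_{u,v}) = \log\lambda(u,v)$ by \cite[Theorem~2.6.12 or Corollary~2.6.10]{MauUrb03}, using that $\Psi_{u,v}\in\K_\beta^s$ throughout $U$ by hypothesis. Since $\lambda(u,v)$ is a nonvanishing analytic function (it is bounded below by $e^{P_\sigma}>0$ near the real point, being the dominant eigenvalue of a positive operator), $\log\lambda(u,v)$ is well-defined and analytic on a complex neighbourhood of each real point of $U$. Hence $(u,v)\mapsto P_\sigma(\Psi_{u,v})$ extends to an analytic function on a neighbourhood of $U\cap\R^2$ in $\C^2$, and in particular is real-analytic as a function of the real parameters $(u,v)$.

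To upgrade separate real-analyticity (or analyticity near each real point) to joint analyticity on all of $U$, I would again appeal to the vector-valued Hartogs theorem \cite[3.3.4]{ColombeauDifferentialCalc} exactly as in the proof of the preceding lemma: the map $(u,v)\mapsto\L_{\Psi_{u,v}}$ is separately analytic on $U$, hence jointly analytic, so the dominant eigenvalue (extracted by a contour integral $\frac{1}{2\pi i}\oint \zeta(\zeta-\L_{\Psi_{u,v}})^{-1}\,d\zeta$ around $\lambda_0$, which is analytic in $(u,v)$ wherever the contour separates $\lambda_0$ from the rest of the spectrum) is jointly analytic, and therefore so is its logarithm $P_\sigma(\Psi_{u,v})$. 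One should note that for the finitely irreducible and topologically mixing (equivalently, finitely primitive) case the spectral gap holds; for the merely finitely irreducible case one passes to a power of $\sigma$ or uses \cite[Theorem~2.1.5]{MauUrb03}, but the statement as phrased already restricts to the finitely primitive situation, so this is not needed.

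The main obstacle, and the step I would be most careful about, is ensuring the spectral-gap hypothesis is genuinely available uniformly on a complex neighbourhood: the Perron--Frobenius theory in \cite{MauUrb03} is stated for real summable potentials, and for complex $(u,v)$ one only has that $\L_{\Psi_{u,v}}$ is a small analytic perturbation of the real operator, so the isolation of the dominant eigenvalue survives only locally. This is why the argument is intrinsically local-in-$(u,v)$-then-globalise-by-Hartogs rather than a direct global spectral statement; one must check that the contour used to define the eigenvalue projection can be chosen locally uniformly, which follows from upper semicontinuity of the spectrum under analytic perturbation. Everything else is a routine transcription of \cite[Theorem~2.6.12]{MauUrb03} from one variable to two.
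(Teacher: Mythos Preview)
Your proposal is correct and follows essentially the same route as the paper: joint analyticity of $(u,v)\mapsto\L_{\Psi_{u,v}}$ (from the preceding lemma), the simple isolated eigenvalue $e^{P_\sigma(\Psi_{u,v})}$ furnished by finite primitivity via \cite[Theorem~2.4.6]{MauUrb03}, and analytic perturbation theory to conclude. Your second invocation of Hartogs is redundant, since the preceding lemma already delivers \emph{joint} analyticity of the operator family on all of $U$, after which Kato's theory gives joint analyticity of the eigenvalue directly; on the other hand, your explicit discussion of the locality of the spectral-gap argument (the contour must separate $\lambda_0$ from the rest of the spectrum only near each real base point) is a point the paper's one-line proof glosses over, and is worth making precise.
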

    \begin{proof}
        By~\cite[Theorem~2.4.6]{MauUrb03} (which has `finitely primitive' as a hypothesis), $\L_{\Psi_{u,v}}$ has a simple isolated eigenvalue equal to $\exp(P(\Psi_{u,v}))$, and analytic perturbation theory (see \cite{Kat66}) then implies that this eigenvalue moves analytically with $(u,v)$, from which the result follows. 
    \end{proof}

    \subsection{Pressure above and below, I} \label{sec:pupdown1}
    As in~\S\ref{sec:imm}, let $(Y, F, \tau)$ be an induced Markov map for some dynamical system $(I,f)$. Let the coding map $H$ denote the conjugacy with $(\X, \sigma)$, the corresponding finitely irreducible Markov shift. 
    Suppose $\hat \nu \in \M_{F}$ spreads to $\mu \in \M_f$ and let $\nu$ denote $H_*\hat \nu \in \M_\sigma$. 
    Given $\phi :I \to \R$  an integrable potential function, its corresponding \emph{induced potential} $\Phi : \X \to \R$ is
   $$\Phi := S_\tau \phi \circ \pi \circ H^{-1}:= \sum_{j=0}^{n-1} \phi \circ f^j \circ \pi \circ H^{-1}.$$
   Write $T:= \int_Y \tau \, d\hat \nu$. The entropies satisfy Abramov's formula~\eqref{eq:Abe}, so $h(\mu) = T^{-1}h(\nu)$. 
    By ergodicity, $\Phi$ satisfies (similarly to~\eqref{eq:AbeDf}) 
    \begin{equation} \label{eq:AbePhi}
        \int_I \phi\, d\mu  = \frac{1}{T} \int_\X \Phi \, d\nu.
    \end{equation}
    \begin{lemma}\label{lem:freeupdown1}
    The free energies satisfy
    $$
    E(f, \phi,\mu) = \frac{1}{T} E(\sigma, \Phi, \nu).$$
    In particular, $E(f, \phi, \mu)$ has the same sign (positive, negative or zero) as $E(\sigma, \Phi, \nu)$. 
    \end{lemma}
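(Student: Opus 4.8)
The plan is to prove Lemma~\ref{lem:freeupdown1} by combining Abramov's entropy formula with the analogue~\eqref{eq:AbePhi} for the integral of the potential. First I would recall the two ingredients that are already in place: since $\hat\nu$ spreads to $\mu$, Abramov's formula~\eqref{eq:Abe} gives $h(\mu) = T^{-1} h(\nu)$ (using that $H$ is a conjugacy, so $h(\nu) = h(\hat\nu)$ as a measure-theoretic entropy), and the ergodic version~\eqref{eq:AbePhi} gives $\int_I \phi\, d\mu = T^{-1}\int_\X \Phi\, d\nu$. Adding these two identities term by term yields
\begin{equation*}
E(f,\phi,\mu) = h(\mu) + \int_I \phi\, d\mu = \frac{1}{T}\Bigl(h(\nu) + \int_\X \Phi\, d\nu\Bigr) = \frac{1}{T} E(\sigma,\Phi,\nu),
\end{equation*}
which is exactly the asserted equality.

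The second assertion, about signs, is then immediate: $T = \int_Y \tau\, d\hat\nu$ is an integral of a function $\tau \geq 1$ against a probability measure, hence $T \geq 1 > 0$, so multiplication by $1/T$ preserves sign (and preserves the property of being zero). I would state this explicitly so the reader sees why $E(f,\phi,\mu)$ and $E(\sigma,\Phi,\nu)$ are simultaneously positive, simultaneously negative, or simultaneously zero.

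There is essentially no hard step here; the lemma is a bookkeeping consequence of results already established. The one point that deserves a word of care is the hypothesis ensuring the quantities are well-defined: one needs $\phi$ integrable against $\mu$ (equivalently, by~\eqref{eq:AbePhi} and the nonnegativity of $T$, $\Phi$ integrable against $\nu$) and $h(\mu) < \infty$, so that $E(f,\phi,\mu)$ is an honest real number rather than an indeterminate $\infty - \infty$; under the running hypotheses (in particular $\mu$ has finite entropy, being carried by a level-$R$ induced map with the tail estimates of Lemma~\ref{lem:inducingcyl}) this is automatic, and I would simply remark that the finiteness of $T$ via Kac' lemma together with the summability of $\tau$ makes the manipulations above legitimate. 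So the proof is two displayed lines plus a sentence on the sign, and the only ``obstacle'' is making sure the integrability and finiteness caveats are in force before invoking~\eqref{eq:Abe} and~\eqref{eq:AbePhi}.
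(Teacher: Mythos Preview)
Your proof is correct and follows exactly the paper's approach: the paper's proof is the single sentence ``This follows immediately from Abramov's formula and~\eqref{eq:AbePhi},'' which is precisely what you have spelled out (adding the observation $T\ge 1>0$ for the sign claim).
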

    \begin{proof}
        This follows immediately from Abramov's formula and~\eqref{eq:AbePhi}.
    \end{proof}

   \begin{lemma}[{\cite[Lemma~4.1]{IomTeq}}] \label{lem:Pneg1}
       Suppose $P(f, \phi) = 0$ and $\Phi$ is locally H\"older continuous. Then
        $P_\sigma(\Phi) \leq 0$. 
    \end{lemma}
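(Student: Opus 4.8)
The plan is to argue by contradiction and exploit the variational characterisation of the pressure for the countable Markov shift together with the spreading/Abramov correspondence. Suppose for contradiction that $P_\sigma(\Phi) > 0$. By the variational principle for finitely irreducible Markov shifts with locally H\"older potential, namely \eqref{eq:PressPhi}, there is then a finite subalphabet $A \subset \A$ and an ergodic $\sigma$-invariant probability measure $\nu$ supported on $\X_A$ with $h(\nu) + \int \Phi \, d\nu > 0$; restricting to a finite subalphabet guarantees both $h(\nu) < \infty$ and $\int \Phi \, d\nu > -\infty$ (since on $\X_A$ the potential $\Phi$ is bounded, $\tau$ being bounded on the finitely many branches indexed by $A$). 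So $E(\sigma, \Phi, \nu) > 0$.

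Next I would transfer this measure down to the original system. The measure $\nu$ corresponds, via the coding conjugacy $H$, to an $F$-invariant measure $\hat\nu = H^{-1}_* \nu$ on $Y$; since $\tau$ is bounded on the support of $\hat\nu$, the integral $T = \int \tau \, d\hat\nu$ is finite, so $\hat\nu$ spreads to a well-defined $f$-invariant probability measure $\mu$. By Lemma~\ref{lem:freeupdown1}, $E(f, \phi, \mu) = \tfrac1T E(\sigma, \Phi, \nu) > 0$. One must check that $\mu \in \M_f$, i.e. that $\mu$ has non-negative Lyapunov exponent: this will follow because positive free energy forces positive entropy (as $\int \phi \, d\mu$ is, in the relevant setting, bounded above — or more directly one observes the spread of a measure living on an expanding induced map cannot have negative exponent, using the expansion estimates such as Lemma~\ref{lem:expandingscheme}/Corollary~\ref{cor:ple} once one notes $\hat\nu$ is not supported on $\hat\K$). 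Then $E(f,\phi,\mu) > 0 = P(f,\phi)$, contradicting the definition of $P(f,\phi)$ as a supremum of free energies over $\M_f$.

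The main obstacle I anticipate is the bookkeeping needed to ensure the measure we extract from the shift is an \emph{admissible} competitor for $P(f,\phi)$ — that is, that it lies in $\M_f$ (non-negative exponent) and that its free energy integral is finite and genuinely positive after dividing by $T$. The reduction to a finite subalphabet is what makes this clean: it simultaneously bounds $\tau$, bounds $\Phi$, makes $h(\nu)$ finite, and makes $T$ finite, so that Abramov's formula \eqref{eq:Abe} and \eqref{eq:AbePhi} apply without integrability subtleties, and Lemma~\ref{lem:freeupdown1} then does the rest. If instead one wanted to avoid the exponent check, one could alternatively note that $P(f,\phi) = 0$ together with Lemma~\ref{lem:freeupdown1} applied to \emph{every} $\hat\nu \in \M_F$ spreading to some $\mu \in \M_f$ already forces $E(\sigma,\Phi,\nu) \le 0$ for all such $\nu$, and then invoke the fact that equilibrium-type measures for $\Phi$ on finite subalphabets realise free energies arbitrarily close to $P_\sigma(\Phi)$; the contradiction is the same. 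I expect the authors' proof (following \cite[Lemma~4.1]{IomTeq}) to proceed along essentially these lines.
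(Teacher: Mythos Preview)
Your proposal is correct and follows essentially the same route as the paper: assume $P_\sigma(\Phi)>0$, use~\eqref{eq:PressPhi} to find $\nu$ on a finite subalphabet with positive free energy, spread it to $\mu$, and apply Lemma~\ref{lem:freeupdown1} to contradict $P(f,\phi)=0$. The paper's proof is terser and does not pause over the $\mu\in\M_f$ check you raise; your extra remarks on that point (boundedness of $\tau$ on $\X_A$, expansion of the induced map) are reasonable hygiene but not a departure in method.
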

    \begin{proof}
    By~\eqref{eq:PressPhi}, if $P_\sigma(\Phi) > 0$, then there is a measure $\nu \in \M_\sigma$ with $E(\sigma, \Phi,\nu) >0$
    and supported on $\X_A$ for some finite alphabet $A$. 
    On $H^{-1}(\X_A)$, $\tau$ is bounded, so $\nu$ spreads to a measure $\mu \in \M_f$. By Lemma~\ref{lem:freeupdown1}, $E(\mu, \phi)$ is strictly positive,  
    contradicting the definition of pressure as the supremum of free energies. 
    \end{proof}

    \subsection{Pressure above and below, II} \label{sec:pupdown}
    Let us return to our specific maps and potentials. 
    Let $(\Y, \hat F_\Y, \tau)$ be a level-$R$ primitive induced map of a  $d$-branched piecewise-monotone map $f$ with non-positive Schwarzian derivative. Let $(\X, \sigma)$ be the corresponding finitely irreducible Markov shift and $H$ the conjugacy between the two maps from~\eqref{eq:Hconj}.  
    We consider the potential 
    $$\phi_{t,v} := -t \log |Df| - v.$$ 
    The corresponding induced potential on $\X$
    \begin{equation} \label{eqn:phitvdef}
    \Phi_{t,v} := S_\tau \phi_{t,v} \circ \pi \circ H^{-1}
\end{equation}
    satisfies
    $$
    \Phi_{t,v} \circ H= 
    -t\log |D \hat F_\Y| - v\tau.$$
From Lemma~\ref{lem:locHol}, $\Phi_{t,v}$ is locally H\"older continuous (note $\tau$ is constant on cylinders, so $V_n(\tau)=0$). 

    Suppose that $\hat \nu \in \M_{\hat F_\Y}$ spreads to $\mu \in \M_f$ and let $\nu$ denote $H_*\hat \nu \in \M_\sigma$. 
    From Lemma~\ref{lem:freeupdown1}, 
    $E(f, \phi_{t,v}, \mu)$ has the same sign (positive, negative or zero) as $E(\sigma, \Phi_{t,v}, \nu)$.

    Let us set $p(t) := P(-t\log|Df|),$ which forces $P(\phi_{t,p(t)}) = 0.$
        Combining this with Lemma~\ref{lem:Pneg1}, we obtain:
    \begin{lemma} \label{lem:Pneg}
        $P_\sigma(\Phi_{t, p(t)}) \leq 0$. 
    \end{lemma}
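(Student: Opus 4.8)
The plan is to deduce this immediately from Lemma~\ref{lem:Pneg1}, so the work is entirely in checking that its two hypotheses are met for the pair $(\phi_{t,p(t)}, \Phi_{t,p(t)})$.

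First I would verify that $P(f,\phi_{t,p(t)}) = 0$. This is just translation-invariance of the pressure: for any constant $c$ and any $\mu \in \M_f$ one has $E(f,\mu,-t\log|Df| - c) = E(f,\mu,-t\log|Df|) - c$, so taking the supremum over $\mu \in \M_f$ gives $P(-t\log|Df| - c) = P(-t\log|Df|) - c$. Applying this with $c = p(t) = P(-t\log|Df|)$ yields $P(f,\phi_{t,p(t)}) = p(t) - p(t) = 0$, exactly as asserted above~\eqref{eqn:phitvdef}.

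Next I would note that $\Phi_{t,p(t)}$ is the induced potential of $\phi_{t,p(t)}$ in the precise sense required by Lemma~\ref{lem:Pneg1}, namely $\Phi_{t,p(t)} = S_\tau \phi_{t,p(t)} \circ \pi \circ H^{-1}$, which is~\eqref{eqn:phitvdef}; and that it is locally H\"older continuous. The latter is already recorded: $\Phi_{t,v}\circ H = -t\log|D\hat F_\Y| - v\tau$, where $\log|D\hat F_\Y|$ is locally H\"older by Lemma~\ref{lem:locHol} and $\tau$ is constant on $1$-cylinders (so $V_n(\tau)=0$), whence $V_n(\Phi_{t,p(t)})$ decays exponentially. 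With both hypotheses in hand, Lemma~\ref{lem:Pneg1} gives $P_\sigma(\Phi_{t,p(t)}) \leq 0$.

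There is essentially no obstacle left once Lemma~\ref{lem:Pneg1} is invoked; the only genuine content has already been absorbed into that lemma (whose proof uses that $P_\sigma(\Phi) > 0$ forces an ergodic shift-invariant measure of positive free energy supported on a finite sub-alphabet, on which $\tau$ is bounded, so that it spreads to a measure in $\M_f$ of positive free energy by Lemma~\ref{lem:freeupdown1}, contradicting $P(f,\phi_{t,p(t)})=0$). The one point to be careful about is bookkeeping: making sure the $\Phi$ of Lemma~\ref{lem:Pneg1} (defined in~\S\ref{sec:pupdown1}) is literally $\Phi_{t,p(t)}$ here, and that the dynamical system $(I,f)$ and induced Markov map $(\Y,\hat F_\Y,\tau)$ there are instantiated by the level-$R$ primitive induced map, which they are by construction.
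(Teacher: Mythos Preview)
Your proposal is correct and follows exactly the paper's approach: the paper's proof is the single line ``From Lemma~\ref{lem:Pneg1}, we obtain,'' with the hypotheses $P(\phi_{t,p(t)})=0$ and local H\"older continuity of $\Phi_{t,v}$ already recorded in the text immediately preceding the lemma. Your write-up simply makes the hypothesis-checking explicit.
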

    
    \begin{lemma} \label{lem:eqlift}
        Suppose that $\mu\in \M_f$ is an equilibrium measure for a potential $-t\log|Df|$ with $\hat \mu(\Y) >0$. Then $P_\sigma(\Phi_{t,p(t)}) = 0.$ The potential~$\Phi_{t,p(t)}$ is summable.
        The corresponding induced measure in $\M_\sigma$
        \begin{equation}\label{eq:nuind}
            \nu = H_* \frac{1}{\hat \mu(\Y)} \hat \mu_{|\Y}
        \end{equation}
        is the unique equilibrium measure for $\Phi_{t,p(t)}$ and is a Gibbs measure. 
    \end{lemma}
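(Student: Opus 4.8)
The plan is to verify each assertion of Lemma~\ref{lem:eqlift} in turn, exploiting the equilibrium hypothesis together with the general machinery already assembled. First I would use Corollary~\ref{cor:Rtowerprim}: since $\mu$ has positive entropy (being an equilibrium measure with $\hat\mu(\Y) > 0$, it gives no mass to $\hat\K$, and positive entropy follows because a zero-entropy equilibrium state would force $P(-t\log|Df|) = P^0(-t\log|Df|)$; but more directly, the hypothesis $\hat\mu(\Y) > 0$ already rules out $\mu$ being supported on attracting/indifferent orbits), and hence the lift $\hat\mu$ restricts to $\Y$ to give, after normalisation, an $\hat F_\Y$-invariant probability measure $\hat\nu$ as in~\eqref{eq:nuind}; its push-forward $\nu = H_*\hat\nu \in \M_\sigma$. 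Set $T := \int_\Y \tau\, d\hat\nu$. By Kac' Lemma, $T = 1/\hat\mu(\Y) < \infty$, and $\hat\nu$ spreads back to $\mu$ (this is the content of the remark at the end of~\S\ref{sec:MM}, applied to $\Y$ in place of $\hat X$).

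Next I would compute the free energy. Since $\mu$ is an equilibrium state for $-t\log|Df|$, we have $E(f, -t\log|Df|, \mu) = P(-t\log|Df|) = p(t)$, so $E(f, \phi_{t,p(t)}, \mu) = p(t) - p(t) = 0$. By Lemma~\ref{lem:freeupdown1}, $E(\sigma, \Phi_{t,p(t)}, \nu) = T \cdot E(f, \phi_{t,p(t)}, \mu) = 0$; in particular $h(\nu) + \int \Phi_{t,p(t)}\, d\nu = 0$, which forces $P_\sigma(\Phi_{t,p(t)}) \geq 0$. Combined with Lemma~\ref{lem:Pneg}, which gives $P_\sigma(\Phi_{t,p(t)}) \leq 0$, we conclude $P_\sigma(\Phi_{t,p(t)}) = 0$. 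Summability is then immediate from Remark~\ref{rem:summ}: a finite pressure is equivalent to summability of the (locally H\"older, by Lemma~\ref{lem:locHol} applied after~\eqref{eqn:phitvdef}) potential $\Phi_{t,p(t)}$, and $P_\sigma = 0$ is finite.

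Finally, for the uniqueness and Gibbs assertions I would invoke Lemma~\ref{lem:GibbsUniq}: $\Phi_{t,p(t)}$ is locally H\"older continuous and summable, so there is a unique $\sigma$-invariant Gibbs measure $\nu_\Phi$, with Gibbs constant equal to $P_\sigma(\Phi_{t,p(t)}) = 0$, and provided $\int \Phi_{t,p(t)}\, d\nu_\Phi > -\infty$ it is the unique equilibrium state. The measure $\nu$ constructed above is $\sigma$-invariant with $h(\nu) + \int \Phi_{t,p(t)}\, d\nu = 0 = P_\sigma(\Phi_{t,p(t)})$, and in particular $\int \Phi_{t,p(t)}\, d\nu = -h(\nu) > -\infty$ (using $h(\nu) = Th(\mu) < \infty$ via Abramov's formula~\eqref{eq:Abe}); so $\nu$ is an equilibrium measure for $\Phi_{t,p(t)}$, hence $\nu = \nu_\Phi$ is the unique one and is Gibbs. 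By the final sentence of the definition of Gibbs measure in~\S\ref{sec:tdf2}, the $\hat F_\Y$-invariant measure $\hat\nu$ is then also called Gibbs.

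The main obstacle I anticipate is purely a matter of bookkeeping rather than a genuine difficulty: one must be careful that the version of Lemma~\ref{lem:freeupdown1} applies with the correct normalisation constant $T = 1/\hat\mu(\Y)$ and that the spreading of $\hat\nu$ recovers exactly $\mu$ (not some other measure with the same projection), which uses ergodicity of $\mu$ and the fact that $\hat\mu$ is \emph{the} lift, so that $\hat\mu_{|\Y}$ normalised is genuinely the first-return measure; also one should confirm $\int \Phi_{t,p(t)}\, d\nu > -\infty$, which is where positive (and finite) entropy of $\mu$ and Abramov's formula do the work, rather than any estimate on $\log|D\hat F_\Y|$ directly.
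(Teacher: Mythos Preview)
Your proposal is correct and follows essentially the same route as the paper's proof: compute $E(f,\phi_{t,p(t)},\mu)=0$, lift via Lemma~\ref{lem:freeupdown1} to get $P_\sigma(\Phi_{t,p(t)})\geq 0$, combine with Lemma~\ref{lem:Pneg} for equality, then invoke Remark~\ref{rem:summ} and Lemma~\ref{lem:GibbsUniq}. You are simply more explicit than the paper about the bookkeeping (Kac, Abramov, and the check that $\int\Phi_{t,p(t)}\,d\nu>-\infty$), which the paper compresses into a single sentence citing Lemma~\ref{lem:GibbsUniq} and~\eqref{eq:PressPhi}.
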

    \begin{proof}
        We have $E(f, \phi_{t, p(t)}, \mu) = 0$, hence (by Lemma~\ref{lem:freeupdown1}) the corresponding measure in $\M_\sigma$ has zero free energy for the potential $\Phi_{t,p(t)}$. Therefore $P_\sigma(\Phi_{t,p(t)}) \geq 0$ which, combined with Lemma~\ref{lem:Pneg}, gives the first statement. Since the pressure is finite (equal to $0$), the potential is summable by Remark~\ref{rem:summ}. The remaining statement now follows from Lemma~\ref{lem:GibbsUniq} and~\eqref{eq:PressPhi}.
    \end{proof}

    \subsection{Bounds on the number of equilibrium states}
    In this section, we first show that the number of equilirbium states with a given entropy is bounded.
           \begin{proof} [Proof of Theorem~\ref{thm:eqstates}]
               Let $d\geq 2, \eps >0$. Take $R$ sufficiently large that $2\eps(R,d) < \eps.$ By Remark~\ref{rem:prim}, there are at most $(2dR)^2d^R$  distinct level-$R$ primitive induced maps. If $f$ is transitive on $J(f)$, Lemma~\ref{lem:primi3} says there is only one level-$R$ primitive induced map. 
    There is at most one equilibrium state which lifts to a given primitive map, by Lemma~\ref{lem:eqlift}, and each equilibrium state with entropy at least $2\eps(R)$ does lift to such a map, by Corollary~\ref{cor:Rtowerprim}. Thus the number of level-$R$ primitive induced maps bounds the number of equilibrium states with entropy at least $\eps$, completing the proof. 
\end{proof}
This allows us to complete the proof of our main technical result. 
           \begin{proof} [Proof of Theorem~\ref{thm:SSSS}]
               Most of Theorem~\ref{thm:SSSS} follows from Proposition~\ref{prop:SSSS}, on taking appropriate subsequences. Only the final statement of Theorem~\ref{thm:SSSS} then needs verifying. From the final statement of Proposition~\ref{prop:SSSS},  if we assume that $E_L = P(-t_0 \log |Df_0| )$, then there is a hyperbolic light limit measure $\mu_*$ with strictly positive entropy which is a convex combination of equilibrium states. 
               Suppose the convex combination is described by a measure $\sigma$ on the space $\M_{f_0}$. If $$A = \{ \mu \in \M_{f_0} : h(\mu) \geq h(\mu_*)/2\},$$
               then $\sigma(A) > 0$ and 
               $$\frac{1}{\sigma(A)} \mu_*' = \int_{A} \mu \, d\sigma(\mu)$$
               is a light limit measure which is a convex combination of equilibrium states with entropy at least $h(\mu_*)/2$, of which there are only a finite number by Theorem~\ref{thm:eqstates}. Hence one of these equilibrium states is a light limit measure (absolutely continuous with respect to $\mu_*$ and necessarily hyperbolic). 
\end{proof}

    \subsection{Exponential tails} 
    We show existence of a full-branched map with exponential tails. 
    Let $f$ be a $d$-branched piecewise-monotone map with non-positive Schwarzian derivative and 
    set $p(t) := P(-t\log|Df|).$ 

        \begin{theorem}\label{thm:fullbranch}
            Given $\theta >0$, there is a finite collection ${\mathcal{K}}$ of full-branched expanding induced Markov maps for $f$ satisfying the following. 
        Suppose 
        %$$p(t)  -\max\{-t\lambda_{\min}, -t\lambda_{\max}\} >2\theta,$$ 
        $$p(t)  -P^0(-t\log|Df|) >2\theta,$$ 
        so $f$ has a positive-entropy equilibrium state $\mu$ for the potential $-t\log|Df|$. 
        For some 
        $(Y_0, G, \tau_0, \pi) \in \mathcal{K}$, the measure $\mu$ lifts to a measure $\hat \nu_0 \in \M_G$, and  
        \begin{itemize}
            \item
                $\hat \nu$ is a Gibbs measure for the potential $S_{\tau_0} \phi_{t,p(t)} \circ \pi$ with $p_{\hat\nu}= 0$;
            \item
                $\log$ of the Jacobian of $\hat \nu_0$ is locally H\"older;
            \item
                the map is $\kappa$-extensible for some $\kappa>0$: if $\tau_0 = n$ on a $1$-cylinder $V$ of $G$, then $f^n$ maps a neighbourhood of $\pi(V)$ onto the $\kappa$-neighbourhood of $\pi(Y_0)$;
            \item
                $|DG| >2$;
            \item
        there exists constants $C_2, \theta_0 > 0$ for which 
        $$
          \hat \nu_0\left( \tau_0^{-1}(n)\right) \leq C_2 \exp(-n\theta_0)
        $$
        for all $n\geq1$. 
    \end{itemize}
    Moreover, if $f$ is transitive on $J(f)$, one can take $\# \mathcal{K} = 1.$
\end{theorem}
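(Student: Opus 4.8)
\textbf{Proof plan for Theorem~\ref{thm:fullbranch}.}

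The plan is to manufacture the full-branched map by \emph{inducing a second time}. We start from a level-$R$ primitive induced map $(\Y, \hat F_\Y, \tau)$, chosen (via Theorem~\ref{thm:eqstates} and Corollary~\ref{cor:Rtowerprim}) so that the positive-entropy equilibrium state $\mu$ for $-t\log|Df|$ lifts to $\hat\nu\in\M_{\hat F_\Y}$; the hypothesis $p(t)-\max\{-t\lambda_{\min},-t\lambda_{\max}\}>2\theta$ together with Lemma~\ref{lem:nsdbounds} guarantees $h(\mu)$ is bounded below in terms of $\theta$, so $R$ can be chosen uniformly. By Lemma~\ref{lem:eqlift}, the induced potential $\Phi_{t,p(t)}$ is locally H\"older (Lemma~\ref{lem:locHol}) and summable, $P_\sigma(\Phi_{t,p(t)})=0$, and the induced measure $\nu=H_*\hat\nu$ is the unique Gibbs (and equilibrium) measure with Gibbs constant $0$. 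The finite collection $\mathcal{K}$ is then indexed by the finitely many level-$R$ primitive induced maps (Remark~\ref{rem:prim}); transitivity on $J(f)$ forces uniqueness (Lemma~\ref{lem:primi3}), giving $\#\mathcal{K}=1$.

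Next I would pass to a full-branched first-return map inside a single $1$-cylinder of $\hat F_\Y$. Fix a cylinder $\hat X_{i_0}$ with $\hat\nu(\hat X_{i_0})>0$ and let $G$ be the first return map of $\hat F_\Y$ to $\hat X_{i_0}$, with return time (in terms of $\hat f$) denoted $\tau_0$. Since $\Y$ is primitive, Lemma~\ref{lem:primi4}\ref{enum:cc3} gives the bounded number $N_\Y$ of iterates needed to cover $\Y$, whence $G$ is full-branched onto $\pi(\hat X_{i_0})$; this is the standard inducing-to-a-cylinder construction for a finitely irreducible Markov shift. The Koebe-type extendibility of $\hat F_\Y$ from Lemma~\ref{lem:primi4}\ref{enum:cc5} and Lemma~\ref{lem:kappaPk} yields a uniform Koebe space $\kappa>0$ for $G$, which gives the $\kappa$-extendibility bullet, bounded distortion, and (after composing enough branches of $\hat F_\Y$, using Lemma~\ref{lem:expandingscheme}) $|DG|>2$, so the diameters of cylinders of $G^n$ shrink exponentially, giving the expanding-induced-Markov-map axioms. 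Local H\"olderness of $\log$ of the Jacobian of $\hat\nu_0$ follows from Lemma~\ref{lem:locHol} applied to $G$ together with the Gibbs property: the restriction/renormalisation $\hat\nu_0:=\hat\nu(\hat X_{i_0})^{-1}\hat\nu_{|\hat X_{i_0}}$ is $G$-invariant (first return), and pushing forward by the natural coding it is Gibbs for the second induced potential $S_{\tau_0}\phi_{t,p(t)}\circ\pi$ with Gibbs constant $0$ because $P_\sigma(\Phi_{t,p(t)})=0$ (Lemma~\ref{lem:GibbsUniq}, applied to the shift for $G$, which is a full shift).

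The main obstacle is the exponential tail estimate $\hat\nu_0(\tau_0^{-1}(n))\le C_2 e^{-n\theta_0}$. Here I would combine two ingredients: the purely combinatorial tail bound on the number of branches of $\hat F_\Y$, refined through the second inducing step (Lemma~\ref{lem:tails} shows $\#\{j:\rho_j=n\}\le e^{3n\eps(R)}$ for the $N$-th return), and the Gibbs property of $\hat\nu_0$, which controls the measure of a cylinder by $\exp(S_{\tau_0}\phi_{t,p(t)})$. On a branch with return time $n$, Lemma~\ref{lem:nsdbounds} gives $\log|D\hat F_\Y^N|\le \lambda_{\max}\tau+C$, so $S_{\tau_0}\phi_{t,p(t)}= -t\log|DG|-p(t)\tau_0 \le -n(p(t)-\max\{-t\lambda_{\min},-t\lambda_{\max}\})+O(1)\le -n\cdot 2\theta+O(1)$ when $t\ge0$ (and symmetrically when $t<0$). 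Summing the Gibbs bound over the at most $e^{3n\eps(R)}$ branches with $\tau_0=n$, and choosing $R$ large enough (hence $\eps(R)$ small) that $3\eps(R)<\theta$, yields $\hat\nu_0(\tau_0^{-1}(n))\le C_2 e^{-n\theta}$, i.e.\ $\theta_0=\theta$. The delicate point is ensuring the choices of $R$ (for the combinatorics) and the cylinder $\hat X_{i_0}$ can be made uniformly over the relevant range of $t$; this is where the uniform lower bound on $h(\mu)$ coming from $p(t)-\max\{-t\lambda_{\min},-t\lambda_{\max}\}>2\theta$ and Lemma~\ref{lem:nsdbounds} is essential, and it is what makes $\mathcal{K}$ finite rather than $t$-dependent.
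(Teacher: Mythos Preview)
Your overall architecture matches the paper's: induce a second time from a level-$R$ primitive induced map to obtain a full-branched scheme, and use the Gibbs property together with Lemma~\ref{lem:nsdbounds} and the branch-counting estimates to get exponential tails. The Gibbs and H\"older bullets and the transitive case follow as you say.

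The tail argument, however, has a real gap. You invoke Lemma~\ref{lem:tails} to claim that the number of branches of $G$ with $\tau_0=n$ is at most $e^{3n\eps(R)}$, but that lemma counts branches of $\hat F_\Y^N$ with $N$th return time $n$ for a \emph{fixed} $N$, and its threshold $n_1$ depends on $N$. A branch of the first return map $G$ to a single cylinder corresponds to an \emph{a priori} unbounded number $p$ of $\hat F_\Y$-iterates (passages through $\Y$), with $1\le p\le n$, so no single application of Lemma~\ref{lem:tails} covers it. Relatedly, the per-branch bound you write as $S_{\tau_0}\phi_{t,p(t)}\le -2\theta n+O(1)$ actually carries an $O(p)$ error, since the constant $C$ in Lemma~\ref{lem:nsdbounds} is paid once per $\hat F_\Y$-step; after summing over branches one obtains at best $\hat\nu_0(\tau_0=n,\ p\text{ passages})\le C_1^{\,p}e^{-n\theta}$ (this is the paper's~\eqref{eq:numsum}), which blows up when $p$ is comparable to $n$.

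The paper fixes this with a two-scale decomposition you do not mention. First it obtains exponential tails for $\hat F_\Y$ itself (your argument essentially does this). To transfer to $G$ it splits $\tau_0^{-1}(n)$ according to $p$: for $p\le T$ it uses~\eqref{eq:numsum}; for $p>T$ it uses the mixing constant $N_\Y$ from Lemma~\ref{lem:primi4}\ref{enum:cc3} together with the Gibbs property to show that the $\hat\nu$-measure of points making $p$ consecutive $\hat F_\Y$-steps without hitting $Y_0$ decays like $e^{-\beta p}$ (the paper's~\eqref{eq:nnnoreturns}). Taking $T\sim\gamma n$ with $\gamma$ small balances the two pieces and yields $\hat\nu_0(\tau_0^{-1}(n))\le C_2e^{-\theta_0 n}$. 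You already cite $N_\Y$ for full-branchedness; the missing step is to use it again, via the Gibbs measure, to control large $p$.

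A minor point: inducing to a $1$-cylinder of $\hat F_\Y$ does not give $|DG|>2$; the paper takes $Y_0$ to be a $K$-cylinder of $\hat F_\Y$ with $K$ chosen via Lemma~\ref{lem:expandingscheme}, so every branch of $G$ automatically involves $\ge K$ iterates of $\hat F_\Y$ and hence has derivative exceeding $2$.
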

    \begin{proof}
    Given $\theta >0$, 
    take $R$ large enough that $\eps(R) < \theta$. 
    There are only finitely many level-$R$ primitive induced maps, and only one when $f$ is transitive. 
    For each such induced map $(\Y, \hat F_\Y, \tau)$, for $K$ large enough $|D\hat F_\Y^n| >2$ for all $n \geq K$. Select some $K$-cylinder $Y_0$ of $\hat F_\Y$. The first return map $G$ to $Y_0$ is full-branched, expanding and extensible. 
    Note that $G$ is the first return map for $\hat f$ and for $\hat F_\Y$; we choose the return time $\tau_0$ to be with respect to $\hat f$). We thus obtain a finite collection $\mathcal{K}$ of maps $(Y_0, G, \tau_0, \pi)$, each one corresponding to a primitive induced map. 
    We shall obtain tail estimates for $(\Y, \hat F_\Y, \tau)$ and subsequently transfer them to the corresponding $(Y_0, G, \tau_0, \pi)$. 

    Suppose $t \in \R$ satisfies 
    \begin{equation}\label{eq:pdiff}
        %p(t) - \max\{-t \lambda_{\min}, -t\lambda_{\max}\} > 2\theta.
        p(t) - P^0(-t\log|Df|) > 2\theta.
    \end{equation}
By Corollary~\ref{cor:eqstate2}, there exists an equilibrium measure $\mu$ for the potential $-t\log|Df|$. The entropy of $\mu$ is necessarily at least $2\theta$, by~\eqref{eq:pdiff}.  
    Since $h(\mu) > 2\eps(R)$, $\mu(\Y) >0$ for some level-$R$ primitive induced map 
    $(\Y, \hat F_\Y, \tau)$. Obtain the conjugacy $H$ to $(\X,\sigma)$ as before. 
    Let $\phi_{t,v}, \Phi_{t,v}$ be defined as per~\eqref{eqn:phitvdef}. 
            Let $$\hat \nu :=  \frac{1}{\hat \mu(\Y)} \hat \mu_{|\Y}$$
            and set $\nu := H_*\hat \nu$, 
    so $\nu$ is the corresponding equilibrium measure $\nu$ for $\sigma$ given by~\eqref{eq:nuind}. By Lemma~\ref{lem:eqlift}, $\nu$ is a Gibbs measure for $\Phi_{t,p(t)}$. 
    By Lemma~\ref{lem:nsdbounds},  
    $$-t \log|D\hat F_\Y| - \tau p(t) \leq -2\theta \tau + C,$$ for some constant $C >0$. 
    By the Gibbs property of $\nu$, for each $1$-cylinder $\indcyl \subset \Y$ of $\hat F_\Y$ with $\tau(\indcyl) = n$, 
    \begin{eqnarray*}
        \hat \nu(\indcyl)  & \leq & K \sup_{\indcyl} \exp\left(-t \log |D \hat F_\Y| -p(t) \tau \right) \\
                           &\leq & K \exp (-2\theta \tau + C).
    \end{eqnarray*}
    In Lemma~\ref{lem:primi4}, we gave a counting estimate on the number of cylinders with a given inducing time, 
    $$
        \# \{\indcyl : \tau (\indcyl) = n \} \leq C_0\exp(\eps(R)n),$$
    for some $C_0 >0$ and all $n\geq1$. 
    Combining the two estimates, we obtain 
    $$ \hat \nu( \tau^{-1}(n)) \leq K C_0 e^C \exp(-n\theta)$$
    a basic exponential tails estimate from which much follows. 
    We use this to obtain exponential tails for a full-branched induced map. 
    
    Let $V = X_{i_1i_2\ldots i_p}$ denote the set of points for which 
    $$\hat F^j_\Y(V) \subset \tau^{-1}(i_j)$$
    for $j = 1, 2, \ldots, p$. 
    If $n := \sum_{j=1}^p i_j$, then
    $$ 
    \hat \nu\left(V\right) \leq K \exp\left( -2\theta n +C\right)  C_0^p \exp\left(\eps(R)n\right)
    $$
    so 
    \begin{equation} \label{eq:numsum}
        \nu\left(V\right) \leq C_1^p \exp(-n\theta),
    \end{equation}
    for some $C_1 >1$ and all $n$. 
    
    Let $(Y_0, G, \tau_0, \pi)$ be the full-branched induced map corresponding to $\Y$. 
    By the Gibbs property, $Y_0$ has positive measure. 
    Let $\hat \nu_0 \in \M_G$ denote the normalised restriction of $\hat \nu$ to $Y_0$. 
       The Gibbs property of $\hat \nu_0$ is inherited from $\hat \nu$; alternatively, one could reproduce~\S\ref{sec:pupdown}. 
        That $\log$ of the Jacobian is locally H\"older follows from the Gibbs property and bounded distortion. 

                We first consider points which make many returns to $\Y$ before returning to $Y_0$. By Lemma~\ref{lem:primi4}\ref{enum:cc3}, there exists $N$ such that for every $1$-cylinder $\indcyl$ of $\hat F_\Y$, 
        $$\Y \subset \bigcup_{j=1}^N \hat F_\Y^j (\indcyl).$$
        Together with the Gibbs property and bounded distortion, this implies the existence of some $\beta >0$ such that, given an $n$-cylinder $\indcyl$, 
        $$\hat \nu \left(\{ \hat x \in \indcyl : \hat F^j_\Y(\hat x) \notin Y_0, \quad j=n,\ldots, n+N-1 \} \right) \leq \exp(-\beta).
        $$
        Since the complement of $\{ \hat x \in \indcyl : \hat F^j_\Y(\hat x) \notin Y_0, \quad j=n+1,\ldots, n+N \}$ is a union of $(n+N+K)$-cylinders, we can inductively obtain
        \begin{equation} \label{eq:nnnoreturns}
            \hat \nu \left(\{ \hat x \in Y_0 : \hat F^j_\Y(\hat x) \notin Y_0, \quad j=1,\ldots, n(N+K) \} \right) \leq \exp(-n\beta).
        \end{equation}

        For $T \leq n$, we can split the set $\tau_0^{-1}(n)$ in two depending on whether $\tau_0 \geq S^\sigma_T\tau$ or not (that is, whether a point making its first return to $Y_0$  has passed more or fewer than $T$ times through $\Y$).
        From~\eqref{eq:nnnoreturns} and~\eqref{eq:numsum} we deduce that
        \begin{eqnarray*}
        \hat \nu\left( \tau_0^{-1}(n)\right)
        &\leq \sum_{p=T}^n\exp( -\lfloor p/N \rfloor \beta) + \sum_{p=1}^T\sum_{i_1 + \cdots + i_p = n}  C_1^p \exp(-n\theta)\\
        &\leq C_3 \exp(-T \beta/N) + C_1^T 2^T \exp(-n\theta),
    \end{eqnarray*}
    for some $C_3 >1$. 
    If one takes $T :=\lfloor \gamma n\rfloor$ with $\gamma := \theta/2\log{C_1 2}$, each term is exponentially small in $n$. Therefore $\hat \nu_0(\tau^{-1}_0(n))$ is also exponentially small in $n$, as required. 
    \end{proof}

    \subsection{Analyticity of pressure}
\begin{proof}[Proof of Theorem~\ref{thm:anal}]
    We assume $t \in (t^-, t^+)$, so 
    %$$p(u)  -\max\{-u\lambda_{\min}, -u\lambda_{\max}\} >2\theta$$ 
    $$p(u)  -P^0(-u\log|Df|) >2\theta$$ 
    for some $\theta >0$, for all $u$ in a small neighbourhood of $t$. Moreover $f$ is assumed transitive, so we have a unique system $(Y_0, G, \tau_0, \pi)$ given by Theorem~\ref{thm:fullbranch}. To each equilibrium state $\mu_u$ for $-u\log|Df|$, we obtain $\hat \nu_0^u \in \M_G$ also from the theorem. 

    The system  has an invariant set 
    $$\Lambda_0 := \bigcap_{n\geq0} G^{-n}(Y_0).$$
    We obtain a new coding map $H_0 : \Lambda_0 \to \X_0$ and conjugacy $H_0 \circ G \circ H_0^{-1} = \sigma_0$, where $(\X_0, \sigma_0)$ is the full shift. 
    Let $\nu_0^u := (H_0)_*\hat \nu_0^u$. 
    Analogously to~\eqref{eqn:phitvdef}, we set $\phi_{u,v} := -u\log|Df| -v$ and define the potential
    \begin{equation} \label{eqn:psitvdef}
        \Psi_{u,v} := S_{\tau_0} \phi_{u,v} \circ \pi \circ H_0^{-1},
    \end{equation}
    Now $\nu_0^u$ is a $\sigma_0$-invariant Gibbs measure for the potential $\Psi_{u,p(u)}$ with $p_{\nu_0^u}=0$.
    From Theorem~\ref{thm:fullbranch}, $\nu_0^t$ has exponential tails:
        There exist constants $C_2, \theta_0 > 0$ for which 
        \begin{equation}\label{eqn:fullt}
        \nu_0^t \left( \tau_0^{-1}(n)\right)  \leq C_2 \exp(-n\theta_0)
    \end{equation}
        for all $n\geq1$. 
   From the Gibbs property of the measure and~\eqref{eqn:fullt}, we recover estimates on the potential:
        For some $\theta_0 > 0$, 
        $$
        \sum_{\indcyl \in \P^{\sigma_0}, \tau_0(\indcyl)=n} \sup_{x \in \indcyl} \exp(\Psi_{t,p(t)}(x)) =O(e^{-\theta_0 n}).$$

    From this, it immediately follows that $\Psi_{t,p(t)}$ is summable and moreover that, for all $(u,v) \in \C^2$ in a complex ball $U$ containing $(t,p(t))$, $\Psi_{u,v}$ is in $\K^s_\beta$.
    Since $\sigma_0$ is a full shift, applying Proposition~\ref{prop:anal} produces: $(u,v) \mapsto P_{\sigma_0}(\Psi_{u,v})$ is analytic on $U$. 
   
    Let $U^\R := \Re(U)$ be a real ball containing $(t,p(t))$ on which $(u,v) \mapsto P_{\sigma_0}(\Psi_{u,v})$ is real-analytic. 
    For $(u,v) \in U^\R$, $\Psi_{u,v}$ is real-valued and summable, so Lemma~\ref{lem:GibbsUniq} implies the existence of a Gibbs measure $\nu_{u,v}$ for $\Psi_{u,v}$. By the variational definition of pressure~\eqref{eq:PressPhi}, consideration of the free energy of $\nu_{u,v}$
    gives, in $U^\R$, 
    \begin{equation*} 
        \frac{\partial P_{\sigma_0}(\Psi_{u,v})}{\partial v}\Big|_{u,v}=-\int\tau~d\nu.
    \end{equation*}
    In particular for each $u$, there is at most one $v$ with $P_{\sigma_0}(\Psi_{u,v}) =0.$ 

    Consideration of $\nu_0^u$  and Lemma~\ref{lem:GibbsUniq} gives  $P_{\sigma_0}(\Psi_{u,p(u)}) = 0$ on a neighbourhood of $t$. 
        By the Implicit Function Theorem, we deduce that $u \mapsto p(u)$ is analytic at $u=t$. 
        This holds for each $t \in (t^-,t^+)$, completing the proof of Theorem~\ref{thm:anal}.
    \end{proof}

        \subsection{ASIP and decay of correlations}
\begin{proof}[Proof of Theorem~\ref{thm:ASIP DCor}]
    Here we will apply the results of \cite{YoungIsrael, MelNic05} to prove Theorem~\ref{thm:ASIP DCor}. 
Let $t\in (t^-, t^+)$; there is a corresponding equilibrium state $\mu$ for the potential $-t\log|Df|$. Obtain $(Y_0, G, \tau_0, \pi)$ from Theorem~\ref{thm:fullbranch}.  It is a full-branched expanding induced Markov map with a $G$-invariant Gibbs measure $\hat \nu_0$  with exponential tails which spreads to $\mu$. In particular,  $\hat \nu_0(\tau\ge n)=O(e^{-\eta n})$.  
From the $\kappa$-extensibility of $G$ and non-positive Schwarzian derivative, 
there exists $C>0$ so that for $x, y$ in the same connected component  $V$ of the domain of $G$, and identifying $x, y$ with $\pi(x), \pi(y)$,
\begin{equation}\label{eqn:melnic3}
    |f^k(x)-f^k(x)|\le C|G^{R}(x)-G^R(y)| \text{ for } 1\le k\le R-1
\end{equation}
where $R=\tau(x)=\tau(y)$. 

We apply \cite[Theorem~2.9]{MelNic05}, noting that
conditions (1)--(5) in \cite[\S2(e)]{MelNic05} hold for our system: (1) ($G$ is full-branched); (2) ($G$ is uniformly expanding); (3) (inequality~\eqref{eqn:melnic3});
 (4)  (the log of the Jacobian of $\hat \nu_0$ is locally H\"older); 
 (5) (integrable inducing time). 
 We obtain that mean-zero H\"older observations satisfy the ASIP, as required. 
 
 The induced Markov map can be viewed as a Young tower \cite{YoungIsrael}. One applies \cite[Theorem~3]{YoungIsrael} to establish decay of correlations (\cite[\S6.4]{YoungIsrael} shows how to go back from the tower to the original system).
\end{proof}

\chapter{Instability for Collet-Eckmann maps}
\label{sec:low ent}
The class of unimodal maps with exponential growth along critical orbits is usually considered to have almost as good statistical properties as uniformly hyperbolic maps.  However, in this chapter we will prove Theorem~\ref{THM:THUN2}, showing that these properties do not extend to statistical stability. We no longer deal with potentials, freeing up the use of the notation $\phi, \psi$ for some induced maps. 

    Let $\F$ denote the set of $C^2$ unimodal maps $f : I \to I$ with non-positive Schwarzian derivative,
	$f(\partial I) \subset \partial I$ and $|Df_{|\partial I}| > 1$, where $I$ denotes the compact interval $[0,1]$. Let us endow $\F$ with the $C^0$ topology.  Each map $f$ has a unique turning (critical) point $c$ in the interior of $I$.

        In \S\ref{ss:1} we will show that, for a map in $\F$ with a parabolic orbit and a certain sequence of Misiurewicz maps converging to the parabolic map, the acips for the Misiurewicz maps converge to the equidistribution along the parabolic orbit. We require stronger control over the acips than mere existence, so we provide another proof of the existence of such measures, while keeping some control on the densities; nevertheless, our proof is a little simpler than the comparable proof in  \cite{BM:flat}. 

In \S\ref{ss:tent}, we examine conjugacies from unimodal maps to tent maps and show existence of tent maps with certain combinatorial properties. 

In the final section of this chapter, we  conclude the proof of Theorem~\ref{THM:THUN2}. In particular, we show that once the topological entropy varies in a continuous family in $\F$, starting from a non-renormalisable map, there are parabolic maps converging to that map for which the equidistributions along the  parabolic period orbits converge to a given periodic orbit of the initial map. Moreover, we show that sequences of Misiurewicz maps as considered in Section~\ref{ss:1} exist. Taking a diagonal subsequence gives a sequence of Misiurewicz maps whose acips converge to the given periodic orbit. 

	Given an interval $J$ and a differentiable map $h: J \to \R$, we say that $h$ has distortion bounded by $C$ if 
        \begin{equation} \label{eq:Cdistn}
            \sup_{x,y \in J} |Dh(x)|/|Dh(y)| \leq C.
        \end{equation}
        If $h$ has bounded distortion clearly it is a diffeomorphism. If $h$ is a diffeomorphism with \emph{non-positive Schwarzian derivative} on a larger interval $J' \supset J$, then the Koebe Principle gives an \emph{a priori}  distortion bound for $h$ on $J$ which depends solely on the relative lengths of $h(J)$ and the connected components of $h(J')\setminus h(J)$. 
        If the domain of $h$ is  instead a collection of pairwise-disjoint intervals, we will say that $h$ has distortion bounded by $C$ if (\ref{eq:Cdistn}) holds on each connected component of its domain. 
        
        We denote by $(a,b)$ the open interval with boundary points $a,b$,  regardless of whether $a < b$ or vice versa, and by $[a,b]$ the corresponding closed interval. We denote the Lebesgue measure by $m$.\index{m@$m=$Lebesgue measure}

        \section{Preliminary lemmas}
        
        The following lemmas will be useful to quantify properties of the acips we construct in the proof of 
 Proposition~\ref{prop:gn}.
	
    \begin{lemma} \label{lem:logsum}
    Let $a_k$ satisfy $0 < a_k < 1$ and $\sum_{k=1}^\infty  a_k k \leq C < \infty$.  Then $\sum_{k=1}^\infty -a_k \log a_k < C+12$.
    \end{lemma}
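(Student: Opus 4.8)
The plan is elementary: the map $t\mapsto -t\log t$ is nonnegative on $(0,1)$ and bounded there, so the only real issue is controlling those indices $k$ for which $a_k$ is so small that $-\log a_k$ is large; for exactly those indices the hypothesis $\sum_k k a_k\le C$ will do the work. Concretely, I would split the positive integers into $A:=\{k:a_k>e^{-k}\}$ and $B:=\{k:a_k\le e^{-k}\}$ and estimate the two sub-sums separately.

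On $A$ one has $-\log a_k<k$, so $0\le -a_k\log a_k<k a_k$, and summing over $k\in A$ gives
$\sum_{k\in A}-a_k\log a_k<\sum_{k\ge 1}k a_k\le C$.
On $B$, for every $k\ge 1$ we have $a_k\le e^{-k}\le e^{-1}$; since $t\mapsto -t\log t$ is \emph{increasing} on $(0,e^{-1}]$ (its derivative $-\log t-1$ is positive there), we get $-a_k\log a_k\le -e^{-k}\log(e^{-k})=k e^{-k}$. Hence
\[
\sum_{k\in B}-a_k\log a_k\le\sum_{k\ge 1}k e^{-k}=\frac{e^{-1}}{(1-e^{-1})^2}=\frac{e}{(e-1)^2}<1.
\]
Adding the two bounds gives $\sum_{k\ge 1}-a_k\log a_k<C+1<C+12$, which is the claimed inequality (indeed with a good deal of room to spare; the constant $12$ is far from optimal).

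The only point deserving a word of care — and it is as close to an obstacle as this statement gets — is the convergence of $\sum_k -a_k\log a_k$: since every summand is $\ge 0$, the partial sums are monotone, so bounding them uniformly (as done above) simultaneously establishes convergence and the estimate for the limit; no separate convergence argument is needed. The calculus inputs used ($-t\log t\ge 0$ on $(0,1)$, monotonicity on $(0,e^{-1}]$, and $\sum_{k\ge1}k e^{-k}=e/(e-1)^2$) are all standard.
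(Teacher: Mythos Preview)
Your proof is correct and uses the same decomposition as the paper's: split according to whether $a_k>e^{-k}$ or $a_k\le e^{-k}$, bounding the first sub-sum by $C$ exactly as the paper does. On the second piece the paper instead invokes the inequality $-\log r\le 2/\sqrt{r}$ to get $-a_k\log a_k\le 2\sqrt{a_k}<2e^{-k/2}$ and sums to a bound below $12$, whereas your monotonicity argument for $t\mapsto -t\log t$ on $(0,e^{-1}]$ is a bit cleaner and yields the sharper constant $C+1$.
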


    \begin{proof}
Split the sum into two parts, one where $-\log a_k \leq k$, where the sum is bounded by $C$, and a second part where $a_k < \exp(-k)$. Now $- \log r \leq 2 \sqrt{1/r}$ for $r >0$, so $- a_k \log a_k \leq 2\sqrt{a_k} < 2 \exp(-k/2)$ provided $a_k < \exp(-k)$. The expression 
        $$2\sum_{k\geq 1} \exp(-k/2) < 2(1 + \exp(1/2))\sum_{k\geq 1} \exp(-k) $$
        is bounded by $12$.
    \end{proof}

\begin{lemma}\label{lem:Misi}
Let $N, \Delta, \lambda >1, \delta >0$ be given. Let $I$ be an interval of length 1. Then there exists $\beta>0$ such that the following holds. 
Let $U$ be an open subinterval of $I$ of length $\geq \delta$. Let $f: I \to I$ be a $C^2$ map such that 
	\begin{itemize}
	\item 
	$$\dist\left(U, \bigcup_{i\geq 1} f^i(\partial U)\right) \geq \delta;$$
	\item
		 $|Df^n(x)| \geq \lambda$ for all $n \geq N$ and $x$ satisfying $f^j(x) \notin U$ for $j = 0,1,\ldots, n-1$;
		 \item
		  $\inf\{|Df(x)| : x \notin U\} \geq \delta$ and $f(\partial I) \subset \partial I$; 
		\item
		given any interval $V$ and $n$ such that $V, f(V), \ldots, f^{n-1}(V)$ are each disjoint from $U$: then the distortion of $f^j$ on $V$ for $j = 1, \ldots, n$ is bounded by $\Delta$. 
	\end{itemize}

	Let $e(x)$ denote the first entry time of $x$ into $U$, that is, $e(x) = \inf\{k \geq 0 : f^k(x) \in U\}$ (if $f^k(x) \notin U$ for all $k\geq 0$, then $e(x) = \infty$). For all $n \geq 0$, 
	$$m(\{x : e(x) \geq n\}) \leq \exp(-n\beta ).$$
	\end{lemma}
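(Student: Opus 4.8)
The plan is to show that the first-entry time into $U$ has exponentially decaying tails by a standard ``expansion plus bounded distortion forces entry'' argument, combined with a telescoping/induction over time blocks. First I would reduce to understanding the set $E_n := \{x : e(x) \geq n\}$, which consists precisely of those points whose first $n$ iterates all avoid $U$. The complement $I \setminus U$ can be written as a union of (at most two, since $U$ is an interval) intervals; on any interval $V$ with $V, f(V), \ldots, f^{n-1}(V)$ all disjoint from $U$, the hypotheses give both a uniform expansion bound ($|Df^j| \geq \lambda$ once $j \geq N$) and a uniform distortion bound ($\Delta$). The key geometric point: if $V$ is a connected component of $E_n$ (a maximal interval on which $f^j$, $0 \le j \le n-1$, stays outside $U$ and is monotone), then $f^n(V)$ must be ``large'' — more precisely, bounded distortion plus the condition $\dist(U, \bigcup_{i\geq 1}f^i(\partial U)) \geq \delta$ forces that $f^j(V)$ cannot be too small for too long without $V$ having already been subdivided. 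The cleanest route is to work with return words: decompose $[0,n]$ into blocks of length $N$, and on each block the map $f^N$ restricted to a surviving interval expands by at least $\lambda$. Hence an interval surviving $kN$ steps, once pushed forward, has been stretched by $\lambda^k$ up to the distortion constant $\Delta$; since lengths are bounded by $|I| = 1$, the preimage interval has length at most $\Delta \lambda^{-k}$, and the number of such components is controlled by how $f^N$ folds (bounded by a constant to the power $k$ coming from the at-most-two pieces of $I \setminus U$ together with the fixed number of monotonicity intervals of $f^N$, which is finite since $f$ is unimodal — $f^N$ has at most $2^N$ laps, a constant depending only on $N$).

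Carrying this out: let $P$ denote the number of laps of $f^N$ (finite, depending only on $N$), and consider the partition of $E_{kN}$ into maximal intervals $V$ on which $(f^N)^i = f^{iN}$ is monotone and avoids $U$ for $i = 0, \ldots, k-1$. Each such $V$ satisfies $|f^{(k-1)N}(V)| \geq \lambda^{k-1}|V|/\Delta'$ for the cumulative distortion $\Delta'$ (a power of $\Delta$, but here I want to be careful — actually the distortion bound $\Delta$ is for the whole composition $f^j$ along an orbit avoiding $U$, which is exactly our situation, so $|f^{kN}(V)| \geq \lambda^{k} |V| / \Delta$ directly when $kN \geq N$). Since $|f^{kN}(V)| \leq 1$, we get $|V| \leq \Delta \lambda^{-k}$. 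The number of such intervals $V$ is at most $P^{k}$ (at each of the $k$ block-steps, one surviving interval can be cut into at most $P$ monotone sub-pieces), so $m(E_{kN}) \leq P^{k} \Delta \lambda^{-k} = \Delta (P/\lambda)^{k}$. \emph{This is where the only real subtlety lies}: if $P \geq \lambda$, the bound is vacuous, so I must instead count more carefully — the number of surviving components does \emph{not} grow like $P^k$, because most of the $P$ laps of $f^N$ land inside $U$ or map surviving intervals across $U$. The correct statement is that the number of connected components of $E_{(k+1)N}$ inside a single component of $E_{kN}$ is bounded by the number of laps of $f^N$ that avoid $U$, which is still a priori up to $P$; but the total measure is already summable. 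The honest fix is the measure estimate itself: $m(E_{(k+1)N} \cap V) \leq$ (number of sub-components) $\times$ (max length), and max length is $\leq \Delta \lambda^{-1} |f^{kN}(V)| / \lambda^{?}$ — one should iterate the \emph{measure} contraction, not count components. Concretely, by bounded distortion, for each component $V$ of $E_{kN}$, the proportion of $f^{kN}(V)$ whose preimage survives the next $N$ steps is at most $1 - \delta'$ for some $\delta' = \delta'(\delta, \Delta, N) > 0$, because $f^N(f^{kN}(V))$ must avoid $U$ and $U$ occupies a definite proportion (via the $\delta$-separation hypothesis and $|U| \geq \delta$) of the image of any interval that $f^N$ stretches enough — once $f^{kN}(V)$ is not tiny. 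Handling the ``tiny image'' case is exactly where expansion re-enters: if $|f^{kN}(V)|$ is tiny, push forward further until it is not, using $|Df^n| \geq \lambda$.

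So the refined induction I would run is: there exist $\theta \in (0,1)$ and $C_0$, depending only on $N, \Delta, \lambda, \delta$, such that $m(E_{(k+1)N}) \leq \theta \, m(E_{kN}) + (\text{exponentially small correction})$, or more simply $m(E_{kN}) \leq C_0 \theta^k$; then setting $\beta := -\tfrac1N \log \theta$ (shrunk slightly to absorb $C_0$ and to pass from multiples of $N$ to all $n$, using $E_n \supseteq E_{n+1}$ so $m(E_n) \leq m(E_{N\lfloor n/N\rfloor}) \leq C_0 \theta^{\lfloor n/N \rfloor} \leq \exp(-n\beta)$ for suitable $\beta$) finishes the proof. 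The main obstacle, and the step deserving the most care, is establishing the one-step contraction $m(E_{(k+1)N}) \leq \theta\, m(E_{kN})$: this requires combining (i) the distortion bound to transfer a definite-proportion statement from $f^{kN}(V)$ back to $V$, (ii) the separation hypothesis $\dist(U, \bigcup_{i \geq 1} f^i(\partial U)) \geq \delta$ to guarantee that a sufficiently stretched interval genuinely overlaps $U$ in a definite proportion (so its surviving part loses a definite fraction), and (iii) the expansion bound $|Df^n| \geq \lambda$ ($n \geq N$) to handle components whose image under $f^{kN}$ is still too small to be stretched across $U$ within one block of length $N$ — for those, one groups several blocks together until the image is macroscopic. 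Once the one-step contraction is in hand, the geometric decay and the choice of $\beta$ are routine.
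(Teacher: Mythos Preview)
Your approach is the standard one, and indeed the paper's own proof is simply a pointer to de Melo--van Strien, Lemma~V.3.3. One minor correction: $f$ is not assumed unimodal; what you actually need, and have, is that $f$ is monotone on each component of $I\setminus U$, which follows from $|Df|\ge\delta>0$ there. Once you abandon lap-counting for the measure-contraction route this misconception is harmless.

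The genuine gap is your handling of components $V$ with tiny image. Saying ``group several blocks together until the image is macroscopic'' is not yet an argument: you must bound \emph{uniformly} how many blocks this takes, and a~priori $|f^{kN}(V)|$ can be arbitrarily small. The fix uses the separation hypothesis a second time, not merely to ensure that large images swallow $U$, but to give a \emph{lower bound} on the image at the moment a component is born. If $V$ is a component of $E_n$ not touching $\partial I$, both endpoints $p,q$ satisfy $f^{j_p}(p),f^{j_q}(q)\in\partial U$ for some $j_p,j_q<n$; taking $j:=\max(j_p,j_q)$ (necessarily $j_p\ne j_q$, else $f^{j_p}(V)$ would be a point or would contain $U$), one endpoint of $f^{j}(V)$ lies in $\partial U$ while the other lies in $\bigcup_{i\ge1}f^i(\partial U)$ and hence at distance $\ge\delta$ from $U$, so $|f^{j}(V)|\ge\delta$. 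From this macroscopic image, block-expansion $|Df^{mN}|\ge\lambda^m$ forces $f^{j+mN}(V)$ to exceed length $1-\delta$ (and hence meet $U$) once $\lambda^m\delta>1$, i.e.\ within $M=O\bigl(N\log(1/\delta)/\log\lambda\bigr)$ further steps. Combined with the distortion estimate you already identified (each such covering cuts the surviving measure by a factor $\le 1-\delta/\Delta$), this gives the uniform contraction $m(E_{n+M})\le\theta\,m(E_n)$ on the non-boundary mass. The at-most-two components touching $\partial I$ have length $\le\lambda^{-\lfloor n/N\rfloor}$ directly from expansion and contribute only an exponentially small remainder.
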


	\begin{proof}
            Mimic the proof of \cite[Lemma~V.3.3]{MSbook}.
        \end{proof}
	\begin{lemma} \label{lem:expintegral}
	For each $\beta > 0$ there is a constant $C$ such that the following holds. Let $I$ be an interval and $r$ a measurable function from $I$ to $\N$ such that, for each $k \geq0$, $m(\{x\in I : r(x) = k\}) \leq \exp(-k\beta)$. 
	
	Let $J \subset I$ be a measurable set  of measure bounded by $\gamma >0$. Then $\int_J r(x) dx \leq C\gamma(C - \log\gamma)$
	\end{lemma}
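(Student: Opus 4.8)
\textbf{Proof plan for Lemma~\ref{lem:expintegral}.}

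The plan is to bound $\int_J r\,dx$ by choosing the worst-case distribution of mass for a fixed measure $\gamma = m(J)$: since $r$ is integrated against the restriction of Lebesgue measure to $J$, and the level sets $\{r = k\}$ have measure at most $e^{-k\beta}$, the integral is largest when $J$ is concentrated on the level sets where $r$ is as large as possible while still fitting inside a set of measure $\gamma$. So first I would fix $\gamma$ and let $N = N(\gamma)$ be the least integer with $\sum_{k > N} e^{-k\beta} \leq \gamma$; equivalently $N \asymp \beta^{-1}\log(1/\gamma)$ up to an additive constant depending on $\beta$. Then for \emph{any} measurable $J$ with $m(J) \le \gamma$, one has the bound
\begin{equation*}
    \int_J r\,dx \;\le\; \sum_{k > N} k\, e^{-k\beta} \;+\; N\,\gamma,
\end{equation*}
obtained by splitting $J = (J \cap \{r \le N\}) \cup (J \cap \{r > N\})$: on the first piece $r \le N$ and $m(J \cap \{r\le N\}) \le \gamma$, giving the term $N\gamma$; on the second piece one simply uses $m(\{r = k\}) \le e^{-k\beta}$ for each $k > N$, giving $\sum_{k>N} k e^{-k\beta}$. (One checks this is indeed the worst case, but we do not even need that — the displayed inequality holds directly.)

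Next I would estimate the tail sum $\sum_{k > N} k e^{-k\beta}$. This is a standard computation: $\sum_{k > N} k e^{-k\beta} \le C_\beta (N+1) e^{-(N+1)\beta} \le C_\beta' N e^{-N\beta}$ for a constant depending only on $\beta$ (geometric-type decay dominates the linear factor). By the choice of $N$, $e^{-N\beta} \le C_\beta'' \gamma$ (since $\sum_{k \ge N} e^{-k\beta} = e^{-N\beta}/(1-e^{-\beta})$ and this exceeds $\gamma$ only by a bounded factor at $k=N$, while at $k = N$ alone $e^{-N\beta} \le \sum_{k\ge N}e^{-k\beta}$ which is comparable to $\gamma$). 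Hence $\sum_{k>N} k e^{-k\beta} \le C \gamma N \le C\gamma(C - \log\gamma)$, absorbing the $\asymp \beta^{-1}\log(1/\gamma)$ estimate for $N$ into the constant $C$ (and using $N \le C' \log(1/\gamma) + C' = C'(C' - \log\gamma)$ after enlarging $C'$, valid for all $\gamma \in (0,1]$; for $\gamma \ge 1$ one may take $J = I$ and the statement is vacuous or handled by enlarging $C$ further). Likewise $N\gamma \le C\gamma(C - \log\gamma)$. Combining the two terms gives $\int_J r\,dx \le C\gamma(C - \log\gamma)$ with a single constant $C$ depending only on $\beta$, as required.

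The only mildly delicate point is bookkeeping the constants so that a \emph{single} $C$ (depending only on $\beta$, not on $\gamma$ or $J$) serves in both the tail bound and the definition of $N$, and covering the trivial regime where $\gamma$ is not small. None of this is a genuine obstacle — it is elementary calculus with geometric series — so I expect the proof to be short; the main care is simply to state the level-set split cleanly and to note that the bound $m(\{r=k\}) \le e^{-k\beta}$ is used directly rather than through any rearrangement argument.
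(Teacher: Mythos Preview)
Your proposal is correct and follows essentially the same approach as the paper: choose a threshold $N$ with $e^{-N\beta}\asymp\gamma$, bound the contribution from $\{r>N\}$ by the tail $\sum_{k>N}k e^{-k\beta}$, and control the rest by $N\gamma$. The paper phrases the first step as a worst-case rearrangement (``the integral over any set of measure $\le S_N$ is at most $T_N=\sum_{k\ge N}k e^{-k\beta}$'') rather than your explicit split into $\{r\le N\}$ and $\{r>N\}$, but the computations and constants are the same.
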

	\begin{proof}
		For  integers $n$, let $S_n := \sum_{k=n}^\infty \exp(-k\beta) = \exp(-n\beta)/(1-\exp(-\beta))$. 
		By examining  the worst case scenario, it is easy to see that the integral of $r$ over any set of measure bounded by $S_n$ is bounded by $T_n := \sum_{k=n}^\infty k \exp(-k\beta)$. 

		Let $N$ be maximal such that $\gamma \leq S_N$. Then rearranging $S_{N+1} \leq \gamma \leq S_N$ gives 
		$$ 
		\frac{\log (\gamma(1-\exp(-\beta))}{-\beta} -1 \leq N \leq \frac{\log (\gamma(1-\exp(-\beta))}{-\beta} +1. $$

		Now, $\int_J r(x) dx \leq T_N$.
		We have 
		\begin{eqnarray*}
		T_N &=& \left(N + \frac{e^{-\beta}}{1-e^{-\beta}}\right) \frac{e^{-N\beta}}{1- e^{-\beta}} \\
		  &\leq& \left(\frac{\log(\gamma(1-e^{-\beta}))}{-\beta} +1 + \frac{e^{-\beta}}{1-e^{-\beta}}\right) \frac{e^\beta\gamma(1-e^{-\beta})}{1-e^{-\beta}},
		  \end{eqnarray*}
		  which can be written in the desired form for some constant $C$.
    \end{proof}
    For motivation, one can think of $h$ in the following lemma as a unimodal map $f$ restricted to a one-sided neighbourhood of the critical point, and of $r$ as the first entry time (on a neighbourhood of the critical value) to a neighbourhood of the critical point. Then this lemma will control integral of the return time on a critical neighbourhood.  
    \begin{lemma}\label{lem:intint}
    	For each pair of constants $\beta>0, C_1 >1$ there is a constant $L$ such that the following holds. 
    	Let $h$ be a $C^1$ diffeomorphism from an open interval $U$ onto an interval $W := h(U)$, where both intervals have  length bounded by 1. Suppose $|Dh| < C_1$ and  
	 $-\int_U \log|Dh(x)| dx  < C_1$.
	Let $r$ be a measurable function from $W$ to $\N$ such that, for each $k \geq0$, $m(\{x\in I : r(x) = k\}) \leq \exp(-k\beta)$. 

	Then $\int_U r(h(x)) dx < L$.
     \end{lemma}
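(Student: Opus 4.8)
The plan is to establish the lemma with $L := \tfrac{2}{\beta}\big(C_1 + (1-e^{-\beta/2})^{-1}\big) + 1$ by combining a single change of variables with Young's inequality, so that no decomposition into scales is ever needed. First I would pass to the image interval $W = h(U)$: since $h$ is a $C^2$ diffeomorphism, for any nonnegative measurable function the change-of-variables formula gives $\int_U r(h(x))\,dx = \int_W r(y)\,g(y)\,dy$, where $g := |Dh^{-1}| = 1/|Dh\circ h^{-1}|$ is a positive measurable function on $W$ with $\int_W g\,dy = |U| \le 1$. Applying the same substitution to $x\mapsto -\log|Dh(x)|$ --- which is bounded below because $|Dh|<C_1$, so the formula is unambiguous --- yields the identity $\int_W g(y)\log g(y)\,dy = -\int_U \log|Dh(x)|\,dx < C_1$. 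Note $g\log g \ge -1/e$ pointwise, so this last integral is a finite real number, bounded below by $-\tfrac1e|W|\ge -\tfrac1e$ and above by $C_1$.

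The key step is Young's inequality for the Legendre pair given by the convex function $b\mapsto b\log b - b$ on $b>0$ and its conjugate $a\mapsto e^{a}$, namely $ab \le b\log b - b + e^{a}$ for all $b>0$, $a\in\R$. Taking $b = g(y)$ and $a = \tfrac{\beta}{2}\,r(y)$ gives the pointwise bound $r g \le \tfrac{2}{\beta}\big(g\log g - g + e^{\beta r/2}\big)$, whose right-hand side is integrable over $W$ by the previous paragraph. Integrating and discarding $-\int_W g\,dy < 0$,
\[
\int_W r(y)\,g(y)\,dy \;\le\; \frac{2}{\beta}\left( C_1 + \int_W e^{\beta r(y)/2}\,dy \right).
\]
The exponential moment is controlled directly by the hypothesis on the level sets of $r$: $\int_W e^{\beta r/2}\,dy = \sum_{k\ge0} e^{\beta k/2}\,m(\{y\in W : r(y)=k\}) \le \sum_{k\ge0} e^{\beta k/2}e^{-\beta k} = (1-e^{-\beta/2})^{-1}$. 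Hence $\int_U r(h(x))\,dx = \int_W r g\,dy < L$, and $L$ depends only on $\beta$ and $C_1$.

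The step requiring care --- and the reason I would avoid the more obvious route of slicing $W$ dyadically according to the size of $g$ and applying Lemma~\ref{lem:expintegral} on each slice --- is that the tail bound for $r$ alone gives only $\int_E r \approx \gamma\log(1/\gamma)$ on a set $E$ of measure $\gamma$; on $\{2^j < g \le 2^{j+1}\}$ the inequality $\int_W g\log g\,dy < C_1$ forces $\gamma$ to be of order at most $2^{-j}/j$, and multiplying by the size $\approx 2^{j}$ of $g$ there leaves a contribution of order $1$ per scale, so the series over $j$ diverges. The $L\log L$ information $\int_W g\log g\,dy < C_1$ must therefore be exploited in its sharp form, and Young's inequality is exactly what pairs it with the exponential moment $\int_W e^{\beta r/2}\,dy$ furnished by the tail hypothesis. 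The remaining hypotheses, $|Dh|<C_1$ and $|U|,|W|\le1$, enter only to make the integrals above well defined and finite and to give $\int_W g\,dy\le1$.
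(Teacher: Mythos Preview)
Your proof is correct and considerably slicker than the paper's. The paper works on the $U$-side, slicing $U$ into level sets $U_k=\{x: e^{-k}\ge |Dh(x)|/C_1 > e^{-k-1}\}$, bounding $\int_{h(U_k)} r$ via Lemma~\ref{lem:expintegral}, and then summing using $\sum_k k\,m(U_k)\lesssim C_1$ together with Lemma~\ref{lem:logsum} to control $\sum_k -m(U_k)\log m(U_k)$. You instead pass once to $W$, recognise $\int_W g\log g<C_1$ as an $L\log L$ bound, and pair it with the exponential moment $\int_W e^{\beta r/2}$ via Young's inequality for the conjugate pair $(b\log b-b,\ e^a)$. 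This is the natural duality argument; it avoids both auxiliary lemmas and yields an explicit constant. One remark: your closing critique of the slicing route is a little unfair --- the per-scale divergence you describe comes from using only the termwise bound $\gamma_j\lesssim 2^{-j}/j$, whereas the paper exploits the full constraint $\sum_j j\,2^j\gamma_j<\infty$ through Lemma~\ref{lem:logsum}, which is precisely the combinatorial surrogate for the entropy--exponential duality you invoke directly. So the slicing approach does close, just less transparently.
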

     \begin{proof}
     	Let $C$ be given by Lemma \ref{lem:expintegral}, so if $J$ is a set with measure bounded by $\gamma >0$, then 
        \begin{equation}\label{eqn:Cgamma}
        \int_J r(x) dx \leq C\gamma(C-\log\gamma).
        \end{equation}
     	
     	For each $k \geq 0$, let $U_k$ denote the set of $x \in U$ for which $e^{-k} \geq |Dh(x)|/C_1 > e^{-(k+1)}$. Then $U = \bigsqcup_{k=0}^\infty U_k$, and
	$$
	C_1 > -\int_U \log |Dh| dx \geq -\log C_1 + \sum_{k=0}^\infty k m(U_k).
	$$
	Now, $\int_{U_k} r(h(x)) dx \leq e^{k+1} \int_{h(U_k)} r(y) dy$. 
        We have $m(h(U_k)) \leq e^{-k} m(U_k)$, so (\ref{eqn:Cgamma}) gives
	$$
		\int_{h(U_k)} r(y) dy \leq C e^{-k}m(U_k) ( C - \log(e^{-k}m(U_k))) \leq Ce^{-k} m(U_k)( C + k - \log m(U_k)).$$
		Then
		$$
		\int_{U_k} r(h(x)) dx \leq C e m(U_k)(C+k-\log m(U_k)).$$
		Summing over $k$ and using Lemma \ref{lem:logsum} with $a_k = m(U_k)$, we get,
		$$
		\int_U r(h(x)) dx \leq C e \left( m(U) C + (C_1+ \log C_1) + C_1 + 12\right). $$
		We can choose $L := Ce(C + 2C_1 + \log C_1 + 12)$. 
	\end{proof}

\section{Acips for some almost-parabolic Misiurewicz maps} \label{ss:1} 

    Let $g \in \F$ have a parabolic periodic orbit of (minimal) period $k$, and suppose that $g^k$ is orientation-preserving in a neighbourhood of the orbit. 
    Let $\alpha$ be the point of the orbit such that the critical point $c$ (of $g$) is in the immediate basin of attraction of $\alpha$ for the iterate $g^k$ (\cite[Theorem~III.6.1]{MSbook}).
    Let $c'$ be the nearest critical point of $g^k$ on the other side of $\alpha$ from $c$, so for some $j < k$, $g^j(c')=c$. Then $g^k$ is orientation-preserving on $(c',c)$. 

\begin{lemma}
	The graph of $g^k$ does not traverse the diagonal on $(c',c)$ and $\alpha$ is the unique fixed point of $g^k$ in this interval.
\end{lemma}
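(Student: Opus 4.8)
The plan is to argue by contradiction using the dynamical characterisation of the parabolic point. First I would recall the set-up: $\alpha$ is a parabolic fixed point of $G:=g^k$, with $DG(\alpha)=1$ (the multiplier being $+1$ since $G$ is orientation-preserving near the orbit), and $c$ lies in the immediate basin of $\alpha$ for $G$, while $c'$ is the nearest critical point of $G$ on the far side of $\alpha$, so that $G$ is a diffeomorphism with non-positive Schwarzian derivative on the open interval $(c',c)$. Since $G$ has non-positive Schwarzian derivative on $(c',c)$, the Minimum Principle applies: $DG$ cannot attain an interior minimum on $(c',c)$, so $\log|DG|$ (equivalently $|DG|^{-1/2}$, which is convex) has no interior local minimum strictly below its boundary values.

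The key step is to locate all fixed points of $G$ in $(c',c)$. Suppose, for contradiction, that $G$ has a fixed point $p\ne\alpha$ in $(c',c)$; without loss of generality take $p$ on the same side of $\alpha$ as $c$. Between $\alpha$ and $p$ the graph of $G$ meets the diagonal at both endpoints, so by the Mean Value Theorem there is a point in $(\alpha,p)$ with $DG=1$. But $DG(\alpha)=1$ as well, so we have two points with $DG=1$; if on $(\alpha,p)$ the graph of $G$ stays on one side of the diagonal then $DG$ must dip below $1$ somewhere strictly between them (or the function $G(x)-x$, which vanishes at both ends, forces $DG<1$ at an interior point), contradicting the Minimum Principle applied to $G$ restricted to a suitable subinterval whose endpoints have $|DG|\ge 1$ — one gets an interior point where $|DG|$ is strictly below the minimum of the boundary values. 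More carefully, I would phrase the Minimum Principle argument directly: on $[\alpha,p]$ we have $|DG(\alpha)|\ge 1$ and $|DG(p)|\ge 1$ (indeed $DG(p)$ must be $\ge 1$ for $p$ to attract from outside, but even if not, $p$ is a boundary point of the restriction), yet the immediate basin condition on $c$ forces $|DG|<1$ on a punctured neighbourhood of $\alpha$ inside $(c',c)$ where orbits are attracted, so $|DG|$ drops below $1$ there — this contradicts $|DG(x)|\ge\min\{|DG(\alpha)|,|DG(p)|\}\ge 1$ for interior $x$. This simultaneously shows the graph of $G$ cannot traverse (cross transversally to the other side of) the diagonal on $(c',c)$: a transversal crossing would produce a second fixed point, already excluded, or would require $|DG|>1$ at a crossing with $|DG|<1$ nearby, again violating the Minimum Principle.

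Finally I would assemble the two conclusions. Uniqueness of the fixed point in $(c',c)$ follows from the contradiction above. For the non-traversal statement: since $G(c')$ and $G(c)$ are determined (note $G(c')=g^{j}$-image gives $G(c')=c$... actually $c=g^j(c')$ so $G(c')$ is the image of $c$ under the remaining iterates) and since $c$ is attracted to $\alpha$, the orbit structure forces $G(x)-x$ to have constant sign on each of $(c',\alpha)$ and $(\alpha,c)$; if it changed sign we would get an extra fixed point. Hence the graph lies entirely on one side of the diagonal on each half, i.e. it does not traverse the diagonal on $(c',c)$. I expect the main obstacle to be the bookkeeping of orientations and of which side of the diagonal the graph lies on in each subinterval $(c',\alpha)$ versus $(\alpha,c)$ — getting the signs right so that the "immediate basin" hypothesis on $c$ genuinely forces $|DG|<1$ adjacent to $\alpha$ on the correct side, which is what powers the Minimum Principle contradiction; the analytic content itself is just the Minimum Principle plus the Mean Value Theorem.
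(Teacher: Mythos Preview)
Your Minimum Principle argument for uniqueness is misdirected. You assert $|DG(p)|\ge 1$ without justification (the parenthetical about ``attracting from outside'' does not establish it), and in fact the correct application of convexity of $|DG|^{-1/2}$ on $(c',c)$ yields the \emph{opposite} inequality: the Mean Value Theorem gives $z$ strictly between $\alpha$ and a second fixed point $\alpha'$ with $DG(z)=1$, and convexity then forces $|DG|^{-1/2}(\alpha')\ge 1$, i.e.\ $|DG(\alpha')|\le 1$. So any second fixed point is attracting, not repelling. The paper finishes with the standard consequence of non-positive Schwarzian (Singer's theorem): the immediate basin of the attracting periodic orbit through $\alpha'$ must contain the critical point $c$ of $g$, which is already in the basin of $\alpha$ --- contradiction. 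Note also that your ``without loss of generality take $p$ on the same side of $\alpha$ as $c$'' is backwards: on the $c$-side a second fixed point is ruled out trivially, since the immediate basin is connected and contains both $\alpha$ and $c$, so cannot contain a fixed $p$ between them. The substantive case is $p\in(c',\alpha)$, where $\alpha$ may repel and your claimed inequality $|DG|<1$ near $\alpha$ need not hold on that side; there your direct approach breaks down and one genuinely needs the ``$\alpha'$ is attracting $\Rightarrow$ two basins for $c$'' step.

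Your non-traversal argument is also incomplete. Uniqueness of the fixed point only gives that $G(x)-x$ has constant sign on each of $(c',\alpha)$ and $(\alpha,c)$ separately; it does \emph{not} rule out a tangential crossing at $\alpha$ with opposite signs on the two sides (locally $G(x)-x$ could behave like $(x-\alpha)^3$, with $\alpha$ attracting from both sides and no other fixed point). The paper handles this with a separate argument that uses the period structure: if $\alpha$ attracted from both sides then $g^{nk}((c',c))$ would shrink to a neighbourhood of $\alpha$, and combining this with the relation $g^j(c')=c$ for some $0<j<k$ forces $g^j(\alpha)=\alpha$, contradicting minimality of $k$. This step is entirely absent from your sketch, and your ``if it changed sign we would get an extra fixed point'' does not substitute for it.
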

\begin{proof}
    If there were a second fixed point $\alpha'$ of $g^k$ in $(c',c)$, one could use the  Minimum Principle to show that it must be hyperbolic attracting. Thus it would not be in the orbit (under $g$) of $\alpha$ and $c$ would be in the basin of two different attracting orbits, which would be impossible.  It remains to rule out the case that $c$ and $c'$ are on opposite sides of the diagonal. Assume that they are on opposite sides, so $\alpha$ is attracting on both sides. Then for $n$ large, $g^{nk}((c',c))$ is a small neighbourhood of $\alpha$. But for some $0 < j < k$, $g^j(c') = c$. Thus $g^{nk+ j}(\alpha) = g^j(\alpha)$ is close to $\alpha$ and so equals $\alpha$, contradicting minimality of $k$. 
\end{proof}

    Let $q$ be a preimage of some repelling periodic point so that $g^k$
    is monotone on $(q,\alpha)$ and $g^i(q), g^i(\alpha) \notin (q,\alpha)$ for all $i\geq 1$. It follows that $\alpha \in (q,c)$, since all points in $(\alpha, c)$ are in the basin of attraction of $\alpha$.

Given a point $x \in I$ and $f \in \F$, we denote by $x^*$ the other (`symmetric') point satisfying $f(x) = f(x^*)$. For maps close enough to $g$, there are corresponding pre-periodic points to $q$ and $q^*$ and we denote these also by $q, q^*$, without indicating the dependence on the map. The critical point is denoted by $c$, again without indicating dependence on the map. 

\begin{proposition} \label{prop:gn}
Suppose there are $g_n \in \F$ converging to $g$ in the $C^0$ topology such that $g^{ik}_n(c) \in (q,c)$ for $1\leq i < n$ and   $g^{nk}_n(c) = q$, 
and such that there exists $C>0$ with  $\int \log |Dg_n(x)| dx > -C$ for all large $n$. 

Then, for large $n$, each $g_n$ has an acip   $\mu_n$ and the sequence of measures $\mu_n$  converges weakly to the equidistribution on the orbit of $\alpha$.
\end{proposition}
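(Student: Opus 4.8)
The plan is to build, for each large $n$, a first-return induced map for $g_n$ on a neighbourhood of the critical value, control its distortion and its return-time tails uniformly in $n$, and then let $n \to \infty$.

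\textit{Step 1: construct the induced map.}
Fix a small interval $U = U_n$ around the critical point $c$, to be chosen as a neighbourhood on which $g_n^k$ is well-approximated by the parabolic map $g^k$. Because $g^{ik}_n(c) \in (q,c)$ for $1 \le i < n$ and $g^{nk}_n(c) = q$, the critical orbit of $g_n$ eventually lands (at time $nk$) on the preperiodic point $q$, so $g_n$ is a Misiurewicz map, and in fact the critical orbit avoids a definite neighbourhood of $c$ \emph{except} for a long initial stretch that shadows the (attracting side of the) parabolic orbit of $g$. The key geometric input is that outside $U$ the map $g_n$ is uniformly expanding on orbits: since $g$ is Misiurewicz-like away from the parabolic orbit and $g_n \to g$ in $C^0$ (and the maps have non-positive Schwarzian derivative, hence uniformly bounded distortion via Lemma~\ref{lem:koebe}), there are $N, \lambda > 1, \Delta, \delta > 0$, \emph{independent of $n$}, for which the four hypotheses of Lemma~\ref{lem:Misi} hold. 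This gives exponential tails $m(\{e \ge n\}) \le e^{-n\beta}$ for the first entry time $e$ into $U$, with $\beta$ uniform in $n$.

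\textit{Step 2: control the return-time integral and build the acip with uniform density bounds.}
Restricting $g_n$ to each of the two one-sided neighbourhoods of $c$ inside $U$, the map $g_n$ factors through a $C^2$ diffeomorphism $h$ onto a neighbourhood $W$ of the critical value; the hypothesis $\int \log|Dg_n| \, dx > -C$ translates (using that $c$ is not too flat) into the bound $-\int_U \log|Dh| \, dx < C_1$ for a uniform $C_1$. Lemma~\ref{lem:intint} (with $r$ the first entry time into $U$, whose tails we bounded in Step 1) then yields $\int_U r(h(x)) \, dx < L$ with $L$ uniform in $n$: the first return time to a critical neighbourhood has uniformly bounded integral. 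Standard Folklore-type construction of the acip for a map with an expanding induced map of integrable return time (as in the references \cite{BM:flat}, but here kept quantitative) produces an acip $\mu_n$ for $g_n$; the uniform bounds on distortion, on the return-time integral, and on $L$ give uniform control on the density $d\mu_n/dm$ and, crucially, on how much mass $\mu_n$ places near the critical neighbourhood $U_n$. One should also use Lemma~\ref{lem:logsum} to handle the entropy/mass bookkeeping when summing over the partition into return branches.

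\textit{Step 3: pass to the limit.}
Now the point is that almost all the mass of $\mu_n$ is carried by the long orbit segment of length $\sim nk$ that shadows the parabolic orbit of $g$: the induced map on $U_n$ has branches whose return time is close to $nk$ (the critical value needs $\approx nk$ iterates of $g_n$, tracking the slowly-converging parabolic-like dynamics near $\alpha$, before escaping near $q$ and returning to $U_n$), and the fraction of $\mu_n$-mass that ever sits outside a small neighbourhood of the orbit $\{g^i(\alpha)\}$ is $O(1/n)$ by the uniform integrability from Step 2. Hence for any continuous test function $\psi$, $\int \psi \, d\mu_n$ differs from the average of $\psi$ over the orbit of $\alpha$ by a quantity tending to $0$ as $n \to \infty$, using $g_n \to g$ in $C^0$ to identify the shadowed orbit with $\{g^i(\alpha)\}$ in the limit. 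This gives weak-$*$ convergence of $\mu_n$ to the equidistribution on the orbit of $\alpha$.

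\textit{Main obstacle.}
The delicate point is Step 2, making \emph{quantitative} and \emph{uniform in $n$} the construction of the acip: one must track the constants in the hyperbolic-times / inducing argument so that the density bound and the return-time integral bound do not blow up as the parabolic limit is approached (this is precisely where our argument must improve on the qualitative existence results). The uniform distortion from non-positive Schwarzian derivative (Lemma~\ref{lem:koebe}) and the uniform expansion outside $U$ from Lemma~\ref{lem:Misi} are the tools that make this possible, but assembling them — in particular choosing $U_n$ correctly so that $g_n^k|_{U_n}$ is uniformly close to the parabolic germ while $U_n$ still has the separation property required by Lemma~\ref{lem:Misi} — is the technical heart of the proof.
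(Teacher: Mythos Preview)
Your approach has a genuine gap in Step~1 that undermines everything downstream. You want to apply Lemma~\ref{lem:Misi} with constants \emph{uniform in $n$} to obtain exponential tails for the first entry time into a critical neighbourhood $U$. But the second hypothesis of that lemma requires uniform expansion for orbits that stay outside $U$, and this fails here: the almost-parabolic point $\alpha$ lies in $(q,c)$, outside any small neighbourhood of $c$, and as $n\to\infty$ the derivative $|Dg_n^k|$ near $\alpha$ tends to $1$. So any uniform-in-$n$ expansion constant $\lambda>1$ is unavailable, the $\beta$ from Lemma~\ref{lem:Misi} degenerates, and Lemma~\ref{lem:intint} then gives no uniform bound. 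There is also an internal inconsistency: in Step~2 you assert that the first return time to $U$ has uniformly bounded integral, yet in Step~3 you need the orbit to spend time $\sim nk$ near the parabolic orbit before returning. These two claims cannot both hold; the return-time integral must diverge, and that divergence is precisely what forces the normalised measure to concentrate on $\mathrm{Orb}(\alpha)$.

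The paper's proof resolves this by inducing not on a critical neighbourhood but on the larger interval $V=(q_2,q_2^*)$, which contains both $c$ and the whole almost-parabolic region, and by decomposing the return time as $r=e_W+e_V\circ\phi_W$. Outside $V$ the map really is uniformly hyperbolic (Ma\~n\'e), so Lemma~\ref{lem:Misi} applies to $e_V$ with uniform $\beta$; Lemma~\ref{lem:intint} then bounds only the \emph{second} summand $\int_U e_V(g_n^{nk}(x))\,dx$ on the central branch. The first summand $\int_V e_W\,d\nu_n$, which records the slow passage through the almost-parabolic region, diverges, and comparing the divergent part $\mu_n(B)$ against the bounded part $\mu_n(I\setminus B)$ is what yields weak convergence to the equidistribution on $\mathrm{Orb}(\alpha)$. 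Your sketch conflates these two pieces of the return time; separating them is the essential idea.
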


\begin{proof}
    Assume $n$ is large, so $g_n$ is close to $g$. 

    Define $q_i$, for $1\leq i <n$, by $q_i \in (q,c)$, $g_n^k(q_1) = q$ and $g_n^k(q_{i+1}) = q_i$. Then $g^k_n(c) = q_{n-1}$. Set $U := (q_{n-1}, q_{n-1}^*)$. Note that $\limsup_{n \to \infty} |c-q_{n-1}| >0$.

    The \emph{first entry time} $e_V(x)$\index{eV@$e_V$, first entry time} of a point $x$ to $ V := (q_2, q_2^*)$ is defined as 
    $$
    e_V(x) := \inf \{j\geq 0  : g_n^j(x) \in V\}.
    $$
    Define the \emph{first entry map} $\phi_V$\index{phiV@$\phi_V$, first entry map} to $V$ by $\phi_V(x) = g_n^{e_V(x)}(x)$. Now $V$ is a nice interval containing the unique critical point, and $q$ never returns to $(q, q_1^*)$, by choice of $q$. Then on each connected component of the domain of definition of $\phi_V$, the 
    restriction of $\phi_V$ is a diffeomorphism onto $V$ which extends to a larger domain and this larger domain gets mapped by the corresponding iterate of $g_n$ diffeomorphically onto $(q,q_1^*)$. 
        Of course, $\phi_V$ is just the identity on $V = (q_2,q_2^*)$.

    Set $W := (q_1, q_2)$ and define the corresponding first entry time $e_W$ and first entry map $\phi_W$ to $W$.
     Note that $e_W(x)= (i-1)k$ on $(q_i,q_i^*)\setminus [q_{i+1}, q_{i+1}^*]$ for $2 \leq i < n-1$.  
By non-positive Schwarzian derivative, there is a uniform bound $\Delta$, independent of large $n$, on the distortion of each of $\phi_V$, $\phi_{W|(V\setminus U)}$ and $\psi := \phi_V\circ  \phi_W$.

	By the Ma\~n\'e Hyperbolicity Theorem (\cite[Theorem~III.5.1]{MSbook}), $g$ is uniformly hyperbolic away from $V$. Therefore the same holds for $g_n$ for all large $n$: 
	 there exist $\lambda, j>1$ such that $|D(g_{n|(I\setminus V)})^j| > \lambda$.

Define $r(x)$ by $r := e_W + e_V \circ \phi_W $.\index{rew@ $r := e_W + e_V \circ \phi_W $} On $V$, $\psi(x) = g_n^{r(x)}(x)$.
By the Folklore Theorem, there is an acip $\nu_n$ for $\psi : V \to V$ ($\psi$ defined almost everywhere) and, because of the distortion bound, 
	\begin{equation} \label{eqn:measuredistn}
	\Delta^{-1} \leq \frac{d\nu_n}{dx} \cdot|V| \leq \Delta
	\end{equation}
	on $V$.
One can then define an absolutely continuous, $g_n$-invariant measure $\mu_n$ by, writing $\chi_S$ for the indicator function of a set $S$, 
$$
\mu_n(S) = \int_V \sum_{j=0}^{r(x)-1} \chi_S(g_n^j(x)) ~d\nu_n(x)
$$
for all measurable sets $S$. 
Of course, $\mu_n$ is finite if and only if the integral is finite, with $I$ in place of $S$ --- this is just $\int_V r(x) ~d\nu_n(x)$. We shall show finiteness and more now.

Our goal is to show that, for large $n$, most of $\mu_n$ is concentrated in a small neighbourhood of the orbit of $\alpha$. So let $B$ be an $\varepsilon$-neighbourhood of the orbit of $\alpha$. It suffices to prove that $\mu_n(B)$ and $\mu_n(I\setminus B)$ are finite (showing finiteness of $\mu_n(I)$) and that $\lim_{n\to \infty} \mu_n(B)/\mu_n(I \setminus B) = \infty$.

        For some $M$, for all large $n$,  $\# \{i \leq e_W(x) : g_n^{i}(x) \notin B \} < M$ for all $x \in V$. 
	Set $U_j := (q_j, q_{j+1}) \cup (q_j^*, q^*_{j+1})$ for $j = 2,\ldots, n-2$; on each $U_j$, $e_W(x) = kj$, and $V = U \cup \bigcup_{j=2}^{n-2}U_j$. 
	Then,
        $$
        \int_V e_W(x) ~d\nu_n (x) \geq \mu_n(B) = \int_V \sum_{j=0}^{r(x)-1} \chi_B(g_n^j(x)) ~d\nu_n(x) \geq \int_V (e_W(x)-M) ~d\nu_n(x).
        $$
        Using (\ref{eqn:measuredistn}) we deduce that 
        $$
            |V|\Delta k\left(n|U| + \sum_{j=2}^{n-2} |U_j|j\right) \geq \mu_n(B) \geq |V|\Delta^{-1} \left(-M|V| + k\sum_{j=2}^{n-2} |U_j|j\right).
        $$
	Clearly $\mu_n(B)$ is finite.
	Denoting by $U^n_j$ the set $U_j$ for the map $g_n$, 
	we have the following.  For each $j$, the sets $U^n_{n-j}$ converge as $n$ tends to infinity to a set $U^\infty_{-j}$, say, of definite size. Therefore, 
        the term $\sum_{j=2}^{n-2} |U^n_j| j \geq |U^n_{n-2}|(n-2) $ tends to infinity with $n$, forcing $\mu_n(B)$ along with it. 

        It remains to show that $\mu_n(I\setminus B)$ is bounded independently of $n$.  Now $\mu_n(I\setminus B) \leq \int_V(M + e_V(g^{e_W}(x))) ~d\nu_n(x)$. 
 Again using (\ref{eqn:measuredistn}),
        $$
        \mu_n(I \setminus B) \leq |V|\Delta\left(|V|M + \Delta\sum_{j=2}^{n-2} |U_j| \int_W e_V(x) dx /|W| + \int_U e_V(g_n ^{kn}(x)) dx\right).
        $$
        By Lemma \ref{lem:Misi} there is a $\theta > 0$ such that $m(\{x \in I : e_V(x) = j \}) \leq \exp(-\theta j)$, again independently of $n$, provided  $n$ is large. Therefore $\int_W e_V(x) dx$ is bounded. 
	 Now consider the term $\int_U e_V(g^{kn}_n(x)) dx$.
		On $U$, $g_n$ is a unimodal map; $g_n^{nk-1}$ maps $g_n(U)$ diffeomorphically with distortion bound $\Delta$ and derivative bounded away from $0$ and infinity independently of $n$, into $W$. On $W$, the return time $e_V$ to $V$ decays exponentially by Lemma \ref{lem:Misi}, and the constants do not depend on $n$, so we can apply Lemma \ref{lem:intint}. Note we do not apply it to $g_n^{nk}$ directly, but rather to  $h:= g_n^{nk}$ restricted to the two connected components of $U\setminus \{c\}$ one at a time. Applying the lemma gives a bound independent of $n$ on $\int_U e_V(g^{kn}_n(x)) dx$. 
		Therefore $\mu_n(I \setminus B)$ is uniformly bounded and $\mu_n(B)/\mu_n(I\setminus B)$ tends to infinity. Normalise the measure $\mu_n$ to finish. 
	\end{proof}

\section{Conjugacies to tent maps} \label{ss:tent}
   
    \begin{definition} For $s \in (1,2]$ denote by $T_s : I \to I$ the \emph{tent map}\index{Tent@ $T_s$, tent map}  with slope $\pm s$, defined  by 
        \begin{itemize}
        \item
            for $0 \leq x \leq 1/2$, $T_s(x) = sx$;
            \item
            for $1/2 \leq x \leq 1$, $T_s(x) = s - sx$.
            \end{itemize}
        \end{definition}
	The tent map $T_s$ has topological entropy equal to $\log s$ (\cite[Theorem~II.8.1]{MSbook}).

    \begin{definition}
        We say a tent map is \emph{periodic} if the orbit of the turning point is periodic. We call the tent map \emph{non-recurrent} if the turning point (1/2) is non-recurrent.
        \end{definition}
        The following is \cite[Theorem~II.8.1]{MSbook}, due to Parry and to Milnor and Thurston. Note that for unimodal maps, showing uniqueness of the semi-conjugacy is relatively straightforward.
    \begin{fact} \label{fact:semic}
        Let $f : I \to I$ be a continuous unimodal map with positive entropy which fixes the boundary. Then there exists a unique continuous semi-conjugacy $h_f$ to $T_s$, where $\log s$ is the (topological) entropy of $f$. 
    \end{fact}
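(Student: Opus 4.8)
The plan is to follow the classical kneading-theory route of Milnor--Thurston and Parry, essentially as in \cite[Theorem~II.8.1]{MSbook}. First, recall the (Misiurewicz--Szlenk) identification of topological entropy with the exponential growth rate of lap numbers: if $\ell_n$ is the number of maximal intervals of monotonicity of $f^n$, then $\htop(f) = \lim_n \tfrac1n \log \ell_n$. Since $f$ is unimodal, $\ell_n \le 2^n$, so writing $\htop(f) = \log s$ we get $s \in (1,2]$, and $s>1$ precisely because the entropy is positive. Hence $T_s$ is a genuine tent map whose core $[T_s^2(1/2), T_s(1/2)]$ is non-degenerate and on which $T_s$ is topologically transitive (indeed locally eventually onto), with $\htop(T_s) = \log s = \htop(f)$.

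For existence, I would construct $h_f$ by comparing itineraries. To each $x \in I$ associate its itinerary $\underline i(x) \in \{L,C,R\}^{\N}$ relative to the turning point $c$ of $f$, and to each $y\in I$ its itinerary $\underline i_s(y)$ relative to $1/2$ for $T_s$; equip sequence space with the usual parity-lexicographic (twisted) order, so that for each unimodal map, $x \le x'$ forces $\underline i(x) \preceq \underline i(x')$, with equality of itineraries only on the dynamically collapsed intervals. After the entropy normalisation the kneading sequence of $T_s$ is dominated by that of $f$, and one checks that for each $x$ the quantities $\inf\{y : \underline i_s(y) \succeq \underline i(x)\}$ and $\sup\{y : \underline i_s(y) \preceq \underline i(x)\}$ agree off a countable set; $h_f(x)$ is defined to be this common value, extended by monotonicity. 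One then verifies in turn: (i) $h_f$ is non-decreasing; (ii) $h_f$ is continuous, since a jump would leave a nonempty open interval of $T_s$-itineraries unrealised, contradicting the fact that $T_s$ on its core realises every admissible itinerary (this is where transitivity of $T_s$ on its core and the lap-number count of $T_s$ are used); (iii) $h_f(I) = I$, by the intermediate value theorem and the same realisability; and (iv) $h_f \circ f = T_s \circ h_f$, which follows from $\underline i(f(x)) = \sigma\underline i(x)$ and the analogous identity for $T_s$, the bookkeeping at preimages of $c$ being absorbed by the monotone extension. (An alternative route for existence: take the measure of maximal entropy $m$ of $f$, which exists and is non-atomic by Hofbauer \cite{HofIntrinsic}, note that it has constant Jacobian $s$ with respect to $f$, and check that $x \mapsto m([0,x])$ pushes $m$ to Lebesgue and conjugates $f$ to the unimodal constant-slope-$s$ model, which is $T_s$.)

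For uniqueness, suppose $h_1, h_2$ are continuous semiconjugacies of $f$ onto $T_s$. Each must be monotone: were $h_j$ non-monotone on some interval $W$, then since $T_s$ is locally eventually onto on its core, some $f^n(W)$ would be carried by $T_s^{\,n}\circ h_j$ onto all of $I$ while $h_j(W)$ stays within a fixed subinterval, a contradiction. Given monotonicity, both $h_j$ send $c$ to $1/2$ and hence agree on $\bigcup_n f^{-n}(c)$, whose $h_j$-image is $\bigcup_n T_s^{-n}(1/2)$, a dense subset of the core; by continuity $h_1 = h_2$ on all points whose forward orbit enters the core, and on each complementary interval (collapsed to an endpoint of the core) both are forced to be the same constant. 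Hence $h_1 = h_2$.

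The main obstacle is the existence half, specifically proving that the itinerary-comparison recipe produces a \emph{continuous and surjective} map: equivalently, that $T_s$ with slope exactly $e^{\htop(f)}$ realises all the itineraries forced by $f$ with no gap. This is exactly the point at which the lap-number characterisation of entropy must be used quantitatively, to match the "size" of the itinerary sets of $f$ and of $T_s$; the remaining verifications are routine combinatorics of unimodal itineraries together with elementary topology of monotone maps.
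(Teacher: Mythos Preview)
Your proposal is correct and follows exactly the route the paper points to: the paper does not give its own proof but simply cites \cite[Theorem~II.8.1]{MSbook} and attributes the result to Parry and to Milnor--Thurston, remarking only that uniqueness is relatively straightforward for unimodal maps. Your sketch of the Milnor--Thurston itinerary argument (with the Parry maximal-entropy alternative noted) and your monotonicity-plus-density argument for uniqueness are precisely the content of that reference, so there is nothing to compare.
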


    \begin{lemma} Let $f \in \F$  have entropy $\log s >0$ and let $h_f$ be the conjugacy to $T_s$. If $f$ has a periodic attractor then $T_s$ is periodic. 
        If $T_s$ is non-recurrent, the semi-conjugacy $h_f$ is a conjugacy and 
    all periodic points of $f$ are hyperbolic repelling and, if also $s > \sqrt{2}$, $f$ is non-renormalisable. 
    \end{lemma}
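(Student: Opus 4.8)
The statement bundles four assertions about a map $f \in \F$ with entropy $\log s > 0$ and its (semi-)conjugacy $h_f$ to the tent map $T_s$: (i) if $f$ has a periodic attractor then $T_s$ is periodic; (ii) if $T_s$ is non-recurrent then $h_f$ is actually a conjugacy; (iii) in that case all periodic points of $f$ are hyperbolic repelling; (iv) if moreover $s > \sqrt 2$ then $f$ is non-renormalisable. The plan is to treat these in turn, using Fact~\ref{fact:semic} for the existence and uniqueness of $h_f$, the non-positive Schwarzian derivative (hence the Minimum Principle and the Koebe Lemma~\ref{lem:koebe}) for the derivative estimates, and standard kneading-theory facts about tent maps.

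\emph{First, (i).} If $f$ has a periodic attractor of period $p$, then by \cite[Theorem~III.6.1]{MSbook} the critical point $c$ lies in its immediate basin, so the orbit of $c$ converges to the attracting orbit; in particular the kneading sequence of $f$ is eventually periodic of a special ``periodic'' type. Since $h_f$ intertwines the orderings of the orbits of the turning points and is order-preserving, the itinerary of $c$ and that of $1/2$ (under $T_s$) coincide, forcing the turning point of $T_s$ to be (pre)periodic of the corresponding combinatorial type; because tent maps are piecewise linear with constant slope $s>1$, a turning point whose orbit converges to a periodic orbit must in fact be periodic (there are no attracting periodic orbits for $T_s$). Hence $T_s$ is periodic.

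\emph{Next, (ii) and (iii) together.} Assume $T_s$ is non-recurrent, so $1/2$ stays a definite distance $\delta_0>0$ from the orbit closure of $1/2$; by Fact~\ref{fact:semic} the corresponding statement transfers via $h_f$ to say the orbit of $c$ stays away from $c$, i.e.\ $f$ is of Misiurewicz type. The semi-conjugacy $h_f$ fails to be a conjugacy only if it collapses some nondegenerate interval $J$; such a $J$ must be a ``wandering'' or ``homtervaI''-type interval, or an interval on which some iterate is monotone and whose forward orbit avoids $c$. For a $C^2$ map with non-positive Schwarzian derivative and a non-recurrent critical point, the Ma\~n\'e Hyperbolicity Theorem (\cite[Theorem~III.5.1]{MSbook}) gives uniform expansion away from a neighbourhood of $c$; an interval mapped to a point by $h_f$ would have to have orbit avoiding $c$ and diameter bounded below along a subsequence (else it is not collapsed), contradicting uniform expansion. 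Thus $h_f$ is injective, hence a conjugacy. For (iii): any periodic point $x$ of $f$ has orbit disjoint from $c$ (as $f$ is Misiurewicz, so the critical orbit, and by the non-recurrence also repelling periodic orbits, avoid a neighbourhood of $c$; more directly, a periodic point equal to $c$ would make $c$ periodic, contradicting non-recurrence). The corresponding periodic point of $T_s$ has multiplier $\pm s^p$ with $|s|>1$, so it is repelling for $T_s$; pulling this back through the conjugacy $h_f$ only tells us $x$ is topologically repelling. To upgrade to hyperbolicity, apply the Minimum Principle: since $f^p$ has no critical point in the (repelling, by the topological statement) periodic interval around $x$ and its orbit avoids $c$, and since Ma\~n\'e gives $|Df^n|\ge\lambda>1$ for long orbit segments staying away from $c$, one gets $|Df^{p}(x)| \ge \lambda^{p/N} > 1$ for $p$ large, and for the finitely many short periods a topologically repelling fixed point of a negative-Schwarzian diffeomorphism is automatically hyperbolic repelling (a neutral or attracting fixed point would attract a neighbourhood, contradicting the topological repulsion coming from the conjugacy with $T_s$). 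Hence every periodic point is hyperbolic repelling.

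\emph{Finally, (iv).} Suppose $s>\sqrt2$, i.e.\ the topological entropy of $f$ exceeds $\tfrac12\log 2$, yet $f$ is renormalisable, say with restrictive interval $J$ of period $p\ge 2$ containing $c$, so $f^p:J\to J$ is again unimodal. Renormalisability is a combinatorial property, so $T_s$ is renormalisable with the same period $p$, and its topological entropy satisfies $\htop(T_s) = \tfrac1p \htop(T_s^p|_{J'}) \le \tfrac1p \log 2 \le \tfrac12\log 2$, contradicting $\htop(T_s)=\log s > \tfrac12\log 2$. (Here one uses that a unimodal map has topological entropy at most $\log 2$, that entropy of a $p$-periodic renormalisation scales by $1/p$, and that $h_f$, being a conjugacy by (ii)--(iii), identifies the renormalisation structures of $f$ and $T_s$.) This contradiction shows $f$ is non-renormalisable.

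\emph{Main obstacle.} The delicate point is (ii): ruling out that $h_f$ collapses an interval. The clean way is to invoke that a non-recurrent critical point of a $C^2$ non-positive-Schwarzian unimodal map precludes wandering intervals and interval-collapse in the semi-conjugacy --- this is where Ma\~n\'e's theorem and the Koebe/Minimum-Principle distortion control do the real work, and care is needed to phrase it so that ``non-recurrent for $T_s$'' genuinely transfers to ``non-recurrent (Misiurewicz) for $f$'' before the hyperbolicity machinery is applied. Once $h_f$ is a conjugacy, (iii) and (iv) are essentially bookkeeping with standard tent-map kneading theory.
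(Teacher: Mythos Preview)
Your proof outline follows the right general path but contains a genuine gap in part~(i) and a shaky argument in part~(ii); there are also structural differences from the paper's approach worth noting.

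\textbf{Gap in (i).} Your claim that ``a turning point whose orbit converges to a periodic orbit must in fact be periodic (there are no attracting periodic orbits for $T_s$)'' is false as stated: for $T_2$, the orbit $1/2\mapsto 1\mapsto 0\mapsto 0\mapsto\cdots$ is strictly preperiodic. Eventually periodic kneading data only yields a preperiodic turning point for $T_s$, not a periodic one. The paper's argument avoids this entirely: the connected component of the immediate basin containing $c$ is a periodic interval; its $h_f$-image must be a single point (otherwise $T_s$ would have a periodic attracting interval), and that point is $h_f(c)=1/2$, hence periodic.

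\textbf{Problems in (ii).} Your argument that a collapsed interval $J$ ``has orbit avoiding $c$ and diameter bounded below along a subsequence'' is muddled. The orbit of $J$ avoids the \emph{point} $c$, not a neighbourhood of $c$, so Ma\~n\'e's theorem does not apply directly; and for a wandering interval the diameters of $f^n(J)$ go to zero, so your ``bounded below'' claim is simply wrong. The paper instead reduces (via \cite[Proposition~III.4.3]{MSbook}) to excluding wandering intervals, then uses the Misiurewicz property to find nested neighbourhoods $W\subset W'$ of $c$ disjoint from the postcritical orbit; a Koebe extension argument gives $|Df^k(x)|\geq C>0$ whenever $f^k(x)\in W$, so iterates of a wandering interval landing in $W$ cannot shrink, contradicting \cite[Theorem~IV.7.1]{MSbook}. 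The paper also treats the case $s\leq\sqrt2$ separately by renormalising down to the case $s>\sqrt2$; your argument ignores this.

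\textbf{Structural difference.} You prove (iv) \emph{after} and \emph{using} the conjugacy (ii). The paper does the reverse: it proves (iv) directly from the semi-conjugacy alone (a restrictive interval for $f$ would be collapsed by $h_f$ to a periodic point, necessarily $1/2$, contradicting non-recurrence of $T_s$), and only then establishes (ii) in the case $s>\sqrt2$. This ordering matters because the paper's wandering-interval argument implicitly uses non-renormalisability. For (iii), the paper's one-line argument---non-recurrent $c$ plus non-positive Schwarzian forces all periodic points to be hyperbolic repelling, since any non-repelling periodic orbit would attract $c$---is far cleaner than routing through the conjugacy and back.
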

    \begin{proof}
        Since $f$ has non-positive Schwarzian derivative, the turning point of $f$ is contained in the immediate basin of a periodic attractor. The connected component of the immediate basin containing the critical point gets mapped by $h_f$ to the turning point of $T_s$, which is therefore periodic, showing the first statement. 

        If $T_s$ is non-recurrent then $c$ is non-recurrent, so $f$ has no periodic attractors and all periodic points are hyperbolic repelling, again by non-positive Schwarzian derivative. If $s > \sqrt{2}$ then $T_s$ is non-renormalisable. 
	Thus either $f$ is non-renormalisable or $f$ contains a restrictive interval containing $c$ which is collapsed by $h_f$. In the latter case the (periodic) restrictive interval is mapped to the turning point, contradicting non-recurrence of $T_s$. 
	Therefore $f$ is non-renormalisable. Let us show (still in the case $s > \sqrt{2}$) that $h_f$ is a conjugacy. By \cite[Proposition~III.4.3]{MSbook}, we only need to show that $f$ has no wandering intervals. 
	Let $W$ be a neighbourhood of $c$ compactly contained in a larger neighbourhood $W'$ disjoint from the post-critical orbit. If $f^k(x) \in W$ then $|Df^k(x)| > C$ for some constant $C$ which depends only on the relative lengths of $W$ and the connected components of $W' \setminus W$, using non-positive Schwarzian derivative. Thus no wandering interval can accumulate in $W$, as it cannot shrink indefinitely in size. 
	On the other hand, any wandering interval must accumulate on $c$, for example by \cite[Theorem~IV.7.1]{MSbook}. In particular, $f$ cannot have a wandering interval. This completes the case $s >\sqrt{2}$. 

        If $s\leq \sqrt{2}$, there is a least $k \geq 1$ such that $s' := 2^k \log s > 1/2$. The tent map $T_s$ is then $k$ times renormalisable of type 2. The $k$th renormalisation of $T_s$ is the map $T_{s'}$. Pulling back, $f$ is also $k$ times renormalisable of type 2, with $k$th renormalisation $f'$ say. But $h_f$ now conjugates $f'$ to $T_{s'}$, since $s' > \sqrt{2}$. It follows again that $f$ and $T_s$ are conjugate. 
    \end{proof}

    \begin{corollary}
        If $f \in \F$ has entropy $\log s> \log 2/2$ and the turning point of $T_s$ is non-recurrent, then $f$ is a non-renormalisable Misiurewicz map.
    \end{corollary}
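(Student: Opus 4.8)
The statement to be proven is the final corollary: if $f \in \F$ has entropy $\log s > \log 2 / 2$ and the turning point of $T_s$ is non-recurrent, then $f$ is a non-renormalisable Misiurewicz map. The plan is to simply chain together the preceding lemma and corollary in this subsection, so the ``proof'' is essentially a matter of verifying that the hypotheses of the earlier lemma are met and then unpacking what its conclusion gives us.

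First I would observe that $\log s > \log 2 / 2$ is exactly the condition $s > \sqrt 2$, so $T_s$ is non-renormalisable (the tent map $T_s$ is renormalisable precisely when $s \le \sqrt 2$, as used repeatedly above). Next, the hypothesis that the turning point $1/2$ of $T_s$ is non-recurrent is precisely the hypothesis ``$T_s$ is non-recurrent'' appearing in the lemma. Applying that lemma with the additional information $s > \sqrt 2$, we conclude: the semi-conjugacy $h_f$ is in fact a conjugacy, all periodic points of $f$ are hyperbolic repelling, and $f$ is non-renormalisable.

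It then remains to check that $f$ is Misiurewicz in the sense defined earlier in the paper, namely that all periodic points are hyperbolic repelling (already obtained) and that the orbit of the critical point $c$ avoids a neighbourhood of the critical set. For this I would use that $h_f$ is a conjugacy: since the turning point of $T_s$ is non-recurrent, its forward orbit stays a definite distance away from $1/2$, and transporting this back through the conjugacy $h_f$ (which is a homeomorphism of $I$) shows that the forward orbit of $c$ under $f$ stays a definite distance from $c$. Since $f$ is unimodal, $\{c\}$ is the whole critical set, so this is exactly the Misiurewicz condition. Combining with non-renormalisability gives the claim.

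I do not expect a genuine obstacle here: the corollary is a bookkeeping consequence of the lemma and corollary immediately preceding it, and the only mild point requiring care is the transfer of ``non-recurrence of the turning point of $T_s$'' to ``non-recurrence, indeed bounded-away recurrence, of $c$ under $f$'' via the conjugacy $h_f$ --- but since $h_f$ is a homeomorphism of the compact interval $I$ and conjugates the two dynamics, distances are comparable in a uniform way on compact sets, so the orbit of $c$ inherits the property that it avoids a fixed neighbourhood of $c$. One should perhaps also note explicitly that non-recurrence of the turning point of a tent map really does mean its orbit is bounded away from $1/2$ (as opposed to merely not accumulating on it), which is automatic here since the orbit is closed and does not contain $1/2$ in its closure.
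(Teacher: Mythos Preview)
Your proposal is correct and matches the paper's intent: the corollary is stated without proof in the paper, as an immediate consequence of the preceding lemma, and your argument supplies exactly the routine verification needed (translating $\log s > \log 2/2$ to $s>\sqrt{2}$, invoking the lemma for non-renormalisability and hyperbolicity of periodic points, then pushing non-recurrence of $1/2$ through the conjugacy $h_f$ to get the Misiurewicz condition for $c$). The only point worth tightening is your final parenthetical: non-recurrence of $1/2$ means $1/2 \notin \overline{\{T_s^n(1/2):n\ge 1\}}$, so the forward orbit closure is a compact set not containing $1/2$, hence at positive distance from it---this is what you want, and it transfers through the homeomorphism $h_f$ as you say.
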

    \begin{corollary}
        If $f \in \F$ has entropy $\log s$ and $T_s$ is non-recurrent, and if $g \in \F$, then $f$ and $g$ are conjugate if and only if the entropy of $g$ also equals $\log s$. 
    \end{corollary}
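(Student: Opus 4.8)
The statement to prove is: if $f \in \F$ has entropy $\log s$, $T_s$ is non-recurrent, and $g \in \F$, then $f$ and $g$ are conjugate iff the entropy of $g$ also equals $\log s$. One direction is trivial: if $f$ and $g$ are conjugate, then they are semiconjugate to the same tent map (the semiconjugacy from Fact~\ref{fact:semic} being defined via kneading data, which is a conjugacy invariant), and since entropy is $\log$ of the tent-map slope, the entropies agree. So the content is the converse: assuming $g$ has entropy $\log s$, I must produce a conjugacy between $f$ and $g$.

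\textbf{Key steps.} First I would invoke Fact~\ref{fact:semic} for both maps: since $f$ and $g$ both have entropy $\log s >0$ and both fix the boundary of $I$, there are unique continuous semiconjugacies $h_f : f \to T_s$ and $h_g : g \to T_s$. Next, because $T_s$ is non-recurrent by hypothesis, the preceding Lemma (the one immediately before the two corollaries) applies to \emph{both} $f$ and $g$: it tells us that when $T_s$ is non-recurrent, the semiconjugacy is in fact a conjugacy. Thus $h_f$ and $h_g$ are homeomorphisms conjugating $f$ and $g$ respectively to $T_s$. Then the composition $h_g^{-1} \circ h_f$ is a homeomorphism of $I$ with
$$
(h_g^{-1}\circ h_f)\circ f = h_g^{-1}\circ T_s \circ h_f = g \circ (h_g^{-1}\circ h_f),
$$
so it conjugates $f$ to $g$, completing the proof. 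I should also remark that the hypothesis $\log s >0$ is automatic here (it is needed for Fact~\ref{fact:semic}), and note that non-recurrence of the turning point of $T_s$ is exactly the condition feeding the previous Lemma; no assumption $s > \sqrt 2$ is needed because that Lemma already handles the renormalisable case $s \le \sqrt 2$ internally.

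\textbf{Main obstacle.} There is essentially no obstacle: the work has all been done in the preceding Lemma, whose proof does the real labour (ruling out wandering intervals via non-positive Schwarzian derivative and the Koebe-type derivative bound near a non-recurrent critical point, and handling low-entropy renormalisable cases by pulling back renormalisations of tent maps). The only mild subtlety is making sure the previous Lemma is genuinely applicable to $g$ and not just to $f$ --- but its hypotheses are only ``$g \in \F$'', ``entropy $\log s$'', and ``$T_s$ non-recurrent'', all of which hold, so this is immediate. Hence the proof is a two-line deduction once the previous Lemma is in hand: apply it to both maps to upgrade the semiconjugacies to conjugacies, then compose.
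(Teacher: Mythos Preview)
Your proposal is correct and is exactly the intended argument: the paper states this as a corollary without proof, and the natural deduction is precisely the one you give --- apply the preceding Lemma to both $f$ and $g$ to upgrade the Milnor--Thurston semiconjugacies to genuine conjugacies onto $T_s$, then compose. The forward direction is even simpler than you state (topological entropy is a conjugacy invariant), but your version is also fine.
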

    	
	Let us continue with some further facts, deducible from or contained in \cite[Chapter~II]{MSbook}, to which we also refer the reader for definitions and conventions. Since tent maps have no periodic attractors or wandering intervals, two tent maps with the same \emph{kneading invariants} are conjugate and thus have the same entropy. Since the entropy is just logarithm
        of the slope, the maps themselves coincide. In particular, $T_s$ and $T_{s'}$ have the same kneading invariants if and only if $s=s'$. With a little more work, one can then show the following: 
        parameters $s$ for which $T_s$ is periodic are dense in $(1,2]$; parameters $s$ for which $T_s$ is non-recurrent are dense in $(1,2]$.
	Writing $\nu(s)$ for the kneading invariant of $T_s$, $\nu$ is a strictly monotone function. Moreover, \emph{at non-periodic parameters} $s$, the kneading invariant $\nu(\cdot)$ is continuous (in general it is not continuous at periodic parameters). 
        Fixing $T_s$, the itinerary of a point $x$ is denoted $\underline{i}_s(x)$, so $\nu(s) = \underline{i}_s(1/2)$. The itinerary map $\underline{i}_s$ is also monotone. It is continuous at points whose orbits do not contain $1/2$.  The itinerary of $x$ is a sequence, for $n \geq 0$, whose $n$th element specifies whether $T^n_s(x) - 1/2$ is positive, negative or zero (that is, on which side of the turning point $T^n_s(x)$ lies). Denote by $\underline{i}_s^N(x)$ the itinerary sequence truncated to its first $N+1$ elements. 

    \begin{lemma} \label{lem:follow}
        Let $\sqrt{2} < s \leq 2$ and  $p\ne 0$ be a periodic point for $T_s$ with orbit $\Orb(p)$\index{Orb@$\Orb(p)$, orbit of $p$} not containing the turning point. Then for all $\varepsilon >0$ and $\delta \ne 0$ ($\delta < 0$ if $s=2$), there is a parameter $r \in [s, s+\delta]$ for which $T_r$ has a periodic turning point with period $N$ say, and
        $$
        \# \{k \in \{1, \ldots N\} : T^k_r(1/2) \notin B(\Orb(p), \varepsilon)\} < \varepsilon N.
        $$
        \end{lemma}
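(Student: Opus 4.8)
\textbf{Proof proposal for Lemma~\ref{lem:follow}.}

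The plan is to exploit the fact that the orbit structure of a tent map is governed entirely by the kneading invariant $\nu(s)=\underline{i}_s(1/2)$, which is strictly monotone in $s$ and continuous at non-periodic parameters. Fix $s$ and the periodic point $p\ne 0$ with period $m$, say, whose orbit avoids the turning point. First I would choose a long time horizon $N_0$ and argue that there is a neighbourhood of itineraries within which a periodic orbit is forced to stay $\varepsilon$-close to $\Orb(p)$: since $\Orb(p)$ is finite and avoids $1/2$, the itinerary map $\underline{i}_s$ is continuous at each $T^j_s(p)$, so there is $N_1$ such that any point $x$ with $\underline{i}_s^{N_1}(x)$ agreeing with $\underline{i}_s^{N_1}(T^j_s(p))$ for some $j$ lies in $B(\Orb(p),\varepsilon)$.

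Next I would perturb the turning-point itinerary towards that of (a point very close to) $p$. Here is the key point: because $p$ is periodic with orbit avoiding $1/2$, the value $\underline{i}_s(p)$ is a periodic symbolic sequence, and by monotonicity of $\nu$ there are parameters $r$ on the prescribed side of $s$ (recall the one-sided restriction when $s=2$, forced because $\nu(2)$ is already the maximal admissible kneading sequence) for which $\nu(r)$ agrees with a long initial block of the shift of $\underline{i}_s(p)$ — more precisely, admissibility (the shift-maximality condition for kneading sequences, \cite[Chapter~II]{MSbook}) lets one realise, for each large $K$, a parameter $r=r_K$ whose kneading sequence has its first $K$ symbols dictated by cycling around $\underline{i}_s(p)$, so that $T^k_{r_K}(1/2)$ lies in $B(\Orb(p),\varepsilon)$ for $0\le k\le K-N_1$. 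Since periodic parameters are dense in $(1,2]$ and $r_K\to s$ (or to $s$ from the correct side), one may in addition take $r_K$ to be a \emph{periodic} parameter, with periodic turning point of some period $N=N(r_K)$; choosing $K$ large forces $N\ge K$ (a long agreement with a non-periodic-block prefix cannot close up too soon), and then the number of indices $k\le N$ with $T^k_{r_K}(1/2)\notin B(\Orb(p),\varepsilon)$ is at most $N_1$ plus the short terminal stretch needed to return to and cycle through $1/2$, which is $O(N_1)$ and hence $<\varepsilon N$ once $K$ (and thus $N$) is large enough. Picking any such $r_K$ inside $[s,s+\delta]$, which is possible since $r_K\to s$ on the correct side, completes the proof.

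The routine parts are the continuity/monotonicity bookkeeping for $\underline{i}_s$ and the density of periodic parameters, all quotable from \cite[Chapter~II]{MSbook}. The main obstacle is the combinatorial realisability step: one must check that a kneading sequence whose long initial block is the periodic pattern of $\underline{i}_s(p)$ (shifted appropriately) is admissible and can be completed to that of an actual periodic tent map with controlled — indeed large — period, and that the ``wasted'' time spent outside $B(\Orb(p),\varepsilon)$ while the turning-point orbit transits to and then cycles back through $1/2$ is bounded by a constant depending only on $p$ and $\varepsilon$, not on $N$. This is where the hypothesis $s>\sqrt2$ (non-renormalisability of $T_s$, so the symbolic dynamics is rich enough near the turning-point orbit) and the one-sided restriction at $s=2$ are used.
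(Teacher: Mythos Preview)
Your realisability step has a genuine gap. You claim one can find parameters $r_K$ whose kneading sequence has its first $K$ symbols ``dictated by cycling around $\underline{i}_s(p)$'', so that $T^k_{r_K}(1/2)\in B(\Orb(p),\varepsilon)$ for $0\le k\le K-N_1$. But the critical orbit starts at the critical value $T_r(1/2)=r/2$, which is the global maximum of $T_r$ and sits near the right endpoint; it is \emph{not} near $\Orb(p)$ in general, and the periodic word $\underline{i}_s(p)$ need not be shift-maximal, so it cannot serve as the initial block of an admissible kneading sequence. Invoking ``admissibility'' here is exactly the point that fails: shift-maximality forces the initial segment of $\nu(r)$ to look like the itinerary of the critical value, not of $p$. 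Your later mention of a ``transit'' only covers the terminal return to $1/2$, not this initial obstruction.

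The paper's proof handles this by building the periodic critical orbit in three pieces rather than two. Working at a fixed non-recurrent $s$, it first uses expansion near $\Orb(p)$ to find long chains of preimages $q_n\in B(\Orb(p),\varepsilon/2)$ with $T_s^n(q_n)=1/2$; these persist for nearby $r$. Then, crucially, it uses non-renormalisability ($s>\sqrt2$) and transitivity to find points $x_j$ arbitrarily close to the critical value $T_s(1/2)$ whose orbits \emph{avoid a fixed neighbourhood $V$ of $1/2$} for $k_j$ steps while transiting to some $q_{n_j}$, and then follow the $q$-chain for $n_j$ steps back to $1/2$. Choosing $n_j\gg k_j$ makes the proportion of time outside $B(\Orb(p),\varepsilon)$ small. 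Because the constructed orbit avoids $V$, its finite itinerary is stable under perturbation, and an intermediate-value argument on $r\mapsto \underline{i}_r(T_r(1/2))$ over $[s,s+\delta]$ produces a parameter $r$ with $T_r(1/2)=x_r$, making the turning point periodic with the required statistics. The essential idea you are missing is this \emph{initial} transit from the critical value to $\Orb(p)$, together with the control (avoidance of $V$) that makes the whole construction robust in the parameter.
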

    \begin{proof}
	We can suppose $T_s$ is non-recurrent because non-recurrent parameters are dense in $(1,2]$. By a simple perturbation argument, we can also assume that $p$ is not in the orbit of the turning point. 
 $T_s$ is expanding on a neighbourhood of $\Orb(p)$, so if $\varepsilon>0$ is sufficiently small,  $$B(\Orb(p), \varepsilon) \subset T_s(B(\Orb(p), \varepsilon)).$$ 
 There exist $q = q_{n_0}$ with $|q-p| < \varepsilon/2$ and some $n_0\geq 1$ such that $T_s^{n_0}(q) = 1/2$. Therefore, there exists a sequence of points $q_n \in B(\Orb(p), \varepsilon/2)$, $n \geq n_0$, for which $T_s(q_{n+1}) = q_n$  and $T_s^n(q_n) = 1/2$. 

 We can assume $\delta$ is small enough that there is a continuation $r \mapsto q_n^r$ for each $n$ and all $r \in [s, s+\delta]$ satisfying $q_n^r \in B(\Orb(p, \varepsilon))$, $T_r(q_{n+1}^r) = q_n^r$ for $n \geq n_0$, and  $T_r^{n_0}(q^r_{n_0}) = 1/2$. 

Let $W$ denote the connected component of $I\setminus \Orb(1/2)$. 
By transitivity, there are symmetric points $a, a_* \in W$, one on each side of $1/2$, and $l \geq 1$ such that $T_s^{l}(a) = T_s^{l}(a_*) = p$ and such that the orbits of $a, a_*$ avoid the neighbourhood $V:= (a,a_*)$ of $1/2$. 
We claim that there is a sequence of points $y_j$ converging to $T_s(1/2)$ in $(T_s(1/2), T_{s+\delta}(1/2))$ and numbers $k_j$ such that $T_s^{k_j}(y_j) = p $ and $\Orb(y_j) \cap V = \emptyset. $
Indeed, let $\eta_j$ be small, positive or negative, and consider the images of the interval $(T_s(1/2), T_s(1/2) + \eta_j)$. Since $T_s$ is 
expanding away from the turning point, and the orbit of $T_s(1/2)$ avoids $W$, the images grow until there is a first one which contains either $a$ or $a_*$ or both. Pull back the point to get a point $y_j$ inside $(T_s(1/2), T_s(1/2) + \eta_j)$. The claim follows.   

Combining these statements, we deduce that there are some (slightly smaller) neighbourhood $V$ of $1/2$ and, given any increasing sequence of numbers $n_j > n_0$,  a sequence of points $x_j$ (close to the $y_j$) such that $T_s^{k_j}(x_j) =  q_{n_j}$ and $T_s^l(x_j) \notin V$ for all $l < k_j + n_j$, while $T_s^{k_j+n_j}(x_j) = 1/2$. Since the $k_j$ were already defined, the sequence of numbers $n_j$ can be chosen with $n_j > k_j^2$, say, so $n_j /k_j \to \infty$. Provided the  $n_j$ are chosen large enough, the $\ul{i}_s(x_j)$ converge to $\ul{i}_s(T_s(1/2))$ in  $\left(\ul{i}_s(T_s(1/2)), \ul{i}_{s+\delta}(T_{s+\delta}(1/2))\right) $.

Therefore, since the dynamics outside of $V$ persists under small perturbation,  there exist $\gamma \in (0, \delta)$, $N \geq 1$ and $x_r$ for $r \in [s, s+\gamma]$ such that
    \begin{itemize}
        \item
            $r \mapsto x_r$ is continuous;
        \item
            $T_r(x_r) \ne 1/2$ for all $l < N$ and $r \in [s, s+\gamma]$. 
        \item
            $T_r^N(x_r) = 1/2$;
        \item
            $\underline{i}_s^N (x_s) = \underline{i}_r^N(x_r) \in \left[\underline{i}_s^N (T_s(1/2)), \underline{i}_{s+\gamma}^N (T_{s+\gamma}(1/2))\right]$; 
        \item
            $\# \{ k \leq N : T_r^k(x_r) \notin B(\Orb(p), \varepsilon) \} < \varepsilon N$.
    \end{itemize}
    In particular, for some $r \in [s, s+\gamma]$, $\ul{i}_r^N(T_r(1/2)) = \ul{i}_r^N(x_r)$, so  $T_r(1/2) = x_r$ and $T_r^{N+1}(1/2) = 1/2$. 
    \end{proof}

    \begin{lemma} \label{lem:sn}
        Let $\sqrt{2} < s < 2$ and let $T_s$ be periodic with periodic turning point of period $k$. Let $b = s/(1+s)$ be the orientation-reversing fixed point of $T_s$. There are $N >0, a$ such that $T_s^N(a) = b$ and $T_s^k$ is monotone and orientation-preserving on $[a,1/2]$. For all $r$ in some neighbourhood of $s$, the point $a$ has a continuation $a_r$ such that $T_r^N(a_r) = r/(1+r)$. 

        There exist $s_n \to s$ such that, for all large $n$, the sequence 
        $$
        1/2, T_n^k(1/2), T_n^{2k}(1/2), \ldots, T_n^{nk}(1/2) = a_n
        $$
        is monotone, where (abusing notation) $T_n$ denotes the tent map with slope $s_n$ and $a_n$ the corresponding continuation of the point $a$. 
    \end{lemma}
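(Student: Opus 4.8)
\textbf{Proof proposal for Lemma~\ref{lem:sn}.}

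The plan is to build the parameters $s_n$ by a pull-back/continuation argument, combining Lemma~\ref{lem:follow}-type itinerary control with the monotonicity of the itinerary map. First I would set up the static picture for the fixed parameter $s$. Since $T_s$ is periodic with turning-point period $k$ and $\sqrt2 < s < 2$, the map $T_s^k$ restricted to a suitable one-sided neighbourhood of $1/2$ behaves like a unimodal (in fact locally affine) map; choosing $a$ on the appropriate side of $1/2$ with $T_s^N(a) = b$ for the orientation-reversing fixed point $b = s/(1+s)$, and using that $T_s^k$ is monotone orientation-preserving on $[a,1/2]$, is a routine combinatorial check using piecewise-linearity and the fact that the turning-point orbit is finite. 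The continuation $r \mapsto a_r$ with $T_r^N(a_r) = r/(1+r)$ exists for $r$ near $s$ because $T_r^N$ is, near $a$, a composition of affine branches whose combinatorics are locally constant (the orbit of $a$ avoids $1/2$), so the Implicit Function Theorem (or just explicit solution of the affine equation) applies, and $r/(1+r)$ is the orientation-reversing fixed point of $T_r$, depending continuously on $r$.

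Next I would produce the sequence $s_n$. The key is that the finite sequence
$$
1/2,\ T_n^k(1/2),\ T_n^{2k}(1/2),\ \ldots,\ T_n^{nk}(1/2) = a_n
$$
being monotone is a condition on the itinerary $\underline{i}_{s_n}^{\,(nk)}(T_{s_n}(1/2))$, since monotonicity of the $T^k$-orbit translates into the sign pattern of $T^{jk}(1/2) - 1/2$ being constant, and more: that the $T$-itinerary between consecutive returns to the relevant side is forced. Working on the $a$-side, one checks that for the fixed parameter $s$ the (infinite) orbit $1/2, T_s^k(1/2), T_s^{2k}(1/2), \ldots$ converges monotonically towards $a$ (this is the hyperbolic/expanding behaviour of $T_s^k$ near its fixed configuration away from $1/2$, precisely because $s$ is non-recurrent-ish after perturbation, or directly from the graph of $T_s^k$ on $[a,1/2]$), so all finite truncations are monotone. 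The itinerary $\underline{i}_s^{\,(nk)}$ of the relevant point is then a prescribed finite word $\tau_n$. I would then invoke monotonicity and (one-sided) continuity of the map $r \mapsto \nu(r) = \underline{i}_r(T_r(1/2))$ together with density of periodic parameters: pick $s_n$ slightly to the appropriate side of $s$ so that $T_{s_n}$ is periodic with $\underline{i}_{s_n}^{\,(nk)}(T_{s_n}(1/2))$ agreeing with $\tau_n$ up to the point where the turning point first hits $1/2$, i.e.\ $T_{s_n}^{nk}(1/2) = a_n$. As $n \to \infty$ these parameters converge to $s$ because the prescribed finite words $\tau_n$ are longer and longer initial segments of the itinerary of the $s$-orbit, and $\nu$ is strictly monotone (so distinct long words force parameters into shrinking intervals around $s$).

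The main obstacle I anticipate is the bookkeeping that turns "the $T^k$-orbit of $1/2$ is monotone" into a clean statement about a single itinerary word that can be realised by a periodic turning point: one must verify that the intermediate iterates $T^{jk+i}(1/2)$ for $0 < i < k$ do not themselves pass through $1/2$ before the final return, and that the combinatorial data stays in the "locally constant" region so the continuation $a_r$ and the itinerary equalities survive the perturbation from $s$ to $s_n$. This is exactly the type of argument carried out in the proof of Lemma~\ref{lem:follow}, so I would model the perturbation step on that proof: fix a nice neighbourhood $V$ of $1/2$, use expansion of $T_s$ outside $V$ (Ma\~n\'e, or directly the slope $>1$), and pull back a point close to $T_s(1/2)$ along branches whose itineraries are stable, landing on the prescribed preimage of $1/2$; then the corresponding parameter has a periodic turning point with the desired monotone $T^k$-orbit. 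The remaining steps — continuity of $a_r$, convergence $s_n \to s$, and the bound "for all large $n$" — are then immediate from strict monotonicity of $\nu$ and the construction.
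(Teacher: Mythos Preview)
There is a genuine gap in your construction of the $s_n$. You write that ``for the fixed parameter $s$ the (infinite) orbit $1/2, T_s^k(1/2), T_s^{2k}(1/2), \ldots$ converges monotonically towards $a$''. This is false: by hypothesis the turning point has period exactly $k$, so $T_s^k(1/2) = 1/2$ and the $T_s^k$-orbit of $1/2$ is the constant sequence $1/2,1/2,1/2,\ldots$. There is no nontrivial orbit at parameter $s$ from which to read off your target itinerary words $\tau_n$, so the rest of your perturbation argument (matching $\underline{i}_{s_n}^{\,(nk)}$ to $\tau_n$) has no content. Relatedly, your justification ``hyperbolic/expanding behaviour \ldots precisely because $s$ is non-recurrent-ish after perturbation'' is circular: non-recurrence only appears \emph{after} you have perturbed to some $s_n$, which is exactly what you are trying to construct.

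The correct picture, which the paper uses, is geometric rather than itinerary-based in the sense of Lemma~\ref{lem:follow}. On $[a,1/2]$ the map $T_s^k$ is affine, orientation-preserving, and fixes $1/2$; the graph of $T_s^k$ touches the diagonal at $(1/2,1/2)$ from one side (since $1/2$ is a turning point of $T_s^k$). When one perturbs $s$ to a nearby $r$ on the side where this periodic orbit is destroyed, the graph of $T_r^k$ on $[a_r,1/2]$ lies strictly on one side of the diagonal, and hence the $T_r^k$-orbit of $1/2$ is monotone and drifts towards $a_r$. The number of steps needed to reach $a_r$ varies monotonically with $r$ (it tends to $\infty$ as $r\to s$), and strict monotonicity of the kneading invariant then gives, for each $n$, a parameter $s_n$ at which the orbit lands exactly on $a_{s_n}$ after $n$ steps. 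Your invocation of strict monotonicity of $\nu$ at the very end is the right ingredient, but it should be applied to this ``saddle-node unfolding'' picture, not to a nonexistent limiting orbit at $s$; and Lemma~\ref{lem:follow} is not needed.
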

    \begin{proof}
         Fix a symmetric neighbourhood $U$ of $1/2$ on which the slope of $T^k_s$ only changes at $1/2$. Since $T_s$ is non-renormalisable, there are pre-images of $b$ in $U$. Let $a$ be one such pre-image, and $a_*$ its symmetric point, so for some $N>0$, $T_s^n(a) = T_s^n(a_*) = b$. We can choose $a$ such that $T_s^j(a) \notin (a, a_*)$ for $1 \leq j < N$. Then $T_s^N$ is monotone on $(a, 1/2)$ and on $(1/2, a_*)$. Thus on one of those intervals, $T_s^N$ is orientation-preserving. 

        The second statement is obvious. 

	The final statement follows from strict monotonicity of the kneading invariant for tent maps (and corresponds to the critical periodic orbit no longer existing, so the graph of $T^k_n$ no longer quite touches the diagonal near $1/2$). 
    \end{proof}

    \section{Proof of Theorem~\ref{THM:THUN2}} \label{ss:pf}
  \begin{proof}  Consider a continuous one-parameter family of maps $f_t \in \F$ (recall $\F$ is endowed with the $C^1$ topology) where $t \in [0, \theta)$, for some $\theta >0$. Let $\log s(t)$ denote the entropy of $f_t$. Suppose that $f_0$ is non-renormalisable and has all periodic points repelling, and that $t \mapsto s(t)$ is not locally-constant at $t=0$. Non-renormalisability implies $s(0) > \sqrt{2}$. Taking $\theta$ smaller if necessary, we can assume that $\sqrt{2} < s(t) \leq 2$ for all $t \in [0, \theta)$. 

        Let $p \notin \partial I$ be a periodic point of $f_0$ with orbit $\Orb(p)$ under iteration by $f_0$. 
        Fix $\eps >0$. We shall show that
        there exists $t\in (0, \theta)$ for which $f_t$ is a non-renormalisable Misiurewicz map and for which the corresponding acip $\mu_t$ satisfies  $\mu_t(B(\Orb(p)), 2\eps) > 1 - 2\eps$, which suffices [note that approximate equidistribution along the orbit follows when $\eps$ is small].

    Denote by $h_t$ the semi-conjugacy map from $f_t$ to $T_{s(t)}$.         The following lemma says that if you are close enough to the corresponding tent map periodic orbit $h_0(\Orb(p))$, then the pullback by $h_t$ for small enough $t$ is contained in an $\eps$-neighbourhood of $\Orb(p)$.
    \begin{lemma} \label{lem:gammy}
        There exists $\gamma > 0$ such that, for all $t \in [0,\gamma]$,
        $$
        h_t(B(\Orb(p), \eps)) \supset B(h_0(\Orb(p)), \gamma).
        $$
    \end{lemma}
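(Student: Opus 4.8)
The plan is to argue by compactness, exploiting that under the hypotheses $h_0$ is not merely a semi-conjugacy but a genuine homeomorphism, and then to reduce the statement to a one-sided continuity property of $t\mapsto h_t(x)$ at finitely many points.

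First I would record that $h_0$ is a strictly increasing homeomorphism of $I$: since $f_0$ has no periodic attractor (all periodic points repelling), no wandering interval (it is $C^2$ with non-positive Schwarzian derivative), and is non-renormalisable, \cite[Proposition~III.4.3]{MSbook} shows the semi-conjugacy is a conjugacy. Hence $h_0(\Orb(p))$ is a periodic orbit of $T_{s(0)}$ containing neither $\tfrac12$ (a periodic turning point of $f_0$ would be super-attracting, contradicting the repelling hypothesis) nor $0=h_0(0)$. As proving the conclusion for some $\eps'\le\eps$ proves it for $\eps$, I may assume the $f_0$-orbits of the finitely many points $p^{(i)}\pm\eps$ (with $p^{(i)}$ ranging over $\Orb(p)$) all avoid $c$ and $\partial I$, since only countably many values of $\eps$ fail this. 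Each $h_t$ is monotone, continuous and onto $I$ (because $T_{s(t)}$ is onto its core and $h_t$ fixes $\partial I$), so it suffices to show $\limsup_{t\to0}h_t(p^{(i)}-\eps)\le h_0(p^{(i)}-\eps)$ and $\liminf_{t\to0}h_t(p^{(i)}+\eps)\ge h_0(p^{(i)}+\eps)$ for each $i$: since $h_0$ is strictly increasing these give $h_t(p^{(i)}-\eps)<h_0(p^{(i)})<h_t(p^{(i)}+\eps)$ with a gap bounded below for small $t$, and choosing $\gamma$ to be a small fraction of that gap (and smaller than half the least separation of distinct points of $h_0(\Orb(p))\cup\partial I$) yields $B(h_0(\Orb(p)),\gamma)\subset\bigcup_i\bigl(h_t(p^{(i)}-\eps),h_t(p^{(i)}+\eps)\bigr)\subset h_t(B(\Orb(p),\eps))$.

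Next I would prove the continuity statement: for $x$ whose $f_0$-orbit avoids $c$ and $\partial I$, $h_t(x)\to h_0(x)$ as $t\to0$. Fix such $x$ and $t_n\to0$. For each $N$ the points $f_0^j(x)$, $j\le N$, stay a positive distance from $c$; since $f_{t_n}\to f_0$ uniformly and the turning points $c_{t_n}$ converge to $c$, for $n$ large $f_{t_n}^j(x)$ lies on the same side of $c_{t_n}$ as $f_0^j(x)$ of $c$ for all $j\le N$, so the first $N$ symbols of the $f_{t_n}$-itinerary of $x$ agree with those of $\underline{i}_{f_0}(x)$. The semi-conjugacy intertwines itineraries, so $h_{t_n}(x)$ lies in the $N$-cylinder of $T_{s(t_n)}$ whose prefix equals that of $\underline{i}_{f_0}(x)=\underline{i}_{T_{s(0)}}(h_0(x))$ (the last equality because $h_0$ is a conjugacy). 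As $s(t_n)\to s(0)$ the endpoints of this cylinder — finitely many preimages of $\tfrac12$ — move continuously in the slope, so the cylinder converges in the Hausdorff metric to the corresponding $N$-cylinder of $T_{s(0)}$, which contains $h_0(x)$ and has length $\le s(0)^{-N}$; therefore $\limsup_n|h_{t_n}(x)-h_0(x)|\le s(0)^{-N}$, and letting $N\to\infty$ gives the claim.

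The hard part is a subtlety I would have to dispose of before the last argument applies: along $t_n\to0$ the map $f_{t_n}$ may have a periodic attractor, and then $h_{t_n}$ collapses to the (now periodic) turning point of $T_{s(t_n)}$ the component $W_n\ni c_{t_n}$ of its immediate basin — note $f_{t_n}$ is non-renormalisable since $s(t_n)>\sqrt2$, so immediate-basin components are the only intervals $h_{t_n}$ collapses; if the orbit of $x$ entered the limit of the $W_n$ the itinerary-matching step would fail. The resolution is that $W_n\to\{c\}$: the periods $k_n$ of these attractors tend to $\infty$ (a bounded-period limit would be a non-repelling periodic point of $f_0$), the iterates $f_{t_n}^i(W_n)$, $0\le i<k_n$, have pairwise disjoint interiors, and Mañé's theorem makes $f_0$ — hence $f_{t_n}$ for large $n$ — uniformly expanding off any fixed neighbourhood of $c$, so such a cycle of intervals grows geometrically once outside that neighbourhood, forcing $|f_{t_n}(W_n)|\lesssim e^{-ck_n}$; combined with $|f_{t_n}(W_n)|\gtrsim|W_n|^{2}$ (non-degeneracy of the critical point, quantified by the hypothesis $\int_I\log|Df_t|>-C$) this gives $|W_n|\to0$. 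Then, $x$ being a positive distance from $c$, for all large $n$ its orbit stays outside $W_n$ up to the relevant time, and the previous paragraph applies unchanged. Piecing the three steps together proves the lemma.
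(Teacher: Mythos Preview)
Your proof is correct, but it takes a genuinely different route from the paper's. The paper never proves pointwise continuity of $t\mapsto h_t(x)$; instead it picks repelling periodic points $q$ of $f_0$ on either side of each $p^{(i)}$, inside $B(\Orb(p),\eps)$. These are hyperbolic, so they admit continuations $q_t$ for small $t$ remaining in $B(\Orb(p),\eps)$, and $h_t(q_t)$ is the periodic point of $T_{s(t)}$ with the given periodic itinerary, hence an explicit algebraic function of $s(t)$. Since $h_0$ is a conjugacy, $h_0(q)\ne h_0(p^{(i)})$, and continuity of $s(t)$ gives a uniform gap. The paper's argument is shorter and avoids any analysis of how $h_t$ behaves at non-periodic points; your argument is more hands-on and actually yields the stronger intermediate fact that $h_t(x)\to h_0(x)$ at every $x$ whose $f_0$-orbit misses $c$.

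One remark: your Step~4 is unnecessary. The itinerary-matching in Step~3 only uses that $f_{t_n}^j(x)$ and $c_{t_n}$ are on the same side for $j\le N$ and large $n$, which follows purely from $f_{t_n}\to f_0$, $c_{t_n}\to c$, and $f_0^j(x)\ne c$; it is irrelevant whether $f_{t_n}^j(x)$ lies in a collapsed basin $W_n$, because monotonicity of $h_{t_n}$ together with $h_{t_n}(c_{t_n})=\tfrac12$ already forces $h_{t_n}(x)$ into the closed $N$-cylinder of $T_{s(t_n)}$ with the correct prefix. This is fortunate, since your Step~4 invokes uniform expansion of $f_{t_n}$ off a neighbourhood of $c$ via Ma\~n\'e, which is a $C^1$ statement and you only have $C^0$ convergence of $f_{t_n}$ to $f_0$.
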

    \begin{proof}
        The point $p$ is accumulated on both sides by repelling periodic points (note that $h_0$ is a conjugacy), and these move continuously as $t$ varies, as do their images by $h_t$. Choose repelling periodic points $q$ on each side of each point of $\Orb(p)$  and choose $\gamma'$ sufficiently small that the continuations $q_t$ of these points for $t \in [0, 2\gamma')$ lie within $B(\Orb(p), \eps)$ but $h_t(q_t) \notin h_0(\Orb(p))$. For each point $x \in h_0(\Orb(p))$, there is a $\gamma_x>0$ such that $h_t(q_t) \notin B(x, \gamma_x)$ for any $q_t$ and any $t \in [0, \gamma']$. Take $\gamma$ to be the minimum over $x \in h_0(\Orb(p)) $ of $\gamma_x$ and of $\gamma'$. 
            \end{proof}

    \begin{lemma} \label{lem:speriod}
        Let $\gamma$ be given by Lemma \ref{lem:gammy}. 

        There exists $s \in (s(0), s(\gamma))$ for which $T_s$ is periodic with turning point of period $k$ say, and 
        $$
        \# \{i \in \{1, \ldots k\} : T^i_s(1/2) \notin B(h_0(\Orb(p)), \gamma)\} < \varepsilon k.
        $$
        \end{lemma}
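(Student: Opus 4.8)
The plan is to obtain Lemma~\ref{lem:speriod} as an essentially immediate consequence of Lemma~\ref{lem:follow}, applied to the tent map $T_{s(0)}$ and the periodic orbit $h_0(\Orb(p))$, once one checks the hypotheses of that lemma and keeps track of the admissible range of parameters.

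First I would record the relevant properties of $h_0$, $p$ and $s(0)$. Since $f_0$ is non-renormalisable and all its periodic points are repelling, $f_0$ has no periodic attractor; as already used in the proof of Lemma~\ref{lem:gammy}, this together with non-positive Schwarzian derivative and the standard absence of wandering and restrictive intervals forces the semi-conjugacy $h_0$ to be a genuine conjugacy, and $s(0) > \sqrt 2$ by non-renormalisability. Set $p_0 := h_0(p)$, a periodic point of $T_{s(0)}$ of the same period as $p$, with $\Orb(p_0) = h_0(\Orb(p))$. Because $h_0$ conjugates $f_0$ to $T_{s(0)}$ and maps $\partial I$ to $\partial I$, and $p \notin \partial I$, we get $p_0 \ne 0$. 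Moreover $1/2 \notin \Orb(p_0)$: otherwise $h_0^{-1}(1/2) = c$ would be a periodic point of $f_0$ and hence a superattracting orbit, contradicting that all periodic points of $f_0$ are repelling. In particular $T_{s(0)}$ itself is not periodic.

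Next, using that $s(\cdot)$ is continuous and not locally constant at $0$, I would shrink $\gamma$ so that $s(\gamma) \ne s(0)$; assume $s(\gamma) > s(0)$, the opposite case (which is forced when $s(0) = 2$) being treated symmetrically with the perturbation parameter taken negative and the interval $(s(\gamma), s(0))$ in place of $(s(0), s(\gamma))$. Put $\eps' := \min\{\eps, \gamma\} > 0$ and $\delta := \tfrac12(s(\gamma) - s(0)) > 0$. Now apply Lemma~\ref{lem:follow} with $s = s(0)$, with $p_0$ in the role of the periodic point $p$, with $\eps'$ in the role of $\eps$, and with this $\delta$: the hypotheses hold since $\sqrt 2 < s(0) \le 2$, $p_0 \ne 0$, and $\Orb(p_0)$ avoids the turning point. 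This yields a parameter $r \in [s(0), s(0)+\delta]$ for which $T_r$ has a periodic turning point of period $k$ say, with
$$\#\{i \in \{1,\ldots,k\} : T^i_r(1/2) \notin B(\Orb(p_0), \eps')\} < \eps' k.$$
Since $T_{s(0)}$ is not periodic, $r \ne s(0)$, and $r \le s(0)+\delta < s(\gamma)$, so $r \in (s(0), s(\gamma))$. Finally, $\eps' \le \gamma$ gives $B(\Orb(p_0),\eps') \subseteq B(\Orb(p_0),\gamma) = B(h_0(\Orb(p)),\gamma)$ and $\eps' \le \eps$, whence
$$\#\{i \in \{1,\ldots,k\} : T^i_r(1/2) \notin B(h_0(\Orb(p)), \gamma)\} \le \#\{i : T^i_r(1/2) \notin B(\Orb(p_0), \eps')\} < \eps' k \le \eps k.$$
Taking $s := r$ completes the argument.

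The proof is mostly bookkeeping on top of Lemma~\ref{lem:follow}; the two points that actually require care are (i) checking that the orbit $h_0(\Orb(p))$ does not contain the turning point — this is precisely where the hypothesis that all periodic points of $f_0$ are repelling (equivalently, that $f_0$ has no periodic critical point) is used — and (ii) ensuring that the perturbed parameter $r$ lies strictly between $s(0)$ and $s(\gamma)$: that $r \ne s(0)$ comes from non-periodicity of $T_{s(0)}$, while the correct sign of the perturbation (and the degenerate endpoint case $s(0)=2$) is dictated by the non-local-constancy of $t \mapsto s(t)$ and must be matched to the side on which $s([0,\gamma])$ extends. I expect (ii), the parameter-range matching, to be the only slightly delicate step.
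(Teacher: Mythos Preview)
Your proposal is correct and takes the same approach as the paper, whose proof is simply ``By Lemma~\ref{lem:follow}.'' You have carefully filled in the details of that application --- verifying that $h_0(p) \ne 0$ and that its orbit avoids the turning point, matching the sign of the perturbation to the direction in which $s(\cdot)$ moves, and ensuring the resulting parameter lies strictly in $(s(0), s(\gamma))$ --- all of which the paper leaves implicit.
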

    \begin{proof}
	By Lemma \ref{lem:follow}.
    \end{proof}

        Let $\gamma, s$ be given by Lemmas~\ref{lem:gammy} and \ref{lem:speriod}.
        Let the sequence $s_n \to s$ be given by Lemma~\ref{lem:sn}. For large $n$, $s_n \in (s(0), s(\gamma))$. 
        Thus there are maps $g_n \in \{f_t : t \in [0,\gamma)\}$ with entropy $\log s_n$ and there is a subsequence of the $g_n$ converging to some map $g \in \F$ with entropy $\log s$. 
        
        It is straightforward to check that $g$ has an orientation-preserving parabolic periodic point $\alpha$, of period $k$ say, which gets mapped by $h_g$ to the turning point of $T_s$. We have from Lemma~\ref{lem:speriod} that  
        $$
        \# \{i \in \{1, \ldots k\} : T^i_s(1/2) \notin B(h_0(\Orb(p)), \gamma)\} < \varepsilon k,
        $$
        so by Lemma~\ref{lem:gammy}
        $$
        \# \{i \in \{1, \ldots k\} : g(\alpha) \notin B((\Orb(p)), \eps)\} < \varepsilon k.
        $$

        Applying Proposition~\ref{prop:gn}, for sufficiently large $n$, the acip $\mu_n$ for $g_n$ approximates the equidistribution on $\Orb(\alpha)$ and so satisfies
       $\mu_n(B(\Orb(p), 2\eps)) > 1 - 2\eps$.  
       Since $\theta$ can be chosen arbitrarily small and  $f_t := g_n$ is a non-renormalisable Misiurewicz map (by choice of $s_n$), this concludes the proof of Theorem~\ref{THM:THUN2}.
\end{proof}

\chapter{Positive entropy does not imply statistical stability}
\label{sec:secexo1}

In this chapter we show how lack of statistical stability may occur. The example will be explicit and shows (absolutely continuous) equilibrium states, with entropy bounded below, converging to a combination of an equilibrium state and a delta mass on a repelling fixed point. 

    We start with some general estimates for normalised quadratic maps. These estimates will transfer to any map affinely conjugate to the normalised maps.  In the second section of this chapter we define the sequence of maps and show instability, making use of the estimates in the first section. 

    Throughout this chapter, there will be topologically defined points (preimages of critical points and fixed points). Their dependence on parameters (of the maps considered) is frequently dropped from notation. 

\section{Quadratic maps which nearly have a fixed point}
We are interested in the local behaviour of a family of quadratic maps where a parabolic fixed point appears in the limit. The most convenient representation is 
 the family 
 $$P_\kappa : x \mapsto x^2 + \kappa,\index{Pkapp@$P_\kappa(x)= x^2 + \kappa$} 
 $$
 where $\kappa$ is a real parameter. Each quadratic map $x \mapsto ax^2 + bx + c$ is affinely conjugate (as a dynamical system) to some $P_\kappa$. 
 For $\kappa = 1/4$, $P_\kappa$ has a parabolic fixed point at $x_1 = 1/2$. For $\kappa > 1/4$, there is no fixed point and $P_\kappa(x) > x$ for all $x>0$, that is, all points get mapped to the right. We are interested, for now, in how long it takes for points to pass some threshold. 

    Let $M > 10$ and consider the first entry time to $[M, \infty)$\index{emk@$e_{M,\kappa}$, first entry time to $[M, \infty)$}
        $$
        e_{M,\kappa}(x) :=\inf \{k \geq 0 : P_{\kappa}^k(x) \geq M \}.$$  
        For $M' \leq M^{2^j}$, 
        \begin{equation}\label{eq:MMj}
            e_{M', \kappa} \leq e_{M,\kappa} + j. 
        \end{equation}
        Each function $e_{M,\kappa}$ is monotone-decreasing and piecewise-constant on $[0,M]$. 
        For all $x >0$, $j \geq1$ and $\kappa > 1/4$, 
        \begin{equation}\label{eq:dkap}
            \frac{dP_{\kappa}^j(x)}{d\kappa} > 0
        \end{equation}
     (visually, as one increases $\kappa$, the graph distances itself from the diagonal and images move to the right). 
     Hence the bigger $\kappa$ is, the less time it takes to enter $[M,\infty)$:  for $1/4 <  \kappa_0 < \kappa$, 
         $e_{M, \kappa} \leq e_{M, \kappa_0}$.

         Thanks to~\eqref{eq:dkap}, given $V >1$ and $n \geq 1$, there is a unique  $\kappa(V,n) > 1/4$ such that $P_{\kappa(V,n)}^n(0) = V$. 
         As a function of $V$, $\kappa(V,n)$ is strictly increasing. 
     As $n$ increases, $\kappa(V,n)$ decreases monotonically with limit  $1/4$. 
     Rather obviously, for $M \geq V$, 
     \begin{equation}\label{eqn:kvnover4}
         \int_{[0,1/4]} e_{M, \kappa(V, n)} \geq (n-1)/4,
     \end{equation}
     since $P_\kappa(0) > 1/4$. 
     
    We wish to show that the first entry time to $[M, \infty)$ does not increase too fast as $n$ increases, nor as $V$ varies.

        \begin{lemma}\label{lem:evvn}
            Given $V_0 >100$, for all $M,M',V,V' \in (V_0, 100V_0)$ and $x \in [0,M]$,
            $$e_{M',\kappa(V',n+1)}(x) \leq e_{M,\kappa(V,n)}(x)+3.$$
            \end{lemma}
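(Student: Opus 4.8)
The plan is to exploit the monotonicity facts already collected in~\eqref{eq:MMj} and~\eqref{eq:dkap}, together with the fact that the $\kappa(V,n)$ decrease in $n$ and increase in $V$, so that we may compare $e_{M',\kappa(V',n+1)}$ with $e_{M,\kappa(V,n)}$ by a short chain of one-step adjustments. First I would reduce the threshold change: since $M,M'\in (V_0,100V_0)$ we have $M'\le M^2$ (as $M>V_0>100$ forces $M^2\ge 100M\ge M'$), so by~\eqref{eq:MMj} with $j=1$, $e_{M',\kappa}\le e_{M,\kappa}+1$ for \emph{any} $\kappa>1/4$; in particular $e_{M',\kappa(V',n+1)}\le e_{M,\kappa(V',n+1)}+1$. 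This isolates the two remaining discrepancies: the parameter changes from $\kappa(V,n)$ to $\kappa(V',n+1)$, and the index changes from $n$ to $n+1$.

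Next I would handle the parameter change. If $V'\le V$ then $\kappa(V',n+1)\le\kappa(V,n+1)\le\kappa(V,n)$ by the two monotonicities, and since a larger parameter gives a smaller entry time (the inequality $e_{M,\kappa}\le e_{M,\kappa_0}$ for $\kappa\ge\kappa_0$ noted just after~\eqref{eq:dkap}), we get $e_{M,\kappa(V',n+1)}(x)\le e_{M,\kappa(V,n)}(x)$ directly, and combining with the previous paragraph the claimed bound holds with room to spare. The genuinely substantive case is $V'>V$, where $\kappa(V',n+1)$ could exceed $\kappa(V,n)$. Here the idea is: $\kappa(V',n+1)\le\kappa(V',n)$, so it suffices to compare $e_{M,\kappa(V',n)}$ with $e_{M,\kappa(V,n)}$, i.e. to control the entry time when only the target value is enlarged from $V$ to $V'\le 100 V_0<100V\le V^2$ (using $V>V_0>100$). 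But by definition $P^n_{\kappa(V',n)}(0)=V'\le V^2$, so starting from $0$ the orbit under $P_{\kappa(V',n)}$ reaches $V$ in at most $n$ steps and then reaches $V'=V^2\le M'^2$ wait—more carefully: $P^n_{\kappa(V,n)}(0)=V$ and $P^n_{\kappa(V',n)}(0)=V'$ with $\kappa(V',n)\ge\kappa(V,n)$, so by~\eqref{eq:dkap} the $\kappa(V',n)$-orbit dominates the $\kappa(V,n)$-orbit pointwise; hence for any $x$, $P^j_{\kappa(V',n)}(x)\ge P^j_{\kappa(V,n)}(x)$, so $e_{M,\kappa(V',n)}(x)\le e_{M,\kappa(V,n)}(x)$. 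Thus actually the parameter enlargement \emph{never} hurts, and the index shift $n\to n+1$ only hurts through $\kappa(\cdot,n+1)\le\kappa(\cdot,n)$, which again helps. Assembling: $e_{M',\kappa(V',n+1)}(x)\le e_{M,\kappa(V',n+1)}(x)+1\le e_{M,\kappa(V',n)}(x)+1\le e_{M,\kappa(V,n)}(x)+1$, giving the bound with constant $1$ rather than $3$.

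I expect the main obstacle to be bookkeeping the elementary inequalities $M'\le M^2$ and $V'\le V^2$ from the hypothesis $M,M',V,V'\in(V_0,100V_0)$ with $V_0>100$ — these need $100V_0\le V_0^2$, i.e. $V_0\ge 100$, which is exactly the standing assumption — and making sure the domain restriction $x\in[0,M]$ is respected when invoking~\eqref{eq:MMj} (which is stated on $[0,M]$) and when invoking the pointwise orbit-domination from~\eqref{eq:dkap}. The slack between the constant $1$ I obtain and the constant $3$ in the statement is comfortable, so I would present the argument cleanly with constant $3$ to absorb any off-by-one issues (for instance if one prefers to route the threshold change through~\eqref{eq:MMj} with $j=2$, or to first pass to a common threshold $M''\in(V_0,100V_0)$). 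Everything here is purely a matter of monotonicity and the two comparison principles already proved, so no new estimate is required.
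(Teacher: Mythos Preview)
There is a genuine gap: you have reversed the direction of the key monotonicity. From the line after~\eqref{eq:dkap}, larger $\kappa$ gives \emph{smaller} entry time; equivalently, smaller $\kappa$ gives \emph{larger} entry time. Now $\kappa(\cdot,n+1)<\kappa(\cdot,n)$, so passing from $n$ to $n+1$ \emph{decreases} the parameter and hence \emph{increases} the entry time. Your sentence ``the index shift $n\to n+1$ only hurts through $\kappa(\cdot,n+1)\le\kappa(\cdot,n)$, which again helps'' is exactly backwards, and so is the case $V'\le V$: from $\kappa(V',n+1)\le\kappa(V,n)$ one obtains $e_{M,\kappa(V',n+1)}\ge e_{M,\kappa(V,n)}$, not the reverse. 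Your final chain $e_{M,\kappa(V',n+1)}\le e_{M,\kappa(V',n)}$ likewise fails for the same reason. The left-hand side of the lemma is the entry time for the \emph{smaller} parameter, so the inequality is not a triviality; if it were, the constant $0$ would do.

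The actual content of the lemma is to bound \emph{by how much} the entry time can grow when $\kappa$ drops from $\kappa(V,n)$ to $\kappa(V',n+1)$. The paper does this by an interleaving argument: setting $p_j=P^j_{\kappa(V,n)}(0)$ and $q_j=P^j_{\kappa(V,n+1)}(0)$, one has $q_j<p_j<q_{j+1}$, which forces $e_{V,\kappa(V,n+1)}\le e_{V,\kappa(V,n)}+1$. This one-step lag is the missing idea. The change $V\to V'$ is then absorbed by showing $\kappa(V',n)\ge\kappa(V,n+1)$ (using $V'\ge V_0\ge\sqrt V$ and $\kappa(\sqrt V,n)=\kappa(V,n+1)$), which costs one more unit; the threshold changes $M\leftrightarrow V$ and $M'\leftrightarrow V$ via~\eqref{eq:MMj} account for the remaining unit. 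Your handling of the threshold change via~\eqref{eq:MMj} is correct, but it is only the easy part; the substantive step cannot be done by monotonicity alone.
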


            \begin{proof}
    For now, set $M = V$ and let us 
    compare $e_{V,\kappa(V,n)}$ and $ e_{V,\kappa(V,n+1)}$.  The discontinuities of the entry time functions lie along the orbits of $0$, since $P_{\kappa(V,n)}^n(0) = V$. Let 
    \begin{align*}
        p_j & := P_{\kappa(V,n)}^j(0),\\
        q_j & := P_{\kappa(V,n+1)}^j(0).
    \end{align*}
    For $j >0$, $q_j < p_j$. Since $p_n = q_{n+1} = M$ and $\kappa(V,n+1) < \kappa(V,n)$, $p_j < q_{j+1}$ for $j < n$. Since $e_{V,V,n} = n-j$ on $[p_j, p_{j+1})$ and 
        $$
        [p_j, p_{j+1}) \subset [q_j, q_j+2)$$
                we deduce 
                \begin{equation}\label{eq:evvn1}
                    e_{V,\kappa(V,n)}(x) \leq e_{{V,\kappa(V,n)}+1}(x) \leq e_{V,\kappa(V,n)}(x)+1.
            \end{equation}

            Letting $V_0 > 100$ and $V,V' \in [V_0, 100 V_0]$,~\eqref{eq:MMj} and~\eqref{eq:dkap} imply that 
            \begin{eqnarray*}
                \kappa(V,n+1) &\leq&  \kappa(\sqrt{V},n) \\
                              & \leq  &
                \kappa(V_0,n) 
                \leq \kappa(V',n).
            \end{eqnarray*}
            Thus, $e_{V,\kappa(V',n)} \leq e_{V,\kappa(V,n+1)}$ for the same range of $V,V'$, and so
            $$e_{V,\kappa(V',n)}(x) \leq e_{V,\kappa(V,n)}(x)+1.$$
                Replacing $n$ by $n+1$ and using~\eqref{eq:evvn1} yields
                \begin{equation}\label{eq:evvn2}
                    e_{V,\kappa(V',n+1)}(x) \leq e_{V,\kappa(V,n)}(x)+2.
            \end{equation}
            From~\eqref{eq:MMj} and~\eqref{eq:evvn2} we obtain
            \begin{eqnarray*}
                e_{M',\kappa(V',n+1)}(x) & \leq & e_{V,\kappa(V',n+1)}(x) + 1 \\
                                         & \leq & e_{V,\kappa(V,n)}(x) +2 
                                          \leq  e_{M, \kappa(V,n)}(x) +3
            \end{eqnarray*}
                for all $M,M',V,V' \in (V_0, 100V_0)$, as claimed.
            \end{proof}

            We also need to estimate the distortion. 
            \begin{lemma}\label{lem:ykdistn}
                Let $\kappa > 1/4$. 
                Let $y_0 \in [0,1/4)$ and set $y_k := P^k_\kappa(y_0).$ Let $k_0$ be the minimal $k \geq 1$ for which $y_{k} \geq 1$. In particular, $DP(y_{k}) < 2$ if and only if $k < k_0$.  
                First, if $k_0 \ne 1$ then 
                \begin{equation}\label{eqn:pkap}
                \frac{DP_\kappa^{k_0 - j}(z)}{DP_\kappa^{k_0-j}(z')} < 21
            \end{equation}
                for all $z,z' \in (y_{j-1}, y_j)$, for each $j = 2, 3, \ldots, k_0$. 
                Second, 
                \begin{equation}\label{eqn:pkapy}
                y_{k_0} - y_{k_0 -1} \geq 1/8. 
            \end{equation}
            \end{lemma}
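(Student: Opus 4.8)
\textbf{Proof plan for Lemma~\ref{lem:ykdistn}.} The strategy is to exploit non-positive Schwarzian derivative (hence the Minimum Principle and the Koebe Lemma~\ref{lem:koebe}), together with explicit control on the geometry of the orbit segment $y_0, y_1, \ldots, y_{k_0}$ sitting in the region just to the right of the (missing) parabolic fixed point. The key structural fact is that for $\kappa > 1/4$ and $x \in [0, 1)$ one has $P_\kappa(x) - x = x^2 - x + \kappa$, which is a downward-opening... rather, an upward-opening parabola in $x$ minimised at $x = 1/2$ with minimum value $\kappa - 1/4 > 0$; so the forward orbit increases monotonically, the increments $y_{j} - y_{j-1}$ are bounded below by $\kappa - 1/4$ and are unimodal in $j$ (decreasing then increasing) as the orbit passes the point $1/2$. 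The interval $[y_{j-1}, y_j]$ therefore has length at most $y_{k_0} - y_{k_0-1}$ for $j \le k_0$ once one is past $1/2$, and at most $1/4$ before; this comparability of consecutive gaps is what ultimately gives the distortion bound.

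For \eqref{eqn:pkap}, I would argue as follows. Fix $j \in \{2, \ldots, k_0\}$ and consider the map $g := P_\kappa^{k_0 - j}$ restricted to a neighbourhood of $[y_{j-1}, y_j]$; since $P_\kappa$ has non-positive Schwarzian derivative (it is quadratic, hence $1/\sqrt{|DP_\kappa|}$ is affine off the critical point, so Schwarzian is identically $0$), so does $g$ on any interval avoiding the critical point $0$, and here $[y_{j-1}, y_j] \subset (0, 1)$ with $0$ a definite distance away. The image $g([y_{j-1}, y_j]) = [y_{k_0 - 1}, y_{k_0}]$, and I want a larger interval $Y \supset [y_{j-1}, y_j]$ on which $g$ is still a diffeomorphism with $g(Y)$ having definite Koebe space around $[y_{k_0-1}, y_{k_0}]$. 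The natural candidate is to pull back $[y_{k_0 - 1}, z]$ for some $z$ a little past $y_{k_0}$, or to use the interval $(y_{j-2}, \cdot)$ on the left and something on the right; the point is that the next gap $[y_{k_0}, P_\kappa(y_{k_0})]$ has length $\ge y_{k_0} - y_{k_0 - 1}$ (since the orbit is increasing and past $1/2$ the increments grow), giving Koebe space comparable to $1$ on that side, and similarly on the left via the gap $[y_{j-2}, y_{j-1}]$ which is at most a bounded multiple of $[y_{j-1}, y_j]$. Feeding a Koebe constant $\kappa_0$ of order $1$ into Lemma~\ref{lem:koebe} gives $((1 + \kappa_0)/\kappa_0)^2$, and with the explicit constants from the gap comparisons this comes out below $21$. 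The number $21$ is slack enough that I would not optimise: it suffices to show the relevant gaps are comparable up to a factor like $4$ or so, then $(1 + 1/4)^2 \cdot (\text{something}) < 21$.

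For \eqref{eqn:pkapy}: since $y_{k_0 - 1} < 1 \le y_{k_0}$ and $y_{k_0} = y_{k_0 - 1}^2 + \kappa$ with $\kappa > 1/4$, and $y_{k_0-1} \ge 1/2$ (because by the time the orbit is about to exceed $1$ it has certainly passed $1/2$, the increments being positive and the orbit being just below $1$), we get $y_{k_0} - y_{k_0 - 1} = y_{k_0 - 1}^2 - y_{k_0 - 1} + \kappa = (y_{k_0-1} - 1/2)^2 + (\kappa - 1/4) \ge (y_{k_0 - 1} - 1/2)^2$. If $y_{k_0 - 1} \ge 1/2 + 1/\sqrt{8}$ this is $\ge 1/8$ immediately; otherwise $y_{k_0 - 1} \in [1/2, 1/2 + 1/\sqrt 8)$, and I would instead use that $y_{k_0} = y_{k_0-1}^2 + \kappa \ge 1$ forces $\kappa \ge 1 - y_{k_0 - 1}^2 \ge 1 - (1/2 + 1/\sqrt 8)^2$, which is a definite positive number, so $y_{k_0} - y_{k_0 - 1} \ge (\kappa - 1/4) \ge 1 - (1/2 + 1/\sqrt 8)^2 - 1/4$; a quick numerical check shows this exceeds $1/8$. (One also needs $y_{k_0 - 1} \ge 1/2$: this holds because the orbit is strictly increasing and $y_{k_0} \ge 1$, so if $y_{k_0 - 1} < 1/2$ then since $P_\kappa$ maps $[0, 1/2]$ into $[\kappa, 1/4 + \kappa]$ and $\kappa < 1/4 + $ small in the regime of interest --- actually here one should invoke that in the application $\kappa$ is close to $1/4$, so $1/4 + \kappa < 1$, contradiction; I would state the lemma's hypotheses so that $\kappa$ is bounded, say $\kappa < 1/2$, which is harmless for the application and makes this clean.)

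The main obstacle I anticipate is the bookkeeping in \eqref{eqn:pkap}: getting a genuinely \emph{uniform} Koebe space on \emph{both} sides of $[y_{k_0-1}, y_{k_0}]$ after pulling back by $g = P_\kappa^{k_0 - j}$, uniformly in $j$, $\kappa$, and $y_0$. The clean way is to pull back a \emph{fixed} larger interval — e.g. show $P_\kappa^{k_0}$ maps a neighbourhood of $[0, 1/4]$ (or of the relevant starting gap) diffeomorphically onto a neighbourhood of $[y_{k_0 - 1}, y_{k_0}]$ with definite space, using that $P_\kappa$ is a diffeomorphism on $(0, \infty)$ and the orbit stays in $(0, M)$ — and then the comparison of consecutive gaps (unimodality of $j \mapsto y_j - y_{j-1}$, plus the lower bound $y_{k_0} - y_{k_0-1} \ge 1/8$ from the second part) does the rest. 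So I would prove the two displays in the order \eqref{eqn:pkapy} first, then \eqref{eqn:pkap}, since the former supplies the definite scale needed for the Koebe estimate in the latter.
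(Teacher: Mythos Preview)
Your approach to \eqref{eqn:pkapy} is roughly right in spirit but contains a numerical slip: in your Case~2, the quantity $1 - (1/2 + 1/\sqrt{8})^2 - 1/4 = 3/8 - 1/\sqrt{8} \approx 0.021$, which is \emph{not} $\geq 1/8$. The repair is simpler than your argument: if $y_{k_0-1} < 1/2 + 1/\sqrt{8}$ then directly $y_{k_0} - y_{k_0-1} \geq 1 - y_{k_0-1} > 1/2 - 1/\sqrt{8} > 1/8$. (Your worry about whether $y_{k_0-1} \geq 1/2$, and the proposed extra hypothesis $\kappa < 1/2$, are also unnecessary: if $y_{k_0-1} < 1/2$ then $y_{k_0} - y_{k_0-1} > 1 - 1/2 = 1/2$.) The paper argues the same way, using the threshold $\sqrt{3}/2$ and monotonicity of $x \mapsto P_\kappa(x) - x$ on $[1/2,\infty)$.

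For \eqref{eqn:pkap} your Koebe strategy is a genuine detour from the paper, and your outline does not actually secure the uniform two-sided Koebe space you need (uniformly in $j$, $\kappa$, $y_0$); the gap-comparison sketch is vague precisely at the point where it has to deliver a numerical constant. The paper's argument is far more direct: for $z,z' \in (y_{j-1}, y_j)$ with $j \geq 2$, the chain rule gives
\[
\log\frac{DP_\kappa^{k_0-j}(z)}{DP_\kappa^{k_0-j}(z')} \;=\; \sum_{i=0}^{k_0-j-1} \int_{P_\kappa^i(z')}^{P_\kappa^i(z)} D\log DP_\kappa,
\]
and the intervals of integration sit inside the pairwise disjoint gaps $(y_{j-1+i}, y_{j+i})$, whose union is contained in $(y_1, y_{k_0-1}) \subset (1/4,1)$. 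Since $D\log DP_\kappa(x) = 1/x < 4$ there, the entire sum is bounded by $\int_{1/4}^1 4\,dx = 3$, giving distortion $< e^3 < 21$. No Koebe lemma, no extension space, no Schwarzian --- just disjointness of consecutive orbit intervals and an explicit bound on the nonlinearity. (Incidentally, your claim that $1/\sqrt{|DP_\kappa|}$ is affine is wrong: it equals $(2x)^{-1/2}$, and the Schwarzian of $P_\kappa$ is $-3/(2x^2)$, strictly negative rather than zero. This does not affect the applicability of Koebe, but it does undermine your stated rationale.)
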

            \begin{proof}
                We have $(y_1, y_{k_0-1}) \subset (1/4, 1)$, and on these intervals $1/2 < DP_\kappa < 2$ and $D^2P_\kappa = 2.$
                Hence $D\log {DP_\kappa} < 4$, and [noting $\log (Df(a)/Df(b)) = \int_b^a D\log Df$] we deduce
                the distortion bound 
                $$
                \frac{DP_\kappa^{k_0-j}(z)}{DP_\kappa^{k_0-j}(z')} \leq \exp \left(\int_{(1/4,1)} D\log DP_\kappa\right) < e^3 < 21,
        $$
         for $j = 2, 3, \ldots, k_0$ and all $z,z' \in (y_{j-1}, y_j)$. 

        To show the length estimate, since $y_{k_0}\geq 1$, we can assume $y_{k_0 -1} > \sqrt{3}/2$, for otherwise the estimate holds trivially. 
        Since $DP_\kappa(x) >1$ for $x > 1/4$,
        $$
        y_{k_0}  - y_{k_0-1} > P_\kappa(\sqrt{3}/2) - \sqrt{3}/2 > 1-\sqrt{3}/2 > 1/8.
        $$
    \end{proof}

    \section{Proof of  Theorem~\ref{THM:EXO1BIS}} \label{sec:exo1}

Theorem~\ref{THM:EXO1BIS} 
%and thus Theorem~\ref{THM:EXO1} 
avers the existence of a sequence $(f_k)_{k\geq1}\in \FNSD$ having decreasing critical relations with $f_k \to f_0$ as $k \to \infty$ and for which each $f_k,$ $k \geq 0$ has an acip $\mu_k$ with entropy uniformly bounded away from $0$, but with $\mu_k$ converging to a convex combination of $\mu_0$ and a Dirac mass on a repelling fixed point. 

As in Theorem~\ref{THM:EXO1BIS}, consider the family of maps 
    $$f_a : [0,1] \to [0,1],$$ 
    with $a > 0$, defined by 
        $$f_a(x) = \left\lbrace \begin{split}
            1 -2x \quad &  \text{ if } \quad  x\in [0,1/2]\\
    a(x-1/2)(x-1) + 1 \quad & \text{ if } \quad x\in (1/2,1]. \end{split} \right.
            $$
            This map has two branches: one orientation-reversing, expanding  branch on $[0, 1/2]$ and  one unimodal branch  on $(1/2, 1)$ with a critical point $c = 3/4$, see Figure~\ref{fig:exo1}. 
            The map has fixed points at $1, 1/3$ and at $\alpha = \alpha(a)$, for some $\alpha \in (1/2, 1)$.  Only parameters in $(0,16)$ satisfy $f_a(c) \in (0,1)$. 
            For $a_0 = 32/3$, $f_{a_0}(c) = 1/3$, so the map is post-critically finite. Standard arguments (essentially a subset of those appearing at the end of this chapter) imply the existence of an acip for $f_{a_0}$; the acip is the unique equilibrium state for the potential $-\log |Df_{a_0}|$ and has positive entropy. 
        \begin{figure}
            \centering
        \def\svgwidth{0.9\columnwidth}
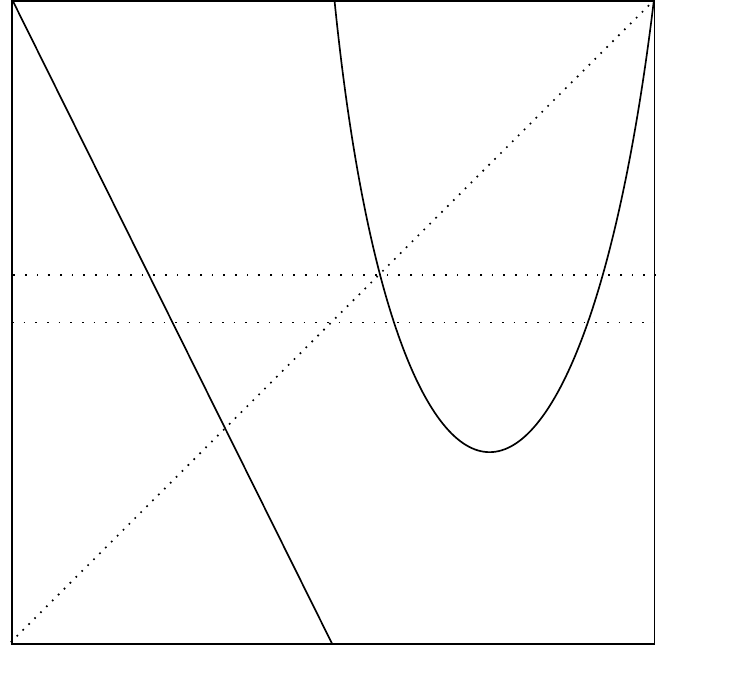
    \caption{The map $f_a$ for $a > a_0$}
    \label{fig:exo1}
\end{figure}

        The idea of the construction in Theorem~\ref{THM:EXO1BIS} is the following.  For $a$ slightly bigger than $a_0$, the critical orbit typically starts off spending a good amount of time close to $1/3$ and, after an even number of steps, escapes back to the interval $(1/2, 1]$. 
        For certain parameters, the map $f_a$ will have an acip which has a proportion of its mass very close to the repelling fixed point at $1/3$. We shall show this.

        \begin{remark} \label{rem:deccr}
            If the orbit of $c$ is disjoint from $\{0, 1/2, 1\}$, then $f_{a}$ has the same critical relations as $f_{a_0}$. 
        \end{remark}

        The following proposition implies Theorem~\ref{THM:EXO1BIS}.
        \begin{proposition} \label{prop:betadel}
            Given $\beta >0$, 
            there exists a sequence $(a_k)_{k\geq1}$ of parameters with $a_k \to a_0$ as $k \to \infty$ for which each $f_{a_k}$ has an acip $\mu_k$, the entropy of the $\mu_k$ is uniformly bounded below, and 
            $$
            \lim_{k\to \infty} \mu_k = \frac{1}{1+\beta} \mu_0 + \frac{\beta}{1+\beta} \delta_{1/3},$$
            where $\delta_{1/3}$ denotes the Dirac mass at the repelling fixed point $1/3$. Moreover the sequence $(f_{a_k})_{k \geq 0}$ has decreasing critical relations. 
        \end{proposition}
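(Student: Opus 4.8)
The plan is to realise $\mu_k$ as the spread of the normalised acip $\nu_k$ of a first return map $F_k$ of $f_{a_k}$ to a fixed interval $J\subset(1/2,1)$ containing the critical point $c=3/4$, and then to control precisely how much of the spread mass is deposited in a shrinking neighbourhood of the repelling fixed point $1/3$.

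First I would set up the induced maps. Fix $J=(1/2+\delta_0,1-\delta_0')$ with endpoints chosen to be (pre)images of repelling periodic points of $f_{a_0}$, so that for $a$ near $a_0=32/3$ the first return map $F_a:=F_{a}$ to $J$, with return time $\tau_a$, has a robust combinatorial structure. Its branches split into a single unimodal ("critical") branch issuing from a neighbourhood of $c$, together with diffeomorphic branches whose extendibility follows from the non-positive Schwarzian derivative via the Koebe Lemma; on the diffeomorphic branches one obtains distortion bounds uniform in $a$, using Lemma~\ref{lem:ykdistn} to absorb the stretch of orbit spent near $1/3$. For $a$ in a non-renormalisable Misiurewicz set, the Rychlik/Folklore-type argument already used in the proof of Proposition~\ref{prop:gn} gives a unique acip $\nu_k$ for $F_k:=F_{a_k}$ with density bounded above and below (uniformly in $k$ away from the forward $F_k$-orbit of the critical value of $F_k$, which stays away from $c$). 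The acip $\mu_k$ of $f_{a_k}$ is then the spread of $\nu_k$, with normalising constant $T_k:=\int_J\tau_{a_k}\,d\nu_k$ and $\mu_k=T_k^{-1}\sum_n\sum_{j<n}(f_{a_k}^j)_*\bigl(\nu_k|_{\{\tau_{a_k}=n\}}\bigr)$, so that everything reduces to understanding $\tau_{a_k}$ and its $\nu_k$-integral.

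Next I would construct the parameters. The critical orbit of $f_a$ leaves $c$, lands at $v=1-a/16$ (which tends to $1/3$ as $a\to a_0$), performs an excursion near $1/3$, and re-enters $(1/2,1]$. Since the left branch $x\mapsto 1-2x$ is affine, this excursion is governed, after an affine change of coordinates, by the renormalised family studied in the preliminary subsection, so the monotonicity \eqref{eq:dkap}, the estimates \eqref{eq:MMj}, \eqref{eqn:kvnover4} and Lemma~\ref{lem:evvn} apply to the entry/escape times. Using these together with an intermediate value argument in the parameter $a$, I would choose $a_k\to a_0$ so that $f_{a_k}$ is a non-renormalisable Misiurewicz map whose critical orbit is eventually mapped onto a repelling cycle while avoiding $\{0,1/2,1\}$, so that by Remark~\ref{rem:deccr} the sequence $(f_{a_k})_{k\ge0}$ has decreasing critical relations. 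The heart of the matter is to arrange, within this freedom, that the fraction of the inducing time spent near $1/3$ converges to a prescribed value: writing $T_k=A_k+B_k$ with $A_k:=\int_{\{\tau_{a_k}\le C_0\}}\tau_{a_k}\,d\nu_k$ the "bounded-time" part and $B_k$ the remaining "near-$1/3$" part, one checks that $A_k$ converges to the corresponding quantity for $f_{a_0}$, whereas $B_k$ can be tuned through the combinatorial type (how many times, and how closely, the critical orbit revisits the $1/3$-neighbourhood before landing on the cycle). The exponential tail estimates of Lemmas~\ref{lem:Misi}, \ref{lem:expintegral}, \ref{lem:intint} and Lemma~\ref{lem:logsum} guarantee $B_k$ is finite and depends continuously and monotonically enough on the construction to be pinned so that $B_k/A_k\to\beta$.

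Finally I would pass to the limit. Because $\nu_k$ has uniformly bounded density and the bounded-return-time part of $F_{a_k}$ converges to that of $F_{a_0}$, the spread of $\nu_k|_{\{\tau_{a_k}\le C_0\}}$ converges, up to the normalising factor, to a multiple of $\mu_0$; meanwhile every orbit segment counted by $B_k$ lies, apart from boundedly many iterates, in a neighbourhood of $1/3$ that shrinks as $k\to\infty$ (since the escape time grows precisely when the orbit point is close to $1/3$), so the spread of $\nu_k|_{\{\tau_{a_k}>C_0\}}$ converges weakly to a multiple of $\delta_{1/3}$. Combining this with $B_k/A_k\to\beta$ and renormalising yields $\mu_k\to\frac{1}{1+\beta}\mu_0+\frac{\beta}{1+\beta}\delta_{1/3}$. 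The entropy bound then follows at once: $T_k$ stays bounded and $h(\nu_k)$ is bounded below (the diffeomorphic branches alone carry a definite amount of entropy), so $h(\mu_k)=h(\nu_k)/T_k$ is bounded below; equivalently, $h(\mu_k)=\int\log|Df_{a_k}|\,d\mu_k\to\frac{1}{1+\beta}h(\mu_0)+\frac{\beta}{1+\beta}\log2>0$, using that no mass of $\mu_k$ escapes into the neighbourhood of $c$ where $\log|Df_{a_k}|=-\infty$. I expect the principal obstacle to be exactly the bookkeeping of the previous two paragraphs: exhibiting a combinatorial type, realisable by parameters $a_k\to a_0$, for which the near-$1/3$ fraction of the inducing time converges to the arbitrary prescribed $\beta$, while simultaneously keeping the acip, its distortion bounds, the return-time tails and the entropy under control uniform in $k$; Lemma~\ref{lem:evvn} together with the monotonicity of the near-fixed-point family are the tools that should make this tractable.
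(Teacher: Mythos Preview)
Your overall strategy---induce to a neighbourhood of the critical point, split the inducing-time integral into a bounded part and a long-sojourn part, tune parameters so the latter has prescribed relative weight, then spread---is exactly the paper's. But you misidentify the mechanism that the preliminary quadratic estimates control, and this is where the real work lies.

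The fixed point $1/3$ is hyperbolic with multiplier $-2$; the excursion near it is governed by linear expansion, not by $P_\kappa$. What $P_\kappa$ actually models is the first return map $\phi=f^k$ to $(\alpha,\alpha_*)$ restricted to its central branch $(q,q_*)\ni c$: for the paper's two-index parameter family $a(k,n)$ this branch is a quadratic map whose graph is nearly tangent to the diagonal near $c$ (see~\eqref{eqn:centralb} and the conjugacy~\eqref{eqn:conjA}), and Lemma~\ref{lem:evvn} together with Lemma~\ref{lem:ykdistn} control how many $\phi$-iterates a point in $(q,q_*)$ needs to escape. Each such $\phi$-iterate then comprises $k$ iterates of $f$, almost all spent near $1/3$ by~\eqref{eqn:ekn4}. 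It is this \emph{two-level} structure---near-parabolic drift of $\phi$ near $c$, times block length $k$---that makes the mass near $1/3$ tunable. Your single-level ``excursion near $1/3$'' picture misses the first level and would not deliver the bounded-increment property~\eqref{eqn:ekn3}, which is precisely what lets the paper pick $n(k)$ minimal with $S^\delta_{k,n(k)}>\beta S_0$ and conclude $S^\delta_{k,n(k)}\to\beta S_0$ (since the overshoot is $O(k|q_*-q|)\to0$ by Lemma~\ref{lem:qlong}). Your phrase ``depends continuously and monotonically enough'' is standing in for this step.

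Concretely, the paper builds the full-branched composite $\Psi=\phi\circ\psi$ (accelerating through the near-parabolic central branch via the first-entry map $\psi$ to $X=(\alpha,\alpha_*)\setminus[q,q_*]$), shows the resulting sequence is a convergent Markov system (Fact~\ref{fact:rho}), and splits the inducing-time integral by \emph{region} ($X$ versus $(q,q_*)$) rather than by threshold. The supporting lemmas you cite (\ref{lem:Misi}, \ref{lem:expintegral}, \ref{lem:intint}) belong to Proposition~\ref{prop:gn} and play no role here; the relevant estimates are Lemmas~\ref{lem:expaj}--\ref{lem:ekn}. Also note that in the paper's notation $v$ is a preimage of $1/2$, not the critical value $1-a/16$. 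Your entropy argument via Abramov is fine and is what the paper uses; the alternative via $\int\log|Df_{a_k}|\,d\mu_k$ would need a separate argument that no mass leaks toward the singularity at $c$.
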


        The proof of Proposition~\ref{prop:betadel} occupies the remainder of this chapter. 
        First we construct a first return map with one unimodal branch (and lots of full branches). In particular, the unimodal branch is the connected component of the domain of the first return map which contains the critical (turning) point.  For a subset of parameters, taking higher iterates on the unimodal branch will give a full-branched induced map. This will have an acip. These acips will converge to the acip for the induced map corresponding to $a_0$. A careful selection of parameters will give control of the integral of the inducing time. If the integral of the inducing time on the (shrinking as $a \to a_0$) unimodal branch  converges to a non-zero constant, there is a good chance of proving the proposition.

        \begin{proof}[Proof of Proposition~\ref{prop:betadel}]
        For $a > a_0$, there is a point $\alpha_* > c$ (depending on $a$) with $f_a(\alpha_*) = f_a(\alpha) = \alpha$. There are  points $v, v_* \in (\alpha, \alpha_*)$ (depending on $a$), with $v < v_*$, such that 
        $$f_a(v) = f_a(v_*) = 1/2.$$
        For each  even number $k \geq 2$, there exists a maximal interval $A_k$ of parameters $a$ for which $f_a^j(c) \in (0, 1/2)$ for $j = 1, \ldots, k-1$, while $f_a^{k}(c) \in (1/2, 1)$. Moreover the map $\xi : a \to f_a^{k}(c)$ maps $A_k$ continuously onto $(1/2, 1)$. 
        By continuity, there is a subinterval $A'_k$ of parameters in $A_k$ mapped by $\xi$ onto $(\alpha, \alpha_*)$. 
        For $a$ in $A'_k$, denote by  $q < q_*$ the points closest to $c$ with $f^k_a(q) = f^k_a(q_*) = \alpha$. As $k$ is even,  $f^k_a$ has a local maximum at $c$ and $f^k_a((q,q_*)) = (\alpha,c)$. The point $v$ gets mapped by $f_a$ to $1/2$, so $v, v_* \notin [q,q_*].$ 
        In particular,  $(q,q_*)$ is compactly contained in $(v,v,_*)$ is compactly contained in $(\alpha, \alpha_*)$. 

        Inside the subinterval $A'_k$, there is a parameter $a(k,n)$ for  which, writing $f_{k,n}$ for $f_{a(k,n)}$, 
        $$ 
        v = f^{kn}_{k,n} (c) < q < f^{k(n-1)}_{k,n}(c) < \cdots < f^{k}_{k,n}(c) < c.$$
        The graph of $f^k_{k,n}$ restricted to $(q,q_*)$ lies below the diagonal. For $n \geq 2$ and $k$ large, the graph very nearly touches the diagonal. 
        Let us fix $f_{k,n}$ for now. 

        The first return map $\phi$ to $(\alpha, \alpha_*)$ has a unimodal branch defined on $(q,q_*)$ and $\phi$ coincides  with $f_{k,n}^k$ on $(q,q_*)$, see Figure~\ref{fig:returnmap}. 
        For $j = 1, \ldots, n$, 
        $$\phi^j(c) = f^{kj}_{k,n}(c).$$ 
        Let us set 
        $$X := (\alpha, \alpha_*)\setminus [q,q_*].$$
        Branches of $\phi$ in $X$
        are full, mapping diffeomorphically onto $(\alpha, \alpha_*)$ [the proof of this is straightforward, noting that $\Orb(\partial X) \cap X = \emptyset$].
        Points in $[q,q_*]$ eventually leave $[q,q_*]$, and the first time they do, they enter $[\alpha, q]$. Denote by $\psi$ the first entry map to 
        $X$.
        Restricted to $X$ it is just the identity map, while on $(q,q_*)$ it consists of iterates of $\phi$. 
        \begin{figure}
    \centering
        \def\svgwidth{0.9\columnwidth}
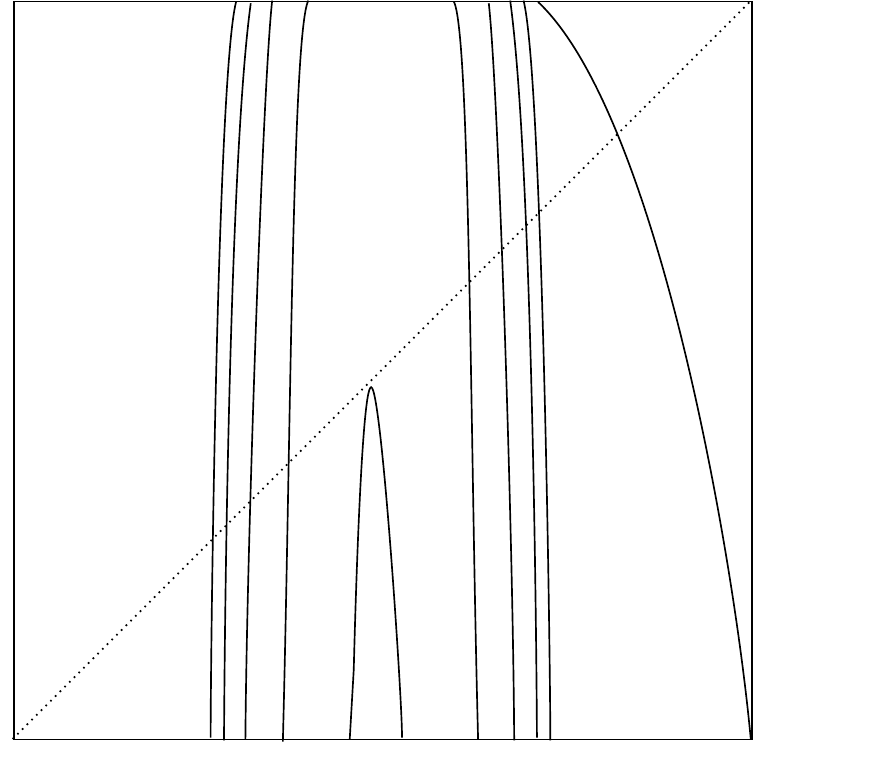
\caption{The first return map $\phi$ to $(\alpha, \alpha_*)$  has one (almost parabolic) unimodal branch $(q,q_*)$ and infinitely many diffeomorphic branches.}
    \label{fig:returnmap}
\end{figure}
                Let us define the map 
                $$\Psi = \Psi_{k,n} := 
                \phi \circ \psi. $$

                \begin{lemma}\label{lem:psifull}
                    This map $\Psi$ is full-branched and the following distortion bound holds for some constant $C$ (independent of $k,n$): 
            \begin{equation}\label{eqn:nsdpsi}
                \left|\frac{D\Psi^n(x) }{D\Psi^n(y)}\right| \leq C|\Psi^n(x) - \Psi^n(y)|
            \end{equation}
            for all $x,y$ in the same branch of $\Psi^n$.
        \end{lemma}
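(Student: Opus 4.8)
The plan is to read off full-branchedness from the way $\Psi$ is assembled out of the first return map $\phi$ and the first entry map $\psi$, and to obtain the distortion estimate from non-positive Schwarzian derivative via the Koebe Lemma~\ref{lem:koebe}, exploiting the uniform extendibility that the construction provides.

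\textbf{Full-branchedness.} On $X$ the map $\psi$ is the identity, so the branches of $\Psi$ inside $X$ are precisely the full branches $Z_i$ of $\phi$, and $\Psi|_{Z_i}=\phi|_{Z_i}$ maps $Z_i$ diffeomorphically onto $(\alpha,\alpha_*)$. On $(q,q_*)$ one has $\psi=\phi^{m(\cdot)}$, where $m$ is the first entry time to $X$ under $\phi$; subdividing the level sets of $m$ at the preimages of $c$ and of the $\partial Z_i$ decomposes $(q,q_*)$, up to a null set, into intervals $W$ on each of which $\psi$ is a homeomorphism onto some $Z_i$, whence $\Psi|_W=\phi\circ\psi|_W$ is a homeomorphism of $W$ onto $(\alpha,\alpha_*)$. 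It remains to check that almost every point of $[q,q_*]$ eventually enters $X$. Since the graph of $\phi=f_{k,n}^{k}$ over $(q,q_*)$ lies strictly below the diagonal, a $\phi$-orbit is strictly decreasing while it stays in $[q,q_*]$; as $\phi$ has no fixed point there and $\phi(q)=\alpha\neq q$, such an orbit cannot converge inside $[q,q_*]$, so it must leave, necessarily into $[\alpha,q)\subset X$. Hence $\psi$, and so $\Psi$, is defined almost everywhere and $\Psi$ is full-branched.

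\textbf{Distortion.} Fix a branch $W$ of $\Psi^n$; then $\Psi^n|_W=f_{k,n}^{j}|_W$ for some $j$, mapping $W$ homeomorphically onto $(\alpha,\alpha_*)$. The endpoints of this interval are the repelling fixed point $\alpha$ and the point $\alpha_*$ with $f_{k,n}(\alpha_*)=\alpha$, whose forward orbits $\{\alpha\}$ and $\{\alpha_*,\alpha\}$ are disjoint from $(\alpha,\alpha_*)$, so $(\alpha,\alpha_*)$ is a nice interval; likewise $(v,v_*)$ is nice, since $v,v_*\mapsto\tfrac12\mapsto0\mapsto1$ is fixed and misses $(v,v_*)$. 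Using the nested nice intervals $(q,q_*)\Subset(v,v_*)\Subset(\alpha,\alpha_*)$ one extends each branch $W$ monotonically to $\widehat{W}\supset W$ so that $f_{k,n}^{j}(\widehat{W})$ is a definite two-sided neighbourhood of $(\alpha,\alpha_*)$. Because $a(k,n)\to a_0$ the points $\alpha,\alpha_*,v,v_*,q,q_*$, the derivative bounds away from $c$, and $|(\alpha,\alpha_*)|$ all converge, so the resulting Koebe space can be taken uniform in $k,n$, as can an upper and a positive lower bound for $|(\alpha,\alpha_*)|$. The Koebe Lemma~\ref{lem:koebe}, in its scaled form and using $|f_{k,n}^{j}(x)-f_{k,n}^{j}(y)|\le|(\alpha,\alpha_*)|$ together with the lower bound on $|(\alpha,\alpha_*)|$, then yields~\eqref{eqn:nsdpsi} with $C$ depending only on that Koebe space and on $\inf|(\alpha,\alpha_*)|$, hence independent of $k,n$.

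\textbf{Main obstacle.} The delicate point is the uniformity of the Koebe space for the branches of $\Psi$ that traverse the almost-parabolic bottleneck: the central interval $(q,q_*)$ collapses to $c$ as $k\to\infty$, so the extension cannot be performed inside $(q,q_*)$ and one must pass through $(v,v_*)$ and then around to $(\alpha,\alpha_*)$, controlling the long stretch of orbit spent near the bottleneck. This is where the estimates of the first subsection enter: the distortion of the iterates of $\phi=f_{k,n}^{k}$ over the central branches is bounded by a universal constant (Lemma~\ref{lem:ykdistn}), and the number of $\phi$-steps before escaping the bottleneck does not inflate the relevant images (Lemma~\ref{lem:evvn}), so a definite two-sided space survives to the final application of $\phi$. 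Combining this universal bottleneck bound with the Koebe estimate for the diffeomorphic pieces in $X$ then gives the asserted bound for $\Psi^n$ with constant independent of $k$ and $n$.
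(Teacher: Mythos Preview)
Your full-branchedness argument is essentially fine, but the distortion argument has a real gap. You try to obtain uniform Koebe space from the nested nice intervals $(q,q_*)\Subset(v,v_*)\Subset(\alpha,\alpha_*)$, yet all three lie \emph{inside} $(\alpha,\alpha_*)$, so they cannot produce a two-sided neighbourhood of $(\alpha,\alpha_*)$ as target of the extension. Having noticed this, you invoke Lemmas~\ref{lem:evvn} and~\ref{lem:ykdistn} to control the almost-parabolic bottleneck; but those lemmas bound the distortion of iterates of the normalised quadratic on particular level-set intervals, not the extendibility of branches of $\Psi$ beyond $(\alpha,\alpha_*)$, and you do not show how to assemble these piecewise bounds into the global estimate~\eqref{eqn:nsdpsi} for $\Psi^n$. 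As it stands, no explicit extension target and no actual Koebe computation are given.

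The paper's proof is far shorter and uses none of the bottleneck lemmas. The key observation you are missing is that the post-critical orbit never enters the collars $(1/2,\alpha]$ and $[\alpha_*,1)$: when the orbit of $c$ is in $(1/2,1)$ it sits in $[v,c]\subset(\alpha,\alpha_*)$, and otherwise it lies in $[0,1/2]$ or at the fixed point $1$; the boundary points $0,1/2,1$ behave similarly. Hence every branch of $\Psi$ (and of $\Psi^n$), regarded as some $f_{k,n}^{j}$, extends monotonically to a diffeomorphism onto $(1/2,1)\supset(\alpha,\alpha_*)$. Since $\alpha-\tfrac12$ and $1-\alpha_*$ are bounded away from zero uniformly in $k,n$ (as $a(k,n)\to a_0$), the Koebe Lemma~\ref{lem:koebe} applies with a uniform $\kappa$ and gives~\eqref{eqn:nsdpsi} immediately. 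Lemmas~\ref{lem:evvn}--\ref{lem:ykdistn} are reserved for the return-time estimates in Lemma~\ref{lem:taubnd}, not for this distortion bound.
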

        \begin{proof}
        Since $\psi (c) = v$ and $f_{k,n}(v) = 1/2$, a fixed point at a positive (and bounded away from $0$ for the parameters under consideration) distance from $[\alpha,\alpha_*]$, $\Psi$ has no critical points (in the sense zero-derivative). 
        Branches of $\psi$ in $(q,q_*)$ are mapped (by $\psi$) diffeomorphically onto $(\alpha, q)$ or onto $(\alpha,v)$. 
         Neither $\alpha$ nor $v$ nor $q$ is in any branch of $\phi$. It follows that branches of $\Psi$ in $(q,q_*)$ are full. 

    Noting that the orbit of $c$ never enters $(1/2, \alpha]$ nor $[\alpha_*,1)$, each branch of $\Psi$ extends (coinciding with the corresponding iterate of $f$) to map diffeomorphically onto $(1/2, 1)$. Moreover, the extensions have non-positive Schwarzian derivative. Therefore there is a uniform distortion bound for all iterates of $\Psi$, independently of $k$.
    \end{proof}

            On each branch of $\Psi$, $\Psi$ coincides with some iterate of $f_{k,n}$. Thus there is a function $\tau = \tau_{k,n}$ defined on the domain of $\Psi$, constant on each branch of $\Psi$, for which 
            $$\Psi = f_{k,n}^\tau.$$ 
            Estimating the size of $\tau^{-1}(j)$, for $j \in \N$, will give us information about the acip for $f_{k,n}$, namely its distribution, its Lyapunov exponent and (therefore) its entropy. 
            Outside $(q,q_*)$, the estimates for $\tau$ which follow are independent of $k, n$. 

        Let $E_j$ denote the set of points whose first entry time to $[\alpha, \alpha_*]$ is $j$. We shall now obtain uniform exponential bounds on the size of $E_j$. 
    \begin{lemma}\label{lem:expaj}
        $m(E_j) \leq e^{-j/4}.$
    \end{lemma}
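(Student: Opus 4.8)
The plan is to exhibit $[\alpha,\alpha_*]$ as a nice interval with a hyperbolicity estimate away from it, and then to apply a counting/inducing argument of Ma\~n\'e type, exactly as in the proof of Lemma~\ref{lem:Misi} (which we copied from \cite[Lemma~V.3.3]{MSbook}). So $E_j = \{x : e(x) = j\}$, where $e$ is the first entry time to $U := (\alpha, \alpha_*)$, and we want $m(E_j) \leq e^{-j/4}$.

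First I would check that the four hypotheses of Lemma~\ref{lem:Misi} hold for $f := f_{k,n}$ and $U := (\alpha, \alpha_*)$, with constants that do not depend on $k,n$. The key points are: the orbit of $\partial U = \{\alpha, \alpha_*\}$ stays a definite distance from $U$ (indeed $f_{k,n}(\alpha) = \alpha = f_{k,n}(\alpha_*)$ is a fixed point, so its forward orbit is just $\{\alpha\}$, and $\alpha \in \partial U$; one uses instead a slightly smaller nice interval $(\alpha + \delta, \alpha_* - \delta)$, or more simply the fact that the post-critical orbit avoids a neighbourhood of $\alpha$); that $f_{k,n}$ restricted to $I \setminus U$ is uniformly expanding under some fixed iterate --- this follows because on $[0,1/2]$ the slope is $\pm 2$, and on $(1/2,1)\setminus U$ the map is a diffeomorphism with non-positive Schwarzian derivative onto a set whose complement in $f_{k,n}((1/2,1))$ has definite size, so the Koebe/Minimum Principle gives expansion on orbits that avoid $U$ (compare the Ma\~n\'e Hyperbolicity argument and Lemma~\ref{lem:MPexp}); that $|Df_{k,n}|$ is bounded below away from $0$ off $U$; and that distortion of $f_{k,n}^j$ is uniformly bounded on intervals whose first $j$ iterates avoid $U$, again by non-positive Schwarzian derivative and bounded geometry. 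All these constants are uniform in $k,n$ because, for the parameters under consideration, $\alpha, \alpha_*, v, v_*$ stay in fixed compact subintervals and the maps $f_{k,n}$ lie in a $C^0$-compact (indeed $C^2$-bounded) family.

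Then Lemma~\ref{lem:Misi} yields some $\beta > 0$, uniform in $k,n$, with $m(E_j) = m(\{x : e(x) \geq j\}) - m(\{x : e(x) \geq j+1\}) \leq m(\{e \geq j\}) \leq e^{-j\beta}$. To get the clean exponent $1/4$ claimed in the statement, one either absorbs the constant by noting that for $j$ below some threshold $j_0$ the bound $e^{-j/4} \geq $ (a trivial bound like $1$ times a fixed factor) holds automatically after shrinking, or --- cleaner --- one observes that since the rough bound is $e^{-j\beta}$ and we only ever use it summed against polynomially-growing weights (as in Lemma~\ref{lem:expintegral}), any fixed positive exponent suffices and $1/4$ is just a convenient normalisation; replacing $\beta$ by $\min(\beta, 1/4)$ after passing to a fixed power of $f_{k,n}$ and relabelling gives the stated form. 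I would state it as: there is a uniform $\beta_0>0$ with $m(E_j)\le e^{-\beta_0 j}$, and then choose the threshold/relabelling so that $m(E_j)\le e^{-j/4}$ for all $j$.

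The main obstacle is the uniformity of the expansion estimate off $U$ as $a = a(k,n) \to a_0$, since the critical orbit spends longer and longer near the repelling fixed point $1/3$ before escaping, and one must be sure this does not degrade the Ma\~n\'e-type hyperbolicity constant: the point is that $1/3 \notin U$ is repelling with fixed multiplier, so orbits lingering near $1/3$ are actually being expanded, not contracted, and the time spent near $1/3$ is precisely what the entry-time function $e$ measures --- there is no conflict. One must also be slightly careful that $\alpha, \alpha_*$ themselves are boundary points of $U$ and $\alpha$ is fixed, so literally $\dist(U, \bigcup_{i\ge1} f^i(\partial U)) = 0$; the fix is to run the argument with the open interval $U$ replaced by its interior pushed in slightly, or to note (as is standard) that a single fixed endpoint causes no problem since Lemma~\ref{lem:Misi}'s proof only needs the forward orbit of $\partial U$ to not re-enter $U$, which holds here. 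Once these points are handled, the rest is a direct citation of Lemma~\ref{lem:Misi}.
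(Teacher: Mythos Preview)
Your approach is genuinely different from the paper's, and it is worth comparing them.

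The paper does not invoke Lemma~\ref{lem:Misi} at all. Instead it exploits the completely explicit form of $f_{k,n}$ and computes $m(E_j)$ by hand. On $[0,1/2]$ the map is linear with slope $2$, so the set of points first entering $(1/2,1)$ at time $i$ has measure exactly $2^{-i-1}$. On $[\alpha_*,1]$ the graph of $f_{k,n}$ lies below the secant line of slope $8/3>2$ through the fixed point $1$, so $m(E_j\cap(\alpha_*,1))<2^{-j-2}$; by symmetry $m(E_j\cap(1/2,1))<2^{-j-1}$. A convolution then gives $m(E_j\cap(0,1/2))<j\,2^{-j-1}$, whence $m(E_j)<(j+1)2^{-j-1}<e^{-j/4}$. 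This is a two-paragraph elementary calculation yielding the precise constant $1/4$ directly, with no appeal to distortion, Ma\~n\'e hyperbolicity, or nice-interval machinery.

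Your route via Lemma~\ref{lem:Misi} is morally sound and would produce $m(\{e\ge j\})\le e^{-\beta j}$ for \emph{some} uniform $\beta>0$; since downstream (Lemma~\ref{lem:exprj} and the integrability estimates) one only needs an exponential tail with any fixed rate, this would suffice for the rest of the section. What it does not give is the explicit exponent $1/4$, and your proposed fix --- ``replacing $\beta$ by $\min(\beta,1/4)$ after passing to a fixed power of $f_{k,n}$ and relabelling'' --- is not correct: if the $\beta$ coming out of the abstract lemma happens to be smaller than $1/4$, no amount of relabelling or passing to iterates will improve it. Either you accept a weaker (but adequate) statement $m(E_j)\le Ce^{-\beta j}$, or you must do something explicit, at which point the paper's direct computation is simpler than the whole Ma\~n\'e apparatus. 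The boundary-point issue you flag (that $\alpha$ is fixed, so the literal distance hypothesis of Lemma~\ref{lem:Misi} fails) is real but, as you say, easily circumvented; it is not the main gap.
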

    \begin{proof}
    Calculation gives $f_{a_0}(5/8) = 1/2$, so for $a > a_0$, $v < 5/8$. Therefore $\alpha < 5/8$ and $\alpha_* > 7/8$, so
    $$
    m(E_0) = m((\alpha, \alpha_*)) \in (1/4, 1/2)$$
    and 
    $$
    m(E_1) = 2m((\alpha_*,1)) < 2^{-2}.$$
By convexity of $f_{k,n}$ on $(1/2, 1]$ and since $f_{k,n}(3/4) \leq 1/3$ and $f_{k,n}(1)=1$, the graph of $f_{k,n}$ restricted to $(3/4, 1)$ lies under the line of slope $8/3 >2$ passing through the repelling fixed point $(1,1)$. Moreover, $f((\alpha_*,1)) = (\alpha,1)$. 
        Therefore, for $j \geq 1$, 
        $$m(E_j \cap (\alpha_*,1)) < 2^{-j-2}. $$
        By symmetry, 
        $$m(E_j \cap (1/2, 1)) < 2^{-j-1}. $$

        On $(0, 1/2)$, the set of points with first entry time to $(1/2,1)$ equal to $i$ has measure $2^{-i-1}$, for $i \geq 1$, and this set is an interval mapped by $f^i_{k,n}$ onto $(1/2,1)$. We deduce, for $j \geq 1$,
        \begin{align*}
            m(E_j \cap (0,1/2)) & =  \sum_{i=1}^{j} 2^{-i} m(E_{j-i}\cap(1/2,1)) \\
                                & <  \sum_{i=1}^{j} 2^{-i} 2^{-j+i -1} \\
                                & = j 2^{-j-1}.
        \end{align*}
        say. 
        Hence $m(E_j) < (j+1) 2^{-j-1}$, so $$m(E_j) < e^{-j/4},$$ say. 
    \end{proof}

    The distortion of $f_{k,n}$ on $(1/2,1)$ is unbounded, due to the critical point. However, it is quadratic, which gives a certain amount of control.  Denote by $R_j$ the set of points in $[\alpha, \alpha_*]$ whose first return time to $[\alpha, \alpha_*]$ is equal to $j$. 
    \begin{lemma}\label{lem:exprj}
        $m(R_j) \leq e^{-j/8}.$
    \end{lemma}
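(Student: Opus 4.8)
### Proof plan for Lemma~\ref{lem:exprj}

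The plan is to bootstrap the exponential estimate on first \emph{entry} times to $[\alpha,\alpha_*]$ (Lemma~\ref{lem:expaj}) into an exponential estimate on first \emph{return} times, by decomposing a return according to the first branch taken. A point $x \in [\alpha,\alpha_*]$ lies either in $X = (\alpha,\alpha_*)\setminus[q,q_*]$ or in the central interval $[q,q_*]$.

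First I would treat the contribution from $X$. On every branch of the return map $\phi$ contained in $X$, $\phi$ is a full diffeomorphism onto $(\alpha,\alpha_*)$ coinciding with some iterate $f_{k,n}^i$, and this iterate extends with non-positive Schwarzian derivative to map onto $(1/2,1)$, so the distortion is uniformly bounded (Lemma~\ref{lem:psifull} and its proof; this is essentially the Koebe bound, Lemma~\ref{lem:koebe}). Since the map $f_{k,n}$ expands by at least $2$ away from a neighbourhood of the critical point and points in $X$ make their first return without passing through $[q,q_*]$, the return time on a branch in $X$ is comparable to its first entry time to $[\alpha,\alpha_*]$ after one step, so by Lemma~\ref{lem:expaj} (applied to the image $f_{k,n}(x)$) together with bounded distortion one gets $m(R_j \cap X) \leq C' e^{-j/4}$ for a constant $C'$ independent of $k,n$; absorbing the constant into the exponent gives the bound $e^{-j/8}$ with room to spare.

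The main work — and the main obstacle — is the contribution from the central interval $[q,q_*]$, where the return time can be large (of order $kn$) and where the distortion of $f_{k,n}$ is genuinely unbounded because of the critical point $c$. Here I would argue as follows. A point $x \in [q,q_*]$ iterates under $\phi$ inside $[q,q_*]$ for some number $\ell$ of steps, following the itinerary of $c$ (the graph of $\phi$ on $(q,q_*)$ lies just below the diagonal, so the dynamics mimics the almost-parabolic orbit $c, \phi(c), \ldots, \phi^n(c)$), and then escapes into $[\alpha,q] \subset X$ via the map $\psi$, after which it needs a further first entry time back to $[\alpha,\alpha_*]$. The set of $x\in(q,q_*)$ escaping after exactly $\ell$ applications of $\phi$ is the pullback under $f_{k,n}^k$ of the analogous set one level down, and the key point is that on $(q,q_*)$ the map $f_{k,n}^k$ is quadratic near $c$ but has bounded distortion away from $c$ (the critical value $\phi(c) = f_{k,n}^{2k}(c)$ and all later points stay at definite distance from $c$ by the ordering $v < q < \phi^{n-1}(c) < \cdots < \phi(c) < c$), so that the measure of the set escaping at a given time $\ell$ decays geometrically in $\ell$ with a rate independent of $k,n$, and the corresponding return time is $\ell k + O(1) + (\text{entry time after escape})$. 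Summing the geometric-in-$\ell$ bounds against the geometric-in-entry-time bounds of Lemma~\ref{lem:expaj}, and using the quadratic (hence H\"older-$\tfrac12$) distortion near $c$ to control the one step through the critical point as in Lemma~\ref{lem:ykdistn}, yields $m(R_j \cap [q,q_*]) \leq C'' e^{-j/\lambda}$ for some fixed $\lambda$; again slackening the exponent to $e^{-j/8}$ and combining with the bound on $R_j\cap X$ finishes the proof.

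The delicate point to get right is that all constants must be uniform in $k$ and $n$, since the lemma is used below to control the integral $\int \tau\, d(\text{acip})$ along the whole sequence $f_{k,n}$; this uniformity is exactly what Lemma~\ref{lem:psifull} (uniform distortion of the full branches) and the uniform expansion of $f_{k,n}$ away from $c$ provide, so I would be careful to invoke only those uniform facts and the explicitly-computed quantities ($f_{a_0}(5/8)=1/2$, slope $8/3$, etc.) from the proof of Lemma~\ref{lem:expaj}.
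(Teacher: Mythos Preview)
You have misread what $R_j$ measures. It is the first return time to $[\alpha,\alpha_*]$ under $f_a$, not the inducing time of $\Psi$. Since $f_a$ maps the open interval $(\alpha,\alpha_*)$ entirely below $\alpha$ (the right branch is a quadratic with minimum at $c$ and $f_a(\alpha)=f_a(\alpha_*)=\alpha$, $f_a(c)<\alpha$), every point of $(\alpha,\alpha_*)$ leaves after a single step; hence
\[
R_j \;=\; (\alpha,\alpha_*)\cap f_a^{-1}(E_{j-1}).
\]
In particular, on the central branch $(q,q_*)$ the first return time is exactly $k$ (that is what ``$\phi$ coincides with $f_{k,n}^k$ on $(q,q_*)$'' says): there is no repeated $\phi$-iteration to track for $R_j$. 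Your decomposition by the escape time $\ell$ from $[q,q_*]$ is attacking a different and harder quantity, essentially the tail of the inducing time $\tau$ of $\Psi$, which is dealt with later (Lemma~\ref{lem:taubnd}), not here. Your claim that those escape sets decay geometrically in $\ell$ with a rate independent of $k,n$ is also false, since the central dynamics is almost-parabolic by construction.

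With $R_j=(\alpha,\alpha_*)\cap f_a^{-1}(E_{j-1})$ the paper's proof is one line: $f_a$ is a single quadratic branch on $(1/2,1)$, so for any measurable set $Y$ one has $m\bigl((1/2,1)\cap f_a^{-1}(Y)\bigr)\le C\sqrt{m(Y)/a}$ with $a\ge 32/3$, and plugging in $m(E_{j-1})\le e^{-(j-1)/4}$ from Lemma~\ref{lem:expaj} yields $m(R_j)\le e^{-j/8}$. No split into $X$ versus $[q,q_*]$ is needed, and uniformity in $k,n$ is automatic. The sentence immediately preceding the lemma (``it is quadratic, which gives a certain amount of control'') is the hint.

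As a secondary point, your bound $m(R_j\cap X)\le C' e^{-j/4}$ via bounded distortion of $\phi$ would not give a uniform $C'$ anyway: $X$ abuts $q,q_*$, which approach $c$ as $k\to\infty$, so $|Df_a|$ on $X$ is not bounded below independently of $k$. Bounded distortion of the composite $\phi$ does not control how the first (near-critical) step of $f_a$ distorts the measure of $E_{j-1}$; again the quadratic square-root bound is what handles this uniformly.
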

    \begin{proof}
        Given a measurable set $X \subset (0,1)$, since $f_a$ is quadratic on $(1/2,1)$,   $$m((1/2,1) \cap f_a^{-1}(X)) \leq \sqrt {\frac{m(X)}{a}} \leq \sqrt{m(X)}/3,$$
        provided $a \geq 32/3$. 
        From the preceding lemma, we obtain 
        $$ 
        m(R_j) \leq \sqrt{m(E_{j-1})}/3 < e^{-\frac{j-1}8}/3 < e^{-j/8},$$
        as required. 
    \end{proof}

    \begin{remark}\label{rem:psicon}
        In the limit, $\Psi$ converges (in some canonical sense) to the first return map for $f_{a_0}$ to $[\alpha, \alpha_*]$. As the estimates of Lemma~\ref{lem:exprj} are uniform, they also hold for the return map for $f_{a_0}$ and the integral of the first return time, $\sum_j jm(R_j)$,
     is bounded.  
    \end{remark}

    Returning to $f_{k,n}$, 
    we can denote by $e_{k,n}$ the first entry time to $X = [\alpha,\alpha_*]\setminus [q,q_*]$. It is a multiple of $k$ on $(q,q_*)$. We can decompose $\tau$ on $(q,q_*)$ as
    $$
    \tau = e_{k,n} + \tau \circ \psi.$$
    We shall estimate these terms separately. 
    \begin{lemma}\label{lem:taubnd}
        $\lim_{k \to \infty} \sup_{n\geq 2} \int_{(q,q_*)} \tau \circ \psi\, dx = 0.$ 
\end{lemma}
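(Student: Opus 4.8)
\textbf{Proof strategy for Lemma~\ref{lem:taubnd}.}
The plan is to bound the integral $\int_{(q,q_*)} \tau \circ \psi\, dx$ by transporting it to the interval $X = [\alpha,\alpha_*]\setminus[q,q_*]$ via the map $\psi$, and there to use the uniform exponential return-time estimates of Lemma~\ref{lem:exprj} (equivalently Lemma~\ref{lem:expaj}). The point is that $\psi$ restricted to $(q,q_*)$ is, on each of its branches, an iterate of $f_{k,n}$ that extends with non-positive Schwarzian derivative to map diffeomorphically onto $(\alpha,q)$ or $(\alpha,v)$ (as recorded in the proof of Lemma~\ref{lem:psifull}); hence the Koebe Lemma~\ref{lem:koebe} gives a distortion bound $\Delta$ for $\psi$ on $(q,q_*)$ that is \emph{uniform} in $k$ and $n$. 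Moreover $\psi\big((q,q_*)\big)$ is contained in $(\alpha,q)$, whose length $|q-\alpha|$ tends to $0$ as $k\to\infty$ (since the central branch of $\phi$ nearly touches the diagonal; $q,q_*\to c$ is not quite what we need, rather $q \to \alpha$ along the nearly-parabolic orbit). Actually the key geometric fact is that $m\big(\psi((q,q_*))\big)=|q-\alpha|\to 0$ uniformly in $n$ as $k\to\infty$, because $\alpha$ is (nearly) a parabolic fixed point of $f_{k,n}^k$ and $q$ is the nearest preimage of $\alpha$ under $f_{k,n}^k$ in the central branch, which converges to $\alpha$ as the parameter converges to the parabolic one.

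First I would make precise that the function $\tau\circ\psi$, viewed on $X$, is exactly the first return time to $X$ under $\phi$ (up to the bounded factor coming from $\phi = f_{k,n}^k$ being one ``macro-step''); more carefully, on $X$ we have $\tau = \tau_{k,n}$ with $\Psi = f_{k,n}^\tau$, and $\tau$ on $X$ is a first-return-type time whose super-level sets are controlled by Lemma~\ref{lem:exprj}: $m(\{x\in X:\tau(x)\geq j\}) \leq \sum_{i\geq j} e^{-i/8} = O(e^{-j/8})$, uniformly in $k,n$ (this is Remark~\ref{rem:psicon}). Consequently $\tau$ is integrable on $X$ with $\int_X \tau\,dx \leq C_0$ for some constant $C_0$ independent of $k,n$, by summing the tail. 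Next, changing variables by $\psi$,
\begin{align*}
\int_{(q,q_*)} \tau\circ\psi\,dx &= \sum_{\text{branches } Z} \int_Z \tau(\psi(x))\,dx
= \sum_Z \int_{\psi(Z)} \tau(y)\, \frac{dx}{dy}\,dy \\
&\leq \Delta \sum_Z \frac{m(Z)}{m(\psi(Z))} \int_{\psi(Z)} \tau(y)\,dy.
\end{align*}
Here I have used the uniform distortion bound $\Delta$ so that $|Dx/Dy| \leq \Delta\, m(Z)/m(\psi(Z))$ on $\psi(Z)$. Since each $\psi(Z)$ is $(\alpha,q)$ or $(\alpha,v)$ and the $Z$'s are pairwise disjoint subsets of $(q,q_*)$, one has $\sum_Z m(Z) \leq m((q,q_*)) \leq 1$, while $m(\psi(Z)) \geq \min(|q-\alpha|, |v-\alpha|)$. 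For the branches mapping onto $(\alpha,v)$ there are only finitely many (a bounded number) and $|v-\alpha|$ is bounded below, so those contribute a term $O\big(\int_{(\alpha,v)}\tau\big) = O(C_0)$ times a bounded factor --- but wait, this would not go to zero. So the decomposition must be handled more carefully: the branches landing on $(\alpha,v)$ are the ones with small inducing time, and in fact $\tau\circ\psi$ restricted to those is bounded, times a set of measure $m((q,q_*)\setminus(\text{deep part}))$ which shrinks; the branches landing on $(\alpha,q)$ have $m(\psi(Z))\geq |q-\alpha|$, large relative to nothing --- here I must instead use that $\int_{(\alpha,q)}\tau\,dy \to 0$ because $m((\alpha,q))\to 0$ and $\tau\in L^1$ with uniformly controlled tails, so $\int_{(\alpha,q)}\tau \leq \int_0^{m((\alpha,q))}(\text{decreasing rearrangement of }\tau)$, which tends to $0$. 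Combining, $\int_{(q,q_*)}\tau\circ\psi\,dx \leq \Delta\big(\int_{(\alpha,q)}\tau\,dy + C_1\,m((q,q_*))\big)$, and both terms go to $0$ uniformly in $n$ as $k\to\infty$.

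The main obstacle I anticipate is the uniform control, \emph{independent of $n$}, of the geometry near the (nearly) parabolic point: one must show that $|q-\alpha| \to 0$ as $k\to\infty$ uniformly over $n\geq 2$, and that the number and combinatorics of branches of $\psi$ inside $(q,q_*)$ do not conspire to inflate the sum. This is plausible because the estimates in Lemmas~\ref{lem:expaj}--\ref{lem:exprj} and the distortion bound in Lemma~\ref{lem:psifull} are all already uniform in $k,n$, and because the Koebe extension of $\psi$-branches to $(\alpha,q)$ or $(\alpha,v)$ gives uniform distortion; the parameter $a(k,n)$ lies in the shrinking interval $A'_k$, so $f_{k,n}^k$ on $(q,q_*)$ converges (as $k\to\infty$, uniformly in $n$) to the parabolic map $f^k$ near $\alpha$ in the appropriate $C^1$ sense, forcing $q\to\alpha$. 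Once that is in hand, the change-of-variables and the $L^1$-tail argument above close the estimate.
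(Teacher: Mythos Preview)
Your argument has two related errors, and the second one is fatal.

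First, the geometry: $\alpha$ is \emph{not} a nearly-parabolic fixed point of $\phi=f_{k,n}^k$. It is the repelling fixed point of $f_a$ in $(1/2,1)$, with $\alpha<v<5/8$, while Lemma~\ref{lem:qlong} gives $q\to c=3/4$ as $k\to\infty$. Hence $|q-\alpha|\to |c-\alpha|>1/8$; the image interval $(\alpha,q)$ does \emph{not} shrink. The near-parabolic behaviour of $\phi$ on $(q,q_*)$ occurs near the critical point $c$, not near $\alpha$. (You may be conflating this $\alpha$ with the parabolic point in Proposition~\ref{prop:gn}.) This alone does not kill the approach, since one could hope instead to exploit $m((q,q_*))\to 0$.

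The decisive gap is the distortion claim. Lemma~\ref{lem:psifull} bounds the distortion of $\Psi=\phi\circ\psi$, \emph{not} of $\psi$. In fact $\psi$ has unbounded distortion on its inner branches: on $J_{n-1}=(q_{n-1},c)$ one has $\psi=\phi^n$, and since $D\phi(x)=-2\cdot 2^k a(x-c)$ vanishes at $c$, $|D\psi(x)|\to 0$ as $x\to c^-$. The composition $\Psi$ escapes this only because the subsequent application of $\phi$ at $v=\psi(c)$ passes through $1/2$ and compensates; this is exactly the point of the remark ``$\Psi$ has no critical points'' in the proof of Lemma~\ref{lem:psifull}. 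Without uniform distortion of $\psi$, your change-of-variables inequality $\int_{(q,q_*)}\tau\circ\psi\,dx\le \Delta\sum_Z \tfrac{m(Z)}{m(\psi(Z))}\int_{\psi(Z)}\tau$ is not available, and the argument collapses.

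The paper's proof confronts the critical point head-on. Using the explicit quadratic form~\eqref{eqn:centralb}, it bounds $m(\phi^{-1}(R_j)\cap J_0)\le \sqrt{m(R_j)/2^k a}$; the \emph{square root} (rather than a linear pull-back) is precisely the cost of the critical point, and the factor $2^{-k/2}$ is what drives the integral to zero. For the branches $J_l$ with $l\ge 1$ it then invokes the affine conjugacy to $P_\kappa$ and the explicit distortion bound of Lemma~\ref{lem:ykdistn} (the constant $21$) to propagate the estimate through the near-parabolic passage; the innermost branches $J_{n-2},J_{n-1}$ are handled by one or two further square-root pull-backs. Summing over $l$ and then over $j$ via Lemma~\ref{lem:exprj} gives the uniform-in-$n$ bound $2^{11-k/16}\sum_j j e^{-j/128}\to 0$. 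Your change-of-variables heuristic captures none of this structure; to make it work you would have to reproduce essentially the same branch-by-branch analysis.
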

    \begin{proof}
        The unimodal branch is just 
        \begin{equation}\label{eqn:centralb}
            x \mapsto -2^ka (x-3/4)^2 + 3/4 - \delta
        \end{equation}
        for some small $\delta = \delta(k,n) >0$. 
        There is a sequence of points $q= q_0 < q_1 < q_2 < \cdots < q_{n-1} < c$ for which $\phi^l(q_l) = q$. 
        Define the intervals $J_l := (q_l, q_{l+1})$ for $l \leq n-2$ and $J_{n-1} := (q_{n-1},c).$
        Each interval $J_l$, $l\geq1$, gets mapped by $\phi$ onto $J_{l-1}$. The interval $J_0$ gets mapped onto $(\alpha, q)$. 
        On $J_0$ we have 
        $$m(\phi^{-1}(R_j) \cap J_0) \leq \sqrt{\frac{m(R_j)}{2^ka}}.$$

        Let $l_0 \geq 0$ be the maximal $l$ for which $D\phi(q_l) > 2$. 
        For $1 \leq l < l_0$, on the interval $J_l$ the derivative is at least $2$. Hence
                \begin{equation}\label{eqn:psi1}
        m(\psi^{-1}(R_j) \cap J_l) \leq 2^{-l}\sqrt{\frac{m(R_j)}{2^ka}}. 
            \end{equation}

        Going one step back, we obtain on $J_{l_0}$ (taking a poor bound for better legibility),
        $$
        m(\psi^{-1}(R_j) \cap J_{l_0}) \leq \left(\frac{m(R_j)}{2^ka}\right)^{1/4}.$$ 
        
        Consider now the affine conjugacy $A$ such that
        $$
        \phi = A^{-1} \circ P \circ A,$$
        where $P$ is the quadratic map $P : y \mapsto y^2 +\kappa$, and $\kappa > 1/4$ depends on $\delta$ and thus on $k, n$. One can calculate 
        \begin{equation}\label{eqn:conjA}
            A : x \mapsto -2^ka (x-3/4)
        \end{equation}
        (recalling $a=a(k,n)$). 
        Since $A$ is affine, 
                \begin{equation}\label{eqn:PAx}
        D\phi(x) = DP(Ax). 
            \end{equation}
            Writing $y_0 := Aq_{n-1}$, we can apply Lemma~\ref{lem:ykdistn}. With the notation of that lemma, $k_0 = n -1 - l_0$, while $y_{k_0 -r}  = A q_{l_0+r}.$ Thus $(y_{k_0 -r-1}, y_{k_0 - r}) = A J_{l_0 +r}.$
                  For $k_0 -r = 2,3,\ldots, k_0$ or, equivalently, for $r= 0, 1, \ldots, n-3-l_0$, 
        by~\eqref{eqn:pkap} and~\eqref{eqn:PAx}, for $x,x' \in J_{l_0 + r}$, 
        $$
        \frac{D\phi^r(x)}{D\phi^r(x')} < 21.
        $$
        Hence
        $$
        m(\psi^{-1}(R_j) \cap J_{l_0 +r})  
        \leq 21 \frac{|J_{l_0 +r}|}{|J_{l_0}|} \left(\frac{m(R_j)}{2^ka}\right)^{1/4}.$$ 
        Summing over intervals $J_{l_0 + r}$, we obtain
        $$
        m(\psi^{-1}(R_j) \cap (q_{l_0}, q_{n-2})  
        \leq 21 \frac{|q_{n-2}-q_{l_0}|}{|J_{l_0}|}   \left(\frac{m(R_j)}{2^ka}\right)^{1/4}. 
        $$
        By~\eqref{eqn:pkapy}, since $A$ is affine, we have $|q_{n-2}-q_{l_0}| < 8|J_{l_0}|$, so 
        \begin{equation}\label{eqn:psi3}
        m(\psi^{-1}(R_j) \cap (q_{l_0}, q_{n-2})  
        \leq 168   \left(\frac{m(R_j)}{2^ka}\right)^{1/4}. 
    \end{equation}

        We have now dealt with $(q_0, q_{n-2})$; the intervals $J_{n-2}, J_{n-1}$ remain to be treated.
        Pulling back~\eqref{eqn:psi3} (once for $J_{n-2}$ and once more for $J_{n-1}$), we obtain, for large $k$,
        \begin{equation}\label{eqn:psi4}
        m(\psi^{-1}(R_j) \cap (J_{n-2} \cup J_{n-1}))  
        \leq  \left(\frac{m(R_j)}{2^ka}\right)^{1/16}. 
    \end{equation}
        
    Combining the estimates $a>1$,~\eqref{eqn:psi1} (summed),~\eqref{eqn:psi3} and~\eqref{eqn:psi4} gives
    \begin{align*}
        m(\psi^{-1}(R_j) \cap (q,c))  
        & \leq 
        2 \sqrt{\frac{m(R_j)}{2^ka}} + 
         168 \left(\frac{m(R_j)}{2^ka}\right)^{1/4}
             +
          \left(\frac{m(R_j)}{2^ka}\right)^{1/16}\\
          & \leq 2^{10-k/16} m(R_j)^{1/16}.
      \end{align*}
             Note that there is no $n$-dependence in the bound.  

             From Lemma~\ref{lem:exprj}, $m(R_j) \leq \exp(-j/8).$
             Hence
             $$
        m(\psi^{-1}(R_j) \cap (q,c))  
        \leq 
        2^{10 -k/16} \exp(-j/128).$$
        Therefore
        $$\int_{(q,q_*)} \tau \circ \psi\, dx = 2\sum_j jm(\psi^{-1}(R_j) \cap (q,c)) 
        \leq
        2^{11-k/16} \sum_j j \exp(-j/128).$$
        This upper bound has no $n$-dependence and tends to zero as $k \to \infty$, as required. 
    \end{proof}

    \begin{lemma}\label{lem:qlong}
        There is a constant $C_0 > 1$ such that, for all $k, n$, 
        \begin{equation}\label{eqn:qlong}
            \frac1{C_0} \sqrt{\frac{1}{2^ka}} 
            \leq |q_* - q|
            \leq C_0 \sqrt{\frac{1}{2^ka}} .
        \end{equation}
    \end{lemma}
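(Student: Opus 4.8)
The plan is to estimate $|q_*-q|$ by relating it, via the affine conjugacy $A$ from~\eqref{eqn:conjA}, to the diameter of a corresponding interval for the quadratic family $P_\kappa : y \mapsto y^2+\kappa$, and then to use the elementary behaviour of $P_\kappa$ near its (absent) parabolic fixed point. Recall from~\eqref{eqn:centralb} that the central branch of $\phi$ is $x \mapsto -2^k a (x-3/4)^2 + 3/4 - \delta$ for some small $\delta = \delta(k,n)>0$, and that $\phi = A^{-1}\circ P \circ A$ with $A : x \mapsto -2^k a(x - 3/4)$ and $\kappa > 1/4$ depending on $\delta$. Since $A$ is affine with $|DA| = 2^k a$, we have $|q_*-q| = (2^k a)^{-1}|A q_* - A q|$, so it suffices to bound $|Aq_* - Aq|$ above and below by absolute constants.

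First I would identify what $Aq$ and $Aq_*$ are. The points $q<q_*$ are the points closest to $c=3/4$ with $f_{k,n}^k(q) = f_{k,n}^k(q_*) = \alpha$; since $\phi$ coincides with $f_{k,n}^k$ on $(q,q_*)$, these are the boundary points of the central branch, i.e. the preimages under the central branch $x\mapsto -2^k a(x-3/4)^2 + 3/4-\delta$ of the endpoint $\alpha$. Under $A$ they become the two preimages $y$ of $A\alpha$ under $P_\kappa$, namely $y = \pm\sqrt{A\alpha - \kappa}$. Hence $|Aq_* - Aq| = 2\sqrt{A\alpha-\kappa}$, and everything reduces to showing that $A\alpha - \kappa$ is bounded above and below by positive absolute constants, uniformly in $k,n$.

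The key facts are: (i) $\kappa \to 1/4^+$ as the graph of the central branch approaches tangency with the diagonal — more precisely $\kappa - 1/4$ is comparable to $\delta(k,n)$ up to the (bounded) slope factors — so in particular $\kappa$ stays in a compact subinterval of $(1/4, \infty)$ bounded away from $A\alpha$; and (ii) $A\alpha$ is the $A$-image of the left endpoint $\alpha$ of the interval $[\alpha,\alpha_*]$, and one computes directly from $A : x \mapsto -2^k a(x-3/4)$ together with the explicit bounds $\alpha < 5/8$, $\alpha_* > 7/8$ established in the proof of Lemma~\ref{lem:expaj}, which give $A\alpha = -2^k a(\alpha - 3/4) = 2^k a(3/4 - \alpha)$ with $3/4 - \alpha$ bounded below by an absolute constant and above by $1/4$. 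That makes $A\alpha$ of order $2^k a$, hence certainly bounded away from $\kappa$ (which is near $1/4$) for $k$ large; for the finitely many small $k$ one checks directly. Combining, $A\alpha - \kappa$ is positive and bounded above and below by absolute constants, whence $|Aq_*-Aq| = 2\sqrt{A\alpha - \kappa}$ is too, and multiplying by $(2^k a)^{-1}$ gives~\eqref{eqn:qlong} with some $C_0>1$.

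The main obstacle is bookkeeping the relation between $\delta$, $\kappa$, and the geometry of the conjugacy carefully enough to be sure $\kappa$ stays uniformly away from $A\alpha$ (so that $A\alpha - \kappa \asymp A\alpha \asymp 2^k a$ rather than degenerating), and to handle uniformity over the parameters $a = a(k,n)$, which themselves vary in a compact parameter range near $a_0$; but since $a$ ranges over a bounded interval bounded away from $0$, the factor $2^k a$ is comparable to $2^k$ and this uniformity is routine. I would also note that the lower bound on $|q_*-q|$ could alternatively be read off from~\eqref{eqn:pkapy} of Lemma~\ref{lem:ykdistn} pulled back by the affine map $A$, which is essentially the same computation. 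No deep input beyond the explicit formulas and the elementary estimates already in hand is needed.
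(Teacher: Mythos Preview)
Your approach via the affine conjugacy $A$ is essentially the same computation as the paper's (which just solves $\phi(q)=\alpha$ directly from~\eqref{eqn:centralb}), but you have a genuine bookkeeping error in the scaling. You write that it ``suffices to bound $|Aq_*-Aq|$ above and below by absolute constants'' and that ``$A\alpha-\kappa$ is bounded above and below by positive absolute constants''. Neither is true, and neither would give~\eqref{eqn:qlong}: if $|Aq_*-Aq|$ were uniformly bounded, then $|q_*-q|=(2^ka)^{-1}|Aq_*-Aq|$ would be of order $(2^ka)^{-1}$, not $(2^ka)^{-1/2}$. You yourself observe in the middle of your argument that $A\alpha=2^ka(3/4-\alpha)$ is of order $2^ka$, and since $\kappa$ stays near $1/4$ this gives $A\alpha-\kappa\asymp 2^ka$, contradicting your stated reduction.

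The correct chain is: $A\alpha-\kappa\asymp 2^ka$ (because $3/4-\alpha$ is bounded between positive constants and $\kappa$ is bounded), hence $|Aq_*-Aq|=2\sqrt{A\alpha-\kappa}\asymp\sqrt{2^ka}$, hence $|q_*-q|=(2^ka)^{-1}\sqrt{2^ka}=(2^ka)^{-1/2}$. Equivalently, and this is what the paper does, solve $-2^ka(q-3/4)^2+3/4-\delta=\alpha$ directly to get $|q_*-q|=2\sqrt{(3/4-\delta-\alpha)/(2^ka)}$, and observe that the numerator $3/4-\delta-\alpha$, which is the length of the image $\phi([q,q_*])=[\alpha,\phi(c)]$, is bounded below (since it contains $[\alpha,v]$ with $\inf_{k,n}|v-\alpha|>0$) and above (by $1/2$). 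Your detour through $A$ and $P_\kappa$ is unnecessary here and led you to lose track of which quantities carry the factor $2^ka$.
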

    \begin{proof}
        The image of $\phi$ on $[q,q_*]$ contains $\alpha$ and $v$, and $\inf_{k,n} |v - \alpha| > 0$. The range of $\phi$ has length bounded by $1/2$. From~\eqref{eqn:centralb}, the result then follows. 
    \end{proof}

    \begin{lemma}\label{lem:ekn}
        The first entry time $e_{k,n}$ to $X = [\alpha,\alpha_*]\setminus [q,q_*]$ has the following properties. 
        \begin{itemize}
            \item
                \begin{equation}\label{eqn:ekn1}
        \lim_{k\to \infty} \int_{(q,q_*)} e_{k,2}\, dx = 0;
        \end{equation}
            \item
                \begin{equation}\label{eqn:ekn2}
        \lim_{n\to \infty} \int_{(q,q_*)} e_{k,n}\, dx = \infty;
        \end{equation}
            \item
                \begin{equation}\label{eqn:ekn3}
        \int_{(q,q_*)} e_{k,n+1}\, dx 
        \leq 
        \int_{(q,q_*)} e_{k,n}\, dx + 4k|q_*-q|;
            \end{equation}
            \item
    For $x \in (q,q_*)$ and $j=1,2,\ldots, k-s$, 
                \begin{equation}\label{eqn:ekn4}
    \lim_{k \to \infty} \inf_{n \geq 1} \frac{\# \{j \leq e_{k,n}(x) : f^j_{k,n}(x)\in B(1/3, 1/k)\}}{e_{k,n}(x)} = 1.
            \end{equation}
        \end{itemize}
    \end{lemma}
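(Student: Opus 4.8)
The plan is to analyze the map $\Psi = \Psi_{k,n}$ on the central branch $(q,q_*)$ by conjugating the first return map $\phi$ restricted to $(q,q_*)$ to a quadratic map $P_\kappa : y \mapsto y^2 + \kappa$ via the affine map $A$ of~\eqref{eqn:conjA}, just as in Lemma~\ref{lem:taubnd}, and then import wholesale the estimates on first entry times for $P_\kappa$ developed in \S\ref{sec:secexo1} (in particular Lemmas~\ref{lem:evvn}, \ref{lem:ykdistn} and equations~\eqref{eq:MMj}--\eqref{eqn:kvnover4}). The point is that $e_{k,n}$ restricted to $(q,q_*)$ is exactly $k$ times the first entry time $e_{M,\kappa(V,\cdot)}$ for an appropriate quadratic family: one step of $\phi$ corresponds to $k$ iterates of $f_{k,n}$ and to one iterate of $P_\kappa$, and the condition $f^{kn}_{k,n}(c) = v$, together with the definitions of $q,q_*$, pins down $\kappa = \kappa(k,n)$ to be the parameter for which $P^n_\kappa(0)$ hits a fixed threshold $V$ (the $A$-image of $v$) lying in a bounded range $(V_0, 100 V_0)$ independent of $k$, by Lemma~\ref{lem:qlong} and the fact that $\inf_{k,n}|v-\alpha|>0$.

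First I would set up the dictionary precisely: record that on $(q,q_*)$, $e_{k,n} = k \cdot \widetilde{e}_n \circ A$ where $\widetilde{e}_n$ is the first entry time of the quadratic map $P_{\kappa(k,n)}$ into $[M,\infty)$ for $M$ the $A$-image of the endpoint $q$ (so that $|q_*-q| = |A(q_*-q)|/(2^ka)$, and the $A$-images of $q,q_*,v,\alpha$ all lie in a fixed bounded window by Lemma~\ref{lem:qlong}). Then~\eqref{eqn:ekn1} follows from the change of variables $y = Ax$ — the integral over $(q,q_*)$ becomes $\frac{k}{2^ka}$ times $\int$ of a bounded entry-time function of $P_{\kappa(k,2)}$ over a bounded interval, hence $O(k/2^k) \to 0$; for this one needs that $\widetilde{e}_2$ is uniformly bounded when $n=2$, which is clear since $\kappa(k,2)$ stays bounded away from $1/4$. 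Equation~\eqref{eqn:ekn2} follows from~\eqref{eqn:kvnover4}: $\int_{[0,1/4]} e_{M,\kappa(V,n)} \geq (n-1)/4$, so the corresponding integral of $e_{k,n}$ over the piece of $(q,q_*)$ mapping (under $A$) into $[0,1/4]$ is at least $k(n-1)/(4\cdot 2^k a) \cdot |A^{-1}[0,1/4]|$, which $\to \infty$ as $n\to\infty$ for fixed $k$ (and the $A$-preimage of $[0,1/4]$ has definite size). Equation~\eqref{eqn:ekn3} follows from Lemma~\ref{lem:evvn}: pointwise $e_{k,n+1} \leq e_{k,n} + 3k$ on the overlap, but the domain $(q,q_*)$ itself changes with $n$; comparing the two integrals, the discrepancy coming from the changing endpoints is controlled by $k$ times the measure of the symmetric difference of the central branches, which is $\leq 2|q_*-q|$, giving the stated $4k|q_*-q|$ bound (the constant $4$ absorbing the $+3k$ pointwise term times $|q_*-q|$ and the endpoint correction).

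The claim~\eqref{eqn:ekn4} — that asymptotically all of the time up to first entry is spent in $B(1/3,1/k)$ — is the one that needs a genuinely separate argument, and I expect it to be the main obstacle. The idea is that under $f_{k,n}$ the critical orbit starting near $c=3/4$ lands near $f_{k,n}(c)$, which for $a$ close to $a_0 = 32/3$ is close to $f_{a_0}(c) = 1/3$, the repelling fixed point; since $|Df_{k,n}(1/3)|$ is a fixed number bigger than $1$ but the orbit is being held near $1/3$ by the constraint $f^{kj}_{k,n}(c)\in(q,c)$ with $(q,q_*)\to$ a point as $k\to\infty$, the escape from any fixed neighbourhood of $1/3$ takes a number of iterates that is bounded independently of $k$ and $n$ once one leaves $B(1/3,1/k)$, whereas the total entry time $e_{k,n}(x) \to\infty$ with $k$ (uniformly in $n\geq 1$, since even for $n=1$ the orbit must traverse the narrow channel between the graph of $f^k_{k,n}$ near $1/3$ and the diagonal). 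More precisely: writing $\lambda := \log|Df_{a_0}(1/3)| > 0$, the channel near the fixed point $1/3$ has width $\sim 2^{-k}$ (because $\phi$ on $(q,q_*)$ is the $A$-conjugate of a quadratic with $|q_*-q|\sim 2^{-k/2}$ and $\kappa$ is close to $1/4$), so a point entering it takes $\gtrsim \frac{1}{\lambda}\log(2^k) = O(k)$ iterates to escape, all but $O(1)$ of which are spent inside $B(1/3,1/k)$; and the time spent outside $B(1/3,1/k)$ after escaping, before reaching $X$, is $O(1)$ by the uniform hyperbolicity of $f_{a_0}$ (hence of $f_{k,n}$ for $n$ large and $a$ near $a_0$) away from a neighbourhood of the critical point, exactly as in the proof of Lemma~\ref{lem:Misi}. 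Dividing, the ratio $\to 1$ as $k\to\infty$ uniformly in $n\geq 1$. The delicate point to nail down carefully is the uniformity in $n$: one must check that the quadratic parameter $\kappa(k,n)$, although decreasing to $1/4$ as $n\to\infty$, still keeps the channel width comparable to $2^{-k}$ uniformly, which follows from~\eqref{eq:dkap} and the fact that $P^1_{\kappa(k,n)}(0)$ is bounded below independently of $n$, so that the first-entry-time profile is dominated by the passage through the neck near the would-be fixed point and the orbit density estimate is governed entirely by $k$.
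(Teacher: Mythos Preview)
For the first three items your plan is the paper's: transfer everything via the affine conjugacy $A$ to the normalised quadratic $P_\kappa$ and quote the estimates from earlier in \S\ref{sec:secexo1}. Two minor slips to correct. First, the $A$-image of $(q,q_*)$ is \emph{not} a bounded interval --- its length is $2^ka\,|q_*-q|\sim\sqrt{2^ka}$ by Lemma~\ref{lem:qlong} --- so your $O(k/2^k)$ for~\eqref{eqn:ekn1} should read $O(k/2^{k/2})$; the paper simply uses the pointwise bound $e_{k,2}\le 2k$ on an interval of length $|q_*-q|$. Second, in your bound for~\eqref{eqn:ekn2} the extra factor $|A^{-1}[0,1/4]|$ is spurious: the change of variables already supplies the Jacobian, giving $\int_{(q,q_*)}e_{k,n}\,dx\ge \tfrac{k}{2^ka}\int_{[0,1/4]}e_{M,\kappa}\ge \tfrac{k(n-1)}{4\cdot 2^ka}\to\infty$ for fixed $k$. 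For~\eqref{eqn:ekn3} you have exactly the paper's argument.

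For~\eqref{eqn:ekn4} your sketch is over-engineered and in places misdirected. The ``channel'' and the parameter $\kappa$ pertain to the return map $\phi$ near $c=3/4$, not to the $f$-dynamics near $1/3$; the fixed point $1/3$ is uniformly repelling with multiplier exactly $-2$ for every $a$, so there is no near-parabolic channel there, and your parenthetical justification via ``$\kappa$ close to $1/4$'' fails already for $n=2$ (where $\kappa\sim 2^{k/2}$). The paper's argument is a one-liner exploiting that the left branch is \emph{exactly} $x\mapsto 1-2x$: for $x\in(q,q_*)$ one has $\phi(x)=f^k_{k,n}(x)\in(\alpha,c)$, hence $|\phi(x)-\tfrac13|<1$, and working backwards along the linear branch gives $|f^j_{k,n}(x)-\tfrac13|=2^{-(k-j)}|\phi(x)-\tfrac13|<2^{-(k-j)}$ for $j=1,\dots,k-1$. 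With $s=\lceil\log_2 k\rceil$ (so $2^{-s}\le 1/k$), the first $k-s$ iterates of each $k$-block lie in $B(1/3,1/k)$; since $e_{k,n}$ is a multiple of $k$, the proportion of good iterates is at least $(k-\lceil\log_2 k\rceil)/k\to 1$. The uniformity in $n$ comes for free --- nothing about $\kappa$ enters --- so your ``delicate point'' dissolves, and your ``all but $O(1)$'' should in any case read ``all but $O(\log k)$''.
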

    \begin{proof}
        The entry time satisfies $e_{k,n} \leq nk$, while~\eqref{eqn:qlong} bounds the length of
        the interval $(q,q_*)$. This implies~\eqref{eqn:ekn1}. 

        Recall the affine map $A$ of~\eqref{eqn:conjA} which conjugates the unimodal branch of $\phi = \phi_{k,n}$ with a normalised quadratic $P : y \mapsto y^2 + \kappa$, with $\kappa$ depending on $k,n$. [The map $A$ also depends slightly on $k,n$.] 
        By~\eqref{eqn:qlong}, 
        $$
        \frac{1}{2C_0} \sqrt{{2^ka}} \leq Aq \leq C_0 \sqrt{{2^ka}}.
        $$
        Set $M_{k,n} := P(Aq) = A\alpha$ and $V_{k,n} := Av$. 
        Then $P^n(0) = V_{k,n}$  (in particular, $\kappa = \kappa(V_{k,n},n)$) and the first entry time at $Ax$ to $[M_{k,n},\infty)$ is $1 + \frac{e_{k,n}(x)}{k}$. 
            From~\eqref{eqn:kvnover4}, we deduce~\eqref{eqn:ekn2}.

            Recall that $v \leq 5/8$, so 
         $|\alpha -v| < |v-3/4|$, so 
        $$M_{k,n}/2 \leq V_{k,n} \leq M_{k,n}.$$
        Moreover, for fixed $k$, $$V_{k,2}/2 \leq V_{k,n} \leq 2V_{k,2}.$$ 
        Setting $V_0 := 10 V_{k,2}$, we can apply Lemma~\ref{lem:evvn} with
        $M = M_{k,n}$, $V=V_{k,n}$, $M' = M_{k,n+1}$, $V' = V_{k,n+1}$. 
        Via the conjugacy, this gives
        $$
        e_{k,n+1} \leq e_{k,n}(x)+3k.$$
        This almost gives~\eqref{eqn:ekn3}, we just replace $3$ by $4$ as the interval $(q,q_*)$ depends on $k,n$ and will vary a little for $n\geq 2$ (the entry time equals $k$ on the non-common part). 

        For $x \in (q,q_*)$ and $j=1,2,\ldots, k-s$, $f_{k,n}^j(x) \in B(1/3, 2^{-s})$. This implies~\eqref{eqn:ekn4}.
    \end{proof}

    To proceed, we need to recall some results concerning full-branched expanding maps applied to our situation. 
    \begin{definition}
    Let us call a collection of maps $(F_k)_{k\geq 0}$ a \emph{convergent Markov system} \index[def]{Convergent Markov system} if the following holds, denoting by $\D_k$ the domain of $F_k$: 
    \begin{itemize}
        \item
            There is an open interval $I$ with $\D_k \subset I$ for all $k$, and $m(I \setminus \D_k) = 0$; 
            
                \item
                    each $\D_k$ is a countable union of open intervals; 
                \item
                    for each branch $J$ of $F_k$, $F_k : J \to I$ is a (surjective) $C^1$ diffeomorphism; 
                \item
                    for some $K$, for all $n, k \geq 0$, the  distortion estimate 
                    $$
                    \left|\frac{DF^n_k(x)}{DF_k^n(y)}\right| \leq K|F_k^n(x) - F_k^n(y)|$$
                holds for all $x,y$ in the same branch of $F_k^n$;
                \item
                    the maps $F_k$ converge uniformly to $F_0$ on compact subsets of $\D_0$ as $k \to \infty$. 
            \end{itemize}
\end{definition}
Then the following results are well-known, we provide only a sketch of their proof. 
\begin{fact} \label{fact:rho}
        Each $F_k$ has an acip $\nu_k$ with Lipschitz density $\rho_k = \frac{d\nu_k}{dm}$ on $I$. The $\rho_k$ are  uniformly Lipschitz, bounded and bounded away from 0. 
        The densities converge: $\rho_k \to \rho_0$ (uniformly) as $k \to \infty$. 
        If $\tau_k$ is a continuous integer-valued function on $\D_k$, $\tau_k$ converges to $\tau_0$ uniformly on compact subsets of $\D_0$, and if there exists $C >0$ such that, for all $k$, 
        $$
        m(\{\tau_k^{-1}([n,\infty)) < C n^{-3},$$
                then for each $\gamma >0$, there exists a compact set $\Lambda \subset \D_0$ satisfying
        \begin{equation}\label{eqn:taukconv}
            \int_{\D_k \setminus \Lambda}\tau_k \, d\nu_k < \gamma
        \end{equation}
        for $k =0$ and for all large $k$, while $\tau_k = \tau_0$ on $\Lambda$ for all large $k$. 
    \end{fact}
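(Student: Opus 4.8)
\textbf{Proof proposal for Fact~\ref{fact:rho}.}

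The plan is to treat this as a standard application of the transfer operator method for countable-branch full-branched Markov maps with bounded distortion, followed by a perturbation argument for convergence. First I would fix the natural transfer operator $\L_k g(x) := \sum_{F_k(y)=x} |DF_k(y)|^{-1} g(y)$ acting on the cone of Lipschitz (or just bounded-variation, or $C^0$ with a Lipschitz-type modulus) functions on $I$. The distortion bound $|DF_k^n(x)/DF_k^n(y)| \leq K|F_k^n(x) - F_k^n(y)|$ and full-branchedness give uniform (in $k$ and $n$) control: iterating the distortion estimate yields a uniform a priori bound on $\|D F_k^{-n}|_J\|$ relative to $|J|$, whence the images of densities under $\L_k^n$ lie in a fixed compact set of a suitable function space (a Lasota--Yorke type inequality with constants independent of $k$). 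This gives, via the standard argument (Schauder/Ionescu-Tulcea--Marinescu, or just Helly's selection theorem on monotone-type bounds), a fixed point $\rho_k = \L_k \rho_k$ which is the density of an acip $\nu_k$, with $\rho_k$ uniformly Lipschitz, uniformly bounded above, and uniformly bounded below (the lower bound comes from full-branchedness: every branch maps onto all of $I$, so $\L_k \rho_k \geq |DF_k|_{J_0}|^{-1} \cdot \inf \rho_k$ for any fixed branch $J_0$, and uniform distortion converts this to a genuine positive lower bound once one knows $\int \rho_k = 1$). Uniqueness and mixing for each $F_k$ follow from the usual spectral-gap argument, or can be quoted.

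Next I would prove $\rho_k \to \rho_0$ uniformly. Since the $\rho_k$ are uniformly Lipschitz and uniformly bounded, Arzel\`a--Ascoli gives a uniformly convergent subsequence $\rho_{k_j} \to \rho_\infty$. Because $F_k \to F_0$ uniformly on compact subsets of $\D_0$ and the branches vary continuously, one checks that $\L_{k_j} \rho_{k_j} \to \L_0 \rho_\infty$ uniformly on compact subsets of $\D_0$ (tails are controlled uniformly since the total mass is $1$ and densities are bounded, so the contribution of branches outside a large compact set is uniformly small); hence $\rho_\infty = \L_0 \rho_\infty$, so $\rho_\infty = \rho_0$ by uniqueness of the acip for $F_0$. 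As every subsequence has a further subsequence converging to $\rho_0$, the whole sequence converges.

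For the statement about $\tau_k$: given $\gamma>0$, first use the hypothesis $m(\tau_k^{-1}([n,\infty))) < C n^{-3}$ together with the uniform bound $\rho_k \leq \sup_k \|\rho_k\|_\infty =: C'$ to get $\nu_k(\tau_k^{-1}([n,\infty))) \leq C C' n^{-3}$, so $\int_{\tau_k \geq N} \tau_k \, d\nu_k \leq C C' \sum_{n \geq N} n^{-2} < \gamma/2$ for $N = N(\gamma)$ large, uniformly in $k$. Then, since $\tau_k$ is integer-valued and converges to $\tau_0$ uniformly on compact subsets of $\D_0$, for each $k$ large enough $\tau_k = \tau_0$ on any fixed compact $\Lambda \subset \D_0 \cap \{\tau_0 \leq N\}$ consisting of a finite union of branches of $F_0^m$ for suitable $m$ (integer-valued continuous functions that converge uniformly must eventually coincide on a compact set where they are locally constant). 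Choosing $\Lambda \subset \{\tau_0 < N\}$ with $\nu_0(\D_0 \setminus \Lambda) \cap \{\tau_0 < N\}$ small (possible since $\D_0 \setminus \{\tau_0 < N\}$ has small $\nu_0$-measure and the complement of $\Lambda$ within $\{\tau_0<N\}$ can be taken to be a finite union of branches of tiny measure), and using $\nu_k \to \nu_0$ weakly (which follows from $\rho_k \to \rho_0$ uniformly), gives $\int_{\D_k \setminus \Lambda} \tau_k \, d\nu_k < \gamma$ for all large $k$ and for $k=0$.

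The main obstacle I anticipate is making the convergence $\L_{k_j}\rho_{k_j} \to \L_0 \rho_\infty$ fully rigorous in the presence of infinitely many branches whose geometry (number, location, endpoints) changes with $k$: one must split the sum defining the transfer operator into a ``large'' finite part, where $F_k \to F_0$ branch-by-branch handles things, and a ``tail'', where one needs a uniform-in-$k$ smallness estimate. That uniform tail estimate is exactly where the uniform distortion bound and the normalisation $\int \rho_k = 1$ (forcing the measure, hence the density times the uniformly-controlled branch lengths, to be uniformly summable) must be combined carefully; everything else is routine. I would present only a sketch of these standard points, as the Fact is stated as well-known.
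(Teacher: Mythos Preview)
Your proposal is correct and matches the paper's level of rigor (the Fact is presented as well-known and only sketched), but you take a genuinely different route for the key step, the convergence $\rho_k \to \rho_0$. You argue via Arzel\`a--Ascoli and transfer-operator continuity: extract a limit $\rho_\infty$, show $\L_0\rho_\infty = \rho_\infty$ by splitting the preimage sum into a finite main part plus a uniformly small tail (controlled via distortion by the total length of the small branches), and invoke uniqueness. The paper instead avoids the transfer-operator tail issue entirely with a Kac's-lemma trick: it first obtains uniform expansion $|DF_k^N| > \lambda > 1$, hence uniform exponential tails for first-return times to any small interval $P$; then on a tiny branch $P$ of $F_0^n$ (where all the uniformly-Lipschitz $\rho_k$ are essentially constant) it uses $\int_P r_k\,d\rho_k = 1$ for the return time $r_k$, truncates the return time on a compact $\Lambda \subset P$ where $r_k = r_0$ for large $k$, and reads off $\rho_k|_P \approx \rho_0|_P$ directly from the two Kac identities. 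Your approach is the standard operator-theoretic one and is more portable; the paper's argument is slicker here because it sidesteps any branch-matching between $F_k$ and $F_0$, needing only that $r_k$ eventually agrees with $r_0$ on a fixed compact set, which follows immediately from uniform convergence on compacta. Your treatment of the $\tau_k$ statement is essentially the same as the paper's.
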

    \begin{proof}
        The first two statements are proven in \cite[Theorem~V.2.2]{MSbook} (at least for one map, but the estimates are uniform, depending only on the distortion estimate). 
        We now sketch the proof of the final statement, using ideas similar to those earlier in the article. One can find, thanks to the distortion bounds, a uniform $N, \lambda$ for which  $|DF_k^N| > \lambda > 1$.
        It follows that return maps to small intervals have exponential tails: if $P$ is a subinterval of $I$ then the set of points with first return time $\geq j$ has measure $\leq C(|P|)e^{-j\delta}$, where $C(\cdot)>0$ can be taken monotone increasing and only depending on the length of $P$, and both $C(\cdot)$ and $\delta>0$ are independent of $k$. 
        If $n$ is large enough and $P$ is a branch of $F^n_0$, then $P$ is tiny and by the uniform bounds on the densities, the densities $\rho_k$ are approximately constant on $P$. By Kac' Lemma, 
        \begin{equation}\label{eqn:kacprk}
            \int_P r_k \,d\rho_k = 1,
        \end{equation}
        where $r_k$ denotes the return time to $P$ for $F_k$. 
        Given small $\eps>0$, let $\Lambda$ be a finite union of intervals compactly contained in some $\Lambda_1$ compactly contained in the domain of the return map to $P$ under $F_0$, and choose $\Lambda$ (and $\Lambda_1$) such that 
        \begin{equation}\label{eqn:plam}
            \int_{P\setminus \Lambda} r_k\, d\rho_k< \eps.
        \end{equation}
        This we can do by the uniform exponential tails estimate. Note that $r_0$ is bounded on $\Lambda$, and that $\Lambda$ does not depend on $k$. 
        For $k$ large enough, $r_k$ coincides with $r_0$ on $\Lambda$ (by uniform convergence of $F_k$ on $\Lambda_1$). 
        But then, by~\eqref{eqn:kacprk} and~\eqref{eqn:plam},
        $$ 
        \left|1 - \int_\Lambda r_0\, d\rho_k\right| =
        \left|1 - \int_\Lambda r_k\, d\rho_k\right|  
        \leq \eps
        $$
        and similarly
        $$
        \left|1 - \int_\Lambda r_0\, d\rho_0 \right|\leq \eps.$$
        Since the $\rho_k$ are each approximately constant (\emph{a priori} varying a lot with $k$) on $P$ and bounded away from zero and infinity, we deduce that $\rho_0 \approx \rho_k$ for large $k$, or $\rho_k \to \rho_0$ as $k \to \infty$. 

        It remains to prove the statements concerning $\tau_k$. As a continuous integer-valued function, $\tau_k$ is constant on each connected component of $\D_k$. Given any compact subset of $\Lambda \subset \D_0$, for $k$ large enough $\tau_k \equiv \tau_0$ on $\Lambda$.
                The tail estimate implies that, given $\gamma >0$, there exists $\eps >0$ such that on any set $Y$ of Lebesgue measure $\eps >0$, 
        $$
        \int_Y \tau_k \, d\nu_k < \gamma.$$ 
        In particular, for $\Lambda$ large enough that $\D_k \setminus \Lambda$ has measure at most $\eps$ for all large $k$ (and for $k=0$), 
        $$
         \int_{\D_k \setminus \Lambda} \tau_k \, d\nu_k < \gamma$$
         for $k$ large and for $k = 0$. 
         showing~\eqref{eqn:taukconv}.
    \end{proof}

    Returning to our collection of maps $f_{k,n}$ with their induced maps $\Psi_{k,n}$ and inducing times $\tau_k$, note that $(q,q_*)$ tends to the point $\{3/4\}$ as $k \to \infty$ (independently of $n$). Moreover the maps $\Psi_{k,n}$ have a uniform distortion bound~\eqref{eqn:nsdpsi}. Let us denote by $\Psi_0$   the first return map to $[\alpha, \alpha_*]$ under $f_{a_0}$, and by $\tau_0$ its return time. 
    Then for any sequence $(n_k)_k$, setting $\Psi_k := \Psi_{k,n_k}$, the sequence
    $(\Psi_k)_{k\geq 0}$ is a convergent Markov system.

    In particular, the acips $\nu_k$ for $\Psi_k$ converge the acip $\nu_0$ for $\Psi_0$. By Remark~\ref{rem:psicon}, 
    $$
    S_0 := \int \tau_0 \, d\nu_0 < \infty.$$
    On the other hand, if we choose $n_k$ carefully, the integral of the inducing times will converge, but not to the corresponding integral $S_0$ for $\Psi_0$. Presently we shall choose $n_k$. 

    Denote by $S_{k,n}$ the integral of the inducing time for $\Psi_{k,n}$, for $k \geq 0$: 
    $$
    S_{k,n} := \int_{[\alpha, \alpha_*]} \tau_k \, d\nu_k.$$
    Recall $X = X_{k,n} = [\alpha, \alpha_*]\setminus [q,q_*]$. It will be useful to define 
    $$
    S_{k,n}^X:= \int_{X} \tau_k \, d\nu_k$$
    and 
    $$
    S_{k,n}^\delta := \int_{(q,q_*)} \tau_k \, d\nu_k$$
    so $S_{k,n}^X + S_{k,n}^\delta = S_{k,n}.$

    Given $\beta >0$,
    for $k \geq 1$, let $n(k) \geq 2$ be the minimal $n$ such that $S_{k,n}^\delta > \beta S_0$. 
    For large $k$, 
    $$S_{k,n}^\delta \approx \int_{(q,q_*)} e_{k,n}\, d\nu_k$$ 
    by Lemma~\ref{lem:taubnd}.
    Now~\eqref{eqn:qlong} and the first three statements of Lemma~\ref{lem:ekn} together imply that $n(k)$ exists and 
    $S_{k,n(k)}^\delta \approx \beta S_0.$
    In particular, $$\lim_{k\to \infty} S^\delta_{k,n(k)} = \beta S_0.$$
    Henceforth, we fix the sequence $n_k := n(k)$. We can drop the $n$-dependence from the notation in the terms considered, and write $S_k$ for $S_{k,n}$, etc.
    We denote the parameter $a(k,n_k)$ by $a_k$.
    As $k \to \infty$, $S_k^X$ converges to $S_0.$ This follows from convergence of the Lipschitz densities $\rho_k$ (Fact~\ref{fact:rho}), convergence of the maps $\Psi_k$ and the exponential tail estimate for the inducing times $\tau_k$ (Lemma~\ref{lem:exprj}, as $\tau_k(x) = j$ on $R_j \cap X$). 

    Now let us look at the spread measures. 
    For $k \geq 0$, let $\nu_k^j$ denote the restriction of $\nu$ to $\tau_k^{-1}(j)$ and set 
    $$
    \mu_k := \frac1{S_k} \sum_{j\geq1}  \sum_{i=0}^{j-1} (f_{a_k}^i)_* \nu^j_k.
    $$
    Then $\mu_k$ is an ergodic acip for each $k \geq 0$. We wish to show that $\mu_k$ does not converge to $\mu_0$ as $k \to \infty.$
    With this aim,  for $k \geq 1$, let us restrict $\nu_k$ to $X$ and then spread:
    $$
    \eta_k^X 
    := \frac1{S_k} \sum_{j\geq1}  \sum_{i=0}^{j-1} (f_{a_k}^i)_* (\nu^j_k)_{|X}.
    $$
    We do the same for $\nu_k$ restricted to $(q,q_*)$:
    $$
    \eta_k^\delta 
    := \frac1{S_k} \sum_{j\geq1}  \sum_{i=0}^{j-1} (f_{a_k}^i)_* (\nu^j_k)_{|(q,q_*)}.
    $$
    Then $\mu_k = \eta_k^X + \eta_k^\delta$ and 
    $$\eta_k^X([0,1]) = \frac{S_k^X }{ S_k}.$$

    Because tails on $X$ are exponential and $\Psi_k$ restricted to $X$ converges to $\Psi_0$, one can use Fact~\ref{fact:rho} and in particular~\eqref{eqn:taukconv} to show that $\eta_k^X$ converges to 
    $$
    \lim_{k\to \infty} \frac{S_k^X }{ S_k} \mu_0 = \frac{1}{1+\beta} \mu_0.$$
    So it remains to check what happens to the limit of $\eta_k^\delta$.
    Well, let $J_j$ denote the set of $x \in (q,q_*)$ with $e_{k,n_k}(x) = j$. The difference between $\eta_k^\delta$ and the measure
    $$
    \hat \eta_k^\delta := 
     \frac1{S_k} \sum_{j\geq1}  \sum_{i=0}^{j} (f_{a_k}^i)_* (\nu^j_k)_{|J_j}
    $$
    is tiny, by~Lemma~\ref{lem:taubnd}. In particular, the limits of $\eta_k^\delta$ and $\hat \eta_k^\delta$ coincide (assuming they exist). 

    By~\eqref{eqn:ekn4}, 
    $$
    \lim_{k\to \infty} \hat \eta_k^\delta ([0,1] \setminus B(1/3, 1/k)) = 0.$$
    Therefore $\hat \eta_k^\delta$ and $\eta_k^\delta$ converge to an atomic measure supported on the point $1/3$ with mass given by
    $$
    \lim_{k\to \infty} \frac{S_k^\delta }{ S_k}  = \frac{\beta}{1+\beta}.$$

    It remains to  bound from below the entropies of $\mu_k$. The Lyapunov exponents $\int_{[\alpha,\alpha_*]} \log |D\Psi_k|\, d\nu_k$ are uniformly bounded below away from $0$. By Pesin's formula, the entropies $h(\nu_k)$ are also uniformly bounded below away from $0$. 
    The integrals of the inducing time converge to $S_0(1+\beta)$. By Abramov's formula, the entropies $h(\mu_k)$ are also bounded away from $0$. 

    That the critical relations are decreasing follows from Remark~\ref{rem:deccr} and Definition~\ref{def:deccr}.

    This concludes the proof of Proposition~\ref{prop:betadel} which, we recall, implies Theorem~\ref{THM:EXO1BIS}.
\end{proof}

\backmatter

\printindex[def]
\printindex

\bibliographystyle{amsalpha}

\end{document}